\documentclass[12pt,a4paper,oneside]{amsart}
\usepackage[utf8]{inputenc}
\usepackage{amsmath,amscd, amsfonts, amssymb, amsthm}
\usepackage{enumerate,enumitem}
\usepackage{setspace,kantlipsum}
\usepackage[dvipsnames]{xcolor}
\usepackage{tikz}
\usepackage{tikz-cd}
\usepackage{mathrsfs}
\usepackage{textcomp}
\usepackage[a4paper, margin=1in]{geometry}
\usepackage{colonequals}
\usepackage{mathtools}
\usepackage{sansmath}
\usepackage[colorlinks=true, linkcolor=blue, citecolor=blue]{hyperref}
\usepackage{listings}
\usepackage{courier}
\usepackage{anyfontsize}
\theoremstyle{plain} 
\newtheorem{thm}{Theorem}[section]
\newtheorem{lemma}[thm]{Lemma}
\newtheorem{lem}[thm]{Lemma}
\newtheorem{proposition}[thm]{Proposition}
\newtheorem{prop}[thm]{Proposition}
\newtheorem{corollary}[thm]{Corollary}
\newtheorem{cor}[thm]{Corollary}

\newtheorem{conj}[thm]{Conjecture}

\theoremstyle{definition}
\newtheorem{notation}[thm]{Notation}

\newtheorem{definition}[thm]{Definition}
\newtheorem{defn}[thm]{Definition}

\newtheorem{example}[thm]{Example}
\newtheorem{remark}[thm]{Remark}
\newtheorem{rmk}[thm]{Remark}

\newtheorem{assumption}[thm]{Assumption}

\numberwithin{equation}{section}

\newcommand{\C}{\mathbb{C}}
\newcommand{\Z}{\mathbb{Z}}
\newcommand{\Q}{\mathbb{Q}}
\newcommand{\R}{\mathbb{R}}
\newcommand{\N}{\mathbb{N}}

\newcommand{\cA}{\mathcal{A}}
\newcommand{\cB}{\mathcal{B}}

\newcommand{\cG}{\mathcal{G}}
\newcommand{\cH}{\mathcal{H}}

\newcommand{\cJ}{\mathcal{J}}

\newcommand{\cM}{\mathcal{M}}
\newcommand{\cN}{\mathcal{N}}
\newcommand{\cO}{\mathcal{O}}

\newcommand{\fm}{\mathfrak{m}}

\newcommand{\sD}{\mathscr{D}}

\newcommand{\mc}[1]{{\mathcal{#1}}}
\newcommand{\mb}[1]{{\mathbb{#1}}}
\newcommand{\ms}[1]{{\mathscr{#1}}}
\newcommand{\mf}[1]{{\mathfrak{#1}}}
\newcommand{\mrm}[1]{{\mathrm{#1}}}

\newcommand{\bs}[1]{\boldsymbol{#1}}

\renewcommand{\d}{\partial}
\newcommand{\Gr}{\mathrm{Gr}}
\newcommand{\gr}{\mathrm{gr}}

\newcommand{\MHM}{\mathrm{MHM}}
\let\mhm\MHM
\newcommand{\coh}{\mathrm{Coh}}

\DeclareMathOperator{\spec}{Spec}
\DeclareMathOperator{\codim}{codim}

\DeclareMathOperator{\pro}{Pro}
\DeclareMathOperator*{\colim}{colim}

\newcommand{\Mod}{\mathrm{Mod}}

\DeclareMathOperator{\Ker}{Ker}
\DeclareMathOperator{\coker}{coker}

\DeclareMathOperator{\supp}{supp}
\newcommand{\shom}{\mathcal{H}\hspace{-1pt}\mathit{om}}

\newcommand{\id}{\mathrm{id}}

\newcommand{\ls}[2]{\vphantom{#2}^{#1}#2}

\DeclareMathOperator{\Res}{Res}
\renewcommand{\Re}{\mathop{\mrm{Re}}}

\begin{document}

\title[Multivariate $V$-filtrations and the monodromy conjecture]{Multivariate $V$-filtrations and the Strong Monodromy Conjecture for hyperplane arrangements}

\author{Dougal Davis}
\author{Ruijie Yang}

\begin{abstract}
In this work, we develop a new theory of multivariate $V$-filtration on $\sD$-modules along a simple normal crossing divisor and relate it with Sabbah's multi-filtration. We establish several new structural results and relate them with the Hodge filtration on free-monodromic local systems from geometric representation theory. As an illustrative application, we give a conceptual and very quick proof of the Strong Monodromy Conjecture and its multivariate generalisation for hyperplane arrangements. Along the way, we confirm both the $n/d$-conjecture of Budur--Musta\c{t}\u{a}--Teitler and its multivariate form due to Budur. 
\end{abstract}
\maketitle
\tableofcontents

\section{Introduction}
In this paper, we develop a new approach to the multiply-indexed $V$-filtration of a holonomic $\ms{D}$-module relative to a divisor with simple normal crossings. Our approach is inspired by (and closely related to) the wall-crossing theory for Hodge filtrations developed by Vilonen and the first-named author \cite{DV23} in the study of real group representations. We show that our notion is equivalent to the existing theory of Sabbah \cite{Sabbah1987} and prove several new structural results that were previously inaccessible. 

As an application of the general theory, we give a proof of the Strong Monodromy Conjecture (and its multivariable generalisation) in the case of hyperplane arrangements. Unlike most previous work on this conjecture, which has attracted significant attention in singularity theory, our proof is conceptual and relies very little on explicit computation or combinatorics. As such, we hope that our methods might provide the starting point for progress on the conjecture in the general case.

\subsection{The Strong Monodromy Conjecture for hyperplane arrangements}
To provide some motivation for the more abstract parts of our work, let us begin at the end with the application. The Strong Monodromy Conjecture of Igusa \cite{Igusa} and Denef--Loeser \cite{DL92} is one of the deepest and most mysterious conjectures in singularity theory. The conjecture relates the poles of $p$-adic zeta functions of a hypersurface singularity to roots of the Bernstein-Sato polynomial and monodromy eigenvalues of the Milnor fibre, predicting a striking relationship between the arithmetic and geometry of such singularities. More precisely, if $f \in \mb{Q}[x_1, \ldots, x_n]$ is a polynomial and $p$ is a prime, then the \emph{$p$-adic zeta function} of $f$ is defined by
\begin{equation} \label{eq:p-adic zeta}
Z_{f, p}(s) \colonequals \int_{\mb{Z}_p^n}|f|_p^s d\mu,
\end{equation}
where $|\cdot|_p$ is the $p$-adic absolute value and $d\mu$ is the Haar measure on $\mb{Q}_p^n$. The integral \eqref{eq:p-adic zeta} converges when $\Re s >0$ and continues to a meromorphic function of $s \in \mb{C}$, explicitly computable in terms of an embedded resolution of the (possibly non-reduced) hypersurface $D = \{f = 0\}$ \cite{Denef87}. Taking a suitable limit as $p \to 1$, Denef and Loeser \cite{DL92} also defined the \emph{topological zeta function} $Z_{f, \mrm{top}}^{\mrm{global}}(s)$: this is a rational function in $s$, calculated in terms of an embedded resolution, which now makes sense also when $f$ has complex coefficients (see \eqref{eqn: global top zeta} below). On the other side, we have the \emph{Bernstein-Sato polynomial}: the unique monic polynomial $b_f(s)$, of minimal degree, such that
\[ b_f(s)f^s = P(s)f^{s + 1} \quad \text{for some $P \in \ms{D}_{\mb{C}^n}[s]$}.\]
Here $\ms{D}_{\mb{C}^n}$ denotes the ring of holomorphic differential operators on $\mb{C}^n$.

\begin{conj}[Strong Monodromy Conjecture] \label{conj:intro SMC}
We have 
\begin{enumerate}
\item ($p$-adic version): If $f\in \Q[x_1,\ldots,x_n]$, then, for all but finitely many $p$, if $s = s_0$ is a pole of $Z_{f, p}(s)$, then $s =\Re s_0$ is a root of $b_f(s)$.
\item (Topological version): If $f\in \C[x_1,\ldots,x_n]$ and $s = s_0$ is a pole of $Z^{\mrm{global}}_{f, \mrm{top}}(s)$, then $s = s_0$ is a root of $b_{f}(s)$.
\end{enumerate}
\end{conj}
There is also a ``weak'' form of the conjecture, which states (e.g.\ in the topological setting) that if $s_0$ is a pole of $Z^{\mrm{global}}_{f, \mrm{top}}(s)$ then $\exp(2\pi i s_0)$ is an eigenvalue of the monodromy operator on the nearby cycles complex of $f$. The strong form implies the weak form, since \cite{Malgrange,Kas83} the set of such monodromy eigenvalues is $\{ \exp(2\pi i s_0) \mid b_f(s_0) = 0\}$. These conjectures remain widely open and have attracted persistent attention from many mathematicians over several decades, a history we do not intend to survey in any detail; see \cite{Veys,Meuser} for comprehensive overviews.

Although the Weak Monodromy Conjecture is known in several settings with non-isolated singularities (see, for example \cite{BMT,LV,BV,ELT,LPS,Quek}), substantially fewer results are known for the strong version (see \cite{VeysBath} and references therein). Our main application is a proof of the Strong Monodromy Conjecture in a special case (see \S \ref{sec: single SMC}).

\begin{thm} \label{thm:intro SMC}
The Strong Monodromy Conjecture (Conjecture \ref{conj:intro SMC}) holds when $f$ defines a hyperplane arrangement, i.e.\ when $f$ is a product of linear factors with arbitrary multiplicities.
\end{thm}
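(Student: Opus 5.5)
Our plan is to work on the canonical log resolution of the arrangement, namely the wonderful model $\pi\colon Y \to \C^n$ obtained by blowing up the dense edges (flats) $W$ of the arrangement in order of increasing dimension. The exceptional and strict-transform divisors $E_W$ are indexed by dense edges, and
\[ \pi^*f = \textstyle\prod_W t_W^{N_W}, \qquad K_{Y/\C^n} = \sum_W (k_W) E_W, \]
with $N_W = \sum_{H\supseteq W} m_H$ (multiplicities of the hyperplanes containing $W$) and $k_W+1 = \codim W$. Denef's formula (and its topological limit) then shows that the candidate poles of $Z_{f,p}$ and $Z^{\mrm{global}}_{f,\mrm{top}}$ lie in $\{-\codim W / N_W\}$ (up to $2\pi i/\log p$ shifts in the $p$-adic case, which are killed by taking real parts). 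So the statement reduces to: \emph{if $-\codim W/N_W$ is actually a pole, then it is a root of $b_f$.}

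The key step is to produce roots of $b_f$ from the multivariate $V$-filtration theory developed in the paper. We will pass to the multivariate setting: write $f = \prod_i \ell_i^{m_i}$ and consider $F=(\ell_1,\dots,\ell_r)$ with the Bernstein--Sato ideal $B_F$ and the multivariate $V$-filtration along the SNC divisor $\pi^{-1}(D)$ on $Y$. The jumps of this filtration are governed by the Hodge/free-monodromic structure relating it to Sabbah's multi-filtration, and pushing forward along $\pi$ (which is an isomorphism off the exceptional locus) will show that for each dense edge $W$ the hyperplane $\sum_{H\supseteq W} s_H + \codim W = 0$ is contained in the zero locus of $B_F$, i.e.\ a component of $Z(B_F)$. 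This is the multivariate analogue of the $n/d$-conjecture. Specialising $s_H = m_H s$ then shows that $-\codim W/N_W$ is a root of $b_f$, since $b_f(s)$ divides the specialisation of any element of $B_F$ (up to the standard containment $b_f(\sum m_i s_i)\in B_F$ and the converse statement for the relevant facet, which we need in the direction ``facet of $Z(B_F)$ specialises to root of $b_f$''; we would obtain this directly by running the $V$-filtration argument for the single function $f$ on $Y$ restricted near a generic point of $E_W$).

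The main obstacle is the step showing that the contribution of $E_W$ is genuinely non-vanishing in the $V$-filtration after pushforward: the pushforward could a priori cancel the jump at $\codim W/N_W$. We would handle this by localising at a generic point of the edge $W$ (reducing to a central essential arrangement in $\codim W$ variables with $W$ the origin, which is dense), where $E_W$ is the unique exceptional divisor over the origin and the relevant graded piece of the $V$-filtration is computed by a nonvanishing cohomology group of the complement of the projectivised arrangement (nonzero top Euler characteristic for dense, i.e.\ indecomposable, arrangements). Any edge $W$ whose candidate pole is actual must be dense in this sense, as non-dense edges do not appear in the resolution; this completes the reduction, and both versions of Conjecture~\ref{conj:intro SMC} follow.
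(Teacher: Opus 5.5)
Your first step is fine and matches the paper: reading off the candidate poles $-\codim W/N_W$ from the dense-edge resolution, then localising at a generic point of $W$, reduces the statement to $b_f(-n/d)=0$ for a central, essential, indecomposable arrangement. This is the Budur--Musta\c{t}\u{a}--Teitler reduction, and the paper uses it too. The gap is in everything after that.

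The claim that pushing forward along $\pi$ places $\sum_{H\supseteq W}s_H+\codim W=0$ inside $Z(B_F)$ has no mechanism behind it. It is the multivariate $n/d$ conjecture (Theorem \ref{thm: multi n/d}) itself. Moreover, jumps of the multivariate $V$-filtration detect $Z(B_F)$ only modulo $\Z^r$ (Proposition \ref{prop: Bernsteinideal and Vfiltration}). The specialisation step also runs the wrong way. For $b\in B_F$, what divides $b(m_1s,\dots,m_rs)$ is the monic generator of $\{c(s) : c(s)f^s\in\sD_X[s]f^s\ell_1\cdots\ell_r\}$. This is a divisor of $b_f$, and not $b_f$ itself once some $m_H>1$: take $F=(x)$, $f=x^2$, where $2s+1$ is not divisible by $b_f$. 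So you only learn that roots of this divisor give points of $Z(B_F)$ on the line $s_H=m_Hs$, not the converse you need. This failure of $B_F$ to specialise is exactly the obstruction the paper flags in its introduction.

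Your proposed repair does not close the gap. Showing the graded piece at $n/d$ is nonzero near the origin, via the nonzero Euler characteristic of the projectivised complement, is precisely the Budur--Musta\c{t}\u{a}--Teitler proof of the \emph{weak} conjecture. It gives $\gr_V^{n/d}\iota_{f,+}\cO_X\neq 0$, i.e.\ that $e^{-2\pi i n/d}$ is a monodromy eigenvalue. That only forces $b_f(-n/d-k)=0$ for some $k\in\Z_{\geq 0}$. A jump of the full $V$-filtration in $(0,1)$ need not be a root: for $f=x^5+y^5$ one has $\gr_V^{1/5}\iota_{f,+}\cO_X\neq 0$, but $b_f(-1/5)\neq 0$ since $\mrm{lct}(f)=2/5$.

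What you need is an element of $\sD_X[s]f^s$ lying in $V^{n/d}\setminus V^{>n/d}$; in the paper this is $q(s)f^s$. Combined with $f^{s+1}\in V^{>n/d}$, which holds because $n/d<1$, this forces $b_f(-n/d)=0$. Producing that element is the content of Lemma \ref{lem: n/d reduction}, which works as follows.
\begin{enumerate}
\item Take $\bs\beta$ in the log canonical polytope with $\sum_i\beta_i=n$ and $\sum_{D_i\supseteq W}\beta_i\notin\Z$ for every other dense edge (Lemma \ref{lemma: existence of good vector}). Then $\bs\beta$ and $\bs\beta+\epsilon\bs 1$ are separated only by the wall $\sum_i\gamma_i=n$.
\item The multivariate Budur--Saito theorem (Corollary \ref{cor:log canonical}) puts $\bs f^{\bs s}$ in the lowest Hodge piece $F_rV^{\bs\beta}$.
\item Hodge-filtered flatness (Theorem \ref{thm:filtered change of functions}) shows its class survives the specialisation $s_i\mapsto\alpha_is$.
\item Localisation at $\mf m$, together with flatness relative to walls (Corollary \ref{cor: subtle injectivity}), moves this from $\bs\beta$ to $\bs\alpha=\tfrac nd(d_1,\dots,d_r)$ along the hyperplane $\sum_i\gamma_i=n$.
\item Corollary \ref{cor: change of functions} identifies the specialised multivariate $V$-filtration with the $V$-filtration of $f$. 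This flat specialisation is what replaces the non-specialising $B_F$.
\end{enumerate}
Your proposal has no substitute for this non-cancellation mechanism, so as it stands it recovers at best the weak monodromy conjecture.
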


Although significantly more approachable than the general case, the special case of hyperplane arrangements has received considerable interest in its own right and has proved quite difficult to establish (see \cite{Walther17,Saitohyperplanearrangement} for earlier results). One particular challenge is that the zeta functions depend only on the intersection lattice of the hyperplane arrangement \cite{BSY11}, while the Bernstein-Sato polynomial does not \cite{Walther17}. It is perhaps of interest that Theorem \ref{thm:intro SMC} therefore provides a combinatorially invariant subset of the roots of $b_f(s)$ (see also \cite{Bath20}). The Weak Monodromy Conjecture for hyperplane arrangements was proved by Budur--Musta\c{t}\u{a}--Teitler \cite{BMT}, who reduced Theorem \ref{thm:intro SMC} to the so-called $n/d$ Conjecture. We prove this below (Theorem \ref{thm: n/d text}).
\begin{thm}[$n/d$ Conjecture]\label{thm: n/d intro}
    If $f\in \mb{C}[x_1, \ldots, x_n]$ defines a degree $d$, central essential indecomposable hyperplane arrangement in $\mb{C}^n$, then $b_f(-n/d) = 0$. 
\end{thm}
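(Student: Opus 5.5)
We plan to prove that $-n/d$ is a root of $b_f(s)$ by showing that the $V$-filtration of $\cO_{\C^n}$ along $f$ jumps at $n/d$. By Kashiwara--Malgrange, the roots of $b_f(-s)$ are exactly the jumping numbers $\alpha$ of the $V$-filtration on $\iota_{f,+}\cO$ for which $\Gr_V^\alpha$ of the graph embedding is nonzero. So it suffices to show that $\Gr_V^{n/d}\iota_{f,+}\cO_{\C^n}\neq 0$ near the origin.

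The first step is to pass to the canonical log resolution $\pi\colon Y\to\C^n$ obtained by blowing up the dense edges of the arrangement. Blowing up the origin first gives an exceptional divisor $E_0\cong\mathbb{P}^{n-1}$ with $\operatorname{ord}_{E_0}(f)=d$ and $\operatorname{ord}_{E_0}(\pi^*dx)=n-1$. Along $E_0$, the pulled-back function $f\circ\pi$ becomes a simple normal crossing function $g=\prod y_i^{a_i}$. This is where the multivariate theory enters. The multi-indexed $V$-filtration of $\cO_Y$ (or of $\pi^*$ of a twisted module $\cO_Y\cdot\prod y_i^{s_i}$) along the components of the snc divisor is explicit, given by monomial conditions. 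Pushing forward along $\pi$ and restricting along the diagonal $s_i=a_i s$ should recover the univariate $V$-filtration along $f$. The paper's main structural results presumably give exactly this compatibility: the behaviour of Sabbah's multi-filtration under proper pushforward and under specialisation to the diagonal. We would invoke these to express $\Gr_V^{n/d}$ in terms of a graded piece of the pushforward from $Y$.

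The second step is to localise at a generic point of $E_0$. On a neighbourhood of a generic point of $E_0$, the relevant contribution is a twisted de Rham complex on $E_0$ minus the strict transforms of the hyperplanes. Concretely, it is the cohomology of $\mathbb{P}^{n-1}\setminus\bigcup H_i$ with coefficients in the rank one local system of monodromy $\exp(-2\pi i\cdot n/d\cdot a_i)$ around $H_i$ (here $a_i=1$ for each hyperplane) and $\exp(2\pi i\, n)=1$ around infinity. The candidate is $\alpha=(\operatorname{ord}(K)+1)/\operatorname{ord}(f)=n/d$, which is the log canonical threshold-type candidate for $E_0$. The point is to show that the class of $\pi^*(dx)\cdot f^{-n/d}$ contributes nontrivially to $\Gr_V^{n/d}$. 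Via the multivariate theory, this reduces to the non-vanishing of the top (Hodge-graded piece of) cohomology of a free-monodromic/rank-one local system on the complement $U=\mathbb{P}^{n-1}\setminus\mathcal{A}$.

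The final step, which we expect to be the main obstacle, is proving that non-vanishing and showing that no cancellation occurs from other strata. On the first point, for a central essential indecomposable arrangement, the Euler characteristic of $U$ equals $(-1)^{n-1}\beta(\mathcal{A})$, where $\beta$ is Crapo's beta invariant, and $\beta\neq0$ exactly by indecomposability. Moreover, for generic enough monodromies (in the Hodge/free-monodromic sense), cohomology is concentrated in degree $n-1$, giving nonzero $H^{n-1}$. On the second point, we would argue using the structural result relating the multivariate $V$-filtration to Hodge filtrations on free-monodromic local systems. Working with the free-monodromic (multivariable $s_i$) version first makes the cohomology concentrate in top degree for all parameters. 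Specialising afterwards, via semicontinuity along the diagonal, should then force the jump at $n/d$, avoiding any explicit resonance computation.
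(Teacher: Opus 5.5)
There is a genuine gap in your first step, and it is exactly where the difficulty of the $n/d$ conjecture lies. Kashiwara--Malgrange relate roots of $b_f(-s)$ and jumping numbers of $V^\bullet\iota_{f,+}\cO$ only modulo $\Z$. Every jumping number is an integer translate of a root, and the two sets have the same image under $\alpha\mapsto e^{2\pi i\alpha}$, but a jumping number in $(0,1)$ need not itself be a root.

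For example, $f=x^3+y^3$ is a central essential indecomposable arrangement of three lines in $\C^2$. Both primitive cube roots of unity are monodromy eigenvalues, so $\Gr_V^{1/3}\iota_{f,+}\cO\neq 0$. Yet $b_f(s)=(s+1)^2(s+2/3)(s+4/3)$ does not vanish at $-1/3$.

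So even if your second and third steps went through, they would only show that $e^{-2\pi i n/d}$ is a monodromy eigenvalue. The Euler characteristic / Crapo $\beta$-invariant computation on $\mathbb{P}^{n-1}\setminus\mathcal{A}$ is essentially the Budur--Musta\c{t}\u{a}--Teitler proof of the \emph{weak} conjecture, and it cannot distinguish $n/d$ from $n/d+k$.

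What you actually need is a nonzero class in $\Gr_V^{n/d}\iota_{f,+}\cO$ represented by an element of $\C[s]f^s$ (or $\sD_X[s]f^s$). Suppose $q(s)f^s\in V^{n/d}\setminus V^{>n/d}$. Since $f^s\in V^{>0}$ and $n/d<1$, we have $f^{s+1}\in V^{>n/d}$. The functional equation then gives $b_f(s)q(s)f^s=q(s)Pf^{s+1}\in V^{>n/d}$, which forces $b_f(-n/d)=0$.

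Your candidate for such an element is the class of $\pi^*(dx)f^{-n/d}$, i.e.\ $f^s$ itself, and this fails in general. First, $f^s\in V^{n/d}$ only if $n/d\le\mathrm{lct}(f)$. This fails for $xy(x-y)z^2(x-z)^4$, where $\mathrm{lct}(f)=1/4<1/3=n/d$. Second, $n/d$ need not be a jumping number of the multiplier ideals of $f$, so no lowest-Hodge-piece argument for $f$ alone can succeed.

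The missing idea in the paper is to vary the multiplicities.
\begin{enumerate}
\item Using indecomposability through Lemma \ref{lemma: basis JW}, it finds $\bs{\beta}$ in the log canonical polytope with $\sum_i\beta_i=n$ and $\sum_{D_i\supseteq W}\beta_i\notin\Z$ for every dense edge $W\neq\{0\}$ (Lemma \ref{lemma: existence of good vector}).
\item At $\bs{\beta}$, the multivariate Budur--Saito formula (Corollary \ref{cor:log canonical}) gives $\bs{f^s}\in F_rV^{\bs{\beta}}\iota_+\cO_X\setminus V^{\bs{\beta}+\epsilon\bs{1}}\iota_+\cO_X$. Filtered flatness keeps this class nonzero after specialising along $s_i\mapsto\alpha_is$ (Lemma \ref{lemma: beta variant}).
\item Localising at $\mf{m}=(s_1+\alpha_1,\ldots,s_r+\alpha_r)$ (Lemma \ref{lem:localised chambers}) and using flatness relative to walls (Corollary \ref{cor: subtle injectivity}), this nonvanishing is transported along the wall to $\bs{\alpha}=\tfrac{n}{d}(d_1,\ldots,d_r)$ (Lemma \ref{lem: n/d reduction}).
\item Corollary \ref{cor: change of functions} turns the resulting $p\in I^{\bs{\alpha}}$ into $q(s)=p(d_1s,\ldots,d_rs)$ with $q(s)f^s\in V^{n/d}\setminus V^{>n/d}$.
\end{enumerate}
Your ``semicontinuity along the diagonal'' would need precisely this flatness input. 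It must be applied to a class in the $\C[\bs{s}]$-span of $\bs{f^s}$, not to twisted cohomology of $\mathbb{P}^{n-1}\setminus\mathcal{A}$.
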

In particular, this implies a slight strengthening of Theorem \ref{thm:intro SMC}: for a certain canonical resolution of singularities, every \emph{potential} pole for the local zeta function is in fact a root of $b_f(s)$ (these poles can cancel across different terms, so need not actually be poles of the zeta function itself, see \cite[Appendix]{BSY11} or Remark \ref{remark: archimedean zeta function does not work}). This conjecture was first established by Walther \cite{Walther17} for tame arrangements (and thus $n=3$), see also \cite{Walther05,BSY11,BW17,Bath20,Shi-Zuo24,XY25} for some previous work. During the final stage of the preparation of our manuscript, we were informed that Wu \cite{Wu26} claimed an alternative proof of the $n/d$-conjecture, which was partially AI-assisted.

Let us remark briefly on some further challenges that have previously arisen in attempts to prove the $n/d$ Conjecture, beyond those already present for the Strong Monodromy Conjecture. Beyond explicit calculation, there are (broadly speaking) two main methods to access roots of the Bernstein-Sato polynomial: via jumping numbers of multiplier ideals \cite{ELSV} and poles of Archimedean zeta functions lying in $(-1,0)$ \cite{bernstein}. The root $-n/d$ in Theorem \ref{thm: n/d intro}, however, need not be either a jumping number (see \cite[Remark 3.4]{BMT}) nor a pole of the Archimedean zeta functions (see Remark \ref{remark: archimedean zeta function does not work}).

Our proof of Theorem \ref{thm:intro SMC} is inspired by the following general strategy suggested in \cite{Budur15, Budur24}. Choose a factorisation $f = f_1 \cdots f_r$ and consider the statement for the family of functions $f_1^{a_1} \cdots f_r^{a_r}$ as we vary the multiplicities $a_i \in \mb{Z}_{>0}$. This leads naturally to the definition of the \emph{Bernstein-Sato ideal} of $F = (f_1, \ldots, f_r)$:
\[ B_F = \{ b \in \mb{C}[s_1, \ldots, s_r] \mid b(s_1, \ldots, s_r) f_1^{s_1} \cdots f_r^{s_r} \in \ms{D}_{\mb{C}^n}[s_1, \ldots, s_r] \cdot f_1^{s_1 + 1} \cdots f_r^{s_r + 1}\}.\]
Versions of the (Strong) Monodromy Conjecture and $n/d$ Conjecture have been proposed for Bernstein-Sato ideals and multivariable zeta functions \cite[Conjectures 1.13 and 1.17]{Budur15}. 
One might hope to deduce from this that the single-variable conjecture holds also for $f$ by specialising $s_i = s$. The obstruction to carrying out this argument is the unfortunate fact that the Bernstein-Sato ideal does not necessarily behave well under specialisation: it is not known (or necessarily expected) that setting $s_i = s$ in $B_F$ recovers the ideal $(b_f(s))$ (see \cite[Page 6]{Budur24}). A similar issue arises for zeta functions, see e.g.\ \cite[Definition 3.21]{Budur15}. 
Instead, our proof carries out this strategy using a multivariate version of the \emph{$V$-filtration}; this turns out to have better specialisation properties than the Bernstein-Sato ideal and still carries enough information about Bernstein-Sato polynomials to prove the theorem. (We give a few more details on the proof in \S\ref{subsec:intro SMC again} below.) Our method is also robust enough to prove the multivariate conjectures for hyperplane arrangements in full generality (see \S \ref{subsec:multi SMC}).

\begin{thm}\label{thm: multivariate n/d intro}
The multivariate Strong Monodromy Conjecture and $n/d$ Conjecture hold for arbitrary factorisations of hyperplane arrangements.
\end{thm}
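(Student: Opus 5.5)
The plan is to reduce both multivariate statements to one geometric input: the multivariate $V$-filtration of $\cO$ along the simple normal crossing divisor obtained from the wonderful (De Concini--Procesi) model of the arrangement. Then we push it forward to $\mb{C}^n$.

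Fix a factorisation $F=(f_1,\ldots,f_r)$ of the arrangement, with each $f_j$ a product of linear forms. Let $\pi\colon Y\to\mb{C}^n$ be the canonical log resolution obtained by blowing up the dense edges $W$ of the intersection lattice. Its exceptional divisors $E_W$ give a simple normal crossing divisor, and the pullback $\pi^*f_j$ vanishes along $E_W$ to order $a_{W,j}$, the number of hyperplanes of $f_j$ containing $W$. The relative canonical divisor has coefficient $\codim W-1$ along $E_W$. Consequently the multivariate zeta function has candidate polar hyperplanes
\[ L_W=\Bigl\{\textstyle\sum_j a_{W,j}s_j+\codim W=0\Bigr\}, \]
one for each dense edge $W$. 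The multivariate Strong Monodromy Conjecture then reduces to showing that every such $L_W$ is contained in the zero locus of $B_F$, which is the multivariate $n/d$ statement. Localising at a general point of $W$ and passing to the quotient by $W$, we may assume the arrangement is central, essential and indecomposable, with $W=0$, $\codim W=n$, $a_j=\deg f_j$. The single-variable case $r=1$, with specialisation $s_j=s$, then gives Theorem~\ref{thm: n/d intro} and hence Theorem~\ref{thm:intro SMC}. For this last reduction I would rely on the good specialisation behaviour of the multivariate $V$-filtration, rather than of $B_F$, as promised in the introduction.

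The core step is to exhibit the root. Consider the graph embedding of $F$ and the $\ms{D}$-module $\iota_+\cO$ with its multivariate $V$-filtration along $t_1\cdots t_r=0$. Established earlier in the paper, the zero locus of $B_F$ is governed by the jumps of this filtration: the codimension-one components are the hyperplanes along which $\gr_V$ is nonzero. I would compute $\gr_V$ by pulling back to the resolution. On $Y$, near a generic point of the exceptional divisor $E_0$ over the origin, the function $\pi^*F$ is a monomial times a unit, so the $V$-filtration there is explicit. It is given by the free-monodromic local system picture, and it has a jump along the hyperplane $\sum_j a_j s_j=-n$ coming from the section $\pi^*(dx_1\wedge\cdots\wedge dx_n)$, which vanishes to order $n-1$ along $E_0$. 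Then I would push forward along $\pi$, using the compatibility of multivariate $V$-filtrations with proper direct images, which should follow from the comparison with Sabbah's filtration and strictness.

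The main obstacle is showing that this jump survives the pushforward, i.e.\ that it is not cancelled in $\pi_+$. Such cancellation is exactly why $-n/d$ is invisible to multiplier ideals and to Archimedean zeta functions. My first attempt would use $\mb{C}^*$-equivariance: the arrangement is homogeneous, so $E_0\cong\mb{P}^{n-1}$ blown up, and the relevant graded piece is a cohomology group of a rank-one free-monodromic local system on the projectivised complement $U$. Indecomposability makes the Euler characteristic of $U$ nonzero ($\beta$-invariant). The monodromy of the local system around the hyperplanes is generic along $L_0$. Therefore its cohomology is concentrated in middle degree and is nonzero, and it cannot be killed by the contributions of the other strata $E_W$. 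That nonvanishing places $L_0$ in the zero locus of $B_F$, and the theorem follows.
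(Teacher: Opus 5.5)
\textbf{There is a genuine gap: your argument only reaches the weak (monodromy) form of the statement.} Your reduction to the central, essential, indecomposable case and your list of candidate polar hyperplanes $L_W$ are fine; this is Budur's reduction. The problem is the bridge from ``$\gr_V$ is nonzero along $L_0$'' to ``$L_0\subset\mathrm{Zero}(B_F)$''.

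The paper does not prove that the codimension-one components of $\mathrm{Zero}(B_F)$ are the jumping hyperplanes of $V^\bullet$. Its proposition identifies them only after applying $\mathrm{Exp}$, i.e.\ modulo $\Z^r$, and the stronger claim is false for two reasons.
\begin{itemize}
\item Since $t_i\colon V^{\bs\alpha}\to V^{\bs\alpha+\bs 1_i}$ is an isomorphism for $\alpha_i>0$, every jump propagates to infinitely many integer translates. The algebraic set $\mathrm{Zero}(B_F)$ contains only finitely many hyperplanes.
\item Already for $r=1$ and $f=x^2+y^3$, the monodromy has eigenvalues $e^{\pm 2\pi i/6}$, so $\gr_V^{1/6}\iota_+\cO_X\neq 0$. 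Yet $b_f(s)=(s+1)(s+5/6)(s+7/6)$ does not vanish at $-1/6$.
\end{itemize}
So nonvanishing of $\gr_V$ along $L_0$ only gives $\mathrm{Exp}(L_0)\subset\mathrm{Exp}(\mathrm{Zero}(B_F))$, which is the weak statement. Your $\C^*$-equivariant $\beta$-invariant computation on the projectivised complement is essentially the Budur--Musta\c{t}\u{a}--Teitler proof of the weak monodromy conjecture for arrangements. It cannot tell the root $-n/d$ apart from $-n/d-1$, and this is exactly why $-n/d$ is invisible to multiplier ideals and Archimedean zeta functions.

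\textbf{What is missing.} You need a witness of the jump that lies in $\C[\bs{s}]\bs{f^s}$ and survives specialisation. Concretely, you need some $p(\bs s)$ with $p(\bs s)\bs{f^s}\in V^{\bs\alpha}\setminus V^{\bs\alpha+\epsilon\bs 1}$ whose class stays nonzero after $\otimes_{\C[\bs s]}\C[\bs{s}']$ and localisation at $\mf{m}'$, at a point with $0<\alpha'_j<1$. Only then does the functional equation bite. For $b\in B_F$ and $m\gg 0$, the operator $b(\bs{s}')\prod_{\ell=1}^{m-1}b_{\mathrm{Gyoja}}(\bs{s}'+\ell\bs 1)$ pushes $q(\bs{s}')\bs{h^{s'}}$ into $(V^{\bs\alpha'+\epsilon\bs 1})_{\mf{m}'}$. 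The range $0<\alpha'_j<1$ makes the Gyoja factors units at $\mf{m}'$, which forces $b(-\bs\alpha')=0$.

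\textbf{How the paper produces the witness.} It works Hodge-theoretically in three steps.
\begin{itemize}
\item It chooses an auxiliary $\bs\beta$ adapted to $\{0\}$: on the face $\sum\beta_i=n$ of the log canonical polytope and off all other walls. There $\bs{f^s}\in F_rV^{\bs\beta}$ by the multivariate Budur--Saito theorem. By Hodge-filtered flatness, its class stays nonzero in the specialised graded piece.
\item It transports this witness along the wall $\{\sum\gamma_i=n\}$ from $\bs\beta$ to $\bs\alpha$. This uses localisation at $\mf m$ (the localised-chambers lemma) together with flatness relative to walls.
\item It passes to the factorisation $(h_1,\ldots,h_k)$ via the change-of-functions isomorphism.
\end{itemize}
Your pushforward computation sees all of $\gr_V$ at once. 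It gives no control over where the class of $\bs{f^s}$, or of any $p(\bs s)\bs{f^s}$, lands. So this is not a technical omission but the heart of the problem.
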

See \cite{maisonobe2016b,BathTrans,AAB,Wu22,Bath23,Sv23,BSZ25} for some previous work. \subsection{The multivariate $V$-filtration}

With this worthy motivation in mind, let us turn now to the technicalities of the $V$-filtration, which is really the main focus for most of this paper. The classical $V$-filtration along a smooth hypersurface \cite{Kas83, Malgrange} is a fundamental tool in $\ms{D}$-module theory, used among other things to construct nearby and vanishing cycles in this setting. It has proved very useful in the study of Bernstein-Sato roots, especially when combined with Saito's theory of mixed Hodge modules \cite{Saito88, Saito90}; see, e.g.\ \cite{BS05, Saito16, MPVfiltration, SY23}.

To fix ideas, let $X$ be a complex manifold, $D \subset X$ a smooth divisor and $\mc{M}$ a coherent (left) $\ms{D}_X$-module. The $V$-filtration of $\ms{D}_X$ along $D$ is the decreasing filtration given by
\begin{equation}\label{eqn: V filtration on DX} V^n\ms{D}_X = \{ P \in \ms{D}_X \mid P t^k \in \mc{O}_Xt^{k + n}\; \text{for all $k \geq 0$}\}\end{equation}
for $n \in \mb{Z}$, where for simplicity we fix a local coordinate $t$ on $X$ such that $D = \{t = 0\}$. A \emph{$V$-filtration} of $\mc{M}$ along $D$ (see Definition \ref{defn:V-filtration} for more details) is a decreasing $\mb{R}$-indexed filtration $V^\bullet \mc{M}$ such that
\begin{enumerate}
\item \label{itm:intro V-filtration good} $V^\bullet \mc{M}$ is a good (i.e.\ finitely generated) filtration with respect to $V^\bullet\ms{D}_X$, and
\item \label{itm:intro V-filtration eigenvalue} $\gr_V^\alpha\mc{M}:= V^\alpha\mc{M}/V^{>\alpha}\mc{M}$ is annihilated by a product of operators of the form $-\partial_t t + \gamma$ with $\Re \gamma = \alpha$.
\end{enumerate}
The first condition implies, in particular, that $V^\alpha \mc{M}$ is constant on a partition of $\mb{R}$ into intervals of the form $(\alpha_i, \alpha_{i + 1}]$, while the second implies that the jumping points $\alpha_i$ are the real parts of the eigenvalues of $\partial_t t$ on $\mc{M}$. (One can also consider a $\mb{C}$-indexed version, in which the jumps are the eigenvalues themselves, but we found the $\mb{R}$-indexed version here simpler to generalise.) The $V$-filtration is unique if it exists, which it always does if $\mc{M}$ is holonomic. Very roughly, one can think of $V^\bullet\mc{M}$ as measuring either the monodromy behaviour or the order of vanishing/pole along $D$ of the sections of $\mc{M}$; see \cite{CDM} for a nice introduction to this topic.

Let us now replace the smooth divisor in this setup with a simple normal crossings divisor $D$ with irreducible components $D_1, \ldots, D_r$, with $D_i = \{t_i = 0\}$ for some local coordinates $t_i$. In this setting, an analogue of the $V$-filtration should simultaneously measure the monodromy behaviour of $\mc{M}$ around each of the divisors $D_i$. A naive approach would be to consider the intersection
\begin{equation} \label{eq:intro naive multivariate V}
V_{D_1}^{\alpha_1} \mc{M} \cap \cdots \cap V_{D_r}^{\alpha_r}\mc{M} \quad \text{for}\; (\alpha_1, \ldots, \alpha_r) \in \mb{R}^r,
\end{equation}
where $V_{D_i}^{\bullet}\mc{M}$ is the usual $V$-filtration along the smooth divisor $D_i$. This gives an $\mb{R}^r$-indexed filtration that does satisfy an analogue of the eigenvalue condition \eqref{itm:intro V-filtration eigenvalue}, but is usually not finitely generated in any reasonable sense (see Example \ref{example: diagonal embedding}) unless the divisor $D$ is particularly well-adapted to the structure of $\mc{M}$ (see, e.g.\ \S\ref{subsec:normal crossings}).

In \cite{Sabbah1987}, Sabbah gave a more refined definition of such a filtration: in addition to the usual $V$-filtrations along the components, which filter by eigenvalues of $\partial_{t_i}t_i$, he defined an analogous filtration $\ls{\bs{L}}{V}^\bullet\mc{M}$ for every non-zero slope $\bs{L} = (L_1, \ldots, L_r) \in \mb{Q}_{\geq 0}^r$, which filters by eigenvalues of $L_1 \partial_{t_1}t_1 + \cdots + L_r \partial_{t_r}t_r$. For $R$ a finite union of cosets of $\mb{Z}^r \subset \mb{R}^r$, he then defined the \emph{canonical multi-filtration} to be the unique good $R$-indexed filtration (if it exists) satisfying
\[ V_R^{\bs{\alpha}}\mc{M} = \bigcap_{\bs{L} \in \mb{Q}_{\geq 0}^r} \ls{\bs{L}}{V}^{\bs{L}(\bs{\alpha})}\mc{M} \quad \text{and} \quad \ls{\bs{L}}{V}^{\gamma}\mc{M} = \sum_{\substack{\bs{\alpha} \in R \\\bs{L}(\bs{\alpha}) \leq \gamma}} V_R^{\bs{\alpha}}\mc{M},\]
where, for $\bs{\alpha} = (\alpha_1, \ldots, \alpha_r) \in \mb{R}^r$ and $\bs{L} \in \mb{Q}_{\geq 0}^r$, we write $\bs{L}(\bs{\alpha}) = L_1 \alpha_1 + \cdots + L_r \alpha_r$. One can think of the first condition as a refinement of \eqref{eq:intro naive multivariate V}, and the second as a consistency check. We will call $V_R^\bullet\mc{M}$ \emph{Sabbah's $V$-filtration}. Sabbah showed that his filtration exists at least when $\mc{M}$ is the direct image of the structure sheaf under the graph of a collection of holomorphic functions \cite[Theorem 3.4.1]{Sabbah1987}.\footnote{This is also asserted but not proved for $\mc{M}$ holonomic \cite[Footnote on page 286]{Sabbah1987}; see, however \cite[Corollary 4.19]{Sabbahbifiltration} for a partial result and Theorems \ref{thm:intro sabbah comparison} and \ref{thm:intro V existence} below for a general proof.}

Sabbah's approach gives, in some sense, a good generalisation of the $V$-filtration to divisors with many components; see, e.g.\ \cite{Sabbah1987b, SabbahAlexander} for some applications. However, since it is defined ``externally'' in terms of the filtrations $\ls{\bs{L}}{V}^\bullet\mc{M}$, instead of by intrinsic conditions, it has proved quite difficult to work with in practice. For example, the usual $V$-filtration has the property that every morphism of $\ms{D}$-modules is necessarily strict with respect to it; while this was claimed for $V_R^\bullet$ in \cite{Sabbah1987}, the claim was subsequently retracted in \cite[p767]{SabbahAlexander} and, to the best of our knowledge, was unknown prior to the present work.

With these drawbacks in mind, we give a different definition of a multivariate $V$-filtration of the $\ms{D}$-module $\mc{M}$ along a simple normal crossings divisor $D$. We explain the general idea here, and refer the reader to \S\ref{subsec:multivariate V definition} below for the precise definition.

First, we introduce the notion of a \emph{wall and chamber filtration}, generalising the property that the usual $V$-filtration is indexed by $\mb{R}$ but jumps only at a discrete set. A wall and chamber filtration is defined to be an $\mb{R}^r$-indexed filtration that jumps only along a discrete set of hyperplanes (the walls) and is constant on the components of their complement (the chambers). For a coherent $\ms{D}_X$-module $\mc{M}$, we define the notion of a \emph{good wall and chamber filtration} over the $\mb{Z}^r$-indexed filtration
\[ V^{\bs{n}}\ms{D}_X := V^{n_1}_{D_1}\ms{D}_X \cap \cdots \cap V^{n_r}_{D_r}\ms{D}_X \quad \text{for $\bs{n} = (n_1, \ldots, n_r) \in \mb{Z}^r$}\]
by imposing a natural finite generation condition.

To define a multivariate $V$-filtration, we impose an additional eigenvalue condition on the jumps across the walls. If $\bs{\alpha}, \bs{\beta} \in \mb{R}^r$ are separated by a wall of the form $L_1 x_1 + \cdots + L_r x_r = \gamma$, we require that the quotient $V^{\bs{\alpha}}\mc{M}/V^{\bs{\beta}}\mc{M}$ be annihilated by a product of operators of the form
\[ \gamma' - L_1\partial_{t_1}t_1 - \cdots - L_r \partial_{t_r}t_r\]
with $\Re \gamma' = \gamma$.  We prove that these conditions, which reduce to the definition of the usual $V$-filtration when $r = 1$, uniquely determine the filtration.
\begin{thm}[Theorem \ref{thm:multivariate V uniqueness}] \label{thm:intro V uniqueness}
Let $\mc{M}$ be a coherent $\ms{D}_X$-module and $D \subset X$ a divisor with simple normal crossings. Then any two multivariate $V$-filtrations on $\mc{M}$ along $D$ coincide.
\end{thm}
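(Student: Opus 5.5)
We follow the classical uniqueness argument for the one-variable $V$-filtration (Kashiwara--Malgrange), adapted chamber by chamber. Let $U^\bullet$ and $W^\bullet$ be two multivariate $V$-filtrations on $\mc{M}$ along $D$. Take a common refinement of their wall structures: the union of the two discrete families of walls is again a locally finite hyperplane arrangement in $\mb{R}^r$. Both filtrations are then constant on each chamber of this refinement. It therefore suffices to show $U^{\bs{\alpha}}\mc{M} \subset W^{\bs{\alpha}}\mc{M}$ for every $\bs{\alpha}$; the reverse inclusion follows by symmetry. By $\mb{Z}^r$-periodicity of the finite generation, we may also pass between $\bs{\alpha}$ and $\bs{\alpha} + \bs{n}$ using the action of $V^{\bs{n}}\ms{D}_X$, in particular of the monomials $t^{\bs{n}}$.

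\textbf{Key lemma.} Fix $\bs{\alpha}$. We show $U^{\bs{\alpha}} \subset W^{\bs{\beta}} + U^{\bs{\alpha}} \cap W^{\bs{\beta}'}$, where $\bs{\beta}'$ lies on the far side of a single wall $H = \{\bs{L}(x) = \gamma\}$ of $W$. Concretely, consider the finitely generated $V^{\bs{0}}\ms{D}_X$-module
\[ Q = (U^{\bs{\alpha}} + W^{\bs{\beta}})/W^{\bs{\beta}} \cong U^{\bs{\alpha}}/(U^{\bs{\alpha}}\cap W^{\bs{\beta}}). \]
On $Q$ the operator $\theta_{\bs{L}} = \sum L_i \partial_{t_i}t_i$ acts. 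The eigenvalue conditions give two polynomials. First, a polynomial $b_W$, with roots of real part $\gamma$, such that $b_W(\theta_{\bs{L}})$ kills the jump of $W$ across $H$. Second, a polynomial $b_U$, obtained by chaining the wall conditions for $U$ along a path from $\bs{\alpha}$ to a sufficiently deep translate $\bs{\alpha} + k\bs{L}$, such that $\prod b_U(\theta_{\bs{L}})$ maps $U^{\bs{\alpha}}$ into $U^{\bs{\alpha}+k\bs{L}}$. The roots of $b_U$ have real parts $\bs{L}(\bs{\alpha}')$ for $\bs{\alpha}'$ on the $U$-side of $H$, so they are disjoint from those of $b_W$. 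Coprimality then yields polynomials $p, q$ with $p\,b_U + q\,b_W = 1$. Applying this identity to a section shows that any section of $U^{\bs{\alpha}}$ lies in $W$ modulo terms pushed deep into $U^{\bs{\alpha}+k\bs{L}} \subset t^{\bs{m}}U^{\bs{\alpha}'}$. This uses the goodness of $U$ to write deep pieces as $V^{\bs{m}}\ms{D}_X$ times shallow pieces.

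\textbf{Iteration and termination.} Iterate the key lemma across the finitely many walls of $W$ between a base region and $\bs{\alpha}$. The remainder terms lie in $U^{\bs{\alpha}+k\bs{L}}$ for arbitrarily large $k$. To conclude we need an Artin--Rees/Krull-type statement: $\bigcap_k (U^{\bs{\alpha}+k\bs{1}} + W^{\bs{\alpha}}) = W^{\bs{\alpha}}$. This follows from coherence of $W^{\bs{\alpha}}$ over $V^{\bs{0}}\ms{D}_X$, the fact that $U^{\bs{\alpha}+k\bs{1}} = t_1\cdots t_r\, U^{\bs{\alpha}+(k-1)\bs{1}}$ for $k\gg0$ (from goodness), and Krull's intersection theorem for the ideal $(t_1\cdots t_r)$ on the Noetherian ring $V^{\bs{0}}\ms{D}_X$, locally at points of $D$.

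\textbf{Main obstacle.} We expect the hardest step to be the coprimality bookkeeping in the key lemma. A wall of $W$ of slope $\bs{L}$ need not be a wall of $U$, and the $U$-eigenvalue conditions only control operators $\theta_{\bs{L}'}$ for the slopes $\bs{L}'$ of $U$'s own walls. To get an annihilator of the form $b_U(\theta_{\bs{L}})$ we must choose the path from $\bs{\alpha}$ in the direction $\bs{L}$. The crossings are then along various walls of slopes $\bs{L}'$, and we must show that $U^{\bs{\alpha}}/U^{\bs{\alpha}+\epsilon\bs{L}}$ is nevertheless killed by a polynomial in $\theta_{\bs{L}}$ whose roots have real part strictly greater than $\gamma$. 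Our first attempt is to exploit the fact that on a quotient killed by a power of $(\theta_{\bs{L}'} - \gamma')$, the difference $\theta_{\bs{L}} - \theta_{\bs{L}'}$ still acts. We then reduce to the $\mb{Z}^r$-graded (normal crossing) model via $\gr$ along all walls, where the operators $\partial_{t_i}t_i$ act separately with controlled real parts of their eigenvalues. If that fails, we would pick $\bs{\alpha}$ generic and perturb the slope $\bs{L}$ slightly so that the segment crosses walls transversally one at a time.
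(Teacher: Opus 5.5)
There is a genuine gap, and it sits exactly at the step you flagged as the main obstacle: the operator $b_U(\theta_{\bs{L}})$ your key lemma needs does not exist in general, and so neither does the B\'ezout identity. When $U$ has walls whose slopes are not proportional to $\bs{L}$, the quotient $U^{\bs{\alpha}}\mc{M}/U^{\bs{\alpha}+k\bs{L}}\mc{M}$ need not be killed by any nonzero polynomial in $\theta_{\bs{L}}$.

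For example, take $\mc{M}=\mc{O}_{\mb{C}^2}$ and $D=\{t_1t_2=0\}$. Here $V^{\bs{\alpha}}\mc{M}=t_1^{\lceil\alpha_1\rceil-1}t_2^{\lceil\alpha_2\rceil-1}\mc{O}_{\mb{C}^2}$ for $\bs{\alpha}\in\mb{R}^2_{>0}$, so $V^{(1,1)}\mc{M}/V^{(2,2)}\mc{M}=\mc{O}/(t_1t_2)$. On this quotient, $\theta_{(1,1)}=\partial_{t_1}t_1+\partial_{t_2}t_2$ acts on the class of $t_2^m$ by $m+2$, for every $m\geq 0$. A priori nothing prevents $W$ from jumping along $\alpha_1+\alpha_2=n$ while $U$ jumps only along coordinate walls, and then your key lemma needs exactly such an operator.

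More generally, chaining wall conditions along any path, straight or perturbed, only produces products of factors $\bs{L}_j(\bs{s})+\gamma_j$, where the $\bs{L}_j$ are the slopes of the walls crossed. Such a product is never coprime in $\mb{C}[\bs{s}]$ to $b_W$, a polynomial in $\bs{L}(\bs{s})=-\theta_{\bs{L}}$, as soon as some $\bs{L}_j$ is not proportional to $\bs{L}$. Their zero loci are complex affine hyperplanes with non-parallel normals, and these always meet. Your real-part disjointness argument conflates the spectrum of $\theta_{\bs{L}}$ with that of the $\theta_{\bs{L}_j}$. Passing to $\gr$ along all walls does not rescue this either: across a wall of slope $\bs{L}'$ only $\bs{L}'(\bs{s})$ has controlled spectrum, not the individual $s_i$ (consider the walls $\alpha_1+\alpha_2=n$ in Example \ref{example: diagonal embedding}).

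The missing idea is to replace the global B\'ezout identity by localisation on $\operatorname{Spec}\mb{C}[\bs{s}]$.

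\begin{enumerate}
\item A $\mb{C}[\bs{s}]$-submodule is determined by its localisations at all maximal ideals $\mf{m}=(s_1+a_1,\ldots,s_r+a_r)$. So it suffices to compare $(U^{\bs{\beta}}\mc{M})_{\mf{m}}$ and $(W^{\bs{\beta}}\mc{M})_{\mf{m}}$.
\item After localising, each factor $\bs{L}_j(\bs{s})+\gamma_j$ attached to a wall not passing through $\Re\bs{a}$ becomes a unit. Hence both localised filtrations are constant on the chambers of the \emph{finite, central} arrangement $\mc{W}_{\mf{m}}$ of walls through $\Re\bs{a}$ (Lemma \ref{lem:localised chambers}). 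This is the correct substitute for coprimality, and it is exactly where walls of different slopes meeting at a point are handled.
\item Choose $\bs{v}\in\mb{Z}^r$ with $\Re\bs{a}+\bs{v}$ in the interior of the chamber $\sigma_{\mf{m}}$ of $\mc{W}_{\mf{m}}$ containing $\bs{\beta}$, and a coherent $\mc{O}_X$-submodule $\mc{F}$ generating $\mc{M}$. Two facts are used: goodness, in particular $V^{\bs{m}}\ms{D}_X\cdot V^{\bs{n}}\ms{D}_X=V^{\bs{m}+\bs{n}}\ms{D}_X$ when $m_i,n_i$ have the same signs; and $\bs{\gamma}+N\bs{v}\in\sigma_{\mf{m}}$ for $N\gg0$. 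Together they give $(V^{\bs{\beta}}\mc{M})_{\mf{m}}=(V^{N\bs{v}}\ms{D}_X\cdot\mc{F})_{\mf{m}}$ for $N\gg 0$.
\end{enumerate}

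The right-hand side of the last identity does not depend on the filtration, so uniqueness follows directly, with no iteration over walls and no Krull-type termination argument.
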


In view of Theorem \ref{thm:intro V uniqueness}, we simply write $V^\bullet\mc{M}$ for the multivariate $V$-filtration on $\mc{M}$ as long as it exists.

The classical proof of Theorem \ref{thm:intro V uniqueness} for $r = 1$ is a simple exercise in manipulating good filtrations and eigenvalues. However, since hyperplanes can intersect in higher dimensions, this argument does not easily generalise to $r > 1$, so new ideas are needed. Our proof hinges on the observation (Lemma \ref{lem:localised chambers}), that the defining eigenvalue condition on the quotients across the walls is equivalent to requiring that the localisation of $V^\bullet\mc{M}$ at any maximal ideal in $\mb{C}[\partial_{t_1}t_1, \ldots, \partial_{t_r}t_r]$ be constant away from a finite set of walls passing through a single point. This characterisation is used again and again throughout the paper. Using this, we are able to write down direct formulas for the localisations of $V^\bullet\mc{M}$ (Lemma \ref{lem:multivariate V localisation}) in terms of generators for $\mc{M}$, which uniquely determine the entire filtration (if it exists).

With the aid of Theorem \ref{thm:intro V uniqueness}, as well as Lemma \ref{lem:localised chambers} described above, we are able to prove that the multivariate $V$-filtration has the same good strictness and functoriality properties as in the single variable case. We summarise these here:

\begin{thm}[Proposition \ref{prop:V image and preimage}, Corollary \ref{cor:V strictness} and Theorem \ref{thm:multivariate V direct image}] \label{thm:intro V strictness and functoriality}
The multivariate $V$-filtration has the following properties.
\begin{enumerate}
\item If $\mc{M}$ admits a multivariate $V$-filtration, then any subquotient of $\mc{M}$ does as well.
\item If $\eta \colon \mc{M} \to \mc{N}$ is a morphism of coherent $\ms{D}_X$-modules and the multivariate $V$-filtrations $V^\bullet\mc{M}$ and $V^\bullet\mc{N}$ exist, then $\eta$ is strict with respect to these filtrations:
\[ \eta(V^{\bs{\alpha}}\mc{M}) = \eta(\mc{M}) \cap V^{\bs{\alpha}}\mc{N} \quad \text{for all $\bs{\alpha} \in \mb{R}^r$}.\]
\item Let $X, Y, Z$ be complex manifolds and let $\pi \colon X \to Y$ be a proper morphism. If $\mc{M}$ is a coherent $\ms{D}_{X \times Z}$-module admitting a multivariate $V$-filtration $V^\bullet\mc{M}$ along $X \times D$ for some simple normal crossings divisor $D \subset Z$, then the direct image
\[ \pi_+(\mc{M}, V^\bullet\mc{M}) \in \mrm{D}^b_{\mrm{coh}}(\ms{D}_{Y \times Z}, V^\bullet\ms{D}_{Y \times Z}) \]
is a strict complex, and the induced filtrations on the cohomology sheaves are multivariate $V$-filtrations along $Y \times D$.
\end{enumerate}
\end{thm}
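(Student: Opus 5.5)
We prove the three parts in order. Throughout, the main tool is the localisation criterion of Lemma \ref{lem:localised chambers}: a good wall and chamber filtration satisfies the eigenvalue condition exactly when, for every maximal ideal $\fm \subset A := \mb{C}[\partial_{t_1}t_1, \ldots, \partial_{t_r}t_r]$, the localisation $(V^\bullet\mc{M})_\fm$ is constant away from finitely many walls through a single point. Since $A$ is commutative and acts on each $V^{\bs{\alpha}}\mc{M}$, localisation at $\fm$ is exact on $A$-modules. This lets us reduce each statement to a finite, "one point" situation.

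\textbf{Part (1): subquotients.} For a submodule $\mc{N} \subset \mc{M}$ we take the induced filtration $V^{\bs{\alpha}}\mc{N} := \mc{N} \cap V^{\bs{\alpha}}\mc{M}$. For a quotient $\mc{M} \to \mc{Q}$ we take the image filtration.
\begin{enumerate}
\item The image filtration is clearly good, and it inherits the eigenvalue condition, since the quotients across walls are quotients of those for $\mc{M}$.
\item For the induced filtration, the eigenvalue condition passes to the subobjects $(\mc{N}\cap V^{\bs{\alpha}})/(\mc{N}\cap V^{\bs{\beta}}) \subset V^{\bs{\alpha}}\mc{M}/V^{\bs{\beta}}\mc{M}$.
\item Goodness of the induced filtration is the key point. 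We use the Artin--Rees property for the multi-indexed Rees ring $\bigoplus_{\bs{n}} V^{\bs{n}}\ms{D}_X$, which is noetherian because the filtration is built from coordinate monomial conditions. This handles each of the finitely many translates $\bs{\alpha} + \mb{Z}^r$ meeting a fundamental domain, and the chamber structure reduces everything to finitely many such translates.
\end{enumerate}

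\textbf{Part (2): strictness.} Factor $\eta$ as $\mc{M} \twoheadrightarrow \eta(\mc{M}) \hookrightarrow \mc{N}$. By Part (1), $\eta(\mc{M})$ carries two multivariate $V$-filtrations: the image of $V^\bullet\mc{M}$ and the restriction of $V^\bullet\mc{N}$. By Theorem \ref{thm:multivariate V uniqueness} these coincide, which is exactly the strictness identity.

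\textbf{Part (3): direct images.} We follow the single-variable argument via filtered direct image, replacing the $\mb{R}$-index by the $\mb{R}^r$-index.
\begin{enumerate}
\item Form the filtered complex $\pi_+(\mc{M}, V^\bullet\mc{M})$ using the relative de Rham (or Spencer) complex. The $t_i$ are coordinates on $Z$, so they commute with the pushforward along $X \to Y$. Properness of $\pi$ gives coherence of each $R\pi_*$ of the graded pieces over the Rees ring, so the result lies in $\mrm{D}^b_{\mrm{coh}}(\ms{D}_{Y\times Z}, V^\bullet\ms{D}_{Y\times Z})$.
\item Localise at a maximal ideal $\fm \subset A$; this commutes with $R\pi_*$ because $A$ acts by global endomorphisms. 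After localisation, $V^\bullet$ is constant off finitely many walls through one point $\bs{\gamma}$.
\item Strictness for a filtration constant off walls through one point amounts to the vanishing of a connecting map between pieces with disjoint generalized eigenvalues. On each successive wall-quotient the element $\sum L_i\partial_{t_i}t_i - \gamma'$ acts nilpotently, and this property survives $R\pi_*$. Hence the maps in the spectral sequence between different eigenvalue pieces vanish, giving strictness.
\item For the cohomology sheaves, goodness follows from coherence, and the eigenvalue condition follows from Lemma \ref{lem:localised chambers} applied to the localised cohomology.
\end{enumerate}

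\textbf{Main obstacle.} The main difficulty is the strictness step in Part (3). In the multi-index setting one cannot order the walls linearly, so the classical argument (induction on $\alpha$ using $\gr_V^\alpha$) must be replaced. We would try inducting on the chambers adjacent to $\bs{\gamma}$, crossing one wall at a time. If that fails, we would instead reduce to the single-variable case along each slope $\bs{L}$, using the known strictness of $\ls{\bs{L}}{V}$ under proper pushforward, and then recover the multi-filtration as an intersection over slopes.
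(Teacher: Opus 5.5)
\textbf{Parts (1) and (2) match the paper; part (3) has a gap in the strictness step.} For (1) and (2) you use image filtrations, Artin--Rees (Corollary \ref{cor:wall and chamber artin-rees}) for the induced filtration, and factoring through the image together with Theorem \ref{thm:multivariate V uniqueness}. This is exactly the paper's argument.

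The mechanism you give for strictness in (3) does not work. Localise at $\fm=(s_1+\mu_1,\ldots,s_r+\mu_r)$. Then every remaining wall passes through $\Re\bs{\mu}$. On each remaining wall-quotient, the only non-invertible factor of $\prod_j(\bs{L}(\bs{s})+\gamma_j)$ is the one vanishing at $\bs{\mu}$. So only one generalised eigenvalue is left, and ``vanishing of maps between pieces with disjoint generalised eigenvalues'' has nothing to act on.

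It does not work before localising either. For $\bs\beta\leq\bs\alpha$, the obstruction to strictness is the connecting map
\[\mc{H}^{i-1}\pi_+(V^{\bs\beta}\mc{M}/V^{\bs\alpha}\mc{M})\to\mc{H}^{i}\pi_+V^{\bs\alpha}\mc{M}.\]
Its source is killed by a polynomial in $\bs{s}$, but its target is not. So nilpotency on wall-quotients, even though it survives $\pi_+$, cannot by itself force this map to vanish.

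Your fallbacks do not repair this. Crossing one wall at a time runs into the same eigenvalue problem. Strictness for each $\ls{\bs{L}}{V}$ separately does not give strictness of the $\mb{R}^r$-indexed complex, because taking $\mc{H}^i$ does not commute with intersecting over $\bs{L}$.

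\textbf{The missing idea is the torsion argument of Lemma \ref{lem:derived multivariate}.} It has two ingredients.
\begin{enumerate}
\item \emph{Uniform torsion bound.} Each $\bigoplus_{\bs{n}\in\mb{Z}^r}\mc{H}^i\pi_+V^{\bs\alpha+\bs n}\mc{M}$ is coherent over $R_V\ms{D}_{Y\times Z}$. This module depends only on the chamber of $\bs\alpha$ modulo $\mb{Z}^r$, and there are finitely many such. So there is a single $\bs k\in\mb{Z}^r_{\geq 0}$ such that $\bs{u}^{\bs k}$ kills all torsion in $\mc{H}^i(R_{V,\mb{R}}\pi_+\mc{M})$.
\item \emph{Localised chambers.} The eigenvalue condition on $\mrm{Cone}(\pi_+V^{\bs\beta}\mc{M}\to\pi_+V^{\bs\alpha}\mc{M})$ holds because $\pi_+$ is $\mb{C}[\bs{s}]$-linear. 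Exactly as in Lemma \ref{lem:localised chambers}, it shows that after localising at $\fm$, the comparison maps between the $\mc{H}^i(\pi_+V^{\bs\delta}\mc{M})_\fm$ are isomorphisms for $\bs\delta$ in a single chamber of $\mc{W}_\fm$.
\end{enumerate}
These chambers are cones with vertex $\Re\bs\mu$, so each contains points $\bs\delta$ and $\bs\delta-\bs k$. Transport a torsion class into degree $\bs\delta$. There it is killed by
\[\bs u^{\bs k}\colon\mc{H}^i(\pi_+V^{\bs\delta}\mc{M})_\fm\to\mc{H}^i(\pi_+V^{\bs\delta-\bs k}\mc{M})_\fm,\]
which is an isomorphism, so the class is zero. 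Your proposal needs this uniform-torsion-versus-cone argument, or an equivalent.

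A smaller point: in the analytic setting, coherence of $\pi_+$ does not follow from properness alone. The paper assumes $\mc{M}$ is a good $\ms{D}$-module and uses Grauert's theorem.
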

These are all standard when $r=1$; for example, the statement (3) can be found for example in \cite{MS89} or \cite[Proposition 7.5.2]{SabbahnoteDmodule} (albeit stated in a slightly different form).

Of course, it is natural to ask whether our new  multivariate $V$-filtration coincides with Sabbah's. We answer this in the affirmative:

\begin{thm}[Theorem \ref{thm:sabbah comparison}]\label{thm:intro sabbah comparison}
For a coherent $\ms{D}_X$-module $\mc{M}$ and simple normal crossings divisor $D$, Sabbah's $V$-filtration $V^\bullet_R\mc{M}$ exists for some $R\subset \mb{R}^r$, which is a finite union of cosets of $\mb{Z}^r$, if and only if the multivariate $V$-filtration $V^\bullet\mc{M}$ exists. In this case, Sabbah's $V$-filtration is the restriction of $V^\bullet\mc{M}$ to $R \subset \mb{R}^r$.
\end{thm}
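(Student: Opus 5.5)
The plan is to prove the two implications separately. The key tool is Lemma \ref{lem:localised chambers}: the eigenvalue condition across walls is equivalent to saying that, after localising at any maximal ideal of $\mb{C}[\partial_{t_1}t_1,\ldots,\partial_{t_r}t_r]$, the filtration is constant away from finitely many walls through a single point. Alongside it I will use the uniqueness Theorem \ref{thm:intro V uniqueness}.

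\textbf{From the multivariate $V$-filtration to Sabbah's.} Suppose $V^\bullet\mc{M}$ exists. First I will show that, for each slope $\bs{L}\in\mb{Q}^r_{\geq 0}\setminus\{0\}$, the formula
\[ \ls{\bs{L}}{V}^{\gamma}\mc{M} := \sum_{\bs{L}(\bs{\alpha})\le\gamma} V^{\bs{\alpha}}\mc{M} \]
defines a filtration satisfying Sabbah's axioms for the slope filtration along $\bs{L}$:
\begin{enumerate}
\item \emph{Goodness} over $\ls{\bs{L}}{V}^\bullet\ms{D}_X$ should follow because only finitely many chambers matter modulo the $\mb{Z}^r$-action, and goodness of $V^\bullet\mc{M}$ lets us pick finitely many generators.
\item \emph{The eigenvalue condition}, that $\gr^\gamma$ is killed by a product of factors $\gamma'-\sum L_i\partial_{t_i}t_i$, should follow by localising. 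At each maximal ideal the walls pass through one point $\bs{\alpha}_0$. The sum defining $\ls{\bs{L}}{V}^\gamma$ therefore jumps only when the half-space $\bs{L}(\cdot)\le\gamma$ crosses $\bs{\alpha}_0$, which happens exactly at $\gamma=\bs{L}(\bs{\alpha}_0)$. This matches the generalised eigenvalue of $\sum L_i\partial_{t_i}t_i$ at that maximal ideal.
\end{enumerate}
By uniqueness of Sabbah's slope filtrations, this is Sabbah's $\ls{\bs{L}}{V}$. I then take $R$ to be the finite union of $\mb{Z}^r$-cosets of the points through which the localised walls pass, which are the vertices of the chamber structure. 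For $\bs{\alpha}\in R$, the second defining identity for $V_R$ holds by construction. The first identity says $V^{\bs{\alpha}}\mc{M}=\bigcap_{\bs{L}}\ls{\bs{L}}{V}^{\bs{L}(\bs{\alpha})}\mc{M}$. Again I check it after localisation at each maximal ideal, where the question reduces to the fact that the (localised) filtration is a lattice of subspaces indexed by a central arrangement through $\bs{\alpha}_0$. There, the localised $V^{\bs{\alpha}}$ is the intersection of the localised half-space pieces through $\bs{\alpha}$, using Lemma \ref{lem:multivariate V localisation}. Goodness of the restriction to $R$ is inherited directly.

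\textbf{From Sabbah's filtration to the multivariate one.} Suppose $V_R^\bullet\mc{M}$ exists. I will extend it to $\mb{R}^r$ by setting
\[ V^{\bs{\beta}}\mc{M}:=\sum_{\bs{\alpha}\in R,\ \bs{\alpha}\ge\bs{\beta}} V_R^{\bs{\alpha}}\mc{M} \]
(componentwise order). Because $R$ is a finite union of $\mb{Z}^r$-cosets, this is a wall and chamber filtration with walls $x_i\in$ the coordinates of $R$. Goodness follows from goodness of $V_R$. The remaining and harder step is the eigenvalue condition across walls, equivalently the localised criterion of Lemma \ref{lem:localised chambers}. For that I use Sabbah's identity
\[ V_R^{\bs{\alpha}}=\bigcap_{\bs{L}}\ls{\bs{L}}{V}^{\bs{L}(\bs{\alpha})}. \]
Each $\ls{\bs{L}}{V}$ has the single-slope eigenvalue property, so after localising at a maximal ideal $\mf{m}$ with eigenvalue point $\bs{\alpha}_0$, each localised $\ls{\bs{L}}{V}$ is constant except across $\bs{L}(x)=\bs{L}(\bs{\alpha}_0)$. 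Hence the localised $V_R$, being an intersection of these, is constant off the hyperplanes through $\bs{\alpha}_0$.

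\textbf{Main obstacle.} The hard step is the finiteness in this last argument: only finitely many slopes should be needed, and the walls of the extended filtration may not be coordinate hyperplanes a priori. I would try to use goodness of $V_R$ to show that the localisation is determined by finitely many slopes, namely normals of faces of the Newton-type polyhedra attached to generators. I would also consider redefining the extension as a union over chambers rather than via the componentwise order. Finally, uniqueness (Theorem \ref{thm:intro V uniqueness}) identifies the restriction to $R$.
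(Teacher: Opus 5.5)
\textbf{Sabbah's filtration $\Rightarrow$ multivariate $V$-filtration.} Your extension $V^{\bs{\beta}}\mc{M}:=\sum_{\bs{\alpha}\in R,\,\bs{\alpha}\geq\bs{\beta}}V_R^{\bs{\alpha}}\mc{M}$ is the wrong object. It is constant on boxes cut out by coordinate hyperplanes. The multivariate $V$-filtration, however, generally jumps along non-coordinate walls lying between points of $R$.

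Example \ref{example: diagonal embedding} shows this concretely. Take $R=\mb{Z}^2$, which contains all vertices of the walls there, so Sabbah's filtration exists for it. Your formula gives $V^{(1/2,1/2)}\mc{M}=V^{(1,1)}\mc{M}$, which is generated by $t_1/(t_1-t_2)$. The true $V^{(1/2,1/2)}\mc{M}$ contains $1/(t_1-t_2)$. So by Theorem \ref{thm:multivariate V uniqueness}, your filtration cannot be a multivariate $V$-filtration.

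Your localisation sketch does not repair this. Localisation need not commute with the infinite intersection over $\bs{L}\in\mb{Q}^r_{\geq 0}$. Even formally, it would only give constancy off infinitely many hyperplanes through $\bs{\alpha}_0$.

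The ingredient you flag as the obstacle, but do not supply, is the existence of an adapted fan $\Sigma$ for $V_R^\bullet\mc{M}$ \cite{Sabbah1987,Bahloul04,CJG07}. It gives $\bigcap_{\bs{L}}\ls{\bs{L}}{V}^{\bs{L}(\bs{\alpha})}\mc{M}=\bigcap_{\bs{L}\in\ms{L}(\Sigma)}\ls{\bs{L}}{V}^{\bs{L}(\bs{\alpha})}\mc{M}$ with $\ms{L}(\Sigma)$ finite. The paper takes this intersection as the definition of $U^{\bs{\alpha}}\mc{M}$ for \emph{all} $\bs{\alpha}\in\mb{R}^r$, and then:
\begin{itemize}
\item it is wall and chamber, with walls $\bs{L}^{-1}(\beta)$ for $\bs{L}\in\ms{L}(\Sigma)$ and $\beta$ a jump of $\ls{\bs{L}}{V}^\bullet\mc{M}$;
\item it is good by Lemma \ref{lem: goodness part}, after enlarging $R$ to contain any given coset;
\item the eigenvalue condition needs no localisation: if $\bs{\alpha}\leq\bs{\beta}$ are separated by the single wall $\bs{L}^{-1}(\gamma)$, then $U^{\bs{\alpha}}\mc{M}\cap\ls{\bs{L}}{V}^{>\gamma}\mc{M}=U^{\bs{\beta}}\mc{M}$, so $U^{\bs{\alpha}}\mc{M}/U^{\bs{\beta}}\mc{M}$ embeds in $\gr^\gamma_{\ls{\bs{L}}{V}}\mc{M}$.
\end{itemize}

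\textbf{Multivariate $V$-filtration $\Rightarrow$ Sabbah's filtration.} First, since the filtrations are decreasing, the slope filtration is $\ls{\bs{L}}{V}^\gamma\mc{M}=\sum_{\bs{L}(\bs{\alpha})\geq\gamma}V^{\bs{\alpha}}\mc{M}$. With $\leq$, as you wrote, it is all of $\mc{M}$ by exhaustiveness. Your localisation check of Sabbah's eigenvalue condition is what the paper does. Two other steps are not as you claim.

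The equality of this sum with the corresponding sum over $\bs{\alpha}\in R$ is not ``by construction''. It is Lemma \ref{lem:LV from lattice}. That lemma first enlarges $\mc{W}$ so that all chambers are bounded. It then uses that $\bs{L}$ attains its maximum on $\bar{\sigma}$ at a vertex of $\mc{W}$, hence at a point of $R$.

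More seriously, the saturation identity $V^{\bs{\alpha}}\mc{M}=\bigcap_{\bs{L}}\ls{\bs{L}}{V}^{\bs{L}(\bs{\alpha})}\mc{M}$ does not reduce to a fact about filtrations indexed by chambers of a central arrangement. Take a decreasing filtration constant on the four quadrants of $\{\alpha_1=0\}\cup\{\alpha_2=0\}$, with values $N\supset A,B\supset C$. Its saturation at $(1,1)$ is $A\cap B$, and nothing in the local chamber structure forces $C=A\cap B$. Lemma \ref{lem:multivariate V localisation} does not supply this either. Saturation is a global consequence of goodness.

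The paper never proves saturation directly. It proceeds as follows:
\begin{itemize}
\item check Sabbah's goodness and eigenvalue conditions for $U_R^\bullet\mc{M}:=V^\bullet\mc{M}|_R$;
\item use Proposition \ref{prop:saturation is good}, which again relies on adapted fans, to conclude that the saturation $\bar{U}_R^\bullet\mc{M}$ is Sabbah's filtration;
\item feed this into the first direction, so that $\bigcap_{\bs{L}}\ls{\bs{L}}{V}^{\bs{L}(\bullet)}\mc{M}$ is a multivariate $V$-filtration;
\item conclude by Theorem \ref{thm:multivariate V uniqueness} that it equals $V^\bullet\mc{M}$, whence $V_R^\bullet\mc{M}=\bar{U}_R^\bullet\mc{M}=V^\bullet\mc{M}|_R$.
\end{itemize}
So both directions of your proposal lack the adapted-fan finiteness. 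The second direction also needs uniqueness used in this global way, in place of a local argument.
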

In particular, Theorems \ref{thm:intro V strictness and functoriality} and \ref{thm:intro sabbah comparison} resolve the confusion around strictness for Sabbah's filtration. We note that the proof of Theorem \ref{thm:intro sabbah comparison} is far from being a triviality: for example, it relies in an essential way on both Theorem \ref{thm:intro V uniqueness} and the existence of an adapted fan \cite[\S 2.2]{Sabbah1987}.

Now, in order for this theory to be interesting, one would of course like the multivariate $V$-filtration to exist in a wide variety of examples. Our next result says that this is in fact always the case for holonomic $\ms{D}$-modules:

\begin{thm}[Theorem \ref{thm:multivariate V existence}] \label{thm:intro V existence}
Let $\mc{M}$ be a holonomic $\ms{D}_X$-module and $D \subset X$ a divisor with simple normal crossings. Then the multivariate $V$-filtration of $\mc{M}$ along $D$ exists.
\end{thm}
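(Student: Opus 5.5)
We will prove existence by reducing to the case of a local system on the complement of $D$, using the functoriality established in Theorem \ref{thm:intro V strictness and functoriality}. The question is local on $X$, so we may work near a point with coordinates $t_1, \ldots, t_r$ cutting out $D$. By part (1) of Theorem \ref{thm:intro V strictness and functoriality}, the class of modules admitting a multivariate $V$-filtration is closed under subquotients. By the uniqueness statement (Theorem \ref{thm:intro V uniqueness}) together with strictness, it is also closed under extensions. Indeed, given $0 \to \mc{M}' \to \mc{M} \to \mc{M}'' \to 0$ with $V^\bullet\mc{M}'$ and $V^\bullet\mc{M}''$ existing, we define $V^{\bs{\alpha}}\mc{M}$ by the localisation formulas of Lemma \ref{lem:multivariate V localisation}. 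Alternatively, we take $V^{\bs\alpha}\mc{M}$ to be the preimage of $V^{\bs\alpha}\mc{M}''$ intersected with a suitable saturation. We then check the good and eigenvalue conditions via Lemma \ref{lem:localised chambers}: localised at a maximal ideal of $\mb{C}[\partial_{t_1}t_1,\ldots,\partial_{t_r}t_r]$, the filtrations on $\mc{M}'$ and $\mc{M}''$ are constant away from finitely many walls through a point, and this property passes to extensions. Hence it suffices to treat simple holonomic modules.

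A simple holonomic module is the minimal extension of an irreducible local system on a locally closed smooth subvariety $S$. Choose an embedded resolution $\pi\colon \tilde X \to X$ that is proper and an isomorphism over the generic part of $S$. We require that the strict transform $\tilde S$ of $\bar S$ be smooth and that $\pi^{-1}(D) \cup \pi^{-1}(\bar S \setminus S) \cup \pi^{-1}(\text{singular locus})$ be a normal crossings divisor meeting $\tilde S$ transversally. The local system may also have singularities along a divisor in $S$; we include its closure in the boundary as well. The original module is then a subquotient of $\mathcal{H}^0\pi_+\tilde{\mc{N}}$, where $\tilde{\mc{N}}$ is the meromorphic or $j_*$-extension on $\tilde S$ of the pulled-back local system.

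To apply part (3) of Theorem \ref{thm:intro V strictness and functoriality}, we use the graph trick. Embed $X$ into $X \times \mb{C}^r$ via $x \mapsto (x, t_1(x), \ldots, t_r(x))$, and replace $D$ by the coordinate divisor $X \times \{s_1\cdots s_r = 0\}$. Here we must check that the multivariate $V$-filtration along $D$ in $X$ corresponds to the one along the coordinate divisor for the graph embedding; since $D$ is already snc, this is a local change-of-coordinates check. Composing with $\pi \times \id$, which is proper, reduces everything to the case of $\tilde{\mc{N}}$ on $\tilde X \times \mb{C}^r$. This module is pushed forward along the graph of the monomial functions $t_i\circ\pi$ on $\tilde S$, and it is a localised regular holonomic module with normal crossing singularities on $\tilde S$.

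In that normal-crossings situation (cf.\ \S\ref{subsec:normal crossings}), we construct the filtration explicitly. Locally, $\tilde{\mc{N}}$ is generated by multivalued monomials $y^{\bs\beta}$ in coordinates $y_j$ on $\tilde S$, tensored with a unipotent part. Pushing forward along $s_i = \prod_j y_j^{m_{ij}}$ gives a module whose sections $\delta$-twisted by $y^{\bs\beta}$ are eigenvectors for the operators $\partial_{s_i}s_i$, up to nilpotents, with eigenvalues linear in $\bs\beta$. We define $V^{\bs\alpha}$ as the $V^{\bs 0}\ms{D}$-submodule generated by those generators whose eigenvalue vector satisfies the finitely many linear inequalities cut out by the exponent matrix $(m_{ij})$. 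Its walls are the hyperplanes $\sum_i L_i x_i = \gamma$ attached to the rays of the dual cone, and the eigenvalue condition follows from the explicit action.

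We expect the main obstacle to be this final explicit step. The filtration must be genuinely good, that is finitely generated over $V^{\bullet}\ms{D}$, rather than a naive intersection (compare Example \ref{example: diagonal embedding}). This likely requires refining the resolution by a toric modification adapted to the monomial map, in the spirit of Sabbah's adapted fan. We would first try to arrange, after such a toric blow-up, that the map $(s_1,\ldots,s_r)$ becomes locally a product of a smooth morphism and coordinate projections. In that case the naive intersection \eqref{eq:intro naive multivariate V} is already good, and the eigenvalue conditions can be read off directly.
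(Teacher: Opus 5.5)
The overall architecture (resolve, pass to a graph embedding, push forward along a proper map) matches the first step of the paper's proof. However, the step you flag as the main obstacle is a genuine gap, and the remedy you propose cannot work.

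The naive intersection is good when the functions are the coordinates $y_j$ cutting out the components of $\pi^{-1}(D)$ and the connection has good formal structure (Propositions \ref{prop:normal crossings V-filtration}, \ref{prop:V-filtration for sub divisor} and \ref{prop:reduction to graph embedding}). Your functions, however, are the monomials $t_i\circ\pi=\prod_j y_j^{m_{ij}}$, and the divisor in $\C^r$ is fixed. So in general no modification of the source turns $(t_1\circ\pi,\ldots,t_r\circ\pi)$ into a coordinate projection. For the diagonal in $\C^2$ one has $\tilde S=\C$ and the map $y\mapsto(y,y)$, which no blow-up changes; this is exactly Example \ref{example: diagonal embedding}, where the naive intersection is not good.

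Your explicit picture is also off. Under the Malgrange--Mellin transform, the operators $\partial_{t_i}t_i$ (your $\partial_{s_i}s_i$) act on the graph push-forward as multiplication by $-s_i$, i.e.\ freely, so no section is a generalised eigenvector. Eigenvalues only appear on the quotients across walls.

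The missing idea is to work first with the localised $V_*$-filtration, whose goodness reduces to $V^{\bs 0}\ms D$-coherence of each piece (Proposition \ref{prop:good V* criterion}). This lets you do three things in turn:
\begin{enumerate}
\item define $V_*^{\bs\alpha'}$ for the monomials simply as the image of $V_*^{\bs\alpha}\otimes_{\C[\bs s]}\C[\bs s']$ under $s_j\mapsto\sum_i m_{ij}s_i'$, with $\alpha_j=\sum_i m_{ij}\alpha_i'$ (Proposition \ref{prop:change of functions});
\item recover the genuine $V$-filtration from its restriction to the positive orthant via restricted $V$-filtrations (Proposition \ref{prop:multivariate V extension}, Corollary \ref{cor:extending V*}), which is what creates the extra walls;
\item only then push forward with Theorem \ref{thm:multivariate V direct image}.
\end{enumerate}

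Two earlier steps are also unsupported.

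First, closure under extensions is asserted rather than proved. The formula of Lemma \ref{lem:multivariate V localisation} is a consequence of existence. To use it as a definition on an extension, you would have to show that the localised pieces stabilise in $N$, glue to $V^{\bs 0}\ms D_X$-coherent submodules, and form a good wall and chamber filtration; none of this is visibly inherited from $\mc M'$ and $\mc M''$, and the ``suitable saturation'' is never specified. The paper avoids extensions altogether. Beilinson's gluing \eqref{eq:multivariate V existence 1} exhibits $\mc M$ as a subquotient of $\Xi_g\mc M\oplus\phi_{g,1}\mc M$. There $\phi_{g,1}\mc M$ and $\psi_{g,1}\mc M$ are handled by induction on $\dim\supp\mc M$, and the relevant maximal extension is a quotient of the localised connection $\frac{\mc M[s]h^s}{(s^n)}(*E)$, to which the resolution argument applies.

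Second, your treatment of simple modules (local systems, multivalued monomials tensored with a unipotent part, regular singularities after resolution) covers only regular holonomic modules, whereas the theorem concerns all holonomic ones. For an irregular connection, an embedded resolution of singularities gives no usable normal form. You need the Kedlaya--Mochizuki resolution of turning points (Theorem \ref{thm:resolution of turning points}) to reach good formal structure, where Proposition \ref{prop:normal crossings V-filtration} computes the filtration via Deligne--Malgrange lattices. Example \ref{example: V filtration of d+dy/x} shows that a turning point affects the walls even when $D$ is already snc.
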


Together with Theorem \ref{thm:intro sabbah comparison}, Theorem \ref{thm:intro V existence} also proves existence of Sabbah's filtration for arbitrary holonomic $\ms{D}$-modules; the latter is established for $r=2$ in the related setting of $\mb{R}$-specialisable $\ms{D}$-modules in \cite{Sabbahbifiltration}.

Our proof of Theorem \ref{thm:intro V existence} works roughly as follows. First, we observe that when $\mc{M}$ is a flat meromorphic connection with poles along $D$ and good formal structure \cite{Sabbahirregular}, the naive formula \eqref{eq:intro naive multivariate V} in fact gives a multivariate $V$-filtration. For general holonomic $\mc{M}$, we use the \emph{resolution of turning points}, established by Kedlaya and Mochizuki \cite{Kedlaya2021,Mochizuki2011}, together with Theorem \ref{thm:intro V strictness and functoriality} to reduce to this case. One nice feature of this approach is that it also gives some control over the walls in terms of a resolution (see e.g.\ Corollary \ref{cor:walls via resolution}) and over the singular supports of the $V^{\bs{\alpha}}\mc{M}$ (see Theorem \ref{thm:intro relative holonomic and flatness} below). 

\subsection{The Malgrange-Mellin transform}

With this exploration of the remote reaches of the $V$-filtration under our belts, let us chart a course back towards the familiar seas of the Bernstein-Sato polynomial. The first step is to bring expressions like $f^s$ into the picture.

Recall that if $f \colon X \to \mb{C}$ is a holomorphic function with the graph embedding  $\iota = (\id_X, f) \colon X \to X \times \mb{C}$, then for any coherent $\ms{D}_X$-module $\mc{M}$ on which $f$ acts invertibly, we have an isomorphism of $\ms{D}_X[s]$-modues
\begin{equation} \label{eq:intro Malgrange-Mellin}
\iota_+\mc{M} \cong \mc{M}[s]f^s,
\end{equation}
where $s$ acts on $\iota_+\mc{M}$ via $s = -\partial_t t$ for $t$ the coordinate on $\mb{C}$ and $\mc{M}[s]f^s = \mc{M} \otimes \mb{C}[s]$ as $\mc{O}_X[s]$-modules with vector fields acting by the usual formula for differentiating the symbol $f^s$ (see \eqref{eqn: action on M[s]fs}). We suggest the name \emph{Malgrange-Mellin transform} for the isomorphism \eqref{eq:intro Malgrange-Mellin}, as it was first observed by Malgrange \cite{Malgrange} and is indeed a holomorphic analogue of the Mellin transform when $\mc{M} = \mc{O}_X[f^{-1}]$ (see Remark \ref{remark: heuristic reason for MM transform}). In particular, the $V$-filtration $V^\bullet\iota_+\mc{M}$ along the smooth divisor $X \times \{0\}$ gives a filtration of $\mc{M}[s]f^s$ by $\ms{D}_X[s]$-submodules. This construction is used in many places (e.g.\ \cite{Kas83,Malgrange}) to relate the $V$-filtration to $b$-functions: for example, every jumping number of the $V$-filtration on $\iota_{+}\cO_X$ is an integer shift of a root of $b_f(s)$ and, when restricting to the lowest piece of the Hodge filtration on $\iota_{+}\cO_{\C^n}$, every jumping number between $(0,1)$ must be a root of $b_f(s)$, by the work of Ein-Lazarsfeld-Smith-Virolin \cite{ELSV} (see also \cite{BS05}). 

More generally, given a tuple $f_1, \ldots, f_r \colon X \to \mb{C}$ of holomorphic functions and $\mc{M} = \mc{M}[(f_1 \cdots f_r)^{-1}]$, we have a Malgrange-Mellin transform
\[ \iota_+\mc{M}\cong \mc{M}[s_1, \ldots, s_r]f_1^{s_1} \cdots f_r^{s_r} =: \mc{M}[\bs{s}]\bs{f^s},\]
an isomorphism of $\ms{D}_X[\bs{s}]:= \ms{D}_X[s_1, \ldots, s_r]$-modules, where now 
\[\iota = (\id_X, f_1, \ldots, f_r) \colon X \to X \times \mb{C}^r\]
denotes the graph embedding for the tuple $(f_1, \ldots, f_r)$. The multivariate $V$-filtration $V^\bullet \iota_+\mc{M}$ along the coordinate axes in $\mb{C}^r$ now defines a wall and chamber filtration of $\mc{M}[\bs{s}]\bs{f^s}$ by $\ms{D}_X[\bs{s}]$-submodules. Our next result shows that the $\ms{D}_X[\bs{s}]$-modules obtained in this way are particularly well-behaved (if $\bs{\alpha} \in \mb{R}_{\geq 0}^r$), which is even new in the single variable case.

\begin{thm}[Proposition \ref{prop:V is relative holonomic} and Theorem \ref{thm:flatness}]  \label{thm:intro relative holonomic and flatness}
Suppose that $\mc{M}$ is a holonomic $\ms{D}_X$-module on which the $f_i$ act bijectively. Then for all $\bs{\alpha} \in \mb{R}_{\geq 0}^r$, the $\ms{D}_X[\bs{s}]$-module $V^{\bs{\alpha}}\iota_+\mc{M}$ is relative holonomic (see \S \ref{subsec:relative holonomicity}) and flat over $\mb{C}[\bs{s}]$. Moreover, if $\bs{\alpha}, \bs{\beta} \in \mb{R}_{\geq 0}^r$ and $\bs{\alpha} \leq \bs{\beta}$, then $V^{\bs{\alpha}}\iota_+\mc{M}/V^{\bs{\beta}}\iota_+\mc{M}$ is flat relative to the walls separating $\bs{\alpha}$ and $\bs{\beta}$ (Definition \ref{defn:relative flatness}).
\end{thm}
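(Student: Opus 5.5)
Both properties are local on $X$, and they are compatible with the functoriality of Theorem \ref{thm:intro V strictness and functoriality}. So I would reduce to the case of a meromorphic connection with good formal structure, where everything can be computed by hand. By Kedlaya--Mochizuki resolution of turning points, choose a proper morphism $\pi\colon \tilde X\to X$ such that $\tilde f_i = f_i\circ\pi$ are monomials times units in local coordinates along an snc divisor $E$, and $\mc{M}$ is a subquotient of $\mc{H}^0\pi_+\tilde{\mc{M}}$ for a meromorphic connection $\tilde{\mc{M}}$ with good formal structure along $E$. Here I would use that $\mc{M}=\mc{M}[f^{-1}]$ together with Kashiwara's equivalence/localisation to arrange this.

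\textbf{Key steps.} First, treat the model case: $\tilde{\mc{M}}$ is a good meromorphic connection and each $\tilde f_i$ is a monomial $u_i\prod_j x_j^{a_{ij}}$. Then $V^{\bs{\alpha}}\iota_+\tilde{\mc{M}}$ is given by the naive intersection formula. Locally it equals a sum of pieces of the form $\mc{L}_{\bs{\beta}}[\bs{s}]\,\bs{f^s}$, where $\mc{L}_{\bs{\beta}}$ is a Deligne-type lattice and $\bs\beta$ ranges over a region cut out by linear inequalities in $\bs\alpha$ with the $a_{ij}$ as coefficients.

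Second, in this model case:
\begin{itemize}
\item Verify relative holonomicity directly. The characteristic variety is contained in $(\text{conormals of strata of }E)\times\mb{C}^r$.
\item Verify flatness over $\mb{C}[\bs{s}]$, using that $\alpha_i\ge 0$ makes every generator a genuine lattice section times $\bs{f^s}$, and that the module is torsion-free. Note that $V^{\bs{\alpha}}$ is built from $\mc{O}[\bs{s}]$-free lattices, and $\partial$-saturation does not create torsion.
\item For the quotient, show that $V^{\bs\alpha}/V^{\bs\beta}$ is filtered by pieces $\gr$ across the individual walls. Each such piece is annihilated by a product of $\gamma'-\sum L_i s_i$ and is flat over $\mb{C}[\bs{s}]/(\text{wall equation})$.
\end{itemize}

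Third, push forward. By Theorem \ref{thm:intro V strictness and functoriality}(3), $\pi_+$ of the filtered module is strict and induces the $V$-filtration on the cohomology. Relative holonomicity is preserved under proper direct image (a relative version of Kashiwara's estimate). Flatness requires more care. I would use strictness to identify $V^{\bs\alpha}\iota_+\mc{H}^0\pi_+\tilde{\mc M}$ with $\mc{H}^0\pi_+(V^{\bs\alpha})$. I would then argue flatness via the vanishing of $\mathrm{Tor}$ against $\mb{C}[\bs{s}]/\mf{m}$, using that specialisation at $\bs{s}=\bs{s}_0$ is computed by the localisation at the maximal ideal, as in Lemma \ref{lem:localised chambers}. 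Passing to subquotients, by Proposition \ref{prop:V image and preimage} and strictness, preserves both properties, since a submodule of a flat torsion-free module over $\mb{C}[\bs s]$ is torsion-free. Full flatness, however, needs exactness of specialisation, which again comes from strictness.

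\textbf{Main obstacle.} I expect flatness to be the main obstacle, especially for quotients of flat modules and for the higher cohomology of $\pi_+$. Torsion-freeness is not flatness when $r>1$. My first attempt would be a local criterion: show that for every $\bs s_0$, restricting to generic lines through $\bs s_0$ reduces the problem to the single-variable case. There, flatness means torsion-freeness in $s$, which follows from the eigenvalue condition and $\bs\alpha\ge 0$ excluding the $s$-torsion coming from $\delta$-type sections. I would then combine these lines via the wall-localisation description of Lemma \ref{lem:multivariate V localisation}.
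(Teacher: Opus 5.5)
Your treatment of relative holonomicity is essentially the paper's. You compute the singular support in the good normal crossings model, push forward along a resolution of turning points with a Kashiwara-type estimate, and use that relative holonomic modules are stable under subquotients and extensions. The flatness half, however, has a genuine gap: none of your reduction steps preserves flatness over $\C[\bs{s}]$ once $r \geq 2$.

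Subquotients of flat modules need not be flat. At best you keep torsion-freeness; for instance, the maximal ideal of $\C[s_1,s_2]$ is a torsion-free submodule of a free module but is not flat. Torsion-freeness is automatic here anyway, because $V^{\bs{\alpha}}\iota_+\mc{M} \subset \mc{M}[\bs{s}]\bs{f^s}$, which is free over $\C[\bs{s}]$. So for $r=1$ the flatness statement is trivial, and for $r \geq 2$ its whole content is the vanishing of higher $\mathrm{Tor}$, which is exactly what subquotients lose.

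The same problem occurs with $\mc{H}^0\pi_+$. Cohomology of a complex of flat modules need not be flat (already $\C[\bs{s}] \xrightarrow{s_1} \C[\bs{s}]$). The strictness of Theorem \ref{thm:multivariate V direct image} only says that $\mc{H}^i\pi_+V^{\bs{\beta}} \to \mc{H}^i\pi_+V^{\bs{\alpha}}$ is injective, i.e.\ it controls torsion in the Rees module. It says nothing about $\mathrm{Tor}^{\C[\bs{s}]}_j(\C(\mf{p}), \mc{H}^i\pi_+V^{\bs{\alpha}})$. In $\pi_+(\C(\mf{p}) \overset{\mrm{L}}\otimes V^{\bs{\alpha}})$ these terms are mixed with the genuine cohomological spread of $\pi_+$, and nothing in your argument separates them.

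Restricting to generic lines through $\bs{s}_0$ only tests torsion-freeness over a principal ideal domain (again automatic), not the regular-sequence condition. Lemma \ref{lem:multivariate V localisation} describes localisations of the filtration, not $\mathrm{Tor}$. Your model case is also not as stated. $V^{\bs{\alpha}}$ is generated over $\ms{D}_X[\bs{s}]$, so derivatives produce polynomial coefficients such as $\partial_x\cdot x^{s} = s\,x^{s-1}$, and a sum of flat submodules of a free module need not be flat. That case can be computed, but it needs an actual computation.

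The missing ingredient is duality, which gives flatness intrinsically, with no reduction at all. The paper proves (Theorem \ref{thm:duality}) that $\mb{D}V_*^{\bs{\alpha}}\iota_+\mc{M} \cong (V_*^{-\bs{\alpha}+\epsilon\bs{1}}\iota_+\mb{D}\mc{M})_{\bs{s}\mapsto -\bs{s}}$ is a module in degree $0$.

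Fix a prime $\mf{p}$. Relative holonomicity forces the cohomology sheaves of the derived fibre $\C(\mf{p}) \overset{\mrm{L}}\otimes_{\C[\bs{s}]_{\mf{p}}} (V_*^{\bs{\alpha}}\iota_+\mc{M})_{\mf{p}}$ to be holonomic over $\ms{D}_X[\bs{s}]_{\mf{p}}$. There duality is exact up to shift and faithful (Propositions \ref{prop:holonomic duality} and \ref{prop:basic duality}). Via the Koszul resolution, the dual of this fibre is the fibre of the dual module shifted by $[-\codim \mf{p}]$, so it has no cohomology above degree $\codim \mf{p}$. Transporting back, the fibre has no cohomology in negative degrees, and Lemma \ref{lem:flatness by fibres} gives flatness.

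Relative flatness of $V_*^{\bs{\alpha}}/V_*^{\bs{\beta}}$ over $\C[U]$ is the same argument. The quotient is killed by a product of operators $\bs{L}(\bs{s})+\gamma'$, so its derived fibres over primes of $\C[U]$ are again holonomic. In this route resolutions enter only through relative holonomicity, which, unlike flatness, does survive pushforwards and subquotients.
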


In more intuitive terms, Theorem \ref{thm:intro relative holonomic and flatness} says that each piece of the multivariate $V$-filtration $V^{\bs{\alpha}}\iota_+\mc{M}$ with $\bs{\alpha} \in \mb{R}^r_{\geq 0}$ can be regarded as a flat family of holonomic $\ms{D}_X$-modules over $\mb{C}^r$, and that the quotients $V^{\bs{\alpha}}\iota_+\mc{M}/V^{\bs{\beta}}\iota_+\mc{M}$ remain as flat as they can be given the defining restriction on their support. The next result explains how these families behave under monomial transformations of the functions $f_i$.

\begin{thm}[Corollary \ref{cor: change of functions}] \label{thm:intro change of functions}
Suppose $f_j' = \prod_{i=1}^r f_i^{d_{ij}}$ for some $d_{ij} \in \mb{Z}_{\geq 0}$  and $j = 1, \ldots, r'$. If the zero loci of $f_1 \cdots f_r$ and $f_1' \cdots f_{r'}'$ coincide, then for $\bs{\alpha}' = (\alpha_1', \ldots, \alpha_{r'}') \in \mb{R}_{\geq 0}^{r'}$ and $\bs{\alpha} = (\sum_j d_{1j} \alpha_j', \ldots, \sum_j d_{rj}\alpha_{j}')$, we have an isomorphism of $\sD_X[\bs{s}']$-modules:
\[ V^{\bs{\alpha}}\iota_+\mc{M} \otimes_{\mb{C}[\bs{s}]} \mb{C}[\bs{s}'] \cong V^{\bs{\alpha}'}\iota_+'\mc{M},\]
where $\iota' = (\id_X, f_1', \ldots, f_r') \colon X \to X \times \mb{C}^{r'}$ and the tensor product is taken over $s_i \mapsto \sum_j d_{ij} s_j'$.
\end{thm}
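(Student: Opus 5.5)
We plan to realise the change of functions geometrically and then combine the functoriality results with flatness. Set $\iota = (\id_X, f_1,\dots,f_r)$ and $\iota' = (\id_X, f'_1,\dots,f'_{r'})$, and let $\mu \colon \mb{C}^r \to \mb{C}^{r'}$ be the monomial map $\mu(t)_j = \prod_i t_i^{d_{ij}}$. Then $\iota' = (\id_X \times \mu)\circ \iota$. On Malgrange--Mellin transforms there is a natural $\ms{D}_X$-linear map
\[
\mc{M}[\bs{s}']\bs{f'^{s'}} \to \mc{M}[\bs{s}]\bs{f^s}, \qquad s_i \mapsto \textstyle\sum_j d_{ij}s'_j ,
\]
given by $f_j'^{s'_j} = \prod_i f_i^{d_{ij}s'_j}$. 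Concretely, $\mc{M}[\bs{s}']\bs{f'^{s'}} \cong \mc{M}[\bs{s}]\bs{f^s}\otimes_{\mb{C}[\bs{s}]}\mb{C}[\bs{s}']$ as $\ms{D}_X[\bs{s}']$-modules. This holds because both sides are $\mc{M}\otimes\mb{C}[\bs{s}']$ with the same derivation formula: the vector field action only involves $\sum_i s_i\, \partial(f_i)/f_i$, and this becomes $\sum_j s'_j\,\partial(f'_j)/f'_j$ under the substitution. So the content of the statement is that the multivariate $V$-filtration is compatible with this base change at corresponding indices.

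\textbf{Step 1 (candidate filtration).} Define $U^{\bs{\alpha}'} := \mrm{image}\bigl(V^{\bs{\alpha}}\iota_+\mc{M}\otimes_{\mb{C}[\bs{s}]}\mb{C}[\bs{s}']\bigr)$ with $\bs{\alpha} = D\bs{\alpha}'$. By flatness (Theorem \ref{thm:intro relative holonomic and flatness}), the map $V^{\bs{\alpha}}\otimes\mb{C}[\bs{s}'] \to \iota_+\mc{M}\otimes\mb{C}[\bs{s}']$ is injective when $\mb{C}[\bs{s}']$ is flat over $\mb{C}[\bs{s}]$. This is automatic if the linear map $\bs{s}'\mapsto D\bs{s}'$ is surjective. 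In general, however, it is only a base change along a linear map $\mb{C}^{r'}\to\mb{C}^r$, and then I would use flatness of the quotients $V^{\bs{\alpha}}/V^{\bs{\beta}}$ relative to walls. The zero-locus hypothesis ensures that every $f_i$ divides some $f'_j$, so the image of $\mb{C}^{r'}$ is not contained in any wall direction in a degenerate way; concretely, the pulled-back wall equations $\bs{L}(D\bs{s}')$ are nonzero linear forms. Relative flatness then gives the vanishing of $\mrm{Tor}_1$, hence injectivity, and identifies $U^{\bullet}$ as a sub-filtration of $\iota'_+\mc{M}$.

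\textbf{Step 2 (verify the axioms).} We check that $U^\bullet$, extended suitably to all of $\mb{R}^{r'}$, is a multivariate $V$-filtration along the coordinate axes of $\mb{C}^{r'}$. Goodness over $V^\bullet\ms{D}$ comes from the fact that $t'_j$ acts as $\prod_i t_i^{d_{ij}}$, which sends $V^{\bs{\alpha}}$ into $V^{\bs{\alpha}+D e_j}$, while $\partial_{t'_j}t'_j = -s'_j$. The walls of $U$ are pullbacks $\bs{L}(D\bs{\alpha}')=\gamma$ of walls of $V$. Across such a wall, the quotient is a base change of a quotient killed by $\prod(\gamma'-\sum_i L_i\partial_{t_i}t_i)$, and $\sum_i L_i \partial_{t_i}t_i = -\sum_i L_i s_i \mapsto -\sum_j (L D)_j s'_j$. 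This is exactly the required eigenvalue condition with slope $LD$. Uniqueness (Theorem \ref{thm:intro V uniqueness}) then gives $U^{\bs{\alpha}'} = V^{\bs{\alpha}'}\iota'_+\mc{M}$, and hence the claimed isomorphism.

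\textbf{Main obstacle.} The delicate step is Step 1: showing that the base change neither loses injectivity nor creates torsion. I would first try to reduce to two elementary cases by factoring $D$: a diagonal/duplication map ($r'<r$, a specialisation $s_i=s_j$) and a surjective linear change of variables. In the first case only relative flatness across walls is needed, and the zero-locus hypothesis is used exactly to ensure that the specialising hyperplane is transverse to all walls through relevant points. An alternative route, if this fails, is to use Theorem \ref{thm:intro V strictness and functoriality}(3) with the proper pushforward along $\id_X\times\mu$ restricted to the graph. This would give existence and strictness directly, with flatness then used only to identify the cohomology with the tensor product.
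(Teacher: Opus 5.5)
Your framework is the paper's. You identify the image of $V^{\bs\alpha}\iota_+\mc M\otimes_{\C[\bs s]}\C[\bs s']$ in $\iota'_+\mc M$ with $V^{\bs\alpha'}\iota'_+\mc M$ by checking the axioms and using uniqueness (this is Proposition~\ref{prop:change of functions}). You get injectivity from relative flatness across walls (Corollary~\ref{cor: subtle injectivity}) followed by a colimit over $\bs\beta\le\bs\alpha$. However, the step you flag as delicate rests on a false claim. Injectivity needs $\mrm{Tor}_1^{\C[\bs s]}(-,\C[\bs s'])$ to vanish on the quotients across \emph{every} wall $\bs L^{-1}(\gamma)$ crossed on the way down from $\bs\alpha$, not just across walls through points of $D(\R^{r'})$. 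Theorem~\ref{thm:flatness} gives this only when $\bs LD\neq 0$. The zero-locus hypothesis does not force every row of $D$ to be nonzero, nor even every $f_i$ to divide some $f'_j$. Take $f_1=xy$, $f_2=x$, $f_3=y$, $f'_1=f_2$, $f'_2=f_3$: the hypothesis holds, the first row of $D$ vanishes, and $\bs L=(1,0,0)$ has $\bs LD=0$. Nothing in your argument rules out walls with such $\bs L$. Your proposed factorisation into duplications and surjective linear maps does not address this either: a zero row is the specialisation $s_i\mapsto 0$, and for it Theorem~\ref{thm:flatness} says nothing across walls $\{\alpha_i=k\}$.

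The paper repairs this in two steps. First, if every row of $D$ has a positive entry, then for any nonzero $\bs L\ge 0$ choose $L_{i_0}>0$ and $d_{i_0j_0}>0$. Then $(\bs LD)_{j_0}\ge L_{i_0}d_{i_0j_0}>0$, so Corollary~\ref{cor: subtle injectivity} applies to every wall. In general, factor $f_i=\prod_m g_m^{e_{mi}}$ into irreducibles. The exponent matrices $E=(e_{mi})$ and $ED$, expressing $(f_i)$ and $(f'_j)$ in terms of $(g_m)$, have no zero rows (the second exactly by the zero-locus hypothesis). So the first case gives $V^{E\bs\alpha}\iota''_+\mc M\otimes_{\C[\bs{s}'']}\C[\bs s]\cong V^{\bs\alpha}\iota_+\mc M$ and $V^{E\bs\alpha}\iota''_+\mc M\otimes_{\C[\bs{s}'']}\C[\bs s']\cong V^{\bs\alpha'}\iota'_+\mc M$, and transitivity of base change concludes.

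A direct fix of your Step 1 also works. If $\bs LD=0$, then $\bs L(\bs\alpha)=(\bs LD)(\bs\alpha')=0$, so such a wall separating two points below $\bs\alpha$ must have $\gamma<0$. Then $\prod_k(\bs L(\bs s)+\gamma_k)$ specialises to the nonzero constant $\prod_k\gamma_k$, and the derived base change of that quotient vanishes.

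Two further points. Theorem~\ref{thm:flatness} concerns $V_*$, not $V$ (which need not be flat off the positive orthant), so the colimit must run over $V_*^{\bs\beta}$, using $V^{\bs\alpha}=V_*^{\bs\alpha}$ for $\bs\alpha\in\R^r_{\ge0}$ (Corollary~\ref{cor:extending V*}). Working with $V_*$ likewise makes your Step 2 immediate. Finally, your fallback via Theorem~\ref{thm:multivariate V direct image} does not apply as stated, since that theorem only covers maps $\pi\times\id_Z$ with the divisor fixed on the $Z$ factor.
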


Note that the analogous statement without the $V$-filtrations is immediate from the Malgrange-Mellin transform. In particular, when $r' = 1$, Theorem \ref{thm:intro change of functions} implies that one obtains the usual $V$-filtration (for positive indices) by specialising the multivariate $V$-filtration in an appropriate way. The flatness and specialisation properties guaranteed by Theorems \ref{thm:intro relative holonomic and flatness} and \ref{thm:intro change of functions} are the key advantages of the multivariate $V$-filtration over the Bernstein-Sato ideal in our proof of the Strong Monodromy Conjecture for hyperplane arrangements.

\begin{rmk}
The assumption $\mc{M} = \mc{M}[(f_1 \cdots f_r)^{-1}]$ in Theorems \ref{thm:intro relative holonomic and flatness} and \ref{thm:intro change of functions} is fairly inessential. We show (Corollary \ref{cor:localising V}) that
\[ V^{\bs{\alpha}}\iota_+\mc{M} = V^{\bs{\alpha}}\iota_+\mc{M}[(f_1 \cdots f_r)^{-1}]  \quad \text{for $\bs{\alpha} \in \mb{R}^r_{> 0}$}.\]
So Theorems \ref{thm:intro relative holonomic and flatness} and \ref{thm:intro change of functions} (and other similar statements below) hold for arbitrary holonomic $\mc{M}$ as long as $\bs{\alpha} \in \mb{R}^r_{> 0}$.
\end{rmk}

The proof of Theorem \ref{thm:intro relative holonomic and flatness} (on which Theorem \ref{thm:intro change of functions} relies) has two main ingredients: the description of the multivariate $V$-filtration in terms of resolutions (which we use to prove relative holonomicity) and the following result on the behaviour of the multivariate $V$-filtration under duality (which we use to prove the flatness statements).

\begin{thm}[Theorem \ref{thm:duality}] \label{thm:intro duality}
Let $\mc{M}$ be a holonomic $\ms{D}_X$-module. Then there is a natural isomorphism 
\[ \mb{D}_{\ms{D}_X[\bs{s}]}V^{\bs{\alpha}}\iota_+\mc{M} \cong (V^{-\bs{\alpha} + \epsilon \bs{1} + m\bs{1} }\iota_+\mb{D}\mc{M})_{\bs{s} \mapsto -\bs{s} - m\bs{1}},\]
for all $\bs{\alpha} \in \mb{R}^r_{> 0}$, $0 < \epsilon \ll 1$ and $m \in \mb{Z}$ such that $-\bs{\alpha} + m \bs{1} \in \mb{R}^r_{\geq 0}$. Here $\bs{1} = (1, \ldots, 1)\in \R^r$, $\mb{D}\mc{M}$ is the usual $\ms{D}_X$-module dual, $\mb{D}_{\ms{D}_X[\bs{s}]}$ denotes the duality functor for $\ms{D}_X[\bs{s}]$-modules (see \S \ref{subsec:duality}), and $(-)_{\bs{s} \mapsto -\bs{s} - m\bs{1}}$ denotes the involution on the category of $\ms{D}_X[\bs{s}]$-modules sending $s_i$ to $-s_i - m$. 
\end{thm}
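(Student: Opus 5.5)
The plan is to reduce to the case where $D$ is already a normal crossings divisor adapted to $\mc{M}$. There both sides can be computed explicitly, and we then transport the isomorphism back along a resolution, using the compatibility of duality with proper direct image.

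\textbf{Step 1: the model case.} Suppose $\mc{M}$ is a flat meromorphic connection with poles along a simple normal crossings divisor, with good formal structure, and the $f_i$ are monomials in the local coordinates. As noted before Theorem \ref{thm:intro V existence}, the multivariate $V$-filtration on $\iota_+\mc{M}$ is then given by the naive intersection formula \eqref{eq:intro naive multivariate V}. Each $V^{\bs{\alpha}}\iota_+\mc{M}$ becomes an explicit lattice in $\mc{M}[\bs{s}]\bs{f^s}$: a locally free $\mc{O}[\bs{s}]$-module, stable under logarithmic vector fields, that is a direct sum of elementary pieces $\mc{O}[\bs{s}]\, x^{\bs{\beta}}\bs{f^s}\otimes(\text{nilpotent and irregular part})$.

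For such a module, $\mb{D}_{\ms{D}_X[\bs{s}]}$ is computed by a logarithmic Spencer resolution. The dual is the lattice in the dual connection with the opposite exponents, twisted by the log canonical bundle. The log canonical twist produces the shift by $\bs{1}$ together with the substitution $\bs{s}\mapsto-\bs{s}$. The integer $m$ and the $\epsilon$ record the passage from ``$\geq$'' to ``$>$'' at the walls, since dualising swaps closed and open chambers. The condition $-\bs{\alpha}+m\bs{1}\in\mb{R}^r_{\geq 0}$ keeps the target index in the range where $V$ is insensitive to localisation (Corollary \ref{cor:localising V}), so that $\mb{D}$ of the localised module may be used.

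\textbf{Step 2: reduction via resolution of turning points.} For general holonomic $\mc{M}$, use Corollary \ref{cor:localising V} to replace $\mc{M}$ by $\mc{M}[(f_1\cdots f_r)^{-1}]$. Then choose, by Kedlaya--Mochizuki, a proper $\pi\colon \tilde X\to X$ such that $\mc{M}$ is a direct summand of (or a cohomology sheaf of) $\pi_+\tilde{\mc{M}}$, with $\tilde{\mc{M}}$ good meromorphic along $\pi^{-1}(f_1\cdots f_r=0)$ and the $f_i\circ\pi$ monomial. By Theorem \ref{thm:intro V strictness and functoriality}(3), $V^{\bs{\alpha}}\iota_+\mc{M}$ is the corresponding cohomology of $\pi_+V^{\bs{\alpha}}\tilde\iota_+\tilde{\mc{M}}$ as a $\ms{D}[\bs{s}]$-complex. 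The complex is strict, so taking $V^{\bs{\alpha}}$ commutes with $\pi_+$. Combining relative duality $\mb{D}_{\ms{D}[\bs{s}]}\pi_+\cong\pi_+\mb{D}_{\ms{D}[\bs{s}]}$ for proper $\pi$ (a base change of the usual result to $\mb{C}[\bs{s}]$) with Step 1 on $\tilde X$ and $\mb{D}\pi_+\tilde{\mc{M}}\cong\pi_+\mb{D}\tilde{\mc{M}}$ gives the isomorphism in the derived category.

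\textbf{Step 3: concentration and naturality.} To pass from the derived statement to modules, we need $\mb{D}_{\ms{D}[\bs{s}]}V^{\bs{\alpha}}\iota_+\mc{M}$ to be concentrated in a single degree. Relative holonomicity (Proposition \ref{prop:V is relative holonomic}) gives grade estimates. Combined with the fact that the right-hand side of Step 2 is a sheaf, this forces concentration, because a spectral sequence argument shows that cohomology in the other degrees vanishes. Naturality, and independence from the choices of resolution and summand, follow from the uniqueness Theorem \ref{thm:intro V uniqueness} applied to the induced filtration on $\iota_+\mb{D}\mc{M}$. Concretely, the family of duals of $V^{\bs{\alpha}}$, reindexed, satisfies the goodness and eigenvalue conditions, because dualising an eigenvalue $\gamma'$ of $\bs{L}(\partial t)$ gives $-\gamma'$ shifted by $\bs{L}(\bs{1})$. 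Indeed, a cleaner alternative to Step 2 may be to prove directly that $\bs{\alpha}\mapsto\mb{D}V^{-\bs{\alpha}+\dots}$ is a multivariate $V$-filtration and conclude by uniqueness. I would try this first if concentration in degree $0$ can be obtained independently.

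\textbf{Main obstacle.} I expect the hardest part to be the concentration in degree $0$ of $\mb{D}_{\ms{D}_X[\bs{s}]}V^{\bs{\alpha}}\iota_+\mc{M}$, equivalently a Cohen--Macaulay type property of the relative holonomic module. This is also where the flatness of Theorem \ref{thm:intro relative holonomic and flatness} is ultimately derived, so it cannot be assumed. I would first try to prove it in the model case, where the module is locally free over the log Spencer complex, and then propagate it through $\pi_+$ using strictness. The exact bookkeeping of $\epsilon$ and $m$ at the walls is a secondary difficulty.
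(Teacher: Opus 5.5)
There is a genuine gap, and it sits where you locate the main obstacle. Nothing in your plan produces the concentration of $\mb{D}_{\ms{D}_X[\bs{s}]}V^{\bs{\alpha}}\iota_+\mc{M}$ in degree $0$, and Step 2 cannot run without it. Even when $\mc{M}=\mc{H}^0\pi_+\tilde{\mc{M}}$, relative duality only computes $\mb{D}$ of the whole complex $\pi_+V^{\bs{\alpha}}\tilde{\iota}_+\tilde{\mc{M}}$. To extract the dual of a single cohomology sheaf you must already know that duals of such modules live in one degree, which is what is being proved. Grade estimates do not close this: $\mrm{SS}(V^{\bs{\alpha}}\iota_+\mc{M})$ has dimension $\dim X+r$, so Proposition \ref{prop:holonomic duality} only confines $\mrm{H}^i(\mb{D}V^{\bs{\alpha}}\iota_+\mc{M})$ to $0\leq i\leq r$. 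The reduction itself is also unavailable. A general holonomic $\mc{M}(*D)$ is not a direct summand or single cohomology sheaf of $\pi_+\tilde{\mc{M}}$ for a good meromorphic connection $\tilde{\mc{M}}$; the existence proof needs Beilinson gluing and extensions. D\'evissage along such triangles would require a \emph{natural} comparison map between $\mb{D}V^{\bs{\alpha}}\iota_+(-)$ and $V^{\ldots}\iota_+\mb{D}(-)$, which you never construct. Step 1 also misdescribes the object: $V^{\bs{\alpha}}\iota_+\mc{M}$ is not a locally free $\mc{O}[\bs{s}]$-lattice. For $f_i=x_i$, Proposition \ref{prop:V-filtration on graph} gives $\ms{D}_X[\bs{s}]\bs{x^s}\otimes_{V^{\bs{0}}\ms{D}_X}V_*^{\bs{\alpha}}\mc{M}$. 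Already for $\mc{O}_{\mb{C}}(*0)$ and $f=x$ this contains $\partial_x\cdot x^s=s\,x^{-1}x^s$, and in the irregular case $V_*^{\bs{\alpha}}\mc{M}$ is not even $\mc{O}$-coherent.

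The missing idea completes your ``cleaner alternative'' and makes concentration an output rather than an input.
\begin{enumerate}
\item Work with $V_*$ and first dualise the localisation. The automorphism $\mu$ of $\ms{D}_X(*D)[\bs{s}]$, $\xi\mapsto\xi+\sum_i s_i\,\xi(f_i)/f_i$, untwists $\mc{M}(*D)[\bs{s}]\bs{f^s}$ to $\mc{M}(*D)[\bs{s}]$. Hence $\mb{D}_{\ms{D}_X(*D)[\bs{s}]}(\iota_+\mc{M}(*D))\cong(\iota_+\mb{D}\mc{M}(*D))_{\bs{s}\mapsto-\bs{s}}$, a single module because $\mc{M}$ is holonomic.
\item Take an injective resolution of $(\ms{D}_X\otimes\omega_X^{-1})[\bs{s}]$ over $\ms{D}_X[\bs{s},\bs{t},\bs{t}^{-1}]$, so that $t_i$ acts on duals by conjugation. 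Assemble $U^{\bs{\alpha}}:=(\mb{D}V_*^{-\bs{\alpha}+\epsilon\bs{1}}\iota_+\mc{M})_{\bs{s}\mapsto-\bs{s}}$ into one $\mb{R}^r$-filtered complex.
\item This is a good wall and chamber object with walls $-\mc{W}$. The eigenvalue condition on cones across a single wall holds formally, since duality is $\mb{C}[\bs{s}]$-linear. Lemma \ref{lem:derived multivariate} then shows the complex is strict and induces the $V_*$-filtration on each cohomology sheaf.
\item Strictness gives $\mc{H}^i(U^{\bs{\alpha}})\hookrightarrow\mc{H}^i(\varinjlim_{\bs{\alpha}}U^{\bs{\alpha}})$. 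On the target $t_i$ acts invertibly and $t_i-f_i$ locally nilpotently, so it equals its localisation at $D$, i.e.\ the localised dual from the first step.
\item Hence $\mc{H}^i(U^{\bs{\alpha}})=0$ for $i\neq0$, and $\mc{H}^0(U^{\bs{\alpha}})=V_*^{\bs{\alpha}}\iota_+\mb{D}\mc{M}$ by uniqueness.
\end{enumerate}
The stated form follows from $V^{\bs{\alpha}}=V_*^{\bs{\alpha}}$ on $\mb{R}^r_{>0}$ and translating by $(t_1\cdots t_r)^m$. No resolution of turning points or model-case computation is needed.
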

When $r=1$, Theorem \ref{thm:duality} refines the classical self-duality of the nearby cycles functor for $\sD$-modules \cite{Saitodual,MM}; see Remark \ref{remark: recover usual self-duality}.

\subsection{Mixed Hodge modules}

So far, everything we have said about the multivariate $V$-filtration holds for arbitrary holonomic $\ms{D}$-modules. Our next few results refine some of these statements in the more structured setting of Saito's mixed Hodge modules. This is of intrinsic interest, and furthermore provides some essential Hodge-theoretic input into our application to the Strong Monodromy Conjecture.

Intuitively, mixed Hodge modules are to $\ms{D}$-modules as mixed Hodge structures are to vector spaces. More precisely, a mixed Hodge module (in the sense of \cite{Saito88, Saito90}) is a regular holonomic $\ms{D}$-module $\mc{M}$ equipped with a \emph{weight filtration} $W_\bullet\mc{M}$, a \emph{Hodge filtration} $F_\bullet \mc{M}$ and a $\mb{Q}$-structure on the associated perverse sheaf. These structures are required to satisfy some stringent inductive conditions, which we will not attempt to recall in this paper; see \cite{OverviewMHM} for an excellent introduction. The main examples are polarisable variations of (pure) Hodge structure, as well as mixed versions of these. The main upshot of the general theory is that the category of mixed Hodge modules inherits both the functoriality (i.e.\ six operations) of the category of holonomic $\ms{D}$-modules and the strictness properties of the category of mixed Hodge structures.

As has been observed previously \cite{MPbirational,SY23,DY25}, for many applications (including to singularity theory), one sometimes wants to relax this structure a little, and allow some $\ms{D}$-modules whose associated perverse sheaves do not admit $\mb{Q}$-structures. There are, broadly speaking, two ways to do this. The least ambitious is to formally extend scalars in Saito's theory from $\mb{Q}$ to $\bar{\mb{Q}}$, see \cite[\S 1.4]{DY25}. This gives a notion of $\bar{\mb{Q}}$-mixed Hodge module; the underlying $\ms{D}$-module of such a thing will always be a direct summand of the $\ms{D}$-module underlying a $\mb{Q}$-mixed Hodge module of a very particular type. This is essentially the approach taken in \cite{MPbirational}, for example (albeit dressed in more expensive clothing). Alternatively, one can work from the beginning in the setting of \emph{complex} mixed Hodge modules of \cite{MHMproject}, which builds a version of the entire theory without mentioning $\mb{Q}$-structures or perverse sheaves. The latter approach, taken in \cite{SY23}, has the advantage of being slightly more flexible (as well as giving a different perspective on polarisations, which can be useful, see e.g.\ \cite{DV23, DLY}), but the disadvantage that the \emph{mixed} part of the theory of complex mixed Hodge modules is not yet completely written down. We will generally state our results without specifying which theory we are using, but indicate where we must either work in a restricted context (so that the $\bar{\mb{Q}}$ theory applies) or assume some expected property of the theory of complex mixed Hodge modules.

Now suppose that $f_1, \ldots, f_r$ are holomorphic functions on a complex manifold $X$ and that $\mc{M}$ is a mixed Hodge module on $X$ such that $\mc{M} = \mc{M}[(f_1 \cdots f_r)^{-1}]$ as a $\ms{D}$-module. Then we can consider the direct image $\iota_+\mc{M}$ as a mixed Hodge module on $X \times \mb{C}^r$; the Hodge filtration $F_\bullet$ induces a filtration on $V^{\bs{\alpha}}\iota_+\mc{M}$, which we denote by the same letter. In this context, we have Hodge-filtered versions of Theorems \ref{thm:intro relative holonomic and flatness} and \ref{thm:intro change of functions}.

\begin{thm}[Theorem \ref{thm:hodge flatness} and Theorem \ref{thm:filtered change of functions}]\label{thm:intro hodge flatness}
For $\mc{M}$ as above, we have:\begin{enumerate}
\item For any $\bs{\alpha} \in \mb{R}^{r}_{\geq 0}$, the stalks of $\gr^{F}V^{\bs{\alpha}}\mc{M}$ and $V^{\bs{\alpha}}\mc{M}$ are free over $\C[\bs{s}]$. 
\item Moreover, if $\bs{\alpha}\leq \bs{\beta}\in \mb{R}^{r}_{\geq 0}$ are separated by a single wall $W \subset \mb{R}^r$, then the stalks of $\gr^{F}(V^{\bs{\alpha}}\iota_{+}\mc{M}/V^{\bs{\beta}}\iota_{+}\mc{M})$ are free relative to $W$ (Definition \ref{defn:relative freeness}).
\item In the setting of Theorem \ref{thm:intro change of functions}, we have 
\[ \gr^F V^{\bs{\alpha}}\iota_+\mc{M}(-r) \otimes_{\mb{C}[\bs{s}]} \mb{C}[\bs{s}'] = \gr^F V^{\bs{\alpha}'}\iota_+'\mc{M}(-r').\]
\end{enumerate}
\end{thm}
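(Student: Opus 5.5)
The plan is to upgrade the non-filtered statements (Theorems \ref{thm:intro relative holonomic and flatness} and \ref{thm:intro change of functions}) to Rees modules. Set
\[ R_F V^{\bs{\alpha}} := \bigoplus_p F_pV^{\bs{\alpha}}\iota_+\mc{M}\, z^p, \]
a module over $R_F\ms{D}_X[\bs{s}]$. Statement (1) then follows from $\mb{C}[z]$-flatness together with freeness modulo $z$. The key structural input is the Hodge-theoretic strictness. The multivariate $V$-filtration of $\iota_+\mc{M}$ should be compatible with $F$ in Saito's sense, i.e.\ the bifiltered complexes behave strictly. The route is the existence proof through resolution of singularities: after a log resolution adapted to $D$, the module becomes a meromorphic connection with a mixed Hodge structure, namely a unipotent-by-finite admixed variation. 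There the naive formula \eqref{eq:intro naive multivariate V} computes $V^\bullet$, and $F$ is given explicitly by Deligne extensions: $F_pV^{\bs{\alpha}}$ is spanned by $\bs{t}^{\bs{k}}\,\d^{\bs{\ell}}$ applied to lattices $F_p$ of the Deligne extension of index $\bs{\alpha}$. So $\gr^F V^{\bs{\alpha}}$ is visibly free over $\mb{C}[\bs{s}]$ upstairs. Indeed, in local coordinates it is a polynomial module in the $\partial_{t_i}t_i$ directions over a locally free $\mc{O}$-module.

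Next, push forward. Apply the filtered version of Theorem \ref{thm:intro V strictness and functoriality}(3): Saito's strictness of $\pi_+$ for mixed Hodge modules, combined with the strictness of $V$, should give that the bifiltered direct image is bistrict. This is the step I expect to be the main obstacle. Bistrictness is not automatic from separate strictness; it should follow from realising each $V^{\bs{\alpha}}$-jump as a nearby-cycle-type mixed Hodge module via the localisation description of Lemma \ref{lem:localised chambers}. Using that description, I would reduce to one-variable $V$-filtrations along generic slopes $\bs{L}$, where Saito's compatibility of $F$ with $V$ is known. I would then argue that $\gr^F$ of the pushforward is the pushforward of $\gr^F$, which is a coherent complex over $\mc{O}_{T^*}[\bs{s}]$ concentrated in one degree.

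Freeness of stalks then follows by a duality argument mimicking the unfiltered proof. Theorem \ref{thm:intro duality} has a Hodge-filtered version, because duality of mixed Hodge modules is filtered-strict. It gives $\mb{D}(R_FV^{\bs{\alpha}})$ as a Rees module of the same type, up to a Tate twist, in a single cohomological degree. So $R_FV^{\bs{\alpha}}$ is Cohen–Macaulay over $R_F\ms{D}_X[\bs{s}]$ with the expected relative dimension. Combined with relative holonomicity, Auslander–Buchsbaum over $\mb{C}[\bs{s},z]$ gives flatness over $\mb{C}[\bs{s}]$ and $\mb{C}[z]$ together, hence freeness of $\gr^F$ stalks. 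Graded flatness over a polynomial ring is equivalent to freeness.

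For (2) I would run the same argument with the quotient across a single wall $W=\{\bs{L}=\gamma\}$. Its support lies over the hyperplane family $\bs{L}(\bs{s})\in -\gamma'+\ldots$, and the dual is again such a quotient. This gives Cohen–Macaulayness of codimension one, hence freeness relative to $W$.

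For (3), tensor the Rees module along the ring map $s_i\mapsto \sum_j d_{ij}s_j'$. By (1) together with Theorem \ref{thm:intro change of functions}, the specialisation is exact ($\mathrm{Tor}$ vanishes by freeness), and the comparison is realised by the monomial map $\mb{C}^r\to\mb{C}^{r'}$ on graphs. I would check the twist $(-r)$ versus $(-r')$ by comparing Hodge filtrations on $\iota_+$ via the shift $F_p\iota_+\mc{M}=\bigoplus F_{p-|\bs{k}|-r}\mc{M}\,\d^{\bs{k}}$ for a codimension-$r$ graph embedding. The filtered isomorphism would then be verified on the open set where all $f_i$ are units and extended using freeness.
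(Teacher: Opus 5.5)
Your proposal has a genuine gap. The two inputs it rests on are not available, and the tool that actually makes the statement accessible never appears.

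\emph{The reduction to a resolution fails.} A general mixed Hodge module $\mc{M}$ with $\mc{M}=\mc{M}[(f_1\cdots f_r)^{-1}]$ can be singular inside $U=X\setminus D$. So it is not $\pi_+$ of an admissible variation with poles along $\pi^{-1}(D)$. Any d\'evissage (weight filtration, Beilinson gluing) would need $\gr^FV^{\bs{\alpha}}$ to be exact on short exact sequences of mixed Hodge modules. That is precisely the $(F,V)$-bistrictness you flag as the main obstacle.

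\emph{The proposed fix for bistrictness does not work as stated.} Saito's compatibility of $F$ with $V$ is for $V$-filtrations along a single function, i.e.\ a smooth hypersurface. The filtrations $\ls{\bs{L}}{V}$ for generic slopes are weighted filtrations along higher-codimension strata. Moreover, $V^{\bs{\alpha}}$ is an intersection of them over $\bs{L}$, which is exactly where compatibility with $F$ stops being formal.

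\emph{There is no Hodge-filtered version of Theorem~\ref{thm:duality} to quote.} Filtered strictness of duality for mixed Hodge modules says nothing directly about $R_FV^{\bs{\alpha}}\iota_+\mc{M}$, which is not a mixed Hodge module. The claim that its dual is a strict Rees module in one degree (Cohen--Macaulayness of $\gr^FV^{\bs{\alpha}}$) is essentially as strong as the freeness you want.

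\emph{Part (3) does not follow from agreement on $U$.} Agreement on $U$ plus freeness over $\mb{C}[\bs{s}]$ cannot pin down a filtration along $D$. For instance, $F_pV^{\bs{\alpha}}$ and $F_pV^{\bs{\beta}}$ already agree on $U$ for all $\bs{\alpha},\bs{\beta}$, and both are free.

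The missing idea is the Hodge-theoretic Malgrange--Mellin formula, Theorem~\ref{thm:multivariate magic formula}. For fixed $p$ and $n\gg0$ it gives $F^*_{p+r}V_*^{\bs{\alpha}}\iota_+\mc{M}\cong F_p\,j_*\mc{M}_n$. Here $\mc{M}_n=j^*\mc{M}\otimes(f_1,\ldots,f_r)^*(\mc{E}_{n,\alpha_1}\boxtimes\cdots\boxtimes\mc{E}_{n,\alpha_r})$ is an honest mixed Hodge module, and each $s_i+\alpha_i\colon\mc{M}_n(1)\to\mc{M}_n$ is a morphism of mixed Hodge modules.

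For (1): $\gr^FV^{\bs{\alpha}}$ is graded and bounded below, so Lemma~\ref{lemma: flatness over polynomial ring} reduces freeness to vanishing of negative cohomology of the single Koszul complex $\mb{C}\otimes^{\mrm{L}}_{\mb{C}[\bs{s}]}\gr^FV^{\bs{\alpha}}$. No fibrewise Cohen--Macaulay argument is needed. In any bounded range of Hodge degrees this is $\gr^F$ of the Koszul complex of $j_*\mc{M}_n$. That complex consists of mixed Hodge modules, hence is strict. It has no negative cohomology by comparison with $j^*\mc{M}[\bs{s}]\bs{f^s}$ on $U$ (Lemma~\ref{lem:filtered koszul}), exactness of $j_*$, and Remark~\ref{rmk:hodge lower bound}.

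For (2): the $!$-variant (Remark~\ref{rmk:! magic formula}) identifies $V_*^{\bs{\alpha}}/V_*^{\bs{\alpha}+\epsilon\bs{1}}$ in low Hodge degrees with $\mrm{Cone}(j_!\mc{M}_n\to j_*\mc{M}_n)$. So its Koszul complex over the polynomial ring on any linear subspace of $\mrm{span}\{s_1,\ldots,s_r\}$ not containing $\bs{L}(\bs{s})$ is strict. The vanishing of negative cohomology supplied by the unfiltered Theorem~\ref{thm:flatness} then passes to $\gr^F$. No filtered duality enters.

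For (3): using (1), replace $\otimes$ by $\otimes^{\mrm{L}}$. Strictness of $(V_*^{\bs{\alpha}},F)\otimes^{\mrm{L}}(\mb{C}[\bs{s}'],F)\to(\iota'_+\mc{M},F)$ then reduces, as in Corollary~\ref{cor: change of functions}, to strict injectivity across single walls. That is exactly the relative freeness in (2).
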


As a corollary of Theorem \ref{thm:intro hodge flatness}, we have the following nontrivial generalisation of a classical result of Budur-Saito \cite{BS05}, which allows us to control the lowest piece of the Hodge filtration on the multivariate $V$-filtration using birational geometry.

\begin{cor} [Corollary \ref{cor:log canonical}]\label{cor:multivariate Budur-Saito}
For $\bs{\alpha} = (\alpha_1, \ldots, \alpha_r) \in \mb{R}_{> 0}^r$, we have an equality of ideals
\[ \mc{J}\left(X,\sum_{i=1}^r (\alpha_i - \epsilon)D_i\right) =F_rV^{\bs{\alpha}}\iota_+\mc{O}_X \subset F_r\iota_+\mc{O}_X = \mc{O}_X,\]
where $D_i = \mrm{div}(f_i)$ and $\mc{J}$ denotes the multiplier ideal of an $\mb{R}$-divisor.
\end{cor}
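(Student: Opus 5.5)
Following the remark after Theorem \ref{thm:intro V existence}, the plan is to reduce to the single-variable theorem of Budur--Saito \cite{BS05} by specialising along a line in $\bs{s}$-space, using the filtered change-of-functions statement, Theorem \ref{thm:intro hodge flatness}(3).

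First assume $\bs{\alpha}\in\mb{Q}^r_{>0}$, and write $\bs{\alpha}-\epsilon\bs{1} = (p_1/N,\ldots,p_r/N)$ with $N,p_i\in\mb{Z}_{>0}$; later I would handle real $\bs{\alpha}$ by perturbation. Set $g=\prod_i f_i^{p_i}$, so $r'=1$, $d_{i1}=p_i$, and the zero loci agree. Theorem \ref{thm:intro change of functions} and its Hodge version match the piece with $\alpha'$ to the piece with $\bs{\alpha}=\alpha'(p_1,\ldots,p_r)$. Since the walls are discrete, the multivariate filtration is constant on the chamber containing $\bs{\alpha}-\epsilon'\bs{1}$, and I expect it is also constant slightly inside the direction $(p_i)$ from it. So I choose $\alpha'=1/N+\delta$ with $\delta$ small, so that the point $\alpha'\bs{p}$ lies in the same chamber as $\bs{\alpha}$, and in the ray direction $\bs{p}$ the filtration is determined from the closed side.

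Taking $F_r$ of both sides of Theorem \ref{thm:intro hodge flatness}(3), and keeping track of the Tate twist $(-r)$ against $(-1)$, should give an identification of $F_rV^{\bs{\alpha}}\iota_+\mc{O}_X\otimes_{\mb{C}[\bs{s}]}\mb{C}[s']$ with $F_1V^{\alpha'}\iota'_+\mc{O}_X$. Both lowest Hodge pieces are $\mc{O}_X$-submodules of $\mc{O}_X\cdot\bs{f^s}$. Budur--Saito \cite{BS05} identifies $F_1V^{\alpha'}\iota'_+\mc{O}_X$ with $\mc{J}(X,(\alpha'-\epsilon)\,\mathrm{div}\,g)=\mc{J}(X,\sum_i(p_i/N)D_i)$ for $\delta<\epsilon$. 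Comparing with the target $\mc{J}(X,\sum(\alpha_i-\epsilon)D_i)$ then only requires matching the $\epsilon$-conventions, using that multiplier ideals are right-continuous in the coefficients. Freeness of $\gr^F$ from Theorem \ref{thm:intro hodge flatness}(1) is what guarantees that the lowest piece $F_r$ does not change under the base change, i.e. that it is $\mc{J}\otimes\mb{C}[\bs{s}]$-shaped, since $F_r$ of a free graded module specialises faithfully.

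For irrational $\bs{\alpha}$, both sides are locally constant in $\bs{\alpha}$ on small neighbourhoods from the correct side. The right side is constant on chambers. For the left side I would use the resolution description of the jumping walls via Corollary \ref{cor:walls via resolution}, where the walls are exactly $\sum_i \mrm{ord}_E(f_i)x_i\in\mb{Z}$ for divisors $E$ over $X$, the same as the multiplier-ideal walls. So I can approximate $\bs{\alpha}$ by rational points.

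\textbf{Main obstacle.} I expect the hardest step to be the boundary bookkeeping. The change-of-functions theorem relates $V^{\bs{\alpha}}$ only at points on the ray $\mb{R}_{>0}\bs{p}$, and I must check that the chamber containing $\bs{\alpha}-\epsilon\bs{1}$ and the closure relations needed for $V^{\bs{\alpha}}$ (defined as a decreasing filtration with closed-side conventions) are compatible along the chosen ray. If this proves delicate, I would instead prove directly that $F_rV^{\bs{\alpha}}$ equals $\mc{J}$ via a log resolution. That route uses the normal-crossing computation of \S\ref{subsec:normal crossings}, where $V$ is the naive intersection formula and the lowest Hodge piece is the monomial ideal, followed by the strict direct image of Theorem \ref{thm:intro V strictness and functoriality}(3) combined with Saito's strictness for $F$, giving $F_r = \pi_*(\omega_{Y/X}\otimes\mc{O}(-\lfloor\sum(\alpha_i-\epsilon)\pi^*D_i\rfloor))$.
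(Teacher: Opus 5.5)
Your main route is essentially the paper's: you reduce to rational $\bs{\alpha}$ using that the relevant walls are rational, then specialise along $s_i\mapsto d_i s$ to a single monomial $g=\prod_i f_i^{d_i}$ via the filtered change-of-functions theorem (after passing to $\mc{O}_X(*D)$ via Corollary \ref{cor:localising V} and Lemma \ref{lem:hodge * formula}, which you leave implicit), read off the lowest Hodge piece, and invoke Budur--Saito. The paper avoids your perturbation and the resulting $\epsilon$-bookkeeping, in which you reuse one $\epsilon$ in two different roles, by taking $\bs{d}=N\bs{\alpha}\in\mathbb{Z}^r$ and $\alpha'=1/N$; then $\bs{\alpha}$ lies exactly on the ray, and the only matching needed is $\mc{J}(X,\sum_i(\alpha_i-\epsilon d_i)D_i)=\mc{J}(X,\sum_i(\alpha_i-\epsilon)D_i)$.
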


Our proof of Theorem \ref{thm:intro hodge flatness} makes use of the following multivariate generalisation of the main result in \cite{DY25}, which one can think of as a Hodge-theoretic enhancement of the Malgrange-Mellin transform $\iota_+\mc{M} \cong \mc{M}[\bs{s}]\bs{f^s}$.

To motivate the result, observe that $\iota_+\mc{M}$ naturally inherits the structure of a mixed Hodge module from $\mc{M}$ and hence carries a Hodge filtration. On the other hand, $\mc{M}[\bs{s}]\bs{f^s}$ does not carry any {\it a priori} Hodge structure (there is no theory of mixed Hodge $\ms{D}_X[\bs{s}]$-modules, and $\mc{M}[\bs{s}]\bs{f^s}$ is not even holonomic as a $\ms{D}_X$-module). However, if we fix $\bs{\alpha} = (\alpha_1, \ldots, \alpha_r) \in \mb{R}^r$ and $n > 0$, then the $\ms{D}_X$-module
\[ \frac{\mc{M}[\bs{s}]\bs{f^s}}{((s_1 + \alpha_1)^n, \ldots, (s_r + \alpha_r)^n)} \]
\emph{is} holonomic and in fact carries a natural complex mixed Hodge module structure. These structures are compatible for different $n$, and thus define a pro-object $\mc{M}[[\bs{s} + \bs{\alpha}]]\bs{f^s}$ in the category of mixed Hodge modules on $X$. These pro-objects can be taken as stand-ins for the non-existent Hodge version of $\mc{M}[\bs{s}]\bs{f^s}$. Pro-objects of this kind play a big role in many areas, including Beilinson's construction of the nearby cycles and maximal extension functors \cite{Beilinson} and the free-monodromic local systems studied in geometric representation theory (e.g.\ \cite{bezrukavnikov-yun}).

Now, $\mc{M}[[\bs{s} + \bs{\alpha}]]\bs{f^s}$ is so far defined just as a pro-object, i.e.\ the inverse limit is interpreted in a formal categorical sense. We can of course also consider the inverse limit in the category of (not necessarily quasi-coherent) sheaves of $\ms{D}_X[\bs{s}]$-modules; unless otherwise specified, we will denote this sheaf by the same symbol. Because of its Hodge-theoretic origin, the sheaf $\mc{M}[[\bs{s} + \bs{\alpha}]]\bs{f^s}$ comes equipped with a filtration defined by the formula 
\[ F_p \mc{M}[[\bs{s} + \bs{\alpha}]] \bs{f^s} := \varprojlim_n F_p\left(\frac{\mc{M}[\bs{s}]\bs{f^s}}{((s_1 + \alpha_1)^n, \ldots, (s_r + \alpha_r)^n)}\right).\]
Note, however, that because limits and colimits do not commute, this filtration will not usually be exhaustive: the same phenomenon occurs for $\mb{C}[[s]] = \varprojlim_n \mb{C}[s]/(s^n)$ and the order filtration on $\mb{C}[s]$, for example. Thus, the union of the $F_p$'s defines an interesting subsheaf inside $\mc{M}[[\bs{s} + \bs{\alpha}]]\bs{f^s}$. The Hodge version of the Malgrange-Mellin transform is:

\begin{thm}[Theorem \ref{thm:multivariate magic formula}] \label{thm:intro magic formula}
If $\mc{M}$ is a mixed Hodge module on which $f_1 \cdots f_r$ acts bijectively, then for $\bs{\alpha} \in \mb{R}^r_{\geq 0}$ we have an isomorphism of filtered $\ms{D}_X[\bs{s}]$-modules
\[ (V^{\bs{\alpha}}\iota_+\mc{M}, F_{\bullet + r} V^{\bs{\alpha}}\iota_+\mc{M}) \cong \left(\bigcup_p F_p \mc{M}[[\bs{s} + \bs{\alpha}]]\bs{f^s}, F_\bullet \mc{M}[[\bs{s} + \bs{\alpha}]]\bs{f^s}\right).\]
\end{thm}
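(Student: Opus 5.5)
The plan is to reduce to the single-variable statement of \cite{DY25} coordinate by coordinate, using the structure of the multivariate $V$-filtration established above.

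\textbf{Step 1: describe $V^{\bs{\alpha}}\iota_+\mc{M}$ as a limit of quotients.} Under the Malgrange-Mellin transform $\iota_+\mc{M}\cong\mc{M}[\bs{s}]\bs{f^s}$ we have $s_i=-\partial_{t_i}t_i$. By Lemma \ref{lem:localised chambers}, after localising at the maximal ideal $\fm_{\bs{\alpha}}=(s_1+\alpha_1,\ldots,s_r+\alpha_r)$, the filtration is constant away from finitely many walls through $-\bs{\alpha}$. For $\bs{\alpha}\in\mb{R}^r_{\geq 0}$ the piece $V^{\bs{\alpha}}$ contains everything with generalised eigenvalues of $-\bs{s}$ lying ``above'' $\bs{\alpha}$. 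I would therefore construct a natural map
\[ V^{\bs{\alpha}}\iota_+\mc{M}\longrightarrow \varprojlim_n \mc{M}[\bs{s}]\bs{f^s}/(\fm_{\bs{\alpha}}^{(n)}), \qquad \fm_{\bs{\alpha}}^{(n)}=((s_1+\alpha_1)^n,\ldots,(s_r+\alpha_r)^n).\]
Flatness over $\mb{C}[\bs{s}]$ (Theorem \ref{thm:intro relative holonomic and flatness}) and relative holonomicity should show that this map is injective. The key input is that the $\fm_{\bs{\alpha}}$-adic completion of the finitely generated flat module $V^{\bs{\alpha}}$ is faithful, and that no sections are killed, by a Krull intersection argument applied to stalks.

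\textbf{Step 2: identify the Hodge filtrations.} The mixed Hodge module structure on $\mc{M}[\bs{s}]\bs{f^s}/\fm_{\bs{\alpha}}^{(n)}$ should be defined as a suitable unipotent nearby-cycle or maximal-extension-type construction, applied to $\iota_+\mc{M}$ twisted by the pro-unipotent free-monodromic local system $\mc{L}_n$ on $(\mb{C}^*)^r$. Concretely, the quotient is the pullback along the graph of $\mc{M}\boxtimes \mc{O}_{(\mb{C}^*)^r}[\bs{s}]\bs{t^{s}}/\fm_{\bs{\alpha}}^{(n)}$, where $\bs{t}^{\bs{s}}$ is the exponentiated monomial local system on the torus, whose Hodge filtration is explicit.

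Since $\mc{L}_n$ is an external product of rank-$n$ unipotent local systems on each factor $\mb{C}^*$, the Hodge filtration of the quotient can be computed iteratively, one coordinate at a time. At each stage the single-variable formula of \cite{DY25} applies, with the other variables treated as parameters. This works because the multivariate $V$-filtration is compatible with restricting to a single coordinate via Theorem \ref{thm:intro change of functions}: for positive indices, iterating the single-variable $V$-filtrations in chambers near $-\bs{\alpha}$ recovers $V^{\bs{\alpha}}$.

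The output of this step is an identification $F_{p+r}V^{\bs{\alpha}}\iota_+\mc{M}\twoheadrightarrow F_p(\text{quotient}_n)$, compatible in $n$. Passing to the limit gives $F_{p+r}V^{\bs{\alpha}}\subseteq F_p\mc{M}[[\bs{s}+\bs{\alpha}]]\bs{f^s}$. The shift by $r$ comes from the $r$ graph directions, one for each coordinate.

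\textbf{Step 3: reverse inclusion, which I expect to be the main obstacle.} It remains to show that every compatible system in $\varprojlim_n F_p(\text{quotient}_n)$ comes from $F_{p+r}V^{\bs{\alpha}}$. This is a Mittag-Leffler and finiteness issue. I would use that $\gr^F V^{\bs{\alpha}}$ is coherent and, by the relative freeness in Theorem \ref{thm:intro hodge flatness}(1) (assuming that part can be proved independently, or proved simultaneously by induction on $r$), free over $\mb{C}[\bs{s}]$. Then the following holds:
\[ F_{p+r}V^{\bs{\alpha}}\otimes\mb{C}[\bs{s}]/\fm_{\bs{\alpha}}^{(n)} = F_p(\text{quotient}_n), \]
with the filtration strict, so the inverse limit of $F_p$ is the $\fm_{\bs{\alpha}}$-adic completion of the coherent $\mc{O}_X[\bs{s}]$-module $F_{p+r}V^{\bs{\alpha}}$.

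The remaining claim is that the union over $p$ of these completions equals $V^{\bs{\alpha}}$ itself, rather than its completion. This is the delicate point. I would prove it via the eigenvalue condition. Because $V^{\bs{\alpha}}$ is generated over $\ms{D}_X[\bs{s}]$ by finitely many $F_p$-pieces, and $\partial$'s raise $F$, an element of the union is annihilated modulo $V^{\bs{\alpha}}$ by a polynomial in $\bs{s}$ invertible at $-\bs{\alpha}$. Such a polynomial is a unit in the completion, which forces the element to lie in $V^{\bs{\alpha}}$.

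If this circularity with freeness is problematic, the fallback is to first prove the theorem in the normal crossings case, where $\mc{M}$ is a variation of Hodge structure with monomial $f_i$ and everything is explicit. One would then reduce the general case to it through a log resolution, using the direct image compatibility of Theorem \ref{thm:intro V strictness and functoriality}(3) together with Saito's strictness for the Hodge filtration.
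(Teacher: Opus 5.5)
Your Step~2 carries the whole argument, and it does not go through. You justify the coordinate-by-coordinate computation by saying that Theorem~\ref{thm:intro change of functions} shows that iterating single-variable $V$-filtrations near $-\bs{\alpha}$ recovers $V^{\bs{\alpha}}$. That theorem is about monomial substitutions $f'_j=\prod_i f_i^{d_{ij}}$ whose product has the \emph{same} zero locus as $f_1\cdots f_r$. It says nothing about isolating one coordinate, or about specialising $s_1,\ldots,s_{r-1}$ to $-\alpha_1,\ldots,-\alpha_{r-1}$.

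Suppose you actually iterate \cite{DY25}, applying it to $f_r$ and to the mixed Hodge module $\mc{N}_n$ obtained by truncating in the first $r-1$ variables. The output is the single-variable $V$-filtration along $f_r$ of an auxiliary holonomic module, carrying the Hodge filtration of the $*$-extension of $\mc{N}_n$ across $D_r$. To compare this with $(V^{\bs{\alpha}}\iota_+\mc{M},F_{\bullet+r})$ you must know how the Hodge filtration of a $*$-extension across one component interacts with the $V$-filtrations along the others. That is a Hodge-filtered form of $\mc{M}(*D)=\ms{D}_X\otimes_{V^{\bs{0}}\ms{D}_X}V^{\bs{0}}\mc{M}(*D)$ along a normal crossings divisor, which is Proposition~\ref{prop:hodge !* formula} in the paper. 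Your proposal supplies nothing playing this role, and the surjection $F_{p+r}V^{\bs{\alpha}}\iota_+\mc{M}\twoheadrightarrow F_p(\text{quotient}_n)$ is simply asserted.

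Step~3 is circular: Hodge-filtered freeness (Theorem~\ref{thm:intro hodge flatness}) is derived in the paper \emph{from} this theorem. Also, $F_{p+r}V^{\bs{\alpha}}\otimes\mb{C}[\bs{s}]/\fm_{\bs{\alpha}}^{(n)}$ is not meaningful, since $s_i$ raises Hodge degree. On the other hand, the ``delicate'' decompletion issue does not exist. Write $\mc{M}_n=j_*\bigl(\mc{M}[\bs{s}]\bs{f^s}/((s_i+\alpha_i)^n)_i\bigr)$. The kernels of $\mc{M}_{n+1}\to\mc{M}_n$ are Tate-twisted by at least $n$, and Hodge filtrations of $*$-extensions are bounded below. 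So for fixed $p$ the system $F_p\mc{M}_n$ is eventually constant (Lemma~\ref{lemma: stabilisation of the inverse limit}), and no completion ever appears.

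The paper's route, for comparison:
\begin{enumerate}
\item After localising along $D$, Lemma~\ref{lem: * tensor product formula} makes the Hodge filtration on each $\mc{M}_n$ the naive one. The filtered Malgrange--Mellin transform then gives $(\iota_+\mc{M}(*D),F^*_{\bullet+r})$ (Lemma~\ref{lem:localised magic formula}).
\item The map from $\bigcup_pF_pj_*\mc{M}[[\bs{s}+\bs{\alpha}]]\bs{f^s}$ to this localisation is strict injective (Lemma~\ref{lem:inverse limit strict inclusion}). This is the one place where a one-function-at-a-time argument is used, and it only concerns injectivity.
\item When the $f_i$ are coordinates, the image is identified with $V^{\bs{\alpha}}_*$. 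This applies Proposition~\ref{prop:hodge !* formula} to $\mc{M}_n$, together with $V^{\bs{0}}_*\mc{M}_n=(V^{\bs{\alpha}}_*\mc{M}(*D))[\bs{s}]\bs{f^s}/((s_i+\alpha_i)^n)_i$ and Proposition~\ref{prop:V-filtration on graph}.
\item The general case follows from the graph embedding $X\hookrightarrow X\times\mb{C}^r$, which turns the $f_i$ into coordinates for an \emph{arbitrary} mixed Hodge module, plus Theorem~\ref{thm:multivariate V direct image}.
\end{enumerate}
Step 4 is also why your fallback (a variation of Hodge structure with normal crossings, then a log resolution) is the wrong reduction. It would still need the normal-crossings Hodge extension formula in the base case, and additionally a d\'evissage of a general $\mc{M}$ that you do not indicate.
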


Note that in order to make sense of the right hand side in the theory of $\bar{\mb{Q}}$-mixed Hodge modules, one needs to assume that $\bs{\alpha} \in \mb{Q}^r$. Theorem \ref{thm:intro magic formula} applies in that context, and this suffices for our applications to the Strong Monodromy Conjecture. The assumptions about the general theory needed to prove the theorem for complex mixed Hodge modules (in which case the right hand side makes sense for $\bs{\alpha} \in \mb{R}^r$) are explained in \S \ref{sec: recollection}.

In fact, Theorem \ref{thm:intro magic formula} was really the starting point for the present work: having treated the case $r = 1$ in \cite{DY25}, we knew that the right hand side defined a filtration of the left hand side satisfying certain properties. (The inverse limit construction and its properties play a key role in the deformation and wall-crossing theory for Hodge filtrations developed by Vilonen and the first named author in \cite[\S 3]{DV23}; see also \cite[\S 2.4]{DMB} for a treatment closer to the multivariate story considered here.) This guided us to the definition of the multivariate $V$-filtration, which we were then able to show exists, is unique and is equivalent to Sabbah's filtration for arbitrary holonomic $\ms{D}$-modules. Similarly, Theorem \ref{thm:intro hodge flatness} follows quite easily from Theorem \ref{thm:intro magic formula} using the strictness properties of mixed Hodge modules on the right hand side; this implies Theorems \ref{thm:intro relative holonomic and flatness} and \ref{thm:intro change of functions} when $\mc{M}$ underlies a mixed Hodge module, which we were then able to prove directly for arbitrary holonomic $\ms{D}$-modules.

We conclude this subsection by explaining a result that expresses the jumping set of the multivariate $V$-filtration in terms of the zero loci of the Bernstein--Sato ideal. Given a tuple of holomorphic functions $f_1,\ldots,f_r:X\to \C$, let $D=\textrm{div}(f_1\cdots f_r)$ and $\iota:X\to X\times \C^r$ be the graph embedding. Consider the exponential map
\[ \textrm{Exp}: \C^r \to (\mb{C}^{\times})^r, \quad (\alpha_1,\ldots,\alpha_r)\mapsto (e^{2\pi i\alpha_1},\ldots, e^{2\pi i\alpha_r}).\]

\begin{prop}[{Proposition \ref{prop: Bernsteinideal and Vfiltration}}]
We have
\[\mathrm{Exp}(\mathrm{Zero}(B_F))=\mathrm{Exp}\overline{\left(\{-\bs{\alpha}\mid \bs{\alpha}\in \R^r_{\geq 0}, \, V^{\bs{\alpha}}\iota_{+}\cO_X(\ast D)\neq V^{\bs{\alpha}+\epsilon\bs{1}}\iota_{+}\cO_X(\ast D)\}\right)},
\]
where $\overline{(-)}$ denotes the Zariski closure in $\mb{C}^r$ and $B_F$ is the Bernstein-Sato ideal of $(f_1,\ldots,f_r)$.
\end{prop}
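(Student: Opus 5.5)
Throughout, write $\mc{N} = \iota_+\cO_X(\ast D) \cong \cO_X(\ast D)[\bs{s}]\bs{f^s}$ via the Malgrange--Mellin transform. Let $J \subset \R^r_{\geq 0}$ be the set of jumping points, i.e.\ those $\bs{\alpha}$ with $V^{\bs{\alpha}}\mc{N} \neq V^{\bs{\alpha}+\epsilon\bs{1}}\mc{N}$. Then $J$ is a union of pieces of walls. Since the filtration is locally a wall and chamber filtration, the Zariski closure of $-J$ is a finite union of affine hyperplanes of the form $L_1x_1+\dots+L_rx_r = -\gamma$, with $\bs{L}\in\Q^r_{\geq 0}$, possibly together with translates by $\Z^r$.

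The proof compares two sets, both up to $\mrm{Exp}$. On the Bernstein--Sato side, I would use the known result (Budur, Maisonobe; see also Budur--van der Veer--Wu--Zhou) that $\mrm{Exp}(\mrm{Zero}(B_F))$ equals the support of the multivariate Alexander-type module. Concretely, it is the set of $\mrm{Exp}(\bs{\gamma})$ such that $\bs{s}=\bs{\gamma}$ lies in the support of
\[ \ms{D}_X[\bs{s}]\bs{f^s}\big/\ms{D}_X[\bs{s}]\bs{f^{s+1}}. \]
Equivalently, by Maisonobe it is the set of $\mrm{Exp}(\bs{\gamma})$ with $\bs{\gamma}$ in the zero set of the relative characteristic ideal. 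I would rather argue directly, in three steps.

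\textbf{Step 1: reduce to $\bs{f^s}$.} Since $\cO_X(\ast D)$ is generated by $(f_1\cdots f_r)^{-k}$ for $k$ large, and shifting $\bs{s}$ by integers absorbs these factors, the $\ms{D}_X[\bs{s}]$-submodule $\ms{D}_X[\bs{s}]\bs{f^{s-k\bs 1}}$ equals $\mc{N}$. This holds locally, where $B_F$ is computed via stalks.

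\textbf{Step 2: the inclusion $\subseteq$.} Fix $\bs{\alpha}\in\R^r_{>0}$ and a large integer $m$. By Lemma~\ref{lem:localised chambers}, after localising at a maximal ideal $\fm_{\bs{\gamma}}\subset\C[\bs{s}]$, the quotient
\[ \mc{Q}_m = V^{\bs{\alpha}}\mc{N}\big/V^{\bs{\alpha}+m\bs 1}\mc{N} \]
is supported exactly along $\{-\bs{s}\in \text{walls crossed}\}$. Here $-\bs{s}$ acts by $\partial_{t_i}t_i$, so the relevant eigenvalue condition is on $\gamma' - \bs{L}(\partial t\, t)$. Choose $\bs{\alpha}$ so that $\bs{f^s}\in V^{\bs{\alpha}}$ (which holds by goodness), and note that $\bs{f^{s+1}} = t_1\cdots t_r\,\bs{f^s}$ maps $V^{\bs{\beta}}$ into $V^{\bs{\beta}+\bs 1}$. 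Using Theorem~\ref{thm:intro V uniqueness}, together with the explicit formula for localisations (Lemma~\ref{lem:multivariate V localisation}), one sees that $\ms{D}_X[\bs{s}]\bs{f^s}/\ms{D}_X[\bs{s}]\bs{f^{s+1}}$ is a subquotient of a finite iterated extension of quotients $V^{\bs{\beta}}/V^{\bs{\beta}'}$ across walls. Hence its support lies in the union of the walls $\bs{L}(-\bs{s}) = \gamma'$ with $\gamma'$ a jump. Its support is also exactly $\mrm{Zero}(B_F)$, because $B_F$ is its annihilator and the module is relative holonomic (Theorem~\ref{thm:intro relative holonomic and flatness}). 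This gives $\mrm{Zero}(B_F)$ contained in the Zariski closure of $-J$ up to $\Z^r$-translates, and hence containment after applying $\mrm{Exp}$.

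\textbf{Step 3: the inclusion $\supseteq$.} Let $W$ be a wall with nonzero jump at some $\bs{\alpha}\in\R^r_{\geq0}$. The quotient $V^{\bs{\alpha}}/V^{\bs{\alpha}+\epsilon\bs 1}$ is nonzero and supported on the hyperplane $-W$. By relative flatness along $W$ (Theorem~\ref{thm:intro relative holonomic and flatness}), its support is then a union of whole components of that hyperplane, not merely a point. Now $\mc{N}$ is generated by $\bs{f^{s-k\bs1}}$, so every $V^{\bs{\beta}}\mc{N}$ lies between $\ms{D}_X[\bs{s}]\bs{f^{s+N\bs 1}}$ and $\ms{D}_X[\bs{s}]\bs{f^{s-N\bs1}}$ for $N \gg 0$. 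Therefore the jump appears as a subquotient of
\[ \ms{D}_X[\bs{s}]\bs{f^{s-N\bs1}}\big/\ms{D}_X[\bs{s}]\bs{f^{s+N\bs1}}, \]
which is filtered by $\Z^r$-translates of $\ms{D}_X[\bs{s}]\bs{f^s}/\ms{D}_X[\bs{s}]\bs{f^{s+1}}$. Consequently $-W$ lies in $\bigcup_{k}(\mrm{Zero}(B_F)+k\bs 1)$, and applying $\mrm{Exp}$ concludes.

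The main obstacle is Step 3. Supports of subquotients can be smaller than those of the ambient module, and the claim needs the jump to produce a whole hyperplane rather than lower-dimensional pieces. That is exactly where flatness relative to the walls is used, so I would focus on making that argument precise.
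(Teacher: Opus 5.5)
Your overall strategy is the paper's. You compare the $V$-filtration with the chain $\ms{D}_X[\bs{s}]\bs{f}^{\bs{s}+k\bs{1}}$ in both directions, use that $B_F$ annihilates $\ms{D}_X[\bs{s}]\bs{f^s}/\ms{D}_X[\bs{s}]\bs{f}^{\bs{s}+\bs{1}}$, and use flatness relative to a wall (Theorem~\ref{thm:flatness}) so that a jump contributes a whole hyperplane. The paper just routes both directions through the single quotient $V_*^{\bs{\alpha}}/\bs{t}V_*^{\bs{\alpha}}$.

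However, Step~1, on which your sandwiching rests, is false. If $D\neq\emptyset$, then $\ms{D}_X[\bs{s}]\bs{f}^{\bs{s}-k\bs{1}}\neq\mc{N}$ for every $k$. Suppose $\bs{f}^{\bs{s}-(k+1)\bs{1}}=P(\bs{s})\bs{f}^{\bs{s}-k\bs{1}}$. Substituting $\bs{s}\mapsto\bs{s}+(k+1)\bs{1}$ gives $\bs{f^s}\in\ms{D}_X[\bs{s}]\bs{f}^{\bs{s}+\bs{1}}$, i.e.\ $1\in B_F$ locally at a point of $D$, which is absurd. Generation by $\bs{f}^{\bs{s}-k\bs{1}}$ does hold over $\ms{D}_{X\times\C^r}$, which contains $\partial_{t_i}=-s_it_i^{-1}$. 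Over $\ms{D}_X[\bs{s}]$ you only have the strictly increasing union $\mc{N}=\bigcup_N\ms{D}_X[\bs{s}]\bs{f}^{\bs{s}-N\bs{1}}$.

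So the containments you actually use are not justified. These are $V^{\bs{\alpha}}\mc{N}\subset\ms{D}_X[\bs{s}]\bs{f}^{\bs{s}-N\bs{1}}$ in Step~3, and $V^{\bs{\beta}'}\mc{N}\subset\ms{D}_X[\bs{s}]\bs{f}^{\bs{s}+\bs{1}}$, which your subquotient claim in Step~2 needs. Theorem~\ref{thm:multivariate V uniqueness} and Lemma~\ref{lem:multivariate V localisation} do not by themselves give them.

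The missing input is that each $V^{\bs{\beta}}\mc{N}$ is coherent over $\ms{D}_X[\bs{s}]$ (Lemma~\ref{lem: DXs coherence}). Since $\mc{N}=\bigcup_N\bs{t}^{-N}\ms{D}_X[\bs{s}]\bs{f}^{\bs{s}+\bs{1}}$ is an increasing union of $\ms{D}_X[\bs{s}]$-submodules, locally $V^{\bs{\beta}}\mc{N}\subset\bs{t}^{-N}\ms{D}_X[\bs{s}]\bs{f}^{\bs{s}+\bs{1}}$. Hence $V^{\bs{\beta}+N\bs{1}}\mc{N}=\bs{t}^NV^{\bs{\beta}}\mc{N}\subset\ms{D}_X[\bs{s}]\bs{f}^{\bs{s}+\bs{1}}$ for $\bs{\beta}\in\R^r_{>0}$, by Proposition~\ref{prop:weak specialisation}. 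In the other direction, $\bs{f^s}\in V^{\bs{\gamma}}\mc{N}$ for some $\bs{\gamma}$ gives the opposite bounds. This is exactly the paper's observation that both sheaves are $\ms{D}_X[\bs{s}]$-coherent and agree after inverting $\bs{t}$. With it, Steps~2 and~3 go through. In Step~2 only the annihilator inclusion is needed; the relative holonomicity you cite there concerns $V^{\bs{\alpha}}$, not $\ms{D}_X[\bs{s}]\bs{f^s}$.

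Two smaller repairs. First, Theorem~\ref{thm:flatness} applies across a single wall, so in Step~3 take $\bs{\alpha}$ generic in a codimension-one piece of $J$; jumps at intersections of walls lie in the Zariski closure of such pieces. Second, the closure must be read modulo $\Z^r$. Since $t_i\colon V^{\bs{\alpha}}\mc{N}\overset{\sim}{\to}V^{\bs{\alpha}+\bs{1}_i}\mc{N}$ on $\R^r_{\geq0}$, you get $J+\Z^r_{\geq0}\subset J$, so the literal Zariski closure of $-J$ is $\C^r$ as soon as $J\neq\emptyset$. Your ``finite union of hyperplanes'' is therefore only correct modulo $\Z^r$, which is harmless after $\mrm{Exp}$.
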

If $r=1$, this recovers the classical relationship between zeros of the Bernstein-Sato polynomial and the jumping values of the $V$-filtration. 
\subsection{The Strong Monodromy Conjecture again} \label{subsec:intro SMC again}
Finally, let us return to the Strong Monodromy Conjecture for hyperplane arrangements, and explain how the above theory feeds into its proof. Suppose that $f$ is the equation of a (central) hyperplane arrangement in $X = \mb{C}^n$ and consider its factorisation $f = f_1^{a_1} \cdots f_r^{a_r}$ into linear factors with multiplicities. We write $\iota \colon X \to X \times \mb{C}^r$ for the graph embedding with respect to $f_1, \ldots, f_r$. To prove the Strong Monodromy Conjecture for $f$, we first argue that, for any potential pole of the local zeta functions (with respect to the iterated blow-ups along dense edges), there is a choice of multiplicities $b_1, \ldots, b_r$ such that that pole becomes the log canonical threshold of $g = f_1^{b_1}\cdots f_r^{b_r}$ (cf.\ \cite{Shi-Zuo24}). By Corollary \ref{cor:multivariate Budur-Saito}, this gives us a particularly well-behaved jump in $F_r V^{\bs{\beta}}\iota_+\mc{O}_X$ for an appropriate $\bs{\beta} \in \mb{R}^r_{> 0}$. Using the fact that the Malgrange-Mellin transforms of sections in $F_r\iota_+\mc{O}_X$ are constant in $s_1, \ldots, s_r$, we can vary the multiplicities and use Theorems \ref{thm:intro relative holonomic and flatness} and \ref{thm:intro change of functions} to translate this to a similarly well-behaved jump in the $V$-filtration for the graph embedding of our original $f$. Since this jump in fact occurs in the interval $(0, 1)$, we are able to apply a version of a result of Ein-Lazarsfeld-Smith-Virolin \cite{ELSV} to conclude that it is also a root of the Bernstein-Sato polynomial of the appropriate multiplicity (see also \cite{LY25}).

\subsection{Outline of the paper}
In \S \ref{sec: multivariate Vfiltration}, we give the definition of multivariate $V$-filtration, prove some basic properties and compare it with Sabbah's filtration. Two variants are introduced: the restricted and localised $V$-filtrations, which play an important role in the interaction of multivariate $V$-filtration with Hodge theory in later sections. In \S \ref{sec: multivariate V and holonomic}, we study the multivariate $V$-filtrations on holonomic $\sD$-modules and establish the existence. In \S \ref{sec: multivariate V as DXsmodule}, further properties of the multivariate $V$-filtrations on graph embeddings are proved: flatness, relative holonomicity and duality. In \S \ref{sec: Hodge and multivariate V}, we examine the interaction between the multivariate $V$-filtration and Hodge filtration on mixed Hodge modules. In \S \ref{sec: strong monodromy conjecture}, we prove the Strong Monodromy Conjecture for hyperplane arrangements and its multivariable generalisation.

\subsection{Notation and conventions} \label{subsec:conventions}

When writing anything about $\ms{D}$-modules, one must make a choice whether to work in the analytic or algebraic category. As a general rule, the latter is slightly easier than the former. All of our results go through in either setting; we have chosen to write them in the analytic context mainly to reassure the reader (and ourselves) that there are no unforeseen issues in this setting.

Thus, for $X$ a complex manifold, we write $\mc{O}_X$ for the sheaf of holomorphic functions on $X$ and $\ms{D}_X$ for the sheaf of holomorphic differential operators. We work with left $\ms{D}_X$-modules throughout, assumed at least quasi-coherent (and usually coherent or even holonomic) unless otherwise specified.

For the sake of brevity, we will often say that various statements about sheaves on a topological space $X$ hold ``locally on $X$'': this means that there exists an open cover $\{U_i\}_{i \in I}$ of $X$ such that the statement holds for the restrictions of the sheaves in question to each $U_i$. For example, if $X$ is a complex manifold, then every coherent $\mc{O}_X$-module is, locally on $X$, a quotient of $\mc{O}_X^{\oplus n}$ for some $n$.

As is usual in Hodge theory, filtrations denoted with an upper (resp.\ lower) index will be decreasing (resp.\ increasing). We will often work with filtrations indexed by the partially ordered set $\mb{R}^r$, with the partial order given by
\[ (\alpha_1, \ldots, \alpha_r) \leq (\beta_1, \ldots, \beta_r) \quad \text{if and only if} \quad \alpha_1 \leq \beta_1, \; \ldots\; , \; \alpha_r \leq \beta_r.\]
We will adopt the convention that vectors in $\mb{R}^r$ are denoted by bold symbols, and that their components are denoted by the corresponding non-bold symbols. So, for example, if $\bs{\alpha} \in \mb{R}^r$, we write $\bs{\alpha} = (\alpha_1, \ldots, \alpha_r)$ for the components. For $1 \leq i \leq r$, we write $\bs{1}_i$ for the vector with $1$ in the $i$th coordinate and $0$ elsewhere, and $\bs{1} = (1, \ldots, 1) = \sum_{i = 1}^r \bs{1}_i$.

Unless otherwise specified, $\epsilon$ will always denote a sufficiently small positive real number (where ``sufficiently small'' is allowed to depend on other quantities in a statement).

\subsection*{Acknowledgements}
The authors would like to thank Eamon Quinlan, Botong Wang, Mircea Musta\c{t}\u{a}, Christian Schnell and Dan Bath for helpful discussions. We would especially like to thank Bradley Dirks, both for introducing us to Sabbah's multi-filtration in the first place and for his careful reading and many valuable comments on an earlier version of this manuscript.

The main results of this manuscript, including the proof of the (single and multivariate) $n/d$ conjecture, were obtained in May 2025. We would like to thank the organizers of the Algebraic Geometry seminars at Humboldt University of Berlin, Princeton University, Harvard University, University of Michigan, University of Illinois at Chicago, Johns Hopkins University, Tsinghua University, and the organizers of the 2025 Fall Central Sectional Meeting in St. Louis, MO, for giving us the opportunity to present the work.

\section{The multivariate $V$-filtration: definition and basic properties}\label{sec: multivariate Vfiltration}

In this section, we introduce the multivariate $V$-filtration of a coherent $\ms{D}$-module on a complex manifold $X$ along a divisor $D$ with simple normal crossing, and establish some of the fundamental properties of this notion. In \S\ref{subsec:multivariate V definition} we give the definition and prove that a multivariate $V$-filtration is uniquely determined if it exists (Theorem \ref{thm:multivariate V uniqueness}) and deduce that morphisms are strict with respect to it (Corollary \ref{cor:V strictness}). We briefly recall Sabbah's approach in \S\ref{subsec:sabbah comparison} and prove that our notion is equivalent to his (Theorem \ref{thm:sabbah comparison}). In \S\ref{subsec:direct image}, we prove that the multivariate $V$-filtrations are compatible with the formation of proper direct images (Theorem \ref{thm:multivariate V direct image}).

In \S\ref{subsec:extension}, we undertake a more detailed study of the multivariate $V$-filtration on $\ms{D}_X$-modules localised away from $D$. We introduce here a crucial technical tool, which we use throughout the paper, the $V_*$-filtration. This is defined exactly as the usual multivariate $V$-filtration, but with respect to the sheaf of rings $\ms{D}_X(*D)$ instead of $\ms{D}_X$. While we show (Corollaries \ref{cor:localising V} and \ref{cor:extending V*}) that the $V$-filtration and $V_*$-filtration directly determine one another, the $V_*$-filtration is often technically simpler to work with.

Finally, in \S\ref{subsec:malgrange-mellin}, we introduce the Malgrange-Mellin transform and record some elementary properties of the multivariate $V$-filtration on a graph embedding.

\subsection{Good wall and chamber filtrations} \label{subsec:wall and chamber}

In this subsection, we define \emph{good wall and chamber filtrations}, of which the multivariate $V$-filtration will be an example, and record some basic facts about them.

To motivate the definition, we first recall the definition of the usual $V$-filtration of a holonomic $\ms{D}$-module along a smooth divisor. Let $X$ be a complex manifold, $D \subset X$ a smooth divisor and $\mc{M}$ a coherent left $\ms{D}_X$-module. We fix a local equation $t = 0$ for $D$ and a vector field $\partial_t$ such that $[\partial_t, t] = 1$. (The reader may assume that $t$ is part of a local coordinate system on $X$, and that $\partial_t$ is corresponding partial derivative.) The \emph{$V$-filtration of $\ms{D}_X$} is the decreasing $\mb{Z}$-indexed filtration given by
\[ V^n \ms{D}_X = \{ P \in \ms{D}_X \mid P t^m \in (t^{m + n}) \text{ for all }m \geq 0\}.\]

\begin{defn} \label{defn:V-filtration}
A \emph{$V$-filtration on $\mc{M}$ along $D$} is an $\mb{R}$-indexed filtration $\{V^\alpha\mc{M}\}_{\alpha \in \mb{R}}$ such that:
\begin{enumerate}
\item \label{itm:V-filtration 1}$V^\bullet\mc{M}$ is decreasing (i.e.\ $V^{\alpha}\mc{M} \supset V^\beta\mc{M}$ if $\alpha \leq \beta$), left-continuous (i.e.\ $V^{\alpha - \epsilon}\mc{M} = V^\alpha\mc{M}$ for all $\alpha$) and, locally on $X$, there exists a finite set $A \subset \mb{R}$ such that $V^{\alpha + \epsilon}\mc{M} = V^\alpha\mc{M}$ for $\alpha\not\in A + \mb{Z}$.
\item \label{itm:V-filtration 2} The filtration $V^\bullet\mc{M}$ is \emph{good} over $V^\bullet\ms{D}_X$, i.e.\ it is exhaustive, satisfies $V^n \ms{D}_X \cdot V^\alpha\mc{M} \subset V^{\alpha + n}\mc{M}$ for all $n \in \mb{Z}$, $\alpha \in \mb{R}$, each $V^\alpha\mc{M}$ is a coherent $V^0\ms{D}_X$-module, and there exists, locally on $X$, a finite set of indices $\{\beta_i \in \mb{R}\}_{i \in I}$ such that
\[ V^\alpha\mc{M} = \sum_{\substack{n \in \mb{Z}, i \in I \\ n + \beta_i \geq \alpha}} V^n\sD_X\cdot V^{\beta_i}\mc{M}, \quad \textrm{for all $\alpha \in \mb{R}$.}\]

\item \label{itm:V-filtration 3} For each $\alpha \in \mb{R}$, there exist, locally on $X$, $\gamma_1, \ldots, \gamma_k \in \mb{C}$ with $\Re \gamma_i = \alpha$ such that the operator $\prod_{i  = 1}^k (\gamma_i - \partial_t t)$ acts by zero on $\gr_V^\alpha\mc{M}$. 
\end{enumerate}
\end{defn}

Intuitively, $V^\alpha \mc{M}$ can be thought of roughly as the set of sections vanishing to order at least $\alpha - 1$ along $D$. By \cite{Kas83}, the $V$-filtration is unique if it exists, which it always does when $\mc{M}$ is holonomic.

Our aim in the next subsection is to generalise Definition \ref{defn:V-filtration} to the case where $D$ is a divisor with simple normal crossings. In this setting, we will be interested in filtrations indexed by $\mb{R}^r$, where $r$ is the number of components of $D$. In this subsection, we will consider how to generalise conditions \eqref{itm:V-filtration 1} and \eqref{itm:V-filtration 2} in the general setting of a sheaf of modules over a (sufficiently nice) $\mb{Z}^r$-filtered sheaf of rings. In what follows, we use the notation and conventions defined in \S\ref{subsec:conventions} for writing vectors in $\mb{R}^r$ and for the partial order thereon.

We first generalise \eqref{itm:V-filtration 1}.
\begin{defn}\label{defn: set of walls}
Let $r \geq 0$. A \emph{set of walls in $\mb{R}^r$} is a set $\mc{W}$ of affine linear hyperplanes in $\mb{R}^r$, finite modulo translation by $\mb{Z}^r$, such that each hyperplane $H \in \mc{W}$, called a \emph{wall}, is of the form
\[ H = \{\bs{\alpha} \in \mb{R}^r \mid L_1 \alpha_1 + \cdots + L_r \alpha_r = \beta \}\]
for some (non-zero) $\bs{L} \in \mb{Q}_{\geq 0}^r$ and some $\beta \in \mb{R}$. We will often regard $\bs{L}$ as a linear map $\mb{R}^r \to \mb{R}$ and write $H = \bs{L}^{-1}(\beta)$. Whenever we write a wall in this way, we will implicitly assume that $\bs{L} \in \mb{Q}^r_{\geq 0}$.
\end{defn}
Note that the set $\mc{W}$ is allowed to consist of infinitely many walls (and usually will). We allow the case $r = 0$ (in which case the only possible set of walls is $\mc{W} = \emptyset$) as it will sometimes be convenient as an induction base.

\begin{defn}\label{defn: chamber}
Let $\mc{W}$ be a set of walls in $\mb{R}^r$. A \emph{chamber of $\mc{W}$} is a non-empty subset $\sigma \subset \mb{R}^r$ of the form
\[ \sigma = \{\bs{\alpha} \in \mb{R}^r \mid \bs{L}(\bs{\alpha}) > \beta \text{ if }\bs{L}^{-1}(\beta) \in \mc{W}_+, \,\bs{L}(\bs{\alpha}) \leq \beta \text{ if }\bs{L}^{-1}(\beta) \in \mc{W}_-\}\]
for some partition $\mc{W} = \mc{W}_+ \cup \mc{W}_-$.
\end{defn}

When $r = 1$, a set of walls is simply a subset $\mc{W}$ of $\mb{R}$ with finite image modulo $\mb{Z}$, and a chamber is an open-closed interval $(\alpha, \beta]$ wth $\alpha, \beta$ consecutive elements in $\mc{W}$. The shaded region in the figure below depicts an example of a chamber for a set of walls with $r = 2$.

\begin{center}
\begin{tikzpicture}[scale=1.2]

\clip (-1, -1) rectangle (2, 2); 

\coordinate (A) at (-0.25, 1.5);
\coordinate (B) at (0, 0);
\coordinate (C) at (1.5, -0.25);


\coordinate (P1) at (-2, 3.25);
\coordinate (P2) at (-3, 0.5);
\coordinate (P3) at (0.5, -3);
\coordinate (P4) at (3.25, -2);
\coordinate (P5) at (3, -0.5);
\coordinate (P6) at (-0.5, 3);

\fill[color=Cyan, opacity=0.2] (A) -- (B) -- (C) -- cycle;

\draw[thick] (P1) -- (P4);

\draw[thick] (P2) -- (B);
\draw[dashed] (B) -- (C);
\draw[thick] (C) -- (P5);

\draw[thick] (P6) -- (A);
\draw[dashed] (A) -- (B);
\draw[thick] (B) -- (P3);
\end{tikzpicture}
\end{center}

In general, it is clear that the chambers of $\mc{W}$ form a partition of $\mb{R}^r$ and that the interiors of the chambers of $\mc{W}$ are precisely the connected components of $\mb{R}^r \setminus \bigcup_{H \in \mc{W}} H$.

\begin{definition}\label{defn: separated by a wall}
If $\bs{\alpha}, \bs{\beta} \in \mb{R}^r$, we say that a wall $H = \bs{L}^{-1}(\gamma) \in \mc{W}$ \emph{separates $\bs{\alpha}$ and $\bs{\beta}$} if $\bs{L}(\bs{\alpha}) \leq \gamma$ and $\bs{L}(\bs{\beta}) > \gamma$, or vice versa.
\end{definition} Note that there are only finitely many walls separating any two points and that $\bs{\alpha}$ and $\bs{\beta}$ lie in the same chamber if and only if they have no separating walls.

\begin{defn}\label{defn: wall and chamber}
Let $X$ be a topological space and $\mc{F}$ a sheaf on $X$. For $r \in \mb{Z}_{\geq 0}$, a \emph{decreasing $\mb{R}^r$-indexed filtration of $\mc{F}$} is a collection $\{U^{\bs{\alpha}}\mc{F} \subset \mc{F}\}_{\bs{\alpha} \in \mb{R}^r}$ of subsheaves such that $U^{\bs{\alpha}} \mc{F} \subset U^{\bs{\beta}} \mc{F}$ if $\bs{\alpha} \geq \bs{\beta}$. We say that $U^{\bullet} \mc{F}$ is a \emph{wall and chamber} filtration if, locally on $X$, there exists a set of walls $\mc{W}$ in $\mb{R}^r$ such that if $\bs{\alpha}$ and $\bs{\beta}$ lie in the same chamber of $\mc{W}$, then $U^{\bs{\alpha}} \mc{F} = U^{\bs{\beta}} \mc{F}$. In this setting, we will call such a set $\mc{W}$ a \emph{set of walls} of the filtration $U^{\bullet}\mc{F}$. If $\sigma$ is a chamber of $\mc{W}$, we write
 $U^\sigma \mc{F} := U^{\bs{\alpha}} \mc{F}$  for some (hence any) $\bs{\alpha}\in \sigma$.
\end{defn}

When $r = 1$, a wall and chamber filtration is nothing but an exhaustive, decreasing, left-continuous filtration with a finite set of jumps modulo $\mb{Z}$.

\begin{remark}\label{remark: containment of closure}
We record two basic properties of a wall and chamber filtration $U^\bullet\mc{F}$.
\begin{enumerate}

\item For any chamber $\sigma$, we have $U^{\sigma}\mc{F} \subset U^{\bs{\alpha}'}\mc{F}$ for any $\bs{\alpha}'\in \overline{\sigma}$. 
Indeed, one can find a sufficiently small $\epsilon > 0$, and a point $\bs{\alpha} \in \sigma$, so that $\bs{\alpha} \geq \bs{\alpha}' - \epsilon \bs{1}$ and $\bs{\alpha}',\bs{\alpha}'-\epsilon\bs{1}$ lie in the same chamber. So $U^{\sigma}\mc{F} = U^{\bs{\alpha}}\mc{F} \subset U^{\bs{\alpha}' - \epsilon \bs{1}}\mc{F} = U^{\bs{\alpha}'}\mc{F}$.

\item If $\bs{\alpha},\bs{\beta}$ are separated by a single wall $\bs{L}^{-1}(\gamma)$ and $\bs{L}(\bs{\alpha}) \leq \gamma < \bs{L}(\beta)$, then $U^{\bs{\beta}}\mc{F}\subset U^{\bs{\alpha}}\mc{F}$. Indeed, by the uniqueness of the separating wall, we must have  
\[ \sigma_{\bs{\alpha}}\cap \overline{\sigma}_{\bs{\beta}} = \bs{L}^{-1}(\gamma)\cap \overline{\sigma}_{\bs{\beta}}\neq \varnothing,\]
where $\sigma_{\bs{\alpha}},\sigma_{\bs{\beta}}$ are the chambers containing $\bs{\alpha},\bs{\beta}$, respectively. Choose $\bs{\alpha}'\in \sigma_{\bs{\alpha}}\cap \overline{\sigma}_{\bs{\beta}}$, by the property above, we have
$U^{\bs{\alpha}}\mc{F}=U^{\bs{\alpha}'}\mc{F}\supseteq U^{\bs{\beta}}\mc{F}$.
\end{enumerate}
\end{remark}

We next generalise Definition \ref{defn:V-filtration}, \eqref{itm:V-filtration 2}. To do so, suppose that $X$ is a complex manifold and that $\mc{A}$ is a sheaf of (not necessarily commutative) $\mb{C}$-algebras on $X$ equipped with a decreasing $\mb{Z}^r$-indexed filtration $U^\bullet \mc{A}$. In order to have a well-behaved theory of coherent sheaves of $\mc{A}$-modules with good filtrations, we will make the following assumptions on $(\mc{A}, U^\bullet\mc{A})$.

\begin{assumption} \label{assumption:ring filtrations}
We assume that $\mc{A}$ is equipped with an additional $\Z$-indexed increasing filtration $F_\bullet \mc{A}$ and that $F_\bullet \mc{A}$ and $U^\bullet\mc{A}$ satisfy the following conditions.
\begin{enumerate}
\item $1 \in F_0\mc{A} \cap U^{\bs{0}}\mc{A}$.
\item $F_p\mc{A} \cdot F_q\mc{A} \subset F_{p + q}\mc{A}$ for $p,q\in \Z$ and $U^{\bs{m}}\mc{A} \cdot U^{\bs{n}}\mc{A} \subset U^{\bs{m} + \bs{n}}\mc{A}$ for $\bs{m},\bs{n}\in \Z^r$.
\item We have $F_{-1}\mc{A} = 0$ and $U^{\bs{0}}\mc{A} \cap F_0\mc{A} \cong \mc{O}_X$ as sheaves of $\mb{C}$-algebras.
\item Each $U^{\bs{n}}\mc{A} \cap F_p\mc{A}$ is a coherent $\mc{O}_X$-module.
\item The $\mb{Z}^{r + 1}$-indexed filtration $ U^\bullet\mc{A} \cap F_\bullet\mc{A}$ is exhaustive and the Rees algebra
\[ R_{U, F} \mc{A} = \bigoplus_{\bs{n} \in \mb{Z}^r, p \in \mb{Z}} (U^{\bs{n}}\mc{A}\cap F_p\mc{A})\,  u_1^{-n_1} \cdots u_r^{-n_r} v^p \]
is flat over $\mb{C}[\bs{u}, v] := \mb{C}[u_1, \ldots, u_r, v]$.
\item The algebras $\Gr^F\mc{A}$ and $\Gr_U\Gr^F\mc{A}$ are commutative and (locally) finitely presented over $\mc{O}_X$. Here $\Gr_U\Gr^F\mc{A}$ is the quotient the Rees algebra $R_{U,F}\mc{A}$ by $u_1, \ldots, u_r,v$. By flatness, this coincides with taking iterated $\Gr$ in each $U^\bullet$ or $F_{\bullet}$ variable separately, in any order.
\end{enumerate}
\end{assumption}

\begin{remark}\label{remark: coherence of Un over U0}
One can deduce from the finiteness conditions (4) and (6) above that $U^{\bs{n}}\cA$ is coherent over $U^{\bs{0}}\cA$, for each $\bs{n}\in \Z^r$.
\end{remark}
In preparation for the wall and chamber case, let us recall the notion of good $\mb{Z}^r$-indexed filtrations over such a ring.

\begin{defn}\label{defn: good Zr indexed filtration}
Under the assumptions above, let $\mc{M}$ be a coherent sheaf of $\mc{A}$-module.
A \emph{good filtration over $U^\bullet\mc{A}$} is a decreasing $\mb{Z}^r$-indexed filtration $U^\bullet\mc{M}$ such that
\begin{enumerate}
\item $U^\bullet\mc{M}$ is exhaustive, i.e.\ $\mc{M} = \bigcup_{\bs{n} \in \mb{Z}^r} U^{\bs{n}}\mc{M}$, and
\item the Rees module $R_U\mc{M} = \bigoplus_{\bs{n} \in \mb{Z}^r} U^{\bs{n}}\mc{M}\, u_1^{-n_1} \cdots u_r^{-n_r}$ is coherent over $R_U\mc{A}$.
\end{enumerate}
We define good $\mb{Z}^{r + 1}$-indexed filtrations over $U^\bullet \mc{A} \cap F_\bullet\mc{A}$ similarly.
\end{defn}

Note that the Rees module construction defines an equivalence of categories between coherent $\mc{A}$-modules with good filtration over $U^\bullet\mc{A}$ (resp.\ $U^\bullet\mc{A} \cap F_\bullet\mc{A}$) and coherent $\mb{Z}^r$-graded modules over $R_U\mc{A}$ (resp.\ $\mb{Z}^{r + 1}$-graded modules over $R_{U, F}\mc{A}$) without $\mb{C}[\bs{u}]$-torsion (resp.\ $\mb{C}[\bs{u}, v]$-torsion). By classical theorems of Cartan, Frisch and Oka about the sheaf of holomorphic functions $\mc{O}_X$ (cf.\ e.g.\ \cite[\S 8.8]{MHMproject}), Assumption \ref{assumption:ring filtrations} implies that $\mc{A}$ is a coherent sheaf of rings, any coherent $\mc{A}$-module admits good filtrations (of both kinds) locally on $X$, that the categories of coherent $\mc{A}$-modules, graded $R_U\mc{A}$-modules and graded $R_{U, F}\mc{A}$-modules are abelian, and that the following Artin-Rees lemma holds. 

\begin{lem}[Artin-Rees] \label{lem:artin-rees}
Let $(\mc{A}, U^\bullet \mc{A},F_{\bullet}\mc{A})$ be as in Assumption \ref{assumption:ring filtrations}. Let $\mc{M}$ be a coherent $\mc{A}$-module equipped with a good filtration $U^\bullet \mc{M}$ over $U^\bullet\mc{A}$ (resp.\ a good filtration $U^\bullet F_\bullet\mc{M}$ over $U^\bullet\mc{A} \cap F_\bullet \mc{A}$). Then for any coherent submodule $\mc{N} \subset \mc{M}$, the filtration
\[ U^\bullet\mc{N} := \mc{N} \cap U^\bullet\mc{M} \quad \text{(resp.} \;\; U^{\bullet}F_\bullet\mc{N} := \mc{N} \cap U^\bullet F_\bullet\mc{M} \; \text{)}\]
is also good.
\end{lem}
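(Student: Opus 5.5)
We translate the statement into one about Rees modules and then argue by Noetherianity. Put $\mc{R} := R_U\mc{A}$, which is a $\mb{Z}^r$-graded sheaf of rings (resp.\ $\mc{R} := R_{U,F}\mc{A}$, which is $\mb{Z}^{r+1}$-graded). By Definition \ref{defn: good Zr indexed filtration}, the module $R_U\mc{M}$ is coherent over $\mc{R}$. Since $\mc{N} \cap U^{\bs{n}}\mc{M} \subset U^{\bs{n}}\mc{M}$, the Rees module of the induced filtration is
\[ R_U\mc{N} = \bigoplus_{\bs{n}} (\mc{N} \cap U^{\bs{n}}\mc{M})\, \bs{u}^{-\bs{n}}, \]
a graded $\mc{R}$-submodule of $R_U\mc{M}$. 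It has no $\mb{C}[\bs{u}]$-torsion, because it sits inside $R_U\mc{N}$'s ambient module $\mc{N}[\bs{u}^{\pm 1}]$. The induced filtration is exhaustive because $U^\bullet\mc{M}$ is. What remains to show is that $R_U\mc{N}$ is coherent over $\mc{R}$.

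\textbf{Step 1: reduce coherence to local finite generation.} $R_U\mc{N}$ is a submodule of the coherent module $R_U\mc{M}$ over the coherent sheaf of rings $\mc{R}$. So coherence follows once we know it is locally finitely generated. We argued above that $\mc{R}$ is coherent: this comes from Assumption \ref{assumption:ring filtrations} together with the theorems of Cartan--Frisch--Oka, via the fact that the associated graded $\Gr_U\Gr^F\mc{A}$ is commutative and finitely presented over $\mc{O}_X$.

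\textbf{Step 2: find local generators.} We would use the fact that $R_U\mc{N}$ is the intersection of two submodules of $\mc{M}[\bs{u}^{\pm1}] = \mc{M}\otimes\mb{C}[\bs{u}^{\pm1}]$:
\[ R_U\mc{N} = R_U\mc{M} \cap (\mc{N}\otimes\mb{C}[\bs{u}^{\pm1}]). \]
Localising at the $u_i$ turns $\mc{R}$ into $\mc{A}[\bs{u}^{\pm1}]$. The module $\mc{N}\otimes\mb{C}[\bs{u}^{\pm1}]$ is coherent over $\mc{A}[\bs{u}^{\pm1}]$, so it is generated by finitely many sections of $\mc{N}$. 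Choose these generators, and scale them by powers of $\bs{u}$ so that they lie in $R_U\mc{M}$. They then generate a coherent graded submodule $\mc{N}_0 \subset R_U\mc{N}$ whose localisation is everything. Consequently $R_U\mc{N}/\mc{N}_0$ is $\bs{u}$-power torsion and embeds in $R_U\mc{M}/\mc{N}_0$, which is coherent. The torsion part of a coherent module is coherent: it is the union of the coherent submodules annihilated by powers of the product $u_1\cdots u_r$. This union stabilises locally, by the standard local Noetherian property of coherent modules over rings satisfying the Cartan--Frisch--Oka hypotheses, which we can check on the graded pieces via Assumption \ref{assumption:ring filtrations}(4)--(6). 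Hence $R_U\mc{N}/\mc{N}_0$ is coherent, and $R_U\mc{N}$ is coherent because it is an extension of coherent modules.

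\textbf{Main obstacle.} The delicate point is the Noetherian stabilisation in Step 2. Locally on $X$, and on compact subsets, we need ascending chains of coherent subsheaves of a coherent $\mc{R}$-module to stabilise. For $\mc{O}_X$ this is the Oka--Cartan theorem. For $\mc{R}$ we would pass to $\Gr_U\Gr^F$, which is commutative and finitely presented over $\mc{O}_X$ by Assumption \ref{assumption:ring filtrations}(6), and then lift stabilisation through the flat Rees construction of (5). This is the argument cited from \cite[\S 8.8]{MHMproject}. The bifiltered case $U^\bullet F_\bullet$ is identical, with $\mc{R}$ replaced by $R_{U,F}\mc{A}$ and $\mb{C}[\bs{u}]$ replaced by $\mb{C}[\bs{u},v]$.
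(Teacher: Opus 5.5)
Your route is sound, but it differs from the paper's. You use finitely many generators of $\mathcal{N}$ to build a coherent $\mathcal{N}_0\subset R_U\mathcal{N}$ with the same localisation. This reduces the lemma to coherence of the $u_1\cdots u_r$-power torsion $T=\bigcup_N T_N$, where $T_N=\ker\bigl((u_1\cdots u_r)^N\bigr)$, of $\mathcal{P}:=R_U\mathcal{M}/\mathcal{N}_0$, i.e.\ to local stationarity of the chain $(T_N)$. The paper applies Noetherianity once, directly. For small compact polycylinders $K$ the ring $R_{U,F}\mathcal{A}(K)$ is Noetherian (Lemma \ref{lemma: coherence of A and rees}), so $\Gamma(K,R_{U,F}\mathcal{N})\subset\Gamma(K,R_{U,F}\mathcal{M})$ is finitely generated, and Frisch flatness together with the coherence criterion of Lemma \ref{lemma: Theorem A and B}(3) finishes. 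Your version separates the formal content (Artin--Rees is equivalent to coherence of $\bs{u}$-torsion in coherent Rees modules) from the analytic input. The paper's version is shorter, because once the Noetherian input is available the torsion detour is unnecessary.

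Two points need tightening. First, saying $R_U\mathcal{N}/\mathcal{N}_0$ embeds in a coherent module is not enough, since a subsheaf of a coherent module need not be coherent. What you need, and what holds, is the equality $R_U\mathcal{N}/\mathcal{N}_0=T$. Indeed, $T$ is graded, so it suffices to check homogeneous sections. If $(u_1\cdots u_r)^N\,m\bs{u}^{-\bs{n}}\in\mathcal{N}_0\subset R_U\mathcal{N}$ with $m\in U^{\bs{n}}\mathcal{M}$, then $m\in\mathcal{N}$, so $m\bs{u}^{-\bs{n}}\in R_U\mathcal{N}$.

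Second, your sketch of the stationarity, ``pass to $\Gr_U\Gr^F$ and lift through the Rees construction'', is circular if it means giving each $T_N$ the induced filtration and taking $\gr$: goodness of induced filtrations is exactly the lemma you are proving. The non-circular argument uses the paper's own ingredients, as follows.
\begin{enumerate}
\item Choose $K$ with $\mathcal{R}(K)$ Noetherian, $\mathcal{P}(K)$ finitely generated, and $\mathcal{P}$ carrying a good filtration $H_\bullet\mathcal{P}$ near $K$. Then the chain $T_N(K)$ stabilises at some $N_0$.
\item Since $u_1\cdots u_r$ is central, $T_N\cap H_{\bs{m}}\mathcal{P}=\ker\bigl((u_1\cdots u_r)^N\colon H_{\bs{m}}\mathcal{P}\to H_{\bs{m}+\bs{c}_N}\mathcal{P}\bigr)$ is $\mathcal{O}_X$-coherent, exactly as in the proof of Lemma \ref{lemma: Theorem A and B}(1).
\item By Theorem A, $T_N(K)$ therefore generates $(T_N)_y$ for every $y\in K$. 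Hence $T_N=T_{N_0}$ on the interior of $K$.
\end{enumerate}
With these two points supplied, your argument is complete.
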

We first prove a few standard results, for the lack of suitable references.

\begin{lemma}\label{lemma: Theorem A and B}
Let $\cB$ be a sheaf of $\cO_X$-algebras such that
\begin{itemize}
\item $\cB$ admits an exhaustive $\Z^{k}$-indexed filtration $G_{\bullet}\cB$ such that $G_{\bs{m}}\cB\cdot G_{\bs{n}}\cB\subset G_{\bs{m}+\bs{n}}\cB$, and $G_{\bs{m}}\cB$ is a coherent $\cO_X$-module for each $\bs{m}\in \Z^k$,
\item $\cB(K)$ is (left) Noetherian for any compact polycylinder $K$.
\end{itemize}
Then the following holds.
\begin{enumerate}
\item $\cB$ is coherent.
\item Let $\cM$ be a coherent $\cB$-module and let $K$ be a compact polycylinder contained in an open subset $U$ of $X$ such that $\cM$ has a good filtration on $U$ (with respect to $G_{\bullet}\cB$), then $\cM(K)$ generates $\cM|_K$ as an $\cO_K$-module, and $H^i(K,\cM)=0$ for every $i\geq 1$.
\item  A $\cB$-module $\cM$ is coherent if and only if there exists a covering $\{K_{\alpha}\}$ by compact polycylinders $K_{\alpha}$ such that $X$ is the union of the interiors of the $K_{\alpha}$ and that for each $\alpha$, $\cM(K_\alpha)$ is a $\cB(K_\alpha)$-module of finite type, and the natural map
\begin{equation}\label{eqn: eval MK} \cO_{X,x}\otimes_{\cO_{X}(K_\alpha)}\cM(K_\alpha)\to \cM_x\end{equation}
is an isomorphism, for every $x\in K_\alpha$.
\end{enumerate}
\end{lemma}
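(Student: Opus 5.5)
The plan is to reduce everything to the classical Cartan–Oka theory for coherent $\cO_X$-modules on compact polycylinders, by way of the filtration $G_\bullet\cB$. The key observation is that a module with a good filtration is an exhaustive union of coherent $\cO_X$-modules $G_{\bs{m}}\cM$. On a compact polycylinder $K$, Theorems A and B hold for each such module. One then passes to the directed union.

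\textbf{Step 1: part (2).} Let $G_\bullet\cM$ be a good filtration on $U\supset K$. Each $G_{\bs{m}}\cM$ is then a coherent $\cO_X$-module, since it is a finite sum of images of $G_{\bs{m}-\bs{n}_j}\cB$.

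For $K$ compact, $\cM(K)=\varinjlim_U\cM(U)$. Sections over $K$ are germs on neighbourhoods, and cohomology on $K$ commutes with filtered colimits of sheaves. Hence
\[ H^i(K,\cM)=\varinjlim_{\bs{m}}H^i(K,G_{\bs{m}}\cM)=0 \quad \text{for } i\geq 1,\]
by Cartan's Theorem B on compact polycylinders (which are Stein compacta).

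Similarly, $\cM(K)=\varinjlim_{\bs{m}} G_{\bs{m}}\cM(K)$. By Theorem A, each $G_{\bs{m}}\cM(K)$ generates $G_{\bs{m}}\cM|_K$ as an $\cO_K$-module, so $\cM(K)$ generates $\cM|_K$.

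\textbf{Step 2: part (1).} I will show that for any open $V$ and any morphism $\cB^{p}|_V\to\cB|_V$, the kernel is locally of finite type. Fix $x\in V$ and a small compact polycylinder $K\ni x$. The kernel $\cN$ is the union of the kernels $\cN_{\bs{m}}$ of $G_{\bs{m}}\cB^p\to G_{\bs{m}+\bs{c}}\cB$, each of which is $\cO$-coherent. So Step 1 applies to $\cN$: $H^1$ vanishes and $\cN(K)$ generates $\cN|_K$ over $\cO_K$.

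Now $\cN(K)$ is the kernel of $\cB(K)^p\to\cB(K)$, using $\Gamma(K,-)$ and the vanishing of higher cohomology. Since $\cB(K)$ is Noetherian, $\cN(K)$ is finitely generated over $\cB(K)$, say by $n_1,\dots,n_q$. It follows that $\cN|_{K^\circ}$ is generated by $n_1,\dots,n_q$ as a $\cB$-module. Hence $\cB$ is coherent, since it is trivially of finite type over itself.

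\textbf{Step 3: part (3), the forward direction.} A coherent $\cM$ locally admits a good filtration, by taking images of $G_\bullet\cB^p$ under a local presentation. Choose polycylinders accordingly. By Step 1, $\cM(K)$ is a quotient of $\cB(K)^p$: take the cokernel of a presentation and use $H^1=0$ on the kernel, which is again coherent with a good filtration. So $\cM(K)$ is of finite type.

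For the isomorphism \eqref{eqn: eval MK}, right-exactness of $\cO_{X,x}\otimes_{\cO_X(K)}-$ together with exactness of $\Gamma(K,-)$ on such modules reduces the claim, by the five lemma, to the case $\cM=\cB$. Writing $\cB=\varinjlim G_{\bs{m}}\cB$, it further reduces to the classical statement $\cO_{X,x}\otimes_{\cO(K)}\cF(K)\cong\cF_x$ for $\cF$ coherent on a polycylinder, which is Frisch/Siu flatness.

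\textbf{Step 4: part (3), the converse.} Choose generators of $\cM(K_\alpha)$ and a presentation $\cB(K_\alpha)^q\to\cB(K_\alpha)^p\to\cM(K_\alpha)\to0$; this exists because $\cB(K_\alpha)$ is Noetherian. Apply $\cO_{X,x}\otimes_{\cO(K_\alpha)}-$ and use the isomorphism for $\cB$ from Step 3. This gives $\cB_x^q\to\cB_x^p\to\cM_x\to0$ exact for all $x\in K_\alpha^\circ$, so $\cM$ is locally finitely presented and hence coherent, by (1).

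The main obstacle is the stalk isomorphism \eqref{eqn: eval MK} for $\cB$ itself. This requires flatness of $\cO_{X,x}$ over $\cO(K)$ together with compatibility of tensor products with the directed union. I would handle it via Frisch's theorem and the fact that tensor products commute with colimits.
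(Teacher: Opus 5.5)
Your proposal is correct and follows essentially the paper's route: Cartan--Oka applied to the $\cO_X$-coherent pieces of an exhaustive filtration for (2), the filtered kernel plus Noetherianity of $\cB(K)$ for (1), and a finite $\cB(K)$-presentation compared on stalks via $\cO_{X,x}\otimes_{\cO_X(K)}\cB(K)\cong\cB_x$ for (3); the only variation is that in the forward direction of (3) you reduce to $\cM=\cB$ by a presentation, whereas the paper filters $\cM$ by the images of $G_{\bs{m}}\cB^p$. One small correction: the isomorphism $\cO_{X,x}\otimes_{\cO_X(K)}\mathcal{F}(K)\cong\mathcal{F}_x$ for coherent $\mathcal{F}$ follows from Theorems A and B and right exactness of $\otimes$, and passing to the directed union only uses that tensor products commute with colimits, so Frisch's flatness theorem is not needed at the step you flagged as the main obstacle.
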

\begin{proof}
For (1), we adapt the proof of \cite[Proposition 2.1.3]{SabbahnoteDmodule}. Suppose $U\subset X$ is open and $\phi: \cB^q|_U \to \cB^p|_U$ is a morphism of left $\cB|_U$-modules, we have to prove that $\Ker \phi$ is locally of finite type. We may assume that $U$ is an open subset of $\C^n$. Let $\epsilon_1,\ldots,\epsilon_q$ be the canonical base of $\cB(U)^q$ and choose $\bs{n}\in \Z^k$ such that 
\[ \phi(\epsilon_i) \in G_{\bs{n}}\cB(U)^p, \quad \textrm{for all $1\leq i\leq q$}.\]
Then for each $\bs{m}\in \Z^k$, we have $\phi(G_{\bs{m}}\cB^q|_U)\subset G_{\bs{m}+\bs{n}}\cB^p|_U$ and $\Ker \phi\cap G_{\bs{m}}\cB^q|_U$ is the kernel of a morphism between coherent $\cO_U$-modules
\[ G_{\bs{m}}\cB^q|_U\to  G_{\bs{m}+\bs{n}}\cB^p|_U.\]
It follows that $\Ker \phi\cap G_{\bs{m}}\cB^q|_U$ is $\cO_U$-coherent .

Let $K\subset U$ be a compact polycylinder and let $x\in K$. By Theorem A of Cartan-Oka, the sheaf $\left[\Ker \phi \cap G_{\bs{m}}\cB^q|_U\right]_x$ is generated by $\Gamma(K,\Ker \phi\cap G_{\bs{m}}\cB^q|_U)\subset \Gamma(K,\Ker \phi)$. It follows that $(\Ker \phi)_x$ is generated by $\Gamma(K,\Ker \phi)$ over $\cO_{X,x}$. On the other hand, note that we have an exact sequence of left $\cB(K)$-modules:
\[ 0\to \Gamma(K,\Ker \phi) \to \cB(K)^q \xrightarrow{\Gamma(K,\phi)} \cB(K)^p.\]
Since $\cB(K)$ is left Noetherian, $\Gamma(K,\Ker \phi)$ is of finite type as a left $\cB(K)$-module. Using these two facts, one can build a surjective morphism of left $\cB|_K$-modules
\[ \cB^r|_K \to (\Ker \phi)|_K \to 0,\]
which proves that $\Ker \phi$ is locally of finite type. In other words, $\cB$ is coherent.

Arguing as in \cite[Exercise 2.2.5]{SabbahnoteDmodule}, one can show that $\cM$ admits locally a good filtration. Then the statement (2) immediately follows from the Theorems A and B of Cartain-Oka for $\cO_X$-modules using a good filtration on $\cM$ locally. 

For (3), we adapt the proof of \cite[Theorem 2.2.9]{SabbahnoteDmodule}. We first prove the $``\Rightarrow"$ direction. Let $U\subset X$ be an open subset so that there is a presentation
\[ 0\to \cN \to \cB^{p}|_U\xrightarrow{\phi} \cM|_U\to 0.\]
Since $\cB$ is coherent by (1),  $\cN$ is a coherent $\cB$-module. Then for a compact polycylinder $K\subset U$ such that $\mc{N}$ admits a good filtration on some neighbourhood of $K$, one has $H^1(K,\cN)=0$ by (2). So there is a surjection $\cB(K)^p \twoheadrightarrow \cM(K)$, i.e. $\cM(K)$ is a $\cB(K)$-module of finite type. Let $\bs{m}\in \Z^k$, since $G_{\bs{m}}\cB|_U$ is coherent over $\cO_X$, the image sheaf $G_{\bs{m}}\cM|_U\colonequals \phi \left((G_{\bs{m}}\cB)^p|_U\right)$ is also coherent. Then Cartan-Oka's Theorem A  implies that the natural map
\[ \cO_{X,x}\otimes_{\cO_X(K)}(G_{\bs{m}}\cM)(K)\to (G_{\bs{m}}\cM)_x \]
is an isomorphism for $x\in K$. Then we can prove \eqref{eqn: eval MK} is an ismorphism by using an inductive limit
\[ \cO_{X,x}\otimes_{\cO_X(K)}\cM(K)\cong \varinjlim_{\bs{m}\in \Z^k} \cO_{X,x}\otimes_{\cO_X(K)}(G_{\bs{m}}\cM)(K) \cong \varinjlim_{\bs{m} \in \mb{Z}^k} (G_{\bs{m}}\mc{M})_x \cong \mc{M}_x.\]

$``\Leftarrow"$: For each $\alpha$, since $\cM(K_\alpha)$ is an $\cB(K_\alpha)$-module of finite type, and $\cB(K_\alpha)$ is left Noetherian, there is a finite presentation
\[\cB^q(K_\alpha) \xrightarrow{\phi} \cB^p(K_\alpha) \xrightarrow{\pi} \cM(K_\alpha)\to 0.\]
This induces sheaf morphisms  $\cB^q|_{K_\alpha} \xrightarrow{\phi} \cB^p|_{K_\alpha} \xrightarrow{\pi} \cM|_{K_\alpha} \to 0$ and by the right exactness of $\cB_x\otimes_{\cB(K_\alpha)}(-)$, one also has
\[ \cB^q_x \xrightarrow{\phi_x} \cB^p_x \xrightarrow{p_x} \cB_x\otimes_{\cB(K_\alpha)}\cM(K_\alpha)\to 0.\]

Note that the canonical homomorphism $\cO_{X,x}\otimes_{\cO(K_\alpha)}\cM(K_\alpha)\to \cB_x\otimes_{\cB(K_\alpha)}\cM(K_\alpha)$ is an isomorphism. This claim is clear if $\cM(K)$ is free over $\cB(K)$ (since $\mc{B}_x = \mc{O}_{X, x} \otimes_{\mc{O}_X(K_\alpha)} \mc{M}(K_\alpha)$ as argued above) and the general case follows from the right exactness of the functors $\cO_{X,x}\otimes_{\cO(K_\alpha)}(-)$ and $\cB_x\otimes_{\cB(K_\alpha)}(-)$.  Consequently, the morphism $c_x:\cB_x\otimes_{\cB(K_\alpha)}\cM(K_\alpha) \to \cM_x$ is an isomorphism, and one has an exact sequence
\[ \cB^q_x \xrightarrow{\phi_x} \cB^p_x \xrightarrow{\pi_x} \cM_x\to 0.\]
We conclude that $\cM|_{K_\alpha}$ is finitely presented over $K_\alpha$. Since the interiors of the $K_\alpha$ cover $X$, $\cM$ is therefore coherent over $\cB$.
\end{proof}

\begin{lemma}\label{lemma: coherence of A and rees}
For any sufficiently small compact polycylinder $K$, the rings $\Gr_U\Gr^F\cA(K)$, $\cA(K),R_{U}\cA(K)$ and $R_{U,F}\cA(K)$ are left and right Noetherian. In particular, the sheaves of rings $\Gr_U\Gr^F\cA$, $\cA,R_{U}\cA$ and $R_{U,F}\cA$ are coherent by Lemma \ref{lemma: Theorem A and B}. 
\end{lemma}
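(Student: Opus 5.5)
The plan is to reduce everything to the commutative ring $\Gr_U\Gr^F\cA(K)$ and then lift Noetherianity through the filtrations by the standard ``graded Noetherian implies filtered Noetherian'' argument. The second sentence of the lemma is then a direct application of Lemma \ref{lemma: Theorem A and B}. Each of the four sheaves carries an exhaustive multi-indexed filtration by coherent $\cO_X$-modules, namely the pieces $U^{\bs{n}}\cA\cap F_p\cA$, in the correct multiplicative sense. So once sections over small compact polycylinders are Noetherian, coherence follows.

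\emph{Step 1: the associated graded.} By Assumption \ref{assumption:ring filtrations}(6), $\Gr_U\Gr^F\cA$ is commutative and, locally, finitely presented over $\cO_X$. Shrinking $K$ inside an open set on which such a presentation holds, $\Gr_U\Gr^F\cA|_U$ is a quotient of $\cO_U[y_1,\ldots,y_N]$ by a finitely generated ideal. I will argue that $\Gr_U\Gr^F\cA(K)$ is a quotient of $\cO(K)[y_1,\ldots,y_N]$. This needs vanishing of $H^1(K,-)$ on the kernel ideal sheaf, which is a filtered union (by degree in the $y$'s) of coherent $\cO$-modules. Theorem B on $K$, together with the fact that cohomology on a compact set commutes with filtered colimits, gives the required vanishing. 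Since $\cO(K)$ is Noetherian for a compact polycylinder $K$ (Frisch's theorem), Hilbert's basis theorem then makes $\Gr_U\Gr^F\cA(K)$ Noetherian.

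\emph{Step 2: lifting through the Rees algebra.} Put $R=R_{U,F}\cA$. By flatness (Assumption \ref{assumption:ring filtrations}(5)), the elements $u_1,\ldots,u_r,v$ form a regular sequence of central elements, and $R/(\bs{u},v)=\Gr_U\Gr^F\cA$. The same Theorem B argument applied to each coherent graded piece shows that taking sections over $K$ commutes with forming these quotients. I then use the standard lemma: if $R$ is $\Z^{r+1}$-graded, the $u_i,v$ are central homogeneous non-zero-divisors, $R/(\bs{u},v)$ is Noetherian, and the grading is suitably bounded, then $R$ is Noetherian. To prove it, take a homogeneous left ideal $I$, form the leading ideal in $R/(\bs u,v)$ by iteratively dividing by $u_i$ and $v$, choose finitely many lifts of its generators, and recover $I$ by a descending induction on the multidegree.

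\emph{Step 3: the remaining rings.} Once $R_{U,F}\cA(K)$ is Noetherian, the other rings follow by specialisation. Setting $v=1$ gives $R_U\cA(K)=R_{U,F}\cA(K)/(v-1)$, and then setting $\bs{u}=\bs{1}$ gives $\cA(K)$. The right Noetherian statements follow by the symmetric argument.

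\emph{Main obstacle.} The delicate point is the descending induction in Step 2. In the multi-graded setting the $U$-degrees are not bounded below, so the usual ``filtration bounded below'' hypothesis fails. I would handle this in two ways. First, induct one variable at a time: pass from $R/(\bs{u},v)$ to $R/(u_2,\ldots,u_r,v)$, and so on. Second, at each stage use completeness with respect to the $u_i$-adic filtration degree by degree. Each graded piece is a coherent $\cO$-module, and the $F$-degree is bounded below because $F_{-1}\cA=0$, so the filtration on each fixed-$F$-degree slice is separated. The argument would then proceed as in the classical proof that $\cD$ and $V^0\cD$ are Noetherian on compact polycylinders.
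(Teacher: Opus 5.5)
Your Steps 1 and 3 are essentially fine. Step 2, however, has a genuine gap exactly at the point you flag, and the proposed repair does not work. The hypothesis $F_{-1}\cA=0$ controls only the $v$-direction: in multidegree $(\bs{n},p)$ the $v^k$-multiples are $U^{\bs{n}}\cA\cap F_{p-k}\cA$, which vanish for $k>p$. It says nothing about $\bigcap_k (U^{\bs{n}+k\bs{1}_i}\cA\cap F_p\cA)$, so separatedness of the $u_i$-adic filtration on an $F$-slice does not follow. Completeness is also unavailable, even in the model case $U=V$ on $\sD_X$: there the $F_0$-slice is the $t$-adic filtration $t^n\cO_X(K)$ of $\cO_X(K)$, which is not $t$-adically complete.

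In fact your ``standard lemma'' cannot hold under Assumption \ref{assumption:ring filtrations} alone. Take $X=\C$ with coordinate $x$, $\cA=\cO_X$, and $r=1$. Set $F_p\cA=\cO_X$ for $p\geq 0$ and $F_p\cA=0$ for $p<0$, and set $U^n\cA=\cO_X$ for $n\leq 0$ and $U^n\cA=x\cO_X$ for $n\geq 1$. Every condition holds:
\begin{itemize}
\item $R_{U,F}\cA=R_U\cA\otimes_{\C}\C[v]$, and $R_U\cA$ is torsion-free over the PID $\C[u]$, so $R_{U,F}\cA$ is flat over $\C[u,v]$;
\item $\Gr_U\Gr^F\cA=\cO_X/x\cO_X$, which is commutative and finitely presented.
\end{itemize}
Yet for $0\in K$ the ideals of $R_U\cA(K)$ generated by $xu^{-1},\dots,xu^{-k}$ increase strictly, because their $u^{-(k+1)}$-component is only $x^2\cO_X(K)$. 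Here $xu^{-1}$ is infinitely $u$-divisible, which is precisely the non-separatedness your argument would have to exclude.

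So what is missing is genuine extra input in the $U$-direction, not a cleverer induction. One sufficient hypothesis is that the commutative algebra $R_U\Gr^F\cA$ is itself locally of finite type over $\cO_X$. This holds for the filtrations the paper actually uses:
\begin{itemize}
\item for $V^\bullet\sD_X$ along $t_1,\dots,t_r$, the ring $R_V\Gr^F\sD_X$ is generated over $\cO_X$ by the $u_i$, the $t_iu_i^{-1}$, the $\xi_{t_i}u_i$, and the remaining principal symbols $\xi_{x_j}$;
\item for $V^\bullet\sD_X(*D)$, the Rees ring is a (skew) Laurent polynomial ring over $V^{\bs{0}}\sD_X$.
\end{itemize}
Given such a hypothesis, your Step 1 argument makes $R_U\Gr^F\cA(K)$ Noetherian, and only the bounded-below $v$-step remains.

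The paper's proof has the same architecture as yours. It uses a bounded-below filtration $G$ to reduce $R_{U,F}\cA$ to $\Gr^F(R_U\cA)\otimes\C[v]$, then takes successive $\Gr_{U_i}$ down to $\Gr_U\Gr^F\cA\otimes\C[\bs{u},v]$. It disposes of the $U_i$ steps with ``by the argument above'', so it does not supply this ingredient either. In the generality of Assumption \ref{assumption:ring filtrations}, the statement needs such an extra hypothesis.

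Finally, in Step 3 do not obtain $\cA(K)$ as a quotient of $R_{U,F}\cA(K)$. The ring $\cA(K)$ is Noetherian unconditionally: run Step 1 for $\Gr^F\cA$ and lift along the bounded-below filtration $F$, which is how the paper handles it. By contrast, $R_{U,F}\cA(K)$ can fail to be Noetherian, as the example above shows.
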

\begin{proof}
Let $K\subset X$ be a small compact polycylinder on which $\Gr^F\mc{A}$ and $\Gr_U\Gr^F\mc{A}$ are finitely presented sheaves of algebras over $\mc{O}_X$. By a theorem of Frisch \cite{Frisch}, $\cO_X(K)$ is Noetherian. Since $\cO_X$ is coherent by the theorems of Cartan and Oka, it follows that $\Gr^F\cA, \Gr_U\Gr^F\cA(K)$ are all Noetherian. 

Now we argue as in \cite[Proposition 2.1.2]{SabbahnoteDmodule} to show $\cA(K)$ is left and right Noetherian. Let $I\subset \cA(K)$ be a left ideal and set $F_{\bullet}I=I\cap F_{\bullet}\cA(K)$. Then $\Gr^FI$ is an ideal in $\Gr^F\cA(K)$, thus is finitely generated, because $\Gr^F\cA(K)$ is Noetherian. Let $e_1,\ldots,e_{\ell}$ be homogeneous generators of $\Gr^FI$ and let $P_1,\ldots,P_{\ell}$ be the lifts of them in $I$ so that $P_i \in F_{p_i}\cA(K)$. By induction on the order, one can show that $I=\sum_i \cA(K)\cdot P_i$. Hence $\cA(K)$ is left Noetherian. Similar argument shows that $\cA(K)$ is right Noetherian.

Finally, let us deal with the case $R_{U,F}\cA(K)$. Consider the $\Z$-index filtration
\[ G_k\left(R_{U,F}\cA\right)=\bigoplus_{\bs{n}\in \Z^r}\left(\bigoplus_{j\leq k}(U_{\bs{n}}\cA\cap F_j\cA) v^j \oplus\bigoplus_{j>k} (U_{\bs{n}}\cA\cap F_k\cA) v^j\right) \bs{u^{-n}}.\] 
It is direct to check that
\[ \Gr^GR_{U,F}\cA\cong \bigoplus_{\bs{n}}\left(\Gr^FU_{\bs{n}}\cA\otimes \C[v]\right)\bs{u^{-n}}\cong \Gr^F(R_U\cA)\otimes \C[v].\]
Iterating this procedure, we obtain a sequence of $\cO_X$-algebras $\cB_i$ for $0 \leq i \leq r+1$ where
\[ \cB_{-1}=R_{U,F}\cA, \quad \cB_0=\Gr^F\cB_0, \quad \cB_i=\Gr_{U_i}\cB_{i-1}, \quad i\geq 1,\]
where $\Gr_{U_i}$ is the associated graded of the filtration induced by the $i$-th coordinate of $U_{\bullet}$. Furthermore by the flatness of $R_{U,F}\cA$ over $\C[\bs{u},v]$,  one has
\[ \cB_r=\Gr_U\Gr^F\cA\otimes \C[\bs{u},v].\]
By above, we know $\cB_r(K)$, being polynomial ring over $\Gr_U\Gr^F\cA(K)$, is both left and right Noetherian. So by the argument above, one can show by descending induction on $i$ that $\cB_i(K)$ is left and right Noetherian. Hence we conclude that $R_{U,F}\cA(K)$ is left and right Noetherian. Similarly, we can also show $R_{U}\cA(K)$ is left and right Noetherian.
\end{proof}

\begin{proof}[Proof of Lemma \ref{lem:artin-rees}]
Any good filtration $U^\bullet\mc{M}$ over $U^\bullet\mc{A}$ can be refined locally to a good filtration $U^\bullet F_\bullet \mc{M}$ over $U^\bullet\mc{A} \cap F_\bullet \mc{A}$, so it suffices to treat the latter case. 

By Lemma \ref{lemma: coherence of A and rees}, $R_{U,F}\cA(K)$ is left Noetherian for any compact polycylinder $K$. Furthermore, from its proof we know one can put a $\Z^k$-indexed filtration on $R_{U,F}\cA$ so that it satisfies  the assumption of Lemma \ref{lemma: Theorem A and B}. By assumption, $R_{U,F}\cM$ is coherent over $R_{U,F}\cA$, so by Lemma \ref{lemma: Theorem A and B}(3) we can cover $X$ by the interiors of compact polycylinders $K$ such that $(R_{U,F}\cM)(K)$ is an $(R_{U,F}\cA)(K)$-module of finite type and  for every $x\in K$, 
\[ \cO_{X,x}\otimes_{\cO_{X}(K)}(R_{U,F}\cM)(K)\to (R_{U,F}\cM)_{x}\]
is an isomorphism. In particular, there is an isomorphism
\[ \cO_{X,x}\otimes_{\cO_{X}(K)}(U^{\bs{n}}F_p\cM)(K)\to (U^{\bs{n}}F_p\cM)_{x}, \quad \forall \bs{n}\in \Z^r, p\in \Z.\]
Shrinking $K$ if necessary, we can assume that $(R_{U,F}\cA)(K)$ is left Noetherian by Lemma \ref{lemma: coherence of A and rees}, so the submodule $(R_{U,F}\cN)(K)$ of $(R_{U,F}\cM)(K)$ is also an $(R_{U,F}\cA)(K)$-module of finite type. Since $\cO_{X,x}$ is flat over $\cO_X(K)$ by a Theorem of Frisch \cite{Frisch}, it follows that there is an isomorphism
 \[ \cO_{X,x}\otimes_{\cO_{X}(K)}\left(U^{\bs{n}}F_p\cM\cap \cN\right)(K)\to \left(U^{\bs{n}}F_p\cM\cap \cN\right)_{x}. \]
Hence by Lemma \ref{lemma: Theorem A and B}(3) again, we conclude that the filtration $U^{\bullet}F_{\bullet}\cN$ is good.\end{proof}

We now consider the analogue of the ``good'' property for wall and chamber filtrations.

\begin{defn}\label{defn: good filtration}
Let $(\mc{A}, U^\bullet \mc{A})$ be as in Assumption \ref{assumption:ring filtrations} and let $\mc{M}$ be a coherent $\mc{A}$-module. We say that an $\mb{R}^r$-indexed wall and chamber filtration $U^\bullet\mc{M}$ is \emph{good} (over $U^\bullet\mc{A}$) if:
\begin{enumerate}
\item \label{itm:good 1}$U^\bullet\mc{M}$ is exhaustive, i.e.\ $\mc{M} = \bigcup_{\bs{\alpha} \in \mb{R}^r}U^{\bs{\alpha}}\mc{M}$,
\item \label{itm:good 2}$U^{\bs{n}}\mc{A} \cdot U^{\bs{\alpha}} \mc{M} \subset U^{\bs{n} + \bs{\alpha}}\mc{M}$ for all $\bs{n} \in \mb{Z}^r, \bs{\alpha} \in \mb{R}^r$,
\item \label{itm:good 3}$U^{\bs{\alpha}}\mc{M}$ is a coherent $U^{\bs{0}}\mc{A}$-module for all $\bs{\alpha} \in \mb{R}^r$, and
\item \label{itm:good 4}there exists, locally on $X$, a bounded interval $[A, B] \subset \mb{R}$ such that $U^\bullet\mc{M}$ is generated by its restriction to $[A, B]^r$, i.e.\ 
\[ U^{\bs{\alpha}}\mc{M} = \sum_{\substack{\bs{n} \in \mb{Z}^r, \bs{\gamma} \in [A, B]^r \\ \bs{n} + \bs{\gamma} \geq \bs{\alpha}}} U^{\bs{n}}\mc{A}\cdot U^{\bs{\gamma}}\mc{M}, \quad \textrm{for all $\bs{\alpha} \in \mb{R}^r$}.\]
\end{enumerate}
\end{defn}

The following equivalent formulation is often helpful, as it allows us to take advantage of the well-behaved theory of good $\mb{Z}^r$-filtrations.

\begin{lem}\label{lem: goodness part}
An $\mb{R}^r$-indexed wall and chamber filtration $U^{\bullet}\cM$ is good if and only if $\{U^{\bs{\alpha} + \bs{n}}\mc{M}\}_{\bs{n} \in \mb{Z}^r}$ is a good $\mb{Z}^r$-filtration over $U^\bullet\mc{A}$ for all $\bs{\alpha} \in \mb{R}^r$.
\end{lem}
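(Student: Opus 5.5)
We prove the two directions separately; the forward direction is essentially bookkeeping with the generation condition, while the reverse direction needs a finiteness argument modulo $\mb{Z}^r$.

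\textbf{Good $\Rightarrow$ slices good.} Suppose $U^\bullet\mc{M}$ is good, fix $\bs{\alpha}\in\mb{R}^r$, and work locally, where we have a set of walls $\mc{W}$ and an interval $[A,B]$ as in Definition \ref{defn: good filtration}(4).

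1. Exhaustiveness of $\{U^{\bs{\alpha}+\bs{n}}\mc{M}\}_{\bs{n}}$ is immediate from (1): any $\bs{\beta}\in\mb{R}^r$ satisfies $\bs{\beta}\geq \bs{\alpha}+\bs{n}$ for some $\bs{n}\in\mb{Z}^r$, so $U^{\bs{\beta}}\mc{M}\subset U^{\bs{\alpha}+\bs{n}}\mc{M}$.

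2. Compatibility with $U^\bullet\mc{A}$ is condition (2), so $R_U$ of the slice is an $R_U\mc{A}$-module.

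3. For coherence of the Rees module, choose finitely many $\bs{n}_1,\ldots,\bs{n}_k$ with $\bs{\alpha}+\bs{n}_j$ ranging over lattice points near $[A,B]^r$. Concretely, for each $\bs{\gamma}\in[A,B]^r$ let $\bs{n}(\bs{\gamma})$ be the smallest lattice vector with $\bs{\alpha}+\bs{n}(\bs{\gamma})\geq\bs{\gamma}$.
   - If $\bs{n}+\bs{\gamma}\geq\bs{\alpha}+\bs{m}$, then also $\bs{n}+\bs{\alpha}+\bs{n}(\bs{\gamma})\geq \bs{\alpha}+\bs{m}$, because $\bs{n}$ and $\bs{m}$ are integral.
   - Hence $U^{\bs{n}}\mc{A}\cdot U^{\bs{\gamma}}\mc{M}\subset U^{\bs{n}}\mc{A}\cdot U^{\bs{\alpha}+\bs{n}(\bs{\gamma})}\mc{M}$.
   - The vectors $\bs{n}(\bs{\gamma})$ take finitely many values as $\bs{\gamma}$ ranges over $[A,B]^r$.

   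Consequently $R_U$ of the slice is generated over $R_U\mc{A}$ by finitely many graded pieces $U^{\bs{\alpha}+\bs{n}_j}\mc{M}$. Each piece is $U^{\bs{0}}\mc{A}$-coherent by (3), so it is locally finitely generated. This makes the Rees module locally of finite type, and since $R_U\mc{A}$ is coherent (Lemma \ref{lemma: coherence of A and rees}), it is coherent. The main point to check is that a graded submodule of $\mc{M}[\bs{u}^{\pm}]$ of finite type is coherent; this holds because it is a finite-type submodule of a module that is locally a quotient of a coherent one.

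\textbf{Slices good $\Rightarrow$ good.} This is the main obstacle, since one must choose $[A,B]$ uniformly over all the slices. Conditions (1), (2) and (3) come directly from the slices:
   - (3) holds because each graded piece of a coherent Rees module over $R_U\mc{A}$ is coherent over $U^{\bs{0}}\mc{A}$; this is the graded degree-$\bs{n}$ part, and $\Gr$ of $R_U\mc{A}$ in degree $\bs{0}$ is $U^{\bs{0}}\mc{A}$.
   - (1) holds because each slice is exhaustive.

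For (4), use the wall and chamber structure locally. Since $\mc{W}$ is finite modulo $\mb{Z}^r$, the filtration takes only finitely many distinct slices up to translation: the class of $\bs{\alpha}$ modulo $\mb{Z}^r$ together with its position relative to the finitely many wall classes determines the slice. More precisely, the chamber decomposition of $[0,1)^r$ induced by the translates of $\mc{W}$ is finite. Pick representatives $\bs{\alpha}_1,\ldots,\bs{\alpha}_N\in[0,1)^r$, one per piece. Every slice $\{U^{\bs{\alpha}+\bs{n}}\mc{M}\}_{\bs{n}}$ then equals one of the slices $\{U^{\bs{\alpha}_i+\bs{n}}\mc{M}\}_{\bs{n}}$, up to an integer shift of the index.

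Each of these finitely many slices is locally generated by finitely many degrees $\bs{\alpha}_i+\bs{n}_{ij}$. Take $[A,B]$ containing all of these points. Then for any $\bs{\beta}$, we have $U^{\bs{\beta}}\mc{M}=U^{\bs{\alpha}_i+\bs{m}}\mc{M}$ for some $i$ and $\bs{m}$, and this lies in $\sum U^{\bs{n}}\mc{A}\cdot U^{\bs{\alpha}_i+\bs{n}_{ij}}\mc{M}$ with $\bs{n}+\bs{n}_{ij}\geq\bs{m}$. This gives (4). The reverse inclusion in (4) follows from (2).

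The delicate point is verifying that points in the same piece of this decomposition yield literally equal slices, not just comparable ones. To handle it, we apply the fact that $U^{\bs{\alpha}+\bs{n}}\mc{M}$ and $U^{\bs{\alpha}'+\bs{n}}\mc{M}$ coincide whenever no wall separates $\bs{\alpha}+\bs{n}$ from $\bs{\alpha}'+\bs{n}$. This separation condition is invariant under integer shifts of the pair, because $\mc{W}$ is $\mb{Z}^r$-periodic up to finitely many classes; if necessary, we first replace $\mc{W}$ by its $\mb{Z}^r$-saturation, which is still a set of walls.
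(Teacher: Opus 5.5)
Your plan follows the paper's proof in both directions, but the key inclusion in the forward direction is backwards as written. You take $\bs{n}(\bs{\gamma})$ to be the \emph{smallest} lattice vector with $\bs{\alpha}+\bs{n}(\bs{\gamma})\geq\bs{\gamma}$. Since the filtration is decreasing, this gives $U^{\bs{\alpha}+\bs{n}(\bs{\gamma})}\mc{M}\subset U^{\bs{\gamma}}\mc{M}$, the opposite of what $U^{\bs{n}}\mc{A}\cdot U^{\bs{\gamma}}\mc{M}\subset U^{\bs{n}}\mc{A}\cdot U^{\bs{\alpha}+\bs{n}(\bs{\gamma})}\mc{M}$ requires. (For example, take $r=1$, the $V$-filtration of $\mc{O}_X$ along a smooth divisor $t=0$, $\alpha=1/2$, $\gamma=1$ and $n=m=0$. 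Your inclusion would then put $V^{1}\mc{O}_X=\mc{O}_X$ inside $V^{0}\ms{D}_X\cdot V^{3/2}\mc{O}_X=t\mc{O}_X$.)

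The repair is to round down: take $n(\bs{\gamma})_i=\lfloor\gamma_i-\alpha_i\rfloor$. Then $\bs{\alpha}+\bs{n}(\bs{\gamma})\leq\bs{\gamma}$ gives the inclusion. Your inequality $\bs{n}+\bs{\alpha}+\bs{n}(\bs{\gamma})\geq\bs{\alpha}+\bs{m}$ still holds, because $\bs{\gamma}-\bs{\alpha}\geq\bs{m}-\bs{n}$ with $\bs{m}-\bs{n}$ integral forces $\lfloor\bs{\gamma}-\bs{\alpha}\rfloor\geq\bs{m}-\bs{n}$. The vectors $\bs{n}(\bs{\gamma})$ for $\bs{\gamma}\in[A,B]^r$ still take only finitely many values. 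This is exactly what the paper's box $S$ with corners in $\bs{\alpha}+\mb{Z}^r$ and the term $\min(S_{\geq\bs{\alpha}+\bs{k}-\bs{n}})$ accomplish.

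There are two smaller points, at places where the paper is similarly brief.

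First, your reason for coherence of the Rees module is not a valid principle. A finite-type submodule of a quotient of a coherent module need not be coherent (take the whole quotient by a submodule that is not of finite type). Instead, locally choose a good $\mb{Z}^r$-filtration $G^\bullet\mc{M}$. Each of your finitely many generating pieces $U^{\bs{\alpha}+\bs{n}_j}\mc{M}$ is $U^{\bs{0}}\mc{A}$-coherent, so it lies in $G^{\bs{n}_j-\bs{c}}\mc{M}$ for a single $\bs{c}$. Hence the Rees module of the slice is a finite-type submodule of the coherent module $\bs{u}^{-\bs{c}}R_G\mc{M}$, and is therefore coherent.

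Second, in the converse, $\bs{n}+\bs{n}_{ij}\geq\bs{m}$ only gives $\bs{n}+\bs{\alpha}_i+\bs{n}_{ij}\geq\bs{\alpha}_i+\bs{m}$, which need not dominate $\bs{\beta}$ as condition (4) demands. Take $\bs{m}=\lfloor\bs{\beta}\rfloor$; your translation invariance then gives $U^{\bs{\alpha}_i+\bs{n}_{ij}}\mc{M}=U^{\bs{\beta}-\bs{m}+\bs{n}_{ij}}\mc{M}$. So use $\bs{\gamma}=\bs{\beta}-\bs{m}+\bs{n}_{ij}$, and widen $[A,B]$ by $1$ on each side so that it contains these indices.
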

\begin{proof}
Suppose first that $U^\bullet\mc{M}$ is good and fix $\bs{\alpha} \in \mb{R}^r$; we want to show that $\{U^{\bs{\alpha} + \bs{n}}\mc{M}\}_{\bs{n} \in \mb{Z}^r}$ is also good.
Since the statement is local on $X$, we may choose an interval $[A, B]$ as in \eqref{itm:good 4}. For each $i = 1, \ldots, r$, choose an interval $[\alpha_i + A_i, \alpha_i + B_i] \supset [A, B]$ with $A_i, B_i \in \mb{Z}$. Set $S = \prod_{i = 1}^r [\alpha_i + A_i, \alpha_i + B_i]$. Then by \eqref{itm:good 4}, we have
\begin{align*}
U^{\bs{\alpha} + \bs{k}}\mc{M} &= \sum_{\substack{\bs{n} \in \mb{Z}^r, \bs{\gamma} \in S \\ \bs{n} + \bs{\gamma} \geq \bs{\alpha} + \bs{k}}} U^{\bs{n}}\mc{A} \cdot U^{\bs{\alpha} + \bs{k}}\mc{M} \\
&= \sum_{\substack{\bs{n} \in \mb{Z}^r \\ S_{\geq \bs{\alpha} + \bs{k} - \bs{n}}\neq \emptyset}} U^{\bs{n}}\mc{A}\cdot U^{\min(S_{\geq \bs{\alpha} + \bs{k} - \bs{n}})}\mc{M} \\
&= \sum_{\substack{\bs{n} \in \mb{Z}^r \\ \bs{\alpha} + \bs{k} - \bs{n} \in S}} U^{\bs{n}}\mc{A} \cdot U^{\bs{\alpha} + \bs{k} - \bs{n}}\mc{M},
\end{align*}
where $S_{\geq \bs{\beta}} := \{\bs{\gamma} \in S \mid \bs{\gamma} \geq \bs{\beta}\}$, and the third equality holds because $\min(S_{\geq \bs{\alpha} + \bs{k} - \bs{n}}) \in \bs{\alpha} + \mb{Z}^r$. Since $S \cap (\bs{\alpha} + \mb{Z}^r)$ is finite and each $U^{\bs{\beta}}\mc{M}$ is a coherent $U^{\bs{0}}\mc{A}$-module, it follows that $\{U^{\bs{\alpha} + \bs{n}}\mc{M}\}_{\bs{n} \in \mb{Z}^r}$ is good as claimed.

Conversely, suppose that $\{U^{\bs{\alpha} + \bs{n}}\mc{M}\}_{\bs{n} \in \mb{Z}^r}$ is good for all $\bs{\alpha} \in \mb{R}^r$. Then clearly \eqref{itm:good 1} and \eqref{itm:good 2} are satisfied. Moreover, our assumptions on $\mc{A}$ imply that each $U^{\bs{n}}\mc{A}$ is coherent over $U^{\bs{0}}\mc{A}$ (see Remark \ref{remark: coherence of Un over U0}), so this also implies that $U^{\bs{\alpha}}\mc{M}$ is coherent over $U^{\bs{0}}\mc{A}$ for every $\bs{\alpha} \in \mb{R}^r$, i.e.\ \eqref{itm:good 3} holds. Finally, to prove \eqref{itm:good 4}, working locally on $X$, we may choose a set of walls $\mc{W}$, a finite set $\{\bs{\alpha}_1, \ldots, \bs{\alpha}_k\}$ such that every chamber of $\mc{W}$ contains a point in $\bs{\alpha}_i + \mb{Z}^r$ for some $r$, and finitely many degrees $\bs{n}_{ij}$ such that the good filtration $\{U^{\bs{\alpha}_i + \bs{n}}\mc{M}\}_{\bs{n} \in \mb{Z}^r}$ is generated in degrees $\bs{\alpha}_{i} + \bs{n}_{ij}$ for each $i$. This implies \eqref{itm:good 4} for any interval $[A, B]$ containing all coordinates of $\bs{\alpha}_i + \bs{n}_{ij}$ for each $i, j$.\end{proof}

As a corollary of Lemmas \ref{lem:artin-rees} and \ref{lem: goodness part}, we deduce that the Artin-Rees lemma also holds for wall and chamber filtrations:

\begin{cor} \label{cor:wall and chamber artin-rees}
Let $\mc{M}$ be a coherent $\mc{A}$-module equipped with a good wall and chamber filtration $U^\bullet \mc{M}$. Then for any coherent submodule $\mc{N} \subset \mc{M}$,
\[ U^\bullet \mc{N} := \mc{N} \cap U^\bullet \mc{M} \]
is also a good wall and chamber filtration.
\end{cor}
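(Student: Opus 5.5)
The strategy is to reduce Corollary \ref{cor:wall and chamber artin-rees} to the $\mb{Z}^r$-indexed Artin--Rees lemma (Lemma \ref{lem:artin-rees}), using the characterisation of goodness in Lemma \ref{lem: goodness part}. Two things must be checked: that $U^\bullet\mc{N} = \mc{N} \cap U^\bullet\mc{M}$ is a wall and chamber filtration, and that it is good.

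The first point is immediate. Work locally on $X$ and fix a set of walls $\mc{W}$ for $U^\bullet\mc{M}$. If $\bs{\alpha},\bs{\beta}$ lie in the same chamber of $\mc{W}$, then $U^{\bs{\alpha}}\mc{M} = U^{\bs{\beta}}\mc{M}$. Intersecting with $\mc{N}$ gives $U^{\bs{\alpha}}\mc{N} = U^{\bs{\beta}}\mc{N}$, so the same $\mc{W}$ serves as a set of walls for $U^\bullet\mc{N}$. The filtration is also clearly decreasing.

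For goodness, fix $\bs{\alpha} \in \mb{R}^r$. By Lemma \ref{lem: goodness part} applied to $\mc{M}$, the $\mb{Z}^r$-indexed filtration $\{U^{\bs{\alpha}+\bs{n}}\mc{M}\}_{\bs{n}\in\mb{Z}^r}$ is good over $U^\bullet\mc{A}$. Since $\mc{N}$ is a coherent submodule, Lemma \ref{lem:artin-rees} shows that $\{\mc{N}\cap U^{\bs{\alpha}+\bs{n}}\mc{M}\}_{\bs{n}} = \{U^{\bs{\alpha}+\bs{n}}\mc{N}\}_{\bs{n}}$ is a good $\mb{Z}^r$-filtration of $\mc{N}$. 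This holds for every $\bs{\alpha}$, so the converse direction of Lemma \ref{lem: goodness part} shows that $U^\bullet\mc{N}$ is a good wall and chamber filtration.

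There is no serious obstacle here; the work was done in the two lemmas. The one point to watch is locality. Lemma \ref{lem: goodness part} is a statement about sheaves, with goodness interpreted locally. The reverse direction of its proof uses only finitely many representatives $\bs{\alpha}_1,\ldots,\bs{\alpha}_k$, one per chamber modulo $\mb{Z}^r$, which is possible because $\mc{W}$ is finite modulo translation. So after shrinking to a small enough open set, the finitely many applications of Lemma \ref{lem:artin-rees} can be made simultaneously, and the bounded interval $[A,B]$ of condition \eqref{itm:good 4} for $\mc{N}$ exists locally.
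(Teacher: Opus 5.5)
Your proposal is correct and follows the same route as the paper, which states the corollary as an immediate consequence of Lemma~\ref{lem:artin-rees} and Lemma~\ref{lem: goodness part}. You observe that the walls of $U^\bullet\mc{M}$ also serve as walls for $U^\bullet\mc{N}$, and you apply Artin--Rees on each coset $\bs{\alpha}+\mb{Z}^r$; that is exactly the intended argument.
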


\subsection{The multivariate $V$-filtration} \label{subsec:multivariate V definition}

We now turn to the definition of a multivariate $V$-filtration of a coherent $\ms{D}$-module along a simple normal crossings divisor. 

Suppose that $X$ is a complex manifold and $D \subset X$ is a simple normal crossings divisor with irreducible components $D_1, \ldots, D_r$. Working locally, let us assume that $D_i = \{t_i = 0\}$ for some local coordinates $t_1, \ldots, t_r$. Consider the $\mb{Z}^r$-indexed filtration $V^\bullet\ms{D}_X$ given by
\[ V^{\bs{n}}\ms{D}_X := V^{n_1}_{D_1} \ms{D}_X \cap \cdots \cap V^{n_r}_{D_r}\ms{D}_X \quad \text{for $\bs{n} = (n_1, \ldots, n_r)\in \Z^r$,}\]
where $V^\bullet_{D_i}\ms{D}_X$ is the $V$-filtration along the smooth hypersurface $D_i$ (see \eqref{eqn: V filtration on DX}). Note that $\partial_{t_i} t_i \in V^{\bs{0}}\ms{D}_X$ for all $i$. It is easy to see that the filtered sheaf of rings $V^\bullet\ms{D}_X$ satisfies Assumption \ref{assumption:ring filtrations}, with $F_\bullet\ms{D}_X$ the usual filtration by order of differential operator.

In the definition below (and throughout this paper), we will write $s_i = -\partial_{t_i} t_i \in V^{\bs{0}}\ms{D}_X$ for $i = 1, \ldots, r$.

\begin{defn} \label{defn:multivariate V-filtration}
Let $\mc{M}$ be a coherent $\ms{D}_X$-module. We say that an $\mb{R}^r$-indexed filtration $V^\bullet \mc{M}$ is a \emph{multivariate $V$-filtration} with respect to $D$ if it satisfies:
\begin{enumerate}
\item \label{itm:multivariate V 1}$V^{\bullet}\cM$ is a good wall and chamber filtration over $V^{\bullet}\sD_X$ in the sense of Definition \ref{defn: good filtration}.
\item \label{itm:multivariate V 4} If $\mc{W}$ is a set of walls and $\bs{\alpha} \leq \bs{\beta}$ are separated by a single wall $H = \bs{L}^{-1}(\gamma)\in \mc{W}$, then there exist (not necessarily distinct) $\gamma_1, \ldots, \gamma_k \in \mb{C}$ with $\Re \gamma_j = \gamma$ such that the operator
\[ \prod_{j = 1}^k (\bs{L}(\bs{s}) + \gamma_j) \colon \frac{V^{\bs{\alpha}}\mc{M}}{V^{\bs{\beta}}\mc{M}} \to \frac{V^{\bs{\alpha}}\mc{M}}{V^{\bs{\beta}}\mc{M}} \]
is zero, where we write
\begin{equation}\label{eqn: Ldtt} \bs{L}(\bs{s}) = \sum_{i = 1}^r L_i s_i = -\sum_{i = 1}^r L_i \partial_{t_i} t_i=-\bs{L}(\bs{\d_tt}).\end{equation}
\end{enumerate}
If $A \subset \mb{C}$ is a $\mb{Q}$-linear subspace containing $\mb{Q}$, we will always endow $ A\subset \mb{R}$ and $A^r \subset \mb{R}^r$ with the (partial) order induced by restriction. We say that $V^\bullet \mc{M}$ is \emph{defined over $A$} if the $\gamma_j$ in \eqref{itm:multivariate V 4} can always be chosen to lie in $A$. 
\end{defn}
Note that if $r = 1$ then Definition \ref{defn:multivariate V-filtration} reduces to the usual definition of a $V$-filtration (Definition \ref{defn:V-filtration}).

\begin{prop} \label{prop:V image and preimage}
Let $f \colon \mc{M} \to \mc{N}$ be a morphism of coherent $\ms{D}_X$-modules.
\begin{enumerate}
\item[(a)] \label{itm:V image} If $f$ is surjective and $V^\bullet \mc{M}$ is a multivariate $V$-filtration on $\mc{M}$, then $V^\bullet \mc{N} := f(V^\bullet\mc{M})$ is a multivariate $V$-filtration on $\mc{N}$.
\item[(b)] \label{itm:V preimage} If $f$ is injective and $V^\bullet \mc{N}$ is a multivariate $V$-filtration on $\mc{N}$, then $V^\bullet\mc{M} := f^{-1}(V^\bullet \mc{N})$ is a multivariate $V$-filtration on $\mc{M}$.
\end{enumerate}
\end{prop}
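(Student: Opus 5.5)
We verify the two conditions of Definition \ref{defn:multivariate V-filtration} separately for each filtration: condition \eqref{itm:multivariate V 1} (good wall and chamber) and condition \eqref{itm:multivariate V 4} (the eigenvalue condition across single walls). Throughout we work locally on $X$ and fix a set of walls $\mc{W}$ for the given filtration. We take $\mc{W}$ itself as a set of walls for the new filtration. The filtration $f(V^\bullet\mc{M})$, resp.\ $f^{-1}(V^\bullet\mc{N})$, is certainly constant on chambers of $\mc{W}$, because the original one is. It is also decreasing, so it is a wall and chamber filtration with the same walls.

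For (a), goodness is the easy case. Exhaustiveness and $V^{\bs{n}}\ms{D}_X\cdot V^{\bs{\alpha}}\mc{N}\subset V^{\bs{\alpha}+\bs{n}}\mc{N}$ follow by applying the $\ms{D}_X$-linear surjection $f$. Coherence of $f(V^{\bs{\alpha}}\mc{M})$ over $V^{\bs{0}}\ms{D}_X$ holds because it is a quotient of a coherent module. The generation condition \eqref{itm:good 4} transports directly by applying $f$ to the generating formula, with the same interval $[A,B]$. For the eigenvalue condition, suppose $\bs{\alpha}\le\bs{\beta}$ are separated by a single wall $H=\bs{L}^{-1}(\gamma)$. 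Then $f$ induces a $V^{\bs{0}}\ms{D}_X$-linear surjection $V^{\bs{\alpha}}\mc{M}/V^{\bs{\beta}}\mc{M}\to V^{\bs{\alpha}}\mc{N}/V^{\bs{\beta}}\mc{N}$, and $\bs{L}(\bs{s})\in V^{\bs{0}}\ms{D}_X$. Hence the same polynomial $\prod_j(\bs{L}(\bs{s})+\gamma_j)$ kills the target.

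For (b), we identify $\mc{M}$ with a coherent submodule of $\mc{N}$, so that $V^\bullet\mc{M}=\mc{M}\cap V^\bullet\mc{N}$. Goodness is exactly Corollary \ref{cor:wall and chamber artin-rees}, the Artin--Rees lemma for wall and chamber filtrations. This corollary is itself deduced from Lemmas \ref{lem:artin-rees} and \ref{lem: goodness part}: apply Artin--Rees to each $\mb{Z}^r$-filtration $\{V^{\bs{\alpha}+\bs{n}}\mc{N}\}_{\bs{n}}$ and then reassemble. For the eigenvalue condition, the inclusion induces an injection
\[ \frac{V^{\bs{\alpha}}\mc{M}}{V^{\bs{\beta}}\mc{M}}=\frac{\mc{M}\cap V^{\bs{\alpha}}\mc{N}}{\mc{M}\cap V^{\bs{\beta}}\mc{N}}\hookrightarrow \frac{V^{\bs{\alpha}}\mc{N}}{V^{\bs{\beta}}\mc{N}}, \]
which is compatible with the action of $\bs{L}(\bs{s})$. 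So again the annihilating polynomial for the target kills the source.

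The only point needing care is the phrase ``if $\mc{W}$ is a set of walls'' in condition \eqref{itm:multivariate V 4}. It must hold for any set of walls of the \emph{new} filtration, which may be coarser than $\mc{W}$. Take a set of walls $\mc{W}'$ for the new filtration, and suppose $\bs{\alpha}\le\bs{\beta}$ are separated by a single wall $H\in\mc{W}'$. We pass to $\mc{W}\cup\mc{W}'$, which is still a set of walls for both filtrations. The segment from $\bs{\alpha}$ to $\bs{\beta}$ is monotone, so we can choose a monotone chain $\bs{\alpha}=\bs{\alpha}_0\le\bs{\alpha}_1\le\cdots\le\bs{\alpha}_m=\bs{\beta}$ with consecutive points separated by a single wall of $\mc{W}\cup\mc{W}'$. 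This may require a small generic perturbation of the path, staying within the same chambers at the endpoints. Each step with a wall not equal to $H$ produces no jump in the new filtration. Steps crossing $H$ itself are killed by a product of factors $\bs{L}_H(\bs{s})+\gamma'$ with $\Re\gamma'$ equal to the constant defining $H$, by the argument above applied to the old filtration. Any other steps are killed by factors for different walls. Those factors are invertible on the relevant quotient of the new filtration, because the new filtration does not jump there, and so they can be discarded. Composing along the chain gives the required product for $H$.

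I expect this wall-refinement bookkeeping to be the main (though mild) obstacle. I would first try to sidestep it by reading \eqref{itm:multivariate V 4} for a single fixed set of walls, and otherwise carry out the chain argument just sketched.
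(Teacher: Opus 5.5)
Your proposal is correct and follows the paper's proof: (a) is immediate from the definitions, and in (b) goodness is Corollary \ref{cor:wall and chamber artin-rees}, while the eigenvalue condition passes to the subquotient via $V^{\bs{\alpha}}\mc{M}/V^{\bs{\beta}}\mc{M}\hookrightarrow V^{\bs{\alpha}}\mc{N}/V^{\bs{\beta}}\mc{N}$. Two small remarks: in (a), coherence of $f(V^{\bs{\alpha}}\mc{M})$ over $V^{\bs{0}}\ms{D}_X$ requires the kernel $\ker f\cap V^{\bs{\alpha}}\mc{M}$ to be of finite type, which follows from Corollary \ref{cor:wall and chamber artin-rees} applied to $\ker f\subset\mc{M}$; and your wall-refinement chain argument is sound, but it is extra care that the paper does not spell out.
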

\begin{proof}
The case (a) is immediate from the definition. For (b), property \eqref{itm:multivariate V 4} of Definition \ref{defn:multivariate V-filtration} is clear, and property \eqref{itm:multivariate V 1} follows from Corollary \ref{cor:wall and chamber artin-rees}.\end{proof}

The main result of this subsection is that multivariate $V$-filtrations are unique when they exist.

\begin{thm} \label{thm:multivariate V uniqueness}
Let $\mc{M}$ be a coherent $\ms{D}_X$-module. Then any two multivariate $V$-filtrations on $\mc{M}$ coincide.
\end{thm}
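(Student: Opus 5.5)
The plan is to use symmetry: given two multivariate $V$-filtrations $V^\bullet\mc{M}$ and $V'^\bullet\mc{M}$, it suffices to prove $V^{\bs{\alpha}}\mc{M} \subset V'^{\bs{\alpha}}\mc{M}$ for every $\bs{\alpha}$. The proof will be local on $X$, so we may fix a single set of walls $\mc{W}$ serving both filtrations. From goodness (Lemma \ref{lem: goodness part}) applied to the $\mb{Z}^r$-filtrations $\{V^{\bs{\alpha}+\bs{n}}\mc{M}\}$ and $\{V'^{\bs{\alpha}+\bs{n}}\mc{M}\}$, together with exhaustiveness, I expect to get an integer $N$ with $V^{\bs{\alpha}+N\bs{1}}\mc{M} \subset V'^{\bs{\alpha}}\mc{M}$ and $V'^{\bs{\alpha}+N\bs{1}}\mc{M}\subset V^{\bs{\alpha}}\mc{M}$ for all $\bs{\alpha}$. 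Set
\[
\mc{Q} = \frac{V^{\bs{\alpha}}\mc{M}+V'^{\bs{\alpha}}\mc{M}}{V'^{\bs{\alpha}}\mc{M}}.
\]
This is a subquotient of $V'^{\bs{\alpha}-N\bs{1}}\mc{M}/V'^{\bs{\alpha}}\mc{M}$. That module has a finite filtration by chambers crossed along a path, and each step is a quotient across a single wall. Hence $\mc{Q}$ is a module over $\mb{C}[\bs{s}]$, and since the $s_i$ lie in $V^{\bs 0}\ms{D}_X$ it is a $\mb{C}[\bs{s}]$-module stalkwise. To show $\mc{Q}=0$, it is enough to show that its localisation at every maximal ideal $\fm_{\bs{\lambda}}\subset\mb{C}[\bs{s}]$ vanishes.

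The key step is a localisation lemma. Fix $\bs{\lambda}\in\mb{C}^r$ and put $\bs{p}=-\Re\bs{\lambda}$. Consider a quotient across a single wall $\bs{L}^{-1}(\gamma)$; it is killed by $\prod_j(\bs{L}(\bs{s})+\gamma_j)$. After localising at $\fm_{\bs{\lambda}}$, every factor with $\gamma_j\neq -\bs{L}(\bs{\lambda})$ becomes invertible. So such a quotient survives localisation only when $\gamma=\bs{L}(\bs{p})$, that is, only when the wall passes through $\bs{p}$.

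Consequently, $\bs{\alpha}\mapsto (V^{\bs{\alpha}}\mc{M})_{\fm_{\bs{\lambda}}}$ is constant on the chambers of the finite central arrangement $\mc{W}_{\bs p}$ of walls through $\bs{p}$. In particular it is invariant under the dilations $\bs{\alpha}\mapsto \bs{p}+c(\bs{\alpha}-\bs{p})$ for $c>0$. The same holds for $V'$.

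Now argue by contradiction at a fixed $\bs{\lambda}$. Suppose $x\in (V^{\bs{\alpha}}\mc{M})_{\fm}$ but $x\notin (V'^{\bs{\alpha}}\mc{M})_{\fm}$. Choose $\bs{\beta}$ in the closure of the $V'$-chambers at $\bs{p}$ so that $x\in V'^{\bs{\beta}}$, and so that $x\notin V'^{\bs{\beta}'}$ for a neighbouring $\bs{\beta}'$ separated from $\bs{\beta}$ by a single wall $H=\bs{L}^{-1}(\bs{L}(\bs{p}))$ with $\bs{\beta}'$ on the side of $\bs{\alpha}$. On $V'^{\bs{\beta}}/V'^{\bs{\beta}'}$, localised, the operator $\bs{L}(\bs{s})+\bs{L}(\bs{\lambda})$ is nilpotent.

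On the other side, dilate $\bs{\alpha}$ away from $\bs{p}$. By dilation invariance $x\in V^{\bs{p}+c(\bs{\alpha}-\bs{p})}_{\fm}$ for every $c$. For $c\gg0$, goodness then puts $x$ in $V'^{\bs{\beta}'}_{\fm}$ as soon as $\bs{p}+c(\bs{\alpha}-\bs{p})\geq \bs{\beta}'+N\bs{1}$ — provided $\bs{\alpha}-\bs{p}$ is strictly positive. The general case will use the $t_i$-action: the $t_i$ shift the filtration by $\bs{1}_i$, and for $\bs{\lambda}$ the localisation is compatible with $\bs{\lambda}\mapsto\bs{\lambda}+\bs{1}_i$.

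The main obstacle I expect is exactly this last step: handling directions $\bs{\alpha}-\bs{p}$ that are not strictly positive. There, dilation does not push the point above $\bs{\beta}'+N\bs{1}$. I would try to reduce to the positive case by first using the $\mb{Z}^r$-periodicity of $\mc{W}$ and multiplication by the $t_i$ (which are bijective on $\mc{M}$-quotients of high enough index by the eigenvalue condition, as for $r=1$). Failing that, I would write explicit formulas for $(V^{\bs\alpha}\mc{M})_{\fm}$ in terms of a fixed set of local generators of $\mc{M}$, and observe that these formulas are symmetric in $V$ and $V'$. The remaining steps — reducing to $\mc{Q}_{\fm}=0$ and the localisation lemma — seem routine.
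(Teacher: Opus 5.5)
Your argument only goes through when $\bs{\alpha}-\bs{p}\in\mb{R}^r_{>0}$. The general case, which you flag as the main obstacle, is left open, and neither fallback closes it as stated.

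The reduction via the $t_i$ cannot work. Since $s_it_i=t_i(s_i-1)$, multiplication by $t_i$ carries the $\fm_{\bs{\lambda}}$-localisation to the $\fm_{\bs{\lambda}-\bs{1}_i}$-localisation. It therefore replaces $\bs{p}$ by $\bs{p}+\bs{1}_i$ at the same time as $\bs{\alpha}$ by $\bs{\alpha}+\bs{1}_i$, so the direction $\bs{\alpha}-\bs{p}$ never changes. Moreover, the bijectivity of $t_i$ you would invoke is Proposition \ref{prop:weak specialisation}, whose proof in the paper already uses the localisation formula.

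The explicit-formula fallback is exactly the paper's route, but you omit the device that handles arbitrary directions. Lemma \ref{lem:multivariate V localisation} picks $\bs{v}\in\mb{Z}^r$ with all $v_i\neq 0$ and $\bs{p}+\bs{v}$ in the interior of the chamber of $\mc{W}_{\bs{p}}$ containing $\bs{\alpha}$. It then shows $(V^{\bs{\alpha}}\mc{M})_{\fm}=(V^{N\bs{v}}\ms{D}_X\cdot\mc{F})_{\fm}$ for $N\gg 0$, where $\mc{F}$ is the $\mc{O}_X$-span of fixed local generators. Negative $v_i$ are absorbed by $\partial_{t_i}$, using $V^{\bs{m}}\ms{D}_X\cdot V^{\bs{n}}\ms{D}_X=V^{\bs{m}+\bs{n}}\ms{D}_X$ for sign-compatible $\bs{m},\bs{n}$. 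Since the right-hand side does not involve the filtration, uniqueness follows from Lemma \ref{lem:localisation}. Your detour through $\bs{\beta},\bs{\beta}'$ and a nilpotent operator is unjustified (you never show such a neighbour exists) and is never used.

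That said, the gap closes with what you already have. Apply your comparison $V^{\bs{\gamma}}\mc{M}\subset V'^{\bs{\gamma}-N\bs{1}}\mc{M}$ (which is correct, locally and uniformly in $\bs{\gamma}$) at the dilated point itself, instead of aiming above a fixed $\bs{\beta}'+N\bs{1}$. Put $\bs{\gamma}=\bs{p}+c(\bs{\alpha}-\bs{p})$. Every wall through $\bs{p}$ is $\bs{L}^{-1}(\bs{L}(\bs{p}))$ with $\bs{L}\in\mb{Q}^r_{\geq 0}\setminus\{0\}$, so $\bs{L}(\bs{1})>0$ and
\[ \bs{L}(\bs{\gamma}-N\bs{1})-\bs{L}(\bs{p})=c\bigl(\bs{L}(\bs{\alpha})-\bs{L}(\bs{p})\bigr)-N\bs{L}(\bs{1}). \]
This is positive for $c\gg 0$ if $\bs{L}(\bs{\alpha})>\bs{L}(\bs{p})$, and nonpositive if $\bs{L}(\bs{\alpha})\leq\bs{L}(\bs{p})$. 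Each chamber is closed on the $\leq$ side of its walls, and $\mc{W}_{\bs{p}}$ is finite. Hence $\bs{\gamma}-N\bs{1}$ lies in the same chamber of $\mc{W}_{\bs{p}}$ as $\bs{\alpha}$ for $c\gg 0$, and
\[ (V^{\bs{\alpha}}\mc{M})_{\fm}=(V^{\bs{\gamma}}\mc{M})_{\fm}\subset(V'^{\bs{\gamma}-N\bs{1}}\mc{M})_{\fm}=(V'^{\bs{\alpha}}\mc{M})_{\fm} \]
holds with no positivity hypothesis. Symmetry plus Lemma \ref{lem:localisation} then finish.

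Completed this way, your route is shorter than the paper's. What the paper's explicit formula buys in addition is a filtration-free description of the localised pieces, which it reuses later (e.g.\ for the injectivity of $t_i$ in Proposition \ref{prop:weak specialisation}).
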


To prove Theorem \ref{thm:multivariate V uniqueness}, let us fix a coherent $\ms{D}_X$-module $\mc{M}$ and a multivariate $V$-filtration $V^\bullet\mc{M}$. Regard $\mc{M}$ as a module over the polynomial ring $\mb{C}[\bs{s}] := \mb{C}[s_1, \ldots, s_r]$ by setting $s_i = -\partial_{t_i}t_i$. We recall the following elementary lemma. 

\begin{lem}[{\cite[Proposition 3.8]{AtiyahMacdonald}}] \label{lem:localisation}
Let $R$ be a commutative ring, $M$ an $R$-module and $N_1, N_2 \subset M$ submodules. Then $N_1 = N_2$ if and only if $(N_1)_{\mf{m}} = (N_2)_{\mf{m}}$ for all maximal ideals $\mf{m} \subset R$.
\end{lem}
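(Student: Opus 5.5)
The plan is to reduce the statement to the standard local-to-global principle for the vanishing of a module, using only exactness of localisation. The forward implication is trivial, since localisation at $\mf{m}$ is a functor and $(N_i)_{\mf{m}}$ is just the image of $R_{\mf{m}} \otimes_R N_i \to M_{\mf{m}}$. For the converse, we first show $N_1 \subset N_2$; the reverse inclusion then follows by symmetry.

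First we establish the auxiliary claim: an $R$-module $Q$ with $Q_{\mf{m}} = 0$ for all maximal ideals $\mf{m}$ is zero. Suppose $x \in Q$ is non-zero. Then its annihilator $\mrm{Ann}(x) = \{a \in R \mid ax = 0\}$ is a proper ideal, since $1 \notin \mrm{Ann}(x)$. Hence it is contained in some maximal ideal $\mf{m}$, by Zorn's lemma. The image $x/1 \in Q_{\mf{m}}$ vanishes if and only if $ax = 0$ for some $a \notin \mf{m}$. No such $a$ exists, because $\mrm{Ann}(x) \subset \mf{m}$. So $x/1 \neq 0$, which contradicts $Q_{\mf{m}} = 0$.

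Next we apply this to $Q = (N_1 + N_2)/N_2$. Localisation is exact, being given by the flat functor $R_{\mf{m}} \otimes_R (-)$, and it commutes with sums of submodules inside $M$. Therefore
\[ Q_{\mf{m}} \cong \bigl((N_1)_{\mf{m}} + (N_2)_{\mf{m}}\bigr)/(N_2)_{\mf{m}}, \]
computed inside $M_{\mf{m}}$. By hypothesis $(N_1)_{\mf{m}} = (N_2)_{\mf{m}}$, so this quotient is zero for every maximal ideal $\mf{m}$. The auxiliary claim then gives $Q = 0$, that is, $N_1 \subset N_2$.

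The only step needing any care is the identification of $Q_{\mf{m}}$ with the quotient of localised submodules inside $M_{\mf{m}}$, and exactness of localisation handles it directly. So I expect no real obstacle here. The substantive use of this lemma comes later, when it is applied with $R = \mb{C}[\bs{s}]$ to compare two candidate $V$-filtrations one maximal ideal at a time.
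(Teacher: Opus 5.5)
Your proof is correct and is essentially the standard argument behind the cited result: the paper gives no proof of its own and simply cites Atiyah--Macdonald. There, the key step is exactly your annihilator argument that a module vanishing at every maximal ideal is zero, and your passage to equality of submodules, via exactness of localisation applied to $(N_1+N_2)/N_2$ and $(N_1+N_2)/N_1$, is the usual deduction.
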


In view of Lemma \ref{lem:localisation}, the filtration $V^\bullet\mc{M}$ is completely characterised by the localisations $(V^\bullet\mc{M})_{\mf{m}} \subset \mc{M}_{\mf{m}}$. Throughout this paper, we will repeatedly use the following re-interpretation of condition \eqref{itm:multivariate V 4} of Definition \ref{defn:multivariate V-filtration} in these terms.

\begin{lem} \label{lem:localised chambers}
Let $U^\bullet \mc{M}$ be a good wall and chamber filtration over $V^\bullet\ms{D}_X$ with a compatible set of walls $\mc{W}$. For a maximal ideal $\mf{m} = (s_1 + \alpha_1, \ldots, s_r + \alpha_r) \subset \mb{C}[\bs{s}]$, let $\mc{W}_\mf{m}$ be the \emph{finite} set of walls
 \begin{equation}\label{eqn: localized walls}
 \mc{W}_{\mf{m}} \colonequals \{H \in \mc{W} \mid \Re \bs{\alpha} \in H\}.
 \end{equation}
Then $U^\bullet \mc{M}$ is a multivariate $V$-filtration (i.e.\ satisfies \eqref{itm:multivariate V 4} of Definition \ref{defn:multivariate V-filtration}) if and only if $(U^\bullet \mc{M})_{\mf{m}}$ is a wall and chamber filtration on $\mc{M}_\mf{m}$ with set of walls $\mc{W}_{\mf{m}}$.
\end{lem}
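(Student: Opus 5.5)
We prove both implications by comparing, for a pair $\bs{\alpha} \leq \bs{\beta}$ separated by a single wall $H = \bs{L}^{-1}(\gamma)$, the quotient $Q := U^{\bs{\alpha}}\mc{M}/U^{\bs{\beta}}\mc{M}$ with its localisation $Q_{\mf{m}}$. Localisation is exact, so $Q_{\mf{m}} = (U^{\bs{\alpha}}\mc{M})_{\mf{m}}/(U^{\bs{\beta}}\mc{M})_{\mf{m}}$. Throughout, the key point is that $Q$ is a module over $\mb{C}[\bs{s}]$ on which the element $\bs{L}(\bs{s})$ acts, and $\bs{L}(\bs{s}) + \gamma'$ is a unit in $\mb{C}[\bs{s}]_{\mf{m}}$ exactly when $\bs{L}(\bs{\alpha}) \neq \gamma'$, where $\mf{m} = (s_i + \alpha_i)$.

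\emph{($\Rightarrow$).} Suppose condition \eqref{itm:multivariate V 4} holds and fix $\mf{m}$. To show that $(U^\bullet\mc{M})_{\mf{m}}$ has walls $\mc{W}_{\mf{m}}$, it suffices to show that $(U^{\bs{\alpha}}\mc{M})_{\mf{m}} = (U^{\bs{\beta}}\mc{M})_{\mf{m}}$ whenever $\bs{\alpha}, \bs{\beta}$ lie in the same chamber of $\mc{W}_{\mf{m}}$. By the local finiteness of $\mc{W}$, we may connect such points by a path crossing only walls of $\mc{W} \setminus \mc{W}_{\mf{m}}$, one at a time and transversally. Moreover, we may arrange the steps to be comparable: replace $\bs{\alpha}, \bs{\beta}$ by $\min(\bs{\alpha}, \bs{\beta})$, which stays in the same $\mc{W}_{\mf{m}}$-chamber since that chamber is convex and cut out by half-spaces with nonnegative normals, and walk monotonically from it. This reduces the claim to consecutive points $\bs{\alpha}' \leq \bs{\beta}'$ separated by a single wall $H = \bs{L}^{-1}(\gamma) \notin \mc{W}_{\mf{m}}$. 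For such a pair, $Q$ is killed by $\prod_j(\bs{L}(\bs{s}) + \gamma_j)$ with $\Re\gamma_j = \gamma \neq \bs{L}(\Re\bs{\alpha})$. Hence each factor is a unit at $\mf{m}$, and therefore $Q_{\mf{m}} = 0$.

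The main technical nuisance in this direction is choosing the chain of points so that consecutive points are $\leq$-comparable and separated by exactly one wall. The plan is to use a generic straight segment in the positive direction, perturbed so that it avoids intersections of two walls; by local finiteness, only finitely many walls meet a bounded region.

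\emph{($\Leftarrow$).} Now assume the localisation condition holds for every $\mf{m}$, and take $\bs{\alpha} \leq \bs{\beta}$ separated by a single wall $H = \bs{L}^{-1}(\gamma)$. The first step is to show that $Q_{\mf{m}} = 0$ unless $\Re(\bs{\alpha}_{\mf{m}}) \in H$. Indeed, if $\Re(\bs{\alpha}_{\mf{m}}) \notin H$, then $\mc{W}_{\mf{m}}$ does not separate $\bs{\alpha}$ from $\bs{\beta}$, since the only wall of $\mc{W}$ separating them is $H$. So the two points lie in one $\mc{W}_{\mf{m}}$-chamber, and the localised filtration agrees at them.

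The second step is to conclude that the support of $Q$ over $\mb{C}[\bs{s}]$ lies in $\{\bs{\alpha}' : \Re\bs{L}(\bs{\alpha}') = \gamma\}$, i.e.\ in the set where $\bs{L}(\bs{s}) + \gamma'$ vanishes for some $\gamma'$ with $\Re\gamma' = \gamma$. From this we need a finite product of such factors annihilating $Q$, and this is the main obstacle. The plan is as follows. Working locally, $Q$ is a coherent $V^{\bs{0}}\ms{D}_X$-module by goodness. The action of $\bs{L}(\bs{s})$ on the coherent $\mc{O}_X$-submodules generated by finitely many local generators of $Q$ yields, by a standard Cayley–Hamilton argument, a nonzero polynomial $b(\bs{L}(\bs{s}))$ killing $Q$ on a neighbourhood. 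More precisely, one uses that $\gr$ of the filtration is coherent over $\gr_V\ms{D}_X$, or that $\mc{O}_X$-coherent pieces of $Q$ stable under $\bs{s}$ give a finite-dimensional action on fibres. Factor $b = \prod_j(\bs{L}(\bs{s}) + \gamma_j)$. For any $\gamma_j$ with $\Re\gamma_j \neq \gamma$, choose $\mf{m}$ with $\bs{L}(\bs{\alpha}_{\mf{m}}) = \gamma_j$; then $Q_{\mf{m}} = 0$ implies that factor can be removed. To carry out this removal, write $Q = Q' \oplus Q''$ according to the coprime factorisation of $b$ in the one-variable ring $\mb{C}[\bs{L}(\bs{s})]$. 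Here $Q''$ is supported where $\bs{L}(\bs{s}) + \gamma_j = 0$ for the bad $\gamma_j$, and it vanishes after localising at every $\mf{m}$ on that locus (and trivially elsewhere), so $Q'' = 0$ by Lemma \ref{lem:localisation}.

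If the existence of such a polynomial $b$ is not readily available for general good filtrations, the fallback plan is to argue directly with Lemma \ref{lem:localisation} applied to the submodules $Q$ and $\ker(\prod_j(\bs{L}(\bs{s}) + \gamma_j))$ stalkwise.
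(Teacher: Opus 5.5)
The converse direction has a genuine gap at exactly the step you flag as the main obstacle: producing a nonzero polynomial $b(\bs{L}(\bs{s}))$ that kills $Q$.

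\textbf{The Cayley--Hamilton step fails.} It uses no hypothesis, and it is false for general good wall and chamber filtrations. The reason is that $Q$ need not contain any $\mc{O}_X$-coherent generating subsheaf stable under $\bs{L}(\bs{s})$. For example, take $U^{\bs{\alpha}}\ms{D}_X := V^{\lceil \bs{\alpha}\rceil}\ms{D}_X$ and the pair $\bs{0} \leq \epsilon\bs{1}_1$, which is separated only by the wall $\{\alpha_1 = 0\}$. Then $Q = V^{\bs{0}}\ms{D}_X/t_1V^{\bs{0}}\ms{D}_X$, and $s_1$ acts on it without torsion. So $b$ has to be produced \emph{from} the localisation hypothesis. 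You only use the hypothesis afterwards, to discard bad factors.

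\textbf{The fallback has the same problem.} Comparing $Q$ with $\ker\prod_j(\bs{L}(\bs{s})+\gamma_j)$ via Lemma \ref{lem:localisation} presupposes finitely many $\gamma_j$ such that $Q_{\mf{m}}=0$ whenever the value of $\bs{L}(\bs{s})$ at $\mf{m}$ is not some $-\gamma_j$. The hypothesis only confines the support to the real hypersurface $\{\Re\bs{L}(\bs{s}) = -\gamma\}$, which is Zariski dense in $\mb{C}^r$.

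\textbf{The missing idea.} This is the paper's argument, and it is a finiteness statement proved section by section.
\begin{enumerate}
\item For a local section $w$ of $Q$, the support $Z(w)$ of $\mb{C}[\bs{s}]w$ is Zariski closed. By the hypothesis it lies in that real hypersurface.
\item The image under $\bs{L}$ of each irreducible component of $Z(w)$ is either a point or contains a nonempty Zariski open, i.e.\ cofinite, subset of $\mb{C}$. A cofinite set cannot lie in a vertical line, so $\bs{L}(Z(w))$ is finite.
\item By the Nullstellensatz, some $\prod_j(\bs{L}(\bs{s})+\gamma_j)$ with $\Re\gamma_j=\gamma$ kills $w$.
\item Pass from finitely many local generators to all of $Q$, using that $Q$ is coherent over $V^{\bs{0}}\ms{D}_X/\sum_{L_i\neq 0}V^{\bs{1}_i}\ms{D}_X$ and that $\bs{L}(\bs{s})$ is central in this ring.
\end{enumerate}
Once you have this, your factor-removal step is unnecessary.

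\textbf{A smaller, repairable error in the forward direction.} The claim that $\min(\bs{\alpha},\bs{\beta})$ stays in the same chamber of $\mc{W}_{\mf{m}}$ is false. A chamber is cut out by constraints of both forms $\bs{L}\leq c$ and $\bs{L}>c$, and taking minima preserves only the first kind. For the single wall $\{\alpha_1+\alpha_2=0\}$, the points $(1,-\tfrac12)$ and $(-\tfrac12,1)$ both lie in $\{\alpha_1+\alpha_2>0\}$, but their minimum does not.

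Monotonicity of the whole path is not needed. Take a generic straight segment inside the convex chamber of $\mc{W}_{\mf{m}}$. At each point $\bs{p}$ where it crosses a single wall $H'\notin\mc{W}_{\mf{m}}$, apply the eigenvalue condition to the pair $\bs{p}\leq\bs{p}+\epsilon\bs{1}$. These two points are comparable, lie in the two adjacent chambers of $\mc{W}$, and are separated by $H'$ alone. This is essentially what the paper does using Remark \ref{remark: containment of closure}.
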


\begin{proof}
Suppose first that $U^\bullet\mc{M}$ is a multivariate $V$-filtration, fix a maximal ideal $\mf{m} = (s_1 + \alpha_1, \ldots, s_r + \alpha_r)$ and $\bs{\beta}, \bs{\gamma} \in \mb{R}^r$ lying in the same chamber of $\mc{W}_{\mf{m}}$. We need to show that $(U^{\bs{\beta}}\mc{M})_{\mf{m}} = (U^{\bs{\gamma}}\mc{M})_\mf{m}$.

Let $\sigma$ (resp.\ $\sigma'$) denote the chamber of $\mc{W}$ containing $\bs{\beta}$ (resp.\ $\bs{\gamma}$). We may of course replace $\bs{\beta}$ and $\bs{\gamma}$ with any other points in $\sigma$ and $\sigma'$ respectively without changing the truth or falsehood of the statement. For dimension reasons, choosing $\bs{\beta}$ and $\bs{\gamma}$ generically, we can assume that no point in the interval $I = \{(1 - u) \bs{\beta} + u \bs{\gamma}  \mid u \in [0, 1] \subset \mb{R}\}$ will lie on more than one wall of $\mc{W}$. An illustrative example is depicted below:
\begin{center}
\begin{tikzpicture}
\draw[thick] (-2, 2) -- (2, -2);
\draw[thick] (-3, 0.75) -- (3, -0.75);
\draw[thick] (-0.5, 2) -- (0.5, -2);
\draw[dashed] (-3, 1) -- (0, -2);
\draw[dashed] (0, 2) -- (3, -1);
\draw[dashed] (-3, -0.5) -- (3, -1.2);
\draw[dashed] (-1.8, 2) -- (-1.3, -2);
\draw[blue, thick] (-2.4,-.3) -- (0,-1.6);

\filldraw (0, 0) circle (1pt) node[above right]{$\Re \bs{\alpha}$};
\filldraw (0,-1.6) circle (0.5pt) node[above]{$\bs{\gamma}$};
\filldraw (-2.4,-.3) circle (0.5pt) node[above]{$\bs{\beta}$};
\node at (-1,-1.8) {$\textcolor{blue}{I}$};
  
  \node at (-4,0) {$\cdots$};
\node at (4,0) {$\cdots$};
\end{tikzpicture}
\end{center}
In the picture above, $\mc{W}_{\mf{m}}=\{\textrm{solid lines}\}$ and $\mc{W}$ is the union of $\mc{W}_{\mf{m}}$ and dashed lines. 

Write $\sigma = \sigma_0, \sigma_1, \ldots, \sigma_m = \sigma'$ for the sequence of chambers of $\mc{W}$ meeting $I$ as the parameter $u$ varies from $0$ to $1$.  Then $\sigma_j$ and $\sigma_{j - 1}$ are separated by a single wall of the form $\bs{L}_j^{-1}(\eta_j)$ for each $j$. It follows from Remark \ref{remark: containment of closure} that $U^{\sigma_j} \mc{M} \subset U^{\sigma_{j - 1}}\mc{M}$ or $U^{\sigma_{j - 1}}\mc{M} \subset U^{\sigma_j}\mc{M}$. Let us suppose that $U^{\sigma_j} \mc{M} \subset U^{\sigma_{j - 1}}\mc{M}$ (the converse case is identical). Then, by Definition \ref{defn:multivariate V-filtration} \eqref{itm:multivariate V 4}, there exist a finite set $\{\eta_{jk}\}_{k} \in \mb{C}$ with $\Re \eta_{jk} = \eta_j$ such that the operator
\begin{equation} \label{eq:localised chambers 1}
\prod_{k} ( \bs{L}_j(\bs{s}) + \eta_{jk}) \colon \frac{U^{\sigma_{j - 1}}\mc{M}}{U^{\sigma_j}\mc{M}} \to \frac{U^{\sigma_{j - 1}}\mc{M}}{U^{\sigma_j}\mc{M}}
\end{equation}
is zero. But since $\bs{\beta}$ and $\bs{\gamma}$ lie in the same chamber of $\mc{W}_{\mf{m}}$, none of the separating walls $\bs{L}_j^{-1}(\eta_j)$ can be walls in $\mc{W}_{\mf{m}}$: in other words, we have $0\neq \eta_{jk}-\bs{L}_j(\bs{\alpha})$ for all $k$. Hence, the operator \eqref{eq:localised chambers 1} is invertible in $\mb{C}[s_1, \ldots, s_r]_{\mf{m}}$. So $(U^{\sigma_{j - 1}}\mc{M})_\mf{m} = (U^{\sigma_j}\mc{M})_\mf{m}$ for all $j$, and hence $(U^{\bs{\beta}}\mc{M})_\mf{m} = (U^{\bs{\gamma}}\mc{M})_\mf{m}$ as claimed.

To prove the converse, suppose that $(U^{\bs{\beta}}\mc{M})_{\mf{m}} = (U^{\bs{\gamma}}\mc{M})_{\mf{m}}$ for all maximal ideals $\mf{m}$ and all $\bs{\beta}, \bs{\gamma}$ lying in the same chamber of $\mc{W}_{\mf{m}}$. Fix $\bs{\beta}\leq \bs{\gamma}$ separated by a single wall $\bs{L}^{-1}(\eta)$ of $\mc{W}$. For a local section $w \in U^{\bs{\beta}}\mc{M}/U^{\bs{\gamma}}\mc{M}$, consider the support $Z(w)$ of the $\mb{C}[\bs{s}]$-module generated by $w$. This is a closed subvariety of $\mb{C}^r$, whose image under $\bs{L} \colon \mb{C}^r \to \mb{C}$ is contained in $-\eta + \sqrt{-1}\mb{R}$ by our assumption. The image of any morphism of varieties $\bs{L} \colon Z(w) \to \mb{C}$ is either a finite set of points or contains a dense Zariski open set; by the above, we must be in the former situation. Hence, there must exist $\eta_1, \ldots, \eta_k \in \mb{C}$ with $\Re \eta_j = \eta$ such that $w$ is annihilated by $\prod_j( \bs{L}(\bs{s}) + \eta_j)$. Since $U^{\bs{\beta}}\mc{M}/U^{\bs{\gamma}}\mc{M}$ is coherent over the ring
\[ V^{\bs{0}}\ms{D}_X\big/\displaystyle\sum_{\substack{i = 1, \ldots, r \\ L_i \neq 0}} V^{\bs{1}_i}\ms{D}_X,\]
which contains $\bs{L}(\bs{s})$ in its centre, if we choose $\eta_1, \ldots, \eta_k \in \mb{C}$ as above so that $\prod_j(\bs{L}(\bs{s})+ \eta_j)$ annihilates a finite set of local generators, then $\prod_j (\bs{L}(\bs{s}) + \eta_j)$ will act by zero on $U^{\bs{\beta}}\mc{M}/U^{\bs{\gamma}}\mc{M}$ as claimed. \end{proof}

With Lemma \ref{lem:localised chambers} at hand, we can now describe $(V^{\bs{\beta}}\mc{M})_\mf{m}$. Working locally on $X$, we can suppose that $\mc{M}$ is generated by finitely many global sections $m_1, \ldots, m_k$. Let $\mc{F} \subset \mc{M}$ be the $\mc{O}_X$-coherent subsheaf generated by $m_1, \ldots, m_k$ over $\mc{O}_X$. Let $\sigma_{\mf{m}}$ be the chamber of $\mc{W}_{\mf{m}}$ containing $\bs{\beta}$. Choose an integer vector $\bs{v} \in \mb{Z}^r$ such that $\Re \bs{\alpha} + \bs{v}$ is in the interior of $\sigma_{\mf{m}}$ and, for simplicity, that $v_i \neq 0$ for all $i$.

\begin{lem} \label{lem:multivariate V localisation}
Locally on $X$, we have $(V^{\bs{\beta}}\mc{M})_\mf{m} = (V^{N \bs{v}}\ms{D}_X \cdot \mc{F})_\mf{m}$, for $N \gg 0$.
\end{lem}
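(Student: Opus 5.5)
The plan is to use Lemma \ref{lem:localised chambers} to reduce the statement to a comparison of two good $\mb{Z}^r$-indexed filtrations, up to a bounded shift. The geometric input is as follows. Every wall in $\mc{W}_{\mf{m}}$ passes through $\Re\bs{\alpha}$, so every chamber of $\mc{W}_{\mf{m}}$ is a (half-open) cone with apex $\Re\bs{\alpha}$. Since $\Re\bs{\alpha}+\bs{v}$ lies in the interior of $\sigma_{\mf{m}}$, the points $\Re\bs{\alpha}+N\bs{v}$ lie in that interior for all $N>0$. Moreover, for any fixed $\bs{c}\in\mb{R}^r$ we have $\Re\bs{\alpha}+N\bs{v}+\bs{c}=\Re\bs{\alpha}+N(\bs{v}+\bs{c}/N)$. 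For $N\gg0$ the vector $\bs{v}+\bs{c}/N$ is close to $\bs{v}$, so this point also lies in the interior of $\sigma_{\mf{m}}$. By Lemma \ref{lem:localised chambers}, $(V^\bullet\mc{M})_{\mf{m}}$ is constant on chambers of $\mc{W}_{\mf{m}}$, and hence
\[ (V^{\bs{\beta}}\mc{M})_{\mf{m}}=(V^{\Re\bs{\alpha}+N\bs{v}+\bs{c}}\mc{M})_{\mf{m}}\quad\text{for all } N\gg0 \text{ (depending on } \bs{c}). \]
Since the statement is local on $X$ and $\bs{v}$ is a fixed integer vector, it therefore suffices to find constants $\bs{c}_1,\bs{c}_2\in\mb{R}^r$ such that, locally,
\[ V^{\Re\bs{\alpha}+N\bs{v}+\bs{c}_1}\mc{M}\subset V^{N\bs{v}}\ms{D}_X\cdot\mc{F}\subset V^{\Re\bs{\alpha}+N\bs{v}+\bs{c}_2}\mc{M}\quad\text{for all }N. \]
Localising at $\mf{m}$ and taking $N$ large then gives the lemma.

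For the second inclusion, I will use that $V^\bullet\mc{M}$ is exhaustive and that $\mc{F}$ is generated by the finitely many sections $m_1,\ldots,m_k$. Locally, each $m_j$ lies in some $V^{\bs{\gamma}_j}\mc{M}$, so $\mc{F}\subset V^{-c\bs{1}}\mc{M}$ for some $c\ge0$, because $V^{\bs{0}}\ms{D}_X\supset\mc{O}_X$. Condition \eqref{itm:good 2} of Definition \ref{defn: good filtration} then gives
\[ V^{N\bs{v}}\ms{D}_X\cdot\mc{F}\subset V^{N\bs{v}-c\bs{1}}\mc{M}, \]
so we may take $\bs{c}_2=-c\bs{1}-\Re\bs{\alpha}$.

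For the first inclusion, I compare two $\mb{Z}^r$-indexed filtrations on $\mc{M}$. The first is $G^{\bs{n}}:=V^{\bs{n}}\ms{D}_X\cdot\mc{F}$. It is a good $\mb{Z}^r$-filtration over $V^\bullet\ms{D}_X$: its Rees module is generated by the $m_j$ in degree $\bs{0}$, and it is exhaustive because $\ms{D}_X=\bigcup_{\bs{n}}V^{\bs{n}}\ms{D}_X$ and $\mc{F}$ generates $\mc{M}$. The second is $U^{\bs{n}}:=V^{\Re\bs{\alpha}+\bs{n}}\mc{M}$, which is good by Lemma \ref{lem: goodness part}. I then need the standard comparison: any good filtration $U$ is dominated by any exhaustive filtration $G$ compatible with $V^\bullet\ms{D}_X$ up to a shift, i.e.\ $U^{\bs{n}+\bs{d}}\subset G^{\bs{n}}$ for some $\bs{d}\in\mb{Z}^r$, locally. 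To see this, note that $U$ is locally generated over $V^\bullet\ms{D}_X$ by finitely many pieces $U^{\bs{n}_i}$, each a coherent $V^{\bs{0}}\ms{D}_X$-module. Each such piece is locally finitely generated, so its generators lie in a single $G^{\bs{n}_i-\bs{d}}$ by exhaustiveness. Hence $U^{\bs{n}}=\sum_i V^{\bs{n}-\bs{n}_i}\ms{D}_X\cdot U^{\bs{n}_i}\subset G^{\bs{n}-\bs{d}}$, where $V^{\bs{m}}\ms{D}_X$ for $\bs{m}$ not componentwise $\ge$ is handled by taking the sum over degrees as in Definition \ref{defn: good Zr indexed filtration}. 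Taking $\bs{n}=N\bs{v}$ gives the first inclusion with $\bs{c}_1=\bs{d}$.

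I expect the main obstacle to be this comparison step, and specifically its local uniformity. The point is that a single shift $\bs{d}$ must work for all $N$ simultaneously on a fixed neighbourhood. My approach is to pass to the Rees modules $R_U\mc{M}\subset R_V\ms{D}_X\otimes_{\mb{C}[\bs{u}]}\mb{C}[\bs{u}^{\pm1}]$-style localisation, and to use the coherence of $R_V\ms{D}_X$ (Lemma \ref{lemma: coherence of A and rees}) together with Lemma \ref{lemma: Theorem A and B}(2). This shows that finitely many sections over a compact polycylinder generate $R_U\mc{M}$, and those sections lie in a bounded range of $G$-degrees. The remaining steps are routine.
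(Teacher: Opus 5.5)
Your proof is correct. The $\supset$ half is the paper's argument essentially verbatim. For the $\subset$ half you take a different route.

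The paper works with a single far-out piece. It fixes $\bs{\beta}'=\bs{\beta}+M\bs{v}$, chosen so that every exponent $\lceil\bs{\beta}'-\bs{\gamma}\rceil$ ($\bs{\gamma}\in[A,B]^r$) has the same signs as $\bs{v}$, and finds $\bs{k}$ with $V^{\bs{\beta}'}\mc{M}\subset V^{\bs{k}}\ms{D}_X\cdot\mc{F}$. It then uses the sharper identity $V^{\bs{m}}\ms{D}_X\cdot V^{\bs{n}}\ms{D}_X=V^{\bs{m}+\bs{n}}\ms{D}_X$ for sign-compatible $\bs{m},\bs{n}$ to get the equality $V^{\bs{\beta}'+N\bs{v}-\bs{k}}\mc{M}=V^{N\bs{v}-\bs{k}}\ms{D}_X\cdot V^{\bs{\beta}'}\mc{M}$. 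In other words, far along $\bs{v}$ the filtration is generated by one piece, so that one inclusion propagates.

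You instead prove a shift comparison $V^{\Re\bs{\alpha}+\bs{n}+\bs{d}}\mc{M}\subset V^{\bs{n}}\ms{D}_X\cdot\mc{F}$ valid for all $\bs{n}\in\mb{Z}^r$ at once. It uses only:
finitely many homogeneous Rees generators of the good filtration;
exhaustiveness of $V^\bullet\ms{D}_X\cdot\mc{F}$;
the mere inclusion $V^{\bs{m}}\ms{D}_X\cdot V^{\bs{n}}\ms{D}_X\subset V^{\bs{m}+\bs{n}}\ms{D}_X$.
You then move inside $\sigma_{\mf{m}}$ along rays from $\Re\bs{\alpha}$ rather than from $\bs{\beta}$.

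Your route is the standard comparison of a good filtration with an exhaustive compatible one. It works unchanged for any filtered ring as in Assumption \ref{assumption:ring filtrations}; by contrast, the paper must re-check the sign-compatible product identity before reusing its argument for $V_*$-filtrations in Theorem \ref{thm:V* uniqueness}. The paper's route exploits a sharper structural property of $V^\bullet\ms{D}_X$ and avoids Rees-module generators.

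The uniformity you flag as the main obstacle is in fact automatic. On some open set take homogeneous generators $e_j\in V^{\Re\bs{\alpha}+\bs{n}_j}\mc{M}$. After shrinking, each $e_j$ lies in some $V^{\bs{p}_j}\ms{D}_X\cdot\mc{F}$. Then the componentwise maximum $\bs{d}=\max_j(\bs{n}_j-\bs{p}_j)$ works for every $\bs{n}$ simultaneously, so Lemma \ref{lemma: Theorem A and B} is not needed. Likewise, goodness of $V^\bullet\ms{D}_X\cdot\mc{F}$ is never used, only its exhaustiveness and compatibility.
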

\begin{proof}
Since $V^\bullet\mc{M}$ is exhaustive and $\mc{F}$ is coherent, we have locally on $X$ that $\mc{F} \subset V^{\bs{\gamma}}\mc{M}$ for some $\bs{\gamma} \in \R^r$. So $V^{N\bs{v}} \ms{D}_X \cdot \mc{F} \subset V^{\bs{\gamma} + N \bs{v}}\mc{M}$. Since $\mc{W}_{\mf{m}}$ consists only of finitely many hyperplanes passing through a single point, for $N \gg 0$, we have $\bs{\gamma} + N \bs{v} \in \sigma_{\mf{m}}$, as illustrated in the figure below. 
\[
\begin{tikzpicture}[scale=1.2]

\draw[thin] (-3, 6/7) -- (5, -10/7);         
\draw[thin] (-0.5, 1.5) -- (1, -3);       
\draw[dashed, gray] (0, 1.5) -- (6, -2.25);  

\filldraw (0,0) circle (0.6pt) node[above right=3pt]{$\Re \bs{\alpha}$};
\node at (3.4, -2.2) {$\sigma_{\mf{m}}$};

\filldraw (1.5,-1) circle (0.6pt) node[below left=2pt] {$\bs{\beta}$};
\filldraw (-2.5,0) circle (0.6pt) node[below left=2pt] {$\bs{\gamma}$};

\filldraw (2.5, -1.2) circle (0.6pt) node[below=2pt]{$\Re \bs{\alpha} + \bs{v}$};
\filldraw (2.5, -2.4) circle (0.6pt) node[below=2pt] {$\bs{\gamma} + N\bs{v}$};

\draw[->, blue, thick] (0, 0) -- (2.5,-1.2);
\draw[->, blue, thick] (-2.5, 0) -- (2.5,-2.4);

\end{tikzpicture}
\]
So $(V^{\bs{\gamma} + N \bs{v}}\mc{M})_\mf{m} = (V^{\bs{\beta}}\mc{M})_\mf{m}$ by Lemma \ref{lem:localised chambers}, which proves the inclusion $(V^{N \bs{v}} \ms{D}_X \cdot \mc{F})_\mf{m} \subset (V^{\bs{\beta}}\mc{M})_\mf{m}$.

To prove the reverse, fix an interval $[A, B]$ such that $V^\bullet \mc{M}$ is generated by its restriction to $[A, B]^r$. Then we have
\[ V^{\bs{\beta}'}\mc{M} = \sum_{\bs{\gamma} \in [A, B]^r} V^{\lceil \bs{\beta}' - \bs{\gamma}\rceil} \ms{D}_X \cdot V^{\bs{\gamma}}\mc{M},\quad \text{for all $\bs{\beta}' \in \R^r$}. \]
Let us fix $\bs{\beta}' = \bs{\beta} + M \bs{v}$ with $M\in \mathbb{Z}$ large enough so that $\lceil \beta'_i - \gamma_i \rceil$ and $v_i$ have the same sign for all $i$ and all $\bs{\gamma} \in [A, B]^r$. Since $V^{\bs{\beta}'}\mc{M}$ is a coherent $V^{\bs{0}} \ms{D}_X$-module, there exists, locally on $X$, $\bs{k} \in \mb{Z}^r$ such that $V^{\bs{\beta}'}\mc{M} \subset V^{\bs{k}} \ms{D}_X \cdot \mc{F}$.  Choose $N$ large enough so that $\bs{\beta}' + N \bs{v} - \bs{k} \in \sigma_\mf{m}$ and so that $v_i$ and $N v_i - k_i$ have the same sign for all $i$.  Since $V^{\bs{m}} \ms{D}_X \cdot V^{\bs{n}} \ms{D}_X = V^{\bs{m} + \bs{n}} \ms{D}_X$ as long as $m_i$ and $n_i$ have the same sign for all $i$, we have
\begin{align*}
V^{\bs{\beta}' + N\bs{v} - \bs{k}}\mc{M} &= \sum_{\bs{\gamma} \in [A, B]^r} V^{N \bs{v} - \bs{k} + \lceil \bs{\beta}' - \bs{\gamma}\rceil} \ms{D}_X \cdot V^{\bs{\gamma}}\mc{M} \\
&=  \sum_{\bs{\gamma} \in [A, B]^r} V^{N\bs{v} - \bs{k}}\ms{D}_X \cdot V^{\lceil \bs{\beta}' - \bs{\gamma}\rceil} \ms{D}_X \cdot V^{\bs{\gamma}}\mc{M} \\
& = V^{N \bs{v} - \bs{k}} \ms{D}_X \cdot V^{\bs{\beta}'}\mc{M} \subset V^{N\bs{v}} \ms{D}_X \cdot \mc{F}.
\end{align*}
Since $\bs{\beta}' + N \bs{v} - \bs{k} \in \sigma_\mf{m}$, by Lemma \ref{lem:localised chambers}, we have
\[ (V^{\bs{\beta}}\mc{M})_\mf{m}= (V^{\bs{\beta}' + N\bs{v} - \bs{k}}\mc{M})_{\mf{m}}\subset (V^{N\bs{v}} \ms{D}_X \cdot \mc{F})_\mf{m}\]
as claimed.\end{proof}

\begin{proof}[Proof of Theorem \ref{thm:multivariate V uniqueness}]
Let $U^\bullet \mc{M}$ and $V^\bullet\mc{M}$ be two multivariate $V$-filtrations. Note that, since the statement is local on $X$, we may fix a set of walls $\mc{W}$ for both $U^\bullet\mc{M}$ and $V^\bullet\mc{M}$ by taking the union of sets of walls for $U^\bullet\mc{M}$ and $V^\bullet\mc{M}$ if necessary. Since Lemma \ref{lem:multivariate V localisation} applies to both $U^\bullet\mc{M}$ and $V^\bullet \mc{M}$, we have $(U^\bullet\mc{M})_\mf{m} = (V^\bullet\mc{M})_\mf{m}$ for every maximal ideal $\mf{m}$. Applying Lemma \ref{lem:localisation}, we conclude that $U^\bullet\mc{M} = V^\bullet\mc{M}$ as claimed.
\end{proof}

\begin{cor} \label{cor:V strictness}
Let $f \colon \mc{M} \to \mc{N}$ be a morphism of coherent $\ms{D}_X$-modules such that $\mc{M}$ and $\mc{N}$ admit multivariate $V$-filtrations $V^\bullet \mc{M}$ and $V^\bullet\mc{N}$. Then $f$ is strict with respect to these filtrations.
\end{cor}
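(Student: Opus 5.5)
The plan is to reduce strictness to uniqueness (Theorem \ref{thm:multivariate V uniqueness}) by way of Proposition \ref{prop:V image and preimage}. Write $\mc{I} := f(\mc{M}) \subset \mc{N}$. Since $\ms{D}_X$ is coherent and $\mc{M},\mc{N}$ are coherent, $\mc{I}$ is a coherent $\ms{D}_X$-module. We will equip $\mc{I}$ with two filtrations and show that both are multivariate $V$-filtrations.

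\textbf{First filtration.} Factor $f$ as the surjection $\mc{M} \twoheadrightarrow \mc{I}$ followed by the inclusion $\mc{I} \hookrightarrow \mc{N}$. By Proposition \ref{prop:V image and preimage}(a), applied to the surjection $\mc{M} \to \mc{I}$, the image filtration $f(V^\bullet\mc{M})$ is a multivariate $V$-filtration on $\mc{I}$.

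\textbf{Second filtration.} By Proposition \ref{prop:V image and preimage}(b), applied to the inclusion $\mc{I} \hookrightarrow \mc{N}$, the induced filtration $\mc{I} \cap V^\bullet\mc{N}$ is also a multivariate $V$-filtration on $\mc{I}$. The goodness in this case comes from the Artin--Rees statement, Corollary \ref{cor:wall and chamber artin-rees}, which is already packaged into that proposition.

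\textbf{Conclusion.} By Theorem \ref{thm:multivariate V uniqueness}, the two filtrations coincide, so $f(V^{\bs{\alpha}}\mc{M}) = f(\mc{M}) \cap V^{\bs{\alpha}}\mc{N}$ for all $\bs{\alpha} \in \mb{R}^r$, which is exactly strictness.

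There is essentially no obstacle here. The only points to check are the coherence of $\mc{I}$, so that the proposition and the uniqueness theorem apply, and that the statements are local on $X$, which is harmless since both the hypotheses and the conclusion are local.
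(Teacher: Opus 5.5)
Your proposal is correct and matches the paper's proof: pass to the image, note that by Proposition \ref{prop:V image and preimage} both $f(V^\bullet\mc{M})$ and $f(\mc{M})\cap V^\bullet\mc{N}$ are multivariate $V$-filtrations on it, and conclude by the uniqueness Theorem \ref{thm:multivariate V uniqueness}. Your remark on coherence of the image is the only extra check needed, and it holds since the image is a quotient of a coherent module over the coherent ring $\ms{D}_X$.
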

\begin{proof}
Let $\mc{P}$ be the image of $f$. Then, by Proposition \ref{prop:V image and preimage}, $f(V^\bullet\mc{M})$ and $V^\bullet \mc{N} \cap \mc{P}$ are both multivariate $V$-filtrations on $\mc{P}$. Hence, these two filtrations coincide by Theorem \ref{thm:multivariate V uniqueness}, i.e.\ $f$ is strict.
\end{proof}

\subsection{Comparison with Sabbah's filtration}\label{sec: comparison with Sabbah's filtration} \label{subsec:sabbah comparison}

In this subsection, we compare our notion of multivariate $V$-filtration with Sabbah's formalism in \cite{Sabbah1987}. We first recall (and paraphrase) some general theory from \cite{Sabbah1987}. In what follows, we fix a subset $R \subset \mb{R}^r$, a union of finitely many cosets for $\mb{Z}^r \subset \mb{R}^r$. We regard $R$ as a poset with the lexicographic partial order inherited from $\mb{R}^r$.

\begin{defn}
Let $U^\bullet \mc{M}$ be a decreasing $R$-indexed filtration such that $V^{\bs{n}}\ms{D}_X \cdot U^{\bs{\alpha}}\mc{M} \subset U^{\bs{n} + \bs{\alpha}}\mc{M}$ for all $\bs{\alpha}\in R$ and $\bs{n}\in \Z^r$.
\begin{enumerate}
\item We say that $U^\bullet\mc{M}$ is \emph{good} if $\{U^{\bs{\alpha} + \bs{n}}\mc{M}\}_{\bs{n} \in \mb{Z}^r}$ is a good $\mb{Z}^r$-indexed filtration over $V^\bullet\ms{D}_X$ for all $\bs{\alpha} \in R$, in the sense of Definition \ref{defn: good Zr indexed filtration}.
\item The \emph{saturation} of $U^\bullet \mc{M}$ is
\[ \bar{U}^{\bs{\alpha}}\mc{M} = \bigcap_{\bs{L} \in \mb{Q}_{\geq 0}^r} \ls{\bs{L}}{U}^{\bs{L}(\bs{\alpha})}\mc{M}, \qquad where \quad \ls{\bs{L}}{U}^{\beta}\mc{M} := \sum_{\substack{\bs{\alpha} \in R \\ \bs{L}(\bs{\alpha}) \geq \beta}} U^{\bs{\alpha}}\mc{M}.\]
\item We say that $U^\bullet\mc{M}$ is \emph{saturated} if $U^\bullet\mc{M} = \bar{U}^\bullet\mc{M}$.
\end{enumerate}
\end{defn}

It is elementary to see that $\ls{\bs{L}}{\bar{U}}^\beta \mc{M} = \ls{\bs{L}}{U}^\beta \mc{M}$ for all $\bs{L}$ and $\beta$. In particular, $\bar{U}^\bullet\mc{M}$ is saturated. We also note the following fact from \cite[2.2.3]{Sabbah1987}.

\begin{prop}\label{prop:saturation is good}
If $U^\bullet\mc{M}$ is a good filtration then its saturation $\bar{U}^\bullet\mc{M}$ is also good.
\end{prop}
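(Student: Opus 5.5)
The plan is to show that $\bar U^\bullet\mc{M}$ is again a filtration by $V^\bullet\ms{D}_X$-submodules, and then to exhibit, for each coset $\bs{\alpha}+\mb{Z}^r\subset R$, the Rees module $\bigoplus_{\bs{n}}\bar U^{\bs{\alpha}+\bs{n}}\mc{M}\,\bs{u}^{-\bs{n}}$ as a graded submodule of a coherent $R_V\ms{D}_X$-module. Coherence would then follow from the Noetherianity of $R_V\ms{D}_X(K)$ on small compact polycylinders (Lemma~\ref{lemma: coherence of A and rees}) together with Lemma~\ref{lemma: Theorem A and B}(3), in the same way as in the proof of Lemma~\ref{lem:artin-rees}.

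The first step is compatibility. For $\bs{m}\in\mb{Z}^r$ we have $V^{\bs{m}}\ms{D}_X\cdot U^{\bs{\beta}}\mc{M}\subset U^{\bs{\beta}+\bs{m}}\mc{M}$. For $\bs{L}\in\mb{Q}^r_{\ge 0}$ this gives $V^{\bs{m}}\ms{D}_X\cdot\ls{\bs{L}}{U}^{\gamma}\mc{M}\subset \ls{\bs{L}}{U}^{\gamma+\bs{L}(\bs{m})}\mc{M}$. Intersecting over $\bs{L}$ yields $V^{\bs{m}}\ms{D}_X\cdot\bar U^{\bs{\alpha}}\mc{M}\subset\bar U^{\bs{\alpha}+\bs{m}}\mc{M}$, so the Rees module $R_{\bar U}\mc{M}$ is indeed an $R_V\ms{D}_X$-module. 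We also have $U^{\bs{\alpha}}\mc{M}\subset\bar U^{\bs{\alpha}}\mc{M}$, so exhaustiveness is inherited.

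The second step is a finiteness reduction. Work locally, and use goodness to choose finitely many generating indices $\bs{\beta}_1,\dots,\bs{\beta}_k\in R$. Then
\[ \ls{\bs{L}}{U}^{\gamma}\mc{M}=\sum_{j}\sum_{\bs{L}(\bs{\beta}_j+\bs{n})\ge\gamma}V^{\bs{n}}\ms{D}_X\cdot U^{\bs{\beta}_j}\mc{M}. \]
Following Sabbah's adapted fan \cite[\S2.2]{Sabbah1987}, I would show that there is a finite rational fan subdividing $\mb{Q}^r_{\ge0}$ such that, in the intersection defining $\bar U^{\bs{\alpha}}\mc{M}$, only the finitely many rays of this fan matter. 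The reason is that on each cone the combinatorics of which $\bs{\beta}_j+\bs{n}$ satisfy the inequality is controlled linearly, so the intersection over a cone equals the intersection over its extremal rays. This reduction is the main obstacle, and it is where the real content of \cite[2.2.3]{Sabbah1987} lies. I would first try to prove it by a direct polyhedral argument, using the fact that for each $j$ the set $\{\bs{n}:\bs{L}(\bs{\beta}_j+\bs{n})\ge\bs{L}(\bs{\alpha})\}$ is cut out by finitely many conditions as $\bs{L}$ ranges over a cone. The coordinate rays $\bs{1}_i$ should be included in the fan.

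Given this reduction, $\bar U^{\bs{\alpha}}\mc{M}=\bigcap_{\ell=1}^{N}\ls{\bs{L}_\ell}{U}^{\bs{L}_\ell(\bs{\alpha})}\mc{M}$ for finitely many $\bs{L}_\ell$. Each single-slope filtration $\ls{\bs{L}_\ell}{U}^\bullet$ is good over the $\bs{L}_\ell$-weighted $\mb{Z}$-filtration of $\ms{D}_X$; this is clear from the displayed formula above. The remaining step is to realise the finite intersection inside a coherent ambient module. To do this I would take the diagonal embedding $\mc{M}\hookrightarrow\mc{M}^{\oplus N}$, and equip the $\ell$-th summand with the $\mb{Z}^r$-filtration $\bs{n}\mapsto\ls{\bs{L}_\ell}{U}^{\bs{L}_\ell(\bs{\alpha}+\bs{n})}\mc{M}$. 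I expect this to be good over $V^\bullet\ms{D}_X$: it is generated by the finitely many $U^{\bs{\beta}_j}\mc{M}$, shifted in the finitely many minimal degrees $\bs{n}$ allowed by the inequality. The filtration induced on the diagonal copy of $\mc{M}$ is exactly $\{\bar U^{\bs{\alpha}+\bs{n}}\mc{M}\}_{\bs{n}}$. Hence Lemma~\ref{lem:artin-rees} shows that it is good, which proves the proposition.
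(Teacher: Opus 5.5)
Your first step (compatibility with $V^\bullet\ms D_X$ and exhaustiveness) is fine. There is, however, a genuine gap in both steps that carry the real content.

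\emph{The final step fails as stated.} For $r\ge 2$ the $\mb Z^r$-filtration $W_\ell^{\bs n}:=\ls{\bs L_\ell}{U}^{\bs L_\ell(\bs\alpha+\bs n)}\mc M$ is never good over $V^\bullet\ms D_X$ in general. It depends only on $\bs L_\ell(\bs n)$, so it is constant along the rank $r-1$ lattice $\ker\bs L_\ell\cap\mb Z^r$. The index set $\{\bs n:\bs L_\ell(\bs\beta_j+\bs n)\ge c\}$ is a half-space, which has infinitely many (or no) minimal elements for the componentwise order; in particular there are none for a coordinate ray $\bs 1_i$. On the other hand, a good filtration satisfies $W^{\bs n}\subset t_i^{\,n_i-c}\mc M$ for $n_i\gg 0$, because $V^{\bs m}\ms D_X\subset V^{m_i}_{D_i}\ms D_X=t_i^{m_i}V^0_{D_i}\ms D_X$ when $m_i\ge 0$. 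Concretely, take $X=\mb C^2$, $\mc M=\mc O_X$, $U^{\bs n}=V^{\bs n}\ms D_X\cdot 1=t_1^{\max(n_1,0)}t_2^{\max(n_2,0)}\mc O_X$ and $\bs L=(1,1)$. Then $W^{(N,-N)}=\ls{\bs L}{U}^{0}\mc O_X=\mc O_X$ for every $N$, which is not contained in $t_1^{N-c}\mc O_X$. So the diagonal embedding into $\mc M^{\oplus N}$ does not land in a module with a good filtration, and Lemma~\ref{lem:artin-rees} cannot be applied. Goodness of each $\ls{\bs L_\ell}{U}^\bullet$ as a $\mb Z$-filtration over $\ls{\bs L_\ell}{V}^\bullet\ms D_X$ gives no control over $\mb Z^r$-goodness of the intersection.

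\emph{The proposed polyhedral proof of the reduction cannot work.} Termwise, a linear inequality in $\bs L$ holds on a cone if and only if it holds on the cone's rays, so your reason applies to \emph{every} cone, including the whole orthant. What you actually need is that intersections of the sums $\sum_{j,\bs n}V^{\bs n}\ms D_X\cdot U^{\bs\beta_j}\mc M$ can be computed index by index. That is false: an element of $\ls{\bs L_1}{U}^{\bs L_1(\bs\alpha)}\mc M\cap\ls{\bs L_2}{U}^{\bs L_2(\bs\alpha)}\mc M$ has two unrelated decompositions, neither adapted to $\bs L_1+\bs L_2$. Example~\ref{example: diagonal embedding} shows the failure already for the orthant. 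There, $\ls{\bs 1_i}{V}^1\mc M=V^1_{D_i}\mc M$ by Proposition~\ref{prop:V-filtration for sub divisor}, and $V^1_{D_1}\mc M\cap V^1_{D_2}\mc M=\mc M$. But the intersection over all slopes is $V^{(1,1)}\mc M\neq\mc M$, since the multivariate $V$-filtration is saturated by Theorem~\ref{thm:sabbah comparison}. Which fan works depends on the module, and its existence is the adapted-fan theorem, a genuinely algebraic statement. The existence proof in \cite[A.1.1]{Sabbah1987} is known to be incorrect; complete proofs are in \cite{Bahloul04,CJG07}.

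The paper does not reprove this proposition; it quotes \cite[2.2.3]{Sabbah1987} together with those adapted-fan results. To complete your route you would have to import adaptedness in its full module-theoretic form and use it directly to obtain $\mb Z^r$-goodness. The resulting finite-intersection formula, combined with Artin--Rees for single slopes, does not suffice.
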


Sabbah's proof of this fact uses the existence of an adapted fan \cite[\S 2.2]{Sabbah1987}. While it has been noted elsewhere that the proof that such a fan exists in \cite[A.1.1]{Sabbah1987} is incorrect, complete proofs can be found in \cite{Bahloul04,CJG07}, for example.

\begin{defn} \label{defn:sabbah}
We say that a decreasing $R$-indexed filtration $V_R^\bullet\mc{M}$ is \emph{Sabbah's $V$-filtration} if:
\begin{enumerate}
\item \label{itm:sabbah 1} $V_R^\bullet\mc{M}$ is good,
\item \label{itm:sabbah 2} $V_R^\bullet\mc{M}$ is saturated, and
\item \label{itm:sabbah 3} for all $\bs{L} \in \mb{Q}_{\geq 0}^r$ and all $\beta \in \mb{R}$, there exist finitely many $\beta_j \in \mb{C}$ with $\Re \beta_j = \beta$ such that the operator $\prod_j(\beta_j - \bs{L}(\bs{\partial_t t}))$ annihilates $\gr^\beta_{\ls{\bs{L}}{V_R}} \mc{M}$.
\end{enumerate}
\end{defn}

We record the following elementary properties from \cite{Sabbah1987}. First, Sabbah's $V$-filtration is unique if it exists. For $\bs{L}\in \Q^r_{\geq 0}$, consider the $\Z$-indexed filtration $\ls{\bs{L}}{V}^{\bullet}\sD_X$:
\[ \ls{\bs{L}}{V}^{k}\sD_X\colonequals \sum_{\bs{n}\in \Z^r, \,\bs{L}(\bs{n})\geq k} V^{\bs{n}}\sD_X, \quad \textrm{for all $k\in \Z$}.\]
The $\ls{\bs{L}}{V}$-filtration of $\cM$ is the unique $\Z$-indexed filtration $\ls{\bs{L}}{V}^{\bullet}\cM$ that is good over $\ls{\bs{L}}{V}^\bullet\ms{D}_X$ and whose associated graded $\gr^{\bullet}_{\ls{\bs{L}}{V}}\cM$ satisfies the nilpotency condition \eqref{itm:sabbah 3}. It follows from the conditions \eqref{itm:sabbah 1},\eqref{itm:sabbah 3} that $\ls{\bs{L}}{V_R}^\bullet\mc{M} = \ls{\bs{L}}{V}^\bullet\mc{M}$. By saturation, \eqref{itm:sabbah 2}, this shows that $V_R^\bullet\mc{M}$ is determined by the $\ls{\bs{L}}{V}$-filtrations of $\cM$ for all $\bs{L}\in \Q^r_{\geq 0}$, so the uniqueness follows.

Second, if $U^\bullet \mc{M}$ is an $R$-indexed filtration satisfying \eqref{itm:sabbah 1} and \eqref{itm:sabbah 3}, then its saturation $\bar{U}^\bullet\mc{M}$ is Sabbah's $V$-filtration, by Proposition \ref{prop:saturation is good}.

Finally, suppose that $R \subset R' \subset \mb{R}^r$ are finite unions of cosets and that Sabbah's $V$-filtration $V_R^\bullet\mc{M}$ exists. Then $V^{\bullet}_{R'}\cM$ exists by the following argument: set 
\[ U_{R'}^{\bs{\alpha}'}\mc{M}\colonequals \sum_{\bs{\alpha} \geq \bs{\alpha}', \,\bs{\alpha} \in R} V_R^{\bs{\alpha}}\mc{M}, \quad \textrm{for $\bs{\alpha}' \in R'$}.\]
Clearly $U_{R'}^\bullet\mc{M}$ is an $R'$-indexed filtration satisfying \eqref{itm:sabbah 1} and \eqref{itm:sabbah 3}. So by above $V_{R'}^\bullet\mc{M}= \bar{U}_{R'}^\bullet\mc{M}$. Furthermore, $V_{R'}^{\bs{\alpha}}\mc{M} = V_R^{\bs{\alpha}}\mc{M}$ for $\bs{\alpha} \in R$, because $V_R^\bullet\mc{M}$ is saturated.

Here is the main result of this section.

\begin{thm}\label{thm:sabbah comparison}
The following are equivalent.
\begin{enumerate}[label=(\alph*)]
\item The multivariate $V$-filtration $V^\bullet\mc{M}$ exists.
\item Sabbah's $V$-filtration $V_R^\bullet\mc{M}$ exists for some $R$.
\end{enumerate}
Moreover, we have $V^{\bs{\alpha}}\mc{M} = V_R^{\bs{\alpha}}\mc{M}$ for $\bs{\alpha} \in R$ provided $V_R^\bullet\mc{M}$ exists.
\end{thm}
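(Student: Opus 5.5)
The plan is to prove both implications by passing between the two filtrations via restriction to $R$ and via the sums $\ls{\bs{L}}{V}^\beta$. The uniqueness statements do most of the work: Theorem \ref{thm:multivariate V uniqueness} for our filtration, and the uniqueness of Sabbah's filtration recalled above.

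\emph{(a) $\Rightarrow$ (b).} Suppose $V^\bullet\mc{M}$ exists. Working locally, fix a set of walls $\mc{W}$ and choose $R$, a finite union of cosets of $\mb{Z}^r$, meeting every chamber of $\mc{W}$. This is possible because $\mc{W}$ is finite modulo $\mb{Z}^r$, so the chambers fall into finitely many $\mb{Z}^r$-translation classes. Set $U^{\bs{\alpha}}\mc{M} := V^{\bs{\alpha}}\mc{M}$ for $\bs{\alpha}\in R$.
\begin{enumerate}
\item Goodness of $U^\bullet$ is Lemma \ref{lem: goodness part}.
\item For condition \eqref{itm:sabbah 3}, consider $\ls{\bs{L}}{U}^\beta\mc{M}/\ls{\bs{L}}{U}^{>\beta}\mc{M}$. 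Each generator $V^{\bs{\alpha}}\mc{M}$ with $\bs{L}(\bs{\alpha})=\beta$ can be moved, modulo pieces with $\bs{L}>\beta$, to a point just above $\bs{L}^{-1}(\beta)$. Do this along a path that crosses walls one at a time, as in the proof of Lemma \ref{lem:localised chambers}.
\item The walls crossed that are parallel to $\bs{L}^{-1}(\beta)$ contribute factors $\bs{L}(\bs{s})+\gamma$ with $\Re\gamma=\beta$. The non-parallel walls have to be handled by the localisation criterion: after localising at $\mf{m}$, only walls through $\Re\bs{\alpha}$ matter.
\item Hence the support in $\mb{C}^r$ of $\gr_{\ls{\bs{L}}{U}}^\beta$ maps under $\bs{L}$ into $-\beta+\sqrt{-1}\mb{R}$. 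The algebraic-image argument from Lemma \ref{lem:localised chambers} then gives the annihilating polynomial.
\end{enumerate}
By the remark after Definition \ref{defn:sabbah}, the saturation $\bar{U}^\bullet$ is then Sabbah's filtration $V_R^\bullet\mc{M}$. It remains to see that $U^\bullet$ is already saturated, i.e.\ that $V^{\bs{\alpha}}\mc{M}=\bigcap_{\bs{L}}\ls{\bs{L}}{V}^{\bs{L}(\bs{\alpha})}\mc{M}$. Localise at each $\mf{m}$ (Lemma \ref{lem:localisation}) and use that $(V^\bullet\mc{M})_\mf{m}$ has only finitely many walls, all through one point. Then $(V^{\bs{\alpha}}\mc{M})_\mf{m}$ is the intersection of the half-space pieces cut out by the walls of $\mc{W}_\mf{m}$, and each such piece equals $(\ls{\bs{L}}{V}^{\bs{L}(\bs{\alpha})})_\mf{m}$ for the corresponding $\bs{L}$.

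\emph{(b) $\Rightarrow$ (a).} Given $V_R^\bullet$, we may enlarge $R$ using the argument recalled before the theorem. Define an $\mb{R}^r$-indexed filtration by
\[ V^{\bs{\alpha}}\mc{M} := \bigcap_{\bs{L}} \ls{\bs{L}}{V}^{\bs{L}(\bs{\alpha})}\mc{M},\]
where $\ls{\bs{L}}{V}^\bullet\mc{M}$ now means the $\mb{R}$-indexed refinement by eigenvalues. We must check three things.
\begin{enumerate}
\item It is a wall and chamber filtration. Here Sabbah's adapted fan (Proposition \ref{prop:saturation is good} and \cite[\S 2.2]{Sabbah1987}) is needed to reduce the intersection to finitely many $\bs{L}$ locally. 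The walls are then $\bs{L}^{-1}(\beta)$ with $\beta$ a jump of $\ls{\bs{L}}{V}$, for $\bs{L}$ in the finite set of rays of the fan.
\item It is good. This follows by taking $R$ to meet every chamber and applying Lemma \ref{lem: goodness part} to the goodness of $V_R$.
\item It satisfies the eigenvalue condition \eqref{itm:multivariate V 4}. Check this via Lemma \ref{lem:localised chambers}: after localising at $\mf{m}$, each $\ls{\bs{L}}{V}^\bullet$ becomes constant except across a hyperplane through $\Re\bs{\alpha}$, by condition \eqref{itm:sabbah 3}.
\end{enumerate}
The final equality $V^{\bs{\alpha}}=V^{\bs{\alpha}}_R$ for $\bs{\alpha}\in R$ is then saturation together with uniqueness.

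\emph{Main obstacle.} I expect the hardest step to be controlling the intersection over infinitely many slopes $\bs{L}$. In (b)$\Rightarrow$(a) this means showing that only finitely many slopes contribute locally, which requires the adapted fan. In (a)$\Rightarrow$(b) it means proving the saturation identity. For the latter, I would first try working with localisations at $\mf{m}$, where everything is a finite central hyperplane arrangement, and then reassemble the global statement via Lemma \ref{lem:localisation}.
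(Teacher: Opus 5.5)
Your (b)$\Rightarrow$(a) is essentially the paper's argument. Checking \eqref{itm:multivariate V 4} via Lemma \ref{lem:localised chambers} is a valid alternative to the paper's observation that $U^{\bs\alpha}\mc{M}/U^{\bs\beta}\mc{M}$ embeds in $\gr^\gamma_{\ls{\bs{L}}{V}}\mc{M}$.

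The gap is in (a)$\Rightarrow$(b). An $R$ that merely meets every chamber of $\mc{W}$ does not give condition \eqref{itm:sabbah 3}, and for such $R$ that condition can simply be false. Already for $r=1$, take $\mc{M}=\mc{O}_X(*D)$ with $D=\{t=0\}$ smooth, so $V^\alpha\mc{M}=t^{\lceil\alpha\rceil-1}\mc{O}_X$ with walls at $\mb{Z}$. The set $R=\tfrac12+\mb{Z}$ meets every chamber $(k-1,k]$. But $U_R^{m+1/2}\mc{M}=t^m\mc{O}_X$, so $\gr^{m+1/2}_{U_R}\mc{M}=t^m\mc{O}_X/t^{m+1}\mc{O}_X$, on which $\partial_t t$ acts by $m+1$ rather than with real part $m+\tfrac12$. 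Indeed, by the theorem itself, Sabbah's filtration does not exist for this $R$. (The same failure occurs for $r=2$, $D=\{t_1t_2=0\}$, $R=(\tfrac12,\tfrac12)+\mb{Z}^2$, $\bs{L}=(1,1)$.)

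Your step 2 hides the problem. You move a generator with $\bs{L}(\bs\alpha)=\beta$ to a point just above $\bs{L}^{-1}(\beta)$, but that point is not in $R$, while $\ls{\bs{L}}{U}^{>\beta}_R\mc{M}$ only sees $V^{\bullet}\mc{M}$ at points of $R$. Concretely, after localising at $\mf{m}$ with $\bs{L}(\Re\bs\gamma)>\beta$, the points of the cone $\sigma_{\mf m}$ with $\bs{L}>\beta$ near $\bs\alpha$ lie between $\bs\alpha$ and the apex $\Re\bs\gamma$. A discrete $R$ need not contain any of them.

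The missing idea is the paper's choice of $R$ together with Lemma \ref{lem:LV from lattice}.
\begin{enumerate}
\item Refine $\mc{W}$ so that all chambers are bounded, and take $R$ to contain every vertex, i.e.\ every point lying on $r$ walls.
\item The maximum of $\bs{L}$ on $\bar\sigma$ is attained at a vertex $\bs\alpha'$, and $V^{\bs\alpha}\mc{M}\subset V^{\bs\alpha'}\mc{M}$ by Remark \ref{remark: containment of closure}. Hence the sum of $V^{\bs\alpha}\mc{M}$ over $\bs\alpha\in R$ with $\bs{L}(\bs\alpha)\ge\beta$ equals the same sum over all $\bs\alpha\in\mb{R}^r$.
\item Once the sums run over all of $\mb{R}^r$, your localisation idea works. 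By Lemma \ref{lem:localised chambers}, $(\ls{\bs{L}}{U}^{\beta}_R\mc{M})_{\mf m}$ is the sum of $(V^{\sigma_{\mf m}}\mc{M})_{\mf m}$ over chambers of $\mc{W}_{\mf m}$ meeting $\{\bs{L}\ge\beta\}$.
\item If $\bs{L}(\Re\bs\gamma)\neq\beta$, each such cone also meets $\{\bs{L}>\beta\}$: move along the ray from $\Re\bs\gamma$ through the point. So the localised graded piece vanishes, and the support argument gives the annihilator.
\end{enumerate}

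Your saturation step is both unjustified and unnecessary. It is unjustified because the claim that $(V^{\bs\alpha}\mc{M})_{\mf m}$ is the intersection of half-space sums is itself a localised saturation statement, and you give no argument for it. It is unnecessary because, once $\bar U_R$ is Sabbah's filtration, your own (b)$\Rightarrow$(a) construction together with Theorem \ref{thm:multivariate V uniqueness} gives $V_R^{\bs\alpha}\mc{M}=\bigcap_{\bs{L}}\ls{\bs{L}}{V}^{\bs{L}(\bs\alpha)}\mc{M}=V^{\bs\alpha}\mc{M}$. This is exactly how the paper concludes.
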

\begin{proof}
We first assume Sabbah's $V$-filtration $V_R^\bullet\mc{M}$ exists for some union of cosets $R \subset \mb{R}^r$. Define an $\mb{R}^r$-indexed filtration $U^\bullet\mc{M}$ using $\ls{\bs{L}}{V}$-filtrations of $\cM$ by
\[ U^{\bs{\alpha}}\mc{M} = \bigcap_{\bs{L} \in \mb{Q}_{\geq 0}^r} \ls{\bs{L}}{V}^{\bs{L}(\bs{\alpha})}\mc{M}, \quad \bs{\alpha}\in \R^r.\]
We show that this is a multivariate $V$-filtration. We first check that $U^\bullet\mc{M}$ is a wall and chamber filtration. To do so, choose an adapted fan $\Sigma$ for $V_R^\bullet\mc{M}$ in the sense of \cite[\S 2.2]{Sabbah1987}. Let $\ms{L}(\Sigma)$ be the (finite) set of $\bs{L}\in \mb{Z}_{\geq 0}^r$ such that $\bs{L}$ is primitive and $\mb{R}_{> 0}\cdot\bs{L}$ is a ray in the $1$-skeleton of $\Sigma$. Then, as observed in the remark following \cite[2.2.3]{Sabbah1987}, we have
\begin{equation} \label{eq:sabbah comparison 1}
U^{\bs{\alpha}}\mc{M} = \bigcap_{\bs{L} \in \ms{L}(\Sigma)} \ls{\bs{L}}{V}^{\bs{L}(\bs{\alpha})}\mc{M}.
\end{equation}
For $\bs{L} \in \ms{L}(\Sigma)$, let $\textrm{JN}(\bs{L}) \subset \mb{R}$ denote the set of jumping numbers for $\ls{\bs{L}}{V}^{\bullet}\mc{M}$; since the latter is a good filtration with respect to $\ls{\bs{L}}{V}^\bullet\ms{D}_X$, $\textrm{JN}(\bs{L})$ is finite modulo integer translations. So by \eqref{eq:sabbah comparison 1}, $U^\bullet\mc{M}$ is a wall and chamber filtration, with a set of walls
\[ \mc{W} := \{\bs{L}^{-1}(\beta) \mid \bs{L} \in \ms{L}(\Sigma), \beta \in \textrm{JN}(\bs{L}) \}.\]
To check that the wall and chamber filtration $U^\bullet\mc{M}$ is good, we apply Lemma \ref{lem: goodness part}. For any $\bs{\alpha} \in \mb{R}^r$, we can assume $\bs{\alpha} \in R$ by enlarging $R$ if necessary by saturation. Since the restriction of $U^\bullet\mc{M}$ to $R$ is the good filtration $V_R^\bullet\mc{M}$, we have that $\{U^{\bs{\alpha} + \bs{n}}\mc{M}\}_{\bs{n} \in \mb{Z}^r}$ is good. Hence $U^\bullet\mc{M}$ is a good wall and chamber filtration as claimed.

Now, if $\bs{\alpha} \leq \bs{\beta}$ are separated by a single wall $\bs{L}^{-1}(\gamma)$, then by \eqref{eq:sabbah comparison 1}, we have
\begin{align*}
U^{\bs{\alpha}}\mc{M} \cap \ls{\bs{L}}{V}^{>\gamma} \mc{M} &= \bigcap_{\bs{L}' \in \ms{L}(\Sigma) \setminus\{\bs{L}\}} \ls{\bs{L}'}{V}^{\bs{L}'(\bs{\alpha})}\mc{M} \cap \ls{\bs{L}}V^{>\gamma}\mc{M} \\
&=  \bigcap_{\bs{L}' \in \ms{L}(\Sigma) \setminus\{\bs{L}\}} \ls{\bs{L}'}{V}^{\bs{L}'(\bs{\beta})}\mc{M} \cap \ls{\bs{L}}V^{\bs{L}(\bs{\beta})}\mc{M} \\
&= U^{\bs{\beta}}\mc{M},
\end{align*}
where the second equality holds because, by the assumption that $\bs{\alpha} \leq \bs{\beta}$ are separated by the single wall $\bs{L}^{-1}(\gamma)$ of $\mc{W}$, there are no jumping numbers of $\ls{\bs{L}'}{V}^\bullet\mc{M}$ (resp.\ $\ls{\bs{L}}{V}^\bullet\mc{M}$) between $\bs{L}'(\bs{\alpha})$ and $\bs{L}'(\bs{\beta})$ (resp.\ $\gamma + \epsilon$ and $\bs{L}(\bs{\beta})$). So $U^{\bs{\alpha}}\mc{M}/U^{\bs{\beta}}\mc{M}$ is a submodule of $\gr_{\ls{\bs{L}}{V}}^\gamma\mc{M}$ and hence condition \eqref{itm:multivariate V 4} of Definition \ref{defn:multivariate V-filtration} is satisfied. So by Theorem \ref{thm:multivariate V uniqueness}, $V^\bullet\mc{M}$ exists and is equal to $U^\bullet\mc{M}$ as claimed.

Conversely, suppose that the multivariate $V$-filtration $V^\bullet\mc{M}$ exists. We will show that there exists a union of cosets $R$ and an $R$-indexed filtration $U^\bullet_R\mc{M}$ satisfying \eqref{itm:sabbah 1} and \eqref{itm:sabbah 3} of Definition \ref{defn:sabbah}; $V_{R}^\bullet\mc{M}$ will then be given as the saturation of $U^\bullet_R\mc{M}$. 

Fix $\mc{W}$ a set of walls for $V^\bullet\mc{M}$. Enlarging $\mc{W}$ if necessary, we can assume for simplicity that the chambers of $\mc{W}$ are all bounded subsets of $\mb{R}^r$. Call a point $\bs{\alpha} \in \mb{R}^r$ a \emph{vertex of $\mc{W}$} if $\bs{\alpha}$ is the intersection of $r$ hyperplanes in $\mc{W}$. Note that there are finitely many vertices modulo $\mb{Z}^r$. Fix any finite union of cosets $R$ such that $R$ contains all vertices of $\mc{W}$ and define 
\[U^{\bs{\alpha}}_R\mc{M} \colonequals V^{\bs{\alpha}}\mc{M}, \quad \textrm{for $\bs{\alpha} \in R$}.\]
By Lemma \ref{lem: goodness part}, the filtration $U_R^\bullet\mc{M}$ is good. To prove \eqref{itm:sabbah 3} of Definition \ref{defn:sabbah}, we will use the following lemma.

\begin{lem} \label{lem:LV from lattice}
For any $\bs{L} \in \mb{Q}_{\geq 0}^r$ and any $\beta \in \mb{R}$, we have
\[ \sum_{\substack{\bs{\alpha} \in R \\ \bs{L}(\bs{\alpha}) \geq \beta}} U_R^{\bs{\alpha}} \mc{M} = \sum_{\substack{\bs{\alpha} \in \mb{R}^r \\ \bs{L}(\bs{\alpha}) \geq \beta}} V^{\bs{\alpha}} \mc{M}.\]
\end{lem}
\begin{proof}
Clearly the left hand side is contained in the right. To prove the reverse, suppose that $\bs{\alpha} \in \mb{R}^r$ satisfies $\bs{L}(\bs{\alpha}) \geq \beta$. Let $\sigma$ be the chamber of $\mc{W}$ containing $\bs{\alpha}$. Since the chambers of $\mc{W}$ are bounded by assumption, the closure $\bar{\sigma}$ is compact. Let 
\[\gamma \colonequals \max_{\bs{\eta}\in \bar{\sigma}} \bs{L}(\bs{\eta}).\]
The set $\bar{\sigma} \cap \bs{L}^{-1}(\gamma)$ is a bounded polytope in $\bs{L}^{-1}(\gamma) \cong \mb{R}^{r - 1}$; let $\bs{\alpha}'$ be one of the vertices of this polytope. We claim that $\bs{\alpha}'$ is a vertex of $\mc{W}$. Indeed, $\bs{\alpha}'$ lies on at least $r - 1$ hyperplanes in $\mc{W}$ by construction; we need to show that it lies on at least $r$. Suppose to the contrary that it lies on exactly $r - 1$ hyperplanes $H_1, \ldots, H_{r - 1}$. Then there exists a vector $\bs{v} \in \mb{R}^r$ parallel to the $H_i$ for all $i$ such that $\bs{L}(\bs{v}) > 0$. In particular, for $0 < \epsilon \ll 1$, we have $\bs{L}(\bs{\alpha}' + \epsilon\bs{v}) > \gamma$ and $\bs{\alpha}' + \epsilon\bs{v} \in \bar{\sigma}$, contradicting maximality of $\gamma$. So $\bs{\alpha}'$ is indeed a vertex of $\mc{W}$ as claimed and hence $\bs{\alpha}' \in R$. Moreover, since $\bs{\alpha}' \in \bar{\sigma}$, by Remark \ref{remark: containment of closure} we have
\[ V^{\bs{\alpha}}\cM\subset V^{\bs{\alpha}'}\mc{M} =U_R^{\bs{\alpha}'}\mc{M}.\]
As $\bs{L}(\bs{\alpha}') = \gamma \geq \bs{L}(\bs{\alpha})\geq \beta$, this proves the converse containment.
\end{proof}

Continuing with the proof of Theorem \ref{thm:sabbah comparison}, fix $\bs{L} \in \mb{Q}_{> 0}^r$ and write
\[ \ls{\bs{L}}{U}_R^\beta \mc{M} = \sum_{\substack{\bs{\alpha} \in R, \\ \bs{L}(\bs{\alpha}) \geq \beta}} U_R^{\bs{\alpha}}\mc{M} = \sum_{\substack{\bs{\alpha} \in \mb{R}^r, \\ \bs{L}(\bs{\alpha}) \geq \beta}} V^{\bs{\alpha}}\mc{M}.\]
We need to show that $\gr_{\ls{\bs{L}}{U}_R}^\beta \mc{M}$ is annihilated by some operator of the form $\prod_j(\bs{L}(\bs{s}) + \beta_j)$ for $\Re \beta_j = \beta$, where $s_i=-\d_{t_i}t_i$. Arguing as in the final paragraph of the proof of Lemma \ref{lem:localised chambers}, it suffices to show that, for every $\bs{\gamma} \in \mb{C}^r$ with corresponding maximal ideal $\mf{m} = (s_1 + \gamma_1, \ldots, s_r + \gamma_r) \subset \mb{C}[\bs{s}]$, we have
\begin{equation} \label{eq:sabbah comparison 3}
(\gr_{\ls{\bs{L}}{U}_R}^\beta\mc{M})_{\mf{m}} = 0 \quad \text{if $\Re \bs{L}(\bs{\gamma}) \neq \beta$}.
\end{equation}
To prove \eqref{eq:sabbah comparison 3}, note that by Lemma \ref{lem:localised chambers}, we have
\begin{equation} \label{eq:sabbah comparison 2}
(\ls{\bs{L}}{U}_R^{\beta}\mc{M})_{\mf{m}} = \sum_{\sigma_\mf{m} \cap \bs{L}^{-1}(\mb{R}_{\geq \beta}) \neq \emptyset}(V^{\sigma_\mf{m}}\mc{M})_{\mf{m}},
\end{equation}
where the sum is over all chambers $\sigma_{\mf{m}}$ of $\mc{W}_{\mf{m}}$, the set of walls in $\mc{W}$ passing through $\Re\bs{\gamma}$. If $\Re \bs{L}(\bs{\gamma}) \neq \beta$, then any chamber of $\mc{W}_\mf{m}$ containing a point $\bs{\alpha}$ with $\bs{L}(\bs{\alpha}) \geq \beta$ also contains a point $\bs{\alpha}'$ with $\bs{L}(\bs{\alpha}') > \beta$ (e.g.\ we can take $\bs{\alpha}' = \bs{\alpha} + t (\bs{\alpha} - \Re\bs{\gamma})$ for some small $t$). So from \eqref{eq:sabbah comparison 2}, we see that
\[ (\ls{\bs{L}}{U}_R^{\beta}\mc{M})_{\mf{m}} = (\ls{\bs{L}}{U}_R^{>\beta}\mc{M})_{\mf{m}},\]
so \eqref{eq:sabbah comparison 3} follows.

Finally, by Lemma \ref{lem:LV from lattice} and the proof of the first direction, we conclude that for $\bs{\alpha}\in R$,
\[ V^{\bs{\alpha}}_R\cM=\bar{U}^{\bs{\alpha}}_R\cM=\bigcap_{\bs{L}\in \Q^r_{\geq 0}}\ls{\bs{L}}{U}_R^{\bs{L}(\bs{\alpha})}\cM=\bigcap_{\bs{L}\in \Q^r_{\geq 0}}\ls{\bs{L}}{V}^{\bs{L}(\bs{\alpha})}\cM=V^{\bs{\alpha}}\cM.\qedhere\] 
\end{proof}

Theorem \ref{thm:sabbah comparison} implies, in particular, that the ordinary $V$-filtrations $V_{D_i}^\bullet\mc{M}$ can be recovered from the multivariate $V$-filtration $V^\bullet\mc{M}$.  For future reference, we record the following related statement.

\begin{prop} \label{prop:V-filtration for sub divisor}
Suppose that $D' \subset D$ is a sub-divisor, given by, say, $D' = D_1 \cup \cdots \cup D_{r'}$ for $r' < r$. If $V^\bullet\mc{M}$ is a multivariate $V$-filtration along $D$, then the filtration
\[ U^{\alpha_1, \ldots, \alpha_{r'}}\mc{M} := \bigcup_{\alpha_{r' + 1}, \ldots, \alpha_r \in \mb{R}} V^{\alpha_1, \ldots, \alpha_{r'}, \alpha_{r' + 1}, \ldots, \alpha_r}\mc{M} \]
is a multivariate $V$-filtration along $D'$.
\end{prop}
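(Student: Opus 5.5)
By induction on $r - r'$ it suffices to treat $r' = r-1$, i.e.\ to drop the last coordinate. So set $U^{\bs{\alpha}'}\mc{M} = \bigcup_{\alpha_r} V^{(\bs{\alpha}',\alpha_r)}\mc{M}$ for $\bs{\alpha}' \in \mb{R}^{r-1}$, and check the two conditions of Definition \ref{defn:multivariate V-filtration} along $D' = D_1 \cup \cdots \cup D_{r-1}$, with respect to the ring filtration $V^\bullet_{D'}\ms{D}_X$.

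\textbf{The union stabilises.} First I will show that, locally on $X$, for fixed $\bs{\alpha}'$ the increasing union stabilises. Precisely, I expect $U^{\bs{\alpha}'}\mc{M} = V^{(\bs{\alpha}', -N)}\mc{M}$ for $N \gg 0$, uniformly for $\bs{\alpha}'$ in a bounded set, and for all $\bs{\alpha}'$ after using translation by $\mb{Z}^{r-1}$. To see this, use the generation property (\ref{itm:good 4}) of Definition \ref{defn: good filtration}: $V^{\bs{\alpha}}\mc{M}$ is generated over $V^\bullet\ms{D}_X$ by pieces indexed in $[A,B]^r$.

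Fix the first $r-1$ indices $n_i$. If $\alpha_r$ is very negative, then in the sum defining $V^{\bs{\alpha}}\mc{M}$ the constraint on $n_r$ is $n_r \ge \alpha_r - \gamma_r$. Since $V^{n}_{D_r}\ms{D}_X$ for $n \le 0$ is generated by $V^{0}$ together with powers of $\partial_{t_r}$, and $\partial_{t_r} \in V^{\bs{-1}_r}\ms{D}_X$, the union over $\alpha_r$ is $\sum_{\bs{n}',\bs{\gamma}} V^{(\bs{n}', -\infty)}\ms{D}_X\cdot V^{\bs{\gamma}}\mc{M}$. Here $V^{(\bs{n}',-\infty)}\ms{D}_X = V^{\bs{n}'}_{D'}\ms{D}_X$.

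This gives goodness almost directly. The filtration is exhaustive, satisfies $V^{\bs{n}'}_{D'}\ms{D}_X \cdot U^{\bs{\alpha}'} \subset U^{\bs{\alpha}'+\bs{n}'}$, and is generated by the finitely many coherent pieces $V^{\bs{\gamma}}\mc{M}$ with $\bs{\gamma} \in [A,B]^r$. The remaining point is coherence of $U^{\bs{\alpha}'}\mc{M}$ over $V^{\bs{0}}_{D'}\ms{D}_X$. This follows because $U^{\bs{\alpha}'}\mc{M}$ is generated over $V^{\bs{0}}_{D'}\ms{D}_X$ by finitely many coherent $V^{\bs{0}}\ms{D}_X$-modules, using Remark \ref{remark: coherence of Un over U0} for coherence of the sheaf of rings.

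\textbf{Wall and chamber structure and eigenvalues.} Fix a set of walls $\mc{W}$ for $V^\bullet\mc{M}$. I will use the localised criterion, Lemma \ref{lem:localised chambers}, now for $\mb{C}[s_1,\dots,s_{r-1}]$, rather than checking eigenvalues across walls directly. Localisation commutes with the filtered union, so for a maximal ideal $\mf{m}' = (s_i+\alpha_i)_{i<r}$ we can work with $\mc{M}_{\mf{m}'}$.

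\begin{enumerate}
\item As a $\mb{C}[\bs{s}]$-module, $\mc{M}_{\mf{m}'}$ is a localisation at the multiplicative set $\mb{C}[\bs{s}'] \setminus \mf{m}'$. The maximal ideals of $\mb{C}[\bs{s}]$ meeting it are the $\mf{m} = \mf{m}' + (s_r + c)$ for $c \in \mb{C}$.
\item By Lemma \ref{lem:localised chambers}, $V^\bullet\mc{M}$ localised at such an $\mf{m}$ is constant on the chambers of $\mc{W}_{\mf{m}}$, which is a finite central arrangement through $(\Re\bs{\alpha}', \Re c)$.
\item Take the union over $\alpha_r \to -\infty$, i.e.\ the projection along the $\alpha_r$-direction. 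The resulting filtration in $\bs{\alpha}'$ can only jump along projections of walls with $L_r = 0$; walls with $L_r \ne 0$ are crossed by moving $\alpha_r$. Walls with $L_r = 0$ through that point pass through $\Re\bs{\alpha}'$ in $\mb{R}^{r-1}$.
\item Hence $(U^\bullet\mc{M})_{\mf{m}}$ has walls in $\mc{W}'_{\mf{m}'}$, where $\mc{W}' = \{H \in \mc{W} \mid L_r = 0\}$ is viewed in $\mb{R}^{r-1}$. That set is a set of walls there.
\item Finally, pass from the $\mf{m}$ back to $\mf{m}'$ by Lemma \ref{lem:localisation} applied over $\mb{C}[\bs{s}]$. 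Equality of submodules localised at all $\mf{m}$ above $\mf{m}'$ gives equality at $\mf{m}'$.
\end{enumerate}

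Applied globally, the same projection argument shows $U^\bullet\mc{M}$ is a wall and chamber filtration with walls $\mc{W}'$. Lemma \ref{lem:localised chambers} then yields the eigenvalue condition.

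\textbf{Main obstacle.} The hard step is the projection argument in steps (2)--(3). A chamber of $\mc{W}'_{\mf{m}'}$ in $\mb{R}^{r-1}$, times a ray $\alpha_r \to -\infty$, must land eventually in a single chamber of the central arrangement $\mc{W}_{\mf{m}}$, uniformly as $\bs{\alpha}'$ varies in that chamber. I would handle it by choosing $N$ so large that $\alpha_r = -N$ lies below all walls with $L_r \neq 0$ through the centre, over a bounded region of $\bs{\alpha}'$. Then $(\bs{\alpha}',-N)$ and $(\bs{\beta}',-N)$ lie in the same $\mc{W}_{\mf{m}}$-chamber whenever $\bs{\alpha}', \bs{\beta}'$ share a $\mc{W}'_{\mf{m}'}$-chamber. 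Since the arrangement is central and conical, this reduces to a bounded region, which makes the uniformity work.
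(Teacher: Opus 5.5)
Your overall plan (keep the walls of $\mc{W}$ with vanishing last coefficient, deduce goodness from that of $V^\bullet\mc{M}$, and get the eigenvalue condition from Lemma \ref{lem:localised chambers}) is the one the paper uses. However, the step you put first is false.

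\textbf{The union does not stabilise.} $U^{\bs{\alpha}'}\mc{M}$ is a $V^{\bs{0}}_{D'}\ms{D}_X$-module, so it is stable under $\partial_{t_r}$, and in general it equals none of the $V^{(\bs{\alpha}',-N)}\mc{M}$. For example, take $\mc{M}=\mc{O}_X(*D)$ on $X=\mb{C}^2$ with $D=\{t_1t_2=0\}$. Then $V^{(\alpha_1,\alpha_2)}\mc{M}=t_1^{\lceil\alpha_1\rceil-1}t_2^{\lceil\alpha_2\rceil-1}\mc{O}_X$, so $U^{\alpha_1}\mc{M}=t_1^{\lceil\alpha_1\rceil-1}\mc{O}_X[t_2^{-1}]$. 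Your own formula, with $V^{(\bs{n}',-\infty)}\ms{D}_X=V^{\bs{n}'}_{D'}\ms{D}_X$, already shows this.

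Stabilisation does hold after localising at a maximal ideal $\mf{m}\subset\mb{C}[\bs{s}]$, because $(V^\bullet\mc{M})_{\mf{m}}$ has only the finitely many walls $\mc{W}_{\mf{m}}$. So your steps (2)--(4) and the ``main obstacle'' paragraph are fine.

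What fails is ``applied globally, the same projection argument shows $U^\bullet\mc{M}$ is a wall and chamber filtration with walls $\mc{W}'$''. Globally, the ray $\alpha_r\to-\infty$ crosses infinitely many translates of walls with $L_r>0$. There are two ways to repair this.
\begin{itemize}
\item Glue your localised statements. If $\bs{\alpha}',\bs{\beta}'$ lie in one chamber of $\mc{W}'$, they lie in one chamber of every $\mc{W}'_{\mf{m}}$, and Lemma \ref{lem:localisation} over $\mb{C}[\bs{s}]$ then gives $U^{\bs{\alpha}'}\mc{M}=U^{\bs{\beta}'}\mc{M}$.
\item Use the paper's direct argument. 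For $m\in V^{(\bs{\alpha}',a)}\mc{M}$, choose $b\ll 0$ so that every wall of $\mc{W}$ separating $(\bs{\alpha}',a)$ from $(\bs{\beta}',b)$ has $L_r>0$ and has $(\bs{\alpha}',a)$ strictly above it. Then $V^{(\bs{\alpha}',a)}\mc{M}\subset V^{(\bs{\beta}',b)}\mc{M}$ by Remark \ref{remark: containment of closure}(2) applied along a generic segment.
\end{itemize}

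\textbf{Coherence.} Saying $U^{\bs{\alpha}'}\mc{M}$ is ``generated over $V^{\bs{0}}_{D'}\ms{D}_X$ by finitely many coherent $V^{\bs{0}}\ms{D}_X$-modules'' gives local finite generation, not coherence. The ambient $\mc{M}$ is not coherent over $V^{\bs{0}}_{D'}\ms{D}_X$, so finite presentation does not follow. The paper uses $R_{U^{\bs{\alpha}'+\bullet}}\mc{M}=R_{V^{\bs{\alpha}+\bullet}}\mc{M}\otimes_{\mb{C}[\bs{u}]}\mb{C}[\bs{u}']$ with $u_r\mapsto 1$. This is a coherent $R_V\ms{D}_X$-module reduced modulo the central element $u_r-1$, and $R_V\ms{D}_X/(u_r-1)=R_{V_{D'}}\ms{D}_X$. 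Lemma \ref{lem: goodness part} then finishes the goodness check.

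\textbf{Step (5).} The principle ``equality at every $\mf{m}$ above $\mf{m}'$ gives equality at $\mf{m}'$'' is false for general $\mb{C}[\bs{s}]$-modules. The ring $\mb{C}[\bs{s}]_{\mf{m}'}$ has maximal ideals that are not maximal in $\mb{C}[\bs{s}]$. For instance, with $M=\mb{C}[s_1,s_2]/(s_1s_2-1)$ and $\mf{m}'=(s_1)$, we have $M_{\mf{m}}=0$ for every maximal $\mf{m}\ni s_1$, yet $M_{\mf{m}'}\cong\mb{C}(s_1)$. The paper phrases this step the same way, but you can avoid it by bypassing $\mf{m}'$ altogether:
\begin{itemize}
\item Take $\bs{\beta}'\leq\bs{\gamma}'$ separated by a single wall $\bs{L}'^{-1}(\eta)\in\mc{W}'$.
\item Run the second half of the proof of Lemma \ref{lem:localised chambers} directly on the $\mb{C}[\bs{s}]$-module $U^{\bs{\beta}'}\mc{M}/U^{\bs{\gamma}'}\mc{M}$, with the linear form $(\bs{L}',0)$.
\item Your localised argument already shows this quotient vanishes at every maximal ideal $(\bs{s}+\bs{a})$ of $\mb{C}[\bs{s}]$ with $\bs{L}'(\Re\bs{a}')\neq\eta$. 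That is exactly what the argument needs.
\end{itemize}
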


\begin{proof}
To see that $U^\bullet\mc{M}$ is a wall and chamber filtration, suppose that $\mc{W}$ is a set of walls for $V^\bullet\mc{M}$ and let $\mc{W}'$ be the set of walls
\[ \mc{W}' = \{(\bs{L}')^{-1}(\gamma) \mid \bs{L}' = (L_1', \ldots, L_{r'}') \in \mb{Q}_{\geq 0}^{r'}, (L_1', L_2', \ldots, L_{r'}', 0, \ldots, 0)^{-1}(\gamma) \in \mc{W}\}\]
in $\mb{R}^{r'}$. We claim that $\mc{W}'$ is a set of walls for $U^\bullet\mc{M}$. To see this, fix $\bs{\alpha}', \bs{\beta}' \in \mb{R}^{r'}$ in the same chamber of $\mc{W}'$; we need to show that $U^{\bs{\alpha}'}\mc{M} = U^{\bs{\beta}'}\mc{M}$. By symmetry, it is enough to check that if $m \in U^{\bs{\alpha}'}\mc{M}$ is a local section then $m \in U^{\bs{\beta}'}\mc{M}$ also. Since $m \in U^{\bs{\alpha}'}\mc{M}$, we have $m \in V^{\bs{\alpha}}\mc{M}$ for some $\bs{\alpha} \in \mb{R}^r$ with $\alpha_i = \alpha_i'$ for $i \leq r'$. Choose any $\bs{\beta} \in \mb{R}^r$ with $\beta_i = \beta_i'$ for $i \leq r'$. Since $\bs{\alpha}'$ and $\bs{\beta}'$ are in the same chamber of $\mc{W}'$, the walls $\bs{L}^{-1}(\gamma)$ separating $\bs{\alpha}$ and $\bs{\beta}$ all satisfy $L_i > 0$ for some $i > r'$. So modifying our choice of $\bs{\beta}$ if necessary, we can suppose that $\bs{L}(\bs{\alpha}) > \gamma$ and $\bs{L}(\bs{\beta}) \leq \gamma$ for all such walls. This implies that $V^{\bs{\alpha}}\mc{M} \subset V^{\bs{\beta}}\mc{M}$, so $m \in V^{\bs{\beta}}\mc{M} \subset U^{\bs{\beta}'}\mc{M}$ as claimed.

Now, if $\bs{\alpha} \in \mb{R}^r$ and $\bs{\alpha'} = (\alpha_1, \ldots, \alpha_{r'})$, then, by construction,
\[ R_{U^{\bs{\alpha}' + \bullet}} \mc{M} = R_{V^{\bs{\alpha} + \bullet}}\mc{M} \otimes_{\mb{C}[\bs{u}]}\mb{C}[\bs{u}'],\]
where the tensor product is over the map sending $u_i$ to $u_i$ for $i \leq r'$ and $1$ for $i > r'$. In particular, since $R_{V^{\bs{\alpha} + \bullet}}\mc{M}$ is coherent over $R_{V_D}\ms{D}_X$, we know $R_{U^{\bs{\alpha}' + \bullet}} \mc{M}$ is coherent over $R_{V_{D'}}\ms{D}_X$. So $U^\bullet\mc{M}$ is good by Lemma \ref{lem: goodness part}.

To conclude that $U^\bullet\mc{M}$ is a multivariate $V$-filtration, we apply Lemma \ref{lem:localised chambers}: if $\mf{m} = (s_1 + \gamma_1, \ldots, s_r + \gamma_r) \subset \mb{C}[\bs{s}]$ is a maximal ideal, then we have
\[ (U^{\bs{\alpha}'}\mc{M})_{\mf{m}} = \bigcup_{\alpha_{r' + 1}, \ldots, \alpha_r\in \mb{R}} (V^{\bs{\alpha}', \alpha_{r' + 1}, \ldots, \alpha_{r'}}\mc{M})_{\mf{m}}\]
for all $\bs{\alpha}' \in \mb{R}^{r'}$. Since $(V^\bullet\mc{M})_{\mf{m}}$ is a wall and chamber filtration with set of walls 
\[ \mc{W}_\mf{m}=\{ H\in \mc{W} \mid \Re(\bs{\gamma})\in H \} \] 
by Lemma \ref{lem:localised chambers}, we can apply the above argument to conclude that $(U^\bullet\mc{M})_{\mf{m}}$ is a wall and chamber filtration with set of walls
\[ \mc{W}'_\mf{m} := \{H \in \mc{W}' \mid \Re(\gamma_1, \ldots, \gamma_{r'}) \in H\}.\]
Now, if we write $\mf{m}' = (s_1 + \gamma_1, \ldots, s_{r'} + \gamma_{r'}) \subset \mb{C}[s_1, \ldots, s_{r'}]$, then
\[ \mc{W}'_\mf{m} = (\mc{W}')_{\mf{m}'},\]
where the right hand side is defined as in \eqref{eqn: localized walls} from Lemma \ref{lem:localised chambers}. 
 So, letting $\gamma_{r' + 1}, \ldots, \gamma_r$ vary and applying Lemma \ref{lem:localisation} to the $\mb{C}[\bs{s}]_{\mf{m}'}$-module $(U^\bullet\mc{M})_{\mf{m}'}$, we conclude that $(U^\bullet\mc{M})_{\mf{m}'}$ is a wall and chamber filtration with set of walls $(\mc{W}')_{\mf{m}'}$ and hence $U^\bullet \mc{M}$ is a multivariate $V$-filtration by Lemma \ref{lem:localised chambers}. 
\end{proof}

\begin{remark}
One can also argue using the theory of adapted fans that the restriction $U_{R'}^\bullet\mc{M}$ of $U^\bullet\mc{M}$ to a sufficiently large union of cosets $R'$ satisfies the conditions defining Sabbah's filtration and apply Theorem \ref{thm:sabbah comparison}. We leave the details to the interested reader.\end{remark}

\subsection{Proper direct images} \label{subsec:direct image}

In this subsection, we prove that multivariate $V$-filtrations behave well under proper direct images.

We first discuss some generalities about filtered complexes and derived categories. Suppose that $(\mc{A}, U^\bullet\mc{A})$ is a filtered sheaf of rings on a complex manifold $X$ satisfying Assumption \ref{assumption:ring filtrations}. Recall that the Rees module construction defines an equivalence of categories between (coherent) sheaves of $\mc{A}$-modules $\mc{M}$ equipped with an exhaustive (good) $\mb{Z}^r$-indexed filtration $U^\bullet\mc{M}$ and (coherent) $\mb{Z}^r$-graded modules over the Rees algebra
\[ R_U\mc{A} = \bigoplus_{\bs{n} \in \mb{Z}^r} U^{\bs{n}}\mc{A} \, \bs{u}^{-\bs{n}}\]
without $\mb{C}[\bs{u}]$-torsion. Motivated by this, one can define the filtered derived category
\[ \mrm{D}(\mc{A}, U^\bullet\mc{A}) \colonequals \mrm{D}(\Mod_{\mb{Z}^r}(R_U\mc{A})),\]
to be the derived category of $\mb{Z}^r$-graded $R_U\mc{A}$-modules, and the full subcategory of good filtrations
\[ \mrm{D}^b_{\mrm{coh}}(\mc{A}, U^\bullet\mc{A}) \subset \mrm{D}(\mc{A}, U^\bullet\mc{A}) \]
to be the subcategory of bounded objects whose cohomology sheaves are coherent $R_U\mc{A}$-modules. We have a forgetful functor
\[ \mrm{D}(\mc{A}, U^\bullet\mc{A}) \to \mrm{D}(\mc{A}) \colonequals \mrm{D}(\Mod(\mc{A}))\]
sending a complex $\mc{N} = \bigoplus_{\bs{n} \in \mb{Z}^r}\mc{N}^{\bs{n}}$ of graded $R_U\mc{A}$-modules to the complex of $\mc{A}$-modules
\[ \mc{M} = \colim_{\bs{n} \in \mb{Z}^r} \mc{N}^{\bs{n}}.\]
For such an $\mc{N} \in \mrm{D}(\mc{A}, U^\bullet\mc{A})$, we usually imagine that it is the Rees module of a filtration $U^\bullet\mc{M}$ on $\mc{M}$ (and indeed one can arrange that this is indeed the case by replacing $\mc{N}$ with a quasi-isomorphic complex if necessary). Accordingly, we often write $\mc{N} = (\mc{M}, U^\bullet\mc{M})$ for a general object in $\mrm{D}(\mc{A}, U^\bullet\mc{A})$, $R_{U}\mc{M} \colonequals \mc{N}$ for $\mc{N}$ regarded as an object in the derived category of $R_{U}\mc{A}$-modules, and, for $\bs{n} \in \mb{Z}^r$, we write $U^{\bs{n}}\mc{M} \colonequals \mc{N}^{\bs{n}}$. 

We have a similar story for $\mb{R}^r$-indexed filtrations. If $(\mc{M}, U^\bullet\mc{M})$ is an $\mc{A}$-module equipped with an exhaustive $\mb{R}^r$-indexed filtration, we can form the $\mb{R}^r$-graded Rees module
\[ R_{U, \mb{R}}\mc{M} = \bigoplus_{\bs{\alpha} \in \mb{R}^r} U^{\bs{\alpha}}\mc{M} \, \bs{u}^{-\bs{\alpha}} \]
over the sheaf of rings
\[ R_{U, \mb{R}}\mc{A} := R_U\mc{A} \otimes_{\mb{C}[\bs{u}]} \mb{C}[\bs{u}^{\mb{R}_{\geq 0}}],\]
where $\mb{C}[\bs{u}^{\mb{R}_{\geq 0}}]$ is the generalised polynomial algebra
\[ \mb{C}[\bs{u}^{\mb{R}_{\geq 0}}] = \mb{C}\textnormal{-span} \{\bs{u}^{\bs{\alpha}} \colonequals u_1^{\alpha_1} \cdots u_r^{\alpha_r} \mid \bs{\alpha} = (\alpha_1, \ldots, \alpha_r) \in \mb{R}_{\geq 0}^r\}\]
with the obvious multiplication and $\mb{R}^r$-grading for which $\bs{u}^{\bs{\alpha}}$ has degree $-\bs{\alpha}$. This defines an equivalence between the category of $\mc{A}$-modules equipped with an exhaustive $\mb{R}^r$-indexed filtration and the category of $\mb{R}^r$-graded $R_{U, \mb{R}}\mc{A}$-modules without $\mb{C}[\bs{u}^{\mb{R}_\geq 0}]$-torsion. By analogy with the definition for filtered modules, we will say that an $\mb{R}^r$-graded $R_{U, \mb{R}}\mc{A}$-module $\mc{N}$ (possibly with $\mb{C}[\bs{u}^{\mb{R}_{\geq 0}}]$-torsion) is \emph{wall and chamber} if, locally on $X$, there exists a set of walls $\mc{W}$ such that
\[ u^{\bs{\beta}} \colon \mc{N}^{\bs{\alpha}} \to \mc{N}^{\bs{\alpha} - \bs{\beta}} \]
is an isomorphism whenever $\bs{\alpha}$ and $\bs{\alpha} - \bs{\beta}$ are in the same chamber of $\mc{W}$. We say that $\mc{N}$ is \emph{good} if in addition, for all $\bs{\alpha} \in \mb{R}^r$, the $\mb{Z}^r$-graded $R_U\mc{A}$-module
\[ \bigoplus_{\bs{n} \in \mb{Z}^r} \mc{N}^{\bs{\alpha} + \bs{n}} \]
is coherent. We define the $\mb{R}^r$-filtered derived category
\[ \mrm{D}_\mb{R}(\mc{A}, U^\bullet\mc{A}) \colonequals \mrm{D}(\Mod_{\mb{R}^r}(R_{U, \mb{R}}\mc{A})) \]
and write
\[ \mrm{D}_{\mb{R}, \mrm{coh}}^b(\mc{A}, U^\bullet\mc{A}) \subset \mrm{D}^b_{\mb{R}, \mrm{wc}}(\mc{A}, U^\bullet\mc{A}) \subset \mrm{D}_{\mb{R}}(\mc{A}, U^\bullet\mc{A}) \]
for the full subcategories of bounded objects whose cohomology sheaves are good wall and chamber modules (resp.\ wall and chamber modules).

As in the $\mb{Z}^r$-indexed case, we will use notation for objects in $\mrm{D}_\mb{R}(\mc{A}, U^\bullet\mc{A})$ as if they were the Rees modules of $\mb{R}^r$-filtered complexes of $\mc{A}$-modules. So we write $(\mc{M}, U^\bullet\mc{M}) \in \mrm{D}_\mb{R}(\mc{A}, U^\bullet\mc{A})$, $R_{U, \mb{R}}\mc{M}$ for the underlying $\mb{R}^r$-graded $R_{U, \mb{R}}\mc{A}$-module, etc.

In the lemma below, we recall that an object $(\mc{M}, U^\bullet\mc{M}) \in \mrm{D}(\mc{A}, U^\bullet\mc{A})$ (resp.\ $\mrm{D}_\mb{R}(\mc{A}, U^\bullet\mc{A})$) is \emph{strict} if the cohomology sheaves $\mc{H}^i(R_U \mc{M})$ (resp.\ $\mc{H}^i(R_{U, \mb{R}}\mc{M})$) are without $\mb{C}[\bs{u}]$-torsion (resp.\ $\mb{C}[\bs{u}^{\mb{R}_{\geq 0}}]$-torsion) for all $i$. Equivalently, $(\mc{M}, U^\bullet\mc{M})$ is strict if the map
\[ \mc{H}^i(U^{\bs{\alpha}}\mc{M}) \to \mc{H}^i(U^{\bs{\beta}}\mc{M}) \]
is injective for all $i$ and all $\bs{\alpha} \geq \bs{\beta}$. In this case, we have an induced filtration $U^\bullet\mc{H}^i(\mc{M})$ such that $U^\bullet\mc{H}^i(\mc{M})$ satisfies that $R_U(\mc{H}^i(\mc{M}))= \mc{H}^i(R_U\mc{M})$ such that
\[ \frac{U^{\bs{\beta}}\mc{H}^i(\mc{M})}{U^{\bs{\alpha}}\mc{H}^i(\mc{M})} = \mc{H}^i(\mrm{Cone}(U^{\bs{\alpha}}\mc{M} \to U^{\bs{\beta}}\mc{M}))\]
for all $\bs{\alpha} \geq \bs{\beta}$.

\begin{lem} \label{lem:derived multivariate}
Let $(\mc{M}, U^\bullet\mc{M}) \in \mrm{D}^b_{\mb{R}, \mrm{coh}}(\ms{D}_X, V^\bullet\ms{D}_X)$. Assume that $\mc{W}$ is a compatible set of walls for $U^\bullet\mc{M}$ and that for $\bs{\alpha} \leq \bs{\beta} \in \mb{R}^r$ separated by a single wall $H = \bs{L}^{-1}(\gamma) \in \mc{W}$, there exist $\gamma_1, \ldots, \gamma_k \in \mb{C}$ with $\Re \gamma_j = \gamma$ such that the operator
\[ \prod_{j = 1}^k (\bs{L}(\bs{s}) + \gamma_j) \colon \mrm{Cone}(U^{\bs{\beta}}\mc{M} \to U^{\bs{\alpha}}\mc{M}) \to \mrm{Cone}(U^{\bs{\beta}}\mc{M} \to U^{\bs{\alpha}}\mc{M}) \]
is zero. Then $(\mc{M}, U^\bullet\mc{M})$ is strict and, for all $i$, the induced filtration $U^\bullet\mc{H}^i(\mc{M})$ is a multivariate $V$-filtration.
\end{lem}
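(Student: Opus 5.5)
The strategy is to reduce strictness to a statement about localisations at maximal ideals of $\mb{C}[\bs{s}]$, as in the proofs of Lemma \ref{lem:localised chambers} and Theorem \ref{thm:multivariate V uniqueness}, and then use the fact that localised filtrations only jump along finitely many walls through one point. Strictness means that $\mc{H}^i(U^{\bs{\alpha}}\mc{M}) \to \mc{H}^i(U^{\bs{\beta}}\mc{M})$ is injective for all $\bs{\alpha} \geq \bs{\beta}$. Working locally, we fix a set of walls $\mc{W}$. By joining $\bs{\beta}$ to $\bs{\alpha}$ by a monotone piecewise linear path, chosen generically so that it never meets two walls at once, it suffices to treat the case where $\bs{\beta} \leq \bs{\alpha}$ are separated by a single wall $H = \bs{L}^{-1}(\gamma)$. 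The points along such a path are decreasing in the partial order, so the maps compose correctly. Within a chamber the maps are isomorphisms, because the cohomology sheaves are wall and chamber $R_{U,\mb{R}}\ms{D}_X$-modules.

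The key steps are as follows.

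\begin{enumerate}
\item Localise everything at a maximal ideal $\mf{m} = (s_1 + \alpha'_1, \ldots, s_r + \alpha'_r)$. Localisation over $\mb{C}[\bs{s}]$ is exact and commutes with cohomology. The $s_i$ act on each $U^{\bs{\alpha}}\mc{M}$, since $s_i \in V^{\bs{0}}\ms{D}_X$, and the maps in the cone are $\mb{C}[\bs{s}]$-linear.
\item Derive the following localised version of Lemma \ref{lem:localised chambers}. If $\bs{\beta} \leq \bs{\alpha}$ are separated by a single wall $H \notin \mc{W}_{\mf{m}}$, then $\prod_j(\bs{L}(\bs{s}) + \gamma_j)$ is invertible after localising at $\mf{m}$ and kills $\mrm{Cone}(U^{\bs{\alpha}}\mc{M} \to U^{\bs{\beta}}\mc{M})$. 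Hence this cone is acyclic after localisation. Consequently, the localised complex $(U^\bullet\mc{M})_{\mf{m}}$ is constant on the chambers of the finite central arrangement $\mc{W}_{\mf{m}}$, up to quasi-isomorphism.
\item To get injectivity across a wall $H \in \mc{W}_{\mf{m}}$, choose a direction $\bs{v} \in \mb{Z}^r_{>0}$. Because $\mc{W}_{\mf{m}}$ is central at $\Re\bs{\alpha}'$, translating by $-N\bs{v}$ for $N \gg 0$ moves any point into the single chamber of $\mc{W}_{\mf{m}}$ that is unbounded below in the direction $-\bs{v}$. In this chamber the localised complex is quasi-isomorphic to $\mc{M}_{\mf{m}}$ itself, by exhaustiveness and the fact that colimits commute with cohomology. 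Thus every map $\mc{H}^i(U^{\bs{\alpha}}\mc{M})_{\mf{m}} \to \mc{H}^i(\mc{M})_{\mf{m}}$ factors as the map to $\mc{H}^i(U^{\bs{\beta}}\mc{M})_{\mf{m}}$ followed by a further map. It therefore suffices to show that $\mc{H}^i(U^{\bs{\alpha}}\mc{M})_{\mf{m}} \to \mc{H}^i(\mc{M})_{\mf{m}}$ is injective.
\item Show this injectivity by an Artin--Rees and $\bs{u}$-torsion argument. Torsion in $\mc{H}^i(R_{U,\mb{R}}\mc{M})$ is a good wall and chamber module. The eigenvalue hypothesis implies that this torsion is supported, after localisation, only on walls through $\Re\bs{\alpha}'$. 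Multiplication by $\bs{u}^{N\bs{v}}$, however, moves between points which are identified in the localised chamber structure by step 2. So a torsion element would have to be killed by an isomorphism, and must vanish.
\end{enumerate}

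Once strictness holds, the induced filtration $U^\bullet\mc{H}^i(\mc{M})$ is a wall and chamber filtration. It is good by coherence of $\mc{H}^i(R_{U,\mb{R}}\mc{M})$ and Lemma \ref{lem: goodness part}. The quotient $U^{\bs{\beta}}\mc{H}^i/U^{\bs{\alpha}}\mc{H}^i$ equals $\mc{H}^i$ of the cone, so it is annihilated by $\prod_j(\bs{L}(\bs{s})+\gamma_j)$. This gives condition \eqref{itm:multivariate V 4} of Definition \ref{defn:multivariate V-filtration}.

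The main obstacle is step 4. Showing that torsion vanishes requires care, because walls through $\Re\bs{\alpha}'$ do genuinely jump after localisation, so step 2 alone does not identify all relevant points. I would first try induction on $r$ and on the number of walls in $\mc{W}_{\mf{m}}$. The single-variable case is the classical statement that an eigenvalue-separated filtered complex is strict. I would reduce to it by restricting to a line $\bs{\beta} + \mb{R}\bs{v}$ transverse to all walls of $\mc{W}_{\mf{m}}$, on which the filtration becomes a $\mb{Z}$-indexed $\ls{\bs{v}}{V}$-type filtration. The eigenvalues of $\bs{L}(\bs{s})$ on the successive cones along this line are then distinct modulo the relevant shifts, so the usual splitting argument gives strictness.
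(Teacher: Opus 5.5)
Your steps 1--3 are sound, and so is your final deduction of the eigenvalue condition from $\mc{H}^i$ of the cone (a slightly more direct route than the paper's appeal to Lemma~\ref{lem:localised chambers}). The genuine gap is step 4, which carries the whole content of the lemma.

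\textbf{Why step 4 fails.} Suppose $\bs{\alpha}$ lies in a chamber $\sigma_{\mf{m}}$ of $\mc{W}_{\mf{m}}$ other than the bottom one. A torsion element $w\in\mc{H}^i(U^{\bs{\alpha}}\mc{M})_{\mf{m}}$ is killed by some $\bs{u}^{N\bs{v}}$ only after its image has crossed walls of $\mc{W}_{\mf{m}}$. Step 2 gives no isomorphism across those walls, so nothing forces $w=0$.

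\textbf{Why the fallback does not repair it.} Along a line, successive walls carry different operators $\bs{L}(\bs{s})$, so there is no single operator whose eigenvalues could separate them. After localising at $\mf{m}$, each wall of $\mc{W}_{\mf{m}}$ has cone killed by a power of $\bs{L}(\bs{s}+\bs{\alpha}')\in\mf{m}$, so no eigenvalue separation remains. Finally, a line $\bs{\beta}+\mb{R}\bs{v}$ with $\bs{v}>0$ meets every intermediate chamber of $\mc{W}_{\mf{m}}$ in a bounded interval. So restricting to lines discards exactly the geometry you need.

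\textbf{The missing idea: a uniform torsion bound combined with the conical shape of the localised chambers.}
\begin{enumerate}
\item Each $\mc{H}^i(R_{U,\bs{\alpha}+\mb{Z}^r}\mc{M})$ is a coherent graded $R_V\ms{D}_X$-module. So, locally on $X$, some $\bs{u}^{\bs{k}}$ kills its $\mb{C}[\bs{u}]$-torsion.
\item This module depends only on the chamber of $\bs{\alpha}$ modulo $\mb{Z}^r$, and there are finitely many such chambers. Hence $\bs{k}$ can be chosen independent of $\bs{\alpha}$. Then $\bs{u}^{\bs{k}}$ kills all torsion in $\mc{H}^i(R_{U,\mb{R}}\mc{M})$, and also in its localisations over $\mb{C}[\bs{s}]$.
\item The chamber $\sigma_{\mf{m}}$ containing $\bs{\beta}$ is a full-dimensional cone with apex $\Re\bs{\alpha}'$. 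So you can take $\bs{\gamma}$ far out along a ray in $\sigma_{\mf{m}}$ with both $\bs{\gamma}$ and $\bs{\gamma}-\bs{k}$ in $\sigma_{\mf{m}}$.
\item Your step 2 then gives isomorphisms $\mc{H}^i(U^{\bs{\beta}}\mc{M})_{\mf{m}}\cong\mc{H}^i(U^{\bs{\gamma}}\mc{M})_{\mf{m}}\cong\mc{H}^i(U^{\bs{\gamma}-\bs{k}}\mc{M})_{\mf{m}}$, where the second map is $\bs{u}^{\bs{k}}$.
\item A torsion $w$ transports to a torsion element $w_{\bs{\gamma}}$. It is annihilated by the isomorphism $\bs{u}^{\bs{k}}$, so $w=0$.
\end{enumerate}

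The uniformity of $\bs{k}$ is essential. The transported element satisfies $\bs{u}^{\bs{n}}w_{\bs{\gamma}}=\bs{u}^{\bs{n}+\bs{\beta}-\bs{\gamma}}w$. So the obvious bound on its torsion exponent grows as $\bs{\gamma}$ moves away from $\bs{\beta}$, and an element-dependent bound would not suffice. With this argument in place, your reductions to a single wall and to injectivity into $\mc{H}^i(\mc{M})_{\mf{m}}$ become unnecessary, though harmless.
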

\begin{proof}
Let us first show that $(\mc{M}, U^\bullet\mc{M})$ is strict, i.e.\ that $\mc{H}^i(R_{U, \mb{R}}\mc{M})$ has no $\mb{C}[\bs{u}^{\mb{R}_{\geq 0}}]$-torsion for all $i$. By assumption, $\mc{H}^i(R_{U, \mb{R}}\mc{M})$ is a good wall and chamber object. In particular, for all $\bs{\alpha} \in \mb{R}^r$, the $\mb{Z}^r$-graded $R_V\ms{D}_X$-module $\mc{H}^i(R_{U, \bs{\alpha} + \mb{Z}^r}\mc{M})$ given by restricting to the graded pieces with grading in $\bs{\alpha} + \mb{Z}^r \subset \mb{R}^r$ is coherent. So for each $\bs{\alpha}$, there exists, locally on $X$, $\bs{k} \in \mb{Z}_{\geq 0}^r$ such that $\bs{u}^{\bs{k}}$ annihilates all $\mb{C}[\bs{u}]$-torsion in $\mc{H}^i(R_{U, \bs{\alpha} + \mb{Z}^r}\mc{M})$. Since $\mc{H}^i(R_{U, \mb{R}}\mc{M})$ is a wall and chamber object, $\mc{H}^i(R_{U, \bs{\alpha} + \mb{Z}^r}\mc{M})$ depends only on the chamber of $\mc{W}$ containing $\bs{\alpha}$ modulo translations by $\mb{Z}^r$. Since there are only finite many of these, we can choose $\bs{k}$ independent of $\bs{\alpha}$, i.e.\ so that $\bs{u}^{\bs{k}}$ annihilates all $\mb{C}[\bs{u}^{\mb{R}_{\geq 0}}]$-torsion in $\mc{H}^i(R_{U, \mb{R}}\mc{M})$. We will deduce from this that in fact no such torsion can exist.

To do so, fix a maximal ideal $\mf{m} \subset \mb{C}[\bs{s}]$. Suppose that we have a torsion element $w \in \mc{H}^i(U^{\bs{\beta}}\mc{M})_{\mf{m}} \subset \mc{H}^i(R_{U, \mb{R}}\mc{M})_{\mf{m}}$ for some $\bs{\beta}\in \mb{R}^r$. Denote by $\sigma_{\mf{m}}$ the chamber of the localised set of walls $\mc{W}_\mf{m}$ containing $\bs{\beta}$. Since $\mc{W}_{\mf{m}}$ consists only of finitely many walls passing through a single point, we can choose $\bs{\gamma}\in \mb{R}^r$ so that both $\bs{\gamma},\bs{\gamma} - \bs{k}$ lie in $\sigma_\mf{m}$. By our assumption on $(\mc{M}, U^\bullet\mc{M})$, the proof of Lemma \ref{lem:localised chambers} gives isomorphisms
\begin{equation} \label{eq:derived multivariate 1}
\mc{H}^i(U^{\bs{\gamma}}\cM)_{\mf{m}} \cong \mc{H}^i(U^{\bs{\beta}}\cM)_{\mf{m}} \cong \mc{H}^i(U^{\bs{\gamma}-\bs{k}}\cM)_{\mf{m}},
\end{equation}
where the map from left to right is $\bs{u}^{\bs{k}}$. In particular, from the first isomorphism, $w$ yields an element $w_{\bs{\gamma}} \in \mc{H}^i(U^{\bs{\gamma}}\mc{M})_{\mf{m}}$ such that $\bs{u}^{\bs{n}}w_{\bs{\gamma}} = \bs{u}^{\bs{n} + \bs{\beta} - \bs{\gamma}}w$ for $\bs{n} \in \mb{Z}_{\geq 0}^r$ sufficiently large. Since $w$ is torsion, we conclude that $w_{\bs{\gamma}}$ is also torsion.
So $\bs{u}^{\bs{k}}w= 0$ and hence $w = 0$ by \eqref{eq:derived multivariate 1}. So $\mc{H}^i(U^{\bs{\beta}}\mc{M})_{\mf{m}}$ is without $\mb{C}[\bs{u}^{\mb{R}_{\geq 0}}]$-torsion for all $\mf{m}$, and hence so is $\mc{H}^i(U^{\bs{\beta}}\mc{M})$ itself. So $(\mc{M}, U^\bullet\mc{M})$ is strict as claimed.

To conclude that $U^\bullet\mc{H}^i(\mc{M})$ is a multivariate $V$-filtration, we note that, by strictness,
\[ R_{U, \mb{R}}\mc{H}^i(\mc{M}) = \mc{H}^i(R_{U, \mb{R}}\mc{M}) \]
is a good wall and chamber object, i.e.\ $U^\bullet\mc{H}^i(\mc{M})$ is a good wall and chamber filtration. We deduce that it is a multivariate $V$-filtration by Lemma \ref{lem:localised chambers}.
\end{proof}

We now consider the behaviour of the multivariate $V$-filtration under direct images. Let us first fix a context in which the relevant direct images make sense. Suppose we are given complex manifolds $X$, $Y$ and $Z$, a divisor $D \subset Z$ with simple normal crossings and a morphism $\pi \colon X \to Y$. Recall that if $\mc{M}$ is a $\ms{D}_{X \times Z}$-module, then the direct image along $\pi \times \id \colon X \times Z \to Y \times Z$ is defined by
\[ (\pi \times \id)_+\mc{M} = \mrm{R}(\pi \times \id)_*(\ms{D}_{Y \times Z \leftarrow X \times Z} \overset{\mrm{L}}\otimes_{\ms{D}_{Y \leftarrow X}} \mc{M}).\]
Since
\[ \ms{D}_{Y \times Z \leftarrow X \times Z} = (\ms{D}_{Y \leftarrow X} \boxtimes \ms{D}_Z) \otimes_{\ms{D}_X \boxtimes \ms{D}_Z} \ms{D}_{X \times Z}\]
this simplifies to 
\[ (\pi \times \id)_+\mc{M} = \mrm{R}(\pi \times \id)_*(\mrm{pr}_1^{-1}\ms{D}_{Y \leftarrow X} \overset{\mrm{L}}\otimes_{\mrm{pr}_1^{-1}\ms{D}_X} \mc{M})\]
where $\mrm{pr}_1$ is the first projection. The latter formula also makes sense at the level of $V$-filtrations. More precisely, define a $\mb{Z}^r$-indexed filtration $V^\bullet\ms{D}_{Y \times Z \leftarrow X \times Z}$ by
\[ V^\bullet\ms{D}_{Y \times Z \leftarrow X \times Z} \colonequals  (\ms{D}_{Y \leftarrow X} \boxtimes V^\bullet\ms{D}_Z) \otimes_{\ms{D}_X \boxtimes \ms{D}_Z} \ms{D}_{X \times Z}.\]
This filtration is compatible in the obvious sense with the $V$-filtrations on $\ms{D}_{X \times Z}$ and $\ms{D}_{Y \times Z}$ (along $X\times D$ and $Y\times D$, respectively) under the actions on the right and left, so we have a functor between filtered derived categories
\[ \pi_+ \colon \mrm{D}(\ms{D}_{X \times Z}, V^\bullet \ms{D}_{X \times Z}) \to \mrm{D}(\ms{D}_{Y \times Z}, V^\bullet\ms{D}_{Y \times Z}) \]
given by
\begin{align*}
 \pi_+(\mc{M}, U^\bullet) &:= \mrm{R}(\pi \times \id)_*((\ms{D}_{Y \times Z \leftarrow X \times Z}, V^\bullet) \overset{\mrm{L}}\otimes_{(\ms{D}_{Y \leftarrow X}, V^\bullet)} (\mc{M}, U^\bullet)) \\
 &= \mrm{R}(\pi \times \id)_*(\mrm{pr}_1^{-1}\ms{D}_{Y \leftarrow X} \overset{\mrm{L}}\otimes_{\mrm{pr}_1^{-1}\ms{D}_X} (\mc{M}, U^\bullet)).
 \end{align*}
Here the derived tensor product of filtered complexes is defined as a derived tensor product of the corresponding Rees modules. This makes sense for both $\mb{Z}^r$-indexed and $\mb{R}^r$-indexed filtrations.

In the analytic setting, it is unfortunately not known in general if the proper direct image of a coherent $\ms{D}$-module is again coherent. This is known to be the case under the technical assumption that the input is a \emph{good} coherent $\ms{D}$-module (see, e.g.\ \cite[Definition 2.2.6]{SabbahnoteDmodule}); we will not recall the definition here, but suffice it to say that the condition is automatic in the algebraic setting for arbitrary coherent $\ms{D}$-modules and in the analytic setting for holonomic $\ms{D}$-modules \cite[Remark 2.2.7]{SabbahnoteDmodule}. Under this assumption, we can show that the multivariate $V$-filtration also behaves well under direct images.

\begin{thm} \label{thm:multivariate V direct image}
Let $X$, $Y$ and $Z$ be complex manifolds, $D \subset Z$ a divisor with simple normal crossings and $\pi \colon X \to Y$ a morphism. Suppose that $\mc{M}$ is a good coherent $\ms{D}_{X \times Z}$-module with a multivariate $V$-filtration $V^\bullet\mc{M}$ along $X \times D$, and $\pi \times \id_Z$ is proper on the support of $\mc{M}$. Then the $\mb{R}^r$-filtered complex $\pi_+(\mc{M}, V^\bullet\mc{M})$ is strict, and the induced filtration $V^{\bullet}\mc{H}^i(\pi_+\mc{M})$ on $\mc{H}^i(\pi_+\mc{M})$ is a multivariate $V$-filtration along $Y \times D$ for all $n$. Moreover, any set of walls for $V^{\bullet}\cM$ is also a set of walls for $V^{\bullet}\cH^i(\pi_{+}\cM)$.
\end{thm}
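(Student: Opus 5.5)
The plan is to reduce everything to Lemma \ref{lem:derived multivariate}, applied to the $\mb{R}^r$-filtered complex $\pi_+(\mc{M}, V^\bullet\mc{M}) \in \mrm{D}_{\mb{R}}(\ms{D}_{Y\times Z}, V^\bullet\ms{D}_{Y\times Z})$ with the set of walls $\mc{W}$ of $V^\bullet\mc{M}$. Two hypotheses of that lemma must be checked. First, the complex lies in $\mrm{D}^b_{\mb{R},\mrm{coh}}$ and is wall and chamber with walls $\mc{W}$. Second, the cones across a single wall are killed by an operator $\prod_j(\bs{L}(\bs{s})+\gamma_j)$. Strictness, and the fact that the induced filtrations on $\mc{H}^i(\pi_+\mc{M})$ are multivariate $V$-filtrations with walls $\mc{W}$, then follow at once.

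For the wall and chamber property, I use that $\pi_+$ is computed through the functor $\mrm{R}(\pi\times\id)_*(\mrm{pr}_1^{-1}\ms{D}_{Y\leftarrow X}\otimes^{\mrm{L}}_{\mrm{pr}_1^{-1}\ms{D}_X}(-))$. This functor acts degreewise on the $\mb{R}^r$-graded Rees module and commutes with the transition maps $\bs{u}^{\bs{\beta}}$. Since these maps are isomorphisms on $R_{V,\mb{R}}\mc{M}$ within a chamber of $\mc{W}$, they stay isomorphisms after applying $\pi_+$. Hence each $\mc{H}^i$ of the Rees complex is wall and chamber with walls $\mc{W}$.

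For the eigenvalue condition, fix $\bs{\alpha}\le\bs{\beta}$ separated by a single wall $\bs{L}^{-1}(\gamma)$. The cone of $\pi_+V^{\bs{\beta}}\mc{M}\to\pi_+V^{\bs{\alpha}}\mc{M}$ is $\pi_+$ of $Q=V^{\bs{\alpha}}\mc{M}/V^{\bs{\beta}}\mc{M}$, regarded as a module over $V^{\bs 0}\ms{D}_{X\times Z}$. The operators $s_i=-\partial_{t_i}t_i$ involve only the $Z$-coordinates, so they commute with the $\ms{D}_X$-action and pass through $\mrm{pr}_1^{-1}\ms{D}_{Y\leftarrow X}\otimes^{\mrm{L}}$ and $\mrm{R}(\pi\times\id)_*$. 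The morphism $\prod_j(\bs{L}(\bs{s})+\gamma_j)$ is zero on $Q$ by Definition \ref{defn:multivariate V-filtration}, so its image under the functor is zero on the cone. I expect some care is needed because a zero morphism of modules gives the zero morphism on $\pi_+$ of the module, which is exactly what is required, so this step should be fine at the level of objects.

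The main obstacle is coherence, namely showing that for each $\bs{\alpha}$ the $\mb{Z}^r$-graded complex $\bigoplus_{\bs n}\pi_+V^{\bs\alpha+\bs n}\mc{M}$ has $R_V\ms{D}_{Y\times Z}$-coherent cohomology. I would first note that $R_{V,\bs\alpha+\mb{Z}^r}\mc{M}$ is a good coherent $R_V\ms{D}_{X\times Z}$-module by Lemma \ref{lem: goodness part}. The filtered direct image is then the direct image for modules over the Rees ring, which is again of $\ms{D}$-module type relative to the $Z$-directions. I would then invoke the relative version of the coherence theorem for proper direct images of good modules, as in \cite[\S 2.2]{SabbahnoteDmodule}. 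This theorem is proved by locally refining the $V$-filtration to a good $V^\bullet\cap F_\bullet$-filtration, which is possible since $V^\bullet\ms{D}$ satisfies Assumption \ref{assumption:ring filtrations}. The Grauert direct image theorem then applies to the coherent $\mc{O}$-modules $\gr$, and properness on the support controls everything. Goodness of $\mc{M}$ is used precisely to get the global good filtration needed in this argument. Once coherence is established, Lemma \ref{lem:derived multivariate} completes the proof.
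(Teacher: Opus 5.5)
Your proposal is correct and follows essentially the same route as the paper. You verify the hypotheses of Lemma~\ref{lem:derived multivariate} for $\pi_+(\mc{M},V^\bullet\mc{M})$: the wall-and-chamber and eigenvalue conditions come from $\pi_+$ acting degreewise on the Rees module and commuting with the $\mb{C}[\bs{s}]$-action, and coherence of each $\mc{H}^i(\pi_+R_{V,\bs{\alpha}+\mb{Z}^r}\mc{M})$ comes from the goodness of $\mc{M}$ via Grauert's theorem, as in Sabbah's notes; the paper cites \cite[Theorem 7.5.2 (1)]{SabbahnoteDmodule} for this step, and the boundedness of cohomology needed to land in $\mrm{D}^b_{\mb{R},\mrm{coh}}$, which you leave implicit, comes out of the same argument.
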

\begin{proof}
We will show that the filtered complex $\pi_+(\mc{M}, V^\bullet\cM)$ satisfies the conditions of Lemma \ref{lem:derived multivariate}. It is clear from the construction that the cohomology sheaves $\mc{H}^i(R_{V, \mb{R}}\pi_+\mc{M})$ are wall and chamber objects such that any set of walls for $V^\bullet\mc{M}$ is a set of walls for $\mc{H}^i(R_{V, \mb{R}}\pi_+\mc{M})$. Note that, by definition, $R_{V, \mb{R}}\pi_+\mc{M}=\pi_+R_{V, \mb{R}}\mc{M}= \pi_{+}(\cM,V^{\bullet}\cM)$. Since the $\ms{D}_{X \times Z}$-module $\mc{M}$ is good, we may argue as in the proof of \cite[Theorem 7.5.2 (1)]{SabbahnoteDmodule} to conclude by Grauert's theorem that $\pi_+R_{V, \mb{R}}\mc{M}$ has bounded cohomology and that for all $\bs{\alpha} \in \mb{R}^r$, each $\mc{H}^i(\pi_+R_{V, \bs{\alpha} + \mb{Z}^r}\mc{M})$ is a coherent graded $R_V\ms{D}_{Y \times Z}$-module. So $\pi_+(\mc{M}, V^\bullet\mc{M}) \in \mrm{D}^b_{\mb{R}, \mrm{coh}}(\ms{D}_{Y \times Z}, V^\bullet\ms{D}_{Y \times Z})$.  Finally, since the direct image is defined by operations linear over $\mb{C}[\bs{s}]$, it follows immediately from Definition \ref{defn:multivariate V-filtration} \eqref{itm:multivariate V 4} that the assumption on the walls in Lemma \ref{lem:derived multivariate} is also satisfied. The conclusion of the theorem now follows.\end{proof}

\subsection{Restricted and localised $V$-filtrations} \label{subsec:extension}

In preparation for our study of multivariate $V$-filtrations on holonomic $\ms{D}$-modules and mixed Hodge modules, in this subsection we study  the behaviour of multivariate $V$-filtrations on localisations $\mc{M}(*D_i)$, and how to reconstruct the multivariate $V$-filtration from its restriction to $\bs{\alpha} \in \mb{R}_{\geq 0}^r$. We also introduce the $V_*$-filtration on a $\ms{D}_X(*D)$-module and explain its relation to the usual multivariate $V$-filtration.

The starting point is the following generalisation of an elementary property of the single-variable $V$-filtration.

\begin{prop} \label{prop:weak specialisation}
Let $V^\bullet\mc{M}$ be a multivariate $V$-filtration and $\bs{\alpha} \in \mb{R}^r$. For $\alpha_i > 0$, the morphism
\begin{equation} \label{eq:weak specialisation 1}
t_i \colon V^{\bs{\alpha}}\mc{M} \to V^{\bs{\alpha} + \bs{1}_i}\mc{M}
\end{equation}
is an isomorphism, and for $\alpha_i < 0$, we have 
\begin{equation} \label{eq:weak specialisation 2}
V^{\bs{\alpha}}\mc{M} = \partial_{t_i}(V^{\bs{\alpha} +  \bs{1}_i}\mc{M}) + V^{\bs{\alpha} + \epsilon \bs{1}_i}\mc{M}.
\end{equation}
\end{prop}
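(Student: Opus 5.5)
The plan is to deduce both statements from the single-variable theory along each component $D_i$, combined with the localisation principle of Lemma \ref{lem:localised chambers}. Two commutation relations do most of the work: $t_i s_i = (s_i + 1)t_i$, and $t_i$, $\partial_{t_i}$ commute with $s_j$ for $j \neq i$. Consequently, for a maximal ideal $\mf{m} = (s_1 + \gamma_1, \ldots, s_r + \gamma_r)$, multiplication by $t_i$ sends $\mc{M}_{\mf{m}}$ to $\mc{M}_{\mf{m}'}$, where $\mf{m}'$ has $\gamma_i$ replaced by $\gamma_i - 1$, and $\partial_{t_i}$ shifts in the opposite direction. Throughout I enlarge the set of walls $\mc{W}$ so that it contains all coordinate hyperplanes $\{x_i = n\}$, $n \in \mb{Z}$. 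This is harmless, and it guarantees that whenever $\gamma_i \in \mb{Z}$ the wall $x_i = \gamma_i$ belongs to $\mc{W}_{\mf{m}}$.

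\emph{Injectivity of \eqref{eq:weak specialisation 1}.} If $t_i w = 0$ with $w \in V^{\bs{\alpha}}\mc{M}$, then $s_i w = -\partial_{t_i} t_i w = 0$. By Proposition \ref{prop:V-filtration for sub divisor} applied to $D' = D_i$, the element $w$ lies in the ordinary $V$-filtration $V^{\alpha_i}_{D_i}\mc{M}$, and $\alpha_i > 0$. The classical single-variable fact is that $t$ is injective on $V^{>0}$: there $\partial_t t$ has generalised eigenvalues with negative real part, so it is invertible, and $w = 0$ follows. I would cite this, or reprove it quickly from Definition \ref{defn:V-filtration}.

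\emph{Surjectivity of \eqref{eq:weak specialisation 1}.} The goal is $V^{\bs{\alpha}+\bs{1}_i}\mc{M} \subset t_i V^{\bs{\alpha}}\mc{M}$.
\begin{enumerate}
\item First, from goodness (Definition \ref{defn: good filtration}(4)) and the identity $V^{\bs{n}}\ms{D}_X = t_i V^{\bs{n}-\bs{1}_i}\ms{D}_X$ for $n_i \geq 1$, one gets $V^{\bs{\beta}}\mc{M} = t_i V^{\bs{\beta}-\bs{1}_i}\mc{M}$ whenever $\beta_i \gg 0$.
\item Next, for $m \in V^{\bs{\alpha}+\bs{1}_i}\mc{M}$ we have $\partial_{t_i} m \in V^{\bs{\alpha}}\mc{M}$ and $t_i\partial_{t_i} m = -(s_i+1)m$. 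Hence the quotient $Q := V^{\bs{\alpha}+\bs{1}_i}\mc{M}/t_iV^{\bs{\alpha}}\mc{M}$ is killed by $s_i + 1$.
\item By Lemma \ref{lem:localisation} it suffices to show $Q_{\mf{m}} = 0$ for $\mf{m}$ with $\gamma_i = 1$. At such $\mf{m}$ the condition $\alpha_i + 1 > 1 = \Re\gamma_i$ should let me use Lemma \ref{lem:localised chambers} to slide $\bs{\alpha}$ in the $+\bs{1}_i$ direction. The idea is to argue, as in the proof of Lemma \ref{lem:multivariate V localisation}, that $(V^{\bs{\alpha}+\bs{1}_i}\mc{M})_{\mf{m}}$ and $(V^{\bs{\alpha}}\mc{M})_{\mf{m}'}$ can be replaced by the pieces at $\bs{\alpha} + N\bs{1}_i$, where step 1 applies.
\end{enumerate}

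\emph{Statement \eqref{eq:weak specialisation 2}.} This is the dual argument. The right-hand side is contained in $V^{\bs{\alpha}}\mc{M}$. On the quotient $V^{\bs{\alpha}}\mc{M}/(\partial_{t_i}V^{\bs{\alpha}+\bs{1}_i}\mc{M} + V^{\bs{\alpha}+\epsilon\bs{1}_i}\mc{M})$, the operator $s_i = -\partial_{t_i}t_i$ acts by zero, because $t_i V^{\bs{\alpha}}\mc{M} \subset V^{\bs{\alpha}+\bs{1}_i}\mc{M}$. On the other hand, this quotient is a quotient of $V^{\bs{\alpha}}\mc{M}/V^{\bs{\alpha}+\epsilon\bs{1}_i}\mc{M}$. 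So localising at any $\mf{m}$ with $\gamma_i = 0$ and using Lemma \ref{lem:localised chambers}, the walls of $\mc{W}_{\mf{m}}$ separating $\bs{\alpha}$ and $\bs{\alpha}+\epsilon\bs{1}_i$ must pass through $\Re\bs{\gamma}$. Since $\alpha_i < 0 = \Re\gamma_i$, I would show by the same sliding argument (now in the $-\bs{1}_i$ direction, using $\partial_{t_i} V^{\bs{n}}\ms{D}_X$ for $n_i \leq -1$) that these localised pieces are generated by $\partial_{t_i}$ of the next piece.

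\emph{Main obstacle.} The hard step is the sliding in surjectivity step 3 and its analogue for \eqref{eq:weak specialisation 2}. For $r > 1$, the ray $\bs{\alpha} + u\bs{1}_i$, $u \geq 0$, can cross walls of $\mc{W}_{\mf{m}}$ with $L_i > 0$ and other coordinates nonzero, so it need not stay in one localised chamber. My first attempt would be to reduce to the single-variable statement along $D_i$ via Proposition \ref{prop:V-filtration for sub divisor}, after first restricting to a direction in which the other coordinates are pushed to $+\infty$ (or to $-\infty$). Failing that, I would handle the crossings one wall at a time, using the fact from Definition \ref{defn:multivariate V-filtration}\eqref{itm:multivariate V 4} that $\bs{L}(\bs{s}) + \gamma'$ is invertible after localisation whenever the wall avoids $\Re\bs{\gamma}$.
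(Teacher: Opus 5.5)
Your framework is right, and several pieces are correct: injectivity via Proposition~\ref{prop:V-filtration for sub divisor} and the single-variable theory works, and step~1 correctly extracts surjectivity for $\beta_i\gg 0$ from goodness. The observations that the relevant quotients are killed by $s_i+1$ (resp.\ $s_i$) also correctly reduce everything to maximal ideals with $\gamma_i=0$ on the source side. (One sign slip: since $t_i\,p(\bs{s})=p(\bs{s}+\bs{1}_i)\,t_i$, multiplication by $t_i$ carries $\mc{M}_{\mf{m}}$ into the localisation where $\gamma_i$ is replaced by $\gamma_i+1$, not $\gamma_i-1$. Your step~3 implicitly uses the correct direction.)

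However, the argument stops exactly at the step that carries all the content, and neither fallback closes it. Your second remedy only controls walls \emph{avoiding} $\Re\bs{\gamma}$, which Lemma~\ref{lem:localised chambers} already makes invisible after localising. The walls that obstruct translation along $\bs{1}_i$ belong to $\mc{W}_{\mf{m}}$. There $\bs{L}(\bs{s})+\gamma'$ is not invertible in $\mb{C}[\bs{s}]_{\mf{m}}$, so the localised filtration can genuinely jump. Your first remedy is circular as suggested. Passing to $V^{\alpha_i}_{D_i}\mc{M}=\bigcup V^{\bs{\alpha}}\mc{M}$ produces some $w\in V^{\alpha_i}_{D_i}\mc{M}$ with $t_iw=m$, but showing $w\in V^{\bs{\alpha}}\mc{M}$ is a strictness statement of exactly the kind you are trying to prove. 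Nor can any translation direction $\bs{v}$ chosen independently of $\bs{\alpha}$ work in general. A wall of $\mc{W}_{\mf{m}}$ with $\bs{L}(\bs{\alpha})\le\bs{L}(\Re\bs{\gamma})$ forces $\bs{L}(\bs{v})\le 0$, while one with $\bs{L}(\bs{\alpha})>\bs{L}(\Re\bs{\gamma})$ forces $\bs{L}(\bs{v})\ge 0$.

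The missing idea is to dilate about the centre of the localised arrangement rather than translate, i.e.\ to take $\bs{v}=\bs{\alpha}-\Re\bs{\gamma}$. Let $\mf{m}$ have $\gamma_i=0$, and let $\mf{m}'$ be the target ideal with $\gamma_i$ replaced by $1$. Every wall of $\mc{W}_{\mf{m}}$ has the form $\bs{L}^{-1}(\bs{L}(\Re\bs{\gamma}))$. So for $T>0$ the point $\bs{\alpha}'=\Re\bs{\gamma}+T(\bs{\alpha}-\Re\bs{\gamma})$ satisfies $\bs{L}(\bs{\alpha}')-\bs{L}(\Re\bs{\gamma})=T\,(\bs{L}(\bs{\alpha})-\bs{L}(\Re\bs{\gamma}))$, and hence lies in the same chamber of $\mc{W}_{\mf{m}}$ as $\bs{\alpha}$. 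Likewise $\bs{\alpha}'+\bs{1}_i$ is the dilate of $\bs{\alpha}+\bs{1}_i$ about $\Re\bs{\gamma}+\bs{1}_i$, the centre for $\mf{m}'$, so these share a chamber of $\mc{W}_{\mf{m}'}$.

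The $i$th coordinate is $\alpha_i'=T\alpha_i$, which tends to $+\infty$ precisely because $\alpha_i>0=\Re\gamma_i$; this is where the hypothesis enters. By Lemma~\ref{lem:localised chambers}, the localised map $t_i\colon(V^{\bs{\alpha}}\mc{M})_{\mf{m}}\to(V^{\bs{\alpha}+\bs{1}_i}\mc{M})_{\mf{m}'}$ coincides with the one at $\bs{\alpha}'$. That map is surjective by your step~1 once $T\gg 0$.

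For \eqref{eq:weak specialisation 2}, the same dilation sends $\alpha_i<0$ to $-\infty$. Because $\mc{W}_{\mf{m}}$ is finite and central, $\bs{\alpha}'+\epsilon\bs{1}_i$ stays in the chamber of $\bs{\alpha}+\epsilon\bs{1}_i$ for small $\epsilon$. Goodness with $\alpha_i'$ below the generating interval then gives the identity at $\bs{\alpha}'$, and hence at $\bs{\alpha}$ after localising at $\mf{m}$.
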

\begin{proof}
First note that if we fix any maximal ideal $\mf{m} = (s_1 + \beta_1, \ldots, s_r + \beta_r) \subset \mb{C}[\bs{s}]$ and set $\mf{m}' = (s_1 + \beta_1, \ldots, s_i + \beta_i + 1, \ldots, s_r + \beta_r)$, then we have morphisms
\begin{equation} \label{eq:weak specialisation 3}
t_i \colon (V^{\bs{\alpha}}\mc{M})_{\mf{m}} \to (V^{\bs{\alpha} + \bs{1}_i}\mc{M})_{\mf{m}'},
\end{equation}
and
\begin{equation} \label{eq:weak specialisation 4}
\partial_{t_i} \colon (V^{\bs{\alpha} + \bs{1}_i}\mc{M})_{\mf{m}'} \to (V^{\bs{\alpha}}\mc{M})_{\mf{m}}.
\end{equation}
If $\beta_i \neq 0$, then $\partial_{t_i}t_i = -s_i$ and $t_i\partial_{t_i} = -s_i - 1$ are invertible in $\mb{C}[\bs{s}]_{\mf{m}}$ and $\mb{C}[\bs{s}]_{\mf{m}'}$ respectively, so \eqref{eq:weak specialisation 3} and \eqref{eq:weak specialisation 4} are isomorphisms for any $\bs{\alpha}$.

Now, to prove that \eqref{eq:weak specialisation 1} is an isomorphism for $\alpha_i > 0$, it suffices to show that \eqref{eq:weak specialisation 3} is an isomorphism for any $\mf{m}$; by the remark above, we need only consider the case $\beta_i = 0$. 
Now, by the definition of good wall and chamber filtrations, there exists $B \in \mb{R}$ (locally on $X$) such that \eqref{eq:weak specialisation 1} (and hence \eqref{eq:weak specialisation 3}) is surjective if we replace $\bs{\alpha}$ with a vector $\bs{\alpha}'$ with $\alpha_i' > B$. Since $\alpha_i > 0$ and $\beta_i = 0$, for $T > 0$ sufficiently large, the vector $\bs{\alpha}' = T(\bs{\alpha} - \Re \bs{\beta}) + \Re \bs{\beta}$ will satisfy this condition. But $\bs{\alpha}$ and $\bs{\alpha}'$ lie in the same chamber of $\mc{W}_{\mf{m}}=\{H \in \mc{W}\mid \Re \bs{\beta}\in H\}$, so we conclude from Lemma \ref{lem:localised chambers} that \eqref{eq:weak specialisation 3} is surjective for $\bs{\alpha}$ itself.

To see that \eqref{eq:weak specialisation 3} is also injective, suppose that $m \in V^{\bs{\alpha}}\mc{M}$ is such that its image in $(V^{\bs{\alpha}}\mc{M})_{\mf{m}}$ satisfies $t_i m = 0$. Consider the submodule $\mc{N} = \ms{D}_X\cdot m \subset \mc{M}$. Since $t_i m = 0$, we clearly have that $V^{\bs{n}} \ms{D}_X \cdot m = 0$ if $n_i > 0$. By Proposition \ref{prop:V image and preimage}, $V^\bullet\mc{N} = V^\bullet\mc{M} \cap \mc{N}$ is a multivariate $V$-filtration on $\mc{N}$. Applying Lemma \ref{lem:multivariate V localisation} with $\mc{F} = \mc{O}_X\cdot m$, we conclude that $(V^{\bs{\alpha}}\mc{N})_\mf{m} = 0$, and hence that $m = 0$ in $(V^{\bs{\alpha}}\mc{M})_{\mf{m}}$.

Now suppose $\alpha_i < 0$ and consider \eqref{eq:weak specialisation 2}. This is clear after localising at any $\mf{m}$ with $\beta_i \neq 0$, since \eqref{eq:weak specialisation 4} is an isomorphism in this case. For the localisation at $\mf{m}$ with $\beta_i = 0$, again by definition of a good wall and chamber filtration there exists $A \in \mb{R}$ (locally on $X$) such that \eqref{eq:weak specialisation 2} holds if we replace $\bs{\alpha}$ with any vector $\bs{\alpha}'$ with $\alpha_i' < B$. Since $\alpha_i < 0$ and $\beta_i = 0$, this is satisfied by $\bs{\alpha}' = T(\bs{\alpha} - \Re \bs{\beta}) + \Re \bs{\beta}$ with $T > 0$ sufficiently large. Since $\bs{\alpha}'$ (resp.\ $\bs{\alpha}' + \epsilon \bs{1}$) lies in the chamber of $\mc{W}_{\mf{m}}$ containing $\bs{\alpha}$ (resp.\ $\bs{\alpha} + \epsilon \bs{1}$) by Lemma \ref{lem:localised chambers}, we conclude that
\[
(V^{\bs{\alpha}}\mc{M})_{\mf{m}} = \partial_{t_i}\left((V^{\bs{\alpha} + \bs{1}_i}\mc{M})_{\mf{m}'}\right) + (V^{\bs{\alpha} + \epsilon\bs{1}_i}\mc{M})_{\mf{m}} = \left(\partial_{t_i}(V^{\bs{\alpha} + \bs{1}_i}\mc{M}) + V^{\bs{\alpha} + \epsilon \bs{1}_i}\mc{M}\right)_{\mf{m}}.
\]
So \eqref{eq:weak specialisation 2} holds after localising at every maximal ideal in $\mb{C}[\bs{s}]$ and hence holds on the nose.
\end{proof}

\begin{rmk}
In the single-variable case, it is elementary to show that the morphism
\[ \partial_t \colon \gr_V^{\alpha + 1}\mc{M} \to \gr_V^\alpha\mc{M}\]
is an isomorphism for $\alpha < 0$; this implies \eqref{eq:weak specialisation 2} but is a priori stronger. We will prove a multivariate generalisation of this stronger statement at the end of this subsection (see Proposition \ref{prop:strong specialisation}).
\end{rmk}

Motivated by Proposition \ref{prop:weak specialisation}, we will sometimes consider multivariate $V$-filtrations in the ``localised'' setting of $\ms{D}_X(*D)$-modules. Consider the $\mb{Z}^r$-indexed filtration on $\ms{D}_X(*D)$ given by
\[ V^{\bs{n}}\ms{D}_X(*D) = t_1^{n_1} \cdots t_r^{n_r} V^{\bs{0}}\ms{D}_X(*D) \quad \text{for $\bs{n} \in \mb{Z}^r$}.\]
It is easy to see that the filtered ring $(\ms{D}_X(*D), V^\bullet\ms{D}_X(*D))$ satisfies Assumption \ref{assumption:ring filtrations}. We make the following definition for a coherent $\ms{D}_X(*D)$-module $\mc{M}$.

\begin{defn}\label{definition: V*}
Let $\mc{M}$ be a coherent $\ms{D}_X(*D)$-module. A \emph{(multivariate) $V_*$-filtration on $\mc{M}$} is an $\mb{R}^r$-indexed wall and chamber filtration $V_*^\bullet\mc{M}$ such that:
\begin{enumerate}
\item $V_*^\bullet\mc{M}$ is good over $V^\bullet\ms{D}_X(*D)$ (Definition \ref{defn: good filtration}).
\item \label{itm:V* 2} If $\mc{W}$ is a set of walls (Definition \ref{defn: set of walls}) for $V_*^\bullet\mc{M}$  and $\bs{\alpha} \leq \bs{\beta} \in \mb{R}^r$ are separated by a single wall $H = \bs{L}^{-1}(\gamma)$ of $\mc{W}$, then there exist $\gamma_1, \ldots, \gamma_k \in \mb{C}$ with $\Re \gamma_j = \gamma$ such that the operator $\prod_{j = 1}^k (\gamma_j + \bs{L}(\bs{s}))$ acts by zero on  $V^{\bs{\alpha}}_{*}\mc{M}/V^{\bs{\beta}}_{*}\mc{M}$. 
\end{enumerate}
\end{defn}

Since $V^{\bs{m}} \ms{D}_X(*D) \cdot V^{\bs{n}} \ms{D}_X(*D) = V^{\bs{m} + \bs{n}} \ms{D}_X(*D)$ as long as $m_i$ and $n_i$ have the same sign for all $i$, one can apply the exact same argument as Theorem \ref{thm:multivariate V uniqueness} to deduce:

\begin{thm} \label{thm:V* uniqueness}
A $V_*$-filtration on a coherent $\ms{D}_X(*D)$-module $\mc{M}$ is unique if it exists.
\end{thm}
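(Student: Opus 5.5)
The plan is to run the proof of Theorem \ref{thm:multivariate V uniqueness} again, with $(\ms{D}_X(*D), V^\bullet\ms{D}_X(*D))$ in place of $(\ms{D}_X, V^\bullet\ms{D}_X)$, and to check that each step carries over. Let $U_*^\bullet\mc{M}$ and $V_*^\bullet\mc{M}$ be two $V_*$-filtrations on $\mc{M}$. The question is local on $X$, so we fix one set of walls $\mc{W}$ serving both filtrations by taking the union of a set of walls for each. We regard $\mc{M}$ as a $\mb{C}[\bs{s}]$-module via $s_i = -\partial_{t_i}t_i$; note that $s_i$ lies in $V^{\bs{0}}\ms{D}_X(*D)$. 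By Lemma \ref{lem:localisation}, it then suffices to show that $(U_*^{\bs{\beta}}\mc{M})_{\mf{m}} = (V_*^{\bs{\beta}}\mc{M})_{\mf{m}}$ for every maximal ideal $\mf{m} = (s_1+\alpha_1,\ldots,s_r+\alpha_r)$ and every $\bs{\beta}\in\mb{R}^r$.

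The first step is the $V_*$-analogue of Lemma \ref{lem:localised chambers}: the localisation $(V_*^\bullet\mc{M})_{\mf{m}}$ is a wall and chamber filtration whose walls are only those in $\mc{W}_{\mf{m}}$, the walls through $\Re\bs{\alpha}$. The proof of Lemma \ref{lem:localised chambers} uses only two facts. First, adjacent chambers are nested, which is Remark \ref{remark: containment of closure}. Second, the eigenvalue condition \eqref{itm:V* 2} makes $\prod_k(\bs{L}_j(\bs{s})+\eta_{jk})$ invertible after localising at $\mf{m}$ whenever the wall does not pass through $\Re\bs{\alpha}$. Neither fact refers to the particular ring, so the argument transfers verbatim.

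The second step is the analogue of Lemma \ref{lem:multivariate V localisation}. Locally, let $\mc{F}$ be an $\mc{O}_X$-coherent subsheaf that generates $\mc{M}$ over $\ms{D}_X(*D)$. Choose $\bs{v}\in\mb{Z}^r$ with all $v_i \neq 0$ such that $\Re\bs{\alpha}+\bs{v}$ lies in the interior of the chamber $\sigma_{\mf{m}}$ of $\mc{W}_{\mf{m}}$ containing $\bs{\beta}$. The claim is that
\[ (V_*^{\bs{\beta}}\mc{M})_{\mf{m}} = (V^{N\bs{v}}\ms{D}_X(*D)\cdot\mc{F})_{\mf{m}} \quad \text{for } N\gg 0. \]
The inclusion $\supseteq$ is proved exactly as before: $\mc{F}\subset V_*^{\bs{\gamma}}\mc{M}$ for some $\bs{\gamma}$, and $\bs{\gamma}+N\bs{v}\in\sigma_{\mf{m}}$ for $N \gg 0$. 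For the inclusion $\subseteq$, use the generation property (4) of goodness on a box $[A,B]^r$. Coherence of $V_*^{\bs{\beta}'}\mc{M}$ over $V^{\bs{0}}\ms{D}_X(*D)$ gives $V_*^{\bs{\beta}'}\mc{M}\subset V^{\bs{k}}\ms{D}_X(*D)\cdot\mc{F}$ for some $\bs{k}$. The key identity needed is
\[ V^{\bs{m}}\ms{D}_X(*D)\cdot V^{\bs{n}}\ms{D}_X(*D)=V^{\bs{m}+\bs{n}}\ms{D}_X(*D) \]
when $m_i$ and $n_i$ have the same signs. Here this holds for all $\bs{m},\bs{n}$, because $V^{\bs{n}}\ms{D}_X(*D)=\bs{t}^{\bs{n}}V^{\bs{0}}\ms{D}_X(*D)$ and $t_i$ normalises $V^{\bs{0}}\ms{D}_X(*D)$. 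So this step is, if anything, easier than in the original case.

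Since the description in the second step depends only on $\mc{F}$, $\bs{v}$ and $\mf{m}$, and not on which $V_*$-filtration we started with, the localisations of $U_*^\bullet\mc{M}$ and $V_*^\bullet\mc{M}$ agree at every $\mf{m}$. Lemma \ref{lem:localisation} then gives $U_*^\bullet\mc{M}=V_*^\bullet\mc{M}$.

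The only point needing genuine care is the transfer of Lemma \ref{lem:localised chambers}. Its converse direction used coherence of the quotient $V_*^{\bs{\beta}}\mc{M}/V_*^{\bs{\gamma}}\mc{M}$ over a ring having $\bs{L}(\bs{s})$ in its centre. However, only the forward direction is needed for uniqueness, and that direction is purely formal.
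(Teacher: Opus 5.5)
Your proposal is correct and matches the paper's proof, which simply observes that $V^{\bs{m}}\ms{D}_X(*D)\cdot V^{\bs{n}}\ms{D}_X(*D)=V^{\bs{m}+\bs{n}}\ms{D}_X(*D)$ for same-sign $\bs{m},\bs{n}$ and then reruns the proof of Theorem \ref{thm:multivariate V uniqueness}: the forward direction of Lemma \ref{lem:localised chambers}, then Lemma \ref{lem:multivariate V localisation}, then Lemma \ref{lem:localisation}. Your step-by-step check makes that ``same argument'' explicit, and your remark that in the localised ring the identity holds for all $\bs{m},\bs{n}$ is also correct, since $t_i^{\pm1}$ normalises $V^{\bs{0}}\ms{D}_X(*D)$.
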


Similarly, direct analogues of Corollary \ref{cor:V strictness} and Theorem \ref{thm:multivariate V direct image} hold for $V_*$-filtrations, with the same proofs.

\begin{notation}\label{notation: V* for arbitrary M}
If $\mc{M}$ is a coherent $\ms{D}_X$-module, we will sometimes write 
\[V^{\bullet}_*\mc{M} \colonequals V^{\bullet}_*\mc{M}(*D)\]
 for the unique $V_*$-filtration on the coherent $\ms{D}_X(*D)$-module $\mc{M}(*D)$, assuming it exists.
\end{notation}

The following observation is often helpful for computing the multivariate $V_*$-filtration.

\begin{prop} \label{prop:good V* criterion}
Let $\mc{M}$ be a coherent $\ms{D}_X(*D)$-module and $U^\bullet \mc{M}$ an exhaustive wall and chamber filtration such that $V^{\bs{n}}\ms{D}_X(*D) \cdot U^{\bs{\alpha}}\mc{M} \subset U^{\bs{\alpha} + \bs{n}}\mc{M}$ for all $\bs{n}, \bs{\alpha}$. Then $U^\bullet \mc{M}$ is good if and only if $U^{\bs{\alpha}}\mc{M}$ is coherent over $V^{\bs{0}}\ms{D}_X(*D) = V^{\bs{0}}\ms{D}_X$ for all $\bs{\alpha} \in \mb{R}^r$.
\end{prop}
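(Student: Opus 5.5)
The forward direction is immediate: condition (3) of Definition \ref{defn: good filtration}, applied over $V^\bullet\ms{D}_X(*D)$, says exactly that each $U^{\bs{\alpha}}\mc{M}$ is coherent over $V^{\bs{0}}\ms{D}_X(*D)$. One identification is needed here. Since $V^{\bs{0}}\ms{D}_X(*D)$ is generated over $\mc{O}_X$ by the $t_i\partial_{t_i}$ and the derivatives in the other coordinates, it coincides with $V^{\bs{0}}\ms{D}_X$. So the real content is the converse.

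For the converse, assume each $U^{\bs{\alpha}}\mc{M}$ is $V^{\bs{0}}\ms{D}_X$-coherent. By Lemma \ref{lem: goodness part}, it is enough to show that for each fixed $\bs{\alpha}$ the $\mb{Z}^r$-indexed filtration $\{U^{\bs{\alpha}+\bs{n}}\mc{M}\}_{\bs{n}}$ is good. The key feature of the localised ring is that each $t_i$ is invertible, with $t_i^{\pm1}\in V^{\pm\bs{1}_i}\ms{D}_X(*D)$. Multiplication by $\bs{t}^{\bs{n}}$ therefore maps $U^{\bs{\beta}}\mc{M}$ into $U^{\bs{\beta}+\bs{n}}\mc{M}$, and $\bs{t}^{-\bs{n}}$ maps it back. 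The two compositions are identities, so
\[ U^{\bs{\alpha}+\bs{n}}\mc{M} = \bs{t}^{\bs{n}}\,U^{\bs{\alpha}}\mc{M}\quad\text{for all } \bs{n}\in\mb{Z}^r. \]
It follows that the Rees module satisfies
\[ R_U\mc{M}\cong R_U\ms{D}_X(*D)\otimes_{V^{\bs{0}}\ms{D}_X}U^{\bs{\alpha}}\mc{M}. \]
More concretely, $R_U\mc{M}$ is the module over $R_U\ms{D}_X(*D)\cong V^{\bs{0}}\ms{D}_X[(\bs{t}\bs{u}^{-1})^{\pm1}]$ generated by $U^{\bs{\alpha}}\mc{M}$ placed in degree $\bs{\alpha}$. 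Note that this ring is a Laurent-type extension in the $t_iu_i^{-1}$ and $u_i$.

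To get coherence of $R_U\mc{M}$, take local generators of $U^{\bs{\alpha}}\mc{M}$ over $V^{\bs{0}}\ms{D}_X$. These generate $R_U\mc{M}$ over $R_U\ms{D}_X(*D)$, so $R_U\mc{M}$ is of finite type. The ring $R_U\ms{D}_X(*D)$ is coherent by Lemma \ref{lemma: coherence of A and rees}. It then remains to show that the relation module is of finite type. I would get this from the $V^{\bs{0}}\ms{D}_X$-coherence of $U^{\bs{\alpha}}\mc{M}$ as follows: in each degree the relations are the $\bs{t}^{\bs{n}}$-translate of the relations in degree $\bs{\alpha}$, which are finitely generated. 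Exhaustiveness is one of the hypotheses, and the inclusions $V^{\bs{n}}\ms{D}_X(*D)\cdot U^{\bs{\beta}}\mc{M}\subset U^{\bs{\beta}+\bs{n}}\mc{M}$ are assumed as well. Condition (4) of Definition \ref{defn: good filtration} can then be obtained from Lemma \ref{lem: goodness part}. Alternatively, it follows directly: modulo $\mb{Z}^r$ there are only finitely many chambers locally, so generation on a box $[0,1]^r$ suffices.

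The step I expect to need care is the identification of $V^{\bs{0}}\ms{D}_X(*D)$ with $V^{\bs{0}}\ms{D}_X$, together with the precise torsion-free/Rees bookkeeping. The translation isomorphism by $\bs{t}^{\bs{n}}$ does the rest.
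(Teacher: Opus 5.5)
Your proof is correct and is essentially the paper's argument: the paper notes that $R_V\ms{D}_X(*D)=V^{\bs{0}}\ms{D}_X[\bs{v},\bs{v}^{-1}]$ with $v_i=u_i/t_i$, so that a $\mb{Z}^r$-graded module over it is coherent if and only if its degree-zero piece is $V^{\bs{0}}\ms{D}_X$-coherent, and then applies Lemma \ref{lem: goodness part}. Your $\bs{t}^{\bs{n}}$-translation and presentation-lifting argument simply spells out that equivalence. One small point: in this Rees ring only the $t_iu_i^{-1}$ are units, not the $u_i$, since $u_i=t_i\cdot(t_iu_i^{-1})^{-1}$ and $t_i$ is not a unit of $V^{\bs{0}}\ms{D}_X$; this plays no role in your argument.
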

\begin{proof}
Observe that the Rees algebra of $V^\bullet\ms{D}_X(*D)$ is given by
\[ R_V\ms{D}_{X}(*D) =\bigoplus_{\bs{n}\in \Z^r} V^{\bs{n}} \ms{D}_X(*D) \bs{u}^{-\bs{n}}=V^{\bs{0}}\ms{D}_X[\bs{v}, \bs{v}^{-1}]\]
where $v_i = u_i/t_i$.  So a $\mb{Z}^r$-graded $R_V\ms{D}_X(*D)$-module is coherent if and only if its zeroth graded piece is a coherent $V^{\bs{0}}\ms{D}_X$-module. The proposition therefore follows from Lemma \ref{lem: goodness part}.
\end{proof}

We have the following corollary of Proposition \ref{prop:weak specialisation}.

\begin{cor} \label{cor:localising V}
Assume that a coherent $\ms{D}_X$-module $\mc{M}$ admits a multivariate $V$-filtration $V^\bullet\mc{M}$. Then the $\ms{D}_X(*D)$-module $\mc{M}(*D)$ admits a multivariate $V_*$-filtration $V_*^\bullet\mc{M}(*D)$ satisfying
\[ V_*^{\bs{\alpha}}\mc{M}(*D) = V^{\bs{\alpha}}\mc{M} \quad \text{for $\bs{\alpha} \in \mb{R}_{> 0}^r$}.\]
\end{cor}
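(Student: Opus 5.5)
The plan is to construct $V_*^\bullet\mc{M}(*D)$ by extending the restriction of $V^\bullet\mc{M}$ to the positive orthant using multiplication by the $t_i$, and then to verify the axioms of Definition \ref{definition: V*}. Concretely, for $\bs{\alpha}\in\mb{R}^r$ choose $\bs{n}\in\mb{Z}^r$ with $\bs{\alpha}+\bs{n}\in\mb{R}^r_{>0}$ and set
\[ V_*^{\bs{\alpha}}\mc{M}(*D) := \bs{t}^{-\bs{n}}\, \mathrm{im}\bigl(V^{\bs{\alpha}+\bs{n}}\mc{M}\to\mc{M}(*D)\bigr). \]

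\textbf{Step 1: the definition is independent of $\bs{n}$.} By Proposition \ref{prop:weak specialisation}, $t_i\colon V^{\bs{\beta}}\mc{M}\to V^{\bs{\beta}+\bs{1}_i}\mc{M}$ is an isomorphism whenever $\beta_i>0$. It follows that increasing $n_i$ does not change the subsheaf. The same injectivity also shows that $V^{\bs{\alpha}}\mc{M}\to\mc{M}(*D)$ is injective for $\bs{\alpha}\in\mb{R}^r_{>0}$. Indeed, a section killed by a power of some $t_i$ gives zero after applying $t_i^k$, and $t_i^k$ is injective on $V^{\bs{\alpha}}\mc{M}$. Hence $V_*^{\bs{\alpha}}=V^{\bs{\alpha}}\mc{M}$ on the positive orthant, which is the claimed formula.

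\textbf{Step 2: filtration and wall-and-chamber properties.} The filtration is decreasing, and it is exhaustive because $\mc{M}(*D)=\bigcup_{\bs{n}}\bs{t}^{-\bs{n}}\mc{M}$ and $V^\bullet\mc{M}$ is exhaustive. For compatibility with $V^{\bs{n}}\ms{D}_X(*D)=\bs{t}^{\bs{n}}V^{\bs{0}}\ms{D}_X$, use two facts: $V^{\bs{0}}\ms{D}_X$ preserves each $V^{\bs{\beta}}\mc{M}$, and $\bs{t}^{-\bs{n}}V^{\bs{0}}\ms{D}_X\bs{t}^{\bs{n}}\subset V^{\bs{0}}\ms{D}_X(*D)$. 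For the second fact, note that conjugating $\partial_{t_i}t_i$ by $t_i^k$ gives $\partial_{t_i}t_i + k$. Walls are the translates by $\mb{Z}^r$ of the walls of $V^\bullet\mc{M}$ meeting the positive orthant, so they form a set of walls.

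\textbf{Step 3: goodness.} Goodness follows from Proposition \ref{prop:good V* criterion}. Each $V^{\bs{\alpha}}_*$ is isomorphic, via $\bs{t}^{\bs{n}}$, to the image of the coherent $V^{\bs{0}}\ms{D}_X$-module $V^{\bs{\alpha}+\bs{n}}\mc{M}$. This isomorphism is twisted by the automorphism $s_i\mapsto s_i-n_i$ of $V^{\bs{0}}\ms{D}_X$, which does not affect coherence.

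\textbf{Step 4: the eigenvalue condition.} Suppose $\bs{\alpha}\le\bs{\beta}$ are separated by a single wall $\bs{L}^{-1}(\gamma)$. Translate both by a common $\bs{n}$ into the positive orthant; the translated points are separated by the single wall $\bs{L}^{-1}(\gamma+\bs{L}(\bs{n}))$. There the quotient is annihilated by a product of factors $\bs{L}(\bs{s})+\gamma_j$ with $\Re\gamma_j=\gamma+\bs{L}(\bs{n})$. Since $\bs{t}^{-\bs{n}}$ intertwines $s_i$ with $s_i+n_i$, this becomes the required annihilator with $\Re=\gamma$.

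The main obstacle is Step 1, specifically injectivity into the localisation and independence of choices. Both rest on Proposition \ref{prop:weak specialisation}; the remaining steps are formal bookkeeping with the shift $s_i\mapsto s_i\pm n_i$.
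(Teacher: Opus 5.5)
Your proposal is correct and follows the paper's proof. The paper defines $V_*^{\bs{\alpha}}\mc{M}(*D)=\bs{t}^{-\bs{n}}V^{\bs{\alpha}+\bs{n}}\mc{M}$ exactly as you do, gets independence of $\bs{n}$ from Proposition~\ref{prop:weak specialisation}, and simply asserts that the axioms of Definition~\ref{definition: V*} clearly hold; your Steps 2--4 spell that verification out.

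One small slip in Step 1: the kernel of $\mc{M}\to\mc{M}(*D)$ consists of sections killed by a power of $t_1\cdots t_r$, not necessarily by a power of a single $t_i$. Injectivity still follows, because $(t_1\cdots t_r)^k$ is a composite of the injective maps $t_i$ between pieces indexed in $\mb{R}^r_{>0}$.
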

\begin{proof}
For $\bs{\alpha} \in \mb{R}^r$, define
\begin{equation}\label{eqn: def of V*} U_*^{\bs{\alpha}}\mc{M}(*D) = t_1^{-n_1}\cdots t_r^{-n_r}V^{\bs{\alpha} + \bs{n}}\mc{M} \end{equation}
for $\bs{n} \in \mb{Z}^r$ such that $\bs{\alpha} + \bs{n} \in \mb{R}_{> 0}^r$; by Proposition \ref{prop:weak specialisation}, this is independent of the choice of such an $\bs{n}$. The filtration $U_*^\bullet \mc{M}(*D)$ clearly satisfies the defining properties of $V_*^\bullet\mc{M}(*D)$, so the corollary follows.
\end{proof}

\begin{remark} \label{rmk:V versus V*}
When $r = 1$ we in fact have $V^\alpha\mc{M}(\ast D)=V_*^\alpha\mc{M}(\ast D)$ for any $\alpha$, provided both filtrations exist. Indeed, by the argument above, this is equivalent to the condition that $t V^\alpha\mc{M}(*D) = V^{\alpha + 1}\mc{M}(*D)$ for all $\alpha \in \mb{R}$; since the statement holds for $\alpha > 0$, this reduces to the claim that $t \colon \gr_V^\alpha \mc{M}(*D) \to \gr_V^{\alpha + 1}\mc{M}(*D)$ is an isomorphism for all $\alpha$, which in turn reduces to the standard fact that $t \colon \gr_V^0 \mc{M}(*D) \to \gr_V^1\mc{M}(*D)$ is an isomorphism. When $r > 1$, however, this usually fails: see Example \ref{example: diagonal embedding} below for an example where $V^{\bs{\alpha}}\mc{M}(*D) \neq V_*^{\bs{\alpha}}\mc{M}(*D)$.
\end{remark}

We will prove a converse to Corollary \ref{cor:localising V} below (see Corollary \ref{cor:extending V*}), which allows us to extend a $V_*$-filtration to a genuine multivariate $V$-filtration. The key technical tool is the following theory of restricted $V$-filtrations. Observe that by Proposition \ref{prop:weak specialisation}, in particular \eqref{eq:weak specialisation 2}, the multivariate $V$-filtration $V^\bullet\mc{M}$ is always generated by its restriction to the positive orthant $\bs{\alpha} \in \mb{R}_{\geq 0}^r$. Our key technical result below characterises precisely when an $\mb{R}_{\geq 0}^r$-indexed filtration is the restriction of a multivariate $V$-filtration on $\mc{M}$.

\begin{defn}
    Let $U_{\mathit{res}}^\bullet\mc{M}$ be a decreasing filtration indexed by $\mb{R}_{\geq 0}^r$. We say that $U_{\mathit{res}}^\bullet\mc{M}$ is a \emph{restricted wall and chamber filtration} if there exists a set of walls $\mc{W}_{\mathit{res}}$ in $\mb{R}^r$ (Definition \ref{defn: set of walls}), such that for every chamber $\sigma$ of $\mc{W}_{\mathit{res}}$, $U_{\mathit{res}}^{\bs{\alpha}}\mc{M}$ is constant for $\bs{\alpha} \in \sigma \cap \mb{R}_{\geq 0}^r$. We call a set $\mc{W}_{\mathit{res}}$ as above a \emph{set of restricted walls}.
\end{defn}

\begin{defn} \label{defn:restricted V-filtration}
We say that a restricted wall and chamber filtration $V^\bullet_{\mathit{res}}\mc{M}$ of $\mc{M}$ is a \emph{restricted multivariate $V$-filtration} if it satisfies:
\begin{enumerate}
\item \label{itm:restricted V-filtration 1} $V^{\bs{0}}_{\mathit{res}}\mc{M}$ generates $\mc{M}$ as a $\ms{D}_X$-module.
\item \label{itm:restricted V-filtration 2} Each $V^{\bs{\alpha}}_{\mathit{res}}\mc{M}$ is a coherent $V^{\bs{0}}\ms{D}_X$-submodule of $\mc{M}$.
\item \label{itm:restricted V-filtration 3} For all $\bs{\alpha} \in \mb{R}_{\geq 0}^r$, we have
\[ t_i (V^{\bs{\alpha}}_{\mathit{res}}\mc{M}) \subset V^{\bs{\alpha} + \bs{1}_i}_{\mathit{res}}\mc{M} \quad \text{and} \quad \partial_{t_i}(V^{\bs{\alpha} + \bs{1}_i}_{\mathit{res}}\mc{M}) \subset V^{\bs{\alpha}}_{\mathit{res}}\mc{M},\]
\item \label{itm:restricted V-filtration 4} There exists $N$ such that if $\alpha_i > N$ then the map
\[ t_i \colon V^{\bs{\alpha}}_{\mathit{res}}\mc{M} \to V^{\bs{\alpha} + \bs{1}_i}_{\mathit{res}}\mc{M} \]
is surjective.
\item \label{itm:restricted V-filtration 5} Locally on $X$, there exists a restricted set of walls $\mc{W}_{\mathit{res}}$ such that if $\bs{\alpha} \leq \bs{\beta}$ are separated by walls $\bs{L}_1^{-1}(\gamma_1)$, $\ldots$, $\bs{L}_N^{-1}(\gamma_N)$ in $\mc{W}_{\mathit{res}}$, then there exist $\gamma_{j, 1}, \ldots, \gamma_{j, M_j} \in \mb{C}$ with $\Re \gamma_{j, k} = \gamma_j$ such that the operator $\prod_{j = 1}^N\prod_{k = 1}^{M_j} (\gamma_{j, k} - \bs{L}_j(\bs{\partial_t t}))$ acts by zero on $V_{\mathit{res}}^{\bs{\alpha}}\mc{M}/V_{\mathit{res}}^{\bs{\beta}}\mc{M}$. 
\end{enumerate}
\end{defn}
\begin{rmk}
In the setting of the usual multivariate $V$-filtration, the analogue of the eigenvalue condition \eqref{itm:restricted V-filtration 5} is imposed only when $\bs{\alpha}$ and $\bs{\beta}$ are separated by a single wall. This implies the more complicated statement here, as, given $\bs{\alpha} \leq \bs{\beta} \in \mb{R}^r$, there exists a sequence $\bs{\gamma}^1 \leq \bs{\gamma}^2 \leq \cdots \leq \bs{\gamma}^k \in \mb{R}^r$ such that $\bs{\gamma}^1$ (resp.\ $\bs{\gamma}^k$) lies in the same chamber as $\bs{\alpha}$ (resp.\ $\bs{\beta}$) and for each $i$, $\bs{\gamma}^i$ and $\bs{\gamma}^{i + 1}$ are separated by a single wall. This can fail in the restricted setting, however: for example, if $r = 2$ and $\mc{W}_{\mathit{res}}$ contains the walls $\{\alpha_1 = 0\}$, $\{\alpha_2 = 0\}$ and $\{\alpha_1 + \alpha_2 = 0\}$, then $(0, 0)$ is separated by at least two walls from all other vectors in $\mb{R}^2_{\geq 0}$. So in this setting we must impose the more complicated condition from the start.
\end{rmk}

It is clear from the definitions (and Proposition \ref{prop:weak specialisation}) that the restriction of a multivariate $V$-filtration is a restricted multivariate $V$-filtration. Our main result on restricted multivariate $V$-filtrations is the following converse.

\begin{proposition} \label{prop:multivariate V extension}
Let $V_{\mathit{res}}^{\bullet}\mc{M}$ be a restricted multivariate $V$-filtration. Then the multivariate $V$-filtration $V^\bullet\mc{M}$ exists, and satisfies $V^{\bs{\alpha}}\mc{M} = V_{\mathit{res}}^{\bs{\alpha}}\mc{M}$ for $\bs{\alpha} \in \mb{R}_{\geq 0}^r$.
\end{proposition}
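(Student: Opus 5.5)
Since Proposition \ref{prop:weak specialisation}, especially \eqref{eq:weak specialisation 2}, says the multivariate $V$-filtration is generated by its restriction to $\mb{R}^r_{\geq 0}$, I will extend $V^\bullet_{\mathit{res}}\mc{M}$ to all of $\mb{R}^r$ by the smallest filtration compatible with $V^\bullet\ms{D}_X$. Set
\[ U^{\bs{\alpha}}\mc{M} := \sum_{\substack{\bs{n} \in \mb{Z}^r,\ \bs{\gamma} \in \mb{R}^r_{\geq 0} \\ \bs{n} + \bs{\gamma} \geq \bs{\alpha}}} V^{\bs{n}}\ms{D}_X \cdot V^{\bs{\gamma}}_{\mathit{res}}\mc{M}, \quad \bs{\alpha} \in \mb{R}^r.\]
I will show that $U^\bullet\mc{M}$ is a multivariate $V$-filtration restricting to $V_{\mathit{res}}^\bullet\mc{M}$. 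Uniqueness then follows from Theorem \ref{thm:multivariate V uniqueness}.

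\emph{Restriction.} For $\bs{\alpha} \geq \bs{0}$ I must show $U^{\bs{\alpha}}\mc{M} = V^{\bs{\alpha}}_{\mathit{res}}\mc{M}$, i.e.\ $V^{\bs{n}}\ms{D}_X \cdot V^{\bs{\gamma}}_{\mathit{res}} \subset V^{\bs{n}+\bs{\gamma}}_{\mathit{res}}$ whenever $\bs{n}+\bs{\gamma}\geq \bs{0}$ (using monotonicity). Locally, $V^{\bs{n}}\ms{D}_X$ is generated over $V^{\bs{0}}\ms{D}_X$ by monomials $\prod t_i^{n_i}$ for $n_i\geq 0$ and $\prod \partial_{t_i}^{-n_i}$ for $n_i<0$. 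So this reduces to condition \eqref{itm:restricted V-filtration 3}, applied one coordinate at a time, together with $V^{\bs{0}}\ms{D}_X$-stability from \eqref{itm:restricted V-filtration 2}. The ordering of factors takes some care but is routine.

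\emph{Goodness and wall and chamber structure.} Exhaustiveness follows from \eqref{itm:restricted V-filtration 1}. For the walls, take $\mc{W} := \mc{W}_{\mathit{res}} \cup \{\alpha_i = k : k \in \mb{Z}\}$. The definition shows $U^{\bs{\alpha}}$ depends only on the chamber data of the points $\bs{\alpha} - \bs{n}$ that lie in $\mb{R}^r_{\geq 0}$, up to integer translates of $\mc{W}_{\mathit{res}}$; I will therefore add all $\mb{Z}^r$-translates, which keeps $\mc{W}$ finite modulo $\mb{Z}^r$. For goodness I will use Lemma \ref{lem: goodness part}. Condition \eqref{itm:restricted V-filtration 4} gives $V^{\bs{\alpha}+\bs{1}_i}_{\mathit{res}} = t_i V^{\bs{\alpha}}_{\mathit{res}}$ for $\alpha_i > N$, so the whole filtration is generated by its restriction to $[0, N+1]^r$, which gives \eqref{itm:good 4}. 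Coherence over $V^{\bs{0}}\ms{D}_X$ follows because only finitely many coherent pieces are involved, together with Remark \ref{remark: coherence of Un over U0}.

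\emph{Eigenvalue condition (main obstacle).} I will verify the criterion of Lemma \ref{lem:localised chambers}: for each maximal ideal $\mf{m}$ with real part $\Re\bs{\beta}$, the localisation $(U^\bullet\mc{M})_\mf{m}$ should be constant on the chambers of $\mc{W}_\mf{m}$. On $\mb{R}^r_{\geq 0}$ this follows from \eqref{itm:restricted V-filtration 5}. Indeed, if $\bs{\alpha}\le\bs{\alpha}'$ in $\mb{R}^r_{\geq0}$ are separated only by walls not through $\Re\bs{\beta}$, then the product operator there is a unit in $\mb{C}[\bs{s}]_\mf{m}$. So $(V_{\mathit{res}}^{\bs{\alpha}})_\mf{m} = (V_{\mathit{res}}^{\bs{\alpha}'})_\mf{m}$. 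Comparable pairs connect any two points of a chamber intersected with the orthant, via their meet or join.

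The hard part is controlling the pieces with negative coordinates. In a coordinate $i$ with $\beta_i \neq 0$, the maps $t_i$ and $\partial_{t_i}$ are isomorphisms after localisation, by the computation at the start of the proof of Proposition \ref{prop:weak specialisation}. Hence $(U^{\bs{\alpha}})_\mf{m}$ can be translated by $\bs{1}_i$ freely and moved into the orthant. In coordinates with $\beta_i = 0$, the walls $\alpha_i=0$ lie in $\mc{W}_\mf{m}$. A chamber of $\mc{W}_\mf{m}$ with $\alpha_i < 0$ should localise to $(\partial_{t_i} U^{\bs{\alpha}+\bs{1}_i})_\mf{m} + (\text{terms with } \alpha_i \text{ increased})_\mf{m}$. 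I would then argue by induction on the number of negative coordinates, using the scaling trick $\bs{\alpha}\mapsto T(\bs{\alpha}-\Re\bs{\beta})+\Re\bs{\beta}$ from Proposition \ref{prop:weak specialisation} to move within a chamber of $\mc{W}_\mf{m}$. If this direct approach gets messy, I would instead prove the wall-crossing annihilation on $U^{\bs{\alpha}}/U^{\bs{\alpha}'}$ directly from the generators. The commutation relation $\bs{L}(\bs{s})\, \partial_{t_i} = \partial_{t_i}(\bs{L}(\bs{s}) - L_i)$ shifts the eigenvalue exactly as the wall shifts under $\bs{\gamma}\mapsto\bs{\gamma}-\bs{1}_i$, so the annihilating products from \eqref{itm:restricted V-filtration 5} transport along the generating maps.
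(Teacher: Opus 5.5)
Your outline is the paper's: the same extension formula for $U^{\bs{\alpha}}\mc{M}$, the same peeling off of $\partial_{t_i}$'s (when $n_i<0$) and $t_i$'s (when $n_i>0$) to get $U^{\bs{\alpha}}\mc{M}=V^{\bs{\alpha}}_{\mathit{res}}\mc{M}$ on the orthant, goodness essentially from the definition, and uniqueness at the end. There is, however, a genuine gap: the two steps that carry the content do not go through as proposed.

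\textbf{The set of walls is wrong.} For fixed $\bs{n}$, the largest admissible $\bs{\gamma}$ is the clamped vector $(\bs{\alpha}-\bs{n})^+$, with coordinates $\max(0,\alpha_i-n_i)$. Hence
\[ U^{\bs{\alpha}}\mc{M}=\sum_{\bs{n}\in\mb{Z}^r}V^{\bs{n}}\ms{D}_X\cdot V_{\mathit{res}}^{(\bs{\alpha}-\bs{n})^+}\mc{M}, \]
and the terms with $\bs{\alpha}-\bs{n}\notin\mb{R}^r_{\geq 0}$ do not drop out. They depend on $\bs{\alpha}$ through a point moving on a face of the orthant. So your claim that $U^{\bs{\alpha}}\mc{M}$ only sees chamber data of those $\bs{\alpha}-\bs{n}$ lying in $\mb{R}^r_{\geq0}$ is false.

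Pulling a restricted wall $\bs{L}^{-1}(\beta)$ back along the clamp gives the truncated walls $(\bs{L}^I)^{-1}(\beta+\bs{L}^I(\bs{n}))$. Here $\bs{L}^I$ keeps only the coefficients $L_i$ with $i\in I=\{i\mid \alpha_i>n_i\}$, and the paper's set of walls consists of all of these. They are in general neither translates of walls in $\mc{W}_{\mathit{res}}$ nor integral coordinate hyperplanes, and they genuinely occur. In Example \ref{example: diagonal embedding}, the jumps along $\alpha_2=k$ in the region $\alpha_1<0$ are truncations of $\alpha_1+\alpha_2=k$. If you replace $\mc{O}_{\mb{C}}$ there by $\mc{O}_{\mb{C}}(*0)x^{\lambda}$ with $\lambda\notin\mb{Z}$, the restricted filtration has walls $\alpha_1+\alpha_2\in\lambda+\mb{Z}$ only. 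Yet $\bigcup_{\alpha_2}V^{\alpha_1,\alpha_2}\mc{M}=V^{\alpha_1}_{D_1}\mc{M}$ jumps at $\alpha_1\in\lambda+\mb{Z}$, which your set does not contain. So Lemma \ref{lem:localised chambers} cannot be run with your $\mc{W}$.

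\textbf{The eigenvalue condition for indices with negative coordinates is not established.} You correctly identify this as the heart of the matter, but neither sketch settles it.
\begin{itemize}
\item The localisation route only handles coordinates with $\beta_i\neq 0$. Invoking the scaling trick of Proposition \ref{prop:weak specialisation} when $\beta_i=0$ is circular: that trick presupposes that $(U^\bullet\mc{M})_{\mf{m}}$ is constant on chambers of $\mc{W}_{\mf{m}}$, which is exactly what you are trying to prove.
\item Transporting the annihilators of condition (5) of Definition \ref{defn:restricted V-filtration} along $t_i$ and $\partial_{t_i}$ only ever produces operators $\prod(\gamma'+\bs{L}(\bs{s}))$ with the full form $\bs{L}$. (Incidentally, the relation is $\bs{L}(\bs{s})\partial_{t_i}=\partial_{t_i}(\bs{L}(\bs{s})+L_i)$.) When the index has been clamped, these operators are attached to hyperplanes that do not pass through $\bs{\alpha}$.
\end{itemize}

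The missing idea is the paper's. For $\alpha_i<0$ one has
\[ U^{\bs{\alpha}}\mc{M}=\partial_{t_i}U^{\bs{\alpha}+\bs{1}_i}\mc{M}+U^{\bs{\alpha}+\bs{1}_i}\mc{M}+U^{\bs{\alpha}-\alpha_i\bs{1}_i}\mc{M}. \]
The image of $\partial_{t_i}$ in $U^{\bs{\alpha}}\mc{M}/U^{\bs{\alpha}+\epsilon\bs{1}}\mc{M}$ is handled by induction on $\lfloor\alpha_i\rfloor$; this is where your commutation relation belongs. The cokernel is a quotient of $U^{\bs{\alpha}-\alpha_i\bs{1}_i}\mc{M}$. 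By induction on the number of negative coordinates, it is annihilated by a product of factors $\gamma'+\bs{L}^{I}(\bs{s})$ for walls through $\bs{\alpha}-\alpha_i\bs{1}_i$.

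The key observation is that $s_i=-\partial_{t_i}t_i$ maps $U^{\bs{\alpha}-\alpha_i\bs{1}_i}\mc{M}$ into $\partial_{t_i}U^{\bs{\alpha}+\bs{1}_i}\mc{M}$, so $s_i$ acts by zero on this cokernel. Hence those factors equal $\gamma'+\bs{L}^{I\setminus\{i\}}(\bs{s})$, and $(\bs{L}^{I\setminus\{i\}})^{-1}(\Re\gamma')$ is a truncated wall through $\bs{\alpha}$. This is exactly where the truncated walls enter the argument, and your proposal has no substitute for it.
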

\begin{proof}
Define an $\mb{R}^r$-indexed filtration $U^\bullet\mc{M}$ by the formula
\begin{equation}\label{eqn: whole V filtration from Vres} U^{\bs{\alpha}}\mc{M} = \sum_{\substack{\bs{n} \in \mb{Z}^r, \bs{\beta} \in \mb{R}_{\geq 0}^r \\ \bs{n} + \bs{\beta} \geq \bs{\alpha}}} V^{\bs{n}}\ms{D}_X \cdot V^{\bs{\beta}}_{\mathit{res}}\mc{M}, \quad \textrm{for $\bs{\alpha} \in \mb{R}^r$}.\end{equation}
We will show that $U^{\bs{\alpha}}\mc{M} = V^{\bs{\alpha}}_{\mathit{res}}\mc{M}$ for $\bs{\alpha} \in \mb{R}_{\geq 0}^r$ and that $U^\bullet\mc{M}$ is a multivariate $V$-filtration.

Consider the first statement. For $\bs{\alpha} \in \mb{R}_{\geq 0}^r$, we clearly have $V^{\bs{\alpha}}_{\mathit{res}}\mc{M} \subset U^{\bs{\alpha}}\mc{M}$. To prove the converse, fix $\bs{\beta} \in \mb{R}_{\geq 0}^r$, $\bs{n} \in \mb{Z}^r$ such that $\bs{\beta} + \bs{n} \geq \bs{\alpha}$; we need to show that $V^{\bs{n}}\ms{D}_X \cdot V_{\mathit{res}}^{\bs{\beta}}\mc{M} \subset V_{\mathit{res}}^{\bs{\alpha}}\mc{M}$. If $n_i < 0$ for some $i$, then $\beta_i \geq \alpha_i - n_i \geq 1$, so
\begin{align*}
V^{\bs{n}}\ms{D}_X \cdot V_{\mathit{res}}^{\bs{\beta}}\mc{M} &= (V^{\bs{n} + \bs{1}_i}\ms{D}_X \cdot \partial_{t_i}+V^{\bs{n}+\bs{1}_i}\sD_X)\cdot V_{\mathit{res}}^{\bs{\beta}}\mc{M} \\
&=V^{\bs{n} + \bs{1}_i}\ms{D}_X \cdot (\partial_{t_i} \cdot V_{\mathit{res}}^{\bs{\beta}}\mc{M}) +V^{\bs{n}+\bs{1}_i}\sD_X\cdot V_{\mathit{res}}^{\bs{\beta}}\mc{M}\\
&\subset V^{\bs{n} + \bs{1}_i}\ms{D}_X \cdot V_{\mathit{res}}^{\bs{\beta} - \bs{1}_i}\mc{M}. \end{align*}
Thus we can reduce to the case where $n_i \geq 0$ for all $i$. Similarly, if $n_i \geq 1$, we have
\[ V^{\bs{n}}\ms{D}_X \cdot V_{\mathit{res}}^{\bs{\beta}}\mc{M} = V^{\bs{n} - \bs{1}_i}\ms{D}_X\cdot (t_i \cdot V_{\mathit{res}}^{\bs{\beta}}\mc{M}) \subset V^{\bs{n} - \bs{1}_i}\ms{D}_X \cdot V_{\mathit{res}}^{\bs{\beta} + \bs{1}_i}\mc{M}.
\]
So we may reduce further to the case $\bs{n} = \bs{0}$, which is clear.

We now prove that $U^\bullet\mc{M}$ is a multivariate $V$-filtration. Since $V^{\bs{0}}_{\mathit{res}}\mc{M}$ generates $\mc{M}$, $U^\bullet\mc{M}$ is exhaustive. If  $U^\bullet\mc{M}$ is a wall and chamber filtration, it is clear from the definition that $U^\bullet\mc{M}$ is good over $V^{\bullet}\sD_X$ as in Definition \ref{defn: good filtration}.  It remains to establish the wall-chamber property and to verify that condition \eqref{itm:multivariate V 4} in Definition \ref{defn:multivariate V-filtration} holds.

First, let us rewrite the definition of $U^\bullet\mc{M}$ slightly. For $\bs{\alpha} \in \mb{R}^r$, let $\bs{\alpha}^+$ be the vector with coordinates $\alpha_i^+ = \max(0, \alpha_i)$. Then 
\[ \sum_{\bs{\beta} \in \mb{R}_{\geq 0}^r,\, \bs{\beta} \geq \bs{\alpha}} V_{\mathit{res}}^{\bs{\beta}}\mc{M} = V_{\mathit{res}}^{\bs{\alpha}^+} \mc{M},\]
and hence
\begin{equation} \label{eq:multivariate V extension 2}
U^{\bs{\alpha}}\mc{M} = \sum_{\bs{n} \in \mb{Z}^r} V^{\bs{n}}\ms{D}_X \cdot V_{\mathit{res}}^{(\bs{\alpha} - \bs{n})^+}\mc{M}.
\end{equation}
Thus, $U^{\bs{\alpha}}\mc{M} \neq U^{\bs{\alpha} + \epsilon \bs{1}}\mc{M}$ only if $V_{\mathit{res}}^{(\bs{\alpha} - \bs{n})^+}\mc{M} \neq V_{\mathit{res}}^{(\bs{\alpha} - \bs{n} + \epsilon \bs{1})^+}\mc{M}$ for some $\bs{n} \in \mb{Z}^r$, which in turn holds only if $(\bs{\alpha} - \bs{n})^+$ lies on a wall in $\mc{W}_{\mathit{res}}$. It follows that $U^{\bs{\alpha}}\mc{M}$ is indeed a wall and chamber filtration, with a set of walls given by
\begin{equation} \label{eq:multivariate V extension 4}
\mc{W} = \{ (\bs{L}^I)^{-1}\left(\beta + \bs{L}^I(\bs{n})\right) \mid \bs{L}^{-1}(\beta) \in \mc{W}_{\mathit{res}}, I \subset \{1, \ldots, r\}, \bs{n} \in \mb{Z}^r\}
\end{equation}
where we write $\bs{L}^I$ for the linear form with coordinates
\[ (L^{I})_{i} = \begin{cases} L_i, & \text{if $i \in I$}, \\ 0, & \text{otherwise}.\end{cases}\]

Finally, we show that $U^\bullet\mc{M}$ satisfies Definition \ref{defn:multivariate V-filtration}, \eqref{itm:multivariate V 4}. Fix $\bs{\alpha} \in \mb{R}^r$ and consider $U^{\bs{\alpha}}\mc{M}/U^{\bs{\alpha} + \epsilon \bs{1}}\mc{M}$. We need to show that this is annihilated by an operator of the form
\begin{equation} \label{eq:multivariate V extension 1}
 \prod_{j = 1}^N\prod_{k = 1}^{M_j} (\gamma_{j, k} + \bs{L}_j(\bs{s}))
\end{equation}
where $\bs{L}_j^{-1}(\gamma_j)$ are the walls of $\mc{W}$ containing $\bs{\alpha}$ and $\Re \gamma_{j, k} = \gamma_j$. If $\bs{\alpha} \in \mb{R}_{\geq 0}^r$, then this holds by definition since $U^{\bs{\alpha}}\mc{M} = V^{\bs{\alpha}}_{\mathit{res}}\mc{M}$. So suppose that $\alpha_i < 0$ for some $i$. We argue inductively on the number of such $i$ and the sizes of the $\lfloor \alpha_i\rfloor$.

Observe that, since $\alpha_i < 0$, we have 
 \begin{equation}\label{eqn: Ualpha with a negative entry} U^{\bs{\alpha}}\mc{M} = \partial_{t_i}(U^{\bs{\alpha} + \bs{1}_i}\mc{M}) + U^{\bs{\alpha} + \bs{1}_i}\mc{M}+ U^{\bs{\alpha} - \alpha_i \bs{1}_i}\mc{M}.\end{equation}
Indeed, it is clear that the right hand side is contained in the left, and by \eqref{eq:multivariate V extension 2} that
\begin{align*}
U^{\bs{\alpha}}\mc{M} &= \sum_{\bs{n} \in \mb{Z}^r, n_i < 0} V^{\bs{n}}\ms{D}_X \cdot V^{(\bs{\alpha} - \bs{n})^+}_{\mathit{res}}\mc{M} + \sum_{\bs{n} \in \mb{Z}^r, n_i \geq 0} V^{\bs{n}}\ms{D}_X \cdot V_{\mathit{res}}^{(\bs{\alpha} - \bs{n})^+}\mc{M} \\
&=  \sum_{\bs{n} \in \mb{Z}^r, n_i < 0} \partial_{t_i}\cdot (V^{\bs{n} + \bs{1}_i}\ms{D}_X \cdot V_{\mathit{res}}^{(\bs{\alpha} - \bs{n})^+}\mc{M}) + \sum_{\bs{n} \in \mb{Z}^r, n_i < 0} V^{\bs{n} + \bs{1}_i}\ms{D}_X \cdot V_{\mathit{res}}^{(\bs{\alpha} - \bs{n})^+}\mc{M} \\
&+\sum_{\bs{n} \in \mb{Z}^r, n_i \geq 0} V^{\bs{n}}\ms{D}_X \cdot V_{\mathit{res}}^{(\bs{\alpha} - \alpha_i \bs{1}_i - \bs{n})^+}\mc{M} \\
&\subset \partial_{t_i}(U^{\bs{\alpha} + \bs{1}_i}\mc{M}) + U^{\bs{\alpha} + \bs{1}_i}\mc{M}+ U^{\bs{\alpha} - \alpha_i \bs{1}_i}\mc{M}.
\end{align*}
By induction on the size of $\lfloor\alpha_i \rfloor$, it is clear that the image of
\begin{equation} \label{eq:multivariate V extension 3}
\partial_{t_i} \colon \frac{U^{\bs{\alpha} + \bs{1}_i}\mc{M}}{U^{\bs{\alpha} + \epsilon\bs{1} + \bs{1}_i}\mc{M}} \to \frac{U^{\bs{\alpha}}\mc{M}}{U^{\bs{\alpha} + \epsilon\bs{1} }\mc{M}}
\end{equation}
is annihilated by an operator of the form \eqref{eq:multivariate V extension 1}. So it remains to check this also for the cokernel of \eqref{eq:multivariate V extension 3}. Since by \eqref{eqn: Ualpha with a negative entry} this cokernel is a quotient of $U^{\bs{\alpha} - \alpha_i \bs{1}_i}\mc{M}$, by induction on the number of $i$ such that $\alpha_i < 0$, it is annihilated by an operator
\[ T = \prod_{j = 1}^N \prod_{k = 1}^{M_j} (\gamma_{j, k} + \bs{L}_{j}^{I_j}(\bs{s})),\]
where the $(\bs{L}_{j}^{I_j})^{-1}(\gamma_j)$ are the walls of $\mc{W}$ containing $\bs{\alpha} - \alpha_i \bs{1}_i$ (here $\bs{L}_j^{-1}(\gamma_j)$ is an integer translate of a wall of $\mc{W}_{\mathit{res}}$ and $I_j \subset \{1, \ldots, r\}$ as in \eqref{eq:multivariate V extension 4}) and $\Re \gamma_{j, k} = \gamma_j$. 
Now, recalling that $s_i = -\partial_{t_i}t_i$, we have
\[ s_i(U^{\bs{\alpha} - \alpha_i \bs{1}_i}\mc{M}) \subset \partial_{t_i}(U^{\bs{\alpha} + (1 - \alpha_i)\bs{1}_i}\mc{M}) \subset \partial_{t_i}(U^{\bs{\alpha} + \bs{1}_i}\mc{M}).\]
So $s_i$ acts by $0$ on the cokernel of \eqref{eq:multivariate V extension 3}. Hence, as operators here, we have
\[ T = \prod_{j = 1}^N \prod_{k = 1}^{M_j} (\gamma_{j, k} + \bs{L}_{j}^{I_j \setminus \{i\}}(\bs{s})).\]
But now we have 
\[ \bs{L}_{j}^{I_j \setminus\{i\}}(\bs{\alpha}) = \bs{L}_{j}^{ I_j \setminus\{i\}}(\bs{\alpha} -\alpha_i \bs{1}_i) = \bs{L}_{j}^{I_j}(\bs{\alpha} -\alpha_i \bs{1}_i)=\gamma_j,\]
so $(\bs{L}_{j}^{I_j \setminus\{i\}})^{-1}(\gamma_j)$ is a wall containing $\bs{\alpha}$, so we are done.
\end{proof}

\begin{corollary}\label{cor: V filtration for vector with integer coefficients}
Let $V^{\bullet}\cM$ be a multivariate $V$-filtration. Then for $\bs{n}\in \Z^r$, one has
\[ V^{\bs{n}}\cM= V^{\bs{n}-\bs{n}^{+}}\sD_X\cdot V^{\bs{n}^{+}}\cM, \quad \textrm{where $\bs{n}^{+}_i=\max(n_i,0)$.}\]
\end{corollary}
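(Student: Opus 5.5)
The plan is to read the statement off Proposition \ref{prop:multivariate V extension}, applied to the restriction of $V^\bullet\cM$ to $\R^r_{\geq 0}$. The restriction of a multivariate $V$-filtration is a restricted multivariate $V$-filtration, by the remark preceding that proposition. Let $U^\bullet\cM$ be the filtration \eqref{eqn: whole V filtration from Vres} built from $V^\bullet_{\mathit{res}}\cM := V^\bullet\cM|_{\R^r_{\geq 0}}$. The proof of Proposition \ref{prop:multivariate V extension} shows that $U^\bullet\cM$ is a multivariate $V$-filtration, so $U^\bullet\cM=V^\bullet\cM$ by Theorem \ref{thm:multivariate V uniqueness}. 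The task is therefore to simplify formula \eqref{eq:multivariate V extension 2} at an integer point $\bs{\alpha}=\bs{n}$:
\[ V^{\bs{n}}\cM=\sum_{\bs{m}\in\Z^r} V^{\bs{m}}\sD_X\cdot V^{(\bs{n}-\bs{m})^+}\cM. \]

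The inclusion $\supset$ is immediate. We have $V^{\bs{n}-\bs{n}^+}\sD_X\cdot V^{\bs{n}^+}\cM\subset V^{\bs{n}}\cM$ by goodness, since $\bs{n}-\bs{n}^+ + \bs{n}^+=\bs{n}$.

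For $\subset$, I would show that each summand $V^{\bs{m}}\sD_X\cdot V^{(\bs{n}-\bs{m})^+}\cM$ lies in $V^{\bs{n}-\bs{n}^+}\sD_X\cdot V^{\bs{n}^+}\cM$. I treat the coordinates one at a time, exactly as in the reduction steps in the first part of the proof of Proposition \ref{prop:multivariate V extension}.

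First take a coordinate $i$ with $m_i>\min(n_i,0)$. If $m_i\ge 1$, factor out $t_i$ using $V^{\bs{m}}\sD_X=V^{\bs{m}-\bs{1}_i}\sD_X\, t_i$. Then use $t_i V^{\bs{\beta}}\cM\subset V^{\bs{\beta}+\bs{1}_i}\cM$, noting that $(\bs{n}-\bs{m})^+ + \bs{1}_i\ge (\bs{n}-\bs{m}+\bs{1}_i)^+$. This lowers $m_i$. If $n_i<0$ and $n_i<m_i\le 0$, simply replace $V^{(\bs{n}-\bs{m})^+}\cM$ by the smaller piece for $(\bs{n}-\bs{m}')^+$ with $m'_i=n_i$. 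Nothing changes in coordinate $i$, since $(n_i-m_i)^+=0$ and $(n_i-m'_i)^+=0$, and $V^{\bs{m}}\sD_X\subset V^{\bs{m}'}\sD_X$ because $\bs{m}'\le\bs{m}$.

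Next take a coordinate with $m_i<\min(n_i,0)$. Here I use $V^{\bs{m}}\sD_X=V^{\bs{m}+\bs{1}_i}\sD_X\partial_{t_i}+V^{\bs{m}+\bs{1}_i}\sD_X$. Then $\partial_{t_i}V^{\bs{\beta}}\cM\subset V^{\bs{\beta}-\bs{1}_i}\cM$ applies, and it is valid because $\beta_i=(n_i-m_i)^+\ge 1$. This raises $m_i$ while keeping $\bs{n}-\bs{m}$ unchanged in coordinate $i$. Iterating over all coordinates reduces every summand to $\bs{m}$ with $m_i=\min(n_i,0)$. That is precisely $\bs{m}=\bs{n}-\bs{n}^+$, with $(\bs{n}-\bs{m})^+=\bs{n}^+$.

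The main obstacle is bookkeeping. The reductions in different coordinates must not interfere. This holds because $V^{\bs{m}}\sD_X$ factors coordinatewise, as in the factorisation $V^{\bs{m}}\sD_X\cdot V^{\bs{n}}\sD_X=V^{\bs{m}+\bs{n}}\sD_X$ for same-sign indices. One must also check that the $\partial_{t_i}$ step only uses $V$-pieces with nonnegative index. This is needed since only the restricted filtration enters, and it is guaranteed by $\beta_i\ge1$.
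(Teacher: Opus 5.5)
Your proposal is correct and is essentially the paper's argument: both compute $V^{\bs n}\cM$ from the explicit formula of Proposition \ref{prop:multivariate V extension}, using the same $t_i$/$\partial_{t_i}$ factorisations of $V^\bullet\sD_X$. The paper organises this as an induction on $\sum_i|n_i|$ via \eqref{eqn: Ualpha with a negative entry}, whereas you keep $\bs n$ fixed and push the summation index $\bs m$ of \eqref{eq:multivariate V extension 2} down to $\bs n-\bs n^+$.

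Two small corrections to your $\partial_{t_i}$ step. First, the $i$th coordinate of $\bs n-\bs m$ does drop by one there; this is compensated by $\partial_{t_i}V^{\bs\beta}\cM\subset V^{\bs\beta-\bs 1_i}\cM$. Second, the leftover term $V^{\bs m+\bs 1_i}\sD_X\cdot V^{\bs\beta}\cM$ should be handled explicitly: it is absorbed because $V^{\bs\beta}\cM\subset V^{\bs\beta-\bs 1_i}\cM$.
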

\begin{proof}
If $n_i\geq 0$, there is nothing to prove. If $n_i<0$, we prove it by induction on $\sum_i |n_i|$. It suffices to show that
\[ V^{\bs{n}}\cM \subset V^{\bs{n}-\bs{n}^{+}}\sD_X\cdot V^{\bs{n}^{+}}\cM\]
By Proposition \ref{prop:multivariate V extension}, $V^{\bullet}\cM$ can be computed using \eqref{eqn: whole V filtration from Vres}. Then by \eqref{eqn: Ualpha with a negative entry}, we have
 \[ V^{\bs{n}}\mc{M} = \partial_{t_i}(V^{\bs{n} + \bs{1}_i}\mc{M}) + V^{\bs{n} + \bs{1}_i}\mc{M}+ V^{\bs{n} - n_i \bs{1}_i}\mc{M}.\]
By induction on $\sum_i |n_i|$ and $(\bs{n}+\bs{1}_i)^{+}=\bs{n}^+$, one has
\begin{align*}
\d_{t_i}(V^{\bs{n}+\bs{1}_i}\cM)+V^{\bs{n} + \bs{1}_i}\mc{M}\subset &\d_{t_i}\cdot V^{\bs{n}-\bs{n}^{+}+\bs{1}_i}\sD_X\cdot V^{\bs{n}^{+}}\cM+V^{\bs{n}-\bs{n}^{+}+\bs{1}_i}\sD_X\cdot V^{\bs{n}^{+}}\cM\\
\subset & V^{\bs{n}-\bs{n}^{+}}\sD_X\cdot V^{\bs{n}^{+}}\cM.\end{align*}
Note that $n_i<0$, so $(\bs{n} - n_i\bs{1}_i)^{+}=\bs{n}^{+}$ and we also have
\[ V^{\bs{n} - n_i \bs{1}_i}\mc{M}=V^{\bs{n} - n_i \bs{1}_i-\bs{n}^{+}}\sD_X\cdot V^{\bs{n}^{+}}\cM\subset V^{\bs{n} -\bs{n}^{+}}\sD_X\cdot V^{\bs{n}^{+}}\cM\]
This finishes the proof.
\end{proof}

As a consequence of Proposition \ref{prop:multivariate V extension}, we have the following criterion for a $\ms{D}$-module to be equal to its localisation along one of the components $D_i \subset D$. 

\begin{prop} \label{prop:* criterion}
Let $\mc{M}$ be a coherent $\ms{D}_X$-module admitting a multivariate $V$-filtration $V^\bullet\mc{M}$. Then $\mc{M} = \mc{M}(*D_i)$ if and only if the morphism
\begin{equation} \label{eq:* criterion 1}
t_i \colon V^{\bs{0}}\mc{M} \to V^{\bs{1}_i}\mc{M}
\end{equation}
is an isomorphism.
\end{prop}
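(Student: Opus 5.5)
The plan is to reduce to the classical single-variable statement along the smooth divisor $D_i$. Proposition \ref{prop:V-filtration for sub divisor}, applied with $D'=D_i$, says that
\[ V_{D_i}^{\alpha}\mc{M} = \bigcup_{\alpha_j,\, j\neq i} V^{\alpha\bs{1}_i+\sum_{j\neq i}\alpha_j\bs{1}_j}\mc{M} \]
is the ordinary $V$-filtration of $\mc{M}$ along $D_i$. For the ordinary $V$-filtration along a smooth divisor we have the classical criterion: $\mc{M}=\mc{M}(*D_i)$ if and only if $t_i\colon V^0_{D_i}\mc{M}\to V^1_{D_i}\mc{M}$ is an isomorphism. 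I would quote this from the standard references on the single-variable theory (e.g.\ \cite{SabbahnoteDmodule}), or reprove it: injectivity of $t_i$ on $V^0_{D_i}$ kills $\Gamma_{D_i}\mc{M}$, and surjectivity together with $V^0_{D_i}$ generating $\mc{M}$ gives surjectivity of $t_i$ on $\mc{M}$. So the real content is to show that the multivariate map \eqref{eq:* criterion 1} is an isomorphism if and only if the single-variable map $t_i\colon V^0_{D_i}\mc{M}\to V^1_{D_i}\mc{M}$ is.

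Direction ``$\Leftarrow$''. Suppose \eqref{eq:* criterion 1} is an isomorphism. I first propagate this to every $\bs{\alpha}$ with $\alpha_i=0$, that is, I show $t_i\colon V^{\bs{\alpha}}\mc{M}\to V^{\bs{\alpha}+\bs{1}_i}\mc{M}$ is an isomorphism. I would check this after localising at each maximal ideal $\mf{m}=(s_j+\beta_j)$ of $\mb{C}[\bs{s}]$, as in the proof of Proposition \ref{prop:weak specialisation}.
\begin{itemize}
\item If $\beta_i\neq 0$, the localised map is an isomorphism for every $\bs{\alpha}$, because $\partial_{t_i}t_i$ and $t_i\partial_{t_i}$ are invertible there.
\item If $\beta_i=0$, then $\bs{\alpha}$ and $\bs{0}$ can be moved within their chambers of $\mc{W}_{\mf{m}}$ using Lemma \ref{lem:localised chambers}. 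I would rather combine this with Corollary \ref{cor: V filtration for vector with integer coefficients} and the formula \eqref{eqn: whole V filtration from Vres}. These express $V^{\bs{\alpha}}\mc{M}$, for $\alpha_i=0$, through $V^{\bs{n}}\ms{D}_X$ with $n_i=0$ (which commutes with $t_i$ up to terms in $V^{\bs{0}}\ms{D}_X$) acting on pieces $V^{\bs{\beta}}$ with $\beta_i=0$ and $\bs{\beta}\geq \bs{0}$.
\item For $\beta_j>0$ with $j\neq i$, the isomorphisms $t_j$ of Proposition \ref{prop:weak specialisation} commute with $t_i$. This reduces the claim to $\bs{\beta}=\bs{0}$, which is the hypothesis.
\end{itemize}
Taking the union over the coordinates $\alpha_j$ with $j\neq i$ then gives that $t_i\colon V^0_{D_i}\to V^1_{D_i}$ is an isomorphism. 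Surjectivity is immediate. Injectivity holds because an element killed by $t_i$ in the union already lies in some single $V^{\bs{\alpha}}$ with $\alpha_i=0$.

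Direction ``$\Rightarrow$''. If $\mc{M}=\mc{M}(*D_i)$, then $t_i$ is injective on $\mc{M}$, so \eqref{eq:* criterion 1} is injective. For surjectivity I would define a candidate filtration $U^{\bs{\alpha}} := t_i^{-1}V^{\bs{\alpha}+\bs{1}_i}\mc{M}$ for $\bs{\alpha}\in\mb{R}^r_{\geq0}$ and prove that it is a restricted multivariate $V$-filtration (Definition \ref{defn:restricted V-filtration}).
\begin{itemize}
\item By Proposition \ref{prop:weak specialisation}, $U^{\bs{\alpha}}=V^{\bs{\alpha}}\mc{M}$ whenever $\alpha_i>0$; only the face $\alpha_i=0$ changes.
\item Coherence over $V^{\bs{0}}\ms{D}_X$ holds because conjugation by $t_i$ preserves $V^{\bs{0}}\ms{D}_X$: it sends $\partial_{t_i}t_i\mapsto\partial_{t_i}t_i+1$ and fixes the other generators.
\item The eigenvalue conditions transform consistently, since $s_i$ shifts by $1$ exactly as the walls shift by $\bs{1}_i$.
\item The compatibility with $\partial_{t_i}$ follows from $\partial_{t_i}t_i^{-1}w=t_i^{-1}(\partial_{t_i}w-t_i^{-1}w)$, together with $t_i^{-1}V^{\bs{\alpha}+2\bs{1}_i}=V^{\bs{\alpha}+\bs{1}_i}$.
\item The generation condition (1) uses $\mc{M}=\bigcup_k t_i^{-k}V^{\bs{0}}\mc{M}$.
\end{itemize}
Proposition \ref{prop:multivariate V extension} and uniqueness (Theorem \ref{thm:multivariate V uniqueness}) then force $U^{\bs{0}}=V^{\bs{0}}\mc{M}$, that is, $t_iV^{\bs{0}}\mc{M}=V^{\bs{1}_i}\mc{M}$.

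The main obstacle is the propagation step in ``$\Leftarrow$''. It must pass from the single index $\bs{0}$ to all $\bs{\alpha}$ on the face $\alpha_i=0$, including coordinates $\alpha_j<0$ where Proposition \ref{prop:weak specialisation} gives only the weak generation statement \eqref{eq:weak specialisation 2}. I expect to control this by the explicit generation formula \eqref{eqn: whole V filtration from Vres} combined with Lemma \ref{lem:localised chambers}.
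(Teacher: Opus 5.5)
Your ``$\Rightarrow$'' direction is the paper's argument: the restricted filtration $t_i^{-1}V^{\bullet+\bs{1}_i}\mc{M}$, followed by Proposition \ref{prop:multivariate V extension} and uniqueness. One small slip there: $\mc{M}=\bigcup_k t_i^{-k}V^{\bs{0}}\mc{M}$ is false in general (take $\mc{M}$ supported on some $D_j$ with $j\neq i$). Generation of $\mc{M}$ by $U^{\bs{0}}$ still holds, because $V^{\bs{0}}\mc{M}\subset U^{\bs{0}}$ and $V^{\bs{0}}\mc{M}$ generates $\mc{M}$.

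The ``$\Leftarrow$'' direction has a genuine gap, in two places.

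\textbf{Surjectivity.} The reduction in your third bullet fails. Proposition \ref{prop:weak specialisation} makes $t_j\colon V^{\bs{\beta}-\bs{1}_j}\mc{M}\to V^{\bs{\beta}}\mc{M}$ an isomorphism only when $\beta_j>1$. So you can push $\beta_j$ into $(0,1]$, but never down to $0$. For $\bs{\beta}=\bs{1}_j$ or $\tfrac12\bs{1}_j$, surjectivity of $t_i\colon V^{\bs{\beta}}\mc{M}\to V^{\bs{\beta}+\bs{1}_i}\mc{M}$ does not follow from the hypothesis at $\bs{0}$ by this route; it is true, but only a posteriori. You also do not need all real $\bs{\alpha}$ with $\alpha_i=0$. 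The integer vectors $\bs{n}\in\mb{Z}^r_{\leq 0}$ with $n_i=0$ are cofinal in the union defining $V^0_{D_i}\mc{M}$, and for them Corollary \ref{cor: V filtration for vector with integer coefficients} gives directly
\[ V^{\bs{n}+\bs{1}_i}\mc{M}=V^{\bs{n}}\ms{D}_X\cdot V^{\bs{1}_i}\mc{M}=V^{\bs{n}}\ms{D}_X\cdot t_iV^{\bs{0}}\mc{M}=t_i\,V^{\bs{n}}\ms{D}_X\cdot V^{\bs{0}}\mc{M}=t_iV^{\bs{n}}\mc{M}. \]
The third equality holds because conjugation by $t_i$ preserves $V^{\bs{n}}\ms{D}_X$ when $n_i=0$. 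This replaces your whole localisation discussion.

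\textbf{Injectivity.} Nothing in your argument gives injectivity. The generation formulas only produce surjectivity. Injectivity of $t_i$ on $V^{\bs{0}}\mc{M}$ says nothing directly about the larger modules $V^{\bs{\alpha}}\mc{M}$ with $\bs{\alpha}\leq\bs{0}$, or about $V^0_{D_i}\mc{M}$. The missing idea, which is how the paper begins, is to run your ``kills $\Gamma_{D_i}\mc{M}$'' argument at the multivariate level:
\begin{itemize}
\item Let $\mc{K}=\ker(\mc{M}\to\mc{M}(*D_i))$. By Proposition \ref{prop:V image and preimage}, it carries the multivariate $V$-filtration $\mc{K}\cap V^\bullet\mc{M}$.
\item The hypothesis and Proposition \ref{prop:weak specialisation} make every $t_i^n$ injective on $V^{\bs{0}}\mc{M}$. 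Since every section of $\mc{K}$ is killed by some power of $t_i$, this gives $V^{\bs{0}}\mc{K}=0$.
\item Iterating \eqref{eq:weak specialisation 2} shows that $V^{\bs{0}}\mc{K}$ generates $\mc{K}$, so $\mc{K}=0$ and $t_i$ is injective on all of $\mc{M}$.
\end{itemize}
With injectivity established this way and the integer-point surjectivity above, your reduction to the single-variable criterion via Proposition \ref{prop:V-filtration for sub divisor} goes through.
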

\begin{proof}
To prove the ``if'' direction, suppose that \eqref{eq:* criterion 1} is an isomorphism. Consider the kernel $\mc{K}$ of the morphism $\mc{M} \to \mc{M}(*D_i)$; this is a coherent submodule of $\mc{M}$ and therefore has a multivariate $V$-filtration $V^{\bs{\alpha}}\mc{K} = V^{\bs{\alpha}}\mc{M} \cap \mc{K}$ by Proposition \ref{prop:V image and preimage}. Since \eqref{eq:* criterion 1} is an isomorphism by assumption, it follows by Proposition \ref{prop:weak specialisation} that $V^{\bs{0}}\mc{M}$ has no $t_i^n$-torsion for any $n > 0$. Since every element of $\mc{K}$ is annihilated by $t_i^n$ for some $n$ by construction, we therefore have $V^{\bs{0}}\mc{K}= 0$ and hence $\mc{K} = 0$ by Proposition \ref{prop:weak specialisation} again. So we have $\mc{M} \subset \mc{M}(*D_i)$, i.e.\ $t_i$ acts injectively on $\mc{M}$.

To conclude that $\mc{M} = \mc{M}(*D_i)$, we claim that the injective morphism
\[ t_i \colon V^{\bs{\alpha}}\mc{M} \to V^{\bs{\alpha}+ \bs{1}_i}\mc{M} \]
is also surjective for all $\bs{\alpha} \in \mb{Z}_{\leq 0}^r$ with $\alpha_i = 0$. Taking the union over all such $\bs{\alpha}$ and applying Proposition \ref{prop:V-filtration for sub divisor}, this implies that
\[ t_i \colon V_{D_i}^0\mc{M} \to V_{D_i}^1\mc{M} \]
is an isomorphism, where $V_{D_i}$ denotes the usual (single variable) $V$-filtration along $D_i$; it is a standard (and elementary) fact that this is equivalent to $\mc{M} = \mc{M}(*D_i)$.

We prove the claim by induction on $\sum_i |\alpha_i|$. The claim is true by assumption when $\bs{\alpha} = \bs{0}$. So suppose by induction that we are given $\bs{\alpha}$ as above and $j \neq i$ with $\alpha_j \leq -1$ such that
\[ t_i \colon V^{\bs{\alpha} + \bs{1}_j}\mc{M} \to V^{\bs{\alpha} + \bs{1}_i + \bs{1}_j}\mc{M} \]
is surjective. Then by \eqref{eq:weak specialisation 2}, we have
\begin{align*}
 V^{\bs{\alpha} + \bs{1}_i}\mc{M} &= \partial_{t_j}(V^{\bs{\alpha} + \bs{1}_i + \bs{1}_j}\mc{M}) + V^{\bs{\alpha} + \bs{1}_i + \epsilon\bs{1}_j}\mc{M}  \\
 &= t_i(\partial_{t_j}(V^{\bs{\alpha} + \bs{1}_j}\mc{M}) + V^{\bs{\alpha} + \epsilon\bs{1}_j}\mc{M}) = t_iV^{\bs{\alpha}}\mc{M}
 \end{align*}
as well. So by induction, the claim is proved.

Now consider the ``only if'' direction, i.e.\ suppose that $\mc{M} = \mc{M}(*D_i)$. Define a restricted wall and chamber filtration on $\mc{M}$ by $U_{\mathit{res}}^{\bs{\alpha}}\mc{M} = t_i^{-1}V^{\bs{\alpha} + \bs{1}_i}\mc{M}$. Clearly $V^{\bs{\alpha}}\mc{M} \subset U_{\mathit{res}}^{\bs{\alpha}}\mc{M}$ for $\bs{\alpha} \in \mb{R}_{\geq 0}$, so in particular $U^{\bs{0}}_{\mathit{res}}\mc{M}$ generates $\mc{M}$ as a $\ms{D}_X$-module since $V^{\bs{0}}\mc{M}$ does (by Proposition \ref{prop:weak specialisation}). So it is immediate from the definitions and Proposition \ref{prop:weak specialisation} that $U_{\mathit{res}}^{\bs{\alpha}}\mc{M}$ is a restricted $V$-filtration on $\mc{M}$. So in fact $U_{\mathit{res}}^{\bs{\alpha}}\mc{M} = V^{\bs{\alpha}}\mc{M}$ for $\bs{\alpha} \in \mb{R}_{\geq 0}^r$ by Proposition \ref{prop:multivariate V extension}. We conclude the desired surjectivity, as $t_i \colon U_{\mathit{res}}^{\bs{0}}\mc{M}=V^{\bs{0}}\cM \to V^{\bs{1}_i}\mc{M}$ is an isomorphism by construction.
\end{proof}

\begin{cor} \label{cor:extending V*}
Assume that $\mc{M}(*D)$ is a coherent $\ms{D}_X(*D)$-module admitting a multivariate $V_*$-filtration $V^{\bullet}_*\mc{M}(*D)$. Then $\mc{M}(*D)$ is coherent as a $\ms{D}_X$-module and admits a multivariate $V$-filtration satisfying
\[ V^{\bs{\alpha}}\mc{M}(*D) = V_*^{\bs{\alpha}}\mc{M}(*D) \quad \text{for $\bs{\alpha} \in \mb{R}^r_{\geq 0}$}.\]
\end{cor}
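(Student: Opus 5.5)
The natural route is to restrict the $V_*$-filtration to the positive orthant, check that this restriction is a restricted multivariate $V$-filtration on $\mc{M}(*D)$ in the sense of Definition \ref{defn:restricted V-filtration}, and then invoke Proposition \ref{prop:multivariate V extension}. Concretely, set $V^{\bs{\alpha}}_{\mathit{res}}\mc{M}(*D) := V_*^{\bs{\alpha}}\mc{M}(*D)$ for $\bs{\alpha} \in \mb{R}^r_{\geq 0}$. This is a restricted wall and chamber filtration, with any set of walls for $V_*^\bullet$ serving as restricted walls. I would then verify the conditions of Definition \ref{defn:restricted V-filtration} one at a time.

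Several conditions are essentially formal. Condition \eqref{itm:restricted V-filtration 2} holds because each $V_*^{\bs{\alpha}}$ is coherent over $V^{\bs{0}}\ms{D}_X(*D) = V^{\bs{0}}\ms{D}_X$ (Proposition \ref{prop:good V* criterion}). For condition \eqref{itm:restricted V-filtration 3}, we have $t_i \in V^{\bs{1}_i}\ms{D}_X(*D)$, and $\partial_{t_i} = t_i^{-1}(t_i\partial_{t_i}) \in V^{-\bs{1}_i}\ms{D}_X(*D)$ because $t_i\partial_{t_i} \in V^{\bs{0}}$. Condition \eqref{itm:restricted V-filtration 4} is automatic, since $t_i$ is invertible with $t_i^{-1} \in V^{-\bs{1}_i}\ms{D}_X(*D)$, so $t_i \colon V_*^{\bs{\alpha}} \to V_*^{\bs{\alpha}+\bs{1}_i}$ is an isomorphism for every $\bs{\alpha}$.

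Condition \eqref{itm:restricted V-filtration 5} follows from Definition \ref{definition: V*}\eqref{itm:V* 2}. Given $\bs{\alpha} \leq \bs{\beta}$ in the orthant, choose a chain $\bs{\gamma}^1 \leq \cdots \leq \bs{\gamma}^k$ in $\mb{R}^r$ (not necessarily in the orthant) whose consecutive terms are separated by single walls, and whose endpoints lie in the chambers of $\bs{\alpha}$ and $\bs{\beta}$. Multiplying the annihilating operators along the chain then gives the required operator, exactly as in the remark following Definition \ref{defn:restricted V-filtration}. Here it matters that the full $V_*$-filtration is defined on all of $\mb{R}^r$.

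The main obstacle is condition \eqref{itm:restricted V-filtration 1}: one must show that $V_*^{\bs{0}}$ generates $\mc{M}(*D)$ over $\ms{D}_X$, and not merely over $\ms{D}_X(*D)$. Since $\mc{M}(*D) = \bigcup_{\bs{n}} t^{-\bs{n}} V_*^{\bs{0}}$ by exhaustivity and goodness, it suffices to show that $t_i^{-1}V_*^{\bs{\alpha}} = V_*^{\bs{\alpha}-\bs{1}_i} \subset \ms{D}_X\cdot V_*^{\bs{0}}$ for all $\bs{\alpha} \geq \bs{0}$ with $\alpha_i \geq 0$, and then induct on the negativity. My plan is to localise at maximal ideals $\mf{m} = (s_j+\beta_j)$ and use Lemma \ref{lem:localised chambers}. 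If $\beta_i \neq -1$, the operator $\partial_{t_i}t_i$ (up to sign and shift) is invertible on the relevant localisation, so $t_i^{-1}w$ lies in $\partial_{t_i}(\cdots)$ of an element already in the $\ms{D}_X$-span. If $\beta_i = -1$, i.e.\ the eigenvalue of $s_i$ is near $1$, so that the localisation involves the walls through a point with real $i$-th coordinate equal to $-1$, I would instead use that $\bs{\alpha}-\bs{1}_i$ and a suitable vector with $i$-th coordinate $-1+\epsilon$ lie in the same localised chamber. This reduces the claim to pieces $V_*^{\bs{\gamma}}$ with $\gamma_i > -1$, and hence, after applying $\partial_{t_i}$ relations, to the orthant. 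Once generation holds, coherence of $\mc{M}(*D)$ over $\ms{D}_X$ follows, since it is generated by the $V^{\bs{0}}\ms{D}_X$-coherent module $V_*^{\bs{0}}$. Proposition \ref{prop:multivariate V extension} then yields the multivariate $V$-filtration, and it agrees with $V_*^\bullet$ on $\mb{R}^r_{\geq 0}$.
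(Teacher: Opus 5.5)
Your overall plan matches the paper's: restrict $V_*^\bullet$ to $\mb{R}^r_{\geq 0}$ and apply Proposition \ref{prop:multivariate V extension}. Your checks of conditions (2)--(5) of Definition \ref{defn:restricted V-filtration} are fine. The gap is condition (1), which you rightly flag as the crux but do not prove.

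First, you misplace the critical case. From $\partial_{t_i}w=-s_i\,(t_i^{-1}w)$, the element $t_i^{-1}w$ lies in $(\ms{D}_X\cdot V_*^{\bs 0})_{\mf m}$ whenever $s_i\notin\mf m$. So the bad localisations are those with $\beta_i=0$, where the walls pass through a point with $i$-th coordinate $0$, not $\beta_i=-1$.

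Second, at those localisations your remedy cannot work. Moving within the $\mc{W}_{\mf m}$-chamber and then ``applying $\partial_{t_i}$ relations'' presupposes $V_*^{\bs\gamma}=\partial_{t_i}V_*^{\bs\gamma+\bs 1_i}+V_*^{\bs\gamma+\epsilon\bs 1_i}$ for $\gamma_i<0$. This holds for a genuine multivariate $V$-filtration by \eqref{eq:weak specialisation 2}, but it is false for $V_*$. In Example \ref{example: diagonal embedding}, $\frac{t_1^{-1}t_2^{-1}}{t_1-t_2}\in V_*^{-1,0}$. However, $\partial_{t_1}V_*^{0,0}+V_*^{-1+\epsilon,0}=\partial_{t_1}V_*^{0,0}+V_*^{0,0}$ is contained in $V^{-1,0}\ms{D}_{\mb{C}^2}\cdot V_*^{0,0}$, which the example shows does not contain it. This obstruction is killed by $s_1$ and by a power of $s_1+s_2-1$, so it lives at $\mf m=(s_1,s_2-1)$. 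There $\beta_1=0$, and the $\mc{W}_{\mf m}$-chamber of $(-1,0)$ is $\{\alpha_1+\alpha_2\leq -1\}$. Moving inside that chamber never reaches the orthant, and peeling in direction $1$ is blocked at this $\mf m$; one must peel in direction $2$ instead.

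The paper avoids proving (1) for $\mc{M}(*D)$ directly. It applies Proposition \ref{prop:multivariate V extension} to $\mc{N}:=\ms{D}_X\cdot V_*^{\bs 0}\mc{M}(*D)$, for which (1) holds tautologically. It then notes that $t_i\colon V^{\bs 0}\mc{N}\to V^{\bs 1_i}\mc{N}$ is an isomorphism, since $t_i^{-1}\in V^{-\bs 1_i}\ms{D}_X(*D)$. Finally it invokes Proposition \ref{prop:* criterion} to get $\mc{N}=\mc{N}(*D)=\mc{M}(*D)$. That proof uses \eqref{eq:weak specialisation 2} for the genuine $V$-filtration of $\mc{N}$ in the directions $j\neq i$, which is exactly what $V_*$ lacks.

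If you want to keep a direct localisation argument, you are missing the following idea. Every wall of $\mc{W}_{\mf m}$ passes through $\Re\bs\beta$, so the points $\Re\bs\beta-T\bs 1$ ($T>0$) all lie in the chamber of $\Re\bs\beta$. Exhaustivity and Lemma \ref{lem:localised chambers} (whose proof applies verbatim to $V_*$) then give $\mc{M}(*D)_{\mf m}=(V_*^{\Re\bs\beta})_{\mf m}$. From $\Re\bs\beta$ you only need to peel in the coordinates $j$ with $\Re\beta_j<0$. Track the shift of the maximal ideal using $\partial_{t_j}g(\bs s)=g(\bs s-\bs 1_j)\partial_{t_j}$. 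Each peeling step then occurs at an ideal whose $j$-th coordinate has negative real part, so $s_j$ is invertible there.
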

\begin{proof}
Define $\mc{N} = \ms{D}_X \cdot V^{\bs{0}}_*\mc{M}(*D) \subset \mc{M}(*D)$; this is coherent by construction. Then $\{V^{\bs{\alpha}}_*\mc{M}(*D)\}_{\bs{\alpha} \in \mb{R}_{\geq 0}^r}$ defines a restricted $V$-filtration on $\mc{N}$, so by Proposition \ref{prop:V image and preimage} we conclude that $\mc{N}$ admits a multivariate $V$-filtration $V^\bullet\mc{N}$ satisfying 
\[ V^{\bs{\alpha}}\mc{N} = \mc{N}\cap V^{\bs{\alpha}}\cM(\ast D)= V_*^{\bs{\alpha}}\mc{M}(*D), \quad \textrm{for $\bs{\alpha} \in \mb{R}_{\geq 0}^r$}.\]
But since $t_i^{-1} \in V^{-\bs{1}_i}\ms{D}_X(*D)$, we have that $t_i \colon V^{\bs{0}}\mc{N} \to V^{\bs{1}_i}\mc{N}$ is an isomorphism for all $i$, by Definition \ref{definition: V*} (1). So by Proposition \ref{prop:* criterion}, we have $\mc{N} = \mc{N}(*D) = \mc{M}(*D)$, so we are done.
\end{proof}

Arguing in a similar way, we have:

\begin{cor} \label{cor:localising at a component}
Let $\mc{M}$ be a coherent $\ms{D}_X$-module admitting a multivariate $V$-filtration along $D$. Then $\mc{M}(*D_i)$ is also coherent and admits a multivariate $V$-filtration along $D$.
\end{cor}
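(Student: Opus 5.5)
The plan is to follow the proof of Corollary \ref{cor:extending V*}, constructing a restricted multivariate $V$-filtration on $\mc{M}(*D_i)$ and then invoking Proposition \ref{prop:multivariate V extension}. By Proposition \ref{prop:weak specialisation}, $t_i \colon V^{\bs{\alpha}}\mc{M} \to V^{\bs{\alpha}+\bs{1}_i}\mc{M}$ is an isomorphism whenever $\alpha_i > 0$. This suggests defining, for $\bs{\alpha} \in \mb{R}^r_{\geq 0}$,
\[ U_{\mathit{res}}^{\bs{\alpha}} := t_i^{-1}\, V^{\bs{\alpha}+\bs{1}_i}\mc{M}' \subset \mc{M}(*D_i), \]
where $\mc{M}'$ is the image of $\mc{M}$ in $\mc{M}(*D_i)$. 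By Proposition \ref{prop:V image and preimage}(a), $\mc{M}'$ carries the multivariate $V$-filtration $V^\bullet\mc{M}' = $ image of $V^\bullet\mc{M}$. The kernel of $\mc{M}\to\mc{M}'$ is $t_i$-torsion, so $t_i$ is injective on $\mc{M}'$ and the formula makes sense. Equivalently, $U^{\bs{\alpha}}_{\mathit{res}} = t_i^{-k}V^{\bs{\alpha}+k\bs{1}_i}\mc{M}'$ for any $k\geq 1$, by Proposition \ref{prop:weak specialisation}.

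Next define $\mc{N} := \ms{D}_X \cdot U^{\bs{0}}_{\mathit{res}} \subset \mc{M}(*D_i)$. This is coherent, because $U^{\bs{0}}_{\mathit{res}} \cong V^{\bs{1}_i}\mc{M}'$ is $V^{\bs{0}}\ms{D}_X$-coherent, being $t_i^{-1}$ times a coherent module. I then check the conditions of Definition \ref{defn:restricted V-filtration} for $U^\bullet_{\mathit{res}}$ on $\mc{N}$.
\begin{enumerate}
\item Coherence over $V^{\bs{0}}\ms{D}_X$ holds because conjugating by $t_i$ preserves $V^{\bs{0}}\ms{D}_X$.
\item The stability under $t_j$ and $\partial_{t_j}$ follows from that of $V^\bullet\mc{M}'$; for $j=i$ one uses $t_i^{-1}\partial_{t_i}t_i = \partial_{t_i} + t_i^{-1}$, and the extra term is harmless because $t_i^{-1}V^{\bs{\alpha}+2\bs{1}_i} = V^{\bs{\alpha}+\bs{1}_i}$ up to the shift.
\item The surjectivity of $t_j$ for large $\alpha_j$ is inherited from $V^\bullet\mc{M}'$.
\item The eigenvalue condition (5) also transfers. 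Conjugation by $t_i$ sends $s_i$ to $s_i+1$ (up to sign convention) and the shift $\bs{\alpha}\mapsto\bs{\alpha}+\bs{1}_i$ shifts each wall $\bs{L}^{-1}(\gamma)$ by $L_i$. These two effects cancel, so the walls are the translates $\mc{W}-\bs{1}_i$, and the required operator is the conjugated one.
\end{enumerate}
Proposition \ref{prop:multivariate V extension} then gives a multivariate $V$-filtration on $\mc{N}$ extending $U^\bullet_{\mathit{res}}$.

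It remains to show $\mc{N} = \mc{M}(*D_i)$, which gives both coherence and existence. By construction, $t_i \colon U^{\bs{0}}_{\mathit{res}} \to U^{\bs{1}_i}_{\mathit{res}}$ is the isomorphism $t_i^{-1}V^{\bs{1}_i}\mc{M}' \to t_i^{-1}V^{\bs{2}_i}\mc{M}'$ induced by $t_i\colon V^{\bs{1}_i}\mc{M}'\cong V^{2\bs{1}_i}\mc{M}'$. Proposition \ref{prop:* criterion} therefore yields $\mc{N} = \mc{N}(*D_i)$. Moreover, $\mc{N}$ contains $V^{\bs{0}}\mc{M}'$ (indeed $V^{\bs{0}}\mc{M}' \subset t_i^{-1}V^{\bs{1}_i}\mc{M}'$ since $t_iV^{\bs{0}} \subset V^{\bs{1}_i}$), and $V^{\bs{0}}\mc{M}'$ generates $\mc{M}'$ by Proposition \ref{prop:weak specialisation}. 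Hence $\mc{N} \supset \mc{M}'$, and so $\mc{N} = \mc{N}(*D_i) \supset \mc{M}'(*D_i) = \mc{M}(*D_i)$.

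I expect the main obstacle to be verifying condition (5), including the bookkeeping of how walls with $L_i\neq 0$ transform. The safest route is via Lemma \ref{lem:localised chambers}-style localisation: conjugation by $t_i$ identifies localisation at $\mf{m}$ with localisation at the shifted maximal ideal $\mf{m}'$, and this identification carries the localised wall structure along.
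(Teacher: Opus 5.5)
Your proposal is correct and is essentially the paper's argument: the paper obtains this corollary ``arguing in a similar way'' to Corollary \ref{cor:extending V*}. That is, it puts the restricted filtration $t_i^{-1}V^{\bullet+\bs{1}_i}$ (the one used in the proof of Proposition \ref{prop:* criterion}) on the $\ms{D}_X$-submodule generated by its $\bs{0}$-piece, extends it via Proposition \ref{prop:multivariate V extension}, and identifies that submodule with $\mc{M}(*D_i)$ using Proposition \ref{prop:* criterion}. Your passage to the $t_i$-torsion-free image $\mc{M}'$ and your translation of the walls by $-\bs{1}_i$ are the bookkeeping the paper leaves implicit. The coherence of the generated submodule is asserted rather than proved, both in your proposal and in the paper.
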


\begin{example}\label{example: diagonal embedding}
Let us illustrate the passage between $V_{\mathit{res}}^\bullet$, $V^\bullet$ and $V_*^\bullet$ in an example. Consider the case where $X = \mb{C}^2$, $t_1$ and $t_2$ are the coordinates and
\[ \mc{M} = \frac{\mc{O}_{\mb{C}^2}[(t_1 - t_2)^{-1}]}{\mc{O}_{\mb{C}^2}} \]
is the direct image of the structure sheaf on $\mb{C}$ under the diagonal embedding. We claim that
\[ V^{\alpha_1, \alpha_2}_{\mathit{res}}\mc{M} \colonequals \mc{O}_{\mb{C}^2}\textnormal{-span}\left\{\left.\frac{t_1^m t_2^n}{(t_1 - t_2)^k} \, \right|\, m + n - k \geq \alpha_1 + \alpha_2 - 2, \,m, n, k \in \mb{Z}_{\geq 0}\right\}\]
defines a restricted multivariate $V$-filtration on $\mc{M}$, along $D=\textrm{div}(t_1t_2)$. To see this, let us check the conditions of Definition \ref{defn:restricted V-filtration}. First, note that $V^\bullet_{\mathit{res}}\mc{M}$ is a restricted wall and chamber filtration with restricted set of walls
\[ \mc{W}_{\mathit{res}} = \{H_n = \{\alpha_1 + \alpha_2 = n\} \mid n \in \mb{Z}_{\geq 0}\}.\]
We have
\[ \mc{M} = \ms{D}_{\mb{C}^2} \cdot \frac{1}{(t_1 - t_2)^2} = \ms{D}_{\mb{C}^2} \cdot V^{0, 0}_{\mathit{res}}\mc{M},\]
so \eqref{itm:restricted V-filtration 1} is satisfied. Moreover,
\[ V^{\alpha_1, \alpha_2}_{\mathit{res}}\mc{M} = \begin{cases} V^{0, 0}\ms{D}_{\mb{C}^2}  \cdot \frac{t_1^{\lceil \alpha_1 + \alpha_2\rceil - 1}}{t_1 - t_2}, & \text{if $\alpha_1 + \alpha_2 > 0$}, \\ V^{0, 0}\ms{D}_{\mb{C}^2}\cdot \frac{1}{(t_1 - t_2)^2}, & \text{if $\alpha_1 = \alpha_2 = 0$},\end{cases} \]
is a coherent $V^{0, 0}\ms{D}_{\mb{C}^2}$-module, so \eqref{itm:restricted V-filtration 2} is also satisfied. Property \eqref{itm:restricted V-filtration 3} is immediate. To see \eqref{itm:restricted V-filtration 4}, it suffices to check that for $\alpha_2 > 1$,
\[ \frac{t_1^m}{(t_1 - t_2)^k} \in t_2V_{\mathit{res}}^{\alpha_1, \alpha_2 - 1}\mc{M} \]
as long as $m - k \geq \alpha_1 + \alpha_2 - 2$, and similarly with $t_1$ and $t_2$ interchanged. This follows from the calculation
\begin{align*}
\frac{t_1^m}{(t_1 - t_2)^k} = \frac{((t_1 - t_2) + t_2)^m}{(t_1 - t_2)^k} & = \sum_{i = 0}^m \binom{m}{i} \frac{(t_1 - t_2)^it_2^{m - i}}{(t_1 - t_2)^k} \\
&= t_2 \sum_{i = 0}^{k - 1} \binom{m}{i} \frac{t_2^{m - i - 1}}{(t_1 - t_2)^{k - i}} \in t_2 V^{\alpha_1, \alpha_2 - 1}_{\mathit{res}}\mc{M}.
\end{align*}
Here we have used the fact that $m - k \geq \alpha_1 + \alpha_2 - 2 > - 1$ and hence $m \geq k$. Finally, to see \eqref{itm:restricted V-filtration 5}, note that if $(\alpha_1, \alpha_2) \leq (\beta_1, \beta_2)$ are separated by walls $H_i, H_{i + 1}, \cdots, H_{j}$, then $V^{\alpha_1, \alpha_2}_{\mathit{res}}\mc{M}/V^{\beta_1, \beta_2}_{\mathit{res}}\mc{M}$ is spanned over $\mb{C}$ by
\[\left\{ \frac{t_1^mt_2^n}{(t_1 - t_2)^k} \mid i - 2 \leq m + n - k  \leq j - 2\right\}.\]
Since
\[ (s_1 + s_2)\frac{t_1^mt_2^n}{(t_1 - t_2)^k} = -(t_1 \partial_{t_1} + t_2 \partial_{t_2} + 2)\frac{t_1^mt_2^n}{(t_1 - t_2)^k} = -(m + n - k + 2) \frac{t_1^m t_2^n}{(t_1 - t_2)^k}, \]
it follows that $V^{\alpha_1, \alpha_2}_{\mathit{res}}\mc{M}/V^{\beta_1, \beta_2}_{\mathit{res}}\mc{M}$ is annihilated by $(s_1 + s_2 + i) \cdots (s_1 + s_2 + j)$ and hence \eqref{itm:restricted V-filtration 5} is satisfied.

According to Proposition \ref{prop:multivariate V extension}, the full multivariate $V$-filtration $V^\bullet\mc{M}$ therefore exists and satisfies
\[ V^{\alpha_1, \alpha_2}\mc{M} = V^{\alpha_1, \alpha_2}_{\mathit{res}}\mc{M} \quad \text{for $\alpha_1 \geq 0$ and $\alpha_2 \geq 0$}.\]
The proof of the proposition provides the set of walls
\[ \mc{W} = \{ \{\alpha_1 + \alpha_2 = n\} \mid n \in \mb{Z} \} \cup \{\{\alpha_1 = n\} \mid n \in \mb{Z}\}\cup \{\{\alpha_2 = n\} \mid n \in \mb{Z}\}.\]
Figure \ref{fig:diagonal embedding} depicts the set of walls $\mc{W}$ (in dashed lines) together with the actual points at which $V^\bullet\mc{M}$ jumps (solid lines) and generators for $V^{\alpha_1, \alpha_2}\mc{M}$ over $V^{0, 0}\ms{D}_{\mb{C}^2}$ in the different regions.

\begin{figure}

\begin{tikzpicture}
\draw[thick] (3, 5) -- (5, 3);
\draw[thick] (1, 5) -- (5, 1);
\draw[thick] (-1, 5) -- (5, -1);
\draw[thick] (-3, 5) -- (5, -3);
\draw[thick] (-5, 5) -- (5, -5);
\draw[thick] (-5, 3) -- (3, -5);
\draw[thick] (-5, 1) -- (1, -5);
\draw[thick] (-5, -1) -- (-1, -5);
\draw[thick] (-5, -3) -- (-3, -5);

\draw[thick] (-5, 4) -- (0, 4);
\draw[dashed, opacity=0.5] (0, 4) -- (5, 4);
\draw[thick] (-5, 2) -- (0, 2);
\draw[dashed, opacity=0.5] (0, 2) -- (5, 2);
\draw[dashed, opacity=0.5] (-5, 0) -- (5, 0);
\draw[dashed, opacity=0.5] (-5, -2) -- (5, -2);
\draw[dashed, opacity=0.5] (-5, -4) -- (5, -4);

\draw[dashed, opacity=0.5] (4, 5) -- (4, 0);
\draw[thick] (4, 0) -- (4, -5);
\draw[dashed, opacity=0.5] (2, 5) -- (2, 0);
\draw[thick] (2, 0) -- (2, -5);
\draw[dashed, opacity=0.5] (0, 5) -- (0, -5);
\draw[dashed, opacity=0.5] (-2, 5) -- (-2, -5);
\draw[dashed, opacity=0.5] (-4, 5) -- (-4, -5);

\draw[fill=black] (0, 0) circle (0.05);
\node at (0.35, 0.25) {\tiny $(0, 0)$};

\node at (3.5, 3.5) {\small $\frac{t_1^3}{t_1 - t_2}$};
\node at (2.5, 2.5) {\small $\frac{t_1^2}{t_1 - t_2}$};
\node at (1.5, 1.5) {\small $\frac{t_1}{t_1 - t_2}$};
\node at (0.5, 0.75) {\small $\frac{1}{t_1 - t_2}$};
\node at (-0.6, -0.5) {\small $\frac{1}{(t_1 - t_2)^2}$};
\node at (-1.4, -1.5) {\small $\frac{1}{(t_1 - t_2)^3}$};
\node at (-2.6, -2.5) {\small $\frac{1}{(t_1 - t_2)^4}$};
\node at (-3.4, -3.5) {\small $\frac{1}{(t_1 - t_2)^5}$};
\node at (2.8, -1.5) {\small $\frac{t_1}{(t_1 - t_2)^2}$};
\node at (2.8, -3.5) {\small $\frac{t_1}{(t_1 - t_2)^3}$};
\node at (-1.4, 2.7) {\small $\frac{t_2}{(t_1 - t_2)^2}$};
\node at (-3.4, 2.7) {\small $\frac{t_2}{(t_1 - t_2)^3}$};
\end{tikzpicture}

\caption{The multivariate $V$-filtration of the diagonal in $\mb{C}^2$: $V^{\bullet}\cM$}
\label{fig:diagonal embedding}
\end{figure}

\begin{figure}

\begin{tikzpicture}
\draw[thick] (3, 5) -- (5, 3);
\draw[thick] (1, 5) -- (5, 1);
\draw[thick] (-1, 5) -- (5, -1);
\draw[thick] (-3, 5) -- (5, -3);
\draw[thick] (-5, 5) -- (5, -5);
\draw[thick] (-5, 3) -- (3, -5);
\draw[thick] (-5, 1) -- (1, -5);
\draw[thick] (-5, -1) -- (-1, -5);
\draw[thick] (-5, -3) -- (-3, -5);

\draw[thick] (-5, 4) -- (0, 4);
\draw[dashed, opacity=0.5] (0, 4) -- (5, 4);
\draw[thick] (-5, 2) -- (0, 2);
\draw[dashed, opacity=0.5] (0, 2) -- (5, 2);
\draw[thick] (-5, 0) -- (0, 0);
\draw[dashed, opacity=0.5] (0, 0) -- (5, 0);
\draw[thick] (-5, -2) -- (0, -2);
\draw[dashed, opacity=0.5] (0, -2) -- (5, -2);
\draw[thick] (-5, -4) -- (0, -4);
\draw[dashed, opacity=0.5] (0, -4) -- (5, -4);

\draw[dashed, opacity=0.5] (4, 5) -- (4, 0);
\draw[thick] (4, 0) -- (4, -5);
\draw[dashed, opacity=0.5] (2, 5) -- (2, 0);
\draw[thick] (2, 0) -- (2, -5);
\draw[dashed, opacity=0.5] (0, 5) -- (0, 0);
\draw[thick] (0, 0) -- (0, -5);
\draw[dashed, opacity=0.5] (-2, 5) -- (-2, 0);
\draw[thick] (-2, 0) -- (-2, -5);
\draw[dashed, opacity=0.5] (-4, 5) -- (-4, 0);
\draw[thick] (-4, 0) -- (-4, -5);

\draw[fill=black] (0, 0) circle (0.05);
\node at (0.35, 0.25) {\tiny $(0, 0)$};
\end{tikzpicture}
\caption{Jumping walls for the multivariate $V$-filtration of the diagonal in $\mb{C}^2$ localised at $0$: $V^{\bullet}\cM(\ast D)$.}
\label{fig:localised diagonal embedding}
\end{figure}

Let's see in this example why the na\"ive filtration defined by $V^{\alpha_1}\mc{M} \cap V^{\alpha_2}\mc{M}$ fails to be a multivariate $V$-filtration. Assume $\alpha_1,\alpha_2>0$. Let $D_i=\textrm{div}(t_i)$. Regarding the diagonal $\mb{C} \to \mb{C}^2$ as the graph of the identity, it is standard that 
\[ V_{D_2}^{\alpha_2}\mc{M} = \sum_{j \geq 0} \partial_{t_1}^j \cdot (V^{\alpha_2}\mc{O}_\mb{C} \otimes \delta) = \sum_{j \geq 0} \partial_{t_1}^j \mc{O}_{\mb{C}^2}\frac{t_2^{\lceil \alpha_2\rceil - 1}}{t_1 - t_2} = \sum_{j \geq 0} \mc{O}_{\mb{C}^2} \frac{t_2^{\lceil \alpha_2 \rceil - 1}}{(t_1 - t_2)^j},\]
where $\delta$ is the class of $\frac{1}{t_1-t_2}$ and $V^{\bullet}\cO_{\C}$ is the $V$-filtration along the origin in $\mb{C}$. In particular, $V^{1}_{D_2}\cM=\cM$. Similarly, $V^{1}_{D_1}\cM=\cM$, so $V^{1}_{D_1}\cM\cap V^{1}_{D_2}\cM=\cM$, which is not coherent over $V^{0,0}\sD_{\C^2}$. The same statement holds for any $V^{\alpha_1}_{D_1}\cM\cap V^{\alpha_2}_{D_2}\cM$.

Next, let us compare $V^{\bullet}_{\ast}\cM=V^{\bullet}_{\ast}\cM(\ast D)$ and $V^{\bullet}\cM(\ast D)$.  By \eqref{eqn: def of V*}, if $n,m\in \Z_{\geq 0}$, 
\[ V^{-n,-m}_{\ast}\cM=t_1^{-n-1}t_2^{-m-1}\cdot V^{1,1}_{res}\cM=t_1^{-n-1}t_2^{-m-1}\cdot V^{0,0}\sD_X\cdot a=V^{0,0}\sD_X\cdot (t_1^{-n-1}t_2^{-m-1}a),\]
where $a\in \cM$ is a generator of $V^{1,1}_{res}\cM$. In particular,
\[ V^{-1,0}_{\ast}\cM=V^{0,0}\sD_{\C^2}\cdot \frac{t_1^{-1}t_2^{-1}}{t_1-t_2}, \quad V^{0,0}_{\ast}\cM=V^{0,0}\sD_{\C^2}\cdot \frac{t_2^{-1}}{t_1-t_2}.\]
As  $V^{\alpha_1,\alpha_2}\cM(\ast D)=V^{\alpha_1,\alpha_2}_{\ast}\cM$ if $\bs{\alpha}\in \R^2_{\geq 0}$ by Corollary \ref{cor:extending V*}, using Corollary \ref{cor: V filtration for vector with integer coefficients}  we have
\begin{align*}
V^{-1,0}\cM(\ast D)=V^{-1,0}\sD_{\C^2}\cdot V^{0,0}_{\ast}\cM
=V^{-1,0}\sD_{\C^2}\cdot \frac{t_2^{-1}}{t_1-t_2}=V^{0,0}\sD_{\C^2}\cdot \frac{t_2^{-1}}{(t_1-t_2)^2}.
\end{align*}
It follows that $\frac{t_1^{-1}t_2^{-1}}{t_1-t_2}\not\in  V^{-1,0}\cM(\ast D)$ and thus $V^{-1,0}_{\ast}\cM(\ast D)\neq V^{-1,0}\cM(\ast D)$.

Another way to see this discrepancy is to observe that there is an exact sequence
\[ 0 \to \mc{M} \to \mc{M}(*D) \to \delta_0 \to 0,\]
where $\delta_0$ is the skyscraper $\sD$-module at $0$. It is very easy to see that the multivariate $V$-filtration of $\delta_0$ jumps non-trivially at $(\alpha_1, \alpha_2)$ if and only if $(\alpha_1, \alpha_2) \in \mb{R}_{\leq 0}^2$ and either $\alpha_1 \in \mb{Z}$ or $\alpha_2 \in \mb{Z}$. By Corollary \ref{cor:V strictness}, the jumping values for $V^\bullet \mc{M}(*D)$ are therefore the union of these jumping values for $V^\bullet\delta_0$ and the jumping values for $V^\bullet \mc{M}$, as depicted in Figure \ref{fig:localised diagonal embedding}. But $V_*^\bullet\mc{M}(*D)$ jumps only along the hyperplanes $\{(\alpha_1, \alpha_2) \mid \alpha_1 + \alpha_2 = n\}$ for $n \in \mb{Z}$, so we cannot have $V^\bullet \mc{M}(*D) = V_*^\bullet \mc{M}(*D)$.
\end{example}

In the classical theory of the $V$-filtration, one has the formula
\[ \mc{M}(*D) = \ms{D}_X \otimes_{V^0\ms{D}_X} V^0\mc{M}(*D) \]
for the localisation of a $\ms{D}$-module along a smooth divisor in terms of its $V$-filtration. This is important in the theory of mixed Hodge modules, for example, where the right hand side is used to construct the Hodge filtration on $\mc{M}(*D)$. The next proposition generalises this to the multivariate setting.
\begin{prop} \label{prop:* formula}
Let $\mc{M}(*D)$ be a coherent $\ms{D}_X(*D)$-module with a multivariate $V_*$-filtration $V_*^\bullet\mc{M}(*D)$. Then 
\begin{equation} \label{eq:* formula 1}
\mc{M}(*D) \cong \ms{D}_X \overset{\mrm{L}}\otimes_{V^{\bs{0}}\ms{D}_X} V^{\bs{0}}_*\mc{M}(*D)
\end{equation}
and for any $\bs{n} \in \mb{Z}^r$,
\begin{equation} \label{eq:* formula 2}
V^{\bs{n}}\mc{M}(*D) \cong V^{\bs{n}}\ms{D}_X \overset{\mrm{L}}\otimes_{V^{\bs{0}}\ms{D}_X} V^{\bs{0}}_*\mc{M}(*D).
\end{equation}
In particular, the derived tensor products are equal to the corresponding underived tensor products, i.e.\ there are no higher Tors.
\end{prop}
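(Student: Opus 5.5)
We reduce everything to a statement about Rees modules and then use the structure of $V_*^{\bs{0}}$ along the negative directions. Write $\mc{N} = V_*^{\bs{0}}\mc{M}(*D)$. By Corollary \ref{cor:extending V*}, $\mc{M}(*D)$ is a coherent $\ms{D}_X$-module with multivariate $V$-filtration satisfying $V^{\bs{0}}\mc{M}(*D) = \mc{N}$. By Corollary \ref{cor: V filtration for vector with integer coefficients}, we then get $V^{\bs{n}}\mc{M}(*D) = V^{\bs{n}-\bs{n}^+}\ms{D}_X\cdot V^{\bs{n}^+}\mc{M}(*D)$. For $\bs{n}^+$ we use $V^{\bs{n}^+}\mc{M}(*D) = \bs{t}^{\bs{n}^+}\mc{N}$, which holds by Proposition \ref{prop:weak specialisation} together with $t_i$-invertibility. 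This shows that the natural multiplication maps
\[ V^{\bs{n}}\ms{D}_X \otimes_{V^{\bs{0}}\ms{D}_X} \mc{N} \to V^{\bs{n}}\mc{M}(*D), \qquad \ms{D}_X \otimes_{V^{\bs{0}}\ms{D}_X}\mc{N} \to \mc{M}(*D) \]
are surjective. The second map is the colimit of the first over $\bs{n}\to -\infty$, so \eqref{eq:* formula 1} follows from \eqref{eq:* formula 2} once we have injectivity and the vanishing of higher Tors, since tensor products commute with filtered colimits.

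For injectivity and Tor-vanishing, we use the product structure of $V^{\bullet}\ms{D}_X$ in the negative directions. Locally,
\[ V^{\bs{n}}\ms{D}_X = \sum_{\bs{k}} \bs{\partial_t}^{\bs{k}} \cdot V^{\bs{n}+\bs{k}}\ms{D}_X, \]
and for $\bs{n}\leq \bs{0}$ the module $V^{\bs{n}}\ms{D}_X$ is a free right $V^{\bs{0}}\ms{D}_X$-module. Its basis is $\{\bs{\partial_t}^{\bs{k}} : 0\le k_i\le -n_i\}$ modulo lower terms. More precisely, it has a finite filtration by right $V^{\bs 0}\ms{D}_X$-submodules, indexed by $\bs{k}$, with graded pieces isomorphic to $V^{\bs{0}}\ms{D}_X$. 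I would rather avoid relying on freeness directly, and instead argue by induction on $\sum_i |n_i^-|$ using the short exact sequences of right modules
\[ 0 \to V^{\bs{n}+\bs{1}_i}\ms{D}_X \to V^{\bs{n}}\ms{D}_X \to \gr^{n_i}_{V_{D_i}} (\cdots) \to 0 .\]
Here the quotient is isomorphic to $\partial_{t_i}^{-n_i}\,\big(V^{\bs{n}+\bs{1}_i}\ms{D}_X / t_i V^{\bs{n}+2\bs{1}_i}\ms{D}_X\big)$. For positive coordinates, $V^{\bs{n}}\ms{D}_X = \bs{t}^{\bs{n}^+}V^{\bs{n}-\bs{n}^+}\ms{D}_X$, and left multiplication by $t_i$ is injective on $\ms{D}_X$, so these coordinates reduce to $\bs{n}\leq\bs 0$.

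Tensoring such a sequence with $\mc{N}$ compares it with the sequence
\[ 0\to V^{\bs{n}+\bs{1}_i}\mc{M}(*D)\to V^{\bs{n}}\mc{M}(*D)\to \text{quotient}\to 0 .\]
The key input is that $\partial_{t_i}$ is injective on graded pieces when $n_i<0$. In the graded piece, $\partial_{t_i}$ composed with $t_i$ is $-s_i$ (up to a shift), and it is invertible there. This follows from the eigenvalue condition: after localising at $\mf{m}$ as in Lemma \ref{lem:localised chambers}, the relevant graded quotient of $V^{\bullet}\mc{M}(*D)$ is supported where $\Re(s_i)\neq 0$ in the relevant range. The analogue for $\ms{D}_X$-modules with $t_i$ invertible is the multivariate version of the classical fact that $\partial_t\colon \gr^{\alpha+1}_V\to\gr^{\alpha}_V$ is an isomorphism for $\alpha<0$. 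So I would first prove that, for $\bs{\alpha}$ with $\alpha_i<0$, the map $\partial_{t_i}\colon V^{\bs{\alpha}+\bs{1}_i}/V^{\bs{\alpha}+\bs{1}_i+\epsilon\bs{1}_i}\to V^{\bs{\alpha}}/V^{\bs{\alpha}+\epsilon\bs{1}_i}$ is injective on $\mc{M}(*D)$. This is a localised version of Proposition \ref{prop:weak specialisation}: check it at each $\mf{m}$, using that $t_i\partial_{t_i}$ is invertible when $\beta_i\neq -1$, and otherwise push $\bs{\alpha}$ far into the negative $i$-direction inside the same $\mc{W}_{\mf m}$-chamber.

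Given this, a five-lemma induction shows that the maps in \eqref{eq:* formula 2} are isomorphisms. The same filtrations give Tor-vanishing, since $\mathrm{Tor}_j^{V^{\bs 0}\ms{D}_X}(\gr,\mc{N})$ reduces to Tors of $V^{\bs 0}\ms{D}_X/t_iV^{\bs 1_i}\ms{D}_X$ against $\mc{N}$. These vanish because $t_i$ acts injectively on $\mc{N}\subset\mc{M}(*D)$, giving the resolution $0\to V^{\bs{1}_i}\ms{D}_X\xrightarrow{} V^{\bs 0}\ms{D}_X$ by right multiplication by $t_i$. I expect the main obstacle to be this injectivity of $\partial_{t_i}$ on the graded pieces in the multivariate setting, because the walls through a point may be slanted rather than coordinate hyperplanes.
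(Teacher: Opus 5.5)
\textbf{There is a gap, at the injectivity step.} Your surjectivity argument, the passage from \eqref{eq:* formula 2} to \eqref{eq:* formula 1} by colimits, and the Tor-vanishing are fine; the Tor-vanishing is exactly the paper's argument. (Note that the resolution of $V^{\bs0}\ms D_X/t_iV^{\bs0}\ms D_X$ is by \emph{left} multiplication by $t_i$, which is the right-linear map.)

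First, reorganise your induction so that only $\partial_{t_i}$ acts on quotients in the $i$-direction. As written, the comparison map on $(V^{\bs n}\ms D_X/V^{\bs n+\bs1_i}\ms D_X)\otimes\mc N$ is left multiplication by $\prod_j\partial_{t_j}^{-n_j}$, so you would also need injectivity of $\partial_{t_j}$, $j\neq i$, on $i$-direction quotients. After that fix, the five lemma still needs $\partial_{t_i}\colon V^{\bs m}\mc M(*D)/V^{\bs m+\bs1_i}\mc M(*D)\to V^{\bs m-\bs1_i}\mc M(*D)/V^{\bs m}\mc M(*D)$ to be injective for $m_i\le 0$. That is exactly Proposition \ref{prop:strong specialisation}, which the paper deduces \emph{from} the present proposition, and your sketch does not prove it.

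Your claim that the localised graded pieces are supported where $t_i\partial_{t_i}$ is a unit is false in the multivariate setting. These quotients are only killed by products of $\bs L(\bs s)+\gamma$ for the walls through $\bs\alpha$. Such walls may be slanted (e.g.\ $\alpha_1+\alpha_2\in\mb Z$ in Example \ref{example: diagonal embedding}), and a slanted hyperplane in $\mb C^r$ meets the locus where $s_i$ takes the critical value. At those maximal ideals, pushing $\bs\alpha$ to $-\infty$ in the $i$-direction inside a $\mc W_{\mf m}$-chamber only yields, via goodness, the generation statement \eqref{eq:weak specialisation 2}. That gives \emph{surjectivity} of $\partial_{t_i}$ on graded pieces. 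Nothing in the axioms of a good wall and chamber filtration forces injectivity at very negative indices, so there is nothing to transport back.

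\textbf{The missing idea is to avoid strong specialisation altogether, as the paper does.}
\begin{enumerate}
\item Your Tor-vanishing already shows that $V^{\bs n}\ms D_X\otimes_{V^{\bs 0}\ms D_X}\mc N\to\mc N':=\ms D_X\otimes_{V^{\bs 0}\ms D_X}\mc N$ is injective for every $\bs n$: each step $V^{\bs n}\ms D_X\to V^{\bs n-\bs1_i}\ms D_X$ stays injective after tensoring, and you then pass to the colimit.
\item In particular $\mc N\subset\mc N'$ and $\mc N$ generates $\mc N'$. So $\{V_*^{\bs\alpha}\mc M(*D)\}_{\bs\alpha\in\mb R^r_{\ge0}}$ is a restricted multivariate $V$-filtration on $\mc N'$.
\item By Proposition \ref{prop:multivariate V extension}, $\mc N'$ then has a multivariate $V$-filtration with $V^{\bs0}\mc N'=\mc N$.
\item The natural map $\mc N'\to\mc M(*D)$ is an isomorphism on $V^{\bs0}$ by Corollary \ref{cor:extending V*}. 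So by Proposition \ref{prop:V image and preimage} its kernel and cokernel carry multivariate $V$-filtrations with $V^{\bs0}=0$, and they vanish by Proposition \ref{prop:weak specialisation}.
\end{enumerate}
Combined with your surjectivity, this gives \eqref{eq:* formula 1} and \eqref{eq:* formula 2}. Strong specialisation then comes out afterwards rather than being an input.
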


\begin{proof}
First observe that for $\bs{n} \in \mb{Z}_{\geq 0}^r$, we have $V^{\bs{n}}\ms{D}_X = \bs{t}^{\bs{n}}V^{\bs{0}}\ms{D}_X \cong V^{\bs{0}}\ms{D}_X$ as right $V^{\bs{0}}\ms{D}_X$-modules. So in this case the identification \eqref{eq:* formula 2} holds by Propositions \ref{prop:weak specialisation} and \ref{prop:* criterion} and Corollary \ref{cor:extending V*}. Now, for any $\bs{n} \in \mb{Z}^r$, we have an isomorphism of right $V^{\bs{0}}\ms{D}_X$-modules 
\begin{equation}\label{eqn: Vn-1i/Vn} \frac{V^{\bs{n} - \bs{1}_i}\ms{D}_X}{V^{\bs{n}}\ms{D}_X} \cong \frac{V^{\bs{0}}\ms{D}_X}{t_iV^{\bs{0}}\ms{D}_X}.\end{equation}
Since $t_i \colon V^{\bs{0}}_*\mc{M}(*D) \overset{\sim}\to V^{\bs{1}_i}_*\mc{M}(*D) \subset V^{\bs{0}}_*\mc{M}(*D)$ is injective, we conclude that
\begin{equation} \label{eq:* formula 3}
\mrm{Tor}_j^{V^{\bs{0}}\ms{D}_X}\left(\frac{V^{\bs{n} - \bs{1}_i}\ms{D}_X}{V^{\bs{n}}\ms{D}_X}, V^{\bs{0}}_*\mc{M}(*D)\right) = 0 \quad \text{for $j > 0$}.
\end{equation}
So the morphism
\[ V^{\bs{n}}\ms{D}_X \otimes_{V^{\bs{0}}\ms{D}_X} V^{\bs{0}}_*\mc{M}(*D)\to V^{\bs{n} - \bs{1}_i}\ms{D}_X \otimes_{V^{\bs{0}}\ms{D}_X} V^{\bs{0}}_*\mc{M}(*D) \]
is injective for all $\bs{n}$. Taking the colimit over $\bs{n}$, it follows that
\begin{equation}\label{eq:* formula 3.5} V^{\bs{n}}\ms{D}_X \otimes_{V^{\bs{0}}\ms{D}_X} V^{\bs{0}}_*\mc{M}(*D) \to \ms{D}_X \otimes_{V^{\bs{0}}\ms{D}_X}V^{\bs{0}}_*\mc{M}(*D)\end{equation}
is also injective for all $\bs{n} \in \mb{Z}^r$. But now the right hand side
\[ \mc{N} \colonequals \ms{D}_X \otimes_{V^{\bs{0}}\ms{D}_X}V^{\bs{0}}_*\mc{M}(*D) \]
is a $\ms{D}_X$-module containing $V^{\bs{0}}_*\mc{M}(*D) = V^{\bs{0}}\ms{D}_X \otimes_{V^{\bs{0}}\ms{D}_X}V^{\bs{0}}_*\mc{M}(*D)$ and generated by it, and hence $\{V^{\bs{\alpha}}_*\mc{M}(*D)\}_{\bs{\alpha} \in \mb{R}_{\geq 0}^r}$ is a restricted multivariate $V$-filtration for $\mc{N}$. 
So by Proposition \ref{prop:multivariate V extension}, $\mc{N}$ admits a multivariate $V$-filtration $V^\bullet \mc{N}$ satisfying
\[ V^{\bs{\alpha}}\mc{N} =V_*^{\bs{\alpha}}\mc{M}(*D) \quad \text{for $\bs{\alpha} \in \mb{R}_{\geq 0}^r$}.\]
In particular, the tautological morphism $\mc{N} \to \mc{M}(*D)$ extending the identity on $V^{\bs{0}}_*\mc{M}(*D)$ is an isomorphism on $V^{\bs{0}}$. By Proposition \ref{prop:V image and preimage}, the kernel and cokernel admit multivariate $V$-filtrations with $V^{\bs{0}} = 0$ and are therefore zero by Proposition \ref{prop:weak specialisation}. So in fact
\[ \ms{D}_X \otimes_{V^{\bs{0}}\ms{D}_X}V^{\bs{0}}_*\mc{M}(*D) =\mc{N}  = \mc{M}(*D).\]
Moreover, since $V^{\bs{n}}\mc{M}(*D) = \bs{t}^{\bs{n}}V_*^{\bs{0}}\mc{M}(*D)$ for $\bs{n} \in \mb{Z}_{\geq 0}^r$, it follows from Proposition \ref{prop:weak specialisation}, Corollary \ref{cor: V filtration for vector with integer coefficients} and the injectivity of \eqref{eq:* formula 3.5}  that 
\begin{equation*}\label{eq:* formula for Vinteger n of M*D} V^{\bs{n}}\ms{D}_X \otimes_{V^{\bs{0}}\ms{D}_X} V^{\bs{0}}_*\mc{M}(*D) = V^{\bs{n}}\ms{D}_X \cdot V^{\bs{0}}\mc{M}(*D) =V^{\bs{n}}\mc{M}(*D)\end{equation*}
for all $\bs{n} \in \mb{Z}^r$ as claimed. 

Finally, to prove the desired isomorphisms at the derived level, we need to show that
\begin{equation} \label{eq:* formula 4}
\mrm{Tor}_j^{V^{\bs{0}}\ms{D}_X} (V^{\bs{n}}\ms{D}_X, V^{\bs{0}}_*\mc{M}(*D)) = 0 \quad \text{for $j > 0$},
\end{equation}
for all $\bs{n} \in \mb{Z}^r$. This holds for $\bs{n} \in \mb{Z}_{\geq 0}^r$ since $V^{\bs{n}}\ms{D}_X \cong V^{\bs{0}}\ms{D}_X$ as right $V^{\bs{0}}\ms{D}_X$-modules, and for $\bs{n}$ arbitrary we have
\[ \mrm{Tor}_j^{V^{\bs{0}}\ms{D}_X}\left(V^{\bs{n}- \bs{1}_i}\ms{D}_X, V_*^{\bs{0}}\mc{M}(*D)\right) \cong \mrm{Tor}_j^{V^{\bs{0}}\ms{D}_X}(V^{\bs{n}}\ms{D}_X, V_*^{\bs{0}}\mc{M}(*D)) \quad \text{for $j > 0$}\]
by \eqref{eq:* formula 3}. Hence, \eqref{eq:* formula 4} holds for all $\bs{n}$ as claimed.
\end{proof}

We conclude this subsection with the following strengthening of Proposition \ref{prop:weak specialisation}.
\begin{prop} \label{prop:strong specialisation}
Let $\mc{M}$ be a coherent $\ms{D}_X$-module with a multivariate $V$-filtration $V^\bullet\mc{M}$. Then for all $\bs{n} \in \mb{Z}^r$ with $n_i \leq 0$, the morphism
\begin{equation} \label{eq:strong specialisation 1}
\partial_{t_i} \colon \frac{V^{\bs{n}}\mc{M}}{V^{\bs{n} + \bs{1}_i}\mc{M}} \to \frac{V^{\bs{n} - \bs{1}_i}\mc{M}}{V^{\bs{n}}\mc{M}}
\end{equation}
is an isomorphism.
\end{prop}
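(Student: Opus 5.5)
The plan is to reduce, exactly as in the proof of Proposition \ref{prop:weak specialisation}, to a statement about localisations at maximal ideals of $\mb{C}[\bs{s}]$, and to dispose of most maximal ideals by an invertibility argument. For $\mf{m} = (s_1 + \beta_1, \ldots, s_r + \beta_r)$, set $\mf{m}' = \mf{m}$ with $s_i + \beta_i$ replaced by $s_i + \beta_i + 1$. Then $\partial_{t_i}$ and $t_i$ induce maps
\[ \partial_{t_i} \colon \Bigl(\frac{V^{\bs{n}}\mc{M}}{V^{\bs{n} + \bs{1}_i}\mc{M}}\Bigr)_{\mf{m}'} \to \Bigl(\frac{V^{\bs{n} - \bs{1}_i}\mc{M}}{V^{\bs{n}}\mc{M}}\Bigr)_{\mf{m}}, \qquad t_i \colon \Bigl(\frac{V^{\bs{n} - \bs{1}_i}\mc{M}}{V^{\bs{n}}\mc{M}}\Bigr)_{\mf{m}} \to \Bigl(\frac{V^{\bs{n}}\mc{M}}{V^{\bs{n} + \bs{1}_i}\mc{M}}\Bigr)_{\mf{m}'}. \]
The two composites are $t_i\partial_{t_i} = -s_i - 1$ on the source and $\partial_{t_i}t_i = -s_i$ on the target. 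If $\beta_i \neq 0$, both operators are invertible after localisation, so \eqref{eq:strong specialisation 1} is an isomorphism at $\mf{m}$. Since every localisation of the target is of the form $(-)_{\mf{m}}$ and every localisation of the source is of the form $(-)_{\mf{m}'}$, Lemma \ref{lem:localisation} reduces the proposition to maximal ideals with $\beta_i = 0$.

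For $\beta_i = 0$, I would use Lemma \ref{lem:localised chambers}: the localised filtration $(V^\bullet\mc{M})_{\mf{m}}$ is constant on chambers of $\mc{W}_{\mf{m}}$, whose walls all pass through $\Re\bs{\beta}$, and similarly for $\mf{m}'$ around $\Re\bs{\beta} - \bs{1}_i$. The idea is to replace the three points $\bs{n} + \bs{1}_i$, $\bs{n}$ and $\bs{n} - \bs{1}_i$ by points far out in the direction $-\bs{1}_i$, keeping them in the same chambers of $\mc{W}_{\mf{m}}$ (resp.\ $\mc{W}_{\mf{m}'}$). This is possible by scaling about the centres, in the same way that the vector $T(\bs{\alpha} - \Re\bs{\beta}) + \Re\bs{\beta}$ is used in the proof of Proposition \ref{prop:weak specialisation}, because $n_i \leq 0 = \Re\beta_i$. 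In this far-out region, goodness of the filtration (Definition \ref{defn: good filtration} \eqref{itm:good 4}, in the form of \eqref{eqn: Ualpha with a negative entry} and Corollary \ref{cor: V filtration for vector with integer coefficients}) gives $V^{\bs{\alpha} - \bs{1}_i}\mc{M} = \partial_{t_i}V^{\bs{\alpha}}\mc{M} + V^{\bs{\alpha}}\mc{M}$. This yields surjectivity of \eqref{eq:strong specialisation 1} at $\mf{m}$, transported back by chamber invariance.

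For injectivity at $\beta_i = 0$, I would take $m \in V^{\bs{n}}\mc{M}$ with $\partial_{t_i}m \in V^{\bs{n}}\mc{M}$ after localisation, and apply $t_i$. This gives $(s_i + 1)m \in t_iV^{\bs{n}}\mc{M} \subset V^{\bs{n} + \bs{1}_i}\mc{M}$. Then I need $s_i + 1$ to act invertibly on the localisation of the source at $\mf{m}'$. This holds if the walls through $\Re\bs{\beta} - \bs{1}_i$ separating $\bs{n}$ from $\bs{n} + \bs{1}_i$ force $s_i$ to have eigenvalue $\neq -1$. Failing that, I would combine the $t_i$ map with the scaling trick: move to the region $\alpha_i \gg 0$ after multiplying by $t_i^{k}$, where $t_i$ is injective by Proposition \ref{prop:weak specialisation}, and use Proposition \ref{prop:* criterion}-style torsion arguments on the submodule $\ms{D}_X \cdot m$.

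The main obstacle is the case $\beta_i = 0$, in which both $s_i$ and $s_i + 1$ can fail to be invertible on the relevant localisations. Within it, the boundary case $n_i = 0$ is hardest, since there the scaling about $\Re\bs{\beta}$ does not move $\bs{n}$ off the hyperplane $\alpha_i = 0$. I would first try to handle $n_i = 0$ via Proposition \ref{prop:* formula}, by comparing with $\mc{M}(*D_i)$ and $\ker(\mc{M} \to \mc{M}(*D_i))$. I would then fall back on an induction on $|n_i|$ using the injectivity of the composite with $t_i$ established in the cases already treated.
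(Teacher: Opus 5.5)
Your reduction to maximal ideals and your treatment of $\beta_i\neq 0$ are correct. However, the proposal does not prove the proposition: all of its content sits in the case $\beta_i=0$, and there your injectivity argument fails.

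You need $(s_i+1)\bar m=0\Rightarrow \bar m=0$ on $(V^{\bs n}\mc M/V^{\bs n+\bs 1_i}\mc M)_{\mf m'}$. But $s_i+1\in\mf m'$ exactly when $\beta_i=0$, so it is not invertible there. Even its injectivity is not supplied by the axioms: across a wall $\bs L^{-1}(\gamma)$ with $L_j\neq 0$ for some $j\neq i$, Definition \ref{defn:multivariate V-filtration} prescribes eigenvalues of $\bs L(\bs s)$ only, not of $s_i$.

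This is not a technicality. Take $\mc M=\mc M(*D)$. By Proposition \ref{prop:* formula} and the right-module isomorphisms \eqref{eqn: Vn-1i/Vn}, left multiplication by $\prod_{j\neq i}P_j\,\partial_{t_i}^{-n_i}$ identifies $V^{\bs n}\mc M/V^{\bs n+\bs 1_i}\mc M$ with $V_*^{\bs 0}\mc M/V_*^{\bs 1_i}\mc M$ for every $\bs n$ with $n_i\le 0$. Under these identifications, $\partial_{t_i}$ becomes the identity, while $t_i\partial_{t_i}$ becomes multiplication by $-(s_i+1-n_i)$. So $\partial_{t_i}$ is injective for a structural reason. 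Your route instead needs an eigenvalue statement about $s_i$ on $V_*^{\bs 0}/V_*^{\bs 1_i}$, which the definition does not control. Relative-flatness results that would give it appear only later, in Theorem \ref{thm:flatness}, for holonomic modules and via duality.

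The scaling trick cannot rescue this. After localisation the filtration is invariant under scaling about the centre, and goodness only ever yields generation statements, i.e.\ surjectivity. Your fallbacks (torsion arguments on $\ms D_X\cdot m$, induction on $|n_i|$ via composites with $t_i$) hit the same obstruction.

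Two smaller points. The walls relevant to $\mf m'$ pass through $\Re\bs\beta+\bs 1_i$, not $\Re\bs\beta-\bs 1_i$, so your scaling claim for the source points is stated with the wrong sign. Also, surjectivity needs no localisation at all: iterating \eqref{eq:weak specialisation 2} along the segment from $\bs n-\bs 1_i$ to $\bs n$, where every $i$-th coordinate is negative, gives $V^{\bs n-\bs 1_i}\mc M=\partial_{t_i}V^{\bs n}\mc M+V^{\bs n}\mc M$ directly.

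What is missing is this structural source of injectivity, together with a dévissage. The paper argues in three steps.
\begin{enumerate}
\item If $\mc M$ is supported on a component $D_j$, it uses Lemma \ref{lem:supported on a component} and induction on the number of components.
\item If $\mc M=\mc M(*D)$, Proposition \ref{prop:* formula} gives $V^{\bs n}\mc M/V^{\bs n+\bs 1_i}\mc M=(V^{\bs n}\ms D_X/V^{\bs n+\bs 1_i}\ms D_X)\otimes_{V^{\bs 0}\ms D_X}V^{\bs 0}_*\mc M(*D)$. For $n_i\le 0$, $\partial_{t_i}\colon V^{\bs n}\ms D_X/V^{\bs n+\bs 1_i}\ms D_X\to V^{\bs n-\bs 1_i}\ms D_X/V^{\bs n}\ms D_X$ is already an isomorphism of right $V^{\bs 0}\ms D_X$-modules.
\item In general, it takes $\mc M_j=\mc M(*(D_1+\cdots+D_j))$ as in Corollary \ref{cor:localising at a component}. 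The kernels and cokernels of $\mc M_{j-1}\to\mc M_j$ are supported on $D_j$. Strictness (Corollary \ref{cor:V strictness}) and the five lemma then give the result by descending induction on $j$.
\end{enumerate}
Your idea of comparing with $\mc M(*D_i)$ and $\ker(\mc M\to\mc M(*D_i))$ points this way. To make it work you must localise along all of $D$ (so Proposition \ref{prop:* formula} applies), handle the cokernel as well as the kernel, reduce the pieces supported on components via Kashiwara, and use strictness to pass the isomorphisms through the exact sequences. Once that is done, no localisation in $\mb C[\bs s]$ is needed.
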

\begin{proof}
First suppose $\mc{M}$ is supported in one of the components $D_j$. Then the claim follows by induction on the number of components of $D$ using Lemma \ref{lem:supported on a component} below. 

Next, consider the case where $\mc{M} = \mc{M}(*D)$. 
In this case, by Corollaries \ref{cor:localising V} and \ref{cor:extending V*} and Proposition \ref{prop:* formula}, $\mc{M}(*D)$ is a coherent $\ms{D}_X$-module with multivariate $V$-filtration satisfying 
\[ \frac{V^{\bs{n}}\mc{M}(*D)}{V^{\bs{n} + \bs{1}_i}\mc{M}(*D)} = \frac{V^{\bs{n}}\ms{D}_X}{V^{\bs{n} + \bs{1}_i}\ms{D}_X}\otimes_{V^{\bs{0}}\ms{D}_X} V^{\bs{0}}_*\mc{M}(*D) \quad \text{for $\bs{n} \in \mb{Z}^r$}.\]
For $n_i \leq 0$, since the morphism
\[ \partial_{t_i} \colon \frac{V^{\bs{n}}\ms{D}_X}{V^{\bs{n} + \bs{1}_i} \ms{D}_X} \to \frac{V^{\bs{n} - \bs{1}_i}\ms{D}_X}{V^{\bs{n}} \ms{D}_X} \]
is an isomorphism of right $V^{\bs{0}}\ms{D}_X$-modules, we deduce that
\[ \partial_{t_i} \colon \frac{V^{\bs{n}}\mc{M}(*D)}{V^{\bs{n} + \bs{1}_i} \mc{M}(*D)} \to \frac{V^{\bs{n} - \bs{1}_i}\mc{M}(*D)}{V^{\bs{n}} \mc{M}(*D)} \]
is an isomorphism as well.

Finally, consider the general case. Consider the sequence of $\ms{D}_X$-modules
\[ \mc{M}_j = \mc{M}(*(D_1 + \cdots + D_j))\]
for $j = 0, 1, \ldots, r$. By Corollary \ref{cor:localising at a component}, each $\mc{M}_j$ is coherent and admits a multivariate $V$-filtration. We argue by descending induction on $j$ that the statement of the proposition holds for $\mc{M}_j$. As argued above, the result holds for $\mc{M}_r = \mc{M}(*D)$. For any $j > 0$, we have an exact sequence
\[ 0 \to \mc{K}_j \to \mc{M}_{j - 1} \to \mc{M}_{j} \to \mc{C}_j \to 0\]
where $\mc{K}_j$ and $\mc{C}_j$ are supported on $D_j$. By Proposition \ref{prop:V image and preimage}, $\mc{K}_j$ and $\mc{C}_j$ admit multivariate $V$-filtrations. Since the result holds for $\mc{K}_j$, $\mc{M}_j$ and $\mc{C}_j$, and since each morphism is strict with respect to $V^\bullet $ by Corollary \ref{cor:V strictness}, we conclude that the result holds for $\mc{M}_{j - 1}$. So by induction, the result holds for all $\mc{M}_j$. Setting $j = 0$ we obtain the result for $\mc{M}$.
\end{proof}

\begin{lem} \label{lem:supported on a component}
Let $\mc{M}$ be a coherent $\ms{D}_X$-module supported on the component $D_r \subset D$. Write $\mc{M} = \mc{N}[\partial_{t_r}]$ for some coherent $\ms{D}_{D_r}$-module $\mc{N}$ using Kashiwara's equivalence. Then $\mc{M}$ admits a multivariate $V$-filtration along $D$ if and only if $\mc{N}$ admits a multivariate $V$-filtration along $D' = D_1 \cap D_r + \cdots +  D_{r - 1} \cap D_r$. If they exist, the two filtrations are related by
\begin{equation} \label{eq:supported on a component 1}
V^{\bs{\alpha}, \alpha_r}\mc{M} = \sum_{k \leq -\alpha_r} \partial_{t_r}^k V^{\bs{\alpha}}\mc{N} \quad \text{for $\bs{\alpha} \in \mb{R}^{r - 1}$, $\alpha_r \in \mb{R}$}.
\end{equation}
\end{lem}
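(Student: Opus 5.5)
The plan is to work throughout with the eigenspace decomposition of $\mc{M}$ under $s_r = -\partial_{t_r}t_r$. By Kashiwara's equivalence, $\mc{M} = \bigoplus_{k \geq 0} \partial_{t_r}^k\mc{N}$ with $\mc{N} = \Ker(t_r)$. Since $t_r\partial_{t_r}^k n = -k\partial_{t_r}^{k-1}n$ for $n \in \mc{N}$, the operator $s_r$ acts on $\partial_{t_r}^k\mc{N}$ by the scalar $k$. The action of $s_1, \ldots, s_{r-1}$ commutes with $\partial_{t_r}$ and restricts on $\mc{N}$ to the corresponding Euler operators for $D'$. Each $V^{\bs{\beta}}\mc{M}$ is a $\mb{C}[\bs{s}]$-submodule because $s_r \in V^{\bs{0}}\ms{D}_X$. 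Since $s_r$ acts semisimply, it follows that $V^{\bs{\beta}}\mc{M} = \bigoplus_k (V^{\bs{\beta}}\mc{M} \cap \partial_{t_r}^k\mc{N})$. The right hand side of \eqref{eq:supported on a component 1} has exactly this shape, with the piece in eigenvalue $k$ nonzero only when $k \leq -\alpha_r$. This matches the eigenvalue condition, since the wall $\{\alpha_r = -k\}$ corresponds to the factor $s_r - k$.

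\emph{From $\mc{N}$ to $\mc{M}$.} Given $V^\bullet\mc{N}$, define $U^{\bs{\alpha},\alpha_r}\mc{M}$ by the right hand side of \eqref{eq:supported on a component 1}. The verification proceeds in three steps.
\begin{enumerate}
\item Wall and chamber property. Take as walls those of $\mc{N}$, extended by $L_r = 0$, together with the hyperplanes $\{\alpha_r = n\}$ for $n \in \mb{Z}$.
\item Compatibility with $V^\bullet\ms{D}_X$. Locally $V^{\bs{n}}\ms{D}_X$ is generated by $V^{\bs{n}'}\ms{D}_{D_r}$, $t_r^{n_r}$ and $\partial_{t_r}^{-n_r}$ together with $V^{\bs{0}}$-operators, so this can be checked on generators, using $t_r\partial_{t_r}^k\mc{N} \subset \partial_{t_r}^{k-1}\mc{N}$.
\item Goodness. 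Use Lemma \ref{lem: goodness part}: the Rees module along $\bs{\beta} + \mb{Z}^r$ is generated by the Rees module of $V^\bullet\mc{N}$ placed in degree $\alpha_r \in (-1, 0]$, which is coherent.
\end{enumerate}
For the eigenvalue condition I would use Lemma \ref{lem:localised chambers}. Localise at $\mf{m} = (s_1 + \gamma_1, \ldots, s_r + \gamma_r)$. The localisation is zero unless $\gamma_r = -k \in \mb{Z}_{\leq 0}$, in which case it equals $\partial_{t_r}^k(V^{\bs{\alpha}}\mc{N})_{\mf{m}'}$ or $0$ according to whether $\alpha_r \leq -k$. Its walls are therefore those of $(V^\bullet\mc{N})_{\mf{m}'}$ through $\Re\bs{\gamma}'$, together with $\{\alpha_r = \gamma_r\}$, all passing through $\Re\bs{\gamma}$. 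Uniqueness (Theorem \ref{thm:multivariate V uniqueness}) then identifies $U^\bullet$ with $V^\bullet\mc{M}$.

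\emph{From $\mc{M}$ to $\mc{N}$.} Given $V^\bullet\mc{M}$, define $V^{\bs{\alpha}}\mc{N} := V^{\bs{\alpha},0}\mc{M} \cap \mc{N}$, the $s_r = 0$ eigencomponent, for $\bs{\alpha} \in \mb{R}^{r-1}$. The wall and chamber property follows by intersecting the walls of $V^\bullet\mc{M}$ with $\{\alpha_r = 0\}$. The eigenvalue condition follows from Lemma \ref{lem:localised chambers}, by localising at maximal ideals with $\gamma_r = 0$. Once $V^\bullet\mc{N}$ is known to be a multivariate $V$-filtration, the first direction together with uniqueness gives \eqref{eq:supported on a component 1}.

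The main obstacle is goodness of $V^\bullet\mc{N}$, in particular coherence of $V^{\bs{\alpha}}\mc{N}$ over $V^{\bs{0}}\ms{D}_{D_r}$. The $s_r = 0$ eigencomponent of a coherent $V^{\bs{0}}\ms{D}_X$-module need not obviously be coherent. I would first realise $V^{\bs{\alpha}}\mc{N}$ as a quotient rather than a submodule: project $V^{\bs{\alpha},0}\mc{M}/V^{\bs{\alpha},\epsilon}\mc{M}$, a coherent module over $V^{\bs{0}}\ms{D}_X/V^{\bs{1}_r}\ms{D}_X$, to its generalised $s_r = 0$ eigenspace. Some power of a polynomial in $s_r$, with finitely many roots, annihilates it locally, by the localisation argument at the end of the proof of Lemma \ref{lem:localised chambers}. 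The projector onto that eigenspace is then a polynomial in $s_r$, hence an endomorphism preserving coherence. The delicate point is that only the single wall $\{\alpha_r = 0\}$ should separate $(\bs{\alpha},0)$ from $(\bs{\alpha},\epsilon)$, which requires choosing $\bs{\alpha}$ generically within its chamber. Having coherence of the pieces, I would deduce finite generation in bounded degrees by applying the projector to the Rees module along $\bs{\alpha} + \mb{Z}^{r-1}$.
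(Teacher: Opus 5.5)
Your forward direction is the paper's: the paper just says it is immediate from the definitions, and your verification fills that in. Obtaining \eqref{eq:supported on a component 1} from the forward direction plus Theorem~\ref{thm:multivariate V uniqueness} is also how the formula follows there.

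For the converse you take a genuinely different route. The paper uses $U^{\bs{\alpha}}\mc{M}=\bigcup_{\alpha_r}V^{\bs{\alpha},\alpha_r}\mc{M}$, which is the filtration of Proposition~\ref{prop:V-filtration for sub divisor} along $D_1+\cdots+D_{r-1}$. It is stable under $t_r$ and $\partial_{t_r}$, hence of the form $(U^{\bs{\alpha}}\mc{N})[\partial_{t_r}]$, and goodness and the eigenvalue condition can then be transported through Kashiwara's equivalence. You instead slice at $\alpha_r=0$ and use the $s_r$-eigenspace decomposition. That avoids Proposition~\ref{prop:V-filtration for sub divisor} and is more concrete, but it obliges you to prove goodness of $V^\bullet\mc{N}$ by hand, and this is where your sketch has a hole.

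The $s_r=0$ eigenspace of $V^{\bs{\alpha},0}\mc{M}/V^{\bs{\alpha},\epsilon}\mc{M}$ is $(V^{\bs{\alpha},0}\mc{M}\cap\mc{N})/(V^{\bs{\alpha},\epsilon}\mc{M}\cap\mc{N})$. So your projector produces $V^{\bs{\alpha}}\mc{N}$ only if $V^{\bs{\alpha},\epsilon}\mc{M}\cap\mc{N}=0$, which you never prove. Exhaustiveness of your $V^\bullet\mc{N}$ is also unaddressed: a section of $\mc{N}$ is a priori only known to lie in some $V^{\bs{\alpha},\alpha_r}\mc{M}$, possibly with $\alpha_r<0$, not in some $V^{\bs{\alpha},0}\mc{M}$.

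Both gaps are filled by Proposition~\ref{prop:weak specialisation} with $i=r$.
\begin{enumerate}
\item $t_r$ is locally nilpotent on $\mc{N}[\partial_{t_r}]$ but injective on $V^{\bs{\beta}}\mc{M}$ for $\beta_r>0$. Hence $V^{\bs{\beta}}\mc{M}=0$ whenever $\beta_r>0$.
\item Consider \eqref{eq:weak specialisation 2}. All terms in it are $s_r$-stable, using $s_r\partial_{t_r}=\partial_{t_r}(s_r+1)$, and $\partial_{t_r}\mc{M}$ has zero $\mc{N}$-component. So \eqref{eq:weak specialisation 2} shows that $V^{\bs{\alpha},\alpha_r}\mc{M}\cap\mc{N}$ does not change as $\alpha_r$ decreases below $0$.
\end{enumerate}

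Once you have the vanishing, the generic-$\bs{\alpha}$ and projector machinery becomes superfluous. Indeed $t_rV^{\bs{\alpha},0}\mc{M}\subset V^{\bs{\alpha},1}\mc{M}=0$, so $V^{\bs{\alpha},0}\mc{M}\subset\mc{N}$ and $V^{\bs{\alpha}}\mc{N}=V^{\bs{\alpha},0}\mc{M}$ on the nose. This is a coherent $V^{\bs{0}}\ms{D}_X$-module killed by $t_r$, hence coherent over $V^{\bs{0}}\ms{D}_{D_r}$. Bounded generation is then inherited from that of $V^\bullet\mc{M}$. In the generating sums for $V^{\bs{\alpha},0}\mc{M}$, every term $V^{\bs{n}}\ms{D}_X\cdot V^{\bs{\gamma}}\mc{M}$ with $n_r+\gamma_r>0$ vanishes. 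The surviving terms have $n_r=-\gamma_r\geq 0$ and can be rewritten via $t_r^{n_r}V^{\bs{\gamma}}\mc{M}\subset V^{\bs{\gamma}',0}\mc{M}$.
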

\begin{proof}
It is immediate from the definitions that if $V^\bullet\mc{N}$ exists then \eqref{eq:supported on a component 1} defines a multivariate $V$-filtration on $\mc{M}$. Conversely, if $V^\bullet \mc{M}$ exists, consider
\[ U^{\bs{\alpha}}\mc{M} = \bigcup_{\alpha_r} V^{\bs{\alpha}, \alpha_r}\mc{M} \quad \text{for $\bs{\alpha} \in \mb{R}^{r - 1}$}.\]
Since $U^{\bs{\alpha}}\mc{M} \subset \mc{M} = \mc{N}[\partial_{t_r}]$ is stable under $\mb{C}[t_r, \partial_{t_r}] \subset \ms{D}_X$, it follows that
\[ U^{\bs{\alpha}}\mc{M} = (U^{\bs{\alpha}}\mc{N})[\partial_{t_r}] \]
for some $U^{\bs{\alpha}}\mc{N} \subset \mc{N}$. It is now straightforward to check that $U^{\bs{\alpha}}\mc{N}$ defines a multivariate $V$-filtration of $\mc{N}$ along $D'$.
\end{proof}

\subsection{Graph embeddings and the Malgrange-Mellin transform}\label{sec: Malgrange iso} \label{subsec:malgrange-mellin}
We conclude this section with a discussion of the behaviour of multivariate $V$-filtrations under graph embeddings. We also introduce the Malgrange-Mellin transform, which plays a big role throughout the rest of the paper.

Consider the following setting. Suppose we are given an arbitrary sequence $f_1, \ldots, f_r$ of holomorphic functions on $X$ and let $\iota \colon X \to X \times \mb{C}^r$ be the graph embedding given by $\iota(x) = (x, f_1(x), \ldots, f_r(x))$. We write $D = \{f_1 \cdots f_r = 0\}$ for the union of the zero loci of the $f_i$. For any coherent $\ms{D}_X$-module $\mc{M}$, we may consider the direct image
\[\iota_+\mc{M} = \mc{M} \otimes \mb{C}[\bs{\partial_t}]\delta_{\bs{t} = \bs{f}}, \]
a coherent $\ms{D}_{X \times \mb{C}^r}$-module. Here $\delta_{\bs{t} = \bs{f}}$ is a formal symbol to remind us how the differential operators act on $\iota_+\mc{M}$. Unless otherwise specified, we write $t_i$ for the $i$th coordinate on $\mb{C}^r$ and consider multivariate $V$-filtrations with respect to the simple normal crossings divisor $\{t_1 \cdots t_r = 0\}$. Setting $s_i = -\partial_{t_i} t_i$ as usual, we may regard $\iota_+\mc{M}$ as a coherent $\ms{D}_X[\bs{s}, \bs{t}]$-module on which $t_i - f_i$ acts nilpotently. (Here the ring $\ms{D}_X[\bs{s}, \bs{t}]$ is defined so that the $s$ and $t$ variables satisfy the usual relations $[s_i, t_j] = -\delta_{ij}t_j$.)

We recall the following alternative description of the $\ms{D}_X[\bs{s}, \bs{t}]$-module $\iota_+\mc{M}$, which holds as long as $f_i$ acts invertibly on $\mc{M}$ (i.e.\ when $\mc{M}$ is in fact a $\ms{D}_X(*D)$-module). Consider the $\ms{D}_X[\bs{s}]\colonequals \ms{D}_X[s_1,\ldots,s_r]$-module 
\[\mc{M}[\bs{s}]\bs{f^s}\colonequals \mc{M}[s_1,\ldots,s_r]f_1^{s_1}\cdots f_r^{s_r}\]
given by $\mc{M}[\bs{s}]\bs{f^s}=\mc{M}\otimes \C[\bs{s}]$ as an $\cO_X[\bs{s}]$-module, with $\ms{D}_X$-action defined by
\begin{equation}\label{eqn: action on M[s]fs} \xi \cdot \left(u p(\bs{s}) \bs{f^s}\right)=\left(\xi(u)p(\bs{s})+\sum_{i=1}^r \frac{\xi(f_i)}{f_i}up(\bs{s})s_i\right)\bs{f^s} \end{equation}
for a vector field $\xi \in T_X$, $u \in \mc{M}$ and $p(\bs{s})\in \C[\bs{s}]$. The formula makes sense since $f_i$ acts invertibly on $\mc{M}$. We equip $\mc{M}[\bs{s}]\bs{f^s}$ with the structure of a $\ms{D}_X[\bs{s}, \bs{t}]$-module by setting
\begin{equation}\label{eqn: ti action} t_i \cdot (up(\bs{s})\bs{f^s}) = f_iu p(s_1,\ldots,s_i+1,\ldots,s_r) \bs{f^s}.\end{equation}

\begin{proposition}\label{prop: Malgrange isomorphism}
We have an isomorphism of $\ms{D}_X[\bs{s}, \bs{t}]$-modules
\begin{equation}\label{eqn: Malgrange isomorphism}
 \mc{M}[\bs{s}]\bs{f^s} \xrightarrow{\sim}\iota_+\mc{M},
\end{equation}
sending $u \bs{f^s}$ to $u \otimes \delta_{\bs{t} = \bs{f}} \in \iota_+\mc{M}$. 
\end{proposition}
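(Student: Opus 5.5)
The plan is to write down the map explicitly, check that it is $\ms{D}_X[\bs{s},\bs{t}]$-linear, and then prove bijectivity by comparing natural filtrations on both sides. Concretely, define $\Phi \colon \mc{M}[\bs{s}]\bs{f^s} \to \iota_+\mc{M}$ as the $\mc{O}_X$-linear map
\[ \Phi\bigl(u\, p(\bs{s})\bs{f^s}\bigr) = p(\bs{s})\cdot (u \otimes \delta_{\bs{t}=\bs{f}}), \qquad s_i = -\partial_{t_i}t_i .\]
This is forced by $\C[\bs{s}]$-linearity and the prescribed value on $u\bs{f^s}$. Recall the standard action on $\iota_+\mc{M} = \mc{M}\otimes\C[\bs{\partial_t}]\delta$:
\begin{itemize}
\item a vector field $\xi$ on $X$ acts by $\xi(u\otimes\delta) = \xi(u)\otimes\delta - \sum_i \xi(f_i)u\otimes\partial_{t_i}\delta$;
\item $t_i$ acts by $t_i(u\otimes \partial^{\bs{k}}\delta) = f_iu\otimes\partial^{\bs{k}}\delta - k_i u\otimes \partial^{\bs{k}-\bs{1}_i}\delta$;
\item $\partial_{t_i}$ acts by shifting the exponent.
\end{itemize}

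\textbf{Step 1: linearity.} Since $t_i\delta = f_i\delta$, we get $s_i(u\otimes\delta) = -\partial_{t_i}(f_iu\otimes\delta)$. For $u\in\mc{M}$ this gives
\[ u\otimes\partial_{t_i}\delta = -f_i^{-1}s_i(u\otimes\delta), \]
using that $f_i^{-1}u$ makes sense and that $\mc{O}_X$ commutes with $\partial_{t_i}$. Plugging this into the formula for $\xi$ shows
\[ \xi\cdot\Phi(u\bs{f^s}) = \Phi\Bigl(\bigl(\xi(u)+\textstyle\sum_i \tfrac{\xi(f_i)}{f_i}us_i\bigr)\bs{f^s}\Bigr), \]
which is exactly \eqref{eqn: action on M[s]fs}. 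For a general element one also uses $[\xi, s_i]=0$.

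For $t_i$, the relation $t_is_i = (s_i+1)t_i$ (from $[s_i,t_j]=-\delta_{ij}t_j$) gives
\[ t_i\, p(\bs{s})(u\otimes\delta) = p(\bs{s}+\bs{1}_i)\, f_iu\otimes\delta, \]
matching \eqref{eqn: ti action}. $\C[\bs{s}]$- and $\mc{O}_X$-linearity are clear.

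\textbf{Step 2: bijectivity.} This is the main point. Filter the source by $\deg_{\bs{s}}\le k$ and the target by $\mc{M}\otimes\C[\bs{\partial_t}]_{\le k}$ (total order in $\partial_t$). Inductively from the identity above,
\[ s^{\bs{k}}(u\otimes\delta) = (-1)^{|\bs{k}|}\bs{f}^{\bs{k}}u\otimes\partial^{\bs{k}}\delta + (\text{lower order}), \]
because $s_i = -t_i\partial_{t_i} - 1$ and $t_i$ acts on the top term by $f_i$ modulo lower order. So $\Phi$ respects the filtrations. On the associated graded pieces of degree $\bs{k}$, $\Phi$ is the map $u\mapsto (-1)^{|\bs{k}|}\bs{f}^{\bs{k}}u$, which is bijective because each $f_i$ acts invertibly on $\mc{M}$. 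Both filtrations are exhaustive and bounded below, so $\Phi$ is an isomorphism. The only delicate point is checking carefully that the leading-term formula holds, i.e. that the correction terms from commuting $t_i$ past $\partial_{t_i}$ really drop the $\partial$-degree. This is a routine induction on $|\bs{k}|$.
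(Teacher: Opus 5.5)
Your proof is correct. The checks of the $\xi$- and $t_i$-actions are right, and the leading-term claim is in fact exact: $s_i(v\otimes\partial^{\bs{m}}\delta)=-f_iv\otimes\partial^{\bs{m}+\bs{1}_i}\delta+m_i\,v\otimes\partial^{\bs{m}}\delta$. So $\Phi$ maps $\{\deg_{\bs{s}}\le k\}$ onto $\{\partial_{\bs{t}}\text{-order}\le k\}$, and on graded pieces it is multiplication by $\pm\bs{f}^{\bs{k}}$.

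The paper does not prove this statement itself. It cites \cite{Malgrange}, \cite[Proposition 2.5]{MPVfiltration} and \cite[(6)]{CDMO}. In Remark~\ref{remark: heuristic reason for MM transform} and in the proof of Proposition~\ref{prop:filtered malgrange} it points to the standard route, which is to write the inverse explicitly,
\[ u\otimes\partial_{\bs{t}}^{\bs{k}}\delta_{\bs{t}=\bs{f}}\longmapsto \prod_{i=1}^r(-1)^{k_i}s_i(s_i-1)\cdots(s_i-k_i+1)\,f_i^{-k_i}u\,\bs{f^s}, \]
and check that both composites are the identity.

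Your triangularity argument gets bijectivity without having to guess or verify this formula, at the cost of not producing it. The explicit inverse is convenient for the filtered statement (Proposition~\ref{prop:filtered malgrange}), because it visibly involves only polynomials in $\bs{s}$ of degree $|\bs{k}|$ and multiplication by $f_i^{-1}$. Your version carries the same information, though. The coefficient of $\partial^{\bs{m}}\delta$ in $\Phi(u\,\bs{s}^{\bs{k}}\bs{f^s})$ is an integer multiple of $\bs{f}^{\bs{m}}u$. So the same filtration comparison, run with $F^*_\bullet$ in place of the trivial filtration on $\mc{M}$, yields the filtered isomorphism as well.
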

Proposition \ref{prop: Malgrange isomorphism} was originally observed (in a special case) by Malgrange \cite{Malgrange}; see also \cite[Proposition 2.5]{MPVfiltration} and \cite[(6)]{CDMO}, for example. We will refer to the isomorphism \eqref{eqn: Malgrange isomorphism} as the \emph{Malgrange-Mellin transform}.

\begin{remark}\label{remark: heuristic reason for MM transform}
Our rationale behind the terminology ``Malgrange-Mellin transform'' is as follows. Suppose that $f$ is a smooth real-valued function on a real manifold $X$ and consider a distribution on $X \times \mb{R}$ of the form $g(x) \partial_t^n \delta_{t = f}$ for some other smooth function $g$. Then taking the Mellin transform on the $\mb{R}$ factor gives
\begin{align*}
\int (g(x)\partial_t^n \delta_{t = f})t^s dt &= (-1)^n s(s - 1) \cdots (s - n + 1) \int g(x)\delta_{t = f} t^{s - n} dt \\
&= (-1)^n s(s - 1) \cdots (s - n + 1) g(x)f^{s - n}.
\end{align*}
When $r = 1$, the inverse to \eqref{eqn: Malgrange isomorphism} is given by essentially the same formula.
\end{remark}
 
We will make extensive use of Proposition \ref{prop: Malgrange isomorphism} throughout this paper. As a first application, we observe that in this presentation, the multivariate $V$-filtration changes in a very simple way under monomial transformations in the $f_i$, keeping the underlying divisor $D$ fixed. Let $\cM$ be a coherent $\sD_X$-module where $f_i$ acts invertibly. Suppose that $f_j' = \prod_{i = 1}^r f_i^{d_{ij}}$, $j = 1, \ldots, r'$, for some $d_{ij} \in \mb{Z}_{\geq 0}$. Write $\iota' \colon X \to X \times \mb{C}^{r'}$ for the graph embedding of the $f_j'$. Then we have a $\sD_X[\bs{s}', \bs{t}']$-module isomorphism 
\begin{equation} \label{eq:basic change of functions}
 \iota'_+\mc{M} = \mc{M}[s_1', \ldots, s_{r'}'](f_1')^{s_1'} \cdots (f_r')^{s_{r'}'} \cong \mc{M}[\bs{s}]\bs{f^s}\otimes_{\mb{C}[\bs{s}]} \mb{C}[\bs{s}']=\iota_+\mc{M} \otimes_{\mb{C}[\bs{s}]} \mb{C}[\bs{s}'],
\end{equation}
where $\mb{C}[\bs{s}] \to \mb{C}[\bs{s}']$ is given by $s_i \mapsto \sum_j d_{ij}s_j'$. In the statement below, for $\bs{\alpha}' \in \mb{R}^{r'}$ we let $\bs{\alpha} \in \mb{R}^r$ be the vector with $i$th coordinate $\alpha_i = \sum_j d_{ij} \alpha_j'$.

\begin{prop} \label{prop:change of functions}
In the setting above, assume that the zero loci of $f_1 \cdots f_r$ and $f_1' \cdots f_{r'}'$ coincide. If the multivariate $V$-filtration $V^\bullet\iota_+\mc{M}$ exists, then so does $V^\bullet \iota'_+\mc{M}$, and for $\bs{\alpha}' \in \mb{R}_{\geq 0}^{r'}$, $V^{\bs{\alpha}'}\iota_+'\mc{M}$ is the image of the map
\begin{equation} \label{eq:change of functions 1}
V^{\bs{\alpha}}\iota_+\mc{M} \otimes_{\mb{C}[\bs{s}]} \mb{C}[\bs{s}'] \to \iota'_+\mc{M}.
\end{equation}
\end{prop}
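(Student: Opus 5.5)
The plan is to construct the filtration on $\iota'_+\mc{M}$ directly and then check it is a restricted multivariate $V$-filtration. For $\bs{\alpha}' \in \mb{R}_{\geq 0}^{r'}$, define $U_{\mathit{res}}^{\bs{\alpha}'}\iota'_+\mc{M}$ to be the image of the map \eqref{eq:change of functions 1}, that is, the $\mb{C}[\bs{s}']$-span of $V^{\bs{\alpha}}\iota_+\mc{M}$ inside $\mc{M}[\bs{s}']\bs{f'^{s'}}$. Here $\bs{\alpha} = D\bs{\alpha}'$, with $D = (d_{ij})$, has nonnegative entries. Once this $U_{\mathit{res}}^\bullet$ is shown to be a restricted multivariate $V$-filtration (Definition \ref{defn:restricted V-filtration}), Proposition \ref{prop:multivariate V extension} gives existence of $V^\bullet\iota'_+\mc{M}$ and identifies its restriction to $\mb{R}^{r'}_{\geq 0}$ with $U^\bullet_{\mathit{res}}$. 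This proves the statement.

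The axioms are checked in the following order.

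First, the wall and chamber property. Let $\bs{L}^{-1}(\gamma)$ be a wall of $V^\bullet\iota_+\mc{M}$. Its pullback under $\bs{\alpha}' \mapsto D\bs{\alpha}'$ is the wall $(\bs{L}D)^{-1}(\gamma)$. Since $\bs{L}D \in \mb{Q}^{r'}_{\geq 0}$, this pullback is nonzero whenever $D^T\bs{L} \neq 0$. This holds because every $f_i$ divides some $f'_j$, by the equality of zero loci.

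Second, the eigenvalue condition \eqref{itm:restricted V-filtration 5}. Under the substitution $s_i \mapsto \sum_j d_{ij}s'_j$, the operator $\bs{L}(\bs{s})$ becomes $(\bs{L}D)(\bs{s}')$. So any annihilator of $V^{\bs{\alpha}}/V^{\bs{\beta}}$ built from walls of $\mc{W}$ maps to an annihilator of the image quotient.

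Third, the compatibility axioms \eqref{itm:restricted V-filtration 3}. Using \eqref{eqn: ti action}, $t'_j$ acts as multiplication by $f'_j = \prod_i f_i^{d_{ij}}$ together with the shift $s'_j \mapsto s'_j + 1$. This is the image of $\prod_i t_i^{d_{ij}}$, which maps $V^{\bs{\alpha}}$ into $V^{\bs{\alpha} + D\bs{1}_j}$. Similarly, $\partial_{t'_j}t'_j = -s'_j$ is $\mb{C}[\bs{s}']$-linear. For $\partial_{t'_j}$ itself on $V^{\bs{\alpha}' + \bs{1}_j}$ one writes $\partial_{t'_j} = -s'_j (t'_j)^{-1}$ on the localised module. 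This needs care, and may be simpler to handle by working with the $V_*$-filtrations and Corollary \ref{cor:extending V*}, using the fact that $t'_j$ is invertible on $\mc{M}[\bs{s}']\bs{f'^{s'}}$ since $f'_j$ acts invertibly.

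Fourth, coherence over $V^{\bs{0}}\ms{D}_{X\times\mb{C}^{r'}}$ (axiom \eqref{itm:restricted V-filtration 2}). This follows because $V^{\bs{\alpha}}\iota_+\mc{M}$ is coherent over $V^{\bs{0}}\ms{D}_{X\times\mb{C}^r} \supset \ms{D}_X[\bs{s}]$, and its $\mb{C}[\bs{s}']$-span is coherent over $\ms{D}_X[\bs{s}',\bs{t}']$-type operators.

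Fifth, generation (axiom \eqref{itm:restricted V-filtration 1}). Since $V^{\bs{0}}\iota_+\mc{M}$ generates $\iota_+\mc{M}$, and $\iota'_+\mc{M} = \iota_+\mc{M}\otimes_{\mb{C}[\bs{s}]}\mb{C}[\bs{s}']$ by \eqref{eq:basic change of functions}, $U^{\bs{0}}_{\mathit{res}}$ generates $\iota'_+\mc{M}$ modulo checking that $\ms{D}_X$-generation survives the base change. It does, because the $t_i$, $\partial_{t_i}$ actions can be re-expressed via $\ms{D}_X[\bs{s}]$ and the invertible $f_i$ on the localised module.

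The main obstacle is axiom \eqref{itm:restricted V-filtration 4}, the surjectivity of $t'_j$ for large $\alpha'_j$. For this I would use Proposition \ref{prop:weak specialisation}: $t_i \colon V^{\bs{\alpha}} \to V^{\bs{\alpha}+\bs{1}_i}$ is an isomorphism when $\alpha_i > 0$. Since some $d_{ij} > 0$, a large $\alpha'_j$ forces $\alpha_i > 0$ for each $i$ with $d_{ij} > 0$. Then $\prod_i t_i^{d_{ij}}$ maps $V^{\bs{\alpha}}$ onto $V^{\bs{\alpha}+D\bs{1}_j}$, and after base change this gives surjectivity of $t'_j$. Strictly, one only needs $\alpha_i > 0$ along the relevant path; a bound $N$ slightly larger than $0$ should suffice. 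The subtle point is the $\bs{s}$-shift in \eqref{eqn: ti action}: one must confirm that the image of $\prod_i t_i^{d_{ij}}$ corresponds exactly to $t'_j$ under \eqref{eq:basic change of functions}. I expect this bookkeeping, together with the $\partial_{t'_j}$ compatibility, to be the delicate part.
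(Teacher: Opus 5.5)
Your overall plan is the paper's, only phrased with restricted filtrations instead of $V_*$-filtrations. You take the images of $V^{D\bs{\alpha}'}\iota_+\mc{M}\otimes_{\mb{C}[\bs{s}]}\mb{C}[\bs{s}']$ (with your matrix $D=(d_{ij})$), pull walls back along $\bs{\alpha}'\mapsto D\bs{\alpha}'$, transport annihilators via $s_i\mapsto\sum_j d_{ij}s'_j$, identify $t'_j$ with $\prod_i t_i^{d_{ij}}$ composed with $s'_j\mapsto s'_j+1$, and then extend. The gap is your verification of axiom~\eqref{itm:restricted V-filtration 1} of Definition~\ref{defn:restricted V-filtration}. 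The operators $t_i$ and $\partial_{t_i}=-s_it_i^{-1}$ are semilinear for the shift $s_i\mapsto s_i+1$. So they cannot be rewritten through $\ms{D}_X[\bs{s}]$ and multiplication by $f_i$, and they do not descend individually along $s_i\mapsto\sum_j d_{ij}s'_j$. Only $\ms{D}_X[\bs{s}]$ and the monomials $\prod_i t_i^{d_{ij}}$ (which become $t'_j$) do. Hence the fact that $V^{\bs{0}}\iota_+\mc{M}$ generates $\iota_+\mc{M}$ over $\ms{D}_{X\times\mb{C}^r}$ says nothing directly about $\ms{D}_{X\times\mb{C}^{r'}}\cdot U^{\bs{0}}_{\mathit{res}}$.

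What you can get formally is $\iota'_+\mc{M}=\bigcup_{\bs{n}'\geq 0}(\bs{t}')^{-\bs{n}'}U^{\bs{0}}_{\mathit{res}}$, i.e.\ generation over the \emph{localised} ring. This uses exhaustiveness of $V_*^\bullet\iota_+\mc{M}$, the equality $V^{\bs{\gamma}}=V^{\bs{\gamma}}_*$ for $\bs{\gamma}\geq 0$, and that every row of $(d_{ij})$ is nonzero. Going from there to generation over $\ms{D}_{X\times\mb{C}^{r'}}$ itself is not formal. Even in the single-variable case, generation of a localised module over $\ms{D}$ by $V^0$ is proved using the eigenvalue structure on $\gr_V$, not by base change.

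The missing ingredient is Proposition~\ref{prop:* criterion}, used as in the proof of Corollary~\ref{cor:extending V*}.
\begin{itemize}
\item Put $\mc{N}'=\ms{D}_{X\times\mb{C}^{r'}}\cdot U^{\bs{0}}_{\mathit{res}}$. Your other checks show that $U^\bullet_{\mathit{res}}$ is a restricted multivariate $V$-filtration on $\mc{N}'$, so Proposition~\ref{prop:multivariate V extension} gives $V^\bullet\mc{N}'$ with $V^{\bs{\alpha}'}\mc{N}'=U^{\bs{\alpha}'}_{\mathit{res}}$ on the orthant.
\item The step you flagged as delicate is in fact easy. For $\bs{\gamma}\geq 0$ one has $\prod_i t_i^{d_{ij}}V^{\bs{\gamma}}\iota_+\mc{M}=V^{\bs{\gamma}+D\bs{1}_j}\iota_+\mc{M}$, because $V=V_*$ on the orthant and $t_iV_*^{\bs{\gamma}}=V_*^{\bs{\gamma}+\bs{1}_i}$. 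Hence $t'_jU^{\bs{\alpha}'}_{\mathit{res}}=U^{\bs{\alpha}'+\bs{1}_j}_{\mathit{res}}$ for all $\bs{\alpha}'\geq 0$.
\item This gives $\partial_{t'_j}=-s'_j(t'_j)^{-1}$ mapping $U^{\bs{\alpha}'+\bs{1}_j}_{\mathit{res}}$ into $U^{\bs{\alpha}'}_{\mathit{res}}$. It also gives axiom~\eqref{itm:restricted V-filtration 4} with $N=0$.
\item In particular $t'_j\colon V^{\bs{0}}\mc{N}'\to V^{\bs{1}_j}\mc{N}'$ is bijective. So by Proposition~\ref{prop:* criterion} each $t'_j$ acts invertibly on $\mc{N}'$, and $\mc{N}'\supset\bigcup_{\bs{n}'}(\bs{t}')^{-\bs{n}'}U^{\bs{0}}_{\mathit{res}}=\iota'_+\mc{M}$.
\end{itemize}
This is why the paper defines the image filtration with $V_*$ on all of $\mb{R}^{r'}$, where only exhaustiveness is needed, and concludes by Corollaries~\ref{cor:localising V} and~\ref{cor:extending V*}.

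A smaller point: equality of zero loci does not force every $f_i$ to divide some $f'_j$, e.g.\ $f_1=x$, $f_2=xy$, $f'_1=xy$. This is harmless for your wall step, since a wall with $\sum_i L_id_{ij}=0$ for all $j$ never separates points of the form $D\bs{\alpha}'$ and can be discarded. But the covering $\bigcup_{\bs{n}'}(\bs{t}')^{-\bs{n}'}U^{\bs{0}}_{\mathit{res}}=\iota'_+\mc{M}$ does use nonzero rows, as holds in the paper's applications.
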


\begin{proof}
Observe that since $\mc{M}$ is a $\ms{D}_X(*D)$-module and $D$ is the zero locus of both $f_1 \cdots f_r$ and $f_1' \cdots f_{r'}'$, $\iota_+\mc{M}$ and $\iota'_+\mc{M}$ are $\ms{D}_{X \times \mb{C}^r}(*\{t_1 \cdots t_r=0\})$ and $\ms{D}_{X \times \mb{C}^{r'}}(*\{t_1' \cdots t_{r'}'=0\})$-modules respectively. So by Corollaries \ref{cor:localising V} and \ref{cor:extending V*}, the claim is equivalent to showing that
\[ V_*^{\bs{\alpha}'}\iota'_+\mc{M} \colonequals \mrm{im}(V_*^{\bs{\alpha}}\iota_+\mc{M} \otimes_{\mb{C}[\bs{s}]} \mb{C}[\bs{s}'] \to \iota'_+\mc{M}), \quad \text{for $\bs{\alpha}' \in \mb{R}^{r'}$},\]
defines a multivariate $V_*$-filtration on $\iota'_+\mc{M}$. This is immediate from the definitions.
\end{proof}

\begin{rmk}
We will see in \S\ref{subsec:flatness} (Corollary \ref{cor: change of functions}) that, when $\mc{M}$ is holonomic, the map \eqref{eq:change of functions 1} is actually injective, so that $V^{\bs{\alpha}'}\iota'_+\mc{M}$ can be identified with the left hand side.
\end{rmk}

Now assume that $D \subset X$ is a divisor with simple normal crossings and that $f_i = x_i$ for some local coordinates $x_i$ on $X$. Let $D_i = \{ x_i = 0\}$ be the irreducible components of $D$. 
\begin{prop} \label{prop:reduction to graph embedding}
Let $\mc{M}$ be a coherent $\ms{D}_X$-module where $x_i$ acts invertibly. Then the multivariate $V$-filtration $V^\bullet\mc{M}$ with respect to $D$ exists if and only if the multivariate $V$-filtration $V^\bullet \iota_+\mc{M}$ with respect to $\{t_1 \cdots t_r = 0\}$ exists, and any set of walls for one is a set of walls for the other.
\end{prop}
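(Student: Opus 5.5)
The plan is to reduce the statement to a Kashiwara-type comparison after a change of coordinates on $X \times \mb{C}^r$ that straightens the graph, in the spirit of Lemma \ref{lem:supported on a component}.

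\textbf{Straightening the graph.} Locally write the coordinates on $X$ as $(x_1, \ldots, x_r, \bs{y})$, and use $(\bs{x}, \bs{y}, \bs{t})$ on $X \times \mb{C}^r$. Introduce new coordinates $(\bs{x}', \bs{y}, \bs{t})$ with $x_i' = x_i - t_i$.
\begin{itemize}
\item In the new coordinates the graph of $\iota$ is the smooth submanifold $Z = \{\bs{x}' = 0\}$, on which $(\bs{t}, \bs{y})$ restrict to coordinates.
\item The identification $X \cong Z$ matches $x_i$ with $t_i|_Z$, so it carries $D \subset X$ to $Z \cap \{t_1 \cdots t_r = 0\}$.
\item The divisor $\{t_1 \cdots t_r = 0\}$ is unchanged. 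Since $V^{\bs{n}}\ms{D}$ is defined intrinsically from the ideals of the components, $V^\bullet\ms{D}_{X \times \mb{C}^r}$ is the same in either coordinate system.
\item In the new coordinates $\partial_{t_i}^{\mathrm{old}} = \partial_{t_i} - \partial_{x_i'}$ and $\partial_{x_i}^{\mathrm{old}} = \partial_{x_i'}$.
\end{itemize}
By Kashiwara's equivalence, $\iota_+\mc{M} \cong \mc{M}[\bs{\partial}_{\bs{x}'}]$, with $Z$ transverse to every $\{t_i = 0\}$.

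\textbf{Direction $V^\bullet\mc{M} \Rightarrow V^\bullet\iota_+\mc{M}$.} Given $V^\bullet\mc{M}$ along $D$, transported to $Z$, define
\[ U^{\bs{\alpha}}\iota_+\mc{M} := \sum_{\bs{k} \in \mb{Z}_{\geq 0}^r} \bs{\partial}_{\bs{x}'}^{\bs{k}}\, V^{\bs{\alpha}}\mc{M}. \]
This is the transverse analogue of \eqref{eq:supported on a component 1}, with no shift in $\bs{\alpha}$ because $\partial_{x'_i}$ lies in $V^{\bs{0}}\ms{D}$.

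Goodness over $V^\bullet\ms{D}_{X \times \mb{C}^r}$ follows from goodness of $V^\bullet\mc{M}$, since $V^{\bs{n}}\ms{D}_{X \times \mb{C}^r} = V^{\bs{n}}\ms{D}_Z[\bs{x}', \bs{\partial}_{\bs{x}'}]$ up to completion. The wall and chamber structure is visibly inherited, with the same set of walls.

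For the eigenvalue condition (Definition \ref{defn:multivariate V-filtration} \eqref{itm:multivariate V 4}) I will work with the operator $s_i^{\mathrm{new}} := -\partial_{t_i}t_i$ in the new coordinates. It acts on $\mc{M}[\bs{\partial}_{\bs{x}'}]$ by
\[ s_i^{\mathrm{new}}\Big(\sum \bs{\partial}_{\bs{x}'}^{\bs{k}} m_{\bs{k}}\Big) = \sum \bs{\partial}_{\bs{x}'}^{\bs{k}}\,(s_i m_{\bs{k}}), \]
using that $t_i$ commutes with $\partial_{x'_i}$. So every quotient $U^{\bs{\alpha}}/U^{\bs{\beta}}$ is $(V^{\bs{\alpha}}\mc{M}/V^{\bs{\beta}}\mc{M})[\bs{\partial}_{\bs{x}'}]$, and the same annihilating polynomial works.

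The old operator $s_i = -\partial^{\mathrm{old}}_{t_i}t_i$ differs from $s_i^{\mathrm{new}}$ by $\partial_{x_i'}t_i$, which is not nilpotent on these quotients. Rather than compare the two directly, I will appeal to the uniqueness Theorem \ref{thm:multivariate V uniqueness}. The definition of the multivariate $V$-filtration uses $-\partial_{t_i}t_i$ for some vector field $\partial_{t_i}$ with $[\partial_{t_i}, t_i] = 1$ preserving the other components, as in \S\ref{subsec:wall and chamber}. I therefore need the defining conditions to be independent of this choice.

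\textbf{Main obstacle: independence of the choice of $\partial_{t_i}$.} I will try to prove this via Lemma \ref{lem:localised chambers}. Two choices of $s_i$ differ by an element $e_i \in V^{\bs{1}_i}\ms{D}\cdot V^{-\bs{1}_i}\ms{D} \cap V^{\bs{0}}$ of the shape $\xi t_i$, with $\xi$ a vector field tangent to all the $\{t_j = 0\}$. Such $e_i$ is topologically nilpotent relative to $V^\bullet$: it sends $V^{\bs{\alpha}}$ into $V^{\bs{\alpha}}$, and a power of it lands in $V^{\bs{\alpha}+\bs{1}_i}$ modulo terms controlled by goodness. The goal is to show that localisations at maximal ideals of the two polynomial rings in the $s_i$ give the same chamber structure. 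This is the step I expect to be delicate. If it fails in general, the fallback is a direct check in the concrete situation, using $\partial_{x_i'}t_i = t_i\partial_{x_i'}$ and the explicit grading by $\bs{k}$.

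\textbf{Converse direction.} Given $V^\bullet\iota_+\mc{M}$, argue exactly as in Lemma \ref{lem:supported on a component}. Each $V^{\bs{\alpha}}\iota_+\mc{M}$ is stable under $\mb{C}[\bs{x}', \bs{\partial}_{\bs{x}'}]$, because both $x'_i$ and $\partial_{x'_i}$ lie in $V^{\bs{0}}\ms{D}$. Hence
\[ V^{\bs{\alpha}}\iota_+\mc{M} = (U^{\bs{\alpha}}\mc{M})[\bs{\partial}_{\bs{x}'}], \qquad U^{\bs{\alpha}}\mc{M} := \ker(\bs{x}') \cap V^{\bs{\alpha}}\iota_+\mc{M}, \]
and one checks directly that $U^\bullet\mc{M}$ is a multivariate $V$-filtration with the same walls.

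Alternatively, both directions can be run on $V_*$-filtrations, using Corollaries \ref{cor:localising V} and \ref{cor:extending V*}. This is legitimate because both modules are localised: $x_i$ acts invertibly on $\mc{M}$, and by \eqref{eqn: ti action} $t_i$ acts invertibly on $\iota_+\mc{M}$. The statement about walls follows in both directions from the construction.
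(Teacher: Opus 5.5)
Your overall route is the paper's. You straighten the graph by the coordinate change $v_i=t_i-x_i$ (your $x_i'=-v_i$). You then use the Kashiwara-type bijection between $V^{\bs 0}\ms{D}_{X\times\mb{C}^r}$-submodules of $\iota_+\mc{M}=\mc{M}[\bs\partial_{\bs v}]$ and $V^{\bs 0}\ms{D}_X$-submodules of $\mc{M}$, and define $V^{\bs\alpha}\iota_+\mc{M}=V^{\bs\alpha}\mc{M}[\bs\partial_{\bs v}]$. The paper calls the remaining verification immediate.

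As written, however, your proposal leaves open exactly the point you flag. The eigenvalue condition on $\iota_+\mc{M}$ refers to $s_i=-\partial^{\mathrm{old}}_{t_i}t_i=s_i^{\mathrm{new}}+\partial_{x_i'}t_i$, but you only verify it for $s_i^{\mathrm{new}}$ and postpone the comparison as a ``delicate'' step. Neither of your suggested routes works as stated:
\begin{itemize}
\item There is no common polynomial ring to localise at, because $[s_i^{\mathrm{new}},s_i^{\mathrm{old}}]=-\partial_{x_i'}t_i\neq 0$.
\item Knowing that $\xi t_i$ maps $V^{\bs\alpha}$ into $V^{\bs\alpha+\bs 1_i}$ says nothing about $V^{\bs\beta}$.
\item The fallback via the $\bs k$-grading fails. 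Adding a degree-raising operator to $\bs L(\bs s^{\mathrm{new}})$ can destroy a polynomial identity; think of $c+N$ with $N$ injective and degree-raising.
\end{itemize}

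The missing idea is one line, and it shows that your remark that $\partial_{x_i'}t_i$ is ``not nilpotent on these quotients'' is beside the point. Let $\bs\alpha\le\bs\beta$ be separated by a single wall $\bs L^{-1}(\gamma)$, and set $Q=V^{\bs\alpha}/V^{\bs\beta}$. Suppose $P(\bs L(\bs s))Q=0$ with $P(z)=\prod_j(z+\gamma_j)$ and $\Re\gamma_j=\gamma$.
\begin{itemize}
\item For any admissible choice of the $\partial_{t_k}$ we have $\bs L(\bs s)t_i=t_i(\bs L(\bs s)-L_i)$. Hence $t_iQ\subset Q$ is killed by both $P(\bs L(\bs s))$ and $P(\bs L(\bs s)+L_i)$.
\item If $L_i>0$, these are coprime polynomials in $\bs L(\bs s)$, since their roots have real parts $-\gamma$ and $-\gamma-L_i$. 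So $t_i$ acts by zero on $Q$.
\item Since $\partial_{x_i'}\in V^{\bs 0}\ms{D}$ acts on $Q$, the difference
\[
\bs L(\bs s^{\mathrm{old}})-\bs L(\bs s^{\mathrm{new}})=\sum_i L_i\,\partial_{x_i'}t_i
\]
vanishes identically on $Q$. The terms with $L_i=0$, where $\partial_{x_i'}t_i$ really can fail to be nilpotent, do not occur.
\end{itemize}

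Thus $P(\bs L(\bs s^{\mathrm{old}}))=P(\bs L(\bs s^{\mathrm{new}}))$ on every single-wall quotient, whichever of the two eigenvalue conditions you start from. This gives both directions of the proposition, and more generally shows that the definition is independent of the choice of $\partial_{t_i}$. With this inserted, the rest of your argument is fine and matches the paper's.
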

\begin{proof}
For any point in $X$, we may identify a neighbourhood in $X \times \mb{C}^r$ with an open subset $U \subset X' \times \mb{C}^r_u \times \mb{C}^r_v$, where $X' = \{x_1 = \cdots = x_r = 0\}$ and we write the coordinates as $u_i = t_i$ and $v_i = t_i - x_i$. In these coordinates, the intersection with $\iota(X)$ is identified with $U \cap (X' \times \mb{C}^r_u \times \{0\})$ and $D_i$ with $\iota(X) \cap \{u_i = 0\}$. So, restricting to this neighbourhood, we may write,
\[ \iota_+\mc{M} = \mc{M}[\partial_{v_1}, \ldots, \partial_{v_r}].\]
It is elementary to see that, by analogy with Kashiwara's equivalence, we have a bijection $\mc{N} \mapsto \mc{N}_0$ between $V^{\bs{0}}\ms{D}_{X \times \mb{C}^r}$-submodules $\mc{N} \subset \iota_+\mc{M}$ and $V^{\bs{0}}\ms{D}_X$-submodules $\mc{N}_0 \subset \mc{M}$ such that
\[ \mc{N} = \mc{N}_0[\partial_{v_1}, \ldots, \partial_{v_r}] \quad \text{and} \quad \mc{N}_0 = \mc{M} \cap \mc{N} = \bigcap_i \ker(v_i \colon \mc{N} \to \mc{N}).\]
Given one of $V^\bullet\mc{M}$ and $V^\bullet\iota_+\mc{M}$, if we define the other by the formula
\[ V^{\bs{\alpha}}\iota_+\mc{M} = V^{\bs{\alpha}}\mc{M}[\partial_{v_1}, \ldots, \partial_{v_r}],\]
it is immediate to see that one satisfies the definition of multivariate $V$-filtration if and only if the other does. So the proposition follows.
\end{proof}

It will also be useful to have the following more explicit description of the relationship between the multivariate $V_*$-filtrations on $\mc{M}$ and $\iota_+\mc{M}$ in terms of the Malgrange-Mellin transform. In the statement below, we write $\ms{D}_X[\bs{s}]\bs{x^s} = \ms{D}_X[\bs{s}] \otimes \bs{x^s}$ for the $(V^{\bs{0}}\ms{D}_{X \times \mb{C}^r}, V^{\bs{0}}\ms{D}_X)$-bimodule with left $V^{\bs{0}}\ms{D}_{X \times \mb{C}^r}$-action given by
\[ P(\bs{s}) \cdot Q(\bs{s})\bs{x^s} = P(\bs{s})Q(\bs{s}) \bs{x^s} \quad \text{for $P(\bs{s}) \in \ms{D}_X[\bs{s}] \subset V^{\bs{0}}\ms{D}_{X \times \mb{C}^r}$},\]
\[ t_i \cdot Q(\bs{s})\bs{x^s} = Q(\bs{s} + \bs{1}_i)\bs{x}^{\bs{s} + \bs{1}_i} = Q(\bs{s} + \bs{1}_i)x_i\bs{x}^{\bs{s}}  \]
and right $V^{\bs{0}}\ms{D}_X$-action given by
\[ Q(\bs{s})\bs{x^s} g = Q(\bs{s}) g \bs{x^s} \quad \text{for $g \in \mc{O}_X$},\]
\[ Q(\bs{s})\bs{x^s} \partial_{x_i}x_i = Q(\bs{s}) (\partial_{x_i}x_i - s_i ) \bs{x^s}.\]

\begin{prop} \label{prop:V-filtration on graph}
For $\bs{\alpha} \in \mb{R}^r$, we have an isomorphism of $V^{\bs{0}}\ms{D}_{X \times \mb{C}^r}$-modules,
\begin{equation} \label{eq:V-filtration on graph 1}
V^{\bs{\alpha}}_*\iota_+\mc{M} = \ms{D}_X[\bs{s}] \bs{x^s} \otimes_{V^{\bs{0}}\ms{D}_X} V^{\bs{\alpha}}_*\mc{M},
\end{equation}
where $V^{\bullet}_*\iota_+\mc{M}$ and $V^{\bullet}_*\mc{M}$ denote the multivariate $V_*$-filtrations with respect to the coordinate axes in $\mb{C}^r$ and the divisor $D=\{x_1\cdots x_r=0\}$, respectively. 
\end{prop}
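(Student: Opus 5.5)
The plan is to define the right hand side of \eqref{eq:V-filtration on graph 1} as a candidate filtration and verify that it is a multivariate $V_*$-filtration on $\iota_+\mc{M}$; uniqueness (Theorem \ref{thm:V* uniqueness}) then identifies it with $V^{\bullet}_*\iota_+\mc{M}$. Concretely, set
\[ U^{\bs{\alpha}}_*\iota_+\mc{M} := \ms{D}_X[\bs{s}]\bs{x^s} \otimes_{V^{\bs{0}}\ms{D}_X} V^{\bs{\alpha}}_*\mc{M}. \]
The first step is to compute this tensor product: since $\ms{D}_X = \ms{D}_{X'}\otimes \mb{C}[x_i,\partial_{x_i}]$ locally and $V^{\bs{0}}\ms{D}_X$ is generated over $\mc{O}_X\ms{D}_{X'}$ by the $\partial_{x_i}x_i$, the right module $\ms{D}_X[\bs{s}]\bs{x^s}$ is $\ms{D}_X\otimes_{V^{\bs{0}}\ms{D}_X}$-free-ish in the following sense: the relation $Q\bs{x^s}\partial_{x_i}x_i = Q(\partial_{x_i}x_i - s_i)\bs{x^s}$ lets us eliminate $s_i$, giving $\ms{D}_X[\bs{s}]\bs{x^s}\otimes_{V^{\bs{0}}\ms{D}_X} N \cong \ms{D}_X\otimes_{V^{\bs{0}}\ms{D}_X}$-type expressions $\sum_{\bs{k}}\bs{\partial_x}^{\bs{k}}N$. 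I will instead argue more directly: by Propositions \ref{prop:* formula} (applied to $\mc{M}$, which is a $\ms{D}_X(*D)$-module) and the Malgrange-Mellin transform (Proposition \ref{prop: Malgrange isomorphism}), $\iota_+\mc{M}\cong \mc{M}[\bs{s}]\bs{x^s}$, and the natural map $U^{\bs{\alpha}}_*\iota_+\mc{M}\to \mc{M}[\bs{s}]\bs{x^s}$ sends $Q(\bs{s})\bs{x^s}\otimes u$ to $Q(\bs{s})u\bs{x^s}$. The key computation is that $\ms{D}_X[\bs{s}]\bs{x^s}\otimes_{V^{\bs 0}\ms{D}_X}(-)$ is, after the change of variables $s_i\mapsto s_i$, $\partial_{x_i}x_i\mapsto \partial_{x_i}x_i - s_i$, isomorphic to $\mb{C}[\bs{s}]\otimes_{\mb C}(\ms{D}_X\otimes_{V^{\bs 0}\ms{D}_X}(-))$ twisted appropriately; combined with Proposition \ref{prop:* formula} (no higher Tor, and $\ms{D}_X\otimes V^{\bs0}_*\mc{M} = \mc{M}$ injectively) this should show the map is injective, so $U^\bullet_*$ is a genuine filtration of $\iota_+\mc{M}$.

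Second, I would check that $U^{\bullet}_*$ is a good wall and chamber filtration over $V^\bullet\ms{D}_{X\times\mb{C}^r}(*D)$ with the same walls as $V^\bullet_*\mc{M}$. The action of $t_i^{\pm1}$ shifts $V^{\bs{\alpha}}_*\mc{M}$ to $V^{\bs{\alpha}\pm\bs{1}_i}_*\mc{M}$ via $x_i^{\pm1}$, which is compatible since $x_i^{\pm 1}V^{\bs\alpha}_*\mc{M}=V^{\bs\alpha\pm\bs 1_i}_*\mc{M}$. Coherence over $V^{\bs0}\ms{D}_{X\times\mb{C}^r}$ follows because $\ms{D}_X[\bs s]\bs{x^s}$ is generated by $\bs{x^s}$ as a left $V^{\bs0}\ms{D}_{X\times\mb C^r}$-module, so a finite set of $V^{\bs0}\ms{D}_X$-generators of $V^{\bs\alpha}_*\mc{M}$ gives generators; then Proposition \ref{prop:good V* criterion} gives goodness. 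Exhaustiveness holds since $\bigcup_{\bs\alpha}V^{\bs\alpha}_*\mc{M}=\mc{M}$.

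Third, the eigenvalue condition across walls: the quotient $U^{\bs\alpha}_*/U^{\bs\beta}_*$ equals $\ms{D}_X[\bs s]\bs{x^s}\otimes (V^{\bs\alpha}_*\mc{M}/V^{\bs\beta}_*\mc{M})$ by right exactness (and injectivity from the first step). On the graph side the operator $s_i=-\partial_{t_i}t_i$ corresponds, through the bimodule relation, to right multiplication by $-\partial_{x_i}x_i$ on the $\mc{M}$ factor modulo commuting past $\ms{D}_X$; more precisely $\bs{x^s}\cdot\bs{L}(\bs{\partial_x x}) = (\bs{L}(\bs{\partial_x x})-\bs{L}(\bs s))\bs{x^s}$ and $\bs{L}(\bs s)$ is central-ish, so $\prod(\bs L(\bs s)+\gamma_j)$ applied to $Q\bs{x^s}\otimes u$ can be rewritten, using that $\bs{L}(\bs{\partial_x x})$ commutes with $\bs{x^s}$ up to this shift, as $Q'\bs{x^s}\otimes\prod(\gamma_j-\bs L(\bs{\partial_xx}))u=0$. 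The main obstacle I expect is the first step: establishing injectivity of the tensor product into $\iota_+\mc{M}$ (equivalently the flatness/Tor-vanishing needed), and handling that $s_i$ does not commute with $\ms{D}_X$ in the bimodule cleanly; I would attack it by reducing via Proposition \ref{prop:reduction to graph embedding}'s coordinates $u_i=t_i$, $v_i=t_i-x_i$, where $\iota_+\mc{M}=\mc{M}[\bs{\partial_v}]$ and the tensor product becomes visibly $V^{\bs\alpha}_*\mc{M}[\bs{\partial_v}]$.
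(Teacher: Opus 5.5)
Your overall architecture matches the paper's. You define $U^{\bs{\alpha}}=\ms{D}_X[\bs{s}]\bs{x^s}\otimes_{V^{\bs{0}}\ms{D}_X}V^{\bs{\alpha}}_*\mc{M}$, check the good wall and chamber and eigenvalue conditions, and conclude by Theorem \ref{thm:V* uniqueness}.

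The step you flag as the main obstacle is a genuine gap. You never prove that $\ms{D}_X[\bs{s}]\bs{x^s}\otimes_{V^{\bs{0}}\ms{D}_X}V^{\bs{\alpha}}_*\mc{M}\to\mc{M}[\bs{s}]\bs{x^s}$ is injective. Without it, $U^\bullet$ is not a filtration of $\iota_+\mc{M}$, and $U^{\bs{\alpha}}/U^{\bs{\beta}}$ is not identified with the tensor product against $V^{\bs{\alpha}}_*\mc{M}/V^{\bs{\beta}}_*\mc{M}$.

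Your proposed route does not work. The substitution $\partial_{x_i}x_i\mapsto\partial_{x_i}x_i-s_i$ is an automorphism of $V^{\bs{0}}\ms{D}_X[\bs{s}]$. It extends only to $\ms{D}_X(*D)[\bs{s}]$, since it forces $\partial_{x_i}\mapsto\partial_{x_i}-s_ix_i^{-1}$, and not to $\ms{D}_X[\bs{s}]$. So it does not convert the twisted functor into $\mb{C}[\bs{s}]\otimes_{\mb{C}}(\ms{D}_X\otimes_{V^{\bs{0}}\ms{D}_X}-)$, and the two functors genuinely differ.

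Take $X=\mb{C}$, $r=1$, $\mc{M}=\mc{O}_X(*D)$. Then $V^1_*\mc{M}=\mc{O}_X\cong V^0\ms{D}_X/V^0\ms{D}_X\,x\partial_x$. The twisted tensor product $\ms{D}_X[s]x^s\otimes_{V^0\ms{D}_X}\mc{O}_X=\ms{D}_X[s]/\ms{D}_X[s](x\partial_x-s)\cong\ms{D}_X[s]x^s$ does embed in $\iota_+\mc{M}$. By contrast, the untwisted map $\ms{D}_X\otimes_{V^0\ms{D}_X}\mc{O}_X=\ms{D}_X/\ms{D}_Xx\partial_x\to\mc{O}_X(*D)$ kills $\partial_x\otimes 1$.

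So Proposition \ref{prop:* formula}, which concerns the untwisted tensor product with $V^{\bs{0}}_*$ only, cannot supply injectivity for general $\bs{\alpha}$; the $\bs{s}$-twist is what makes it true. Your fallback in the coordinates $u_i=t_i$, $v_i=t_i-x_i$ just asserts the conclusion. Nothing is ``visible'' until you know the structure of the tensor product.

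The missing ingredient, which the paper uses, is that $\ms{D}_X[\bs{s}]\bs{x^s}$ is a \emph{free} right $V^{\bs{0}}\ms{D}_X$-module with basis $\{\bs{\partial_x^n}\bs{x^s}\}_{\bs{n}\in\mb{Z}^r_{\geq 0}}$. There are two ways to see this.

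One is to filter by order, with each $s_i$ in degree one. On the associated graded, the twisted right action of $\partial_{x_i}x_i$ becomes multiplication by $x_i\xi_i-s_i$, where $\xi',\bs{\xi}$ are the symbols of $\partial_{x'},\bs{\partial_x}$. Then $\mc{O}_X[\xi',\bs{\xi},\bs{s}]=\bigoplus_{\bs{n}}\bs{\xi}^{\bs{n}}\mc{O}_X[\xi',x_1\xi_1-s_1,\ldots,x_r\xi_r-s_r]$ by a triangular change of variables, and freeness lifts from the graded.

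The other uses your own ingredients. Your $s_i$-elimination shows the elements $\bs{\partial_x^n}\bs{x^s}\otimes m$ span $\ms{D}_X[\bs{s}]\bs{x^s}\otimes_{V^{\bs{0}}\ms{D}_X}N$. In the $(u,v)$ picture, the composite $\bigoplus_{\bs{n}}N\to\mc{M}[\bs{\partial_v}]$, $(m_{\bs{n}})\mapsto\sum_{\bs{n}}(-\bs{\partial_v})^{\bs{n}}m_{\bs{n}}$, is injective. Spanning together with injectivity of this composite is the argument you needed to write down.

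Freeness makes $\ms{D}_X[\bs{s}]\bs{x^s}\otimes_{V^{\bs{0}}\ms{D}_X}(-)$ exact. Hence the $U^{\bs{\alpha}}$ nest inside $\ms{D}_X[\bs{s}]\bs{x^s}\otimes_{V^{\bs{0}}\ms{D}_X}\mc{M}=\ms{D}_X(*D)[\bs{s}]\bs{x^s}\otimes_{\ms{D}_X(*D)}\mc{M}=\mc{M}[\bs{s}]\bs{x^s}$, using that each $x_i$ acts invertibly on $\mc{M}$.

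A smaller point concerns your eigenvalue step. There $\bs{L}(\bs{s})\bs{x^s}\otimes u=\sum_i L_i\partial_{x_i}\bs{x^s}\otimes x_iu-\bs{x^s}\otimes\bs{L}(\bs{\partial_x x})u$. The first term vanishes in $U^{\bs{\alpha}}/U^{\bs{\beta}}$ only because $x_iV^{\bs{\alpha}}_*\mc{M}=V^{\bs{\alpha}+\bs{1}_i}_*\mc{M}\subset V^{\bs{\beta}}_*\mc{M}$ whenever $L_i\neq 0$. Calling $\bs{L}(\bs{s})$ ``central-ish'' does not account for this term.
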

\begin{proof}
We first argue that the right hand side of \eqref{eq:V-filtration on graph 1} defines an exhaustive wall and chamber filtration on $\iota_+\mc{M} = \mc{M}[\bs{s}]\bs{x^s}$. To see this, observe that the right $V^{\bs{0}}\ms{D}_X$-module $\ms{D}_X[\bs{s}]\bs{x^s}$ is free with basis $\{\bs{\partial_x^n}\mid \bs{n} \in \mb{Z}^r\}$. So the right hand side of \eqref{eq:V-filtration on graph 1} defines an exhaustive filtration of $\ms{D}_X[\bs{s}] \bs{x^s} \otimes_{V^{\bs{0}}\ms{D}_X} \mc{M}$. Since each $x_i$ acts invertibly on $\mc{M}$, we have, localising at $D$,
\[ \ms{D}_X[\bs{s}]\bs{x^s} \otimes_{V^{\bs{0}}\ms{D}_X} \mc{M} = \ms{D}_X(*D)[\bs{s}]\bs{x^s} \otimes_{\ms{D}_X(*D)} \mc{M} = \mc{M}[\bs{s}]\bs{x^s}.\]
Thus, we have an exhaustive filtration
\[ U^{\bs{\alpha}}\iota_+\mc{M} \colonequals \ms{D}_X[\bs{s}]\bs{x^s}\otimes_{V^{\bs{0}}\ms{D}_X} V_*^{\bs{\alpha}}\mc{M},\]
which is clearly a wall and chamber filtration since $V_*^{\bs{\alpha}}\mc{M}$ is one. Moreover,
\begin{align*}
 t_i U^{\bs{\alpha}}\iota_+\mc{M} &= \ms{D}_X[\bs{s}]x_i\bs{x^s} \otimes_{V^{\bs{0}}\ms{D}_X} V_*^{\bs{\alpha}}\mc{M} \\
 &= \ms{D}_X[\bs{s}]\bs{x^s} \otimes_{V^{\bs{0}}\ms{D}_X} x_iV_*^{\bs{\alpha}}\mc{M} \\
 &= \ms{D}_X[\bs{s}]\bs{x^s} \otimes_{V^{\bs{0}}\ms{D}_X} V_*^{\bs{\alpha} + \bs{1}_i}\mc{M} = U^{\bs{\alpha} + \bs{1}_i}\iota_+\mc{M},
 \end{align*}
so $U^\bullet\iota_+\mc{M}$ is compatible with the action of $V^\bullet\ms{D}_{X \times \mb{C}^r}[(t_1 \cdots t_r)^{-1}]$. 

Next, by Proposition \ref{prop:good V* criterion}, to check that the wall and chamber filtration $U^\bullet\iota_+\mc{M}$ is good, by Lemma \ref{lem: goodness part}, we need only check that each $U^{\bs{\alpha}}\iota_+\mc{M}$ is a coherent $V^{\bs{0}}\ms{D}_{X \times \mb{C}^r}$-module. Since it is already coherent over $\ms{D}_X[\bs{s}]$ by construction, this is immediate.

Finally, let us check the eigenvalue condition \eqref{itm:V* 2} of Definition \ref{definition: V*}. Suppose that $\bs{\alpha} \leq \bs{\beta}$ are separated by a single wall $\bs{L}^{-1}(\gamma)$. Then there exist $\gamma_j$ with $\Re \gamma_j = \gamma$ such that
\[ \prod_j (-\bs{L}(\bs{\partial_x x}) + \gamma_j) \colon \frac{V_*^{\bs{\alpha}}\mc{M}}{V_*^{\bs{\beta}}\mc{M}} \to \frac{V_*^{\bs{\alpha}}\mc{M}}{V_*^{\bs{\beta}}\mc{M}} \]
is zero. Observe that if $L_i \neq 0$ then $x_iV_*^{\bs{\alpha}}\mc{M} = V_*^{\bs{\alpha} + \bs{1}_i}\mc{M} \subset V_*^{\bs{\beta}}\mc{M}$, so for $m \in V_*^{\bs{\alpha}}\mc{M}/V_*^{\bs{\beta}}\mc{M}$, one has
\begin{align*}
s_i Q(\bs{s}) \bs{x^s} \otimes m &= Q(\bs{s})\partial_{x_i}x_i \bs{x^s} \otimes m - Q(\bs{s}) \bs{x^s}\partial_{x_i} x_i \otimes m \\
&= Q(\bs{s}) \partial_{x_i} \bs{x^s} \otimes x_i m - Q(\bs{s})\bs{x^s} \otimes (\partial_{x_i} x_i) m\\
&= -Q(\bs{s})\bs{x^s} \otimes (\partial_{x_i} x_i) m.
\end{align*} Hence, as operators on
\[ \frac{U^{\bs{\alpha}}\iota_+\mc{M}}{U^{\bs{\beta}}\iota_+\mc{M}} = \ms{D}_X[\bs{s}]\bs{x^s} \otimes_{V^{\bs{0}}\ms{D}_X}\frac{V_*^{\bs{\alpha}}\mc{M}}{V_*^{\bs{\beta}}\mc{M}}\]
we have
\[  \prod_j (\bs{L}(\bs{s}) + \gamma_j) = \id \otimes \prod_j (-\bs{L}(\bs{\partial_x x}) + \gamma_j) = 0.\]
Thus, $U^\bullet\iota_+\mc{M}$ is a multivariate $V_*$-filtration, which proves the proposition.\end{proof}

\section{The multivariate $V$-filtration on holonomic $\sD$-modules}\label{sec: multivariate V and holonomic}

In this section, we study the multivariate $V$-filtrations on holonomic $\ms{D}$-modules in more detail. In \S\ref{subsec:normal crossings}, we write down the multivariate $V$-filtration in the case of a meromorphic flat connection with good formal structure. We use this in \S\ref{subsec:existence} to prove that every holonomic $\ms{D}$-module admits a multivariate $V$-filtration by a resolution of singularities argument, and relate the walls to the numerical data specified by the resolution. We conclude the section with a discussion of categories of fully $A$-specialisable $\ms{D}$-modules, whose $V$-filtrations are always defined over a restricted set $E \subset \mb{C}$, and show that the multivariate $V$-filtrations of these are also defined over $A$.

\subsection{The normal crossings case} \label{subsec:normal crossings}

In this subsection, we write down the multivariate $V$-filtration explicitly for the basic local models for holonomic $\ms{D}$-modules: flat meromorphic connections with unramified good normal crossings singularities. We note that in the case of regular singularities, these are nothing but flat meromorphic connections with poles along a divisor with normal crossings. In the irregular case, further conditions need to be imposed on the singularity of the connection to obtain simple local models.

Let $X$ be a complex manifold, $D \subset X$ a divisor with simple normal crossings and $\mc{M}$ a flat meromorphic connection on $X$ with poles along $D$. In other words, $\mc{M}$ is a $\ms{D}_X(*D)$-module that is coherent (hence locally free) over $\mc{O}_X(*D)$. Following terminology used by Sabbah and Kedlaya \cite[Definition 3.4.6]{Kedlaya2010} (see also \cite[Definition I.2.1.5]{Sabbahirregular}, and \cite[Definitions 2.2.3 and 2.3.5]{Mochizuki2011}), we say that $\mc{M}$ has \emph{unramified good formal structure} if, for every $x \in X$, the completion
\[ \widehat{\mc{M}}_x := \widehat{\mc{O}}_{X, x} \otimes_{\mc{O}_X} \mc{M}\]
admits a $\widehat{\ms{D}}_{X, x}(*D)$-decomposition
\[ \widehat{\mc{M}}_x = \bigoplus_{j \in J}\mc{U}_j\exp(\phi_j)\]
where $J$ is some finite set, each $\mc{U}_j$ is a $\widehat{\ms{D}}_{X, x}(*D)$-module with regular singularities along $D$, $\phi_j \in \widehat{\mc{O}}_{X, x}(*D)$ and $\exp(\phi_j)$ denotes the exponential twist by $\phi_j$. The twists $\phi_j$ are further assumed to satisfy:
\begin{enumerate}
\item \label{itm:unramified good 1} If $j \in J$ then either $\phi_j \in \widehat{\mc{O}}_{X, x}$ or $\phi_j$ is of the form
\begin{equation} \label{eq:unramified good 1}
\phi_j = ut_1^{-n_1} \cdots t_r^{-n_r},
\end{equation}
for some $n_i \in \mb{Z}_{\geq 0}$ and some unit $u \in \widehat{\mc{O}}_{X, x}$. Here $t_i = 0$ are local equations for the irreducible components of $D$ containing $x$, 
\item If $j, k \in J$ then either $\phi_j - \phi_k \in \widehat{\mc{O}}_{X, x}$ or $\phi_j - \phi_k$ is of the form 
\[ \phi_j - \phi_k = ut_1^{-n_1} \cdots t_r^{-n_r},\]
for some $n_i \in \mb{Z}_{\geq 0}$ and some unit $u \in \widehat{\mc{O}}_{X, x}$.
\end{enumerate}
For our purposes, only condition \eqref{itm:unramified good 1} will play a role. We say that $\mc{M}$ has \emph{good formal structure} if, locally on $X$, there exists a finite covering $\pi \colon \tilde X \to X$, ramified along $D$, such that $\pi^*\mc{M}$ has unramified good formal structure. Here $\pi^*$ is the usual pullback of a meromorphic connection.

In general, if $D$ is any divisor on $X$ and $\mc{M}$ is a flat meromorphic connection on $X$ with poles along $D$, we will sometimes say that \emph{$\mc{M}$ has (unramified) good normal crossings singularities} if $D$ has simple normal crossings and $\mc{M}$ has (unramified) good formal structure. By Hironaka, one can always resolve the divisor $D$ by one with simple normal crossings; a key theorem of Kedlaya (in the analytic case) and Mochizuki (in the algebraic case) states similarly that any flat meromorphic connection can be resolved by one with good normal crossings singularities:

\begin{thm}[{\cite[Theorems 4.4.7 and 4.5.1]{Kedlaya2021} \cite[Theorem 1.3.3]{Mochizuki2011}}] \label{thm:resolution of turning points}
Let $\mc{M}$ be a flat meromorphic connection on $X$ with poles along an arbitrary divisor $D \subset X$. Then there exists a projective birational morphism $\pi \colon \tilde X \to X$, an isomorphism outside $D$, such that $\tilde X$ is smooth, $\pi^{-1}(D)$ has simple normal crossings and $\pi^*\mc{M}$ has good formal structure. 
\end{thm}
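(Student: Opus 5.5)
The plan is not to reprove this theorem but to deduce the form stated here from the cited results, checking that the hypotheses and conclusions match. First, since all later uses of the theorem in this paper are local on $X$ (existence and uniqueness of multivariate $V$-filtrations are local statements), I will allow myself to shrink $X$ to a relatively compact open neighbourhood of a given point whenever convenient. The one thing I have to confirm is that the projective morphism $\pi$ may be constructed on such a neighbourhood, so that ``locally on $X$'' is an acceptable reading in the analytic setting.

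\textbf{Step 1.} Apply Hironaka's embedded resolution to the pair $(X, D)$. This gives a projective birational morphism $\pi_1 \colon X_1 \to X$, an isomorphism outside $D$, with $X_1$ smooth and $\pi_1^{-1}(D)$ a divisor with simple normal crossings. The pullback $\pi_1^*\mc{M}$ is again a flat meromorphic connection with poles along $\pi_1^{-1}(D)$, because pullback of meromorphic connections only involves $\mc{O}(*D)$-linear operations. So we may assume from the start that $D$ has simple normal crossings.

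\textbf{Step 2.} Invoke \cite[Theorem 4.4.7]{Kedlaya2021} in the analytic case, or \cite[Theorem 1.3.3]{Mochizuki2011} in the algebraic case. Either gives, locally on $X$ (and globally on a relatively compact open in the analytic case, by \cite[Theorem 4.5.1]{Kedlaya2021}), a further sequence of blow-ups $\pi_2 \colon \tilde X \to X_1$ with centres lying over $D$ such that $(\pi_1\pi_2)^*\mc{M}$ has good formal structure in the sense above, i.e.\ unramified good formal structure after a finite cover ramified along the boundary. Such blow-ups preserve smoothness and the simple normal crossings property of the total transform. The composite $\pi = \pi_1 \circ \pi_2$ is then projective, birational and an isomorphism outside $D$.

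\textbf{Step 3.} Check that the definitions agree: Kedlaya's ``good formal structure'' \cite[Definition 3.4.6]{Kedlaya2010} is exactly the notion recalled above, with ramified covers permitted. Note that we do \emph{not} claim an unramified good formal structure, which cannot be achieved in general.

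The main obstacle is the analytic globalisation. Kedlaya's results are naturally local, or apply on relatively compact subsets, whereas the statement is phrased globally on $X$. I would handle this by stating explicitly that the theorem is applied only after shrinking $X$ to a relatively compact open. This suffices for everything in \S\ref{subsec:existence}, since multivariate $V$-filtrations are unique by Theorem \ref{thm:multivariate V uniqueness} and therefore glue from local pieces.
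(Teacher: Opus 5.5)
Your proposal agrees with the paper, which offers no argument beyond citing Kedlaya and Mochizuki. Your point that in the analytic case the theorem should be applied over relatively compact opens is a fair sharpening of the global phrasing, and it suffices because every use, in \S\ref{subsec:existence} and in Proposition~\ref{prop:V is relative holonomic}, is local.

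One correction: blow-ups with centres over $D$ do not in general preserve smoothness or simple normal crossings, so that justification in Step~2 is false as stated. These properties of $\tilde X$ and $\pi^{-1}(D)$ must instead be read off from the conclusions of the cited theorems. Those theorems also treat an arbitrary divisor, so your Step~1 is unnecessary.
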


A resolution as in Theorem \ref{thm:resolution of turning points} is called a \emph{resolution of turning points} (a turning point is a point at which $\cM$ does not have good formal structure). Roughly speaking, resolutions of turning points play the role of resolutions of singularities in the theory of irregular $\ms{D}$-modules. 

\begin{example}\label{example: resolution of turning points}
Let $X = \mb{C}^2$ with $t_1, t_2$ the usual coordinates and consider the rank $1$ connection $\cM=\left(\cO_X(*D),d+d(t_2/t_1)\right)$; as a $\ms{D}_X(*D)$-module, this is the exponential twist $\mc{O}_X(*D)\exp(t_2/t_1)$. Since $t_2/t_1$ does not have the form \eqref{eq:unramified good 1} in the definition of good formal structure above, we see that $\mc{M}$ has a turning point at $(0, 0)$. Letting $\pi \colon \tilde{X} \to X$ be the blowup at $(0, 0)$, we have $\pi^*\mc{M} = \mc{O}_X(*D)\exp(\pi^*(t_2/t_1))$; on the blowup, the function $\pi^*(t_2/t_1)$ does have the form \eqref{eq:unramified good 1} at every point, so $\pi$ is a resolution of turning points.
Later, this example will demonstrate that resolving turning points is necessary for constructing the multivariate $V$-filtration (see Example \ref{example: V filtration of d+dy/x}).
\end{example}

To prove existence of the multivariate $V$-filtration, we will also need the notion of the \emph{Deligne-Malgrange lattice}. For simplicity, we restrict our discussion to the case of unramified good normal crossings singularities, although the notion can be defined more generally (see, e.g.\ \cite[\S 2.7.2.1]{Mochizuki2011}).

\begin{defn} \label{defn:Deligne-Malgrange}
Let $\mc{M}$ be a flat meromorphic connection on $X$, with poles along a simple normal crossings divisor $D$ and unramified good formal structure. A \emph{Deligne-Malgrange lattice} is a coherent $\mc{O}_X$-submodule $M \subset \mc{M}$, with induced connection $\nabla \colon M \to M (*D)\otimes \Omega_X^1$, such that, $M\otimes_{\cO_X}\cO_X(*D)=\mc{M}$ and, for every point $x \in X$, we have a decomposition
\begin{equation} \label{eq:Deligne-Malgrange decomposition}
(\widehat{M}_x, \widehat{\nabla}_x) = \bigoplus_{j \in J} (U_j, \nabla_j + d \phi_j),
\end{equation}
where $\phi_j$ are as in the definition of unramified good formal structure and $(U_j, \nabla_j)$ are (formal) vector bundles with flat connections with logarithmic singularities along $D$ such that, for each irreducible component $D_i$ of $D$ containing $x$, every eigenvalue of the residue $\Res_{D_i} \nabla_j$ has real part in $[0, 1)$.
\end{defn}
Note that when $\mc{M}$ has regular singularities, Deligne's canonical extension gives a Deligne-Malgrange lattice. In general, we have the following result.

\begin{thm}[{e.g.\ \cite[Theorem 5.3.4]{Kedlaya2010}} and Malgrange \cite{Malgrange1996}] \label{thm:Deligne-Malgrange lattice}
Let $\mc{M}$ be a flat meromorphic connection with unramified good normal crossings singularities. Then there exists a unique Deligne-Malgrange lattice $M \subset \mc{M}$.
\end{thm}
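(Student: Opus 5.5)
The plan is to prove uniqueness first, by a purely formal and local argument, and then to build existence from local formal data, descending to convergent lattices and gluing them using uniqueness.

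\textbf{Uniqueness.} Let $M, M' \subset \mc{M}$ be two Deligne-Malgrange lattices. Both are coherent $\mc{O}_X$-submodules of the locally free $\mc{O}_X(*D)$-module $\mc{M}$. Since $\widehat{\mc{O}}_{X,x}$ is faithfully flat over $\mc{O}_{X,x}$, a lattice is recovered from its completion by $M_x = \widehat{M}_x \cap \mc{M}_x$ inside $\widehat{\mc{M}}_x$. It therefore suffices to show $\widehat{M}_x = \widehat{M}'_x$ for each $x$.

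Fix $x$ and the decomposition $\widehat{\mc{M}}_x = \bigoplus_j \mc{U}_j\exp(\phi_j)$.
\begin{enumerate}
\item The decompositions \eqref{eq:Deligne-Malgrange decomposition} for $M$ and $M'$ are compatible with this splitting. Each summand is characterised intrinsically: $\widehat{M}_x \cap \mc{U}_j\exp(\phi_j)$ is the image of the corresponding projection. The underlying reason is that there are no nonzero horizontal morphisms between $\mc{U}_j\exp(\phi_j)$ and $\mc{U}_k\exp(\phi_k)$ when $\phi_j - \phi_k$ has a pole; the goodness condition makes this pole a monomial, so the twisted connection acts invertibly on the relevant $\shom$. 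I would normalise so that the $\phi_j$ are pairwise distinct modulo $\widehat{\mc{O}}_{X,x}$ and absorb holomorphic differences into the $\mc{U}_j$.
\item This reduces uniqueness to the regular singular case: show that a logarithmic lattice in $\mc{U}_j$ whose residues along each $D_i$ have eigenvalues with real part in $[0,1)$ is unique. This is the classical uniqueness of the Deligne extension. Concretely, I would compare two such lattices $U, U'$ via the $\Res_{D_i}$ action on $(U+U')/U$. An element of $t_i^{-k}U \cap U'$ not in $U$ produces eigenvalues of the residue on $U'$ that are shifted by $-k$ relative to those on $U$, which is incompatible with both real-part windows being $[0,1)$.
\end{enumerate}

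\textbf{Existence.}
\begin{enumerate}
\item At each $x$, build the formal lattice $\widehat{L}_x = \bigoplus_j U_j\exp(\phi_j)$, where $U_j$ is the formal Deligne lattice of the regular formal module $\mc{U}_j$. Existence of $U_j$ is standard, via the $\widehat{\mc{O}}$-version of Deligne's canonical extension, for instance by taking a $\mb{C}$-span of eigen-solutions with exponents in $[0,1)$.
\item Descend to a convergent lattice by setting $L_x := \widehat{L}_x \cap \mc{M}_x$. This is a finite $\mc{O}_{X,x}$-module with $\widehat{L_x} = \widehat{L}_x$, by the standard correspondence between $\mc{O}_{X,x}$-lattices of $\mc{M}_x$ and $\widehat{\mc{O}}_{X,x}$-lattices of $\widehat{\mc{M}}_x$. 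Spreading $L_x$ out gives a coherent lattice $L$ on a neighbourhood of $x$.
\item Verify that $L$ is a Deligne-Malgrange lattice at all nearby points $y$, not just at $x$. Once this holds, uniqueness glues the local lattices into a global one.
\end{enumerate}

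\textbf{Main obstacle.} Step 3 of existence is the hard part. The formal decomposition at $y$ is not obviously induced from the one at $x$, because the completions at different points are unrelated a priori. My first attempt would use the fact that, near $x$, the good formal structure is induced by a decomposition over the completion along the stratum of $D$ through $x$ (Mochizuki/Kedlaya), and that the residue eigenvalues are locally constant along each component $D_i$. Alternatively, I would characterise Deligne-Malgrange lattices by an open, pointwise-checkable condition: logarithmic growth of horizontal sections of $\nabla - d\phi_j$ with exponents in $[0,1)$, in the spirit of Malgrange's construction. Such a condition propagates from $x$ to nearby points. This is the step where I would lean on \cite{Malgrange1996} and \cite[\S 5.3]{Kedlaya2010} rather than reprove it.
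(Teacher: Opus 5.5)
Your uniqueness argument is sound. Equality of two submodules of $\mc{M}_x$ can be tested after the faithfully flat base change to $\widehat{\mc{O}}_{X,x}$. The splitting of $\widehat{\mc{M}}_x$ according to the classes of the $\phi_j$ modulo $\widehat{\mc{O}}_{X,x}$ is canonical, because there are no nonzero morphisms between summands whose exponents differ by $u\,t_1^{-n_1}\cdots t_r^{-n_r}$ with some $n_i>0$. What remains is the uniqueness of the formal Deligne lattice. (The paper gives no proof of this theorem; it only cites Kedlaya and Malgrange.)

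The gap is in step 2 of existence. Once $\dim X\geq 2$, there is no general correspondence between $\mc{O}_{X,x}$-lattices in $\mc{M}_x$ and $\widehat{\mc{O}}_{X,x}$-lattices in $\widehat{\mc{M}}_x$.

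\begin{itemize}
\item Such a correspondence does hold for completion along $D$ (Beauville--Laszlo), since $\mc{O}/(t^N)$ is unchanged by $t$-adic completion.
\item It fails for completion at a point, because $\mc{O}_{X,x}/(t^N)\to\widehat{\mc{O}}_{X,x}/(t^N)$ is not an isomorphism.
\end{itemize}

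For a concrete failure, take $X=\mb{C}^2$, $D=\{x_1=0\}$, $\mc{M}=\mc{O}_X(*D)e_1\oplus\mc{O}_X(*D)e_2$, and $g\in\mb{C}[[x_2]]$ divergent. Set $\widehat{L}=\widehat{\mc{O}}_{X,0}\,e_1\oplus\widehat{\mc{O}}_{X,0}\,x_1^{-1}(e_2-ge_1)$. Then $\widehat{L}\cap\mc{M}_0=\mc{O}_{X,0}e_1\oplus\mc{O}_{X,0}e_2$. Indeed, write an element of the intersection as $ae_1+x_1^{-1}b_0e_2$ with $b_0\in\mc{O}_{X,0}$. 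Membership in $\widehat{L}$ requires $x_1a+gb_0\in x_1\widehat{\mc{O}}_{X,0}$, which forces $g\cdot b_0(0,x_2)$ to be convergent, hence $b_0(0,x_2)=0$. The completion of this intersection is strictly smaller than $\widehat{L}$.

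So $L_x:=\widehat{L}_x\cap\mc{M}_x$ is a lattice, but nothing in your argument gives $\widehat{L_x}=\widehat{L}_x$. For the particular lattice $\bigoplus_j U_j\exp(\phi_j)$ this equality is true a posteriori. However, it is a convergence statement that is essentially the existence theorem itself. Combined with step 3, which you explicitly defer to \cite{Malgrange1996} and \cite{Kedlaya2010}, the existence half of your proposal reduces to exactly the results the paper cites.

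The cited approach avoids pointwise descent altogether:
\begin{itemize}
\item The lattice is first constructed near generic points of $D$, where only completion along $D$ is needed and descent is valid.
\item It is then extended across a closed analytic subset of codimension at least two by a coherence argument.
\item Finally, its completion at the remaining points is identified with $\bigoplus_j U_j\exp(\phi_j)$. This last step is where good formal structure at every point is actually used, and it is your step 3.
\end{itemize}
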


We will also use a slight variant of this construction. Fix local equations $t_1, \ldots, t_r$ for the irreducible components $D_1, \ldots, D_r$. For $\bs{\alpha} \in \mb{R}^r$, consider the meromorphic connection
\[ \mc{M}\bs{t}^{-\bs{\alpha}} = \mc{M}t_1^{-\alpha_1}\cdots t_r^{-\alpha_r} = \frac{\mc{M}[\bs{s}]\bs{t}^{\bs{s}}}{(s_1 + \alpha_1, \ldots, s_r + \alpha_r)}.\]
This is just $\mc{M}$ as an $\mc{O}_X(*D)$-module, but with a modified connection. If $\mc{M}$ has unramified good formal structure, then so does $\mc{M}\bs{t}^{-\bs{\alpha}}$. If we write $M^{\bs{\alpha}} \subset \mc{M}$ for the $\mc{O}_X$-submodule such that $M^{\bs{\alpha}} \bs{t}^{-\bs{\alpha}}$ is the Deligne-Malgrange lattice of $\mc{M}\bs{t}^{-\bs{\alpha}}$, then $M^{\bs{\alpha}}$ is characterised by the same conditions as Definition \ref{defn:Deligne-Malgrange} except that the residues $\Res_{D_i}\nabla_j$ have real part in the interval $[\alpha_i, \alpha_i + 1)$ instead of $[0, 1)$.

\begin{prop} \label{prop:normal crossings V-filtration}
Let $\mc{M}$ be a flat meromorphic connection with poles along a simple normal crossings divisor $D = \bigcup_{i = 1}^r D_i$ and good formal structure. Then the multivariate $V$-filtration $V^\bullet\mc{M}$ along $D$ exists and is given by \begin{equation}\label{eq:normal crossings}
V^{\bs{\alpha}}\mc{M} = V_{D_1}^{\alpha_1}\mc{M} \cap \cdots \cap V_{D_r}^{\alpha_r}\mc{M}, \quad \textrm{for all $\bs{\alpha}\in \R^r$}. \end{equation}
\end{prop}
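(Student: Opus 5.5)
The plan is to work throughout with $V_*$-filtrations. Since $\mc{M} = \mc{M}(*D)$, Corollaries \ref{cor:localising V} and \ref{cor:extending V*} reduce existence of $V^\bullet\mc{M}$ to existence of a $V_*$-filtration on $\mc{M}$. That $V_*$-filtration then extends by integer translation, $t_i V_*^{\bs{\alpha}} = V_*^{\bs{\alpha} + \bs{1}_i}$, to all of $\mb{R}^r$. I would first treat the unramified case and then descend along a ramified covering.

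\emph{Unramified case.} Write $M^{\bs{\beta}}$ for the twisted Deligne--Malgrange lattices: residues have real parts in $[\beta_i, \beta_i + 1)$. Note that $s_i = -\partial_{t_i}t_i$ acts on a section behaving like $t_i^{a}$ by $-(a+1)$. So the natural candidate is
\[ U_*^{\bs{\alpha}}\mc{M} := V^{\bs{0}}\ms{D}_X \cdot M^{\bs{\alpha} - \bs{1}}. \]
The verification has four parts.
\begin{enumerate}
\item Shifting $\bs{\alpha}$ in any coordinate multiplies $M^{\bs{\alpha}-\bs{1}}$ by the corresponding $t_i$. Hence $t_i U_*^{\bs{\alpha}} = U_*^{\bs{\alpha} + \bs{1}_i}$, which is the compatibility with $V^\bullet\ms{D}_X(*D)$.
\item $U_*^\bullet$ is a wall and chamber filtration. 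The lattices $M^{\bs{\beta}}$ change only when some $\beta_i$ crosses the real part of a residue eigenvalue. There are finitely many such values modulo $\mb{Z}$, so the walls are of the form $\{\alpha_i = c\}$.
\item Each piece is coherent over $V^{\bs{0}}\ms{D}_X$, since it is generated by a coherent $\mc{O}_X$-module. Goodness then follows from Proposition \ref{prop:good V* criterion}.
\item The eigenvalue condition \eqref{itm:V* 2} is the main step.
\end{enumerate}

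For step (4) I would argue after completion at each point, since the $\mb{C}[\bs{s}]$-annihilator can be detected formally by faithful flatness. On a summand $(U_j, \nabla_j + d\phi_j)$ with $\phi_j$ holomorphic, $t_i\partial_{t_i}$ preserves the logarithmic lattice. On $\gr$ it acts by the residue, whose eigenvalues have the required real part, so the quotients across $\{\alpha_i = c\}$ are killed by $\prod(s_i + \gamma)$ with $\Re\gamma = c$. On a summand with $\phi_j = u\bs{t}^{-\bs{n}}$ and some $n_i > 0$, the operator $t_i\partial_{t_i}$ acts as multiplication by $-n_i u\bs{t}^{-\bs{n}}$ plus a logarithmic term. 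I expect this to force $V^{\bs{0}}\ms{D}\cdot (U_j\bs{t}^{\bs{k}}e^{\phi_j})$ to be independent of $k_i$, so that this summand contributes no jumps in the $i$-th direction.

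This irregular computation is the step I expect to be the main obstacle. I would first try a filtration-by-pole-order argument, showing $t_i\cdot(\text{piece}) = \text{piece}$ via invertibility of $u$. Afterwards I would use Lemma \ref{lem:localised chambers} together with uniqueness (Theorem \ref{thm:V* uniqueness}) to pass from the completion back to the analytic module.

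\emph{Identification with \eqref{eq:normal crossings}.} For $r = 1$, or along each single component, the same construction is the classical $V$-filtration along $D_i$. Both sides of \eqref{eq:normal crossings} are then determined by lattices in $\widehat{\mc{M}}_x$, summand by summand. In the decomposition \eqref{eq:Deligne-Malgrange decomposition}, $M^{\bs{\beta}}$ is cut out by separate conditions on each residue, so $M^{\bs{\beta}} = \bigcap_i M^{(\beta_i \text{ at } D_i,\ \text{large elsewhere})}$. I would deduce the intersection formula by comparing localisations at each maximal ideal of $\mb{C}[\bs{s}]$, using Lemma \ref{lem:localisation}.

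\emph{Ramified case.} Locally choose $\pi\colon \tilde X \to X$, $t_i = \tilde t_i^{e_i}$, such that $\pi^*\mc{M}$ is unramified good. Then $\mc{M}$ is the $\mu_{\bs e}$-invariant part of $\pi_*\pi^*\mc{M}$. I would define $V^{\bs{\alpha}}\mc{M}$ as the invariants of $\pi_*V^{(e_1\alpha_1,\dots,e_r\alpha_r)}\pi^*\mc{M}$. Under pushforward $s_i$ corresponds to $\tilde s_i/e_i$, and the group action is semisimple and commutes with $V^{\bs{0}}\ms{D}_X$. Hence the invariant part inherits goodness and the eigenvalue condition with rescaled walls. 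The same invariance argument transports the intersection formula \eqref{eq:normal crossings}. Finally, uniqueness (Theorem \ref{thm:multivariate V uniqueness}) glues the local constructions into a global filtration.
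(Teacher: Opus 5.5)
Your candidate $U_*^{\bs\alpha}=V^{\bs 0}\ms D_X\cdot M^{\bs\alpha-\bs 1}$ is exactly the paper's $U^{\bs\alpha}=V^{\bs 0}\ms D_X\cdot \bs t^{-\bs 1}M^{\bs\alpha}$. Your summand-by-summand analysis (irregular summands become $*$-localised along $D^j_{\mrm{irreg}}$, regular directions see the residue) is the completed formula \eqref{eq:normal crossings 2}. However, two steps fail as written.

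The first is the ramified normalisation. From $t_i=y_i^{e_i}$ you get $y_i\partial_{y_i}=e_it_i\partial_{t_i}$, hence $\partial_{y_i}y_i=e_i\partial_{t_i}t_i+1-e_i$. In other words $\tilde s_i=e_is_i+e_i-1$, not $e_is_i$. So you must take $\Gamma$-invariants of $\pi_*V^{\bs\alpha'}\pi^*\mc M$ with $\alpha_i'=e_i\alpha_i+1-e_i$, as in \eqref{eq:normal crossings 5}. With your index $e_i\alpha_i$, the quotient across a jump $\{\alpha_i=c\}$ is killed by $s_i+\gamma$ with $\Re\gamma=c+1-1/e_i$, so condition \eqref{itm:multivariate V 4} fails. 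Already for $\mc M=\mc O_{\mb C}(*0)$ and $e=2$, your recipe jumps at $1/2$ and gives $V^1\mc M=t\mc O_{\mb C}$ instead of $\mc O_{\mb C}$.

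The second is that working through $V_*$ only proves \eqref{eq:normal crossings} on $\mb R^r_{\geq 0}$. Corollary \ref{cor:extending V*} identifies $V^{\bs\alpha}\mc M$ with $V_*^{\bs\alpha}\mc M$ only for $\bs\alpha\in\mb R^r_{\geq 0}$. The $t_i$-equivariance of $V_*$ tells you nothing about $V$ elsewhere, and in general $V\neq V_*$ off the orthant (Remark \ref{rmk:V versus V*}, Example \ref{example: diagonal embedding}). Since the statement is for all $\bs\alpha\in\mb R^r$, you must also check from the completed description that $U^{\bs\alpha-\bs 1_i}=\partial_{t_i}U^{\bs\alpha}+U^{\bs\alpha-\bs 1_i+\epsilon\bs 1_i}$ for $\alpha_i<0$. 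On a regular summand, with $t_i^a$ the relevant power, this is the invertibility of $a+\Res_{D_i}\nabla_j$ modulo $t_i$, whose eigenvalues have real part $\alpha_i-1\neq 0$. This makes $U^\bullet$ good over $V^\bullet\ms D_X$ and not only over $V^\bullet\ms D_X(*D)$. Uniqueness, or induction via \eqref{eq:weak specialisation 2}, then gives $V=U$ everywhere.

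Your identification step is also underspecified. The lattice identity $M^{\bs\beta}=\bigcap_i M^{(\cdots)}$ does not suffice, because generating over $V^{\bs 0}\ms D_X$ or $V^0_{D_i}\ms D_X$ does not commute with intersections. Moreover, the localisation of irregular summands appears only after generating. What you need are the completions of both sides, \eqref{eq:normal crossings 2} and \eqref{eq:normal crossings 3}. After that the intersection is immediate summand by summand, with no localisation over $\mb C[\bs s]$.

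This is also why the paper takes $\bigcap_i V^{\alpha_i}_{D_i}\mc M$ itself as the candidate. Its eigenvalue condition is inherited from the single-variable filtrations: across $\{\alpha_i=c\}$ the quotient embeds in $\gr^c_{V_{D_i}}\mc M$. So no separate eigenvalue check for $U$ is needed. The one completed computation, including your irregular-summand argument, then gives goodness, the identification, and the $t_i$ and $\partial_{t_i}$ relations at once.
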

\begin{proof}
Consider the filtration on $\mc{M}$ defined by the right hand side of \eqref{eq:normal crossings}. Clearly this is a wall and chamber filtration satisfying \eqref{itm:multivariate V 4} of Definition \ref{defn:multivariate V-filtration}, so it remains to check that it is a good filtration over $V^{\bullet}\sD_X$.

We consider first the case when $\mc{M}$ has unramified good formal structure. Fix local equations $t_1, \ldots, t_r$ for the irreducible components $D_1, \ldots, D_r$ and, for $\bs{\alpha}\in \mb{R}^r$, define $M^{\bs{\alpha}} \subset \mc{M}$ so that $M^{\bs{\alpha}}\bs{t}^{-\bs{\alpha}}$ is the Deligne-Malgrange lattice of $\mc{M}\bs{t}^{-\bs{\alpha}}$ as above. Set 
\[ U^{\bs{\alpha}}\mc{M} := V^{\bs{0}}\ms{D}_X \cdot t_1^{-1} \cdots t_r^{-1}M^{\bs{\alpha}}.\]
We claim that $U^{\bs{\alpha}} \mc{M}$ is equal (as a subsheaf of $\mc{M}$) to the right hand side of \eqref{eq:normal crossings}. 
To check this, it suffices to check that their completions every point $x \in X$ are equal as sub-objects of $\widehat{\mc{M}}_x$. So fix $x \in X$ and a decomposition 
\[ (\widehat{M}_x, \widehat{\nabla}_x) = \bigoplus_{j \in J} (U_j, \nabla_j + d \phi_j).\]
Refining the decomposition if necessary, we may assume further that for each $i$ and each $j$, $\Res_{D_i} \nabla_j$ has a single eigenvalue $\beta_i^j$. Then by the uniqueness in Theorem \ref{thm:Deligne-Malgrange lattice} (which holds also for formal meromorphic connections) we have
\begin{equation} \label{eq:normal crossings 1}
\widehat{M}_x^{\bs{\alpha}} = \bigoplus_{j \in J} t_1^{\lceil \alpha_1 - \Re \beta_1^j\rceil} \cdots t_r^{\lceil \alpha_r - \Re \beta_r^j \rceil} U_j.
\end{equation}
Now, for each $j \in J$, let us write $I_j = \{i \in \{1, \ldots, r\} \mid \text{$\phi_j$ has a pole along $D_i$}\}$ and define
\[ D^j_{\mathrm{irreg}} \colonequals \bigcup_{i \in I_j} D_i.\]
Then, by direct calculation using \eqref{eq:normal crossings 1}, we have 
\begin{equation} \label{eq:normal crossings 2}
(U^{\bs{\alpha}}\mc{M})^{\wedge}_x = \bigoplus_{j \in J} t_1^{\lceil\alpha_1 - \Re \beta_1^j \rceil - 1} \cdots t_r^{\lceil \alpha_r - \Re \beta_r^j \rceil - 1}U_j(*D^j_{\mathrm{irreg}}),
\end{equation}
while the $V$-filtration along $D_i$ is given by
\[ V_{D_i}^0\ms{D}_X \cdot t_1^{-1} \cdots t_r^{-1}M^{\bs{\alpha}} \]
with completion
\begin{equation} \label{eq:normal crossings 3}
(V_{D_i}^\alpha \mc{M})^\wedge_x = \bigoplus_{j \in J} t_i^{\lceil \alpha_i - \Re \beta_i^j\rceil - 1}U_j(*(D^j_{\mrm{irreg}} + \sum_{j \neq i} D_j)).
\end{equation}
Since \eqref{eq:normal crossings 2} is indeed the intersection over all $i$ of \eqref{eq:normal crossings 3}, we conclude that $U^\bullet \mc{M}$ is indeed the intersection of the $V$-filtrations along the components as claimed.

Now, $U^{\bs{\alpha}}\mc{M}$ is manifestly coherent over $V^{\bs{0}}\ms{D}_X$ and, by a further calculation using \eqref{eq:normal crossings 2}, we have
\[ U^{\bs{\alpha} + \bs{1}_i}\mc{M} = t_i(U^{\bs{\alpha}}\mc{M}) \quad \text{for all $\bs{\alpha}$}, \quad U^{\bs{\alpha} - \bs{1}_i}\mc{M} = \partial_{t_i}(U^{\bs{\alpha}}\mc{M}) + U^{\bs{\alpha}-\bs{1}_i+\epsilon\bs{1}_i}\mc{M} \quad \text{for $\alpha_i < 0$}.\]
Consequently,
\[ U^{\bs{\alpha}}\mc{M} = \sum_{\substack{\bs{n} \in\mb{Z}^r, \bs{\gamma} \in [0, 1]^r \\ \bs{n} + \bs{\gamma}\geq \bs{\alpha}}} V^{\bs{n}}\ms{D}_X \cdot U^{\bs{\gamma}}\mc{M}. \]
So the wall and chamber filtration $U^{\bs{\alpha}}\mc{M}$ is good,
and hence is a multivariate $V$-filtration.

Next consider the general case, where $\mc{M}$ has possibly ramified good formal structure. Working locally, we may fix a finite cover $\pi \colon \tilde X \to X$, ramified along $D$, such that $\pi^*\mc{M}$ has unramified good formal structure. In local coordinates, we may assume that $\tilde{X}$ is the ramified Galois cover given by $t_i = y_i^{n_i}$, $i = 1, \ldots, r$ for some $n_1,\ldots, n_r > 0$.

We first claim that
\begin{equation} \label{eq:normal crossings 5}
V_{D_i}^\alpha \mc{M} = \pi_*(V_{\tilde{D}_i}^{n_i \alpha + 1 - n_i}\pi^*\mc{M})^\Gamma,\end{equation}
where $\tilde{D}_i = \pi^{-1}(D_i)$ and $\Gamma= \mu_{n_1} \times \cdots \times \mu_{n_r}$ is the Galois group. Using the relations
\begin{equation} \label{eq:normal crossings 4}
\partial_{y_i} = n_iy_i^{n_i - 1} \partial_{t_i},
\end{equation}
observe that we have
\[ \pi_*(\ms{D}_{\tilde{X}})^\Gamma \subset \ms{D}_X \quad \text{and} \quad U^m_{D_i} \pi_*(\ms{D}_{\tilde{X}})^\Gamma \colonequals \pi_*(V^{m n_i}_{\tilde{D}_i}\ms{D}_{\tilde X})^\Gamma \subset V^m_{D_i}\ms{D}_X,\]
for all $m \in \mb{Z}$. Since $V_{\tilde{D}_i}^\bullet\pi^*\mc{M}$ is good over $V_{\tilde{D}_i}^\bullet\ms{D}_{\tilde{X}}$, it follows that the filtration
\[ U^\alpha_{D_i} \mc{M} \colonequals \pi_*(V^{n_i\alpha + 1 - n_i}_{\tilde{D}_i}\pi^*\mc{M})^\Gamma \]
is good over $U^\bullet_{D_i} \pi_*(\ms{D}_{\tilde X})^\Gamma$. Moreover, from \eqref{eq:normal crossings 3} that, in fact,
\[ V^m_{D_i}\ms{D}_X \cdot U_{D_i}^\alpha\mc{M} \subset U_{D_i}^{m + \alpha}\mc{M}.\]
Since $U_{D_i}^\bullet\mc{M}$ is already good over $U_{D_i}^\bullet \pi_*(\ms{D}_{\tilde{X}})^\Gamma \subset V_{D_i}^\bullet \ms{D}_X$, it is good over $V_{D_i}^\bullet \ms{D}_X$. Moreover, since
\[ \partial_{y_i} y_i = n_i \partial_{t_i} t_i + 1 - n_i,\]
it follows that $\partial_{t_i} t_i$ acts on $\gr_{U_{D_i}}^\alpha \mc{M}$ with generalised eigenvalue $\alpha$. So $U_{D_i}^\bullet \mc{M} = V_{D_i}^\bullet \mc{M}$, which proves \eqref{eq:normal crossings 5}.

Since the functor $\pi_*(-)^\Gamma$ is exact, it follows that
\begin{equation} \label{eq:normal crossings 6}
\begin{aligned}
V_{D_1}^{\alpha_1}\mc{M} \cap \cdots \cap V_{D_r}^{\alpha_r} \mc{M} &= \pi_*( V_{D_1}^{n_1 \alpha + 1 - n_1}\pi^*\mc{M} \cap \cdots \cap V_{D_r}^{n_r \alpha + 1 - n_r}\pi^*\mc{M})^\Gamma\\
&= \pi_*(V^{\bs{n}\bs{\alpha} + \bs{1} - \bs{n}}\pi^*\mc{M})^\Gamma.
\end{aligned}
\end{equation}
Here we have used the fact that we know the proposition for the connection $\pi^*\mc{M}$ with unramified good formal structure. Since $V^{\bullet}\pi^*\mc{M}$ is a good wall and chamber filtration over $V^\bullet \ms{D}_{\tilde X}$, it follows that the right hand side of \eqref{eq:normal crossings 6} is a good wall and chamber filtration over
\[ \pi_*(V^{\bs{n}\bullet}\ms{D}_{\tilde X})^\Gamma \subset V^\bullet \ms{D}_X \]
and hence over $V^\bullet \ms{D}_X$ as claimed.\end{proof}

\subsection{Existence and calculation via resolutions}\label{subsec:existence}

In this subsection, we use resolutions of turning points (Theorem \ref{thm:resolution of turning points}) to prove the following theorem.

\begin{thm} \label{thm:multivariate V existence}
Let $\mc{M}$ be a holonomic $\ms{D}$-module on a complex manifold $X$ and $D \subset X$ a simple normal crossing divisor. Then the multivariate $V$-filtration of $\mc{M}$ along $D$ exists.
\end{thm}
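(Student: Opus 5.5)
We prove Theorem \ref{thm:multivariate V existence} by induction on $\dim \supp \mc{M}$, reducing to the good normal crossings case (Proposition \ref{prop:normal crossings V-filtration}) via resolution of turning points (Theorem \ref{thm:resolution of turning points}) and proper direct images (Theorem \ref{thm:multivariate V direct image}).

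\textbf{Reduction to a localised graph embedding.} The statement is local, so we work near a point with $D = \{t_1 \cdots t_r = 0\}$. The class of coherent $\ms{D}_X$-modules admitting a multivariate $V$-filtration is closed under subquotients (Proposition \ref{prop:V image and preimage}). It is also closed under extensions: if $0 \to \mc{M}' \to \mc{M} \to \mc{M}'' \to 0$ with both ends admitting $V$-filtrations, we work with the localised characterisation of Lemma \ref{lem:localised chambers}. Concretely, take a good $V^\bullet\ms{D}_X$-filtration on $\mc{M}$ inducing ones on $\mc{M}', \mc{M}''$ and saturate as in Lemma \ref{lem:multivariate V localisation}. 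If this proves awkward, a cleaner route is to realise $\mc{M}$ inside a direct image to which the construction below applies directly. We first use Kashiwara's equivalence and the sub-divisor/component results (Lemma \ref{lem:supported on a component}, Corollary \ref{cor:localising at a component}) to handle modules supported on some $D_i$, by induction on $r$. Next, for a general holonomic $\mc{M}$, the exact sequences relating $\mc{M}$, $\mc{M}(*D)$ and sections supported on $D$ (as in the proof of Proposition \ref{prop:strong specialisation}) reduce us to $\mc{M} = \mc{M}(*D)$. Then, by Proposition \ref{prop:reduction to graph embedding}, it suffices to treat $\iota_+\mc{M}$ along the coordinate axes of $\mb{C}^r$, for $\iota$ the graph embedding of $t_1,\dots,t_r$. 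This is the formulation that behaves well under birational maps.

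\textbf{Inductive step via resolution.} Let $Z$ be the closed subset where $\mc{M}$ fails to be a meromorphic connection along some divisor. Choose a divisor $E \supset D$ (after adding a hypersurface $g=0$) such that $\mc{M}(*E)$ is a meromorphic connection with poles along $E$. By Theorem \ref{thm:resolution of turning points}, there is a projective $\pi\colon \tilde X \to X$, an isomorphism off $E$, such that $\pi^{-1}(E)$ is snc and $\tilde{\mc{M}} := \pi^*\mc{M}(*E)$ has good formal structure. Consider $\tilde\iota = (\id, \pi^*t_1,\dots,\pi^*t_r)\colon \tilde X \to \tilde X \times \mb{C}^r$. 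Locally on $\tilde X$ each $\pi^* t_i$ is a monomial in the coordinates $y_j$ cutting out $\pi^{-1}(E)$, times a unit, which we absorb by a coordinate change. By Proposition \ref{prop:normal crossings V-filtration}, $\tilde{\mc{M}}$ has a $V$-filtration along $\pi^{-1}(E)$. Proposition \ref{prop:reduction to graph embedding} then gives one for the graph of the coordinate functions, and Proposition \ref{prop:change of functions} (monomial change of functions; zero loci coincide once we include $g$ as an extra function and then forget it via Proposition \ref{prop:V-filtration for sub divisor}) gives one for $\tilde\iota_+\tilde{\mc{M}}$ along the axes. Theorem \ref{thm:multivariate V direct image}, applied with $\pi\times\id_{\mb{C}^r}$ proper, produces $V$-filtrations on all $\mc{H}^i(\pi\times \id)_+\tilde\iota_+\tilde{\mc{M}}$. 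In particular $\mc{H}^0$ restricts to $\iota_+\mc{M}(*E)$ off $E$, and $\iota_+\mc{M}(*E)$ is a subquotient of it, namely $\mc{H}^0\pi_+\tilde{\mc{M}} = \mc{M}(*E)$ since $\pi$ is an isomorphism off $E$ and the module is localised. Finally, $\mc{M}(*D)$ and $\mc{M}(*E)$ differ by modules supported on $\{g=0\}$, of smaller support dimension, handled by induction together with the extension step.

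\textbf{Main obstacle.} We expect the main obstacle to be the extension/gluing step. The multivariate $V$-filtration is not obviously stable under extensions, because goodness and the wall condition interact, and the $\mb{R}^r$-walls need not be finite near a point. We would first try to avoid general extensions: we would build everything from direct images of localised good-normal-crossings connections and then take subquotients. This uses the fact that any holonomic module locally admits a filtration whose graded pieces are subquotients of $\pi_+$ of such connections, with all pieces embedded in a single direct-image complex where strictness from Theorem \ref{thm:multivariate V direct image} applies.
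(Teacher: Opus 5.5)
Your proposal has a genuine gap, at the point you flag yourself: every dévissage step needs the class of modules admitting a multivariate $V$-filtration to be \emph{closed under extensions}, and nothing available proves this.

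\begin{itemize}
\item Proposition \ref{prop:V image and preimage} gives only subquotients.
\item Corollary \ref{cor:V strictness} presupposes existence on all terms.
\item Lemma \ref{lem:multivariate V localisation} describes the localisations $(V^{\bs{\beta}}\mc{M})_{\mf{m}}$ of a filtration already known to exist. It does not produce a good wall and chamber filtration on an extension. For that you would need stabilisation in $N$ uniformly over infinitely many $\mf{m}$, plus goodness.
\end{itemize}

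Yet you use extensions at least three times:
\begin{itemize}
\item to recover $\mc{M}$ from $\mc{M}(*D)$ and the sections of $\mc{M}$ supported on $D$;
\item to recover $\mc{M}(*D)$ from $\mc{M}(*E)$ and modules supported on $\{g=0\}$;
\item to handle modules supported on $D$ but not on a single $D_i$, since Lemma \ref{lem:supported on a component} needs support in one component.
\end{itemize}
Your fallback, a filtration of $\mc{M}$ whose graded pieces are subquotients of direct images, only controls the graded pieces, so it is still an extension argument.

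The missing idea, which is how the paper closes the induction, is Beilinson's gluing. For a local function $g$, $\mc{M}$ is the middle cohomology of $\psi_{g,1}\mc{M} \to \Xi_g\mc{M}\oplus\phi_{g,1}\mc{M}\to\psi_{g,1}\mc{M}$. Hence $\mc{M}$ is a \emph{subquotient} of $\Xi_g\mc{M}\oplus\phi_{g,1}\mc{M}$, and only subquotients and direct sums are ever needed. The induction then runs as follows.
\begin{itemize}
\item Choose $g$ so that $Z\setminus g^{-1}(0)$ is smooth and dense in $Z=\supp\mc{M}$, and $\mc{M}(*g^{-1}(0))$ is the direct image of a meromorphic connection. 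Then $\phi_{g,1}\mc{M}$ has smaller support and is handled by induction.
\item This leaves $\Xi_g\mc{M}$, which splits along the irreducible components, so you may take $Z$ irreducible.
\item If $Z\subset D_i$, use Kashiwara's equivalence and induction on $\dim X$.
\item Otherwise, glue again with an equation $h$ of $E=D+g^{-1}(0)$. Now $\phi_{h,1}\mc{M}$ has smaller support.
\item $\Xi_h\mc{M}$ is a quotient of $\frac{\mc{M}[s]h^s}{(s^n)}(*E)$. This is the direct image of a meromorphic connection on $Z\setminus E$, localised along $E\supseteq D$, so it is exactly the case your resolution step handles. No extension is ever formed.
\end{itemize}

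A smaller correction to that resolution step: you must resolve turning points of the connection living on the support, via $\pi\colon\tilde Z\to Z\subset X$, one irreducible component at a time. You cannot treat $\mc{M}(*E)$ as a meromorphic connection on $X$, since it is one only when $\supp\mc{M}=X$. With that change, the rest of your resolution step (monomial change of functions, passing to a sub-divisor, then the proper direct image theorem) matches the paper.
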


\begin{proof}
By uniqueness, the question is local, so we may as well assume that $D_i = \{t_i = 0\}$ for some local coordinates $t_i$ on $X$.

We first consider the case where $\mc{M} = \mc{M}(*E)$ for some divisor $E \supset D$ on $X$ and $\mc{M}(*E)$ is the direct image of a flat meromorphic connection on an irreducible subvariety $Z \subset X$ such that $Z \setminus E$ is smooth. By Theorem \ref{thm:resolution of turning points}, we can choose a resolution of turning points $\pi \colon \tilde{Z} \to Z \subset X$ for this meromorphic connection. Write $\tilde{E} = \pi^{-1}(E)$ and $\tilde{D} = \pi^{-1}(D)$. Then
\[ \mc{M}(*E) = \pi_+\tilde{\mc{M}}(*\tilde{E}),\]
where $\tilde{\mc{M}}(*\tilde{E})$, the pullback of $\pi^{\ast}(\mc{M}(\ast E))$ as a meromorphic connection, is a flat meromorphic connection on $\tilde{Z}$ with poles along $\tilde{E}$ and good formal structure.
In particular, by Proposition \ref{prop:normal crossings V-filtration}, $\tilde{\mc{M}}(*\tilde{E})$ admits a multivariate $V$-filtration along $\tilde{E}$ and thus along the subdivisor $\tilde{D}$ by Proposition \ref{prop:V-filtration for sub divisor}.  Since the local defining equations of the irreducible components of $\tilde{D}$ are monomials in the local coordinates of $\tilde{Z}$, we can use Proposition \ref{prop:reduction to graph embedding} and Proposition \ref{prop:change of functions} to obtain that $\tilde{\iota}_+\tilde{\mc{M}}(*\tilde E)$ admits a multivariate $V$-filtration along the coordinate axes in $\mb{C}^r$, where $\tilde{\iota} \colon \tilde Z \to \tilde Z \times \mb{C}^r$ is the graph embedding for $t_1 \circ \pi, \ldots, t_r \circ \pi$. Then by Theorem \ref{thm:multivariate V direct image}, 
\[\iota_+\mc{M}(*E) = \pi_+\tilde{\iota}_+\tilde{\mc{M}}(*\tilde E) \]
also admits a multivariate $V$-filtration along $D$, where $\iota$ is the graph embedding for $t_1, \ldots, t_r$. So by Proposition \ref{prop:reduction to graph embedding}, we conclude that $\mc{M} = \mc{M}(*E)$ admits a multivariate $V$-filtration along $D$.

Now consider the general case. If $g \colon X \to \mb{C}$ is any holomorphic function, then by Beilinson's gluing theory \cite{Beilinson}, we may identify $\mc{M}$ with the cohomology of the complex
\begin{equation} \label{eq:multivariate V existence 1}
\mc{M} \cong [\psi_{g,1} \mc{M} \to \Xi_g \mc{M} \oplus \phi_{g,1}\mc{M} \to \psi_{g,1}\mc{M}]
\end{equation}
where $\psi_{g,1}$, $\phi_{g,1}$ and $\Xi_g$ are the unipotent nearby cycles, unipotent vanishing cycles and maximal extension functors. If we write $Z \colonequals \supp \mc{M}$, then the $\ms{D}$-modules $\psi_{g,1}\mc{M}$ and $\phi_{g,1}\mc{M}$ are supported in $Z \cap g^{-1}(0)$, while $\Xi_g\mc{M}$ is supported in $\overline{Z \setminus g^{-1}(0)}$. We will use this to prove that the multivariate $V$-filtration on $\mc{M}$ exists by induction on $\dim Z$. 

As a base case, if $Z = \emptyset$, then $\mc{M} = 0$, so the theorem is true trivially. So suppose that $Z \neq \emptyset$. Observe that, working locally, we can choose a divisor $F = g^{-1}(0)$ for some holomorphic function $g$ such that $Z \setminus F$ is smooth and dense in $Z$, and $\mc{M}(*F)$ is the direct image of a flat meromorphic connection on it. In particular, $\dim (Z \cap F) < \dim Z$, so the multivariate $V$-filtrations on $\phi_{g, 1}\mc{M}$ and $\psi_{g, 1}\mc{M}$ exist by induction. By \eqref{eq:multivariate V existence 1}, it therefore suffices to prove that the multivariate $V$-filtration exists for the maximal extension $\Xi_g \mc{M} = \Xi_g(\mc{M}(*F))$. The $\ms{D}_X$-module $\mc{M}(*F)$, and hence $\Xi_g\mc{M}$, decomposes as a direct sum according to the irreducible components of $Z \setminus F$; replacing $\mc{M}$ with such a summand if necessary, we reduce to proving the theorem when $Z$ is irreducible and $\mc{M}(*F)$ is the direct image of a flat meromorphic connection on the smooth subvariety $Z \setminus F$.

Now, if $Z \subset D_i$ for some $i$ then we may write $\mc{M} = \mc{M}'[\partial_{t_i}]$ for some holonomic $\ms{D}_{D_i}$-module $\mc{M}'$. By induction on $\dim X$ (noting that the theorem is trivially true when $\dim X = 0$), we may suppose that $\mc{M}'$ admits a multivariate $V$-filtration $V^\bullet\mc{M}'$ along the simple normal crossings divisor $D' = \bigcup_{j \neq i} D_i \cap D_j \subset D_i$. By Lemma \ref{lem:supported on a component}, $\mc{M}$ therefore admits a multivariate $V$-filtration along $D$.

If $Z \not\supset D_i$ for any $i$, on the other hand, then setting $E = D + F$ we have that $\dim Z \cap E < \dim Z$, so choosing a local equation $h$ for $E$, the theorem holds for $\phi_{h, 1}\mc{M}$ and $\psi_{h, 1}\mc{M}$ by induction. The maximal extension $\Xi_{h}\mc{M}$ is by construction a quotient of $\frac{\mc{M}[s]h^s}{s^n}(*E)$ for some large $n$. As argued above, the flat connection $\frac{\mc{M}[s]h^s}{s^n}(*E)$ admits a multivariate $V$-filtration, so we conclude using Proposition \ref{prop:V image and preimage} that $\Xi_{h}\mc{M}$ admits a multivariate $V$-filtration and hence so does $\mc{M}$ by \eqref{eq:multivariate V existence 1}.
\end{proof}

The proof of Theorem \ref{thm:multivariate V existence} often gives an effective method for calculating features of the multivariate $V$-filtration using resolutions. For example, we have the following interpretation for the walls in the case of a flat meromorphic connection.

\begin{cor} \label{cor:walls via resolution}
Suppose that $\mc{M} = \mc{M}(*D)$ is the direct image of a flat meromorphic connection on an irreducible analytic subvariety $Z \subset X$ such that $Z \setminus D$ is smooth. Fix a resolution of turning points $\pi \colon \tilde{Z} \to Z$ with $\tilde{D} = \bigcup_j E_j = \pi^{-1}(D)$ and define $L_{i, j} \in \mb{Z}_{\geq 0}$ by $\pi^*D_i = \sum_j L_{i, j} E_j$. Then every wall for $V_*^\bullet\mc{M}$ is of the form
\[ \left\{ \bs{\alpha} \in \mb{R}^r \,\left|\, \sum_i L_{i, j} \alpha_i = \gamma\right.\right\} \]
for some $j$ and some jumping number $\gamma \in \mb{R}$ for the $V$-filtration of $\pi^*\mc{M}$ along $E_j$.
\end{cor}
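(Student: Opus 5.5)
We retrace the first step of the proof of Theorem \ref{thm:multivariate V existence} and keep track of the walls at each stage. Set $\tilde{\mc{M}} = \pi^*\mc{M}$, a flat meromorphic connection on $\tilde Z$ with poles along $\tilde D$ and good formal structure. Near a point of $\tilde Z$, choose coordinates $y_1, \ldots, y_m$ so that the components $E_j$ through that point are $\{y_j = 0\}$. By Proposition \ref{prop:normal crossings V-filtration}, the multivariate $V$-filtration of $\tilde{\mc{M}}$ along $\tilde D$ is $\bigcap_j V_{E_j}^{\beta_j}\tilde{\mc{M}}$. Its walls can therefore be taken to be the coordinate hyperplanes $\{\beta_j = \gamma\}$, where $\gamma$ runs over the jumping numbers of $V_{E_j}^\bullet\tilde{\mc{M}}$. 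Since $\tilde{\mc{M}}$ is localised along $\tilde D$, Corollary \ref{cor:localising V} shows that the same walls serve for $V_*^\bullet\tilde{\mc{M}}$.

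Next we transfer this to the graph embedding. By Proposition \ref{prop:V-filtration on graph} (or Proposition \ref{prop:reduction to graph embedding}), the graph embedding $\tilde\iota_0$ for the coordinates $y_1, \ldots, y_m$ gives $\tilde\iota_{0,+}\tilde{\mc{M}}$ with the same set of walls in the $\bs{\beta}$-variables. Pulled back, $t_i \circ \pi = u_i\prod_j y_j^{L_{i,j}}$ with $u_i$ units. We would absorb the units, either by a coordinate change or by noting that twisting by $u_i^{s_i}$ does not alter the eigenvalue conditions. Then Proposition \ref{prop:change of functions}, with $d_{ji} = L_{i,j}$, identifies $V_*^{\bs{\alpha}}\tilde\iota_+\tilde{\mc{M}}$ with the image of $V_*^{\bs{\beta}}\tilde\iota_{0,+}\tilde{\mc{M}} \otimes \mb{C}[\bs{s}]$, where $\beta_j = \sum_i L_{i,j}\alpha_i$. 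So $V_*^{\bs{\alpha}}\tilde\iota_+\tilde{\mc{M}}$ changes only when some $\beta_j(\bs{\alpha})$ crosses a jumping number $\gamma$ of $V_{E_j}^\bullet\tilde{\mc{M}}$. Hence the hyperplanes $\{\sum_i L_{i,j}\alpha_i = \gamma\}$ form a set of walls for $V_*^\bullet\tilde\iota_+\tilde{\mc{M}}$.

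To descend, Theorem \ref{thm:multivariate V direct image}, in its $V_*$ analogue, says that any set of walls for $V_*^\bullet\tilde\iota_+\tilde{\mc{M}}$ is also a set of walls for $V_*^\bullet\iota_+\mc{M} = V_*^\bullet\pi_+\tilde\iota_+\tilde{\mc{M}}$. Applying Proposition \ref{prop:reduction to graph embedding} once more (its walls statement for $V_*$, via Proposition \ref{prop:V-filtration on graph}), these are walls for $V_*^\bullet\mc{M}$. Finally, since sets of walls are only determined up to enlargement, we interpret ``every wall'' to mean that the set of hyperplanes just described is a set of walls. Equivalently, by Lemma \ref{lem:localised chambers}, every hyperplane across which $V_*^\bullet\mc{M}$ genuinely jumps must lie in this set.

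The main obstacle is the second step. The pulled-back functions are monomials times units, not pure monomials, and the change of functions has to be compatible with the coordinates used in Proposition \ref{prop:reduction to graph embedding}. We would first try to handle this by choosing local coordinates $y_j' = y_j v_j$ that absorb the units. That is not always possible when $r$ exceeds the number of $E_j$ through the point, and if it fails we would instead verify the eigenvalue condition directly: multiplying by a unit $u^{s}$ does not change the residues.
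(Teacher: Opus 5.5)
Your proposal is correct and follows the route the paper intends: the corollary is obtained by tracking walls through the first case of the proof of Theorem \ref{thm:multivariate V existence}. That means Proposition \ref{prop:normal crossings V-filtration}, then Propositions \ref{prop:reduction to graph embedding} and \ref{prop:change of functions}, then the wall-preserving direct image Theorem \ref{thm:multivariate V direct image} in its $V_*$ form. The unit factors, which the paper glosses over by calling the pulled-back equations monomials, are handled as your fallback says, and most cleanly by the coordinate change $(y,\bs{t})\mapsto (y, u_1 t_1,\ldots,u_r t_r)$ on $\tilde Z\times\mb{C}^r$: it sends the graph of $(\prod_j y_j^{L_{i,j}})_i$ to that of $(t_i\circ\pi)_i$ and fixes each Euler field $t_i\partial_{t_i}$, so it preserves $V_*$-filtrations and their walls.
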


Note that, in the context of Corollary \ref{cor:walls via resolution}, we can deduce a set of walls for $V^\bullet\mc{M}$ from the walls for $V^\bullet_*\mc{M}$ using Proposition \ref{prop:multivariate V extension}.

\begin{example}\label{ex: set of walls via log resolution}
Consider a complex manifold $X$ and a sequence of holomorphic functions $f_1, \ldots, f_r \colon X \to \mb{C}$ and set $D = \{f_1 \cdots f_r = 0\}$. Consider the $\ms{D}_{X \times \mb{C}^r}$-module $\mc{M} = \iota_+\mc{O}_X(*D)$, where $\iota \colon X \to X \times \mb{C}^r$ is the graph embedding of $f_1, \ldots, f_r$. Fix a log resolution $\pi \colon \tilde X \to X$ of $(X, D)$ such that $\pi$ is an isomorphism over $X \setminus D$. Then $\pi$ is also a resolution of turning points for $\mc{M}$ (the trivial connection). Since the jumping numbers of the $V$-filtration of the trivial connection along a smooth divisor are integers, the multivariate $V_*$-filtration of $\mc{M}$ along the coordinate axes in $\mb{C}^r$ has a set of walls of the form
\[\mc{W}=\{H_{E,k}\colonequals \{ \bs{\alpha}\in \R^r \mid \sum_{i=1}^r \mathrm{ord}_E(f_i)\alpha_i=k\} \mid \textrm{$E$ an exceptional divisor of $\pi$, $k\in \Z$}\}.\]
\end{example}

\begin{example}\label{example: V filtration of d+dy/x}
This example shows that for irregular $\ms{D}$-modules, resolutions of turning points, rather than just resolutions of singularities, are indeed necessary in order to apply Corollary \ref{cor:walls via resolution}. Consider the meromorphic connection $\mc{M} = \mc{O}_{\mb{C}^2}[(t_1t_2)^{-1}]\exp(t_2/t_1)$ from Example \ref{example: resolution of turning points} and the resolution of turning points $\pi \colon \tilde{X} \to X = \mb{C}^2$ given by blowing up the origin. Writing $D_i = \{t_i = 0\}$, we have $\pi^*D_1 = E_1 + E_3$ and $\pi^*D_2 = E_2 + E_3$, where $E_i$, $i = 1, 2$ are the strict transforms of $D_i$ and $E_3$ is the exceptional divisor. The pulled back connection $\pi^*\mc{M}$ is an exponential twist of the trivial connection along $E_1$, so its $V$-filtration along $E_1$ is constant and its $V$-filtrations along $E_2$ and $E_3$ jump at the integers. So $V_*^\bullet\mc{M}$ has a set of walls given by
\[ \{ \{\bs{\alpha} \in \mb{R}^r \mid \alpha_2 = k\}, \{\bs{\alpha} \in \mb{R}^r \mid \alpha_1 + \alpha_2 = k\} \mid k \in \mb{Z}\}.\]
We leave it as an exercise to the reader to check using the method outlined in the proof of Theorem \ref{thm:multivariate V existence} that all of these walls are required: if $\bs{\alpha} \leq \bs{\beta}$ are separated by one of the above walls, then $V_*^{\bs{\alpha}}\mc{M} \neq V_*^{\bs{\beta}}\mc{M}$.
\end{example}

\subsection{Full $A$-specialisability}\label{sec: full specialisability}

We conclude this section with a discussion of multivariate $V$-filtrations defined over a $\mb{Q}$-linear subspace $A \subset \mb{C}$ containing $\mb{Q}$.

As stated, Theorem \ref{thm:multivariate V existence} ensures the existence of a multivariate $V$-filtration in which eigenvalues are allowed to be arbitrary complex numbers. In the theory of the classical $V$-filtration, however, one often finds in practice that eigenvalues can be assumed to lie in a much smaller set, such as $\mb{Q}$ or $\mb{R}$. In this subsection, we use this idea to define a precise class of holonomic $\ms{D}$-modules, called \emph{fully $A$-specialisable}, and show that the multivariate $V$-filtrations of such $\ms{D}$-modules are always defined over $A$, in the sense of Definition \ref{defn:multivariate V-filtration}.

The inductive definition below is loosely inspired by the definition of mixed Hodge modules or twistor $\ms{D}$-modules \cite{Saito88,MHMproject, Sabbah05, Mochizuki2011}. Roughly speaking, the fully $A$-specialisable $\ms{D}$-modules form the largest class of holonomic $\ms{D}$-modules that is closed under standard functors and for which (single-variable) $V$-filtrations are always defined over $A$.

\begin{definition}\label{defn: A specialisability}
Let $\mc{M}$ be a holonomic $\ms{D}_X$-module. We define the condition that $\mc{M}$ be \emph{fully $A$-specialisable} inductively on the dimension as follows.
\begin{enumerate}
\item If $\dim \supp \mc{M} = 0$, then $\mc{M}$ is fully $A$-specialisable.
\item If $\dim \supp \cM\geq 1$ and $\mc{M}$ is simple, then $\mc{M}$ is fully $A$-specialisable if for every open subset $U\subset X$ and every holomorphic function $f$ on $U$ such that $\supp \mc{M} \not\subset f^{-1}(0)$, the $V$-filtration $\iota_{+}(\cM|_U)$ is defined over $A$ and $\gr^\alpha_V\iota_+(\mc{M}|_U)$ is fully $A$-specialisable for all $\alpha \in \mb{R}$. Here $\iota\colon U\to U\times \C$ is the graph embedding of $f$. If $\mc{M}$ is not simple, then $\mc{M}$ is defined to be fully $A$-specialisable if each of its composition factors is.
\end{enumerate}
Note that the simplicity assumption in (2) is imposed for convenience of the inductive definition to ensure that $\dim \supp \Gr_V^\alpha \mc{M} < \dim \supp \mc{M}$ for all $\alpha$. If $\cM^{\bullet}$ is a complex of $\sD$-modules with holonomic cohomologies, we say it is fully $A$-specialisable if each cohomology $\cH^j(\cM^{\bullet})$ is fully $A$-specialisable.
\end{definition}

Some basic properties immediately follow from the properties of the (single-variable) $V$-filtration and how they interact with standard functors.  In below, any $\sD$-module is assumed to be holonomic.
\begin{enumerate}
\item \label{itm: E-specialisable V-filtration} If $\mc{M}$ is fully $A$-specialisable, then the $V$-filtration along any smooth divisor is defined over $A$ and $\Gr_V^\alpha\mc{M}$ is fully $A$-specialisable for all $\alpha \in \mb{R}$.
\item \label{itm: morphism between E-specialisable} The fully $A$-specialisability  is closed under subquotients and extensions. Furthermore, if $\cM_1^{\bullet}\to \cM_2^{\bullet}\to \cM_3^{\bullet} \xrightarrow{+1}$ is a distinguished triangle of holonomic $\sD$-complexes and $\cM_1^{\bullet}$ and $\cM_2^{\bullet}$ are both fully $A$-specialisable, then so is $\cM_3^{\bullet}$.
\item \label{itm: duality} $\cM$ is fully $A$-specialisable if and only if the same is true for $\mathbb{D}\cM$.
\item \label{itm: stable under open pullback} If $j \colon Z\to X$ is the embedding of a smooth submanifold, and $\cM$ is a fully $A$-specialisable $\sD_X$-module, then $j^{\ast}\cM$ and $j^{!}\cM$ are fully $A$-specialisable. 
\item \label{itm: * and ! extension} Let $i \colon D \to X$ be an embedding of a divisor. If $\cM$ is a fully $A$-specialisable $\sD$-module on $X$, then $\cM(\ast D)$ and $\cM(!D)$ are fully $A$-specialisable. (This follows, e.g.\ from \eqref{itm: morphism between E-specialisable},\eqref{itm: duality}, \eqref{itm: stable under open pullback} and the distinguished triangle 
\[ \cM\to \cM(\ast D) \to i_+i^{!}\cM\xrightarrow{+1}.)\]

\item \label{itm: direct image} If $\pi\colon Y\to X$ is a proper map and $\cM$ is a fully $A$-specialisable $\sD_Y$-module, then $\cH^i\pi_{+}\cM$ is fully $A$-specialisable for each $i$. (This follows from e.g. \cite[Theorem 7.5.2]{SabbahnoteDmodule} or Theorem \ref{thm:multivariate V direct image}.)
\end{enumerate}

\begin{lemma}\label{lemma: nearby cycle E specialisable}
If $\cM$ is fully $A$-specialisable and $D=\textrm{div}(g)$ for a holomorphic function $g$, then then the unipotent nearby cycles $\psi_{g,1}\cM$, unipotent vanishing cycles $\varphi_{g,1}\cM$ and maximal extension $\Xi_g\cM$ are all fully $A$-specialisable.
\end{lemma}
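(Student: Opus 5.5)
The plan is to express all three functors in terms of the $V$-filtration of the graph embedding $\iota_+\cM$ along $t=0$, where $\iota\colon X\to X\times\C$ is the graph of $g$, and then use the listed closure properties (1)--(6). Unipotent nearby and vanishing cycles are, after Kashiwara's equivalence, identified with graded pieces of this $V$-filtration:
\[ \psi_{g,1}\cM\cong \gr_V^{1}\iota_+\cM \quad\text{(more precisely the generalised $0$-eigenspace of $s+1$ on it)}, \qquad \varphi_{g,1}\cM\cong \gr_V^{0}\iota_+\cM \]
(unipotent part). Since $\iota_+\cM$ is fully $A$-specialisable by property \eqref{itm: direct image} (a closed embedding is proper), property \eqref{itm: E-specialisable V-filtration} shows that each $\gr_V^\alpha\iota_+\cM$ is fully $A$-specialisable. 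The unipotent part is a direct summand, namely the kernel of a power of $s+\alpha$ acting as a $\sD$-linear endomorphism. It is therefore a subquotient, and property \eqref{itm: morphism between E-specialisable} applies.

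The one subtle point is descending from $X\times\C$ back to $X$. The modules $\gr_V^\alpha\iota_+\cM$ are supported on $X\times\{0\}$, and $\psi_{g,1}\cM$ corresponds to them under Kashiwara's equivalence. I would check that full $A$-specialisability is preserved under this equivalence. For simple modules this follows by comparing $V$-filtrations along graphs of functions $f$ on $U\subset X$ with those of $f\circ \mathrm{pr}$ restricted to $X\times\{0\}$, essentially via the argument of Lemma \ref{lem:supported on a component} in one variable, and then inducting on dimension. Alternatively, it follows from property \eqref{itm: stable under open pullback}, taking $j^{!}$ along $X\times\{0\}\hookrightarrow X\times\C$, which recovers the module up to shift. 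I would use the latter, since it is already available.

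For the maximal extension, I would use Beilinson's description: $\Xi_g\cM$ is a subquotient of $\cM[s]g^s/(s^n)\,(*D)$ for $n\gg0$. Equivalently, it fits into exact sequences
\[ 0\to \cM(!D)\to \Xi_g\cM\to \psi_{g,1}\cM\to 0,\qquad 0\to\psi_{g,1}\cM\to\Xi_g\cM\to\cM(*D)\to0. \]
Since $\cM(*D)$ is fully $A$-specialisable by property \eqref{itm: * and ! extension}, and we have just shown the same for $\psi_{g,1}\cM$, closure under extensions (property \eqref{itm: morphism between E-specialisable}) gives the result for $\Xi_g\cM$.

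The main obstacle is the identification of $\psi_{g,1}$ and $\varphi_{g,1}$ with $V$-graded pieces. If $\cM$ is not of the form $\cM(*D)$ one must make sure this holds with the conventions used in the paper. If it is inconvenient, I would instead derive $\varphi_{g,1}\cM$ from the exact sequence $\psi_{g,1}\cM\xrightarrow{\mathrm{can}}\varphi_{g,1}\cM\to i_+\mathcal{H}^{1}i^{!}\cM\to 0$ together with the triangle in property \eqref{itm: morphism between E-specialisable}, since $i^!\cM$ is fully $A$-specialisable by property \eqref{itm: stable under open pullback} (with $i^!$ computed on the smooth locus via the graph).
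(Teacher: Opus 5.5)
Your argument is correct. For $\psi_{g,1}\cM$ and $\varphi_{g,1}\cM$ it is the paper's proof spelled out; the paper simply says these follow from their definition via the $V$-filtration of $\iota_+\cM$.

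The descent step you worry about is unnecessary. The module $\gr_V^\alpha\iota_+\cM$ is a module over $\gr_V^0\sD_{X\times\C}\supset\sD_X$, so it is already a $\sD$-module on $X\times\{0\}\cong X$. That is exactly the sense in which property (1) says it is fully $A$-specialisable.

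For $\Xi_g\cM$ you take a genuinely different route from the paper.
\begin{itemize}
\item \textbf{Your route.} You invoke Beilinson's exact sequence $0\to\cM(!D)\to\Xi_g\cM\to\psi_{g,1}\cM\to 0$, and combine the $\psi_{g,1}$ case just proved with properties (2) and (5).
\item \textbf{The paper's route.} It works directly from the construction: $\Xi_g\cM$ is a subquotient of $\cM(*D)[s]g^s/(s^n)$. That module is an iterated extension of $n$ copies of $\cM(*D)$, since on each $s^k(\cdot)/s^{k+1}(\cdot)$ the twisted term $s\,\xi(g)/g$ acts by zero. So the paper needs only properties (2) and (5), and this case is independent of the nearby-cycles case.
\end{itemize}
Your version trades that small computation for the structural exact sequences of the gluing formalism. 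Those sequences are a theorem about $\Xi_g$ rather than its definition. Both arguments are equally short.

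There is one slip in your fallback for $\varphi_{g,1}$. The cokernel of $\mathrm{can}$ is $\mathcal{H}^0i^*\cM$, not $\mathcal{H}^1i^!\cM$; the latter is $\coker(\mathrm{var})$. For example, if $\cM\neq 0$ is supported on $D$, then $\psi_{g,1}\cM=0$ and $\varphi_{g,1}\cM\cong\cM\neq 0$, but $\mathcal{H}^1i^!\cM=0$. The fallback still works with $i^*$ in place of $i^!$, computed as $i_0^*\iota_+\cM$ for $i_0\colon X\times\{0\}\hookrightarrow X\times\C$ so that property (4) applies to a smooth submanifold. It is not needed, though.
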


\begin{proof}
The assertion for $\phi_{g, 1}\mc{M}$ and $\varphi_{g, 1}\mc{M}$ follow immediately from the definitions of these functors in terms of $V$-filtrations. For the maximal extension, recall that by definition \cite{Beilinson}
\[ \Xi_g \mc{M} = \coker\left(\frac{\mc{M}(*D)[s]}{(s^n)}(!D) \to \frac{\mc{M}(*D)[s]}{(s^n)}(*D)\right) \]
for $n$ sufficiently large. By property \eqref{itm: morphism between E-specialisable}, it therefore suffices to prove that $\frac{\mc{M}(*D)[s]}{(s^n)}(*D)$ is fully $A$-specialisable for all $n$. But $\frac{\mc{M}(*D)[s]}{(s^n)}(*D)$ is an iterated extension of $n$ copies of $\mc{M}(*D)$ so is fully $A$-specialisable by \eqref{itm: morphism between E-specialisable} and \eqref{itm: * and ! extension}.
\end{proof}

\begin{prop}\label{prop: E definability}
If a holonomic $\sD_X$-module $\cM$ is fully $A$-specialisable, then its multivariate $V$-filtration along any simple normal crossing divisor $D$ is defined over $A$.
\end{prop}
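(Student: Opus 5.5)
The plan is to re-run the inductive existence proof of Theorem \ref{thm:multivariate V existence} and check at each step that the eigenvalues $\gamma_j$ in Definition \ref{defn:multivariate V-filtration}\eqref{itm:multivariate V 4} can be taken in $A$. First I would record the permanence properties of ``defined over $A$''. Subquotients inherit it by Proposition \ref{prop:V image and preimage}. The same holds for the direct image, Theorem \ref{thm:multivariate V direct image}, and for the passages in Propositions \ref{prop:V-filtration for sub divisor}, \ref{prop:reduction to graph embedding}, \ref{prop:change of functions}, Lemma \ref{lem:supported on a component} and Corollaries \ref{cor:localising V}, \ref{cor:extending V*}. In each case the operators annihilating the wall-quotients are built from those of the input by linear operations over $\mb{C}[\bs{s}]$, and the changes of variables involved have rational coefficients. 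Since $A$ is a $\mb{Q}$-linear subspace containing $\mb{Q}$, eigenvalues stay in $A$; for example, $\gamma \mapsto \gamma + \bs{L}(\bs{n})$ in \eqref{eq:multivariate V extension 4}.

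For the extension step I would also use the localised criterion. A good wall and chamber filtration is defined over $A$ if and only if $(V^\bullet\mc{M})_{\mf{m}}$ is constant on the chambers of $\mc{W}_{\mf{m}}$ for every $\mf{m}$, and $\mc{W}_{\mf{m}}$ is replaced by $\emptyset$-like behaviour unless $\bs{L}(\bs{\alpha}) \in A + \sqrt{-1}\cdot$(anything) for the relevant walls. Concretely, this means running Lemma \ref{lem:localised chambers} with supports: the image of $Z(w)$ under $\bs{L}$ is a finite set of points lying in $A$. With this criterion, the gluing step \eqref{eq:multivariate V existence 1} (a cohomology of a complex of modules defined over $A$) is handled by strictness, Corollary \ref{cor:V strictness}, together with the subquotient property. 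Extensions work the same way: products of annihilators defined over $A$ are defined over $A$.

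Then I would follow the induction on $\dim \supp \mc{M}$ from the proof of Theorem \ref{thm:multivariate V existence}. By Lemma \ref{lemma: nearby cycle E specialisable}, $\psi_{g,1}\mc{M}$, $\phi_{g,1}\mc{M}$ and $\Xi_g\mc{M}$ are fully $A$-specialisable, so the induction hypothesis applies to the first two. Together with property \eqref{itm: stable under open pullback}, the case $Z \subset D_i$ also reduces by induction. Using \eqref{itm: * and ! extension}, everything reduces to the key case: $\mc{M} = \mc{M}(*E)$ is the direct image of a flat meromorphic connection on $Z \setminus E$ that is fully $A$-specialisable, with $\Xi_h\mc{M}$ a quotient of the connection $\frac{\mc{M}[s]h^s}{s^n}(*E)$.

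In the key case I take a resolution of turning points $\pi\colon\tilde Z\to Z$. The multivariate $V$-filtration of $\pi^*\mc{M}$ along $\tilde E$ is the intersection \eqref{eq:normal crossings} of single-variable $V$-filtrations along the components $E_j$. Its walls are coordinate hyperplanes whose eigenvalues are the eigenvalues of the residues of the Deligne–Malgrange lattice, so it is defined over $A$ as soon as each single-variable $V_{E_j}$ of $\pi^*\mc{M}$ is. Then direct image, change of functions and graph embedding preserve this, as above.

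The main obstacle is showing that $V_{E_j}^\bullet\pi^*\mc{M}$ is defined over $A$, since full $A$-specialisability is known only for $\mc{M}$ on $X$, not for its pullback to $\tilde Z$. I would try to argue at a general point $p$ of $E_j$ that the residue eigenvalues of $\pi^*\mc{M}$ along $E_j$ coincide, up to $\mb{Q}$-rescaling and $\mb{Q}$-translation, with the eigenvalues of the single-variable $V$-filtration of $\mc{M}$ along $g = u\circ\pi^{-1}$. Here $u$ is a suitable holomorphic function whose pullback vanishes to order one along $E_j$ near $p$, or one uses a monomial in the $t_i$, with $\mb{Q}$-rescaling by multiplicities. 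That $V$-filtration is defined over $A$ by property \eqref{itm: E-specialisable V-filtration}. More precisely, I would apply Theorem \ref{thm:multivariate V direct image} in the single-variable case, pushing forward $V_{\pi^*g}$ of $\pi^*\mc{M}$ from $\tilde Z$ to $X$. Because the eigenvalues of $\pi_+$ strictly detect those along a component where $\pi^*g$ has multiplicity $m$, through $\partial_y y = m\,\partial_t t + (\text{integer})$ as in \eqref{eq:normal crossings 4}, the residue eigenvalues lie in $A$ by uniqueness. Failing a clean argument, I would enlarge the induction to assert directly that pullbacks under proper birational maps of fully $A$-specialisable connections remain so.
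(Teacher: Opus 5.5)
Your outline is the same as the paper's proof, which is only two sentences long. It says that, by the closure properties of fully $A$-specialisable modules, the argument of Theorem~\ref{thm:multivariate V existence} reduces everything to a connection with good formal structure. There Proposition~\ref{prop:normal crossings V-filtration} writes $V^\bullet$ as an intersection of single-variable $V$-filtrations along the components, each defined over $A$ by property \eqref{itm: E-specialisable V-filtration}. You correctly isolate the one point this needs: that $V_{E_j}^\bullet$ of the connection $\tilde{\mc{M}}(*\tilde E)$ on $\tilde Z$ is defined over $A$. However, the argument you propose for that point does not work.

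\textbf{Why the detection argument fails.} Theorem~\ref{thm:multivariate V direct image}, including its single-variable case, only transports information from $\tilde Z$ down to $X$. Walls and annihilating operators upstairs serve downstairs, but not conversely, because $\pi_+$ can kill a non-zero $\gr_V$-piece. Here is an example.
\begin{itemize}
\item Take $X=\mb{C}^2$, $g=x^2+y^3$, $\mc{M}=\mc{O}_X(*g^{-1}(0))$, and $\pi$ the minimal embedded resolution.
\item The first exceptional curve $E_1$ has $\mathrm{ord}_{E_1}(\pi^*g)=2$, and a general point of $E_1$ meets no other component. So near such a point the $V$-filtration of $\tilde{\mc{M}}$ along $\pi^*g$ jumps at $\tfrac12$.
\item Downstairs, however, $b_g(s)=(s+1)(s+\tfrac56)(s+\tfrac76)$. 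The nearby-cycle monodromy of $g$ has only the eigenvalues $1$ and the primitive sixth roots of unity, so $V^\bullet\iota_{g,+}\mc{M}$ has no jump in $\tfrac12+\mb{Z}$.
\end{itemize}
This is the familiar cancellation of candidate eigenvalues in A'Campo's formula. So the claim that $\pi_+$ ``strictly detects'' eigenvalues along a component is false. For a general $\mc{M}$, a residue eigenvalue not in $A$ could a priori cancel in this way for every function you test, and uniqueness then tells you nothing.

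\textbf{What is needed instead.} The required input is a statement about the class of modules, not about $\pi_+$: $\tilde{\mc{M}}(*\tilde E)$ should itself be fully $A$-specialisable, so that property \eqref{itm: E-specialisable V-filtration} applies directly along each $E_j$. This is what the paper's proof takes for granted when it says the reduction goes through ``by the basic properties of the class''. The module $\tilde{\mc{M}}(*\tilde E)$ arises from $\mc{M}$ by pulling back to $\tilde Z$ and localising, so one uses
\begin{itemize}
\item restriction to a smooth closed submanifold, property \eqref{itm: stable under open pullback}, after pulling back along a projection, and
\item localisation, property \eqref{itm: * and ! extension}.
\end{itemize}
Your fallback, closure of the class under pullback along proper birational maps, points at exactly this, but it has to be justified as a closure property rather than through eigenvalue detection. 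Once it is, the rest of your bookkeeping matches the paper: subquotients, direct images, changes of functions with integer exponents, and Lemma~\ref{lem:supported on a component} all preserve eigenvalues in $A$.
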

\begin{proof}
By the basic properties of the class of fully $A$-specialisable $\ms{D}$-modules above, the proof of Theorem \ref{thm:multivariate V existence} reduces the proposition to the case where $\mc{M}$ is a flat meromorphic connection with good formal structure. In this case, the proposition follows from Proposition \ref{prop:normal crossings V-filtration}, since the $V$-filtration of $\mc{M}$ along every irreducible component of $D$ is defined over $A$.
\end{proof}

\begin{rmk}\label{rmk: examples of fully specialisable}
In order for Proposition \ref{prop: E definability} to be useful in practice, one needs a supply of examples of fully $A$-specialisable $\ms{D}$-modules. In this paper, we will obtain such examples (for $A = \mb{Q}$ or $\mb{R}$) from the $\ms{D}$-modules underlying mixed Hodge modules (see Corollary \ref{cor: MHM fully A specializable}). More generally, one can show that the $\ms{D}$-module associated to a flat meromorphic connection with good formal structure is fully $A$-specialisable if and only if the $V$-filtration along each irreducible component of $D$ is defined over $A$. Applying standard functors, this gives a wide class of fully $A$-specialisable $\ms{D}$-modules: for example, starting from the fact that the $V$-filtration of the trivial connection along a smooth divisor is always defined over $\mb{Q}$, one sees in this way that all holonomic $\ms{D}$-modules of geometric origin are fully $\mb{Q}$-specialisable. 
\end{rmk}

\section{The multivariate $V$-filtration as a $\ms{D}_X[\boldsymbol{s}]$-module}\label{sec: multivariate V as DXsmodule}

In this section, we study further properties of the multivariate $V$-filtration $V^{\bs{\alpha}}\iota_+\mc{M}$ on the graph embedding of a holonomic $\ms{D}_X$-module $\mc{M}$ with respect to a sequence of holomorphic functions, regarded via the Malgrange-Mellin transform (Proposition \ref{prop: Malgrange isomorphism}) as a module over $\ms{D}_X[\bs{s}]$. In \S\ref{subsec:relative holonomicity}, we review the theory of relative holonomicity and show (Proposition \ref{prop:V is relative holonomic}) that $V^{\bs{\alpha}}\iota_+\mc{M}$ is always a relative holonomic $\ms{D}_X[\bs{s}]$-module. In \S\ref{subsec:duality}, we prove a self-duality theorem for the $V_*$-filtration (Theorem \ref{thm:duality}), generalising the usual self-duality of the nearby cycles functor. Finally, in \S\ref{subsec:flatness}, we prove (Theorem \ref{thm:flatness}) that the $\ms{D}_X[\bs{s}]$-modules $V_*^{\bs{\alpha}}\iota_+\mc{M}$ are always flat over $\mb{C}[\bs{s}]$, as well as a similar statement for the quotients $V_*^{\bs{\alpha}}\iota_+\mc{M}/V_*^{\bs{\beta}}\iota_+\mc{M}$. These flatness statements play a key role in our study of the Strong Monodromy Conjecture for hyperplane arrangements in \S\ref{sec: strong monodromy conjecture}.

Unless otherwise specified, we work throughout in the following setting: fix a complex manifold $X$, holomorphic functions $f_1, \ldots, f_r \colon X \to \mb{C}$, and a holonomic $\ms{D}_X$-module $\mc{M}$. 

\subsection{Relative holonomicity} \label{subsec:relative holonomicity}

In this subsection, we recall the notion of a relative holonomic $\ms{D}_X$-module over a smooth affine variety from \cite{ZerolociBernsteinSatoideal}. We review some general properties of such modules (which we will need later on) and show that the multivariate $V$-filtration on a graph embedding gives an example.

\begin{definition}\label{defn: singular support}
Let $R$ be the coordinate ring of a smooth algebraic variety $\spec R$ over $\mb{C}$. If $\mc{N}$ is a coherent $\sD_X\otimes_{\C}R$-module, the \emph{singular support}\footnote{It is referred to as the \emph{relative characteristic variety} of $\cN$ in \cite[\S 3.2]{ZerolociBernsteinSatoideal}.} of $\cN$ is defined by
\[ \mrm{SS}(\mc{N}) \colonequals \supp \Gr^F \mc{N} \subset T^*X \times (\spec R)^{\textrm{an}},\]
for any good filtration $F_\bullet \mc{N}$ compatible with the filtration $F_p(\ms{D}_X\otimes_{\C}R)\colonequals (F_p\ms{D}_X)\otimes_{\C}R$. We say that $\mc{N}$ is \emph{relative holonomic} \cite[Definition 3.2.3]{ZerolociBernsteinSatoideal} if, locally around any point in $X$, we have
\[ \mrm{SS}(\mc{N}) = \bigcup_i L_i \times S_i^{\mrm{an}},\]
for some finite collection of conical Lagrangians $L_i \subset T^*X$ and algebraic subvarieties $S_i \subset \spec R$. 
\end{definition}

Here is a useful criterion for relative holonomicity. The proof is a fairly straightforward consequence of Gabber's integrability theorem \cite{gabber}; see \cite[Proposition 8]{Maisonobe16} (which treats the case $R = \mb{C}[\bs{s}]$) and \cite[Proposition 3.2.5]{ZerolociBernsteinSatoideal} (which treats the case of algebraic $\ms{D}$-modules).

\begin{prop} \label{prop:relative holonomic criterion}
Assume that $\mc{N}$ is a coherent $\ms{D}_X \otimes R$-module such that $\mrm{SS}(\mc{N}) \subset L \times (\spec R)^{\mathrm{an}}$ for some conical Lagrangian $L \subset T^*X$. Then $\mc{N}$ is relative holonomic.
\end{prop}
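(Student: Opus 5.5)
The plan is to reduce the statement to Gabber's integrability theorem applied fibrewise over $\spec R$, or more precisely to its version for modules over $\ms{D}_X \otimes R$ with the commutative parameters in $R$. The question is local on $X$. Since $\mc{N}$ is coherent, near any point we may choose a good filtration $F_\bullet\mc{N}$. Then $\Gr^F\mc{N}$ is a coherent module over $\Gr^F(\ms{D}_X\otimes R) = \mc{O}_{T^*X}\otimes R$, with support $\mrm{SS}(\mc{N})\subset L\times(\spec R)^{\mathrm{an}}$.

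\textbf{Decomposing the support.} Write $L=\bigcup_k L_k$ as a finite union of irreducible conical Lagrangians, locally over $X$. Each irreducible component $C$ of $\mrm{SS}(\mc{N})$ is an irreducible analytic subset of $L\times(\spec R)^{\mathrm{an}}$. It is stable under the $\mb{C}^\times$-scaling on the cotangent fibres, so it lies in some $L_k\times(\spec R)^{\mathrm{an}}$.

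\textbf{Applying Gabber.} The key input is Gabber's theorem: the support of $\Gr^F\mc{N}$, with the radical of the annihilator, is involutive for the Poisson bracket on $T^*X\times \spec R$. Here the Poisson bracket is the relative one, in which the functions from $R$ are Casimirs, since $R$ is central in $\ms{D}_X\otimes R$. This is exactly the setting treated in \cite[Proposition 8]{Maisonobe16} and \cite[Proposition 3.2.5]{ZerolociBernsteinSatoideal}, and I would invoke it as stated there, adapting to analytic $X$ via the local Noetherian statements of Lemma \ref{lemma: coherence of A and rees}.

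Involutivity implies the following for each component $C$: over a general point $\lambda$ of its image $S=\overline{p_2(C)}$, the fibre $C_\lambda\subset L_k$ is involutive in $T^*X$, so each component of $C_\lambda$ has dimension at least $\dim X$. Since $C_\lambda\subset L_k$ and $\dim L_k=\dim X$, $C_\lambda$ must contain $L_k$. So $C$ contains $L_k\times S^{\circ}$ for a dense open $S^\circ\subset S$. Irreducibility then gives $C=L_k\times \bar S$, and $\bar S$ is algebraic because $C$ is a product with an analytic subset of $(\spec R)^{\mathrm{an}}$ that is the projection of a component. This is where algebraicity needs care: I would argue via the Zariski closure of the annihilator ideal restricted to the zero section, $\mathrm{Ann}_R(\Gr^F\mc{N})$ along $L_k$, which is an ideal of $R$ because $R$ is Noetherian.

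\textbf{Main obstacle.} The step I expect to be hardest is the fibrewise involutivity for general $\lambda$ together with algebraicity of $S$. I would handle it by reducing to a generic point of $S$, base changing $\mc{N}$ to the fraction field of $R/\mathfrak p$ (Frisch flatness keeps coherence), and applying the classical Gabber theorem over that field. Dimension count then forces the product structure.
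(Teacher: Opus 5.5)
Your overall route, Gabber's integrability theorem in relative form followed by a dimension count on the fibres over $\spec R$, is the one the paper intends. The paper gives no argument beyond citing Gabber, \cite[Proposition 8]{Maisonobe16} and \cite[Proposition 3.2.5]{ZerolociBernsteinSatoideal}.

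\textbf{Gap: the base-change step.} The method you propose for the step you flag as hardest does not work. Base change to $K=\mrm{Frac}(R/\mf{p})$ and classical Gabber only control the characteristic variety of $\mc{N}\otimes_R K$. With the induced filtration, $\Gr^F(\mc{N}\otimes_R K)$ is merely a quotient of $\Gr^F\mc{N}\otimes_R K$. So you only get $\mrm{Ch}(\mc{N}\otimes_R K)\subseteq \mrm{SS}(\mc{N})\times_{\spec R}\spec K$, which is the wrong inclusion for your purpose. A component $C$ whose generic fibre is too small need not survive the base change, and then Gabber for $\mc{N}\otimes_R K$ says nothing about it.

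Concretely, take $X=\mb{C}$, $R=\mb{C}[s]$ and $\mc{N}=\ms{D}_X[s]/\ms{D}_X[s](s\partial_x^2+\partial_x)$. Then $\mrm{SS}(\mc{N})=\{s\xi^2=0\}$ contains the component $T^*X\times\{0\}$. But for $\mf{p}=(s)$ one has $\mc{N}\otimes_R K=\ms{D}_X/\ms{D}_X\partial_x$, whose characteristic variety is only the zero section. The hypothesis fails in this example, but it shows your argument never detects the component it is meant to constrain. The missing input is that $\Gr^F$ commutes with $-\otimes_R K$, which fails in general. Frisch's theorem has no bearing on this.

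\textbf{How to fix it.} Do not base change. Apply Gabber directly to the filtered ring $\ms{D}_{X,x}\otimes R$. Its associated graded $\mc{O}_{X,x}[\xi]\otimes R$ is commutative Noetherian and $R$ is Poisson-central, so $\mrm{SS}(\mc{N})$ is involutive for the relative bracket; this is what your ``Applying Gabber'' paragraph asserts. Now take a general smooth point $c$ of $C$ over $\lambda$, off the other components, at which $C\to S$ is submersive, and set $V=T_c(T^*X\times\{\lambda\})$.
\begin{itemize}
\item For $g$ vanishing on $C$, the Hamiltonian field $H_g$ is vertical and tangent to $C$. Hence $H_g(c)\in T_cC\cap V=T_cC_\lambda$.
\item Restricting $(T_cC)^{\perp}$ to $V$ gives exactly $(T_cC\cap V)^{\perp}$. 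So the vectors $H_g(c)$ span the symplectic orthogonal of $T_cC_\lambda$.
\end{itemize}
Thus $C_\lambda$ is coisotropic, so it has dimension at least $\dim X$, and your dimension count goes through.

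\textbf{Algebraicity.} $\mrm{Ann}_R(\Gr^F\mc{N})$ cuts out the closure of the projection of all of $\mrm{SS}(\mc{N})$, not the part lying over $L_k$. Instead, choose finitely many local generators in $\mc{O}_X[\xi]\otimes R$ of the annihilator of $\Gr^F\mc{N}$. Evaluate them at a point of $L_k$ not lying on the other $L_j$. This shows that the union of the $S$ with $L_k\times S\subset\mrm{SS}(\mc{N})$ is the zero set of finitely many elements of $R$, hence algebraic.
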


The relevance to the multivariate $V$-filtration is as follows. Suppose that $\mc{M}$ is a holonomic $\ms{D}_X$-module and $\iota \colon X \to X \times \mb{C}^r$ is the graph embedding for holomorphic functions $f_1, \ldots, f_r \colon X \to \mb{C}$ as in our running setting. Consider the multivariate $V_*$-filtration $V_*^{\bs{\alpha}}\iota_+\mc{M}$ (see Notation \ref{notation: V* for arbitrary M}), regarded as a $\ms{D}_X[\bs{s}] = \ms{D}_X \otimes \mb{C}[\bs{s}]$-module in the usual way. Recall from Corollary \ref{cor:localising V} that $V_*^{\bs{\alpha}}\iota_+\mc{M} = V^{\bs{\alpha}}\iota_+\mc{M}$ if $\bs{\alpha} \in \mb{R}_{>0}^r$, and $V_*^{\bs{\alpha}}\iota_+\mc{M} = (t_1 \cdots t_r)^{-N} V^{\bs{\alpha} + N\bs{1}}\iota_+\mc{M}$ for $N \gg 0$ in general.

\begin{prop} \label{prop:V is relative holonomic}
For any $\bs{\alpha}\in \mb{R}^r$, the $\ms{D}_X[\bs{s}]$-module $V^{\bs{\alpha}}_*\iota_+\mc{M}$ is relative holonomic.
\end{prop}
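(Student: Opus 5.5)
The plan is to use Proposition \ref{prop:relative holonomic criterion}: it suffices to find, locally on $X$, a conical Lagrangian $L \subset T^*X$ with $\mrm{SS}(V_*^{\bs{\alpha}}\iota_+\mc{M}) \subset L \times \mb{C}^r$. Since $V_*^{\bs{\alpha}}\iota_+\mc{M} = (t_1\cdots t_r)^{-N}V^{\bs{\alpha}+N\bs{1}}\iota_+\mc{M}$, and multiplication by $t_i$ corresponds under the Malgrange-Mellin transform to $f_i$ times the shift $s_i \mapsto s_i + 1$ (which preserves the form $L \times \mb{C}^r$ and is an isomorphism of $\ms{D}_X$-modules up to twisting the $\mb{C}[\bs{s}]$-structure), we may assume $\bs{\alpha} \in \mb{R}^r_{>0}$ and that $\mc{M} = \mc{M}(*D)$ (Corollary \ref{cor:localising V}). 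Relative holonomicity is also preserved under extensions and subquotients (the support of $\Gr^F$ only shrinks or unions), and by strictness (Corollary \ref{cor:V strictness}) short exact sequences of $\mc{M}$'s induce short exact sequences of $V^{\bs{\alpha}}$-pieces. So, following the dévissage in the proof of Theorem \ref{thm:multivariate V existence}, we reduce to $\mc{M}$ the direct image of a flat meromorphic connection on a locally closed smooth $Z\setminus E$, with $\mc{M}=\mc{M}(*E)$, $E \supset D$; more precisely, we use Beilinson's gluing \eqref{eq:multivariate V existence 1} and induction on $\dim \supp \mc{M}$, noting that $\Xi_h\mc{M}$ is a quotient of an iterated extension of copies of $\mc{M}(*E)$.

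In this basic case, take a resolution of turning points $\pi\colon \tilde Z \to Z$ with $\pi^{-1}(E)$ simple normal crossings. Then, as in the proof of Theorem \ref{thm:multivariate V existence}, $V^{\bs{\alpha}}\iota_+\mc{M} $ is $\mc{H}^0$ of the strict direct image $\pi_+(\tilde\iota_+\tilde{\mc{M}}, V^\bullet)$ (Theorem \ref{thm:multivariate V direct image}), and by Propositions \ref{prop:V-filtration on graph}, \ref{prop:change of functions} and \ref{prop:normal crossings V-filtration} the piece upstairs is explicit: locally it is $\ms{D}_{\tilde Z}[\bs{s}]\bs{g^s}\otimes_{V^{\bs{0}}\ms{D}_{\tilde Z}} V_*^{\tilde{\bs\alpha}}\tilde{\mc{M}}$ with $g_i = t_i\circ\pi$ monomials, and $V_*^{\tilde{\bs{\alpha}}}\tilde{\mc{M}}$ generated over $V^{\bs{0}}\ms{D}$ by a Deligne--Malgrange type lattice. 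One computes its singular support with respect to $F_\bullet\ms{D}_{\tilde Z}\otimes\mb{C}[\bs{s}]$: for the unramified regular pieces it is contained in the union of conormal bundles to strata of the normal crossings divisor times $\mb{C}^r$ (the operators $x_i\partial_{x_i}$ act through $\bs{s}$ and the residues, so are of order zero), while the exponential twists $\exp(\phi_j)$ contribute the Lagrangians given by the graphs of $d\phi_j$ over the strata, again independent of $\bs{s}$. Ramified covers are handled by taking $\Gamma$-invariants as in Proposition \ref{prop:normal crossings V-filtration}. Hence the singular support upstairs is of the form $\tilde L \times \mb{C}^r$.

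Finally push forward: the filtered direct image with a good filtration $F$ gives $\mrm{SS}(\mc{H}^0\pi_+ -) \subset \pi_\ast$ of $\tilde L \times \mb{C}^r$ under the Lagrangian correspondence $T^*\tilde Z \leftarrow \tilde Z\times_X T^*X \to T^*X$, via the standard estimate of Kashiwara (the $\mb{C}[\bs{s}]$ factor is a passive parameter, since $\pi$ is proper and the filtration is $\mb{C}[\bs{s}]$-linear); the image of a conical Lagrangian under a proper map is contained in a conical Lagrangian $L$. Then Proposition \ref{prop:relative holonomic criterion} concludes. The main obstacle I expect is the singular-support computation in the irregular good-formal-structure case, where one must check that the $V^{\bs 0}\ms{D}$-coherent lattice piece tensored up to $\ms{D}_X[\bs{s}]$ admits a good $F$-filtration whose graded support does not spread in the $\bs{s}$ direction beyond a product; I would try first to verify it on the completion using the decomposition \eqref{eq:Deligne-Malgrange decomposition} and then descend by faithful flatness of completion.
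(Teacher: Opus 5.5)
Your overall route is the paper's: reduce to $\bs{\alpha}\in\mb{R}^r_{>0}$, d\'evissage to direct images of flat meromorphic connections, resolve turning points, bound the singular support explicitly in the good normal crossings case, push forward with Kashiwara's estimate, and conclude with Proposition \ref{prop:relative holonomic criterion}. Your $\Gamma$-invariants amount to the paper's pushforward along the Galois cover. Your Beilinson-gluing d\'evissage works, though the paper gets by with extensions alone. Two smaller omissions:
\begin{itemize}
\item Proposition \ref{prop:relative holonomic criterion}, and even the definition of $\mrm{SS}$, require $V^{\bs{\alpha}}_*\iota_+\mc{M}$ to be coherent over $\ms{D}_X[\bs{s}]$. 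This is Lemma \ref{lem: DXs coherence}: a power of $t_i-f_i$ kills the local generators, so finitely many $t_i^k v_j$ generate.
\item For monomial $g_i$, the upstairs module is not literally $\ms{D}[\bs{s}]\bs{g^s}\otimes_{V^{\bs 0}\ms{D}}V_*^{\tilde{\bs\alpha}}\tilde{\mc{M}}$. It is the image of a base change of the coordinate case (Proposition \ref{prop:change of functions}), which is harmless for $\mrm{SS}$.
\end{itemize}

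The genuine gap is the one local computation that carries the content. You flag it as the main obstacle, but your prediction for the irregular case is wrong. The singular support is $\supp\gr^F$ for a good filtration over $F_\bullet\ms{D}_X\otimes\mb{C}[\bs{s}]$, hence conic. It cannot contain a graph of $d\phi_j$ without containing that graph's conic hull, which has dimension $\dim X+1$ wherever $d\phi_j\neq 0$. So if your description were right, $\mrm{SS}$ would not lie in $L\times\mb{C}^r$ for any conic Lagrangian $L$. Then neither Proposition \ref{prop:relative holonomic criterion} nor your conic-Lagrangian pushforward step could be applied.

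What is true, and what the paper proves, is that with unramified good formal structure the exponential factors do not enlarge the conic singular support at all. It stays inside the union of the conormal bundles to the strata of $E$, times $\mb{C}^r$. The paper gets this by reducing, via Proposition \ref{prop:V-filtration on graph}, to a bound on the logarithmic singular support $\mrm{SS}(V^{\bs{\alpha}}\mc{M})\subset T^*X(\log D)$ of the $V^{\bs 0}\ms{D}_X$-module generated by the Deligne--Malgrange lattice. That bound is checked on completions using \eqref{eq:normal crossings 2}.

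The simplest case shows the mechanism. For $\mc{O}(*0)e^{1/x}$ on $\mb{C}$, the $V$-filtration is constant and $F_pV^0\ms{D}_{\mb{C}}\cdot e^{1/x}=\mc{O}_{\mb{C}}\,x^{-p}e^{1/x}$. The operator $x\cdot x\partial_x$ preserves each of these pieces, so the support lies in $\{x\eta=0\}$. Its preimage under $\eta\mapsto x\xi$ is $T^*_{\mb{C}}\mb{C}\cup T^*_0\mb{C}$, with no graph of $d(1/x)$ appearing. Goodness of the $\phi_j$, through the existence of the Deligne--Malgrange lattice, is what makes this persist in higher dimension. With this corrected local bound in place of yours, the rest of your argument goes through.
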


\begin{rmk}
For $\bs{\alpha} \in \mb{R}^r_{>0}$, we have $V^{\bs{\alpha}}\iota_+\mc{M} = V^{\bs{\alpha}}_*\iota_+\mc{M}$ by Corollary \ref{cor:localising V}, so Proposition \ref{prop:V is relative holonomic} implies that $V^{\bs{\alpha}}\iota_+\mc{M}$ is relative holonomic. Although we will not need this fact, the latter claim also holds for arbitrary $\bs{\alpha} \in \mb{R}^r$. Indeed, if $\mc{M} \subset \mc{M}(*D)$, then
\[ V^{\bs{\alpha}}\iota_+\mc{M} \subset  V^{\bs{\alpha}}\iota_+\mc{M}(\ast D)=\bs{t}^{-\bs{n}} V^{\bs{\alpha} + \bs{n}}\iota_+\mc{M} = \bs{t}^{-\bs{n}}V_*^{\bs{\alpha} + \bs{n}}\iota_+\mc{M}\]
for large enough $\bs{n} \in \mb{Z}^r$. Since the right hand side is relative holonomic, so is the left. In general, since the class of relative holonomic $\ms{D}_X$-modules is clearly closed under extensions, it therefore remains to check that the kernel of $\mc{M} \to \mc{M}(*D)$ is relative holonomic. But this follows by a straightforward induction argument using Lemma \ref{lem:supported on a component}.
\end{rmk}

We first check coherence:

\begin{lemma}\label{lem: DXs coherence}
For all $\bs{\alpha}\in \R^r$, $V^{\bs{\alpha}}\iota_{+}\cM$ is $\sD_X[\bs{s}]$-coherent.
\end{lemma}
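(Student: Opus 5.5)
Since $X\times\mb{C}^r$ restricted to the graph is a neighbourhood where $\iota_+\mc{M} = \mc{M}\otimes\mb{C}[\bs{\partial_t}]\delta_{\bs{t}=\bs{f}}$, the plan is to reduce coherence over $\ms{D}_X[\bs{s}]$ to coherence over $V^{\bs{0}}\ms{D}_{X\times\mb{C}^r}$, which we already know from goodness (Definition \ref{defn: good filtration}(3)). The key observation is that $\iota_+\mc{M}$ is supported on the graph $\iota(X)$, on which $t_i - f_i$ acts locally nilpotently. So $V^{\bs{0}}\ms{D}_{X\times \mb{C}^r}$ is generated over $\ms{D}_X[\bs{s}]$ (with $\ms{D}_X$ acting via the vector fields on $X\times\mb{C}^r$ tangent to the $X$-factor) by the $t_i$, and these operators should be controllable by the $s_i$ together with nilpotent $t_i-f_i$.

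\textbf{Step 1.} I first reduce to finite generation. Coherence of $\ms{D}_X[\bs{s}]$ as a sheaf of rings follows from Lemma \ref{lemma: Theorem A and B} with the filtration by order and degree in $\bs{s}$. Hence a submodule of a coherent module that is locally finitely generated is coherent. Also, $V^{\bs{\alpha}}\iota_+\mc{M}\subset \iota_+\mc{M}$, and $\iota_+\mc{M}$ is not coherent over $\ms{D}_X[\bs{s}]$ in general. So I will instead show directly that $V^{\bs{\alpha}}\iota_+\mc{M}$ is locally finitely generated and that its relations are finitely generated. For the latter I use Noetherianity on compact polycylinders, via Lemma \ref{lemma: Theorem A and B}(3).

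\textbf{Step 2.} This is the main step. Fix local generators $m_1,\dots,m_k$ of $V^{\bs{\alpha}}\iota_+\mc{M}$ over $V^{\bs{0}}\ms{D}_{X\times\mb{C}^r}$. Any element of $V^{\bs{0}}\ms{D}_{X\times\mb{C}^r}$ is a sum of terms $P\cdot g(x,\bs{t})\cdot \bs{(\partial_t t)}^{\bs{a}}$ with $P\in\ms{D}_X$. Writing $t_i = f_i + (t_i - f_i)$ and expanding, the $\mc{O}_{X\times\mb{C}^r}$-coefficients reduce modulo the ideal $(t_i-f_i)$ to functions on $X$. Since $(t_i - f_i)^N m_j=0$ for $N\gg0$ (as $\iota_+\mc{M}$ is supported on the graph and the $m_j$ are finitely many), the $\ms{D}_X[\bs{s}]$-module generated by the finitely many elements $(t-f)^{\bs{b}}m_j$ with $|\bs{b}|<N$ is stable under $\mc{O}_{X\times\mb{C}^r}$. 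It is also stable under $s_i$ and $\ms{D}_X$, although one must check the commutators $[\partial_{x}, t_i-f_i] = -\partial_x f_i$ and $[s_i, t_j-f_j]$. These commutators stay inside the span, and I verify this by induction on $|\bs{b}|$. Hence this module equals $V^{\bs{\alpha}}\iota_+\mc{M}$, which is therefore finitely generated over $\ms{D}_X[\bs{s}]$. The main obstacle is that the nilpotency exponent $N$ must be uniform locally. I expect to get this from coherence of $\mc{O}_X$-spans of generators, that is, from a local bound on the order of $\delta$-derivatives appearing in each $m_j$.

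\textbf{Step 3.} For relations, I argue as follows. On a small compact polycylinder $K$, the sections $V^{\bs{\alpha}}\iota_+\mc{M}(K)$ form a finitely generated module over the Noetherian ring $\ms{D}_X[\bs{s}](K)$ (Lemma \ref{lemma: coherence of A and rees} applied to $\ms{D}_X[\bs{s}]$ with its evident filtrations). The comparison map \eqref{eqn: eval MK} is an isomorphism because it already is for the coherent $\mc{O}$-modules in a good filtration of the $V^{\bs{0}}$-coherent module $V^{\bs{\alpha}}\iota_+\mc{M}$. Lemma \ref{lemma: Theorem A and B}(3) then gives coherence.
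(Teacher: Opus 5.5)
Your Step 2 is the paper's argument: generators of $V^{\bs{\alpha}}\iota_+\mc{M}$ over $V^{\bs{0}}\sD_{X\times\mb{C}^r}$ are killed by a power of each $t_i-f_i$, so finitely many $(\bs{t}-\bs{f})^{\bs{b}}m_j$ generate over $\sD_X[\bs{s}]$, and the only computation needed is $t_iP(\bs{s})=P(\bs{s}+\bs{1}_i)t_i$ (index by $b_i<N$ for each $i$ rather than $|\bs{b}|<N$; the uniformity of $N$ is automatic since there are finitely many sections). Your Steps 1 and 3 spell out the passage from local finite generation to coherence, which the paper compresses into ``and is therefore coherent''; they use Noetherianity on compact polycylinders and the fact that the pieces of a good filtration over $V^{\bs{0}}\sD_{X\times\mb{C}^r}$ are killed by powers of $t_i-f_i$, hence $\cO_X$-coherent.
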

\begin{proof}
Choose local generators $v_1, \ldots, v_n$ for $V^{\bs{\alpha}}\iota_+\mc{M}$ over $V^{\bs{0}}\ms{D}_{X \times \mb{C}^r}$. Since $\iota_+\mc{M}$ is supported on the graph of $(f_1,\ldots,f_r)$, there exists $m \geq 0$ such that $(t_i - f_i)^m v_j = 0$ for all $i$, $j$. So $V^{\bs{\alpha}}\iota_+\mc{M}$ is locally generated over $\sD_X[\bs{s}]$ by the sections $t^k_i v_j$ for $k < m$, and is therefore $\ms{D}_X[\bs{s}]$-coherent as claimed.
\end{proof}

\begin{proof}[Proof of Proposition \ref{prop:V is relative holonomic}]

We may as well assume for simplicity that $\bs{\alpha} \in \mb{R}^r_{>0}$, so that $V^{\bs{\alpha}}_*\iota_+\mc{M} = V^{\bs{\alpha}}\iota_+\mc{M}$. By Lemma \ref{lem: DXs coherence} and Proposition \ref{prop:relative holonomic criterion}, it suffices to show that $\mrm{SS}(V^{\bs{\alpha}}\iota_+\mc{M}) \subset L \times \mb{C}^r$ for some conical Lagrangian $L \subset T^*X$. 

We first consider the case where $D\colonequals \textrm{div}(f_1\ldots f_r)_{\textrm{red}}$ has simple normal crossings and  $\mc{M}$ is a flat meromorphic connection on $X$ with poles along a simple normal crossings divisor $E \supset D$ and unramified good formal structure (see \S \ref{subsec:normal crossings}). In this case, we claim that the lemma holds with $L$ equal to the union of conormal bundles to strata of $E$. 
To see this, first observe that, since in this case $f_i$ are monomials in the coordinates of $X$, by Proposition \ref{prop:change of functions} 
it suffices to consider the case where $f_1 = x_1, \ldots, f_r = x_r$ are local coordinates cutting out the irreducible components of $D$. Here we use the elementary fact that if $\spec R' \to \spec R$ is a morphism of smooth affine varieties and $\mc{N}$ is a $\ms{D}_X\otimes R$-module, then 
\[\mrm{SS}(R' \otimes_R \mc{N}) \subset \mrm{SS}(\mc{N}) \times_{(\spec R)^{\mrm{an}}} (\spec R')^{\mrm{an}}.\]
Let $x_{r + 1}, \ldots, x_{r + k}$ be further local coordinates cutting out the remaining irreducible components of $E$. By Proposition \ref{prop:V-filtration on graph}, 
\[ \mrm{SS}(V^{\bs{\alpha}}\iota_+\mc{M}) \subset \left(\mrm{SS}(V^{\bs{\alpha}}\mc{M}) \times_{T^*X(\log D)} T^*X\right) \times \mb{C}^r.\]
Here $\mrm{SS}(V^{\bs{\alpha}}\mc{M}) \subset T^*X(\log D)$ is the singular support of $V^{\bs{\alpha}}\cM$ as a $V^{\bs{0}}\ms{D}_X$-module, defined to be the support of the $\gr^FV^{\bs{0}}\ms{D}_X = \mrm{Sym}_{\mc{O}_X}(TX(\log D))$-module $\gr^F V^{\bs{\alpha}}\mc{M}$ on $T^*X(\log D)$, for any choice of good filtration $F_\bullet V^{\bs{\alpha}}\mc{M}$. Hence, the claim follows from the assertion that, for all $i$,
\begin{equation} \label{eq:V is relative holonomic 1}
\mrm{SS}(V^{\bs{\alpha}}\mc{M}) \subset \begin{cases} \{(\partial_{x_i}x_i)x_i = 0\}, & \text{if $i = 1, \ldots, r$}, \\ \{\partial_{x_i} x_i = 0\}, & \text{if $i = r + 1, \ldots, r + k$}.\end{cases}
\end{equation}
Note that $\partial_{x_i}x_i$ is a local coordinate on $T^*X(\log D)$ if $i = 1, \ldots, r$. Since the map $\cO_{X,x}\to \widehat{\mc{O}}_{X, x}$ is faithfully flat, \eqref{eq:V is relative holonomic 1} can be checked after completing at each point, where it is clear by direct calculation using \eqref{eq:normal crossings 2}.

Next, observe that if $\pi \colon \tilde X \to X$ is a proper map, $\tilde{\iota} \colon \tilde X \to \tilde X \times \mb{C}^r$ is the graph embedding for $f_1 \circ \pi, \ldots, f_r \circ \pi$ and $\mc{M} = \pi_+\tilde{\mc{M}}$ for some holonomic $\ms{D}_{\tilde X}$-module satisfying the conclusion of Proposition \ref{prop:V is relative holonomic}, then since $V^{\bs{\alpha}}\iota_+\mc{M} = \pi_+V^{\bs{\alpha}}\tilde{\iota}_+\tilde{\mc{M}}$ by Theorem \ref{thm:multivariate V direct image}, by the analogue of Kashiwara's estimate on the singular support for $\sD_X[\bs{s}]$-modules, we have
\[ \mrm{SS}(V^{\bs{\alpha}}\iota_+\mc{M}) \subset (p \times \id)(q \times \id)^{-1}(\mrm{SS}(V^{\bs{\alpha}}\tilde{\iota}_+\tilde{\mc{M}})),\]
where $p$ and $q$ are the morphisms
\[ T^*\tilde{X} \xleftarrow{q} T^*X \times_X \tilde X \xrightarrow{p} T^*X.\]
Since the operation $p\circ q^{-1}$ sends Lagrangian subvarieties to Lagrangian subvarieties, the conclusion of Proposition \ref{prop:V is relative holonomic} therefore holds for $\mc{M}$. Applying this in the case of a ramified covering, we deduce that Proposition \ref{prop:V is relative holonomic} holds when $\mc{M}$ is a flat meromorphic connection with (possibly ramified) good normal crossings singularities.

Finally, consider the general case. Since $\bs{\alpha}\in \R^r_{>0}$, by Corollary \ref{cor:localising V} we can assume that $\mc{M} = \mc{M}(*D)$. Since the singular support takes extensions to unions, we may as well assume that $\mc{M} = i_+\mc{N}(*E)$ for some closed analytic subset $i \colon Z \hookrightarrow X$ and some flat meromorphic connection $\mc{N}$ on $Z$ with singularities along a divisor $E \supset D$ such that $Z \setminus E$ is smooth. By Theorem \ref{thm:resolution of turning points}, we can take a resolution of turning points $\pi \colon \tilde Z \to Z \subset X$ and write $\mc{M} = \pi_+\tilde{\mc{N}}$ for a flat meromorphic connection $\tilde{\mc{N}}$ with good normal crossings singularities. Since the proposition holds for $\tilde{\mc{N}}$, it also does for $\mc{M}$ as argued the above.
\end{proof}

\begin{remark}
Although we will not need this fact, one can show using arguments similar to the proof of Theorem \ref{thm:flatness} below that $\mrm{SS}(V^{\bs{\alpha}}_*\iota_+\mc{M}) = \mrm{SS}(\mc{M}(*D))\times \mb{C}^r$ for all $\bs{\alpha} \in \mb{R}^r$.
\end{remark}
It will also be useful to consider the following generalisation of the notion of relative holonomicity and singular support.

\begin{defn} \label{defn:localised singular support}
Let $R$ be a localisation of the coordinate ring of a smooth affine variety over $\mb{C}$. We say that a coherent $\ms{D}_X \otimes R$-module $\mc{N}$ is \emph{relative holonomic} if, locally on $X$, it is of the form $\mc{N} = \mc{N}' \otimes_{R'} R$ for some smooth affine variety $\spec R'$ where $R$ is a localisation of $R'$ and some relative holonomic $\ms{D}_X \otimes R'$-module $\mc{N}'$. In this case, we define the \emph{singular support} of $\mc{N}$ to be
\[ \mrm{SS}(\mc{N}) = \bigcup_i L_i \times (S_i \times_{\spec R'} \spec R) \subset T^*X \times \spec R \quad \text{if}\;\; \mrm{SS}(\mc{N}') = \bigcup_i L_i \times S_i^{\mrm{an}},\]
for any choice of $R'$ and $\mc{N}'$ as above, where $S_i\subset \spec R'$. Note that $\mrm{SS}(\mc{N})$ is independent of this choice since taking singular support in the sense of Definition \ref{defn: singular support} commutes with taking finite localisations of $R'$ (cf.\ \cite[Remark 3.2.1]{ZerolociBernsteinSatoideal}).
\end{defn}

In this generality, the relative holonomic modules form a Serre subcategory of the category of coherent $\ms{D}_X \otimes R$-modules. 
When $R$ is the coordinate ring of a smooth affine variety, this follows from \cite[Lemma 3.2.4]{ZerolociBernsteinSatoideal}, for example. When $R$ is a localisation of the coordinate ring of a smooth affine variety, it follows from this plus the fact that any finite diagram of coherent $\ms{D}_X \otimes R$-modules is, locally on $X$, obtained as the localisation of a diagram of coherent $\ms{D}_X \otimes R'$-modules for some smooth affine variety $\spec R'$.

Finally, for certain $R$, we can also define \emph{holonomic} $\ms{D}_X \otimes R$-modules; as we will see in the next subsection, these are especially well-behaved under duality. If $R$ is a commutative ring, we say that $R$ is \emph{equidimensional} if every maximal ideal $\mf{m} \subset R$ has the same codimension (necessarily equal to the dimension of $R$). For example, any regular local ring is equidimensional, while the localisation of $\mb{C}[\bs{s}]$ at the set of functions that do not vanish at the generic points of a given collection of subvarieties of different dimensions is not.

\begin{defn} \label{defn:localised holonomic}
Suppose that $R$ is a localisation of the coordinate ring of a smooth affine variety and that $R$ is equidimensional of dimension $d$. We say that a coherent $\ms{D}_X \otimes R$-module $\mc{N}$ is \emph{holonomic} if it is relative holonomic and, locally on $X$,
\[ \mrm{SS}(\mc{M}) = \bigcup_i L_i \times S_i,\]
a finite union, where each $S_i \subset \spec R$ is of dimension $0$.
\end{defn}

\begin{rmk}
Note that, by \cite[Lemma 3.4.1]{ZerolociBernsteinSatoideal}, if $\mc{N}$ is relative holonomic then $\mc{N}$ is holonomic if and only if its support as an $R$-module is zero dimensional.
\end{rmk}

\subsection{Duality} \label{subsec:duality}

We now turn to the duality theory for relative holonomic modules in general, and the multivariate $V$-filtration on a graph embedding in particular.

Continuing as in the previous subsection, let $R$ be a localisation of the coordinate ring of a smooth affine variety.

\begin{defn} \label{defn:duality}
If $\mc{N}$ is a coherent $\ms{D}_X\otimes R$-module, we define the \emph{dual} of $\mc{N}$ to be 
\[ \mb{D}\mc{N} := \mrm{R}\shom_{\ms{D}_X \otimes R}(\mc{N}, (\ms{D}_X \otimes_{\mc{O}_X} \omega_X^{-1}) \otimes_{\mb{C}} R)[\dim X] \in \mrm{D}^b_{\mrm{coh}}(\ms{D}_X \otimes R).\]
More generally, we define the dual of any object in $\mrm{D}^b_{\mrm{coh}}(\ms{D}_X \otimes R)$ by the same formula. We will sometimes write $\mb{D} = \mb{D}_{\ms{D}_X\otimes R}$ if we need to make the coefficient ring explicit.
\end{defn}

We have the following basic properties of the duality functor.

\begin{prop} \label{prop:basic duality}
For $\mc{N} \in \mrm{D}^b_{\mrm{coh}}(\ms{D}_X \otimes R)$, we have the following.
\begin{enumerate}
\item If $R'$ is a localisation of $R$, then $\mb{D}(\mc{N} \otimes_R R') = \mb{D}\mc{N} \otimes_R R'$.
\item If the cohomology sheaves of $\mc{N}$ are relative holonomic, then the same holds for $\mb{D}\mc{N}$.
\item We have $\mb{D}\mb{D}\mc{N} \cong \mc{N}$.
\end{enumerate}
\end{prop}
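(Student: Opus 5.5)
The three statements are standard homological facts about $\ms{D}_X \otimes R$-modules. I would prove them by reducing to the known theory over $\ms{D}_X$ and over coordinate rings $R'$ of smooth affine varieties. Throughout, write $\omega_X^{-1}$-twisted $\ms{D}_X \otimes R$ as $\ms{D}_{X,R}$ for brevity. The key preliminary is that $\ms{D}_X\otimes R$ has finite global homological dimension locally on $X$. This holds because $\gr^F(\ms{D}_X \otimes R) = \gr^F\ms{D}_X \otimes R$ is a regular commutative ring: locally it is a polynomial ring over $\mc{O}_X$ tensored with a regular ring, up to analytic completion issues handled as in Lemma~\ref{lemma: coherence of A and rees}. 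So every coherent module admits, locally, a finite resolution by finite free modules, and $\mb{D}$ preserves $\mrm{D}^b_{\mrm{coh}}$.

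\textbf{Part (1).} Let $R'$ be a localisation of $R$. Then $R'$ is flat over $R$, so $\ms{D}_X\otimes R'$ is flat over $\ms{D}_X \otimes R$. For a coherent $\mc{N}$, compute $\mb{D}\mc{N}$ using a local finite free resolution $\mc{P}^\bullet \to \mc{N}$. The complex $\shom(\mc{P}^\bullet, \ms{D}_{X,R})$ is termwise free of finite rank, and $\shom(\ms{D}_{X,R}^n,\ms{D}_{X,R})\otimes_R R' = \shom(\ms{D}_{X,R'}^n,\ms{D}_{X,R'})$. By flatness, $\mc{P}^\bullet\otimes_R R'$ resolves $\mc{N}\otimes_R R'$. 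This gives (1), first for modules and then for bounded complexes by dévissage.

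\textbf{Part (3).} The natural biduality map $\mc{N} \to \mb{D}\mb{D}\mc{N}$ is constructed as usual. It is an isomorphism on finite free modules, because the side-changing twist by $\omega_X^{-1}$ composed twice gives the identity, exactly as in the $\ms{D}_X$ case. Hence it is an isomorphism on all of $\mrm{D}^b_{\mrm{coh}}$ by resolving, using finite homological dimension. This argument does not need holonomicity.

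\textbf{Part (2).} This is the substantive claim, and I expect it to be the main obstacle. By Definition~\ref{defn:localised singular support}, locally $\mc{N}$ is $\mc{N}'\otimes_{R'}R$ for a relative holonomic $\ms{D}_X\otimes R'$-module $\mc{N}'$, with $R'$ the coordinate ring of a smooth affine variety. By (1), $\mb{D}\mc{N} = \mb{D}\mc{N}'\otimes_{R'}R$. Since localisation preserves relative holonomicity by definition, it suffices to treat $R=R'$, and then the result is \cite[Lemma 3.2.4 / Proposition 3.3]{ZerolociBernsteinSatoideal}: the cohomology of the dual of a relative holonomic module is relative holonomic.

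If I were to prove that step directly, I would proceed as follows. Choose a good filtration on $\mc{N}$ and a filtered free resolution. Then $\gr^F\mb{D}\mc{N}$ is computed by $\mrm{R}\shom_{\gr^F}(\gr^F\mc{N}, \gr^F\ms{D}_{X,R})$, so its support is contained in $\mrm{SS}(\mc{N})$. This shows that each cohomology sheaf of $\mb{D}\mc{N}$ has singular support contained in $\bigcup_i L_i\times S_i$. The delicate point is that a closed conic subset of $\bigcup L_i\times S_i$ need not itself be of product form. This is where I would invoke the relative version of Gabber's involutivity together with the argument of \cite{ZerolociBernsteinSatoideal}, which shows that the irreducible components of such a singular support are again of the form $L\times S$.
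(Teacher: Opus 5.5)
Your proposal is correct and follows the paper's own argument. Parts (1) and (3) come from computing $\mb{D}$ locally on a resolution, and part (2) is reduced via (1) and the definition of relative holonomicity over localisations to the coordinate ring of a smooth affine variety, where it is \cite[Lemma 3.2.4]{ZerolociBernsteinSatoideal}. For the resolutions, the paper uses modules $\ms{D}_X\otimes P$ with $P$ projective over $R$; over a non-local $R$ you cannot always insist on a \emph{finite} resolution by \emph{free} modules, though projective terms suffice for your argument. Your optional direct sketch of (2) is unnecessary, and the ``product form'' step you flag there is exactly what Proposition~\ref{prop:relative holonomic criterion} provides.
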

\begin{proof}
Assertions (1) and (3) are clear, for example, by computing the dual (locally) using a resolution of $\mc{N}$ by modules of the form $\ms{D}_X \otimes P$ for $P$ a projective $R$-module. Assertion (2) holds when $R$ is the coordinate ring of a smooth affine variety by \cite[Lemma 3.2.4]{ZerolociBernsteinSatoideal}, and hence in general by (1). 
\end{proof}

As in the case of $\ms{D}_X$-modules, duality defines an exact anti-equivalence on the category of holonomic modules.

\begin{prop} \label{prop:holonomic duality}
Assume that $R$ is equidimensional of dimension $d$. Then for any relative holonomic $\ms{D}_X \otimes R$-module $\mc{N}$, we have
\begin{enumerate}
\item $\dim \mrm{SS}\left(\mrm{H}^i(\mb{D}\mc{N})\right) \leq d + \dim X - i$,
\item $\mrm{H}^i(\mb{D}\mc{N}) = 0$ unless $d + \dim X - \dim\mrm{SS}(\mc{N}) \leq i \leq d$, and
\item if $\mc{N}$ is holonomic, then $\mrm{H}^i(\mb{D}\mc{N}) = 0$ unless $i = d$, and $\mrm{H}^d(\mb{D}\mc{N})$ is holonomic.
\end{enumerate}
\end{prop}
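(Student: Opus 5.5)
We will reduce all three statements to the corresponding facts for finitely generated modules over the regular ring $\mathrm{gr}^F(\ms{D}_X)\otimes R$, i.e.\ to commutative algebra on $T^*X\times \spec R$. This follows the classical argument for $\ms{D}_X$-modules (Kashiwara, Björk). The statements are local on $X$. By Proposition \ref{prop:basic duality}(1), they are also compatible with localising $R$. So we may work locally and, where convenient, replace $R$ by its localisation at a maximal ideal $\mf{m}$. Such a localisation is regular local of dimension $d$, because $R$ is equidimensional. The ring $\mathcal{A}=\ms{D}_X\otimes R$ with the filtration $F_p\ms{D}_X\otimes R$ has associated graded $\mathcal{O}_{T^*X}\otimes R$, viewed as a sheaf of graded rings on $X$. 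This ring is regular of pure dimension $2\dim X+d$ along the relevant fibres.

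\textbf{Step 1 (filtered resolutions and the grade estimate).} We choose a good filtration on $\mc{N}$ and, locally, a filtered free resolution by modules $\mathcal{A}^{n_k}$ with shifted filtrations. This resolution computes $\mb{D}\mc{N}$ as a filtered complex whose associated graded computes $\mrm{R}\shom_{\mathrm{gr}\mathcal{A}}(\mathrm{gr}^F\mc{N},\mathrm{gr}\mathcal{A})$, up to shift and twist. The standard spectral sequence argument gives
\[ \mrm{SS}(\mrm{H}^i(\mb{D}\mc{N})) \subset \supp \mathcal{E}xt^{i+\dim X}_{\mathrm{gr}\mathcal{A}}(\mathrm{gr}^F\mc{N},\mathrm{gr}\mathcal{A}). \]
Here we use the shift $[\dim X]$ built into Definition \ref{defn:duality}. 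Over a regular (Cohen–Macaulay, equidimensional) ring of dimension $n=2\dim X+d$, the commutative-algebra facts we need are these. First, $\dim\supp\mathcal{E}xt^j(G,\mathrm{gr}\mathcal{A})\le n-j$. Second, $\mathcal{E}xt^j(G,\mathrm{gr}\mathcal{A})=0$ for $j<n-\dim\supp G$ and for $j>n$. Applying them with $j=i+\dim X$ gives statement (1) directly. It also gives the vanishing $\mrm{H}^i(\mb{D}\mc{N})=0$ for $i<d+\dim X-\dim\mrm{SS}(\mc{N})$, which is the lower bound in (2).

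\textbf{Step 2 (upper bound $i\le d$).} Statement (1) with $i>d$ gives $\dim\mrm{SS}(\mrm{H}^i(\mb{D}\mc{N}))<\dim X$. Now $\mrm{H}^i(\mb{D}\mc{N})$ is relative holonomic by Proposition \ref{prop:basic duality}(2), so its singular support is a union of pieces $L\times S$ with $L$ conical Lagrangian and $\dim L=\dim X$. A dimension below $\dim X$ therefore forces the singular support to be empty, hence $\mrm{H}^i(\mb{D}\mc{N})=0$. This completes (2).

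\textbf{Step 3 (holonomic case).} If $\mc{N}$ is holonomic, then $\dim\mrm{SS}(\mc{N})=\dim X$, since the $S_i$ are zero-dimensional. Step 2 then forces $i=d$. For holonomicity of $\mrm{H}^d(\mb{D}\mc{N})$, we use (1): $\dim\mrm{SS}(\mrm{H}^d)\le\dim X$. Combined with the relative holonomic structure $\bigcup L_i\times S_i$, where each $\dim L_i=\dim X$, this forces each $S_i$ to be zero-dimensional.

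\textbf{Main obstacle.} The step I expect to be delicate is Step 1 in the analytic, localised setting. We must make sense of filtered resolutions over $\ms{D}_X\otimes R$ when $R$ is only a localisation, and justify the $\mathcal{E}xt$ dimension bounds for the non-Noetherian sheaf $\mathcal{O}_{T^*X}\otimes R$. I would handle this by passing to $R'$ (Definition \ref{defn:localised singular support}), working on compact polycylinders with Lemma \ref{lemma: Theorem A and B}, and checking dimensions stalkwise at points of $T^*X\times\spec R$, where the local rings are regular. The equidimensionality of $R$ is exactly what makes the bound uniform across these points.
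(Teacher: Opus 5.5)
Your proposal is correct and follows the paper's proof essentially step for step. The spectral-sequence reduction to $\mathrm{Ext}^{i+\dim X}$ over regular local rings of dimension $2\dim X+d$ gives (1) and the lower bound in (2), relative holonomicity of $\mathrm{H}^i(\mb{D}\mc{N})$ gives the upper bound, and (3) follows formally. The paper makes your ``check at points'' fix precise by localising stalks of $\gr^F\ms{D}_X\otimes\mc{O}_{(\spec R)^{\mathrm{an}}}$ at their graded maximal ideals (via \cite[\S 3.6]{ZerolociBernsteinSatoideal}). This is genuinely needed: the stalks $\mc{O}_{X,x}[\xi]\otimes R$ are not equidimensional, e.g.\ $(1-x\xi)$ is a maximal ideal of height one in $\C\{x\}[\xi]$, so your phrase ``regular of pure dimension $2\dim X+d$'' cannot be taken literally for them. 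The only real difference is the localised case. The paper first proves (1) and the lower bound for a smooth affine coordinate ring $R'$, then transfers them to $R$ via $\dim(S'\times_{\spec R'}\spec R)=\dim S'+d-\dim R'$, rather than localising directly at maximal ideals of $R$ as you suggest.
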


\begin{remark}
In view of Proposition \ref{prop:holonomic duality}, we write $\mb{D}_{hol}\mc{N} = \mrm{H}^d(\mb{D}\mc{N}) = \mb{D}\mc{N}[d]$ if $\mc{N}$ is a holonomic $\ms{D}_X\otimes R$-module.
\end{remark}
\begin{proof}
We first consider (1) and the lower bound in (2) in the case where $R$ is the coordinate ring of a smooth affine variety. By a standard spectral sequence argument (cf.\ \cite[Proposition 4.2.2]{ZerolociBernsteinSatoideal}) it suffices to prove the assertions with the $\gr^F\ms{D}_X \otimes R$-module
\[ \mrm{Ext}^{i + \dim X}_{\gr^F \ms{D}_X \otimes R}(\gr^F \mc{N}, \gr^F\ms{D}_X \otimes R)\]
in place of $\mrm{H}^i(\mc{N})$, where $F_\bullet \mc{N}$ is any good filtration with respect to $F_{\bullet}(\sD_X\otimes R)$. By the discussion in \cite[\S3.6]{ZerolociBernsteinSatoideal}, we may further replace this with
\[ \mrm{Ext}^{i + \dim X}_A(M, A),\]
where $A$ is the localisation at the unique graded maximal ideal of the stalk of $\gr^F\ms{D}_X \otimes \mc{O}_{(\spec R)^{\textrm{an}}}$ at any point of $X \times (\spec R)^{\textrm{an}}$, and $M = \gr^F\mc{N} \otimes_{\gr^F\ms{D}_X \otimes R} A$. The ring $A$ is a regular local ring of dimension $d + 2 \dim X$ and the module $M$ is finitely generated. So by standard commutative algebra (e.g.\ \cite[Corollary 3.5.11]{BH})
\[ \dim \supp \mrm{Ext}^{i + \dim X}_A(M, A) \leq (d+2 \dim X) - i - \dim X = d+\dim X - i \]
and
\[ \mrm{Ext}^{i + \dim X}_A(M, A) = 0 \quad \text{if $i + \dim X < (d + 2\dim X) - \dim \supp M$}.\]
Since $\dim \supp M \leq \dim \mrm{SS}(\mc{N})$, this gives (1) and the lower bound in (2) in this case.

Next consider (1) and the lower bound in (2) in the general case. Working locally on $X$, write $R$ as the localisation of the coordinate ring $R'$ of a smooth affine variety, say of dimension $n$, and $\mc{N}$ as the localisation of a relative holonomic $\ms{D}_X \otimes R'$-module $\mc{N}'$. Since $R$ is equidimensional dimension $d$, it follows that for any irreducible subvariety $S' \subset \spec R'$, the fibre product $S = S' \times_{\spec R'} \spec R$ has dimension $\dim S = \dim S' + d - n$. Since singular support and duality commute with localisation, (1) and the lower bound in (2) for $\mc{N}$ follow from the same statements for $\mc{N}'$.

To get the upper bound in (2), we note that since $\mrm{H}^i(\mb{D}\mc{N})$ is relative holonomic, we have $\dim \mrm{SS}(\mrm{H}^i(\mb{D}\mc{N})) \geq \dim X$ if $\mrm{H}^i(\mb{D}\mc{N}) \neq 0$, so this follows from (1).

Finally, to get (3), note that $\mc{N}$ is holonomic if and only if $\dim \mrm{SS}(\mc{N}) \leq \dim X$, so the vanishing follows from (2) and the holonomicity of $\mrm{H}^d(\mb{D}\mc{N})$ follows from (1).
\end{proof}

Return now to a holonomic $\ms{D}_X$-module $\mc{M}$ and the graph embedding $\iota \colon X \to X \times \mb{C}^r$ of a sequence of holomorphic functions $f_1, \ldots, f_r$, with associated divisor $D = \{f_1 \cdots f_r = 0\}$. We have the following duality theorem for the multivariate $V$-filtration on $\iota_+\mc{M}$. Recall from Notation \ref{notation: V* for arbitrary M} that $V^{\bullet}_{\ast}\iota_{+}\cM\colonequals V^{\bullet}_{\ast}\iota_{+}\cM(\ast D)$.

\begin{thm} \label{thm:duality}
We have a canonical isomorphism
\[ \mb{D}V_*^{\bs{\alpha}}\iota_+\mc{M} \cong (V_*^{-\bs{\alpha} + \epsilon \bs{1}}\iota_+\mb{D}\mc{M})_{\bs{s} \mapsto -\bs{s}},\]
where $(-)_{\bs{s} \mapsto -\bs{s}}$ denotes the twist of a $\ms{D}_X[\bs{s}]$-module by the automorphism of $\ms{D}_X[\bs{s}]$ sending $s_i$ to $-s_i$. In particular, $\mb{D}V_*^{\bs{\alpha}}\iota_{+}\mc{M}$ has cohomology in degree $0$ only.
\end{thm}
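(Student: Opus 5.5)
The plan is to reduce, via resolutions and proper direct images, to the normal crossings model, where the dual can be computed explicitly, and then to glue by uniqueness. Write $\mc{N}_{\bs{\alpha}} := V_*^{\bs{\alpha}}\iota_+\mc{M}$. By Proposition \ref{prop:V is relative holonomic} this is relative holonomic, and by Proposition \ref{prop:basic duality} so is $\mb{D}\mc{N}_{\bs{\alpha}}$.

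The first step is to construct a canonical pairing, and hence a morphism
\[ \Phi_{\bs{\alpha}} \colon (V_*^{-\bs{\alpha}+\epsilon\bs{1}}\iota_+\mb{D}\mc{M})_{\bs{s}\mapsto-\bs{s}} \to \mb{D}\mc{N}_{\bs{\alpha}}. \]
For this I would use the Malgrange-Mellin transform, $\mc{M}[\bs{s}]\bs{f^s}$ against $(\mb{D}\mc{M})[\bs{s}]\bs{f^{-s}}$, together with the usual duality pairing of $\mc{M}(*D)$ and $\mb{D}(\mc{M}(*D)) = (\mb{D}\mc{M})(!D)$. Both sides should be compatible with the maps $t_i$, $\partial_{t_i}$ and with changing $\bs{\alpha}$, and at the generic point of $\mb{C}^r$ (after localising away from all walls) both become $\mb{D}$ of $\iota_+\mc{M}(*D)\otimes \mb{C}(\bs{s})$. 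To make the construction canonical, I would first define the dual filtration $U^{\bs{\alpha}} := \mb{D}\mc{N}_{-\bs{\alpha}+\epsilon\bs{1}}$ twisted by $\bs{s}\mapsto-\bs{s}$, as sub-objects of a fixed module, and show that it is a $V_*$-filtration on $\iota_+\mb{D}\mc{M}$. Theorem \ref{thm:V* uniqueness} then gives the identification. The eigenvalue conditions on walls dualise directly: the wall $\bs{L}^{-1}(\gamma)$ becomes $\bs{L}^{-1}(-\gamma)$ under $\bs{s}\mapsto -\bs{s}$, and the shift $\epsilon\bs{1}$ accounts for the half-open convention of chambers.

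The second step, which is the main content, is to show that $\mb{D}\mc{N}_{\bs{\alpha}}$ is concentrated in degree $0$ and that the resulting filtration is good. I would follow the proof of Theorem \ref{thm:multivariate V existence} and Proposition \ref{prop:V is relative holonomic}.

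First, in the model case where $\mc{M}$ is a meromorphic connection with unramified good formal structure and $f_i = x_i$ are coordinates, Proposition \ref{prop:V-filtration on graph} gives $\mc{N}_{\bs{\alpha}} = \ms{D}_X[\bs{s}]\bs{x^s}\otimes_{V^{\bs{0}}\ms{D}_X} V_*^{\bs{\alpha}}\mc{M}$. In this case $V_*^{\bs{\alpha}}\mc{M}$ is explicitly $V^{\bs{0}}\ms{D}_X\cdot \bs{x}^{-\bs{1}}M^{\bs{\alpha}}$, with $M^{\bs{\alpha}}$ a shifted Deligne-Malgrange lattice. I would resolve it by the logarithmic de Rham/Spencer-type complex $V^{\bs{0}}\ms{D}_X\otimes \wedge^\bullet TX(\log D)\otimes M^{\bs{\alpha}}$ and compute the dual directly. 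The dual of a lattice with residues in $[\alpha_i,\alpha_i+1)$ is a lattice for the dual connection with residues in $(-\alpha_i-1,-\alpha_i]$, and this is exactly what produces the index $-\bs{\alpha}+\epsilon\bs{1}$. This computation can be checked after formal completion, as in Proposition \ref{prop:normal crossings V-filtration}.

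The ramified case follows by taking $\Gamma$-invariants, and the case of monomial $f_i$ follows via Proposition \ref{prop:change of functions}. To pass to general $\mc{M}$, I would use that duality commutes with proper direct image for $\ms{D}_X[\bs{s}]$-modules (relative Verdier duality). Together with the strictness of Theorem \ref{thm:multivariate V direct image}, which gives $V_*^{\bs{\alpha}}\iota_+\pi_+ = \pi_+V_*^{\bs{\alpha}}\tilde\iota_+$, this establishes the statement for direct images of good meromorphic connections. Finally I would extend to arbitrary holonomic $\mc{M}$ by the dévissage of Theorem \ref{thm:multivariate V existence}, using exactness of $V_*^{\bs{\alpha}}$ (Corollary \ref{cor:V strictness}) and the five lemma.

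I expect the main obstacle to be the degree-$0$ concentration in the dévissage step. The pushforward $\pi_+$ of a module concentrated in one degree need not stay so. To handle this, I would prove the isomorphism at the level of complexes for the whole object $\pi_+$, which is strict, so that it commutes with taking $\mc{H}^i$. I would also use Proposition \ref{prop:holonomic duality}(2) after localising at maximal ideals of $\mb{C}[\bs{s}]$ to bound the degrees in which cohomology can appear.
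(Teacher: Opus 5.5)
Your first step (dualise the $V_*$-pieces, show that the result is a $V_*$-filtration on $\iota_+\mb{D}\mc{M}$, and conclude by Theorem \ref{thm:V* uniqueness}) is essentially the paper's strategy. The resolution-based second step, which you make carry the weight, does not go through as written. There are three concrete problems.

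(i) For monomial $f_i$, Proposition \ref{prop:change of functions} only shows that $V^{\bs{\alpha}'}_*\iota'_+\mc{M}$ is the \emph{image} of $V^{\bs{\alpha}}_*\iota_+\mc{M}\otimes_{\mb{C}[\bs{s}]}\mb{C}[\bs{s}']$. Duality commutes with derived base change, not with images. So you would need injectivity of that map and vanishing of higher Tor, i.e.\ Corollary \ref{cor: change of functions} and Theorem \ref{thm:flatness}. The paper deduces both \emph{from} the present theorem, so as written this step is circular. (The same reduction was harmless in Proposition \ref{prop:V is relative holonomic}, where only singular-support bounds were needed.)

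(ii) In the irregular model, $M^{\bs{\alpha}}$ is not stable under $TX(\log D)$, because the twists $d\phi_j$ have higher-order poles. Moreover $V_*^{\bs{\alpha}}\mc{M}$ is not even $\mc{O}_X$-coherent: it contains the summands $U_j(*D^j_{\mathrm{irreg}})$ of \eqref{eq:normal crossings 2}. So $V^{\bs{0}}\ms{D}_X\otimes\wedge^\bullet TX(\log D)\otimes M^{\bs{\alpha}}$ is not a resolution of it.

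(iii) Proposition \ref{prop:holonomic duality}(2) cannot give degree-$0$ concentration. $V^{\bs{\alpha}}_*\iota_+\mc{M}$ has singular support of dimension $\dim X+r$, and this is unchanged by localising at a maximal ideal, so the bound only gives vanishing of $\mrm{H}^i(\mb{D}V^{\bs{\alpha}}_*\iota_+\mc{M})$ outside $[0,r]$. In the pushforward step the correct fix is different: $\pi_+$ of the localised good connection is already a single module, because $\pi$ is an isomorphism off $\tilde{E}$. By strictness, both sides are then concentrated in degree $0$.

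The missing idea is that none of this reduction is needed. Assemble the objects $U^{\bs{\alpha}}:=(\mb{D}V_*^{-\bs{\alpha}+\epsilon\bs{1}}\iota_+\mc{M})_{\bs{s}\mapsto-\bs{s}}$ into an $\mb{R}^r$-graded complex over the Rees ring. Compute all of them using one fixed injective resolution of $(\ms{D}_X\otimes\omega_X^{-1})[\bs{s}]$ over $(\ms{D}_X\otimes\ms{D}_X)[\bs{s},\bs{t},\bs{t}^{-1}]$. Take the maps $\bs{u}^{\bs{\beta}-\bs{\alpha}}$ to be those induced by inclusions, and let $t_i$ act by conjugation $h\mapsto t_i^{-1}ht_i$.

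This object is a good wall-and-chamber object with walls $-\mc{W}$, and the eigenvalue condition holds on each $\mrm{Cone}(U^{\bs{\beta}}\to U^{\bs{\alpha}})$. Lemma \ref{lem:derived multivariate} then gives strictness, i.e.\ $\mc{H}^n(U^{\bs{\alpha}})\hookrightarrow\mc{H}^n(\varinjlim U^{\bs{\alpha}})$, and shows that the induced filtration on cohomology is the $V_*$-filtration.

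It remains to identify the colimit with $\iota_+\mb{D}\mc{M}(*D)$. One first checks that $f_i$ acts bijectively on its cohomology. Then one uses the automorphism $\mu$ of $\ms{D}_X(*D)[\bs{s}]$ given by $\xi\mapsto\xi+\sum_i s_i\xi(f_i)/f_i$. It satisfies $\mb{D}\mu\cong\mu^{-1}\mb{D}$ and turns $\mc{M}(*D)[\bs{s}]$ into $\mc{M}(*D)[\bs{s}]\bs{f^s}$; this is the precise form of your Mellin ``pairing''. Since $\iota_+\mb{D}\mc{M}(*D)$ sits in degree $0$, each $U^{\bs{\alpha}}$ is concentrated in degree $0$ and equals $V_*^{\bs{\alpha}}\iota_+\mb{D}\mc{M}$. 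No resolution of turning points is needed.
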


\begin{rmk}\label{remark: recover usual self-duality}
When $r = 1$, Theorem \ref{thm:duality} refines the self-duality of the nearby cycles functor $\bigoplus_{\alpha \in (0, 1]} \gr_V^\alpha \iota_+(-)$ (see e.g. \cite[Theorem 7.1(i),(iii),(iv)]{CDM}) as follows. If $\alpha \in (0, 1)$, then by Corollary \ref{cor:localising V} we have
\[\mb{D}_{\ms{D}_X[s]}V^{\alpha}\iota_+\mc{M} = \mb{D}_{\ms{D}_X[s]}V^\alpha_*\iota_+\mc{M} \cong \left(V^{>-\alpha}_*\iota_+\mb{D}\mc{M} \right)_{s\mapsto -s}\cong \left(V^{>1 - \alpha}\iota_+\mb{D}\mc{M}\right)_{s\mapsto -s},\]
where the first isomorphism is Theorem \ref{thm:duality} and the second is multiplication by $t$. Taking the cone of $V^{>\alpha} \to V^\alpha$, we therefore get an isomorphism
\[ \mb{D}_{\ms{D}_X}\gr_V^\alpha\iota_+\mc{M} \cong \mb{D}_{\ms{D}_X[s]}\gr_V^\alpha \iota_+\mc{M}[1] \cong \gr_V^{1 - \alpha}\iota_+\mb{D}\mc{M},\]
where the first isomorphism holds since $s + \alpha$ acts nilpotently on $\gr_V^\alpha \iota_+\mc{M}$. If $\alpha = 1$, since $V_*^0\iota_+\mb{D}\mc{M} \neq V^0\iota_+\mb{D}\mc{M}$ in general, we have to multiply by an extra factor of $t$ to get $V_*^0\iota_+\mb{D}\mc{M} \cong V^1\iota_+\mb{D}\mc{M}$; so we get a self-duality on $\gr_V^1\iota_{+}(-)$. The theorem says nothing, on the other hand, about the (unipotent) vanishing cycles functor $\gr_V^0\iota_+(-)$. 
\end{rmk}

\begin{proof}[Proof of Theorem \ref{thm:duality}]
It suffices to check that the theorem holds after localising at the divisor $D$. Using the Malgrange-Mellin transform \eqref{eqn: Malgrange isomorphism}, we may write
\begin{equation}\label{eqn:MM for *D} \iota_+\mc{M}(*D) = \mc{M}(*D)[\bs{s}]\bs{f^s}\end{equation}
as $\ms{D}_X(*D)[\bs{s}]$-modules. Now, observe that we have an automorphism $\mu$ of the sheaf of rings $\ms{D}_X(*D)[\bs{s}]$ given by
\[ \mu(g) = g, \quad \mu(s_i) = s_i, \quad \text{and} \quad \mu(\xi) = \xi + \sum_{i = 1}^r s_i \frac{\xi f_i}{f_i}\]
for $g \in \mc{O}_X(*D)$ and $\xi$ a vector field,
such that $\mc{M}(*D)[\bs{s}]\bs{f^s} = \mu(\mc{M}(*D)[\bs{s}])$ is the module obtained from $\mc{M}(*D)[\bs{s}]$ by composing the action with $\mu$. Similarly, it is straightforward to check that the map $\mu \otimes \mathrm{id}$ defines an isomorphism
\[ \mu \otimes \mrm{id} \colon \ms{D}_X(*D)[\bs{s}] \otimes_{\mc{O}_X}\omega_X^{-1} = \left(\ms{D}_X(*D) \otimes_{\mc{O}_X} \omega_X^{-1}\right) [\bs{s}] \cong (\mu, \mu^{-1})\left(\ms{D}_X(*D)[\bs{s}]\otimes_{\mc{O}_X}\omega_X^{-1}\right)\]
of the $\ms{D}_X(*D)[\bs{s}] \otimes \ms{D}_X(*D)[\bs{s}]$-module $(\ms{D}_X(*D) \otimes_{\mc{O}_X}\omega_X^{-1})[\bs{s}]$ with its $(\mu, \mu^{-1})$-twist.

So if we write $\mb{D}_{\ms{D}_X(*D)[\bs{s}]}$ for the duality on the derived category of $\ms{D}_X(*D)[\bs{s}]$-modules defined by
\[\mb{D}_{\ms{D}_X(*D)[\bs{s}]}\mc{N} := \mrm{R}\shom_{\ms{D}_X(*D)[\bs{s}]}(\mc{N}, (\ms{D}_X(*D)\otimes_{\mc{O}_X} \omega_X^{-1})[\bs{s}])[\dim X] \]
by analogy with Definition \ref{defn:duality}, then
\[ \mb{D}_{\ms{D}_X(*D)[\bs{s}]}\mu(\mc{N}) \cong \mu^{-1}\mb{D}_{\ms{D}_X(*D)[\bs{s}]}\mc{N}.\]
Since
\[ \mb{D}_{\ms{D}_X(*D)[\bs{s}]}(\mc{M}(*D)[\bs{s}]) = (\mb{D}\mc{M})(*D)[\bs{s}],\]
we therefore by \eqref{eqn:MM for *D} have
\begin{equation}\label{eq:duality 2}
\begin{aligned}
\mb{D}_{\ms{D}_X(*D)[\bs{s}]}\left(\iota_+\mc{M}(*D)\right) &= \mb{D}_{\ms{D}_X(*D)[\bs{s}]}(\mu(\mc{M}(*D)[\bs{s}])) \\
&= \mu^{-1}\left((\mb{D}\mc{M})(*D)[\bs{s}]\right) = (\iota_+\mb{D}\mc{M}(*D))_{\bs{s} \mapsto -\bs{s}}.
\end{aligned}
\end{equation}

To extend this to the $V$-filtration, let us fix an injective resolution $\mc{I}^\bullet$ for $(\ms{D}_X \otimes \omega_X^{-1})[\bs{s}]$ as a $(\ms{D}_X \otimes \ms{D}_X)[\bs{s}, \bs{t}, \bs{t}^{-1}]$-module, where $t_i$ acts by \eqref{eqn: ti action}. Since $(\ms{D}_X \otimes \ms{D}_X)[\bs{s}, \bs{t}, \bs{t}^{-1}]$ is flat over $(\ms{D}_X \otimes \ms{D}_X)[\bs{s}]$, the terms of $\mc{I}^\bullet$ are also injective as $(\ms{D}_X \otimes \ms{D}_X)[\bs{s}]$-modules. So for all $\bs{\alpha} \in \mb{R}^r$, we have
\[ (\mb{D}V_*^{-\bs{\alpha} + \epsilon \bs{1}}\iota_+\mc{M})_{\bs{s} \mapsto -\bs{s}} \cong \shom_{\ms{D}_X[\bs{s}]}(V_*^{-\bs{\alpha} + \epsilon \bs{1}}\iota_+\mc{M}, \mc{I}^\bullet[\dim X])_{\bs{s} \mapsto -\bs{s}} =: U^{\bs{\alpha}}\]
as objects in $\mrm{D}^b_{\mrm{coh}}(\ms{D}_X[\bs{s}])$. We need to show that
\[ U^{\bs{\alpha}} \cong V_*^{\bs{\alpha}}\iota_+\mb{D}\mc{M}.\]
For $\bs{\alpha} \leq \bs{\beta}$, the inclusion $V_*^{-\bs{\alpha} + \epsilon \bs{1}}\iota_+\mc{M} \to V_*^{-\bs{\beta} + \epsilon\bs{1}}\iota_+\mc{M}$ induces a map $\bs{u}^{\bs{\beta} - \bs{\alpha}} \colon U^{\bs{\beta}} \to U^{\bs{\alpha}}$, while for each $i$, we have an isomorphism
\[ u_i^{-1}t_i \colon U^{\bs{\alpha}} \overset{\sim}\to (U^{\bs{\alpha} + \bs{1}_i})_{\bs{s} \mapsto \bs{s} - \bs{1}_i} \]
sending a morphism
\[ h \colon V^{-\bs{\alpha} + \epsilon \bs{1}}_*\iota_+\mc{M} \to \mc{I}^\bullet[\dim X] \]
to the morphism
\[ t_i^{-1} h t_i \colon V^{-(\bs{\alpha} + \bs{1}_i) + \epsilon \bs{1}}_*\iota_+\mc{M} \to \mc{I}^\bullet[\dim X]. \]
Putting these together, we obtain a complex of $\mb{R}^r$-graded modules
\[ R_{U, \mb{R}} := \bigoplus_{\bs{\alpha} \in \mb{R}^r} U^{\bs{\alpha}} \bs{u}^{-\bs{\alpha}}\]
over
\[ R_{V, \mb{R}}\ms{D}_X[\bs{s}, \bs{t}, \bs{t}^{-1}] = \ms{D}_X[\bs{s}][\bs{u}^{\mb{R}_{\geq 0}}][\bs{u}^{-1}\bs{t}, \bs{u}\bs{t}^{-1}].\]

Thus, in the language of \S\ref{subsec:direct image}, $R_{U, \mb{R}}$ defines an object $(\mc{N}, U^\bullet\mc{N})$ in the filtered derived category $\mrm{D}_\mb{R}(\ms{D}_X[\bs{s}, \bs{t}, \bs{t}^{-1}], V^\bullet)$ with $R_{U, \mb{R}}\mc{N} = R_{U, \mb{R}}$. By construction, it is a wall and chamber object with set of walls
\[ -\mc{W} = \{\bs{L}^{-1}(-\gamma) \mid \bs{L}^{-1}(\gamma) \in \mc{W}\}\]
for any set of walls $\mc{W}$ of $V_*^\bullet\iota_+\mc{M}$. Moreover, the cohomology of each $U^{\bs{\alpha}}$ is coherent over $\ms{D}_X[\bs{s}]$ and hence over $V^{\bs{0}}\ms{D}_X[\bs{s}, \bs{t}, \bs{t}^{-1}]$, so $R_{U, \mb{R}}$ is a good wall and chamber object by the same argument as Proposition \ref{prop:good V* criterion}. Finally, if $\bs{\alpha} \leq \bs{\beta}$ are separated by a single wall $\bs{L}^{-1}(-\gamma)$ of $-\mc{W}$, then $-\bs{\beta} + \epsilon \bs{1} \leq -\bs{\alpha} + \epsilon \bs{1}$ are separated by the single wall $\bs{L}^{-1}(\gamma)$ of $\mc{W}$, so there exist $\gamma_j$ with $\Re \gamma_j = \gamma$ such that
\[ \prod_j (\bs{L}(\bs{s}) + \gamma_j) \colon \frac{V_*^{-\bs{\beta} + \epsilon \bs{1}}\iota_+\mc{M}}{V_*^{-\bs{\alpha} + \epsilon \bs{1}}\iota_+\mc{M}} \to \frac{V_*^{-\bs{\beta} + \epsilon \bs{1}}\iota_+\mc{M}}{V_*^{-\bs{\alpha} + \epsilon \bs{1}}\iota_+\mc{M}} \]
is zero. So taking homs into $\mc{I}^\bullet$ and applying $\bs{s} \mapsto -\bs{s}$, we deduce that
\[ \prod_j (\bs{L}(\bs{s}) - \gamma_j) = \pm \prod_j (\bs{L}(-\bs{s}) + \gamma_j) \colon \mrm{Cone}(U^{\bs{\beta}} \to U^{\bs{\alpha}})\]
is also zero. So applying Lemma \ref{lem:derived multivariate} (whose statement and proof apply equally well to $V_*$-filtrations over $\ms{D}_X[\bs{s}, \bs{t}, \bs{t}^{-1}]$), we deduce that the object $(\mc{N}, U^\bullet\mc{N})$ is strict and induces the $V_*$-filtration on its cohomologies.

To complete the proof, it remains to identify the underlying complex of $\ms{D}_X[\bs{s}, \bs{t}, \bs{t}^{-1}]$-modules $\mc{N} := \varinjlim_{\bs{\alpha} \in \mb{R}^r} U^{\bs{\alpha}}$ with $\iota_+\mb{D}\mc{M}(*D)$ as objects in the derived category. Since every local section of $V_*^{ - \bs{\alpha} - \epsilon \bs{1}}\iota_+\mc{M}$ is annihilated by a power of $t_i - f_i$ for all $\bs{\alpha}$, one can, locally on $X$, construct a resolution of $V_*^{ - \bs{\alpha} - \epsilon \bs{1}}\iota_+\mc{M}$ by $\ms{D}_X[\bs{s}, \bs{t}]$-modules with the same property that are free over $\ms{D}_X[\bs{s}]$. Taking duals, we conclude that, for all $n$, every local section of $\mc{H}^n(U^{\bs{\alpha}}\mc{N})$ and hence $\mc{H}^n(\mc{N})$ is annihilated by some power of $t_i - f_i$. Since $t_i$ acts bijectively on $\mc{H}^n(\mc{N})$, it therefore follows that $f_i$ acts bijectively on $\mc{H}^n(\mc{N})$ also. So by \eqref{eq:duality 2},
\begin{align*}
\mc{H}^n(\mc{N}) = \mc{H}^n(\mc{N})(*D) &= \varinjlim_{\bs{\alpha} \in \mb{R}^r} \mc{H}^n(U^{\bs{\alpha}}(*D)) \\
&\quad= \mc{H}^n\mb{D}_{\ms{D}_{X}(*D)[\bs{s}]}\left(\iota_+\mc{M}(*D)\right)_{\bs{s} \mapsto -\bs{s}} = \mc{H}^n(\iota_+\mb{D}\mc{M}(*D)).
\end{align*}
Since $\mc{M}$ is holonomic, the right hand side is zero unless $n = 0$, so we are done.\end{proof}

\begin{rmk}
A version of Theorem \ref{thm:duality} also holds, with essentially the same proof, for $\mb{D}_{V^{\bs{0}}\ms{D}_X}V_*^{\bs{\alpha}}\mc{M}$, 
where $\mb{D}_{V^{\bs{0}}\ms{D}_X}$ now denotes duality for $V^{\bs{0}}\ms{D}_X$-modules
\[ \mb{D}_{V^{\bs{0}}\ms{D}_X}\mc{N} := \mrm{R}\shom_{V^{\bs{0}}\ms{D}_X}(\mc{N}, V^{\bs{0}}\ms{D}_X\otimes_{\mc{O}_X} \omega_X^{-1})[\dim X].\]
\end{rmk}

\subsection{Flatness} \label{subsec:flatness}
In this subsection, we use the duality theorem (Theorem \ref{thm:duality}) together with the formalism of relative holonomicity to deduce flatness results for the multivariate $V$-filtration on a graph embedding. As before, we let $\mc{M}$ be a holonomic $\ms{D}$-module on a complex manifold $X$, $f_1, \ldots, f_r \colon X \to \mb{C}$ be holomorphic functions, $\iota \colon X \to X \times \mb{C}^r$ be the associated graph embedding. Consider the multivariate $V_*$-filtration $V_{\ast}^\bullet\iota_+\mc{M}$ along the coordinate axes in $\mb{C}^r$, where $V^{\bullet}_{\ast}\iota_{+}\cM=V^{\bullet}_{\ast}\iota_{+}\cM(\ast D)$ and $D=\textrm{div}(f_1\cdots f_r)$ (see  Notation \ref{notation: V* for arbitrary M}).

\begin{thm} \label{thm:flatness}
For any $\bs{\alpha} \in \mb{R}^{r}$, $V^{\bs{\alpha}}_{\ast}\iota_+\mc{M}$ is flat over $\C[\bs{s}]$. Moreover, if $\bs{\alpha}\leq \bs{\beta}$ are separated by a single wall $W \subset \mb{R}^r$, then $V_{\ast}^{\bs{\alpha}}\iota_{+}\mc{M}/V_{\ast}^{\bs{\beta}}\iota_{+}\mc{M}$ is flat relative to $W$.
\end{thm}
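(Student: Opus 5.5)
The strategy is to derive flatness from the duality theorem, Theorem \ref{thm:duality}, together with a standard Auslander--Buchsbaum/Ext criterion. A finitely generated module over a regular ring of dimension $r$ is flat if and only if its depth-type Ext modules vanish in positive degree. The key input is that both $\mc{N} := V_*^{\bs{\alpha}}\iota_+\mc{M}$ and its dual $\mb{D}\mc{N}$ are concentrated in degree $0$, by Theorem \ref{thm:duality}. We then combine this with the dimension estimates of Proposition \ref{prop:holonomic duality} applied after localising $\mb{C}[\bs{s}]$.

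For the first claim, fix a maximal ideal $\mf{m}\subset \mb{C}[\bs{s}]$ and a point of $X$. It suffices to show that $\mc{N}_{\mf{m}}$ is flat over $R=\mb{C}[\bs{s}]_{\mf{m}}$, which is regular local and equidimensional of dimension $r$.
\begin{enumerate}
\item Suppose $\mc{N}$ is not flat at $\mf{m}$. Then some $\mrm{Tor}^R_j(\mc{N}_{\mf{m}}, R/\mf{p})\neq 0$ with $j>0$.
\item Choose $\mf{p}$ minimal, of height $c$, among primes in the support of the non-flat locus. Localising at $\mf{p}$ reduces us to a relative holonomic $\ms{D}_X\otimes R_{\mf{p}}$-module whose torsion part $\mc{T}$ (say $\mrm{Tor}$, or the maximal submodule supported at the closed point) is holonomic in the sense of Definition \ref{defn:localised holonomic}.
\item By Proposition \ref{prop:holonomic duality}(3), $\mb{D}\mc{T}$ lives in degree $c=\dim R_{\mf{p}}$.
\item Meanwhile, Proposition \ref{prop:holonomic duality}(2) together with the degree-$0$ concentration of $\mb{D}\mc{N}$ constrains the possible degrees.
\end{enumerate}

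Concretely, I would prove by induction on $r$ the following statement: if a relative holonomic module $\mc{N}$ and its dual $\mb{D}\mc{N}$ are both concentrated in degree $0$, then $\mc{N}$ is flat over $R$. The steps are as follows.

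First, take a regular element $s'\in\mf{m}\setminus\mf{m}^2$. Consider the triangle $\mc{N}\xrightarrow{s'}\mc{N}\to\mc{N}/^{\mrm{L}}s'$ and dualise using $\mb{D}(\mc{N}\otimes^{\mrm{L}} R/s')\cong \mb{D}\mc{N}\otimes^{\mrm{L}}R/s'[-1]$ (up to shift conventions).

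Second, let $K=\ker(s'\colon \mc{N}\to\mc{N})$. This kernel is relative holonomic, supported on $\{s'=0\}$, and its dual would have to contribute cohomology in a degree forbidden by the degree-$0$ concentration of $\mb{D}\mc{N}$, via Proposition \ref{prop:holonomic duality}(2). Hence $s'$ is injective on $\mc{N}$.

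Third, $\mc{N}/s'\mc{N}$ then satisfies the same hypotheses over $R/s'$, and induction gives flatness, via the local flatness criterion.

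The main obstacle I expect is exactly the middle step. Showing that nonzero $s'$-torsion contradicts the degree-$0$ concentration of $\mb{D}\mc{N}$ requires a careful use of the long exact sequence: we need the lowest nonvanishing $\mrm{Ext}$ of the torsion submodule to inject into $\mrm{H}^{>0}(\mb{D}\mc{N})$. I would first try to work with the maximal submodule of $\mc{N}_{\mf{m}}$ of dimension $<r$ over $R$. Its dual is concentrated in positive degrees by Proposition \ref{prop:holonomic duality}(2), and the bidual isomorphism of Proposition \ref{prop:basic duality}(3) would then force it to vanish. The same argument shows that all associated primes are minimal and that the higher Tors vanish.

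For the relative statement, let $\bs{\alpha}\leq\bs{\beta}$ be separated by the single wall $W=\bs{L}^{-1}(\gamma)$. Then $Q := V_*^{\bs{\alpha}}/V_*^{\bs{\beta}}$ is killed by $\prod_j(\bs{L}(\bs{s})+\gamma_j)$. Dualising the short exact sequence and applying Theorem \ref{thm:duality} to both terms gives
\[\mb{D}Q \cong Q'[-1],\]
where $Q' = V_*^{-\bs{\beta}+\epsilon\bs{1}}/V_*^{-\bs{\alpha}+\epsilon\bs{1}}$ for $\mb{D}\mc{M}$. This uses strictness (Corollary \ref{cor:V strictness}) to see that the dual map is injective. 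Thus $Q$ and $\mb{D}Q$ are each concentrated in a single degree, the expected one for modules over $\mb{C}[\bs{s}]/(\prod_j(\bs{L}(\bs{s})+\gamma_j))$, a hypersurface with components $\bs{L}(\bs{s})=-\gamma_j$.

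Finally, we localise at $\mf{m}$. Only one component passes through each point, so $R/(\bs{L}(\bs{s})+\gamma_j)^k$ is used locally. We then run the same torsion-killing argument relative to this Cohen--Macaulay base to obtain flatness relative to $W$ in the sense of Definition \ref{defn:relative flatness}: flatness over $\mb{C}[\bs{s}]/(\bs{L}(\bs{s})+\gamma_j)^k$ locally, after filtering by powers of $\bs{L}(\bs{s})+\gamma_j$.
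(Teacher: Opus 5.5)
You have the right inputs (the degree-$0$ concentration of $\mb{D}V_*^{\bs{\alpha}}\iota_+\mc{M}$ from Theorem \ref{thm:duality} and the estimates of Proposition \ref{prop:holonomic duality}), but the mechanism that converts them into flatness has two gaps.

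\textbf{The torsion-killing step.} The step you flag as the main obstacle is not proved, and your fallback does not work. Knowing that $\mb{D}\mc{T}$ lives in positive degrees is compatible with $\mb{D}\mb{D}\mc{T}\cong\mc{T}$: every holonomic $\mc{T}$ has dual in degree $d>0$. So biduality alone forces nothing. What you need is a degree mismatch of size at least two. Since $\mb{D}(\mc{N}\overset{\mrm{L}}\otimes_R R/s')\cong(\mb{D}\mc{N}\overset{\mrm{L}}\otimes_R R/s')[-1]$ lives in degrees $[0,1]$, one gets $\mrm{H}^j\mb{D}K\cong\mrm{H}^{j+2}\mb{D}(\mc{N}/s'\mc{N})$ for $j\geq 1$. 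Only then do part (1) of Proposition \ref{prop:holonomic duality} (applied to $\mc{N}/s'\mc{N}$), part (2) (applied to each $\mrm{H}^j\mb{D}K$) and biduality push $K$ into degrees $\geq 2$, forcing $K=0$.

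\textbf{The local flatness criterion.} Even granting that $s'$ is injective and $\mc{N}/s'\mc{N}$ is flat over $R/s'$, the local flatness criterion does not give flatness of $\mc{N}_{\mf{m}}$. This module is not finitely generated over $R$ (nor over a commutative Noetherian $R$-algebra with $s'$ in its radical), so the ideal-separatedness hypothesis is unavailable. For the same reason, the Auslander--Buchsbaum/Ext criterion you open with does not apply.

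\textbf{How the paper avoids both.} It uses Lemma \ref{lem:flatness by fibres}, which only asks that $\mb{C}(\mf{p})\overset{\mrm{L}}\otimes\mc{N}_{\mf{p}}$ have no negative cohomology for \emph{every} prime $\mf{p}$. This is direct. By Proposition \ref{prop:V is relative holonomic}, the cohomology of this derived fibre is relative holonomic and supported at the closed point, hence holonomic. So by Proposition \ref{prop:holonomic duality}(3), $\mb{D}_{\mathit{hol}}\mrm{H}^i$ of the fibre equals $\mrm{H}^{\codim\mf{p}-i}$ of its dual. By the Koszul resolution and Theorem \ref{thm:duality}, that dual is $\mb{C}(\mf{p})\overset{\mrm{L}}\otimes(\text{a module})[-\codim\mf{p}]$, which vanishes above degree $\codim\mf{p}$. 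Faithfulness of $\mb{D}_{\mathit{hol}}$ then kills $\mrm{H}^i$ for $i<0$. No induction on $r$ and no torsion-killing are needed.

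\textbf{The relative statement.} Your identification $\mb{D}Q\cong Q'[-1]$ is correct. The injectivity of the dual map comes from the naturality in Theorem \ref{thm:duality}, which identifies it with the inclusion $V_*^{-\bs{\alpha}+\epsilon\bs{1}}\subset V_*^{-\bs{\beta}+\epsilon\bs{1}}$, not from strictness. However, your target is the wrong one. Flatness relative to $W$ (Definition \ref{defn:relative flatness}) means flatness over $\mb{C}[U]$ for linear subspaces $U\not\ni\bs{L}(\bs{s})$. It is not flatness over $\mb{C}[\bs{s}]/(\bs{L}(\bs{s})+\gamma_j)^k$, which fails in general: already for $r=1$, a $\gr_V^\alpha$ on which $s+\alpha$ has Jordan blocks of different sizes is not free over $\mb{C}[s]/(s+\alpha)^k$. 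Filtering by powers of $\bs{L}(\bs{s})+\gamma_j$ does not repair this, since flatness of $Q$ over $\mb{C}[U]$ does not pass to the graded pieces.

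The missing observation is the following. Take $U$ complementary to $\mb{C}\bs{L}(\bs{s})$. Because $Q$ is killed by a polynomial in $\bs{L}(\bs{s})$, the derived fibres $\mb{C}(\mf{p}')\overset{\mrm{L}}\otimes_{\mb{C}[U]}Q$ over primes $\mf{p}'\subset\mb{C}[U]$ have support finite over $\spec\mb{C}(\mf{p}')$, hence are holonomic over $\ms{D}_X[\bs{s}]_{\mf{p}'}$. The same duality argument, now fed with $\mb{D}Q\cong Q'[-1]$, kills their negative cohomology, and Lemma \ref{lem:flatness by fibres} applies.
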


Here we have used the following terminology.

\begin{defn} \label{defn:relative flatness}
Let $M$ be a $\mb{C}[\bs{s}]$-module and $W = \bs{L}^{-1}(\gamma) \subset \mb{R}^r$. We say that $M$ is \emph{flat relative to $W$} if $M$ is flat over $\mb{C}[U]$ for any linear subspace $U \subset \mrm{span}\{s_1, \ldots, s_r\}$ not containing $\bs{L}(\bs{s})$.
\end{defn}

\begin{remark}
By Corollaries \ref{cor:localising V} and \ref{cor:extending V*}, the same statements also hold for $V^{\bs{\alpha}}\iota_{+}\cM$ if $\bs{\alpha}\in \R^r_{>0}$, or when $\cM=\cM[(f_1\cdots f_r)^{-1}]$ and $\bs{\alpha}\in \R^r_{\geq 0}$. However, the flatness result can fail if $\bs{\alpha} \not\in \mb{R}^{r}_{>0}$. For example, if $\mc{M}$ is supported on the divisor $D_i$, then $s_i$ acts by zero on $V^{\bs{0}}\iota_{+}\mc{M}$, so $V^{\bs{0}}\iota_+\mc{M}$ is not a flat $\mb{C}[\bs{s}]$-module.
\end{remark}

Before giving the proof of Theorem \ref{thm:flatness}, we first note two key corollaries. 
\begin{corollary}\label{cor: subtle injectivity}
Assume $\{d_{ij}\}_{1\leq i\leq r,1\leq j\leq r'}\subset \Q_{\geq 0}$. If $\bs{\beta}\leq \bs{\alpha}$ are separated by a wall $\bs{L}^{-1}(\gamma)$ such that $\sum_i L_id_{ij}\neq 0$ for some $j$, then the map
\begin{equation}\label{eqn: tensor with Cs' is injective}V_{\ast}^{\bs{\alpha}}\iota_{+}\cM\otimes_{\mb{C}[\bs{s}]} \mb{C}[\bs{s}']\to V_{\ast}^{\bs{\beta}}\iota_{+}\cM\otimes_{\mb{C}[\bs{s}]} \mb{C}[\bs{s}']
\end{equation}
is injective, where the tensor products are taken over $s_i\mapsto \sum_jd_{ij}s_j'$.
\end{corollary}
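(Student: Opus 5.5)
Consider the short exact sequence of $\mb{C}[\bs{s}]$-modules
\[ 0 \to V_{\ast}^{\bs{\alpha}}\iota_{+}\cM \to V_{\ast}^{\bs{\beta}}\iota_{+}\cM \to Q \to 0, \qquad Q \colonequals V_{\ast}^{\bs{\beta}}\iota_{+}\cM/V_{\ast}^{\bs{\alpha}}\iota_{+}\cM. \]
Apply $-\otimes_{\mb{C}[\bs{s}]}\mb{C}[\bs{s}']$. The map \eqref{eqn: tensor with Cs' is injective} is injective as soon as
\[ \mrm{Tor}_1^{\mb{C}[\bs{s}]}(Q, \mb{C}[\bs{s}']) = 0. \]
Our task is therefore to prove this Tor vanishing, using the second half of Theorem \ref{thm:flatness}. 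That result says $Q$ is flat relative to the wall $W = \bs{L}^{-1}(\gamma)$; strictly, the theorem is stated for the quotient $V^{\bs{\alpha}}/V^{\bs{\beta}}$ with the smaller index first, and here that quotient is $Q$ up to relabelling.

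We factor the ring map $\phi\colon \mb{C}[\bs{s}] \to \mb{C}[\bs{s}']$, $s_i \mapsto \sum_j d_{ij}s'_j$, through its image. The image is $\mb{C}[U]$, where $U \subset \mrm{span}\{s_1,\ldots,s_r\}$ is a suitable linear subspace. Concretely, $\phi$ is induced by the linear map $\phi^\ast\colon \mrm{span}\{s_i\} \to \mrm{span}\{s'_j\}$. Let $K = \ker \phi^\ast$ and choose a complement $U$ with $\mrm{span}\{s_i\} = U \oplus K$. Then
\[ \mb{C}[\bs{s}] = \mb{C}[U]\otimes \mb{C}[K], \]
and $\phi$ factors as
\[ \mb{C}[U]\otimes\mb{C}[K] \to \mb{C}[U] \to \mb{C}[\bs{s}']. \]
The first map sets the elements of $K$ to zero. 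The second map is injective, since $\phi^\ast|_U$ is injective, and $\mb{C}[\bs{s}']$ is a polynomial ring free over the polynomial subring $\mb{C}[\phi^\ast U]$. In particular $\mb{C}[\bs{s}']$ is flat over $\mb{C}[U]$.

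The hypothesis on the wall enters through the following claim: $\bs{L}(\bs{s}) \notin K$. Indeed,
\[ \phi^\ast(\bs{L}(\bs{s})) = \sum_j \Bigl(\sum_i L_i d_{ij}\Bigr) s'_j, \]
and by assumption this is nonzero. We then choose the complement $U$ so that it contains $\bs{L}(\bs{s})$, which is possible precisely because $\bs{L}(\bs{s}) \notin K$. Dually, $K$ is a linear subspace not containing $\bs{L}(\bs{s})$, so relative flatness (Definition \ref{defn:relative flatness}) says that $Q$ is flat over $\mb{C}[K]$.

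The Tor computation follows. Since $\mb{C}[\bs{s}']$ is flat over $\mb{C}[U]$, base change gives
\[ \mrm{Tor}^{\mb{C}[\bs{s}]}_1(Q,\mb{C}[\bs{s}']) = \mrm{Tor}^{\mb{C}[\bs{s}]}_1(Q,\mb{C}[U])\otimes_{\mb{C}[U]}\mb{C}[\bs{s}']. \]
Moreover $\mb{C}[U] = \mb{C}[\bs{s}]\otimes_{\mb{C}[K]}\mb{C}$, so by flat base change
\[ \mrm{Tor}_1^{\mb{C}[\bs{s}]}(Q,\mb{C}[U]) = \mrm{Tor}_1^{\mb{C}[K]}(Q,\mb{C}). \]
This last group vanishes because $Q$ is $\mb{C}[K]$-flat, which gives the required Tor vanishing and hence the injectivity.

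The main obstacle is justifying the factorisation when the $d_{ij}$ are rational rather than integral; this is only a linear change of coordinates and causes no difficulty. One further point needs care: the factorisation uses only that $\mb{C}[\bs{s}]$ is a polynomial ring split as $U\oplus K$, and we should check that Theorem \ref{thm:flatness} applies to every linear subspace $K$ avoiding $\bs{L}(\bs{s})$, not merely coordinate subspaces. This holds exactly as stated in Definition \ref{defn:relative flatness}.
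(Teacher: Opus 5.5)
Your proof is correct and is essentially the paper's argument. Both proofs use the short exact sequence to reduce injectivity to $\mathrm{Tor}_1^{\mb{C}[\bs{s}]}(V_*^{\bs{\beta}}/V_*^{\bs{\alpha}},\mb{C}[\bs{s}'])=0$. Both then rewrite this as a Tor over the polynomial ring on the kernel of $s_i\mapsto\sum_j d_{ij}s_j'$, where the paper does this in one step via $\mb{C}[\bs{s}']\cong\mb{C}[\bs{s}]\overset{\mrm{L}}\otimes_{\mb{C}[\ker]}\mb{C}[W]$ and you do it in two flat base changes. Both conclude by relative flatness from Theorem~\ref{thm:flatness}, using $\bs{L}(\bs{s})\notin\ker$.
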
 
\begin{proof}
Let $U\subset \textrm{span}\{s_1,\ldots,s_r\}$ be the kernel of the linear map $s_i\mapsto \sum_j d_{ij}s_j'$. Let $W\subset \textrm{span}\{s_1',\ldots,s_r'\}$ be a linear subspace complementary to the image of $s_i\mapsto \sum_j d_{ij}s_j'$, and set $K=\C[W]$, which can be viewed as a $\C[U]$-module. Then there is a $\C[\bs{s}]$-module isomorphism $\C[\bs{s}]\overset{\mrm{L}}\otimes_{\C[U]}K\xrightarrow{\sim} \C[\bs{s}']$. It follows that
\[ V_{\ast}^{\bs{\beta}}/V_{\ast}^{\bs{\alpha}}\overset{\mrm{L}}\otimes_{\mb{C}[\bs{s}]} \mb{C}[\bs{s}'] = V_{\ast}^{\bs{\beta}}/V_{\ast}^{\bs{\alpha}}\overset{\mrm{L}}\otimes_{\mb{C}[\bs{s}]} \mb{C}[\bs{s}] \overset{\mrm{L}}\otimes_{\mb{C}[U]} K = V_{\ast}^{\bs{\beta}}/V_{\ast}^{\bs{\alpha}}\overset{\mrm{L}}\otimes_{\mb{C}[U]} K,\]
The assumption on the wall $\bs{L}^{-1}(\gamma)$ implies that $\bs{L}(\bs{s})\not\in U$. So by Theorem \ref{thm:flatness},
\[ \textrm{Tor}^{\C[\bs{s}]}_1(V_{\ast}^{\bs{\beta}}\iota_{+}\cM/V_{\ast}^{\bs{\alpha}}\iota_{+}\cM,\C[\bs{s}'])= \textrm{Tor}^{\C[U]}_1(V_{\ast}^{\bs{\beta}}\iota_{+}\cM/V_{\ast}^{\bs{\alpha}}\iota_{+}\cM,K)=0,\]
and this shows the desired injectivity.
\end{proof}
\begin{corollary}\label{cor: change of functions}
Assume $\cM=\cM[(f_1\cdots f_r)^{-1}]$. Suppose $f_j' = \prod_{i=1}^r f_i^{d_{ij}}$ for some $\{d_{ij}\}_{1\leq i\leq r,1\leq j\leq r'} \subset \mb{Z}_{\geq 0}$ and the zero loci of $f_1 \cdots f_r$ and $f_1' \cdots f_{r'}'$ coincide. Then for $\bs{\alpha}' \in \mb{R}_{\geq 0}^{r'}$ and $\bs{\alpha} = (\sum_j d_{1j} \alpha_j', \ldots, \sum_j d_{rj}\alpha_{j}')$, one has an isomorphism of $\sD_X[\bs{s}']$-modules:
\begin{equation}\label{eqn: Valpha' is the tensor of Valpha with Cs'}V^{\bs{\alpha}}\iota_+\mc{M} \otimes_{\mb{C}[\bs{s}]} \mb{C}[\bs{s}']\xrightarrow{\sim} V^{\bs{\alpha}'}\iota'_+\mc{M},\end{equation}
where $\iota' \colon X \to X \times \mb{C}^{r'}$ is the graph embedding of the $f_j'$  and the tensor product are taken over $s_i\mapsto \sum_jd_{ij}s_j'$. 
\end{corollary}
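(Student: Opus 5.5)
By Proposition \ref{prop:change of functions}, applied with $\bs{\alpha}'\in\mb{R}^{r'}_{\geq 0}$, we already know that $V^{\bs{\alpha}'}\iota'_+\mc{M}$ is the image of the natural map
\[ V^{\bs{\alpha}}\iota_+\mc{M}\otimes_{\mb{C}[\bs{s}]}\mb{C}[\bs{s}']\to \iota'_+\mc{M}=\iota_+\mc{M}\otimes_{\mb{C}[\bs{s}]}\mb{C}[\bs{s}']. \]
Here the target is identified via \eqref{eq:basic change of functions}, and $\bs{\alpha}\in\mb{R}^r_{\geq 0}$ because $d_{ij}\geq 0$. So the whole content of the corollary is the injectivity of this map. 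Since $\mc{M}=\mc{M}(*D)$, Corollaries \ref{cor:localising V} and \ref{cor:extending V*} let us replace $V$ by $V_*$ throughout for indices in the closed positive orthant. I will therefore work with $V_*$ and prove that
\[ V_*^{\bs{\alpha}}\iota_+\mc{M}\otimes_{\mb{C}[\bs{s}]}\mb{C}[\bs{s}']\to \iota_+\mc{M}\otimes_{\mb{C}[\bs{s}]}\mb{C}[\bs{s}'] \]
is injective.

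The plan is to write $\iota_+\mc{M}$ as an increasing union $\bigcup_{N} V_*^{\bs{\alpha}-N\bs{1}}\iota_+\mc{M}$. This union is exhaustive, and tensor product commutes with filtered colimits, so it suffices to show that $V_*^{\bs{\alpha}}\otimes\mb{C}[\bs{s}']\to V_*^{\bs{\alpha}-N\bs{1}}\otimes\mb{C}[\bs{s}']$ is injective for every $N$. Working locally, fix a set of walls $\mc{W}$. Along the segment from $\bs{\alpha}$ to $\bs{\alpha}-N\bs{1}$ we can choose a chain $\bs{\alpha}=\bs{\gamma}^0\geq\bs{\gamma}^1\geq\cdots\geq\bs{\gamma}^k=\bs{\alpha}-N\bs{1}$, perturbing generically inside chambers as in the proof of Lemma \ref{lem:localised chambers}, so that consecutive points are separated by a single wall. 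The injectivity then reduces to the single-wall case, since a composite of injections is injective. For each single wall $\bs{L}^{-1}(\gamma)$ we want to invoke Corollary \ref{cor: subtle injectivity}. Its hypothesis is that $\sum_i L_i d_{ij}\neq 0$ for some $j$. Because $\bs{L}\in\mb{Q}^r_{\geq 0}$ is nonzero and $d_{ij}\geq 0$, this fails exactly when $d_{ij}=0$ for all $j$ and all $i$ with $L_i\neq 0$. That cannot happen, because the zero loci coincide: every $f_i$ divides some $f'_j$ to positive power, so for each $i$ there is a $j$ with $d_{ij}>0$. Hence every wall qualifies.

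The one subtlety is that the chain must be monotone ($\bs{\gamma}^{l+1}\leq\bs{\gamma}^l$) while passing through single walls. Moving along the direction $-\bs{1}$ with a generic small perturbation of the starting point gives this, because every wall has $\bs{L}\geq 0$ and so $\bs{L}$ is monotone along the path. Pure translates of the endpoints stay in the same chamber, so the endpoints' filtration pieces are unchanged. I expect this bookkeeping, rather than any real difficulty, to be the main care point. Once injectivity holds, the map \eqref{eqn: Valpha' is the tensor of Valpha with Cs'} is an isomorphism onto its image $V^{\bs{\alpha}'}\iota'_+\mc{M}$. It is $\ms{D}_X[\bs{s}']$-linear because the identification \eqref{eq:basic change of functions} is.
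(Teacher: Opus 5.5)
Your argument matches the first case of the paper's proof. Both use Proposition \ref{prop:change of functions} to reduce to injectivity, pass to $V_*$, exhaust $\iota_+\mc{M}$ by the $V_*^{\bs{\alpha}-N\bs{1}}$, and handle single-wall steps with Corollary \ref{cor: subtle injectivity}.

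\textbf{The gap.} Your justification that every wall qualifies is false. Coincidence of the zero loci does \emph{not} give, for each $i$, some $j$ with $d_{ij}>0$. It only gives $\bigcup_i f_i^{-1}(0)=\bigcup_{i\in S}f_i^{-1}(0)$, where $S=\{i \mid d_{ij}>0 \text{ for some } j\}$. For example, take $f_1=f_2=x$ (or $f_1=xy$, $f_2=x$), $r'=1$ and $f_1'=f_1$. All hypotheses hold, yet $d_{21}=0$, so $s_2\mapsto 0$. That $f_2$ divides $f_1'$ is irrelevant, because the tensor product only sees the fixed exponent matrix $(d_{ij})$. Now take a wall $\bs{L}^{-1}(\gamma)$ with $\bs{L}$ supported on $\{1,\dots,r\}\setminus S$, e.g.\ $\bs{L}=\bs{1}_2$. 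Then $\sum_i L_i d_{ij}=0$ for all $j$, so Corollary \ref{cor: subtle injectivity} does not apply. Nothing in your argument excludes such walls from the arbitrarily chosen set of walls $\mc{W}$.

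\textbf{The repair.} The paper adds a second reduction. Locally, factor the $f_i$ into pairwise non-associate irreducible germs $g_1,\dots,g_{r''}$, so $f_i=\prod_m g_m^{e_{mi}}$ and $f_j'=\prod_m g_m^{c_{mj}}$ with $c_{mj}=\sum_i e_{mi}d_{ij}$.
\begin{itemize}
\item Every row of $(e_{mi})$ is nonzero by construction.
\item Every row of $(c_{mj})$ is nonzero by the zero-locus hypothesis together with unique factorisation in $\mc{O}_{X,x}$.
\end{itemize}
Hence your argument applies to both changes of functions $(g_m)\to(f_i)$ and $(g_m)\to(f_j')$. Composing the base changes $\mb{C}[\bs{s}'']\to\mb{C}[\bs{s}]\to\mb{C}[\bs{s}']$, compatibly with the Malgrange--Mellin identifications, gives the isomorphism for $(f_i)\to(f_j')$.

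Alternatively, you could show that some set of walls has every wall involving an index in $S$, for instance walls coming from resolutions as in Example \ref{ex: set of walls via log resolution}. For general holonomic $\mc{M}$ that would need a d\'evissage you have not supplied.
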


\begin{proof}
By Proposition \ref{prop:change of functions}, it suffices to prove that the following map is injective:
\[ V^{\bs{\alpha}}\iota_+\mc{M} \otimes_{\mb{C}[\bs{s}]} \mb{C}[\bs{s}']\to \iota'_+\mc{M}.\] 

We break the proof into two cases. First we assume for each $i$, there exists a $j$ such that $d_{ij}>0$. Now suppose $\bs{\beta}\leq \bs{\alpha}$ is separated by a single wall $\bs{L}^{-1}(\gamma)$. Choose $i_0$ such that $L_{i_0}> 0$. Pick $j_0$ such that $d_{i_0j_0}>0$. Since $L_i\geq 0$ for every $i$, we have
\[ \sum_{i=1}^r L_id_{ij_0}\geq L_{i_0}d_{i_0j_0}>0.\]
So $\sum_i L_id_{ij_0}\neq 0$ and we can apply Corollary \ref{cor: subtle injectivity} to obtain an injection
\[ V_{\ast}^{\bs{\alpha}}\iota_+\mc{M} \otimes_{\mb{C}[\bs{s}]} \mb{C}[\bs{s}']\hookrightarrow V_{\ast}^{\bs{\beta}}\iota_+\mc{M} \otimes_{\mb{C}[\bs{s}]} \mb{C}[\bs{s}'].\] 
As $\cM=\cM[(f_1\cdots f_r)^{-1}]$, taking the colimit over all $\bs{\beta}\leq \bs{\alpha}$, we have an injection
\[ V_{\ast}^{\bs{\alpha}}\iota_+\mc{M} \otimes_{\mb{C}[\bs{s}]} \mb{C}[\bs{s}']\hookrightarrow \varinjlim_{\bs{\beta} \leq \bs{\alpha}} V_*^{\bs{\beta}}\iota_+\mc{M} \otimes_{\mb{C}[\bs{s}]}\mb{C}[\bs{s}'] = \iota_+\mc{M} \otimes_{\mb{C}[\bs{s}]} \mb{C}[\bs{s}'] = \iota'_{+}\cM.\]
Since $\bs{\alpha}\in \R^r_{\geq 0}$, $V^{\bs{\alpha}}\iota_+\mc{M}=V_{\ast}^{\bs{\alpha}}\iota_+\mc{M}$ by Corollary \ref{cor:extending V*}, so this proves the desired injectivity in this case.

In the general case, let us consider the decomposition $f_i=\prod_{m=1}^{r''} g_m^{e_{mi}}$ of $f_i$ into irreducible factors, where each $g_m$ is an irreducible factor of some $f_i$ and $e_{mi}\in \Z_{\geq 0}$. By construction, for each $m$, there exists $i$ so that $e_{mi}>0$. Let $\bs{\alpha}''=(\sum_i e_{mi}\alpha_i)\in \R^{r''}_{\geq 0}$ and let $\iota''$ be the graph embedding of $g_1,\ldots,g_{r''}$. By the previous case, we have an isomorphism of $\sD_X[\bs{s}]$-modules:
\[ V^{\bs{\alpha}''}\iota''_{+}\cM\otimes_{\C[\bs{s}'']}\C[\bs{s}]\xrightarrow{\sim} V^{\bs{\alpha}}\iota_{+}\cM,\]
where the tensor product is taken over $s''_m \mapsto \sum_i e_{mi}s_i$. Using $f_j'=\prod_i f_i^{d_{ij}}$, we can write $f_j'=\prod_m g_m^{c_{mj}}$ with $c_{mj}\colonequals \sum_ie_{mi}d_{ij}$. Since $f_1\cdots f_r$ and $f_1'\cdots f_{r'}'$ have the same zero loci, for each $m$ there must exist a $j$ such that $c_{mj}\neq 0$. Hence we also have
 \[ V^{\bs{\alpha}''}\iota''_{+}\cM\otimes_{\C[\bs{s}'']}\C[\bs{s}']\xrightarrow{\sim} V^{\bs{\alpha}'}\iota'_{+}\cM,\]
 where the tensor product are taken over $s''_m\mapsto \sum_j c_{mj}s_j'=\sum_ie_{mi}(\sum_j d_{ij}s_j')$. So
 \begin{align*}
 V^{\bs{\alpha}}\iota_{+}\cM\otimes_{\C[\bs{s}]}\C[\bs{s}']&\cong ( V^{\bs{\alpha}''}\iota''_{+}\cM\otimes_{\C[\bs{s}'']}\C[\bs{s}])\otimes_{\C[\bs{s}]}\C[\bs{s}']\\
 &\cong V^{\bs{\alpha}''}\iota''_{+}\cM\otimes_{\C[\bs{s}'']}\C[\bs{s}']\\
 &\cong V^{\bs{\alpha}'}\iota'_{+}\cM. \qedhere
 \end{align*}
\end{proof}

We now turn to the proof of Theorem \ref{thm:flatness}. We will use the following general criterion for flatness.

\begin{lem} \label{lem:flatness by fibres}
Let $R$ be a Noetherian commutative ring of finite Krull dimension. Suppose that $M \in \mrm{D}^-(R)$ is a complex of $R$-modules such that $\mrm{H}^i(M) = 0$ for $i > 0$ and
\begin{equation}\label{eqn: vanishing condition} \mrm{H}^i(\operatorname{Frac}(R/\mf{p}) \overset{\mrm{L}}\otimes_R M) = 0 \quad \text{for every prime ideal $\mf{p} \subset R$ and $i \neq 0$}. \end{equation}
Then $\mrm{H}^i(M) = 0$ for all $i \neq 0$ and $M = \mrm{H}^0(M)$ is a flat $R$-module.
\end{lem}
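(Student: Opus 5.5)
We argue by Noetherian induction on the support of the cohomology of $M$, or more concretely by descending induction on the largest $i<0$ with $\mrm{H}^i(M)\neq 0$ together with a localisation argument. The plan has three steps.

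\textbf{Reduction to local rings.} Vanishing of modules and flatness are both local on $\spec R$. Moreover, for $\mf{p}\subset\mf{q}$, the residue field computation at $\mf{p}$ is unchanged by localising at $\mf{q}$. So we may assume $R$ is a Noetherian local ring with maximal ideal $\mf{m}$ and residue field $k$, of finite Krull dimension $d$. We then induct on $d$. The case $d=0$ is included, since all statements localise and the hypothesis \eqref{eqn: vanishing condition} is inherited by $M_{\mf{p}}$ over $R_{\mf{p}}$.

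\textbf{Inductive step.} By induction, for every non-maximal prime $\mf{p}$ we know $M_{\mf{p}}$ is concentrated in degree $0$ and flat, hence free since $R_{\mf{p}}$ is local. Thus the modules $\mrm{H}^i(M)$ for $i<0$ are supported only at $\mf{m}$, i.e.\ they have finite length whenever they are finitely generated. We need not assume finite generation, however. It suffices to know that every nonzero module supported at $\mf{m}$ contains a copy of $k$, and that Nakayama is not needed in this form.

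Let $i_0<0$ be the largest index with $\mrm{H}^{i_0}(M)\neq0$, if one exists. Consider the top-degree behaviour of $k\overset{\mrm{L}}\otimes_R M$: truncating gives a triangle $\tau_{\le -1}M\to M\to \mrm{H}^0(M)$. This leads to the exact sequence
\[
0\to \mrm{Tor}_1(k,\mrm{H}^0M)\to \mrm{H}^{-1}(k\overset{\mrm{L}}\otimes \tau_{\le-1}M)\to \mrm{H}^{-1}(k\overset{\mrm{L}}\otimes M)=0,
\]
and similarly in lower degrees. The main obstacle is to extract the nonvanishing of a negative-degree fibre cohomology from $\mrm{H}^{i_0}(M)\neq 0$, because right-exactness only controls the top degree $k\otimes \mrm{H}^{i_0}$. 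That top-degree term could vanish when $\mrm{H}^{i_0}$ is not finitely generated (for instance a divisible module).

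To get around this, I would use the other direction of duality: work instead with $\mrm{R}\mrm{Hom}$, or better, use the finite Krull dimension via Koszul/local cohomology. Since $\mrm{H}^{i_0}(M)$ is $\mf{m}$-torsion, it has nonzero $\mrm{Hom}(k,-)$. Choosing a system of parameters $x_1,\dots,x_d$, the Koszul complex $K$ satisfies that $K\otimes M$ computes, up to shift by $d$, $\mrm{Hom}$-type information, and its lowest cohomology detects $\mf{m}$-torsion socles. In particular, this part of the argument is where the finite Krull dimension hypothesis is used.

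Combining this with the vanishing of $\mrm{Tor}_j(k,M_{\mf{p}})$ structure, we get $K\otimes M$ filtered by copies of $R/(x)$-fibres. Comparing with the fibre at $\mf{m}$ yields a nonzero class in $\mrm{H}^{i_0-d}$ or a nearby negative degree of $k\overset{\mrm{L}}\otimes M$, contradicting \eqref{eqn: vanishing condition}. Hence $M\simeq \mrm{H}^0(M)$.

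\textbf{Flatness.} Finally, with $M$ a module, \eqref{eqn: vanishing condition} says $\mrm{Tor}_i(\kappa(\mf{p}),M)=0$ for all $i>0$ and all primes. By induction over localisations, $\mrm{Tor}_1(R/\mf{p},M)$ is supported at $\mf{m}$ in the local case. Dévissage of $R/\mf{p}$ by a filtration with quotients $R/\mf{q}$, together with the sequences $0\to R/\mf{p}\xrightarrow{x} R/\mf{p}\to R/(\mf{p},x)\to 0$ and descending induction on $\dim R/\mf{p}$, gives $\mrm{Tor}_1(R/\mf{p},M)=0$ for all $\mf{p}$. Hence $\mrm{Tor}_1(N,M)=0$ for all finitely generated $N$, so $M$ is flat.
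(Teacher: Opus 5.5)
The gap is at the central step: nothing in your proposal turns ``$\mrm{H}^i(M)$ is $\mf{m}$-torsion'' into ``$\mrm{H}^i(M)=0$''.

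Your reduction to the local case and the induction over localisations are fine. They correctly show that $\mrm{H}^i(M)$ is $\mf{m}$-torsion for $i<0$. (Your remark that ``flat over a local ring, hence free'' is false for non-finitely generated modules, but you never use it.) The Koszul argument that is supposed to finish the job does not work, for two reasons.
\begin{enumerate}
\item $K(x_1,\dots,x_d)$ is quasi-isomorphic to $R/(x)$ only when $R$ is Cohen--Macaulay. In general, $K\otimes M$ is only an iterated extension of the shifts $H_j(x;R)[j]\overset{\mrm{L}}\otimes_R M$. This shows its cohomology lies in $[-d,0]$, but it is not ``filtered by copies of $R/(x)$-fibres''.
\item Even granting that, the nonzero group $H_d(x;\mrm{H}^{i_0}(M))$ is only the $E_2^{-d,i_0}$-term of the spectral sequence $E_2^{p,q}=H^p(K\otimes\mrm{H}^q(M))\Rightarrow\mrm{H}^{p+q}(K\otimes M)$. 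Nothing in your argument stops it from being killed by the differentials $d_r\colon E_r^{-d,i_0}\to E_r^{-d+r,\,i_0-r+1}$, which involve the lower cohomology $\mrm{H}^q(M)$ for $q<i_0$. Since $M$ is only bounded above, you cannot avoid this by taking $i_0$ to be the lowest nonzero degree.
\end{enumerate}
So the claim that this ``yields a nonzero class in $\mrm{H}^{i_0-d}$ or a nearby negative degree'' is unsupported.

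The missing idea is to induct over quotient rings as well as localisations. For any ideal $I$, the complex $R/I\overset{\mrm{L}}\otimes_R M$ over $R/I$ again satisfies the hypotheses, since $\operatorname{Frac}(R/\mf{p})\overset{\mrm{L}}\otimes_{R/I}(R/I\overset{\mrm{L}}\otimes_R M)=\operatorname{Frac}(R/\mf{p})\overset{\mrm{L}}\otimes_R M$ for $\mf{p}\supseteq I$.

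The paper uses this as follows. Pick $0\neq f\in\mf{m}$ and use the sequence $0\to R/\mrm{Ann}(f)\xrightarrow{f}R\to R/(f)\to 0$. Noetherian induction applied to $R/(f)$ gives $\mrm{H}^i(R/\mrm{Ann}(f)\overset{\mrm{L}}\otimes_R M)\cong\mrm{H}^i(M)$ for $i<0$.
\begin{itemize}
\item If $\mrm{Ann}(f)\neq 0$, the left side vanishes by induction again.
\item If $f$ is a nonzerodivisor, the isomorphism says $f$ acts bijectively on $\mrm{H}^i(M)$. The paper concludes by passing to $R[f^{-1}]$, which has smaller dimension. Equivalently, bijectivity combined with your $\mf{m}$-torsion observation forces $\mrm{H}^i(M)=0$.
\end{itemize}
Flatness then follows at once by applying the first part to $R/I\overset{\mrm{L}}\otimes_R M$ for every ideal $I$.

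Your own flatness d\'evissage can be repaired, but as written it has a hole. The sequence $0\to R/\mf{p}\xrightarrow{x}R/\mf{p}\to R/(\mf{p},x)\to 0$ together with $\mrm{Tor}_1$-vanishing only shows that $x$ is surjective on $\mrm{Tor}_1(R/\mf{p},M)$. That does not kill an $\mf{m}$-torsion module; divisible modules are counterexamples. For injectivity you need $\mrm{Tor}_2(R/(\mf{p},x),M)=0$. So the induction must carry all $\mrm{Tor}_i$ with $i\geq 1$. It must also be ascending in $\dim R/\mf{p}$, starting from $\mf{p}=\mf{m}$, not descending.
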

\begin{proof}
We first show that $\mrm{H}^i(M) = 0$ for all $i \neq 0$ by induction on $\dim R$. Note that, since any non-zero $R$-module has a non-zero localisation at some prime ideal, we may assume without loss of generality that $R$ is local with maximal ideal $\mf{m}$. If $R$ is a field then the conclusion is true by the assumption \eqref{eqn: vanishing condition}. If $R$ is not a field, by Noetherian induction we may assume that the conclusion holds for $R/I$ for every non-zero ideal $I \subset R$. Choose a non-zero $f \in \mf{m}$. Letting $I_f \subset R$ be the annihilator of $f$, we have a long exact sequence
\[ \cdots\to  \mrm{H}^{i - 1}(R/(f) \overset{\mrm{L}}\otimes_R M) \to \mrm{H}^i(R/I_f \overset{\mrm{L}}\otimes_R M) \to \mrm{H}^i(M) \to \mrm{H}^i(R/(f) \overset{\mrm{L}}\otimes_R M) \to \cdots\]
Since the claim holds for $R/(f)$ by Noetherian induction, we have $\mrm{H}^i(R/(f) \overset{\mrm{L}}\otimes_R M) = 0$ for $i < 0$ and hence
\begin{equation} \label{eq:flatness by fibres 1}
\mrm{H}^i(R/I_f \overset{\mrm{L}}\otimes_R M) \overset{\sim}\to \mrm{H}^i(M), \quad \textrm{for $i<0$}.
\end{equation}
If $I_f \neq 0$, then by Noetherian induction we have $\mrm{H}^i(R/I_f \overset{\mrm{L}}\otimes_R M) = 0$ and hence $\mrm{H}^i(M) = 0$ for $i < 0$. If $I_f = 0$, then \eqref{eq:flatness by fibres 1} can be identified with the morphism
\[ \mrm{H}^i(M) \xrightarrow{f} \mrm{H}^i(M).\]
Since this is an isomorphism, we therefore have $\mrm{H}^i(M) = \mrm{H}^i(M)[f^{-1}] = \mrm{H}^i(M[f^{-1}])$. But $M[f^{-1}]$ is an $R[f^{-1}]$-module satisfying the assumptions of the lemma and, since $R$ is local and $f \in \mf{m}$, we have $0 \leq \dim R[f^{-1}] < \dim R$. So by induction on dimension, we have $\mrm{H}^i(M[f^{-1}]) = 0$ and hence $\mrm{H}^i(M) = 0$ for $i < 0$ as claimed. 

Finally, to prove flatness, we have shown that for every ideal $I\subset R$ one has 
\[ \mrm{Tor}_i^R(R/I, M) = \mrm{H}^{-i}(R/I \overset{\mrm{L}}\otimes_R M) = 0 \quad \text{for $i > 0$},\]
and hence $M = \mrm{H}^0(M)$ is flat as claimed. 
\end{proof}

\begin{proof}[Proof of Theorem \ref{thm:flatness}]
We first prove that $V_{\ast}^{\bs{\alpha}}\iota_+\mc{M}$ is flat over $\mb{C}[\bs{s}]$. By Lemma \ref{lem:flatness by fibres}, it is enough to show that, for every prime ideal $\mf{p} \subset \mb{C}[\bs{s}]$, we have
\begin{equation}\label{eqn: vanishing for tensor with C(p)} \mrm{H}^i(\mb{C}(\mf{p}) \overset{\mrm{L}}\otimes_{\mb{C}[\bs{s}]} V^{\bs{\alpha}}_{\ast}\iota_+\mc{M}) =\mrm{H}^i(\mb{C}(\mf{p}) \overset{\mrm{L}}\otimes_{\mb{C}[\bs{s}]_{\mf{p}}}(V^{\bs{\alpha}}_{\ast}\iota_+\mc{M})_\mf{p}) = 0 \quad \text{for $i \neq 0$},\end{equation}
where $\mb{C}(\mf{p}) := \mrm{Frac}(\mb{C}[\bs{s}]/\mf{p})$. We note that the cohomology vanishes for $i > 0$ by construction, so it suffices to show that it vanishes for $i < 0$.

Let us regard $\mb{C}(\mf{p}) \overset{\mrm{L}}\otimes_{\mb{C}[\bs{s}]_{\mf{p}}}(V_{\ast}^{\bs{\alpha}}\iota_+\mc{M})_\mf{p}$ as a complex of coherent $\ms{D}_X[\bs{s}]_{\mf{p}} = \ms{D}_X \otimes \mb{C}[\bs{s}]_{\mf{p}}$-modules. Since the relative holonomicity is preserved under localization in $\C[\bs{s}]$ \cite[Remark 3.2.1]{ZerolociBernsteinSatoideal} and taking cohomology,  by Proposition \ref{prop:V is relative holonomic} each cohomology sheaf of this complex is also  relative holonomic over $\C[\bs{s}]_{\mf{p}}$ and its singular support must be contained in $L\times \spec \mb{C}(\mf{p}) \subset L \times \spec \mb{C}[\bs{s}]_{\mf{p}}$ for some Lagrangian $L\subset T^{\ast}X$. Hence, each cohomology sheaf is holonomic as a $\ms{D}_X[\bs{s}]_{\mf{p}}$-module in the sense of Definition \ref{defn:localised holonomic}. By Proposition \ref{prop:holonomic duality}, the duality functor commutes (up to a shift) with taking cohomology sheaves when restricted to the category of complexes with holonomic cohomology. So for all $i$ we have 
\begin{equation} \label{eq:flatness 1}
\mb{D}_{\mathit{hol}}\mrm{H}^{i}(\mb{C}(\mf{p}) \overset{\mrm{L}}\otimes_{\mb{C}[\bs{s}]_{\mf{p}}}(V_{\ast}^{\bs{\alpha}}\iota_+\mc{M})_\mf{p}) = \mrm{H}^{-i + \codim \mf{p}}\left(\mb{D}_{\ms{D}_X[\bs{s}]_{\mf{p}}}(\mb{C}(\mf{p}) \overset{\mrm{L}}\otimes_{\mb{C}[\bs{s}]_{\mf{p}}}(V_{\ast}^{\bs{\alpha}}\iota_+\mc{M})_\mf{p})\right).
\end{equation}
Using the Koszul resolution of $\mb{C}(\mf{p})$ as a $\mb{C}[\bs{s}]_{\mf{p}}$-module and Theorem \ref{thm:duality}, one has
\begin{align*}
\mb{D}_{\ms{D}_X[\bs{s}]_{\mf{p}}}(\mb{C}(\mf{p}) \overset{\mrm{L}}\otimes_{\mb{C}[\bs{s}]_{\mf{p}}}(V_{\ast}^{\bs{\alpha}}\iota_+\mc{M})_\mf{p}) 
&= \mb{C}(\mf{p}) \overset{\mrm{L}}\otimes_{\mb{C}[\bs{s}]_{\mf{p}}}(\mb{D}_{\ms{D}_X[\bs{s}]} V_{\ast}^{\bs{\alpha}}\iota_+\mc{M})_{\mf{p}}[-\codim \mf{p}] \\
&= \mb{C}(\mf{p}) \overset{\mrm{L}}\otimes_{\mb{C}[\bs{s}]_{\mf{p}}}\left((V^{ - \bs{\alpha} + \epsilon \bs{1}}_*\iota_+\mb{D}\mc{M})_{\bs{s} \mapsto -\bs{s}}\right)_{\mf{p}}[-\codim \mf{p}].
\end{align*}
Since this has vanishing $\mrm{H}^j$ for $j > \codim \mf{p}$, and since the functor $\mb{D}_{\mathit{hol}}$ is faithful by Proposition \ref{prop:basic duality}, we conclude by \eqref{eq:flatness 1} that \eqref{eqn: vanishing for tensor with C(p)}  holds.

Similarly, to prove flatness of $V_{\ast}^{\bs{\alpha}}\iota_+\mc{M}/V_{\ast}^{\bs{\beta}}\iota_+\mc{M}$ over $\mb{C}[U]$, enlarging $U$ if necessary, we may assume that $U + \mb{C}\bs{L} = \operatorname{span}\{s_1, \ldots, s_r\}$. Since $V_{\ast}^{\bs{\alpha}}\iota_+\mc{M}/V_{\ast}^{\bs{\beta}}\iota_+\mc{M}$ is annihilated by a product of operators of the form $\bs{L}(\bs{s}) + \gamma'$, it follows from Proposition \ref{prop:V is relative holonomic} that for any prime ideal $\mf{p}' \subset \mb{C}[U]$, each cohomology group of 
\[ \mb{C}(\mf{p}') \overset{\mrm{L}}\otimes_{\mb{C}[U]_{\mf{p}'}}\left(\frac{V_{\ast}^{\bs{\alpha}}\iota_+\mc{M}}{V_{\ast}^{\bs{\beta}}\iota_+\mc{M}}\right)_{\mf{p}'}\]
is holonomic over $\ms{D}_X[\bs{s}]_{\mf{p}'} = \ms{D}_X \otimes \mb{C}[\bs{s}]_{\mf{p}'} = \ms{D}_X[\bs{s}] \otimes_{\mb{C}[U]} \mb{C}[U]_{\mf{p}'}$, where $\mb{C}(\mf{p}') = \mrm{Frac}(\mb{C}[U]/\mf{p}')$. (Note that the ring $\mb{C}[\bs{s}]_{\mf{p}'}$ is equidimensional, so this makes sense.) So arguing as above using Proposition \ref{prop:holonomic duality} and Theorem \ref{thm:duality}, we deduce that
\[ \mrm{H}^i\left(\mb{C}(\mf{p}') \overset{\mrm{L}}\otimes_{\mb{C}[U]}\frac{V_{\ast}^{\bs{\alpha}}\iota_+\mc{M}}{V_{\ast}^{\bs{\beta}}\iota_+\mc{M}}\right) = \mrm{H}^i\left(\mb{C}(\mf{p}') \overset{\mrm{L}}\otimes_{\mb{C}[U]_{\mf{p}'}}\left(\frac{V_{\ast}^{\bs{\alpha}}\iota_+\mc{M}}{V_{\ast}^{\bs{\beta}}\iota_+\mc{M}}\right)_{\mf{p}'}\right) = 0\]
for $i \neq 0$. So $V_{\ast}^{\bs{\alpha}}\iota_+\mc{M}/V_{\ast}^{\bs{\beta}}\iota_+\mc{M}$ is flat over $\mb{C}[U]$ by Lemma \ref{lem:flatness by fibres} as claimed.
\end{proof}

\section{The Hodge filtration and the $V$-filtration}\label{sec: Hodge and multivariate V}
In this section, we specialise our study of the multivariate $V$-filtration further from general holonomic $\ms{D}$-modules to the more structured setting of mixed Hodge modules. We recall some of the general theory of mixed Hodge modules in \S\ref{sec: recollection}; the main new feature for our purposes is the addition of a very special good filtration, the \emph{Hodge filtration}. In \S\ref{subsec:hodge localisation}, we show (Proposition \ref{prop:hodge !* formula})  that the $V$-filtration formula for the $\ast$-localisation along a simple normal crossings divisor holds also with the Hodge filtration in place. Using this, we show in \S\ref{subsec:magic formula} that, conversely, the multivariate $V_*$-filtration on the graph embedding of a mixed Hodge module can be recovered from the Hodge filtrations on a sequence of auxiliary mixed Hodge modules; this is a multivariate version of our construction in \cite{DY25} for the single-variable $V$-filtration. As applications, we deduce in \S\ref{subsec:properties of Hodge and V} that the Hodge filtration and the multivariate $V$-filtration interact rather well, at least if we restrict to the positive orthant: for example, we prove Hodge-filtered versions of the flatness theorem (Theorem \ref{thm:hodge flatness}), the monomial change of functions (Theorem \ref{thm:filtered change of functions}) and strictness of the direct images under proper maps (Theorem \ref{thm:multistrict pushforward}).

\subsection{Recollection on mixed Hodge modules}\label{sec: recollection}
In this subsection, we recall some aspects of the theory of mixed Hodge modules.

Let us first fix some conventions, as the phrase ``mixed Hodge module'' can mean a few different things depending on context. If $X$ is a complex manifold, we will write $\mhm_{\mb{Q}}(X)$ for the $\mb{Q}$-linear abelian category of polarisable mixed Hodge modules on $X$ defined by Saito in \cite[\S 2.17]{Saito90}. We will refer to the objects simply as $\mb{Q}$-mixed Hodge modules. For example, if $X$ is a point, then $\mhm_{\mb{Q}}(X)$ is the category of polarisable $\mb{Q}$-mixed Hodge structures \cite[Theorem 3.9]{Saito90}. For general $X$, an object in $\mhm_{\Q}(X)$ is specified by a tuple
 \[ (\cM,F_{\bullet}\cM,W_{\bullet}\cM,K),\]
where $\cM$ is a regular holonomic $\sD_X$-module, $F_{\bullet}\cM$ is a good filtration over the order filtration $F_{\bullet}\sD_X$, $W_{\bullet}\cM$ is a finite filtration by $\sD_X$-modules and $K$ is a $\Q$-perverse sheaf such that $K\otimes_{\Q}\C\cong \textrm{DR}_X(\cM)$. These data are required to satisfy a complicated set of inductive conditions amounting roughly to the requirement that any specialisation to a point is a $\mb{Q}$-mixed Hodge structure.

\begin{rmk}
We work in this paper primarily with \emph{analytic} mixed Hodge modules on complex manifolds. There is also a more restrictive notion of \emph{algebraic} mixed Hodge modules on a complex algebraic variety $X$ \cite[\S4]{Saito90}. In the polarisable setting considered here, one can define the category of algebraic mixed Hodge modules on $X$ to be the full subcategory of analytic mixed Hodge modules that can be extended to some (hence any) proper algebraic compactification of $X$. In particular, all our results proved in the analytic setting hold also in the algebraic one.
\end{rmk}

In this paper, it will be convenient also to work with larger coefficient fields than $\mb{Q}$. Here are two ways to achieve this. First, if $\mb{K} \subset \mb{C}$ is an algebraic extension of $\mb{Q}$, then we can define a $\mb{K}$-linear category $\mhm_\mb{K}(X)$ by formally extending scalars in $\mhm_{\mb{Q}}(X)$ from $\mb{Q}$ to $\mb{K}$; see \cite[\S 1.4]{DY25}. In this setting, most theorems extend formally from the case of $\mb{Q}$-mixed Hodge modules to $\mb{K}$-mixed Hodge modules. If $\mb{K} = \mb{C}$, on the other hand, then one also has the $\mb{C}$-linear category $\mhm_\mb{C}(X)$ of \emph{complex mixed Hodge modules}, which is the topic of the (unfinished) book project \cite{MHMproject} (see also \cite[Appendix A.1]{DV22} and \cite[\S 2.1]{DV23} for some general discussion). This theory, which sits somewhere between Saito's theory and Mochizuki's theory of mixed twistor $\ms{D}$-modules \cite{MochizukiMTM}, is developed in parallel to the $\mb{Q}$-linear theory, but with $\mb{C}$-Hodge structures as the prototype instead of $\mb{Q}$-Hodge structures.

For the results we will consider here, the complex theory $\mhm_\mb{C}(X)$ is the most natural context. We will therefore write simply
\[ \mhm(X) := \mhm_\mb{C}(X)\]
and refer to its objects simply as mixed Hodge modules. Since there is no complete reference at the time of writing, however, we will have to take certain expected properties of this theory on faith; we will spell these out when they occur, and will indicate when needed how to modify the statements and proofs of our results in the better-founded theory over algebraic extensions of $\mb{Q}$.

Let us first record how the theories with different coefficients are related:

\begin{prop} \label{prop:field extension}
Let $\mb{K} = \mb{C}$ or an algebraic extension of $\mb{Q}$. Then there is an exact base-extension functor
\[ \mb{K} \otimes_{\mb{Q}} - \colon \mhm_\mb{Q}(X) \to \mhm_{\mb{K}}(X).\]
\end{prop}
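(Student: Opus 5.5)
The proof splits into two cases, according to whether $\mb{K}$ is an algebraic extension of $\mb{Q}$ or $\mb{K} = \mb{C}$. In each case I will construct the functor on objects by tensoring the underlying data, check that the axioms of the target category are satisfied, and then verify exactness.

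\emph{Algebraic case.} When $\mb{K}$ is an algebraic extension of $\mb{Q}$, the category $\mhm_{\mb{K}}(X)$ is defined in \cite[\S 1.4]{DY25} by formally extending scalars. Concretely, I take it to be the category of pairs $(\mc{H}, \rho)$, where $\mc{H} \in \mhm_{\mb{Q}}(X)$ and $\rho \colon \mb{K} \to \mrm{End}(\mc{H})$ is a $\mb{Q}$-algebra map; for infinite extensions one first works with finite subextensions and passes to the colimit. The functor sends $\mc{M}$ to $\mb{K} \otimes_{\mb{Q}} \mc{M}$, which is a finite direct sum of copies of $\mc{M}$ carrying the evident $\mb{K}$-action. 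This construction makes sense because $\mhm_{\mb{Q}}(X)$ is a $\mb{Q}$-linear abelian category with finite direct sums. Exactness is immediate: kernels and cokernels in $\mhm_{\mb{K}}(X)$ are computed in $\mhm_{\mb{Q}}(X)$, and $\mb{K} \otimes_{\mb{Q}} -$ is a finite direct sum of copies of the identity, hence exact.

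\emph{Complex case.} For $\mb{K} = \mb{C}$, the functor sends $(\mc{M}, F_\bullet, W_\bullet, K)$ to the same regular holonomic $\ms{D}_X$-module $\mc{M}$. The underlying $\mb{Q}$-perverse sheaf $K$ is forgotten, while $\mc{M}$ keeps its filtrations $F_\bullet$ and $W_\bullet$. The $\mb{Q}$-structure supplies a real structure on $\mrm{DR}(\mc{M})$, and from it I construct the conjugate/sesquilinear pairing required in the complex theory of \cite{MHMproject}.

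I then need to check, by induction on the dimension of the support, that the $\mb{C}$-axioms hold. The key point is that nearby and vanishing cycles along germs of functions commute with the functor. On the $\mb{Q}$ side these operations are defined via the $V$-filtration, and the $V$-filtration is intrinsic to the underlying $\ms{D}$-module, so both theories compute the same $\psi_f$ and $\phi_f$. This reduces the induction to the case of a point, where I must show that polarisable $\mb{Q}$-mixed Hodge structures give polarisable $\mb{C}$-mixed Hodge structures. This holds because the Hodge decomposition $H_{\mb{C}} = \bigoplus H^{p,q}$, together with the polarisation, defines the required complex structure. Exactness in this case is again immediate, since kernels and cokernels in both categories are computed on the underlying $\ms{D}$-modules with their induced filtrations, using strictness.

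\emph{Main obstacle.} The hard step is the complex case. Because \cite{MHMproject} is unfinished, I cannot prove compatibility with the definition of $\mhm_{\mb{C}}(X)$ from first principles. Instead I will state explicitly which expected properties of the complex theory I am assuming, in particular that its nearby-cycle formalism agrees with Saito's formalism on underlying filtered $\ms{D}$-modules. For the applications, the algebraic case over $\bar{\mb{Q}}$ suffices.
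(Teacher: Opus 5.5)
Your proposal matches the paper: for $\mb{K}$ algebraic over $\mb{Q}$ the functor exists and is exact by the very construction of $\mhm_{\mb{K}}(X)$ as a formal extension of scalars, and for $\mb{K} = \mb{C}$ the paper does not prove the statement but takes it as an expected property of the unfinished complex theory. Your sketch of the complex case (a sesquilinear pairing built from the real structure, then induction via nearby cycles) goes beyond what the paper does and is only heuristic, but since you explicitly fall back on stated assumptions there, your argument has the same status as the paper's.
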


This is true by definition when $\mb{K}$ is algebraic over $\mb{Q}$. When $\mb{K} = \mb{C}$ we take it to be an expected property of the theory of complex mixed Hodge modules.

Our main interest in mixed Hodge modules will be their underlying $\ms{D}$-modules equipped with Hodge and weight filtrations. For future reference, we record the main properties of this structure. In the statement below, we write $\mrm{MFW}(\ms{D}_X)_{rh}$ for the category of tuples $(\mc{M}, F_\bullet\mc{M}, W_\bullet\mc{M})$, where $\mc{M}$ is a regular holonomic $\ms{D}_X$-module, $F_\bullet\mc{M}$ is a good filtration and $W_\bullet\mc{M}$ is a filtration by $\ms{D}_X$-submodules.

\begin{prop} \label{prop:hodge forgetful functor}
Let $\mb{K}$ be $\mb{C}$ or an algebraic extension of $\mb{Q}$. Then for any complex manifold $X$, there is a functor
\begin{align*}
\mhm_\mb{K}(X) &\to \mrm{MFW}(\ms{D}_X)_{rh}\\
\mc{M} &\mapsto (\mc{M}, F_\bullet\mc{M}, W_\bullet\mc{M})
\end{align*}
such that the composition with the forgetful functor to $\coh(\ms{D}_X)$ is faithful and exact. Moreover, if $\mc{M} \to \mc{N}$ is a morphism in $\mhm_\mb{K}(X)$, then the morphism $(\mc{M}, F_\bullet \mc{M}, W_\bullet\mc{M}) \to (\mc{N}, F_\bullet\mc{N}, W_\bullet\mc{N})$ is bi-strict. The forgetful functors to $\mrm{MFW})(\ms{D}_X)_{rh}$ commute with the coefficient extensions of Proposition \ref{prop:field extension}.
\end{prop}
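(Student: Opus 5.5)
This statement is a recollection of the foundations of Saito's theory rather than something to be derived from the earlier sections, so the plan is to assemble it from the definitions and the standard theorems, working first with $\mb{K} = \mb{Q}$. The functor is defined on objects by sending a tuple $(\mc{M}, F_\bullet\mc{M}, W_\bullet\mc{M}, K)$ to its first three components, and on morphisms by sending a morphism of mixed Hodge modules to its underlying $\ms{D}_X$-module map. This map preserves $F$ and $W$ because morphisms in $\mhm_\mb{Q}(X)$ are by definition filtered morphisms compatible with the comparison isomorphism $K \otimes \mb{C} \cong \mrm{DR}_X(\mc{M})$. The next two steps are as follows.

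\begin{itemize}
\item \textbf{Faithfulness and exactness.} Faithfulness holds because a morphism of $\mb{Q}$-perverse sheaves is determined by its complexification, which via $\mrm{DR}_X$ is determined by the $\ms{D}$-module map. Exactness of the composite to $\coh(\ms{D}_X)$ is part of Saito's construction: kernels and cokernels in $\mhm_\mb{Q}(X)$ are computed on underlying $\ms{D}$-modules, as in \cite[\S2.17]{Saito90}.
\item \textbf{Bi-strictness.} This is Saito's theorem that morphisms of mixed Hodge modules are strictly compatible with $F$ and $W$ simultaneously, \cite[Proposition 2.17? / 5.1.14]{Saito88}. Following Saito, I would reduce via the weight filtration to the graded pieces, which are polarisable pure Hodge modules. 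There strictness for $F$ follows by induction on the dimension of the support, using strict specialisability along $V$ and the corresponding statement for polarisable Hodge structures.
\end{itemize}

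For $\mb{K}$ algebraic over $\mb{Q}$, objects of $\mhm_\mb{K}(X)$ are formal extensions of scalars, for instance pairs of an object of $\mhm_\mb{Q}(X)$ with a $\mb{K}$-module structure, as in \cite[\S1.4]{DY25}. I would define the functor by taking the underlying filtered $\ms{D}$-module and letting $\mb{K}$ act through its given action. Every property then transfers formally, since $F$ and $W$ are preserved by the $\mb{K}$-action and direct summands of bi-strict maps are bi-strict. Compatibility with Proposition \ref{prop:field extension} is then immediate from the construction.

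For $\mb{K} = \mb{C}$, the statement is part of the expected foundations of \cite{MHMproject}, and I would take it as an axiom there, in the same way the paper takes Proposition \ref{prop:field extension} on faith. Among the steps, bi-strictness is the only one with real content, so I would cite it rather than reprove it.
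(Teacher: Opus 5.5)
Your proposal is correct and follows the paper's route. The paper likewise gives no argument beyond citing \cite[Proposition 5.1.14]{Saito88} for $\mb{K}=\mb{Q}$ and \cite[\S 1.4]{DY25} for algebraic extensions, and it takes the case $\mb{K}=\mb{C}$ as a desideratum of the complex theory.

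One point needs stating precisely. For algebraic $\mb{K}$, the underlying $\ms{D}_X$-module of $(M,\mb{K}\to\mrm{End}(M))$ must be the direct summand $\mc{M}\otimes_{\mb{C}\otimes_{\mb{Q}}\mb{K}}\mb{C}$ on which $\mb{K}$ acts through the fixed embedding $\mb{K}\subset\mb{C}$, not the whole $\ms{D}$-module of $M$ with its $\mb{K}$-action. Otherwise, for $\mb{K}$ finite over $\mb{Q}$, the functor $\mb{K}\otimes_{\mb{Q}}-$ would replace each underlying $\ms{D}$-module by $[\mb{K}:\mb{Q}]$ copies of itself, and compatibility with Proposition \ref{prop:field extension} would fail.
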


This is a standard fact in the case $\mb{K} = \mb{Q}$ (cf.\ \cite[Proposition 5.1.14]{Saito88}), while the extension to $\mb{K}$ algebraic over $\mb{Q}$ is explained briefly in \cite[\S 1.4]{DY25}. We take it as a desideratum of the complex theory (which should follow by essentially the same arguments as the rational case). The category $\mhm_\mb{K}(X)$ also comes equipped with an auto-equivalence $\mc{M} \mapsto \mc{M}(1)$, called \emph{Tate twist}, acting on the underlying filtered $\ms{D}$-modules by
\[ (\mc{M}(1), W_\bullet\mc{M}(1), F_\bullet\mc{M}(1)) = (\mc{M}, W_{\bullet + 2}\mc{M}, F_{\bullet - 1}\mc{M}).\]
This is true by construction in all settings.

One of the main points of constructing the category $\mhm_\mb{K}(X)$ is that it has some particularly good functoriality properties. We will recall a few of these below.

First, if $f \colon X \to Y$ is a proper morphism of complex manifolds, then we have Laumon's filtered direct image functor
\[ f_+ \colon \mrm{D}^b_{\mrm{coh}}(\ms{D}_X, F_\bullet) \to \mrm{D}^b_{\mrm{coh}}(\ms{D}_Y, F_\bullet) \]
given by
\[ f_+(\mc{M}, F_\bullet) = \mrm{R}f_*((\ms{D}_{Y \leftarrow X}, F_\bullet) \overset{\mrm{L}}\otimes_{(\ms{D}_X, F_\bullet)} (\mc{M}, F_\bullet)),\]
where we endow
\[ \ms{D}_{Y \leftarrow X} = \mrm{Diff}(f^{-1}\omega_Y, \omega_X) \cong f^{-1}\ms{D}_Y \otimes_{f^{-1}\mc{O}_Y} \omega_{X/Y} \]
with the order filtration shifted to begin in degree $\dim Y - \dim X$, and $\overset{\mrm{L}}\otimes_{(\ms{D}_X, F_\bullet)}$ means the derived tensor product of Rees modules over $R_F\ms{D}_X$. One of the fundamental properties of the theory of mixed Hodge modules is:

\begin{thm}
Assume that $f \colon X \to Y$ is a projective morphism. Then there exist functors
\[ \mc{H}^i f_+ \colon \mhm_\mb{K}(X) \to \mhm_\mb{K}(Y) \]
for $i \in \mb{Z}$, such that for any $\mc{M} \in \mhm_\mb{K}(X)$, the filtered complex $f_+(\mc{M}, F_\bullet)$ is strict and we have natural isomorphisms
\[ \mc{H}^if_+(\mc{M}, F_\bullet) \cong (\mc{H}^if_+\mc{M}, F_\bullet) \]
for all $i$.
\end{thm}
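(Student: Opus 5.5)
The plan is to follow Saito's original strategy \cite{Saito88, Saito90}, which adapts to $\mb{K}$ algebraic over $\mb{Q}$ by extension of scalars (Proposition \ref{prop:field extension}). For $\mb{K} = \mb{C}$ we would either run the same argument inside \cite{MHMproject} or take the statement as one of the expected properties of the complex theory, as with Propositions \ref{prop:field extension} and \ref{prop:hodge forgetful functor}. The argument has three reductions and one hard step.

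First, the question is local on $Y$ and $f$ is projective, so we factor $f$ as a closed embedding $i \colon X \hookrightarrow \mb{P}^N \times Y$ followed by the projection $p \colon \mb{P}^N \times Y \to Y$. For the closed embedding, $i_+(\mc{M}, F_\bullet)$ is concentrated in degree $0$. Kashiwara's equivalence, together with the definition of mixed Hodge modules supported on a submanifold, identifies it with a mixed Hodge module, and strictness is automatic. So it suffices to treat $p$.

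Second, we reduce from mixed to pure modules using the weight filtration $W_\bullet\mc{M}$. Its graded pieces $\gr^W_k\mc{M}$ are polarisable pure Hodge modules of weight $k$, and by the definition of polarisable mixed Hodge modules they decompose by strict support. This gives a weight spectral sequence
\[ E_1^{-k, i + k} = \mc{H}^i p_+\gr^W_k\mc{M} \Rightarrow \mc{H}^i p_+\mc{M}. \]
Once the pure case is known, each $E_1$ term is pure of weight $i + k$ and the differentials are morphisms of Hodge modules. Purity then forces the sequence to degenerate at $E_2$. Bi-strictness of morphisms (Proposition \ref{prop:hodge forgetful functor}) then gives strictness of $p_+(\mc{M}, F_\bullet)$ and the required mixed Hodge module structure on the abutment.

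The main obstacle is the pure case. For a polarisable pure Hodge module $\mc{M}$ of weight $w$ with strict support $Z$, we must show three things: $p_+(\mc{M}, F_\bullet)$ is strict; each $\mc{H}^ip_+\mc{M}$ is a polarisable pure Hodge module of weight $w + i$; and relative hard Lefschetz holds for a relatively ample class. We would argue by induction on $\dim Z$, following Saito's proof of the decomposition theorem. Strictness and the Hodge property of $\mc{H}^ip_+$ are tested along an arbitrary local function $g$ on $Y$ via nearby and vanishing cycles. The key input is that the $V$-filtration commutes with proper direct image (the single-variable case of Theorem \ref{thm:multivariate V direct image}). This compatibility also holds with the Hodge filtration once strictness is known one dimension lower, which lets us identify $\psi_g p_+ = p_+\psi_{g\circ p}$ and $\phi_g p_+ = p_+ \phi_{g \circ p}$ compatibly with $F_\bullet$. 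These specialisations live on smaller supports, so the inductive hypothesis applies, with the monodromy weight filtration handled through the relative hard Lefschetz package.

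The genuinely hard input is the base case and the global step over the generic part of $Z$, which requires Hodge theory on a possibly singular support. There we would use the variation of Hodge structure case: Zucker's and Cattani--Kaplan--Schmid's $L^2$/degeneration results on curves, combined with Saito's reduction to curves, give strictness and polarised Hodge structures. We would then propagate relative hard Lefschetz and polarisations through the induction using the standard Deligne--Saito argument on primitive decompositions. I expect this step to be where essentially all the difficulty lies, and would simply cite \cite[\S 5]{Saito88} and \cite[Theorem 2.14]{Saito90} for it.
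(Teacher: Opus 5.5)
Your proposal is correct and matches the paper, which also proves this by citation. The case $\mathbb{K}=\mathbb{Q}$ is Theorems 2.14 and 2.18 of \cite{Saito90}, algebraic $\mathbb{K}$ follows by extension of scalars, and $\mathbb{K}=\mathbb{C}$ follows by the same weight-filtration reduction to the pure case, which is proved in \cite[Theorem 14.3.1]{MHMproject}, so the paper never needs your fallback of treating the complex case as a desideratum. Your sketch of Saito's argument is a fair outline, but $E_2$-degeneration needs more than purity: you also need Deligne's lemma identifying the induced Hodge filtrations on the $E_r$ terms, which is part of what you are citing.
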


The case $\mb{K} = \mb{Q}$ is \cite[Theorem 2.14 and 2.18]{Saito90}, while the case where $\mb{K}$ is algebraic over $\mb{Q}$ follows by extension of scalars. The case $\mb{K} = \mb{C}$ follows by the same argument by reduction to the pure case, which is \cite[Theorem 14.3.1]{MHMproject}. Note that in the setting of algebraic mixed Hodge modules, the cohomological functors $\mc{H}^if_+$ can be upgraded to a functor between derived categories \cite[Theorem 4.3]{Saito90}, but this is not known in the analytic setting.

As an elementary special case, when $f \colon X \to Y$ is a closed immersion, we have $\mc{H}^if_+ = 0$ for $i \neq 0$, so we get an honest exact functor
\[ f_+ = \mc{H}^0f_+\colon \mhm_\mb{K}(X) \to \mhm_{\mb{K}}(Y).\]

\begin{prop}[Kashiwara's equivalence for mixed Hodge modules]
For a closed immersion $f$ as above, the functor $f_+$ is fully faithful, with image equal to the full subcategory of mixed Hodge modules on $Y$ whose underlying $\ms{D}_Y$-modules are supported in $X$.
\end{prop}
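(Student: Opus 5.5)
We deduce the statement from the underlying $\ms{D}$-module version of Kashiwara's equivalence together with the characterisation of the Hodge filtration in terms of the $V$-filtration along a smooth hypersurface. The question is local on $Y$, and $f_+$ of a mixed Hodge module is automatically supported in $X$. So the plan is to construct a quasi-inverse on the subcategory of mixed Hodge modules on $Y$ supported in $X$, working locally where $X$ is cut out by coordinates $y_1 = \cdots = y_c = 0$. By induction on the codimension (composing closed immersions), we reduce to the case where $X = \{t = 0\}$ is a smooth hypersurface with coordinate $t$.

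In the hypersurface case, let $\mc{N}$ underlie a mixed Hodge module on $Y$ supported in $X$. Kashiwara's equivalence for $\ms{D}$-modules gives $\mc{N} = \mc{M}[\partial_t]$ with $\mc{M} = \ker(t) = \gr_V^1\mc{N}$; this is also the $V$-filtration description used in Lemma \ref{lem:supported on a component} with $r=1$. We endow $\mc{M}$ with:
\begin{itemize}
\item the filtrations $F$ and $W$ induced on $\gr_V^1\mc{N}$, shifted appropriately;
\item the perverse sheaf $\phi_{t,1}K$, or equivalently $i^!K$, where $K$ is the perverse sheaf (or its complex analogue) attached to $\mc{N}$.
\end{itemize}
Saito's axioms require that $\gr_V^1$ (the unipotent vanishing cycles $\phi_{t,1}$) of a mixed Hodge module is again a mixed Hodge module on $X$, with the filtrations induced from $F$ and $W$. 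Since $\mc{N}$ is supported on $t=0$, the nearby cycles vanish, the monodromy filtration on $\phi_{t,1}$ is trivial, and $\phi_{t,1}\mc{N}$ is a mixed Hodge module on $X$ whose underlying module is $\mc{M}$. This defines the functor $G \colon \mc{N}\mapsto \phi_{t,1}\mc{N}$.

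We then check that $f_+G \cong \id$ and $G f_+ \cong \id$. On underlying $\ms{D}$-modules these are the usual Kashiwara isomorphisms. For the Hodge filtration, the key input is Saito's description of $F$ on a module supported on a hypersurface: for $\mc{N} = \mc{M}[\partial_t]$ one has
\[ F_p\mc{N} = \sum_k F_{p-k-1}\mc{M}\,\partial_t^k, \]
with the shift coming from the convention for $F_\bullet \ms{D}_{Y\leftarrow X}$. Saito derives this from the condition $F_p\mc{N} = \sum_i \partial_t^i(F_{p-i}\mc{N}\cap V^{>0}\mc{N})$, a defining property of mixed Hodge modules supported on $t=0$ (strict support decomposition). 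This identity shows that $F$ is recovered from its $\gr_V^1$ part, and similarly for $W$. The rational (or complex) structures match because $\phi_{t,1}$ of $i_*L$ is $L$.

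Full faithfulness follows from the faithfulness of the forgetful functor (Proposition \ref{prop:hodge forgetful functor}) together with the fact that $G$ is a functor inverse on morphisms. A morphism of underlying $\ms{D}$-modules compatible with $F$, $W$ and $K$ descends, since Kashiwara's equivalence is fully faithful at each level.

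The main obstacle is the Hodge-filtration identity $F_p\mc{N} = \sum F_{p-k-1}\mc{M}\partial_t^k$, i.e.\ showing that $F$ is generated by $F\cap \gr_V^1$. In the $\mb{Q}$ theory this is part of Saito's foundational results (\cite[Lemma 3.2.6, Proposition 3.2.2]{Saito88}), which we would cite. For $\mb{K}$ algebraic it follows by extension of scalars; for $\mb{C}$ it is an expected property of \cite{MHMproject}.
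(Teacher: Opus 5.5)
Your argument for full faithfulness is fine and is what the paper has in mind. A morphism $f_+\mc{M}_1\to f_+\mc{M}_2$ in $\mhm(Y)$ is $f_+$ of a unique $\ms{D}_X$-linear map. That map preserves $F$, $W$ and the perverse sheaves, since the filtrations on $\mc{M}_i$ are recovered by intersecting with the subsheaf killed by the ideal of $X$. It therefore lies in $\mhm(X)$, because mixed Hodge modules form a full subcategory of such tuples.

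\textbf{The gap: essential image.} The problem is the step ``Saito's axioms require that $\phi_{t,1}\mc{N}$ is a mixed Hodge module on $X$''.

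\emph{What the axioms give.} They are formulated via the graph embedding $\iota_t\colon Y\to Y\times\mb{C}_\tau$ (cf.\ Proposition \ref{prop:mhm strict specialisation}). They make $\gr_V^\alpha\iota_{t,+}\mc{N}$ a mixed Hodge module on $Y\times\{0\}=Y$, supported in $t^{-1}(0)$. If $\mc{N}$ is already supported on $X=\{t=0\}$, then $\tau$ acts locally nilpotently on $\iota_{t,+}\mc{N}$. Hence $\phi_{t,1}\mc{N}=\ker(\tau)$ is isomorphic to $\mc{N}$ itself, as an object of $\mhm(Y)$. So your functor $G$ does not land in $\mhm(X)$.

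\emph{Circularity.} The module you actually want, $\ker(t)\subset\mc{N}$, is a $\ms{D}_X$-module. In Saito's theory, the fact that $V$-graded pieces along a smooth hypersurface underlie mixed Hodge modules \emph{on the hypersurface} is derived from the graph-embedding version together with precisely the equivalence you are proving. So the construction of $G$ is circular.

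\emph{Misidentified obstacle.} The filtered identity you single out is Saito's lemma that a strictly specialisable $(\mc{N},F)$ supported on $t=0$ equals $f_+(\ker t,F)$. That is a statement about filtered $\ms{D}$-modules only. It does not show that $(\ker t,F,W,\phi_{t,1}K)$ satisfies the defining conditions of $\mhm(X)$.

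\textbf{What is missing.} You need to verify those defining conditions on $X$, by induction on the dimension of the support. This is what the paper means by ``baked into the inductive definition''.
\begin{itemize}
\item Given a local function $h$ on $X$, extend it locally to $\tilde h$ on $Y$.
\item Since $(f\times\id)\circ\iota_h=\iota_{\tilde h}\circ f$, the Hodge-filtered $V$-filtration of $\mc{N}=f_+\mc{M}$ along $\tilde h$ is $f_+$ of that of $\mc{M}$ along $h$. For a closed embedding this is a direct computation; for the underlying $V$-filtrations compare Theorem \ref{thm:multivariate V direct image} with $r=1$.
\item This gives strict specialisability of $(\mc{M},F)$ along $h$. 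It also gives identifications $\psi_{\tilde h}\mc{N}\cong f_+\psi_h\mc{M}$ and $\phi_{\tilde h,1}\mc{N}\cong f_+\phi_{h,1}\mc{M}$, compatible with $F$, $W$, can and var.
\item The left-hand sides are mixed Hodge modules on $Y$ supported in $X\cap\tilde h^{-1}(0)$, which has smaller dimension. The inductive hypothesis then puts $\psi_h\mc{M}$ and $\phi_{h,1}\mc{M}$ in $\mhm(X)$.
\item You must also transport the graded polarisations and the relative monodromy filtration condition.
\end{itemize}

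\textbf{Convention slip.} With the paper's indexing ($\partial_t t-\alpha$ nilpotent on $\gr_V^\alpha$), one has $\ker t=\gr_V^0\mc{N}=\phi_{t,1}\mc{N}$. For $\mc{N}$ supported on $t=0$, both $\gr_V^1\mc{N}=\psi_{t,1}\mc{N}$ and $V^{>0}\mc{N}$ vanish (cf.\ Lemma \ref{lem:supported on a component}). So the formula you quote should read $F_p\mc{N}=\sum_i\partial_t^i(F_{p-i}V^0\mc{N})$.
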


The full faithfulness of $f_+$ is a relatively easy consequence of the definitions of the functor using the analogous statement for the underlying $\ms{D}$-modules. The statement about the essential image is baked into the inductive definition of mixed Hodge module.

As alluded to above, the category of mixed Hodge modules is defined so that it also behaves well under specialisation. More precisely, it has the following good property with respect to the $V$-filtration.

Suppose that $(\mc{M}, F_\bullet\mc{M})$ is a coherent $\ms{D}_X$-module with good filtration. Let $f \colon X \to \mb{C}$ be a holomorphic function and let $\iota \colon X\to X\times \C$ be the graph embedding. The direct image $\iota_{+}\cM$ and its $V$-filtration $V^{\alpha}\iota_{+}\cM$ inherit Hodge filtrations by
\[ F_{p+1}\iota_{+}\cM = \sum_{r+s\leq p}F_r\cM\otimes \d_t^{s}, \quad F_{\bullet}V^{\alpha}\iota_{+}\cM=F_{\bullet}\iota_{+}\cM\cap V^{\alpha}\iota_{+}\cM.\]
We also set $F_{\bullet}\gr^{\alpha}_V\iota_{+}\cM=F_{\bullet}V^{\alpha}\iota_{+}\cM/F_{\bullet}V^{>\alpha}\iota_{+}\cM$. 

\begin{definition}\label{defn: strict A-specializability}
Let $A \subset \mb{C}$ be a $\mb{Q}$-linear subspace containing $\mb{Q}$. We say that $(\mc{M}, F_\bullet)$ is \emph{strictly $A$-specialisable} along $f$ (cf.\ \cite[Definition 10.6.1]{MHMproject}) if
\begin{enumerate}
\item the $V$-filtration $V^{\bullet}\iota_{+}\cM$ is defined over $A$, (in the sense of e.g.\ Definition \ref{defn:multivariate V-filtration}),
\item \label{itm: t strict} $t(F_kV^{\alpha}\iota_{+}\cM)=F_kV^{\alpha+1}\iota_{+}\cM$ for $\alpha>0,k\in \Z$,
\item \label{itm: dt strict} $\partial_t(F_k\gr^{\alpha}_V\iota_{+}\cM)=F_{k +1}\gr^{\alpha-1}_V\iota_{+}\cM$ for $\alpha<1,k\in \Z$.
\end{enumerate}
\end{definition}

\begin{prop} \label{prop:mhm strict specialisation}
Let $\mb{K} = \mb{C}$ (resp.\ an algebraic extension of $\mb{Q}$ containing all roots of unity), let $A = \mb{R}$ (resp.\ $\mb{Q}$) and let $\mc{M} \in \mhm_\mb{K}(X)$. Then for any holomorphic function $f$, $(\mc{M}, F_\bullet\mc{M})$ is strictly $A$-specialisable along $f$,
\[ (\gr_V^\alpha\iota_+\mc{M}, F_\bullet\gr_V^\alpha\iota_+\mc{M}) \]
underlies an object $\gr_V^\alpha\iota_+\mc{M}$ in $\mhm_\mb{K}(X)$ for all $\alpha \in A$, and the maps
\[ t \colon \gr_V^0 \iota_+\mc{M} \to \gr_V^1 \iota_+\mc{M} \quad \text{and} \quad \partial_t \colon \gr_V^0\iota_+\mc{M}(1) \to \gr_V^1\iota_+\mc{M} \]
underlie morphisms in $\mhm_\mb{K}(X)$.
\end{prop}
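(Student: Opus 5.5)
This is essentially Saito's theory, so I will not prove it from scratch; I will reduce it to the definition of mixed Hodge modules. Everything will be done first for $\mb{K}=\mb{Q}$ (with $A=\mb{Q}$) and then transported to other coefficients.

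\textbf{The case $\mb{K}=\mb{Q}$.}
\begin{enumerate}
\item Saito's definition \cite{Saito88,Saito90} is inductive on the dimension of the support. Part of the condition for $\mc{M}\in\mhm_{\mb{Q}}(X)$ is that $(\mc{M},F_\bullet\mc{M})$ be quasi-unipotent and regular along every locally defined holomorphic function $f$. It must also be strictly specialisable in the sense of \cite[3.2.1]{Saito88}, which is exactly conditions \eqref{itm: t strict} and \eqref{itm: dt strict} of Definition \ref{defn: strict A-specializability}.
\item Quasi-unipotence of the monodromy of $\psi_f$ (for rational structures this is the local monodromy theorem, built into the definition) forces all eigenvalues of $\partial_t t$ on $\gr_V^\alpha$ to be rational. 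Hence the $V$-filtration is defined over $\mb{Q}$.
\item Saito's nearby and vanishing cycles $\psi_{f,\lambda}$ and $\phi_{f,1}$, with their Hodge filtrations, are identified with $\gr_V^\alpha\iota_+\mc{M}$ for $\alpha\in(0,1]$ and $\alpha=0$, with the indexing shifts of \cite[5.1]{Saito88}. These are mixed Hodge modules with the relative monodromy weight filtration, and $\mathrm{can}=\partial_t$, $\mathrm{var}=t$ are morphisms, with the Tate twist as stated.
\item For general $\alpha\in\mb{Q}$, I would use that $t\colon\gr_V^\alpha\to\gr_V^{\alpha+1}$ for $\alpha>0$ and $\partial_t\colon\gr_V^{\alpha}\to\gr_V^{\alpha-1}$ for $\alpha<1$ are filtered isomorphisms (up to Tate twist). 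This follows from strict specialisability, and it transports the Hodge module structure to every $\alpha$.
\end{enumerate}

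\textbf{Algebraic extensions $\mb{K}$.} Objects of $\mhm_{\mb{K}}(X)$ are direct summands of objects coming from $\mhm_{\mb{Q}}(X)$, cut out by a $\mb{K}$-action; see \cite[\S1.4]{DY25}. The $V$-filtration is functorial and strict, so all three properties pass to summands. The requirement that $\mb{K}$ contain the roots of unity lets us split $\gr_V^\alpha$ from the generalised $\lambda$-eigenspaces $\psi_{f,\lambda}$ for $\lambda=e^{-2\pi i\alpha}$, so each $\gr_V^\alpha$ is itself an object of $\mhm_{\mb{K}}(X)$.

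\textbf{The complex case $\mb{K}=\mb{C}$, $A=\mb{R}$.} This does not follow from the rational case: there is no $\mb{Q}$-structure, so eigenvalues need not be rational or even real. The statement should instead be read off from \cite[Ch.~10--14]{MHMproject}. There, strict $\mb{R}$-specialisability along any $f$ is part of the definition of complex (mixed) Hodge modules, and the real parts of the eigenvalues reflect the real-indexed Hodge theory. Since the mixed part of that theory is not fully written down, in this case I would state the result as an expected property of the theory, just as was done for Proposition \ref{prop:hodge forgetful functor}.

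\textbf{Main obstacle.} The delicate step is matching Saito's normalisations of the Hodge filtration on $\psi_f$ and $\phi_f$ (the shifts by $1$ and the Tate twist) with the conventions used here. The twist must land on $\partial_t$ and not on $t$.
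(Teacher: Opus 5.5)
Your proposal is correct and follows essentially the same route as the paper. The paper treats the complex case as part of the definition, an expected property of the theory. For algebraic $\mb{K}$, it takes Saito's $\mb{Q}$-mixed Hodge modules $\psi_f$ and $\phi_f$ with their semisimple monodromy and extracts the $e^{-2\pi i\alpha}$-eigenspaces after extending scalars.

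Two small corrections. First, over $\mb{Q}$ itself only Galois-stable pieces such as $\psi_f$, $\psi_{f,1}$ and $\phi_{f,1}$ are objects, not the individual $\psi_{f,\lambda}$ as your step 3 says; your next paragraph repairs this. Second, the map you match with $\mathrm{can}$ is really $\partial_t \colon \gr_V^1\iota_+\mc{M}(1) \to \gr_V^0\iota_+\mc{M}$. The statement as printed has the source and target of $\partial_t$ swapped, so you should not accept the twist ``as stated''.

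Your explicit transport of the structure to every $\alpha \in A$ via the filtered isomorphisms $t$ and $\partial_t$ is sound, and fills in a step the paper leaves implicit.
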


Proposition \ref{prop:mhm strict specialisation} will be true by definition when $\mb{K} = \mb{C}$ (cf.\ \cite[Definition 14.2.2]{MHMproject} for the pure case). This is almost the case for $\mb{Q}$-mixed Hodge modules as well, except that one only has $\mb{Q}$-mixed Hodge modules $\psi_f\mc{M}$ and $\phi_f\mc{M}$ underlying
\[ \left(\bigoplus_{\alpha \in \mb{Q} \cap (0, 1]} \gr_V^\alpha\iota_+\mc{M}, F_\bullet\right) \quad \text{and} \quad \left(\bigoplus_{\alpha \in \mb{Q} \cap [0, 1)} \gr_V^\alpha\iota_+\mc{M}, F_\bullet\right)\]
respectively, both equipped with a semisimple monodromy operator acting by $\exp(-2\pi i \alpha)$ on $\gr_V^\alpha \iota_+\mc{M}$. After tensoring with a field $\mb{K}$ containing the roots of unity, one can then extract the eigenspaces to get the desired $\mb{K}$-mixed Hodge module structures on the $\gr_V^\alpha\iota_+\mc{M}$.

Since the underlying $\ms{D}$-modules are unchanged under field extensions, we deduce:

\begin{cor}\label{cor: MHM fully A specializable}
For $\mb{K} = \mb{C}$ (resp.\ any algebraic extension of $\mb{Q}$), the underlying $\ms{D}_X$-module $\mc{M}$ is fully $\mb{R}$ (resp.\ $\mb{Q}$)-specialisable.
\end{cor}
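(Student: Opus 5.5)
The statement to prove is Corollary \ref{cor: MHM fully A specializable}: the underlying $\ms{D}_X$-module of a mixed Hodge module is fully $A$-specialisable. The plan is to verify Definition \ref{defn: A specialisability} directly, by induction on $d = \dim \supp \mc{M}$. We use only the closure properties of $\mhm_\mb{K}$ and Proposition \ref{prop:mhm strict specialisation}.

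First we reduce to the simple case. Since $\mhm_\mb{K}(X)$ is abelian, and the forgetful functor to $\coh(\ms{D}_X)$ is faithful and exact (Proposition \ref{prop:hodge forgetful functor}), $\mc{M}$ has a finite filtration in $\mhm_\mb{K}(X)$ whose graded pieces are simple mixed Hodge modules. Full $A$-specialisability is closed under extensions (property \eqref{itm: morphism between E-specialisable}), so it suffices to treat the underlying $\ms{D}$-modules of these pieces.

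Here there is a subtlety: a simple mixed Hodge module need not have a simple underlying $\ms{D}$-module. To handle this, we strengthen the induction hypothesis to the statement that every composition factor, in $\coh(\ms{D}_X)$, of the $\ms{D}$-module underlying any object of $\mhm_\mb{K}(X)$ with support of dimension $\le d$ satisfies the condition in the definition. Let $\mc{N}$ be a simple $\ms{D}$-module subquotient of such an $\mc{M}$, with $\dim\supp\mc{N}\ge 1$. If $\dim \supp \mc{N} = 0$ there is nothing to check. Otherwise, take $U$ open and $f$ holomorphic on $U$ with $\supp\mc{N}\not\subset f^{-1}(0)$.

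Next we apply Proposition \ref{prop:mhm strict specialisation} to $\mc{M}|_U$ (restriction to an open set preserves mixed Hodge modules). It tells us that $V^\bullet\iota_+\mc{M}|_U$ is defined over $A$, and that each $\gr_V^\alpha\iota_+\mc{M}|_U$ underlies a mixed Hodge module. Since $\mc{N}$ is a subquotient of $\mc{M}$ and admits a $V$-filtration by Proposition \ref{prop:V image and preimage}, strictness (Corollary \ref{cor:V strictness}) shows two things. First, the $V$-filtration of $\iota_+\mc{N}|_U$ is the induced one, hence is defined over $A$. Second, $\gr_V^\alpha\iota_+\mc{N}|_U$ is a subquotient of $\gr_V^\alpha\iota_+\mc{M}|_U$.

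The remaining point is the support dimension of these graded pieces. We split $\mc{M}$ so that its part supported in $f^{-1}(0)$ is treated separately: that part contributes nothing to the composition factors not contained in $f^{-1}(0)$. Alternatively, we apply the argument directly to the subquotient mixed Hodge module containing $\mc{N}$ with support not in $f^{-1}(0)$. In either case the graded pieces $\gr_V^\alpha$ relevant to $\mc{N}$ are subquotients of mixed Hodge modules supported in $\supp\mc{N}\cap f^{-1}(0)$, which has dimension $< \dim\supp \mc{N}\le d$. By the strengthened induction hypothesis, their composition factors are fully $A$-specialisable, so $\gr_V^\alpha\iota_+\mc{N}|_U$ is as well.

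Finally, the coefficient extensions commute with the forgetful functors (Proposition \ref{prop:field extension}). This lets us pass from $\mb{K}$ algebraic over $\mb{Q}$ to a field containing the roots of unity, as Proposition \ref{prop:mhm strict specialisation} requires, without changing the underlying $\ms{D}$-module.

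The main obstacle is the support-dimension bookkeeping in the simple-but-not-$\ms{D}$-simple case. The plan is to bound supports by passing to the intermediate-extension mixed Hodge module supported on the closure of $\supp\mc{N}$.
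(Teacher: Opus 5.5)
Your proposal is correct and follows the paper's argument. The paper deduces the corollary in one line from Proposition \ref{prop:mhm strict specialisation}, together with the fact that extending coefficients (to a field containing the roots of unity) does not change the underlying $\ms{D}$-module; your induction on support dimension, with the composition-factor bookkeeping, just makes that deduction explicit. The support step you flag is settled by taking a simple subquotient $\mc{S}$ of $\mc{M}$ in $\mhm_{\mb{K}}$ that contains $\mc{N}$ as a $\ms{D}$-composition factor: $\mc{S}$ is pure with irreducible strict support $Z \not\subset f^{-1}(0)$, so $\gr_V^\alpha\iota_+\mc{S}|_U$ underlies a mixed Hodge module with support of dimension $< \dim Z \le d$. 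Discarding only subobjects and quotients of $\mc{M}$ supported in $f^{-1}(0)$ would not suffice, since composition factors supported there can sit in the middle of a composition series.
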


One of the main purposes of Proposition \ref{prop:mhm strict specialisation} is to pin down how Hodge filtrations are allowed to extend along open immersions. Suppose that $D \subset X$ is a divisor and write $j \colon U = X \setminus D \to X$ for the inclusion of the complement. Then we have a restriction functor
\[ j^* \colon \mhm_\mb{K}(X) \to \mhm_\mb{K}(U).\]
One defines the full subcategory $\mhm_\mb{K}(U)_X \subset \mhm_\mb{K}(U)$ of \emph{mixed Hodge modules extendable to $X$} to be the essential image of $j^*$. 

\begin{prop}
In the setting above, the functor
\[ j^* \colon \mhm_\mb{K}(X) \to \mhm_\mb{K}(U)_X \]
has fully faithful left and right adjoints
\[ j_!, j_* \colon \mhm_\mb{K}(U)_X \to \mhm_\mb{K}(X).\]
\end{prop}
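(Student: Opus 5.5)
The plan is to construct $j_*$ and $j_!$ as the standard extensions along the divisor $D$ and then check the adjunctions on the underlying $\ms{D}$-modules. Full faithfulness will then follow from $j^*j_* \cong \id \cong j^*j_!$.

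Since the question is local on $X$ and adjoints glue by uniqueness, I will first reduce to the case $D = g^{-1}(0)$ for a holomorphic function $g$. Given $\mc{N} \in \mhm_\mb{K}(U)_X$, choose $\mc{M} \in \mhm_\mb{K}(X)$ with $j^*\mc{M} \cong \mc{N}$. Define $j_*\mc{N}$ to be the mixed Hodge module whose underlying $\ms{D}$-module is $\mc{M}(*D)$. Its Hodge filtration is given by the classical formula
\[ F_p\mc{M}(*D) = \sum_{i\geq 0} F_{p-i}\ms{D}_X\cdot F_iV^{0}\iota_+\mc{M}(*D), \]
computed through the graph embedding of $g$ via the identification $\mc{M}(*D) = \ms{D}_X\otimes_{V^0\ms{D}_X}V^0\mc{M}(*D)$ (the case $r=1$ of Proposition \ref{prop:* formula}). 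Define $j_!\mc{N} := \mb{D}j_*\mb{D}\mc{N}$.

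Independence of the choice of $\mc{M}$ uses only the fact that $\mc{M}(*D)$ and its $V$-filtration in the positive range depend only on $\mc{M}|_U$. To show that these filtered modules underlie mixed Hodge modules, I will use Beilinson's gluing, as in \eqref{eq:multivariate V existence 1}. We have
\[ j_*\mc{N} = \ker\bigl(\Xi_g\mc{M} \to \psi_{g,1}\mc{M}(-1)\bigr), \qquad j_!\mc{N} = \coker\bigl(\psi_{g,1}\mc{M} \to \Xi_g\mc{M}\bigr). \]
Here $\psi_{g,1}$, $\phi_{g,1}$ and $\Xi_g$ lie in $\mhm_\mb{K}(X)$ by Proposition \ref{prop:mhm strict specialisation} together with the standard construction of $\Xi_g$ from $\frac{\mc{M}(*D)[s]}{(s^n)}$. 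Since $\mhm_\mb{K}(X)$ is abelian and the functor of Proposition \ref{prop:hodge forgetful functor} is exact and bi-strict, these kernels and cokernels carry the induced Hodge filtrations.

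For the adjunction, I will show $\mathrm{Hom}(\mc{P}, j_*\mc{N}) \cong \mathrm{Hom}(j^*\mc{P}, \mc{N})$ for every $\mc{P} \in \mhm_\mb{K}(X)$.
\begin{itemize}
\item On the level of $\ms{D}$-modules this is the usual adjunction for $(*D)$: any map $\mc{P}|_U \to \mc{N}$ extends uniquely to $\mc{P} \to \mc{P}(*D) \to \mc{M}(*D)$.
\item It then remains to check that this extension is compatible with the Hodge and weight filtrations and the $\mb{K}$-structure. For the Hodge filtration, compatibility follows because $\mc{P} \to \mc{P}(*D)$ is a morphism of mixed Hodge modules, and $\mc{P}(*D) \to \mc{M}(*D)$ is determined by its restriction to $V^0$, where it is a morphism of mixed Hodge modules $\gr_V$-wise, by strict specialisability.
\item The $!$ case is dual.
\end{itemize}

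The main obstacle is this extension step, namely showing that a morphism defined on $U$ extends to a morphism of mixed Hodge modules rather than merely of $\ms{D}$-modules. I would handle it using faithfulness (Proposition \ref{prop:hodge forgetful functor}) together with the rigidity of morphisms in the gluing description.
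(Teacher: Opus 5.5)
The gap is circularity: your argument never independently shows that $j_*\mc{N}$ and $j_!\mc{N}$ lie in $\mhm_\mb{K}(X)$. The Hodge filtration you write down is the correct answer: it is Proposition~\ref{prop:hodge extension}, once $\ms{D}_X$ is replaced by $\ms{D}_{X\times\mb{C}}$ and $F_\bullet$ by the localised $F^*_\bullet$. The problem is proving that this filtered module underlies a mixed Hodge module.

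For that step you use only that $\Xi_g\mc{M}\in\mhm_\mb{K}(X)$. But Proposition~\ref{prop:mhm strict specialisation} supplies only the graded pieces $\gr_V^\alpha\iota_+\mc{M}$, hence $\psi_{g,1}\mc{M}$ and $\phi_{g,1}\mc{M}$. The ``standard construction'' of $\Xi_g$ from $\mc{M}(*D)[s]/(s^n)$ (Beilinson's, or the one recalled in the proof of Lemma~\ref{lemma: nearby cycle E specialisable}) takes the $(!D)$- and $(*D)$-extensions of the mixed Hodge modules $j^*\mc{M}\otimes g^*\mc{E}_n$ on $U$. That is, it applies exactly the functors $j_!$, $j_*$ you are trying to construct; even $\mc{M}(*D)[s]/(s^n)$ is a priori a mixed Hodge module only on $U$.

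To break the circle you would have to verify Saito's inductive conditions directly for the explicit object. That means strict specialisability of $(\mc{M}(*D),F_\bullet)$ along \emph{every} local function $h$, not just $g$. It also means producing a weight filtration with the required relative monodromy properties. You never specify one, and it is not the localisation of $W_\bullet\mc{M}$: $j_*$ of the constant module on $\mb{C}^\times$ acquires a weight-$2$ quotient $\delta_0$. Finally it means extending the $\mb{K}$-structure (the perverse sheaf $\mrm{R}j_*$, or its replacement in the complex theory). That verification is the entire content of the statement. The paper does not derive it from the preceding recollections; it takes it as part of the foundations of the theory (``part of the definition'', i.e.\ Saito's construction over $\mb{Q}$ and a desideratum for complex mixed Hodge modules). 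Proposition~\ref{prop:hodge extension} is recorded afterwards only as a consequence.

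Separately, your gluing formulas are reversed. Beilinson's exact sequences are $0\to j_!\mc{N}\to\Xi_g\mc{M}\to\psi_{g,1}\mc{M}(-1)\to 0$ and $0\to\psi_{g,1}\mc{M}\to\Xi_g\mc{M}\to j_*\mc{N}\to 0$. Here $\psi_{g,1}\mc{M}$ is the largest subobject of $\Xi_g\mc{M}$ supported on $D$, and $\psi_{g,1}\mc{M}(-1)$ is the largest such quotient. So $\ker(\Xi_g\mc{M}\to\psi_{g,1}\mc{M}(-1))$ has underlying $\ms{D}$-module $\mc{M}(!D)$, not $\mc{M}(*D)$. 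With your assignment the right adjunction already fails for $\mc{N}=\mc{O}_{\mb{C}^\times}$ and $\mc{P}=\mc{O}_{\mb{C}}$: $\mrm{Hom}(\mc{P},\text{your }j_*\mc{N})=0$, while $\mrm{Hom}(j^*\mc{P},\mc{N})=\mb{K}$.

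Finally, the adjunction step is incomplete even granting the objects. Faithfulness (Proposition~\ref{prop:hodge forgetful functor}) gives only uniqueness of the extended morphism, not that it is a morphism in $\mhm_\mb{K}(X)$. Compatibility with $F_\bullet$ is the easy part: $F^*_\bullet$ is determined on $U$ by coherence, and $F_\bullet j_*\mc{N}$ is generated by $F^*_\bullet V^0$. You say nothing about $W_\bullet$ or the $\mb{K}$-structure, however. Your appeal to ``rigidity of morphisms in the gluing description'' again presupposes a gluing formalism for mixed Hodge modules that is itself built from $j_!$ and $j_*$.
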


This is again part of the definition. Note that one also has left and right adjoints as above for \emph{regular} holonomic $\ms{D}$-modules, constructed ultimately using Deligne's theory of canonical extensions for connections with regular singularities, and the functors for mixed Hodge modules are compatible with these by definition.

For our purposes, we will need a somewhat explicit description of the behaviour of $j_!$ and $j_*$ on the underlying Hodge-filtered $\ms{D}$-modules. In order to give the formula, observe that there is a functor
\begin{align*}
\mhm(U)_X &\to \mrm{MFW}(\ms{D}_X(*D)) \\
\mc{M} &\mapsto (\mc{M}(*D), F_\bullet^*\mc{M}(*D), W_\bullet^*\mc{M}(*D)),
\end{align*}
where $\mrm{MFW}(\ms{D}_X(*D))$ is the category of $\ms{D}_X(*D)$-modules with good filtration $F_\bullet^*$ relative to $F_\bullet\ms{D}_X(*D) \colonequals (F_\bullet\ms{D}_X)(*D)$ and filtration $W_\bullet^*$ by $\ms{D}_X(*D)$-submodules, given by the formula
\[ (\mc{M}(*D), F_\bullet^*\mc{M}(*D), W_\bullet^*\mc{M}(*D)) \colonequals (\mc{M}'(*D), (F_\bullet\mc{M}')(*D), (W_\bullet\mc{M}')(*D))\]
for any $\mc{M}' \in \mhm(X)$ such that $j^*\mc{M}' = \mc{M}$. This is well-defined since, for any two such extensions $\mc{M}'$ and $\mc{M}''$, the kernel and cokernel of the tautological maps $\mc{M}' \to j_*\mc{M}$ and $\mc{M}'' \to j_*\mc{M}$ will be supported in $D$ and hence they and all subsheaves become zero after localising at $D$. Note also that $\mc{M}(*D) = j_*\mc{M}$ as $\ms{D}_X$-modules, but that the Hodge and weight filtrations are different.

Now, let us suppose for simplicity that the divisor $D$ is smooth. Then for $\mc{M} \in \mhm(U)_X$, we have a $V$-filtration $V^\bullet \mc{M}(*D)$ of $\mc{M}(*D)$ along $D$. The following formulas now follow from Proposition \ref{prop:mhm strict specialisation} (see, e.g.\ \cite[Propositions 11.3.3 and 11.4.2]{MHMproject}).

\begin{prop} \label{prop:hodge extension}
We have
\begin{equation}\label{eqn: filtration on *extension} \left(j_*\mc{M},F_{\bullet}\right)=(\sD_X,F_{\bullet})\otimes_{(V^0\sD_X,F_{\bullet})}(V^0\mc{M}(\ast D),F^*_{\bullet}),\end{equation}
and
\begin{equation}\left(j_!\mc{M},F_{\bullet}\right)=(\sD_X,F_{\bullet})\otimes_{(V^0\sD_X,F_{\bullet})}(V^{>0}\mc{M}(\ast D),F^*_{\bullet}),\end{equation}
where $F_\bullet^* V^\alpha\mc{M}(*D) \colonequals V^\alpha \mc{M}(*D) \cap F_\bullet^*\mc{M}(*D)$.
\end{prop}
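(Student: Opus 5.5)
The plan is to reduce both formulas to the case of the maximal/Deligne-type extension and then use strict specialisability (Proposition \ref{prop:mhm strict specialisation}) to control the Hodge filtration on each $V$-graded piece. Working locally, I fix an extension $\mc{M}' \in \mhm(X)$ of $\mc{M}$ and a local equation $t$ for $D$. I regard $j_*\mc{M}$ and $j_!\mc{M}$ as mixed Hodge modules on $X$; their underlying $\ms{D}_X$-modules are $\mc{M}(*D)$ and $\mc{M}(!D)$. Since $D$ is smooth, the graph embedding along $t$ is essentially trivial, so Definition \ref{defn: strict A-specializability} can be applied directly to $V^\bullet$ along $D$ on these modules. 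The first key step is to show that $F_\bullet j_*\mc{M}$ and $F^*_\bullet\mc{M}(*D)$ agree on $V^{0}$ (respectively on $V^{>0}$ for $j_!$). Both restrict to $j^*$ of the same Hodge filtration. The kernel and cokernel of comparison maps such as $\mc{M}'\to j_*\mc{M}$ are supported on $D$, so they have $V^{>0}=0$ (by Proposition \ref{prop:weak specialisation} applied to modules supported on $D$). By strictness of morphisms of mixed Hodge modules with respect to $F$ (Proposition \ref{prop:hodge forgetful functor}) together with strictness for $V$ (Corollary \ref{cor:V strictness}), bi-strictness then gives $F_\bullet V^{>0}j_*\mc{M} = F^*_\bullet V^{>0}\mc{M}(*D)$. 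For $j_*$ one also needs the piece $\gr_V^0$. There $t\colon \gr^0_V\to\gr^1_V$ is an isomorphism of $\ms{D}$-modules by Remark \ref{rmk:V versus V*}, and by Proposition \ref{prop:mhm strict specialisation} it underlies a morphism of mixed Hodge modules, hence is $F$-strict. This pins $F_\bullet \gr_V^0$ to $t^{-1}F_\bullet\gr_V^1$, which matches $F^*$ because $t$ is invertible on $F^*_\bullet\mc{M}(*D)$.

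The second step is to propagate from $V^0$ (resp.\ $V^{>0}$) to the whole module. The map from the tensor product in \eqref{eqn: filtration on *extension} to $(j_*\mc{M},F_\bullet)$ is an isomorphism of underlying $\ms{D}$-modules by Proposition \ref{prop:* formula} with $r=1$. Showing it is a filtered isomorphism amounts to the equality
\[F_pj_*\mc{M}=\sum_k \partial_t^k F_{p-k}V^0j_*\mc{M}.\]
I would prove this by descending along $V^{-n}$. Condition \eqref{itm: dt strict} of strict specialisability gives $F_{p}\gr_V^{\alpha-1}=\partial_t F_{p-1}\gr_V^{\alpha}$ for $\alpha<1$, so by induction on $n$ each $F_pV^{-n}$ is generated by $\partial_t$-images of $F V^0$ modulo $V^{>-n}$. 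Exhaustiveness of $V$, together with the finiteness given by goodness of the Hodge filtration on coherent pieces, then finishes the argument. The $j_!$ case is identical, except that one starts from $V^{>0}$ and uses \eqref{itm: dt strict} all the way down to $\alpha$ near $0$, including $\partial_t\colon\gr^1_V\to\gr^0_V$.

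The main obstacle is injectivity at the filtered level, i.e.\ that the filtered tensor product has no extra $v$-torsion in its Rees module. I would handle this by computing the tensor product $R_F\ms{D}_X\otimes_{R_FV^0\ms{D}_X}R_FV^0$ using the basis $\{\partial_t^k\}$ of $R_F\ms{D}_X$ as a right $R_FV^0\ms{D}_X$-module modulo the relation from $t\partial_t$. Filtered injectivity should then reduce, via $\gr_V$, to injectivity of $\partial_t$ on $F_\bullet\gr_V^\alpha$ for $\alpha<0$ (resp.\ $\alpha\le 0$), where it is an $F$-strict isomorphism by Proposition \ref{prop:strong specialisation} combined with \eqref{itm: dt strict}.
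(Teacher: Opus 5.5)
Your overall route (match $F_\bullet$ with $F^*_\bullet$ on $V^0$, resp.\ $V^{>0}$, then generate downward with $\partial_t$ via strict specialisability) is the standard argument that the paper simply cites from Proposition~\ref{prop:mhm strict specialisation} and the MHM project. Your $\gr^0_V$ step for $j_*$ and your $\partial_t$-descent below $V^0$ are fine. Two steps, however, do not go through as written.

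\emph{First gap: the step on $V^{>0}$.} The equality $F_\bullet V^{>0}j_*\mc{M}=F^*_\bullet V^{>0}\mc{M}(*D)$ does not follow from comparison maps and bi-strictness. Bi-strictness of $\mc{M}'\to j_*\mc{M}$, which is an isomorphism on $V^{>0}$, only shows that $F_\bullet V^{>0}$ is the same for every extension $\mc{M}'$. But $F^*_p=\bigcup_k t^{-k}F_p\mc{M}'$ is a $t$-saturation. What you must show is that $m\in V^\alpha$ with $\alpha>0$ and $t^km\in F_p$ force $m\in F_p$, and no morphism between extensions can produce $t^{-1}$. The missing input is condition~(2) of Definition~\ref{defn: strict A-specializability}, namely $t(F_pV^\alpha)=F_pV^{\alpha+1}$ for $\alpha>0$, together with injectivity of $t$ on $V^{>0}$ (Proposition~\ref{prop:weak specialisation}). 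The same condition is needed again in your $\gr^0_V$ step, to write an element of $F_pV^{>1}$ as $t$ applied to an element of $F_pV^{>0}$ (compare the diagram in the proof of Lemma~\ref{lem:hodge * formula}).

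\emph{Second gap: the $j_!$ case is not identical.} Condition~(3) only concerns $\partial_t\colon\gr^\alpha_V\to\gr^{\alpha-1}_V$ for $\alpha<1$. Proposition~\ref{prop:strong specialisation} only gives bijectivity of $\partial_t\colon V^n/V^{n+1}\to V^{n-1}/V^n$ for $n\le 0$. Neither says anything about $\mathrm{can}=\partial_t\colon\gr^1_V\to\gr^0_V$, which in general is not surjective: for $j_*\mc{M}$ it is surjective only when $\gr^1_Vj_*\mc{M}=0$, since $t\partial_t$ is nilpotent there.

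Yet $\mathrm{can}$ is exactly the map you need at $\alpha=1$, in three places:
\begin{itemize}
\item to generate $F_\bullet\gr^0_Vj_!\mc{M}$ from $F_\bullet V^{>0}$;
\item in your torsion-freeness argument;
\item already for the underlying isomorphism $\ms{D}_X\otimes_{V^0\ms{D}_X}V^{>0}\mc{M}(*D)\cong j_!\mc{M}$, which Proposition~\ref{prop:* formula} does not cover.
\end{itemize}
The missing idea is the property that distinguishes $j_!$. The map $\mathrm{can}\colon\gr^1_Vj_!\mc{M}\to\gr^0_Vj_!\mc{M}$ is an isomorphism of $\ms{D}$-modules. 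One way to see this: $i^*j_!\mc{M}=0$ for $i\colon D\hookrightarrow X$, and $i^*$ is computed by $\mathrm{can}$. Dually, $\mathrm{var}=t$ is an isomorphism for $j_*\mb{D}\mc{M}$. Since $\mathrm{can}$ underlies a morphism of mixed Hodge modules by Proposition~\ref{prop:mhm strict specialisation}, it is then a filtered isomorphism. With these two inputs added, your argument goes through.
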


\begin{rmk} \label{rmk:hodge lower bound}
If $F_p\mc{M} = 0$, then for any extension $\mc{M}'$ of $\mc{M}$ to $X$, $F_p\mc{M}'$ is supported in $D$, so $F_p^*\mc{M}(*D) = (F_p\mc{M}')(*D) = 0$. Since $F_\bullet \ms{D}_X$ is zero in negative degrees, Proposition \ref{prop:hodge extension} implies that $F_pj_!\mc{M} = F_pj_*\mc{M} = 0$ also.
\end{rmk}

We have so far discussed a collection of nice properties satisfied by mixed Hodge modules and operations to construct new mixed Hodge modules out of old ones. In order for this to be a useful theory, however, one needs a source of examples as a starting point.

\begin{thm}[{\cite[Theorem 3.27]{Saito90}}] \label{thm:vmhs is mhm}
Let $X$ be a complex manifold and let $U \subset X$ be a Zariski open dense subset. If $\mb{V} = (\mb{V}_\mb{Q}, \mc{V}, F^\bullet \mc{V}, W_\bullet\mc{V})$ is an admissible graded-polarisable variation of $\mb{Q}$-mixed Hodge structure on $U$ relative to $X$, then there is an associated object in $\mhm_\mb{Q}(U)_X$ with underlying bi-filtered $\ms{D}_X$-module
\[ (\mc{V}, F^{-\bullet}\mc{V}, W_{\bullet - \dim X}\mc{V}).\]
\end{thm}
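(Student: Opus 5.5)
This is Saito's theorem, quoted from \cite[Theorem 3.27]{Saito90}; within the paper it would be invoked rather than reproved. If I were to sketch a proof, I would follow Saito's strategy and construct the object locally on $X$. I would use resolution of singularities to reduce to the case where $D = X\setminus U$ is a divisor with simple normal crossings. Then I would take the extension $j_*$ explicitly, in the form of the Deligne canonical extension with its filtrations, and verify the inductive conditions in the definition of mixed Hodge modules.

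\textbf{Step 1 (reduction).} Working locally on $X$, choose a projective log resolution $\pi\colon \tilde X\to X$ that is an isomorphism over $U$ and such that $\pi^{-1}(X\setminus U)$ is a simple normal crossings divisor. Admissibility of $\mb{V}$ is preserved under pullback to $\tilde X$. If I construct the extension $\tilde{\mc{M}}=\tilde j_*\mb{V}$ as a mixed Hodge module on $\tilde X$, then $\mc{H}^0\pi_+\tilde{\mc{M}}$ is an object of $\mhm_\mb{Q}(X)$ restricting to $\mb{V}$ on $U$, which puts $\mb{V}$ in $\mhm_\mb{Q}(U)_X$. The stated bifiltered module $(\mc{V},F^{-\bullet},W_{\bullet-\dim X})$ is the underlying datum on $U$, after the standard reindexing for left $\ms{D}$-modules.

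\textbf{Step 2 (normal crossings case).} For $D=\{t_1\cdots t_r=0\}$, I would take the Deligne extensions $V^{\bs{\alpha}}$ with residue eigenvalues of real part in $[\alpha_i,\alpha_i+1)$; these are exactly the lattices used in Proposition \ref{prop:normal crossings V-filtration}. Admissibility gives two inputs. First, $F^p$ extends to the canonical extensions as subbundles, via Schmid's nilpotent orbit theorem and Kashiwara's several-variable extension. Second, the relative monodromy weight filtrations exist. With these, I would define $F_p\mc{V}(*D)=\sum_i F_i\ms{D}_X\cdot F_{p-i}V^{>-1}$, where the lattice $V^{>-1}$ carries the extended Hodge filtration. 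I would define the weight filtration on $\mc{V}(*D)$ via the relative monodromy filtrations.

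Then I would check the conditions of Definition \ref{defn: strict A-specializability} along each function $g$. By the inductive nature of the definition, it suffices to do this along the coordinate functions and monomials. The nearby cycles $\gr_V^\alpha$ of $\mc{V}(*D)$ are computed as limit mixed Hodge structures restricted to strata, and by induction on $\dim X$ these are again admissible variations on the strata.

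\textbf{Main obstacle.} The hardest step is showing that the multivariable limiting objects, namely the nilpotent orbit data $(\psi_{t_1}\cdots\psi_{t_k}\mb{V}, N_1,\dots,N_k)$, form mixed Hodge structures with compatible relative weight filtrations. This is what makes the inductive definition close up. It requires Cattani--Kaplan--Schmid's several-variable $SL_2$-orbit theorem in the pure case, and Kashiwara's results on admissibility in the mixed case. Once that is in place, strictness of $t$ and $\partial_t$ on the filtered $V$-graded pieces follows from the local formulas for the canonical extensions.
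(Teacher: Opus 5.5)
You are right that the paper gives no proof here: it simply quotes \cite[Theorem 3.27]{Saito90} and uses it, for instance to lift the nilpotent orbits of Example \ref{eg:nilpotent orbit} to mixed Hodge modules. Your outline is the standard route to Saito's theorem: reduce by resolution to the normal crossings case, use Kashiwara's admissibility results and Cattani--Kaplan--Schmid there, then take the projective direct image.

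There is one concrete slip in the sketch. With your indexing of Deligne lattices (residues in $[\alpha_i,\alpha_i+1)$), $V^{>-1}$ is the lattice with residues in $(-1,0]$. The $\ms{D}_X$-submodule of $\mc{V}(*D)$ it generates is only the image of $j_!\mb{V}\to j_*\mb{V}$; for the trivial variation it is $\mc{O}_X$, not $\mc{O}_X(*D)$. For $j_*$ you need the lattice $V^{\geq -1}$, with residues in $[-1,0)$, giving $F_p j_*\mc{V}=\sum_i F_i\ms{D}_X\cdot F_{p-i}V^{\geq -1}$. This lattice is the paper's $V^{\bs{0}}\mc{M}(*D)$ in Propositions \ref{prop:hodge extension} and \ref{prop:hodge !* formula}. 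Your $V^{>-1}$ corresponds instead to the paper's $V^{\epsilon\bs{1}}\mc{M}(*D)$, which is the lattice governing $j_!$ (Remark \ref{remark: !-extension multivariate}).
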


One expects an analogous theorem for complex mixed Hodge modules and variations of complex mixed Hodge structure, but we will not need such a strong statement.

For simplicity, let us recall the definition of admissible variations of $\mb{Q}$-mixed Hodge structure only in the special case where $X$ is a curve and the underlying local system has unipotent local monodromy.

\begin{definition} \label{defn:admissible vmhs}
For $X$ a curve, we say \cite[(3.4), (3.5), (3.13)]{SZ85} (cf.\ also \cite[(3.1.5),(3.1.6)]{Saito17}) that $\mb{V}$ as in Theorem \ref{thm:vmhs is mhm} is an \emph{admissible graded-polarisable unipotent variation of $\mb{Q}$-mixed Hodge structure} if
\begin{enumerate}
\item $\mb{V}_\mb{Q}$ is a $\mb{Q}$-local system on $U$ with unipotent monodromy around points in $X \setminus U$,
\item $\mc{V}$ is a flat holomorphic connection on $U$ with an identification $\mrm{DR}(\mc{V}) \cong \mb{V}_\mb{Q} \otimes_{\mb{Q}} \mb{C}[1]$,
\item $F^\bullet \mc{V}$ is a finite decreasing filtration by holomorphic sub-bundles satisfying Griffiths transversality,
\item $W_\bullet \mc{V}$ is a finite increasing filtration by flat sub-bundles, arising from a corresponding filtration on $\mb{V}_\mb{Q}$,
\item for each $w \in \mb{Z}$, $(\Gr^W_w\mb{V}_\mb{Q}, \Gr^W_w\mc{V}, F^\bullet\Gr^W_w\mc{V} \colonequals (F^\bullet \mc{V} \cap W_w\mc{V}/F^\bullet \mc{V} \cap W_{w - 1}\mc{V}))$ is a polarisable variation of Hodge structure of weight $w$,
\item $\gr_F^p\gr^W_w \tilde{\mc{V}}$ is a free $\mc{O}_X$-module for any $p,w\in \mathbb{Z}$, where $\tilde{\mc{V}}$ is the Deligne canonical extension of $\mc{V}$ and $W_\bullet \tilde{\mc{V}}$ and $F^\bullet \tilde{\mc{V}}$ are the intersections of $\tilde{\mc{V}}$ with the direct images of the weight and Hodge filtrations on $\mc{V}$, and
\item for each $x \in X \setminus U$, there exists a relative monodromy filtration on $(\tilde{\mc{V}}_x,W_{\bullet}\tilde{\mc{V}}_x)$ for the logarithm of the monodromy operator.
\end{enumerate}
\end{definition}

The following basic example will play a big role in what follows.

\begin{example} \label{eg:nilpotent orbit}
Let $n > 0$. Then the flat connection $\mc{V} = \mc{O}_{\mb{C}^\times}[s]t^s/(s^n)$ on $U = \mb{C}^\times$ underlies an admissible graded-polarisable unipotent variation of $\mb{Q}$-mixed Hodge structure relative to $X = \mb{C}$, with Hodge and weight filtrations
\[ F^p \left(\frac{\mc{O}_{\mb{C}^\times}[s]t^s}{(s^n)}\right) = \sum_{j \leq -p}\frac{\mc{O}_{\mb{C}^{\times}}s^jt^s}{(s^n)}  \quad \text{and} \quad W_w\left(\frac{\mc{O}_{\mb{C}^{\times}}[s]t^s}{(s^n)}\right) = \sum_{2k \geq -w}\frac{\mc{O}_{\mb{C}^{\times}}s^kt^s}{(s^n)}.\]
To see this, observe that the monodromy around $0$ is the unipotent operator $\exp(-2\pi i s)$, which indeed preserves the weight filtration. We get a flat $\mb{Q}$-structure by declaring the $\mb{Q}$-structure on the fibre at $1$ to be
\[ \mb{V}_{\mb{Q}, 1} = \sum_j (2\pi i)^j\mb{Q}s^j t^s.\]
Griffiths transversality of the Hodge filtration is immediate from the definition, and the associated graded $\Gr^W_w$ is the trivial variation of Hodge structure $\mb{Q}(j)$ if $w = -2j$ for $0 \leq j < n$ and $0$ otherwise, which is indeed polarisable. To check admissibility (the last two conditions in Definition \ref{defn:admissible vmhs}), observe that the Deligne canonical extension of $\mc{V}$ is $\tilde{\mc{V}} = \mc{O}_{\mb{C}}[s]t^s/(s^n)$, so
\[ \gr_F^p\gr^W_w \tilde{\mc{V}} = \begin{cases} \mc{O}_{\mb{C}}s^j t^s, & \text{if $w = -2j$, $p = -j$, $0 \leq j < n$}, \\ 0, &\text{otherwise,}\end{cases}\]
which is indeed a vector bundle. Finally, the logarithm of the monodromy operator $\mrm{N} = -s$ satisfies $\mrm{N}W_w\tilde{\mc{V}}_0 \subset W_{w - 2}\tilde{\mc{V}}_0$, so the relative monodromy weight filtration exists and is equal to $W_\bullet$ by, e.g.\ \cite[Proposition 2.14]{SZ85} (this is also immediate from the definition).
\end{example}

We will also need some examples of mixed Hodge modules with non-unipotent monodromy.

\begin{prop} \label{prop:basic local systems}
If either $\alpha \in \mb{Q}$ and $\mb{K}$ is an algebraic extension of $\mb{Q}$ containing $e^{-2\pi i \alpha}$, or $\alpha \in \mb{R}$ and $\mb{K} = \mb{C}$, then there exists a $\mb{K}$-mixed Hodge module on $\mb{C}^\times$, extendable to $\mb{C}$, whose underlying $\ms{D}_{\mb{C}^\times}$-module $\mc{O}_{\mb{C}^\times} t^\alpha$ with Hodge and weight filtrations
\[ W_w \mc{O}_{\mb{C}^\times}t^\alpha = \begin{cases} \mc{O}_{\mb{C}^\times}t^\alpha, & \text{if $w \geq 1$}, \\ 0, & \text{otherwise,} \end{cases} \quad \text{and} \quad F_p \mc{O}_{\mb{C}^\times}t^\alpha = \begin{cases} \mc{O}_{\mb{C}^\times}t^\alpha, & \text{if $p \geq 0$}, \\ 0, & \text{otherwise.} \end{cases}\] 
\end{prop}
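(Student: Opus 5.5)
We need to prove existence of a mixed Hodge module on $\mb{C}^\times$ with underlying $\mc{O}_{\mb{C}^\times}t^\alpha$, Hodge filtration concentrated in degree $0$, weight $1$, extendable to $\mb{C}$.

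The plan is to build the object directly as a rank one admissible variation and then invoke the general machinery. We treat the two cases separately.

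\textbf{Case $\alpha\in\mb{Q}$, $\mb{K}$ algebraic.} Write $\alpha=a/m$ and let $\pi\colon \mb{C}^\times\to\mb{C}^\times$, $u\mapsto u^m=t$, a finite étale Galois cover with group $\mu_m$. The constant variation $\mb{Q}_{\mb{C}^\times}$, with trivial Hodge structure of type $(0,0)$, is admissible relative to $\mb{C}$ by Theorem \ref{thm:vmhs is mhm}. Its direct image $\pi_*\mb{Q}$ is again a polarisable variation of pure Hodge structure of weight $0$ on $\mb{C}^\times$, with quasi-unipotent (finite order) monodromy. It therefore underlies an object of $\mhm_\mb{Q}(\mb{C}^\times)_{\mb{C}}$, either because it is $\mc{H}^0$ of the direct image of the extendable module $\mb{Q}^H$ under the finite (hence proper after compactifying to $\mb{C}\to\mb{C}$) map, or because of Schmid/Saito admissibility for pure polarisable variations on curves. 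After extending scalars to $\mb{K}\ni e^{-2\pi i\alpha}$ (Proposition \ref{prop:field extension}), the deck group $\mu_m$ acts by automorphisms of this $\mb{K}$-mixed Hodge module. We take the direct summand (image of the idempotent) on which a generator acts by the appropriate character. Its underlying $\ms{D}$-module is $\mc{O}_{\mb{C}^\times}u^{a}=\mc{O}_{\mb{C}^\times}t^{\alpha}$. The Hodge filtration is inherited from $F_0=$ everything, $F_{-1}=0$, since the trivial variation has $F^0=\mc{V}$, $F^1=0$, which is shifted as in Theorem \ref{thm:vmhs is mhm}. The weight filtration is likewise $W_{\dim X}=W_1$. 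Being a summand of an extendable object, the summand is itself extendable, because $\mhm(U)_X$ is closed under summands: take the image of the idempotent applied to $j_*$.

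\textbf{Case $\mb{K}=\mb{C}$, $\alpha\in\mb{R}$.} Consider the rank one complex local system with monodromy $e^{2\pi i\alpha}$, which is unitary since $|e^{2\pi i\alpha}|=1$. A unitary rank one local system carries a polarisable complex variation of Hodge structure of type $(0,0)$, namely the flat unitary metric with $F^0=\mc{V}$ and $F^1=0$. Extendability to $\mb{C}$ is the complex analogue of Theorem \ref{thm:vmhs is mhm}. For pure polarisable complex variations on a punctured disc, this extension theorem is part of the complex theory \cite{MHMproject} (the pure case of Schmid's nilpotent orbit theorem, which is trivial in rank one). The only point to verify is that the Hodge filtration on the Deligne extensions is by subbundles, which is automatic in rank one. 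Reading off the filtrations gives $F_p$ and $W_w$ as stated.

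The main obstacle is the $\mb{C}$ case: one must make sure the rank one unitary variation actually lies in $\mhm_\mb{C}(\mb{C}^\times)_{\mb{C}}$ using only properties of the complex theory. I would justify this by the pure case of the extension theorem in \cite{MHMproject} (variations of pure Hodge structure with quasi-unipotent, or in the complex theory arbitrary unitary, monodromy on curves), and otherwise flag it as an expected property, as the paper does elsewhere.
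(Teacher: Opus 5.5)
Your proposal is correct and follows the paper's own justification. In the rational case the paper likewise extracts the $e^{-2\pi i\alpha}$-eigenspace of monodromy, equivalently of the deck-group action, inside the direct image of the constant mixed Hodge module under a $d$-fold cover $\mb{C}^\times\to\mb{C}^\times$. In the complex case it also appeals to unitarity of $\mc{O}_{\mb{C}^\times}t^\alpha$ and \cite[Theorem 16.2.1]{MHMproject}, treating the rest as a desideratum of the complex theory.

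One minor correction: the flat sections of $\mc{O}_{\mb{C}^\times}t^\alpha$ are $c\,t^{-\alpha}$, so its local system has monodromy $e^{-2\pi i\alpha}$, not $e^{2\pi i\alpha}$. This is why $e^{-2\pi i\alpha}$ is the number required to lie in $\mb{K}$.
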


In the first case, one can construct such a mixed Hodge module by extracting the $e^{-2\pi i \alpha}$-eigenspace of monodromy inside the direct image of the constant mixed Hodge module under the $d$-fold covering $\mb{C}^\times \to \mb{C}^\times$, where $d$ is chosen so that $d \alpha \in \mb{Z}$. In the second case, we take this to be a desideratum of the theory. Note that the connection $\mc{O}_{\mb{C}^\times}t^\alpha$ is unitary so underlies a trivial polarisable variation of complex Hodge structure, and hence a polarisable complex Hodge module \cite[Theorem 16.2.1]{MHMproject}.

Finally, we will also need two more mixed Hodge module constructions: pullbacks and tensor products. If $f \colon X \to Y$ is a smooth morphism of relative dimension $d$, then there is a pullback functor
\[ f^\circ = f^*[d] = f^![-d](-d) \colon \mhm_\mb{K}(Y) \to \mhm_\mb{K}(X) \]
given on the underlying bifiltered $\ms{D}$-modules by
\[ f^\circ \mc{M} = f^*\mc{M}, \quad W_wf^\circ \mc{M} = f^*W_{w - \dim X + \dim Y}\mc{M}, \quad F_pf^\circ \mc{M} = f^*F_p\mc{M},\]
where on the right hand sides of the equations $f^*$ means the pullback of sheaves of $\mc{O}$-modules. This is true by construction \cite[\S 2.17]{Saito90}. If, on the other hand, $f \colon X \to Y$ is instead a closed immersion, then we have cohomological functors
\[ \mc{H}^i f^*, \mc{H}^i f^! \colon \mhm_\mb{K}(Y) \to \mhm_\mb{K}(X),\]
lifting the analogous functors for regular holonomic $\ms{D}$-modules, which can be written down using Kashiwara's equivalence and either the functors $j_!j^*$, $j_*j^*$ for the inclusion $j$ of the complement of a divisor \cite[Proposition 2.19]{Saito90}, or nearby and vanishing cycles \cite[Corollary 2.24]{Saito90}. (As for direct images, these cohomological functors can be upgraded to well-defined functors at the derived category level in the algebraic case \cite[\S4.4]{Saito90}.)

We also have an external tensor product functor
\[ \boxtimes \colon \mhm_\mb{K}(X) \times \mhm_\mb{K}(Y) \to \mhm_{\mb{K}}(X \times Y) \]
lifting the usual external tensor product for $\ms{D}$-modules and given on weight and Hodge filtrations by
\[ W_w(\mc{M} \boxtimes \mc{N}) = \sum_{w_1 + w_2 \leq w} W_{w_1}\mc{M} \boxtimes W_{w_2}\mc{N}, \quad F_p(\mc{M} \boxtimes \mc{N}) = \sum_{p_1 + p_2 \leq p} F_{p_1} \mc{M} \boxtimes F_{p_2}\mc{N}.\]
The existence of the external tensor product is a non-trivial theorem in the case $\mb{K} = \mb{Q}$ (and hence the case $\mb{K}$ algebraic over $\mb{Q}$) \cite[Theorem 3.28]{Saito88}; we again take this as a desideratum of the complex theory. If $\mc{M}, \mc{N} \in \mhm_{\mb{K}}(X)$, their tensor product is now defined by
\[ \mc{M} \otimes \mc{N} = \mc{H}^{-\dim X} \Delta^*(\mc{M} \boxtimes \mc{N}) \in \mhm_\mb{K}(X).\]
Since the definition involves a non-smooth pullback, the behaviour of the Hodge filtration under $\otimes$ is somewhat complicated in general. In the case where the $\ms{D}$-module underlying $\mc{N}$ is a connection (i.e.\ $\mc{N}$ is a variation of mixed Hodge structure), however, the obvious formula holds:

\begin{lemma}\label{lem: tensor product formula}
If $\mc{M}, \mc{N} \in \mhm_\mb{K}(X)$ and $\cN$ is a flat connection, then 
\[ F_p(\cM\otimes \cN)=\sum_{q+r\leq p} F_q\cM\otimes F_r\cN, \quad \text{for all $p\in \Z$}.\]
\end{lemma}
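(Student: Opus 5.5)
The plan is to reduce to a computation on the external tensor product followed by a smooth-type restriction. By definition $\mc{M}\otimes\mc{N} = \mc{H}^{-\dim X}\Delta^*(\mc{M}\boxtimes\mc{N})$, and the Hodge filtration on $\mc{M}\boxtimes\mc{N}$ is the convolution $\sum_{p_1+p_2\leq p}F_{p_1}\mc{M}\boxtimes F_{p_2}\mc{N}$. The point is that when $\mc{N}$ is a flat connection, the diagonal $\Delta(X)\subset X\times X$ is non-characteristic for $\mc{M}\boxtimes\mc{N}$. So the restriction $\mc{H}^{-\dim X}\Delta^*$ is simply the naive $\mc{O}$-module restriction $\Delta^*(\mc{M}\boxtimes\mc{N}) = \mc{M}\otimes_{\mc{O}_X}\mc{N}$. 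The claim is that the Hodge filtration is then the naive restriction of the filtration.

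Rather than invoke a general non-characteristic restriction theorem for mixed Hodge modules, which is not recalled in the paper, I would work locally and change coordinates so that the diagonal becomes the zero section of a projection. Since $\mc{N}$ is a flat connection, locally $\mc{N}\cong\mc{O}_X^{\oplus n}$ as a $\ms{D}$-module, with $F_\bullet\mc{N}$ a filtration by subbundles. I would use the automorphism $\phi(x,y)=(x,y-x)$ of $X\times X$ (locally $X\subset\mb{C}^d$) to identify $\mc{M}\boxtimes\mc{N}$ with a module on $X\times\mb{C}^d$. Its filtered structure is $F_\bullet\mc{M}\boxtimes(\mc{O},F)$ twisted by the flat connection, and the diagonal becomes $X\times\{0\}$.

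Restriction to $X\times\{0\}$ is then iterated restriction to smooth divisors $z_i=0$ along which the module is smooth in the $z$-directions. For such restrictions I would use the formula for $\mc{H}^{-1}i^*$ via the $V$-filtration from Proposition~\ref{prop:mhm strict specialisation}. Concretely, $i^*$ is computed by $\mrm{Cone}(\gr_V^1\to\gr_V^0)$ with the $\partial_t$-shifted filtration. Because the module is $\mc{O}$-flat in $z$ (a connection in those directions), the $V$-filtration along $z_i=0$ is the $z_i$-adic filtration, and $F_pV^1 = z_iF_p$ by strict specialisability (item~\eqref{itm: t strict}). Hence $F_\bullet\gr_V^1 = F_\bullet/z_iF_\bullet$, which is the naive restriction. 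Care with the Tate/index shift conventions ($\mc{H}^{-1}i^* \leftrightarrow \gr_V^1$, with no shift) is needed here. Iterating over the $d$ coordinates gives $F_p\Delta^*(\mc{M}\boxtimes\mc{N}) = \Delta^*F_p(\mc{M}\boxtimes\mc{N})$, which is $\sum_{q+r\leq p}F_q\mc{M}\otimes F_r\mc{N}$. This uses that $F_r\mc{N}$ are subbundles, so the convolution restricts without torsion issues.

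The main obstacle is making precise that the $V$-filtration along each $z_i=0$ is the $z_i$-adic one with $F$ compatible, i.e.\ that the twist by the flat connection on $\mc{N}$ does not disturb this. My first attempt would be to note that locally $\mc{N}$ is filtered-isomorphic as an $\mc{O}$-module to a trivial bundle with a filtration by subbundles, and that the operators $\partial_{z_i}$ act on $\mc{M}\boxtimes\mc{N}$ only through the $\mc{N}$ factor. Hence $\partial_{z_i}z_i$ acts on $z_i^k(\cdot)/z_i^{k+1}$ with eigenvalue $k+1$ on generators. Uniqueness of the $V$-filtration then identifies it, and compatibility with $F$ follows because $F_p$ of the boxtimes is a direct sum of $F_q\mc{M}\otimes F_{r}\mc{N}$-pieces that are $\mc{O}$-free in the $z$-variables. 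As a sanity check and alternative route, one could instead compare both sides in the case $\mc{N}$ is a polarisable VHS, where the result is standard, and then extend to mixed $\mc{N}$ by bistrictness with respect to $W$ (Proposition~\ref{prop:hodge forgetful functor}) and induction on weight length.
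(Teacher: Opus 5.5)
Your proposal is correct and takes essentially the paper's route. The paper likewise reduces to iterated codimension-one non-characteristic restrictions, identifies $\mathcal{H}^{-1}i^*$ with $\mathrm{gr}_V^1$ for the $t$-adic $V$-filtration, and gets $F_p\mathrm{gr}_V^1=F_p/tF_p$ from strict specialisability; it only checks non-characteristicity of $\Delta$ differently, via $d\Delta|_{\mathrm{SS}(\mathcal{M})}$ being a closed immersion together with Saito's Lemmas 3.5.4 and 3.5.6, where you use the shear $z=y-x$ and a local trivialisation of $\mathcal{N}$. One index slip: in this normalisation $V^1$ is the whole module and it is $F_pV^{>1}$, not $F_pV^1$, that equals $z_iF_p$, which is what gives your correct conclusion $F_\bullet\mathrm{gr}_V^1=F_\bullet/z_iF_\bullet$.
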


\begin{proof}
Let us consider the maps
\[ 
T^{\ast}(X\times X) \xleftarrow{p_2} X\times_{X\times X} T^{\ast}(X\times X) \xrightarrow{d\Delta} T^{\ast}X
\]
Since $\cN$ is a flat connection, its singular support is the zero section $X \subset T^*X$. 
So the restricted map 
\begin{equation*}\label{eqn: restriction of dDelta} d\Delta \colon p_2^{-1}\left(\textrm{SS}(\cM\boxtimes \cN)\right) = \mrm{SS}(\mc{M}) \to T^{\ast}X\end{equation*}
is a closed immersion, hence finite. 
In other words, the diagonal embedding $\Delta$ is non-characteristic for the $\sD$-module underlying $\cM\boxtimes \cN$.  Therefore, we can apply Lemma \ref{lemma: hodge filtration on noncharacteristic pullback} below to obtain \[F_{p}\cH^{-\dim X}\Delta^{\ast}(\cM\boxtimes \cN)=\Delta^{\ast}(F_p(\cM\boxtimes \cN))=\sum_{q+r\leq p} F_q\cM\otimes F_r\cN.\qedhere\]
\end{proof}
\begin{lemma}\label{lemma: hodge filtration on noncharacteristic pullback}
Let $\cM$ be a mixed Hodge module on $X$. If $i\colon Z\to X$ is a closed embedding of a smooth subvariety and is non-characteristic for the $\sD$-module underlying $\cM$, then 
\[ F_p\cH^{-(\dim X-\dim Z)}i^{\ast}\cM = i^{\ast}(F_p\cM), \quad \text{for all $p\in \Z$}. \]
\end{lemma}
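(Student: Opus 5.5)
The plan is to reduce to a local statement about smooth transversal restriction and then use the characterisation of $i^*$ through the $V$-filtration along $Z$ together with strict specialisability. The question is local, so I would work near a point of $Z$ and induct on $\operatorname{codim} Z$. Locally $i$ factors as a sequence of codimension-one closed embeddings $Z = Z_c \subset Z_{c-1} \subset \cdots \subset Z_0 = X$, each cut out by one coordinate $t$. Non-characteristic for $\mc{M}$ at each stage follows from non-characteristic for the composite: the conormal of $Z_k$ in $X$ is contained in that of $Z$. Non-characteristic restriction is also stable: $i_k^*\mc{M}$ has characteristic variety contained in the image of $\mrm{SS}(\mc{M})$, and this keeps the next inclusion non-characteristic. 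So the problem reduces to the case of a smooth hypersurface $Z = \{t = 0\}$.

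For a hypersurface, Saito's construction gives the Hodge module $\mc{H}^{-1}i^*\mc{M}$ (up to the shift convention) as the cokernel of $\operatorname{var}$, equivalently via $\gr_V^1$. Concretely, $\mc{H}^{-1}i^*\mc{M} = \coker(t\colon \gr_V^0 \to \gr_V^1)$, with Hodge filtration induced from $F_\bullet \gr^1_V\iota_+\mc{M}$ (suitably shifted). Here $\iota$ can be taken as the identity since $t$ is a coordinate. The key input I would establish is the following. Because $Z$ is non-characteristic, $\mc{M}$ is smooth in the $t$-direction near $Z$, so $\partial_t t$ acts on $\mc{M}/t\mc{M}$ and the $V$-filtration along $Z$ is simply $V^{\alpha}\mc{M} = t^{\lceil \alpha\rceil -1}\mc{M}$. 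Justifying this formula is the first step: check the good-filtration and eigenvalue conditions of Definition~\ref{defn:V-filtration} directly, using that $\mc{M}$ is $\mc{O}$-coherent relative to $t$ (Kashiwara's non-characteristic restriction makes $\mc{M}$ locally $\mc{M}|_Z\{t\}$-like). Consequently $\gr_V^\alpha = 0$ for $\alpha\notin\mb{Z}$, $\gr_V^0 = 0$ in the relevant sense, and $\gr_V^1 = \mc{M}/t\mc{M} = i^*\mc{M}$.

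Next comes the Hodge filtration. Strict specialisability (Proposition~\ref{prop:mhm strict specialisation}) gives $F_pV^1\mc{M} = F_p\mc{M}$, since $V^1 = \mc{M}$ near $Z$. It also gives $F_pV^{>1}\mc{M} = F_pV^2\mc{M} = tF_p V^1 \mc{M} = tF_p\mc{M}$, using condition~\eqref{itm: t strict} at $\alpha = 1 > 0$. Hence
\[ F_p\gr_V^1\mc{M} = F_p\mc{M}/tF_p\mc{M} = i^*F_p\mc{M}, \]
the last equality using that $F_p\mc{M} \cap t\mc{M} = tF_p\mc{M}$. After tracking the index shift between the graph-embedding convention $F_{p+1}\iota_+$ and the direct restriction, this yields $F_p\mc{H}^{-1}i^*\mc{M} = i^*F_p\mc{M}$. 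The hypersurface case then composes along the induction, since the pulled-back filtrations $i_k^*F_p$ compose.

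The main obstacle is rigorously matching Saito's definition of $\mc{H}^{-1}i^*$ with the formula $\gr_V^1$ including the exact Hodge index shift. A related difficulty is proving $V^\alpha\mc{M} = t^{\lceil\alpha\rceil-1}\mc{M}$ in the analytic non-characteristic setting. For the latter I would first try using $\mc{O}_X$-flatness of $\gr^F\mc{M}$ in $t$. This comes from non-characteristic-ness, since $t$ is then a non-zero-divisor on $\gr^F\mc{M}$. It also gives $F_p\mc{M}\cap t\mc{M} = tF_p\mc{M}$ directly.
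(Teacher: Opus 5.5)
Your outline matches the paper's proof. You factor $i$ into non-characteristic hypersurface embeddings, use $V^\alpha\cM = t^{\lceil\alpha\rceil-1}\cM$ along $Z=\{t=0\}$, identify $\cH^{-1}i^*\cM$ with $\gr^1_V\cM$, and compute $F_p\gr^1_V\cM = F_p\cM/tF_p\cM$. Your derivation of $F_p\cM\cap t\cM = tF_p\cM$ from strict specialisability at $\alpha=1$ is fine.

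\textbf{The Hodge index step.} The one real gap is the step you yourself call the main obstacle, and the description you chose for it uses the wrong functor. The object $\coker(\mathrm{var}\colon \phi_{t,1}\cM\to\psi_{t,1}\cM(-1))$ computes $\cH^{1}i^!\cM$, not $\cH^{-1}i^*\cM$. In the non-characteristic case it equals $\cH^{-1}i^*\cM(-1)$. Read literally, your description would therefore give $F_p = i^*F_{p+1}\cM$, which is off by one in exactly the index the lemma is about. No unspecified ``suitable shift'' for the graph-embedding convention can be trusted to repair this without actually tracking it.

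The correct description is Saito's (2.24.3), which the paper cites: $\cH^{-1}i^*\cM = \ker(\mathrm{can}) = \ker(\partial_t\colon \gr^1_V\cM\to\gr^0_V\cM)$, a sub-object of $\psi_{t,1}\cM$ with the induced filtration. Take $V$ to be the $V$-filtration of $\cM$ itself along the coordinate $t$, rather than that of a graph embedding. Since $\gr^0_V\cM=0$ here, this gives $F_p\cH^{-1}i^*\cM = F_p\gr^1_V\cM$ with no shift. As a check, $\cM=\cO_X$ yields $\cO_Z$ with lowest Hodge piece in degree $0$, which is right for $\cH^{-1}i^*$. By contrast, $\cH^1 i^!$ has its lowest piece in degree $-1$.

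\textbf{A smaller point.} It is not true that $t$ is a non-zero-divisor on $\gr^F\cM$ for an arbitrary good filtration merely because $Z$ is non-characteristic. For example, $\cO_X$ with $F_0=t\cO_X$ and $F_1=\cO_X$ is a good filtration with $\gr^F_1=\cO_X/t\cO_X$. For the Hodge filtration this regularity does hold, but it is a consequence of strict specialisability together with the $V$-filtration formula. So it cannot be used to prove that formula without circularity.

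The formula $V^\alpha\cM=t^{\lceil\alpha\rceil-1}\cM$ is a purely $\sD$-module statement. It follows from two facts:
\begin{itemize}
\item near a non-characteristic $Z$, $\cM$ is coherent over $V^0\sD_X$, indeed over the ring of differential operators relative to $t$ (Kashiwara's theorem);
\item $\partial_t t\cdot t^{k-1}m \equiv k\,t^{k-1}m$ modulo $t^k\cM$ for $k\geq 1$.
\end{itemize}
The paper simply cites Saito's Lemma 3.5.6 for it.
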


\begin{proof}
This is well-known, see the proof of \cite[Lemma 9.5]{Schnell16} for example. We recall the proof here for reader's convenience. One can factor $i:Z\to X$ into a sequence of closed embedding of codimension 1 of the form $i_r:Z_r\to Z_{r+1}$. By \cite[Lemma 3.5.4]{Saito88}, the closed embedding $i_r$ is still non-characteristic for the pullback of $\cM$ to $Z_{r+1}$. Hence we can assume $Z$ is defined by a single function $t$. In this case, one can show that, c.f. \cite[Lemma 3.5.6]{Saito88},
\[ V^{\alpha}\cM=t^{\lceil \alpha-1\rceil}\cdot \cM, \quad \text{for all $\alpha \in \R$},\]
where $V^{\bullet}\cM$ is the $V$-filtration along $t$. Furthermore, there is an isomorphism
\[\cH^{-1}i^{\ast}\cM\cong \textrm{ker}\left(\d_t: \gr^1_V\cM\to  \gr^0_V\cM\right)=\gr^1_V\cM \in \MHM(Z),\]
(see \cite[(2.24.3)]{Saito90} for example), and we conclude
\begin{equation*}
F_p\cH^{-1}i^{\ast}\cM\cong F_p\gr^1_V\cM=\frac{V^{1}\cM\cap F_p\cM}{V^{>1}\cM\cap F_p\cM}=\frac{F_p\cM}{t\cdot F_p\cM}=i^{\ast}(F_p\cM). \qedhere 
\end{equation*}
\end{proof}

We will also need a version of Lemma \ref{lem: tensor product formula} for the $F^*_\bullet$-filtrations.

\begin{lemma} \label{lem: * tensor product formula}
If $D \subset X$ is a divisor with complement $U$, $\mc{M}, \mc{N} \in \mhm_\mb{K}(U)_X$ are extendable mixed Hodge modules and $\mc{N}$ is a flat connection on $U$, then
\[ F_p^*(\cM\otimes \cN)(*D) =\sum_{q+r\leq p} F_q^*\cM(*D)\otimes F_r^*\cN(*D), \quad \text{for all $p\in \Z$}.\]
\end{lemma}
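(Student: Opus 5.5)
The plan is to reduce to Lemma \ref{lem: tensor product formula} by choosing good extensions of $\mc{M}$ and $\mc{N}$ to $X$, and then localising along $D$. Concretely, pick extensions $\mc{M}', \mc{N}' \in \mhm_\mb{K}(X)$ with $j^*\mc{M}' = \mc{M}$ and $j^*\mc{N}' = \mc{N}$. Since $\mc{M}\otimes\mc{N}$ is again extendable (e.g.\ by $\mc{M}'\otimes\mc{N}'$), the left-hand side is by definition $F_p(\mc{M}'\otimes\mc{N}')(*D)$ for any extension of the tensor product. The right-hand side is $\sum_{q+r\le p}(F_q\mc{M}')(*D)\otimes_{\mc{O}_X(*D)}(F_r\mc{N}')(*D)$. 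So it suffices to show that $F_p(\mc{M}'\otimes\mc{N}')$ agrees with $\sum_{q+r\le p}F_q\mc{M}'\otimes F_r\mc{N}'$ after applying $(-)(*D)$.

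The only difficulty is that $\mc{N}'$ is not a flat connection on $X$, so Lemma \ref{lem: tensor product formula} does not apply directly on $X$. The first step is therefore to check that the question is local on $X$ and insensitive to modifications supported on $D$. Both sides are subsheaves of $(\mc{M}\otimes\mc{N})(*D)$, and $F^*$ does not depend on the chosen extension, as explained before Proposition \ref{prop:hodge extension}. It is therefore enough to compare the two sides on stalks at points of $D$, after inverting a local equation $g$ of $D$.

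Next I would apply Lemma \ref{lemma: hodge filtration on noncharacteristic pullback} in an open-set form. Over $U$, the diagonal is non-characteristic for $\mc{M}\boxtimes\mc{N}$, exactly as in the proof of Lemma \ref{lem: tensor product formula}. Moreover, $F_p\mc{H}^{-\dim X}\Delta^*(\mc{M}'\boxtimes\mc{N}')$ restricted to $U$ equals $\Delta^*F_p(\mc{M}\boxtimes\mc{N})$, because restriction to open sets commutes with all the functors involved. Now a section of $F_p(\mc{M}'\otimes\mc{N}')$ near a point of $D$ is a section of $F_p$ over $U$ which, after multiplying by a high power of $g$, extends. Conversely, any section of $\sum F_q\mc{M}'\otimes F_r\mc{N}'$ over $U$ lies in $F_p$ over $U$, and after multiplying by a power of $g$ it extends to a section of $F_p(\mc{M}'\otimes\mc{N}')$. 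Coherence of both filtered pieces and the standard fact that two coherent $\mc{O}_X$-subsheaves of $(\mc{M}\otimes\mc{N})(*D)$ agreeing on $U$ have equal localisations $(\,\cdot\,)(*D)$ then give the equality.

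The main obstacle is making this last step precise. Two coherent subsheaves that agree on $U$ do have the same $\mc{O}_X(*D)$-span, since each is contained in $g^{-N}$ times the other locally; this uses coherence and that the sections are $g$-torsion-free. I would verify this carefully, noting that coherence of $F_q\mc{M}'$ and $F_r\mc{N}'$ over $\mc{O}_X$ holds by goodness. If needed, I would replace the tensor product with the image of $F_q\mc{M}'\otimes_{\mc{O}_X}F_r\mc{N}'$ inside the $D$-localised module, so as to avoid torsion issues.
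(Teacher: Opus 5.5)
Your proposal is correct and follows essentially the same argument as the paper. The paper also chooses extensions $\mc{M}',\mc{N}'$, observes via Lemma~\ref{lem: tensor product formula} applied on $U$ that the coherent subsheaves $F_p(\mc{M}'\otimes\mc{N}')$ and $\sum_{q+r\le p}\mrm{im}(F_q\mc{M}'\otimes F_r\mc{N}')$ agree on $U$, and concludes by coherence that their localisations at $D$ coincide. Your rederivation through the non-characteristic diagonal over $U$ is exactly that lemma applied to $\mc{M},\mc{N}\in\mhm_{\mb{K}}(U)$, so you can cite it directly.
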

\begin{proof}
Choose extensions $\mc{M}'$ and $\mc{N}'$ of $\mc{M}$ and $\mc{N}$ to $X$ and consider the $\mc{O}_X$-coherent subsheaves
\[ F_p(\mc{M}' \otimes \mc{N}') \quad \text{and} \quad \sum_{q + r \leq p} \mrm{im}(F_q\mc{M}' \otimes F_r\mc{N}' \to \mc{M}' \otimes \mc{N}')\]
of $\mc{M}' \otimes \mc{N}'$. By Lemma \ref{lem: tensor product formula}, these are equal when restricted to the complement of $D$, so their localisations at $D$ must also be equal by coherence. This gives the lemma.
\end{proof}

\subsection{Localisation of mixed Hodge modules via the multivariate $V$-filtration} \label{subsec:hodge localisation}

In this subsection, we give a formula for the Hodge filtration on the extension of a mixed Hodge module across a simple normal crossings divisor, in terms of the multivariate $V$-filtration. This simultaneously generalises \eqref{eqn: filtration on *extension} from smooth divisors to simple normal crossings, and Proposition \ref{prop:* formula} from $\ms{D}$-modules to mixed Hodge modules.

In what follows, we fix a mixed Hodge module $\mc{M}$ on a complex manifold $X$ and a simple normal crossings divisor $D = D_1 \cup \cdots \cup D_r \subset X$. We write $U = X \setminus D$ and $j \colon U \to X$ for the inclusion. For $\bs{\alpha} \in \mb{R}^r$, we write $F_\bullet V^{\bs{\alpha}}\mc{M} = F_\bullet \mc{M} \cap V^{\bs{\alpha}}\mc{M}$, where $F_\bullet\mc{M}$ is the Hodge filtration.

First, we need a multivariate generalisation of the strict specialisability (or, more precisely, Definition \ref{defn: strict A-specializability}\eqref{itm: t strict}).
\begin{lem} \label{lem:hodge * formula}
If $\mc{M} \in \mhm(U)_X$ then, for each $i$, the morphism
\begin{equation} \label{eq:hodge * formula 1}
t_i \colon (V^{\bs{\alpha}}j_*\mc{M}, F_\bullet) \to (V^{\bs{\alpha} + \bs{1}_i}j_*\mc{M}, F_\bullet)
\end{equation}
is strict injective for $\alpha_i \geq 0$. In particular, if $\bs{\alpha}\in \R^r_{\geq 0}$, the inclusion map
\begin{equation} \label{eq:hodge * formula 2}
\left(V^{\bs{\alpha}}j_*\cM,F_{\bullet}\right)\to (\mc{M}(*D), F_\bullet^*)
\end{equation}
is strict.
\end{lem}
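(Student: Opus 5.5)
The plan is to reduce to the single-variable strict specialisability of Definition \ref{defn: strict A-specializability}\eqref{itm: t strict}, applied to $j_*\mc{M}$ along the smooth divisor $D_i$. Injectivity of $t_i$ on $V^{\bs{\alpha}}j_*\mc{M}$ is already clear, since $j_*\mc{M} = \mc{M}(*D)$ has $t_i$ acting bijectively. Surjectivity onto $V^{\bs{\alpha}+\bs{1}_i}$ for $\alpha_i > 0$ is Proposition \ref{prop:weak specialisation}, and for $\alpha_i = 0$ it is Proposition \ref{prop:* criterion} combined with Corollaries \ref{cor:localising V} and \ref{cor:extending V*}, because $V = V_*$ on the closed positive orthant for a localised module. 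So the whole content of \eqref{eq:hodge * formula 1} is Hodge strictness:
\[ t_i(F_pV^{\bs{\alpha}}j_*\mc{M}) = F_pV^{\bs{\alpha}+\bs{1}_i}j_*\mc{M}. \]
Equivalently, if $m \in V^{\bs{\alpha}}j_*\mc{M}$ and $t_im \in F_p j_*\mc{M}$, then $m \in F_pj_*\mc{M}$.

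To prove this, I would use the single-variable $V$-filtration $V^\bullet_{D_i}$ on $j_*\mc{M}$. This is recovered from the multivariate one by Proposition \ref{prop:V-filtration for sub divisor}, via a union over the other coordinates, so $V^{\bs{\alpha}}j_*\mc{M} \subset V^{\alpha_i}_{D_i}j_*\mc{M}$. For $\alpha_i > 0$, Proposition \ref{prop:mhm strict specialisation} applied along $t_i$ gives the strictly injective isomorphism
\[ t_i \colon (V_{D_i}^{\alpha_i}j_*\mc{M}, F_\bullet) \to (V_{D_i}^{\alpha_i+1}j_*\mc{M}, F_\bullet). \]
Hence $t_im \in F_p$ forces $m \in F_pV^{\alpha_i}_{D_i}$, and in particular $m \in F_p j_*\mc{M}$. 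For $\alpha_i = 0$, I would use instead that $j_*\mc{M}$ is the $*$-extension across $D_i$. By \eqref{eqn: filtration on *extension} applied to the smooth divisor $D_i$, working locally away from the other components or directly, the map $t_i \colon F_pV^0_{D_i} \to F_pV^1_{D_i}$ is an isomorphism: $t$ is an isomorphism $\gr^0_V \to \gr^1_V$ for $*$-extensions, compatibly with $F$, and $F_p V^{>0} \to F_pV^{>1}$ is strict by the $\alpha>0$ case. I expect this $\alpha_i = 0$ step to be the main obstacle. The issue is that $F_\bullet j_*\mc{M}$ is defined via $j_*$ across all of $D$, not just $D_i$. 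I would handle it by factoring $j_* = j_{i*}\circ j'_*$, where $j'$ is the inclusion of the complement of $D$ into the complement of the other components, and $j_i$ extends across $D_i$. I would then apply \eqref{eqn: filtration on *extension} to the $*$-extension across $D_i$ of the mixed Hodge module obtained by extending across the others, and reduce to the statement that $t\colon F_p\gr^0_V \to F_p\gr^1_V$ is an isomorphism, which follows from $V^0 = t^{-1}V^1$ inside $j_{i*}$.

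For \eqref{eq:hodge * formula 2}, given $\bs{\alpha} \in \mb{R}^r_{\geq 0}$, choose an extension $\mc{M}' = j_*\mc{M}$, so that $F^*_p\mc{M}(*D) = \bigcup_{\bs{n}\geq 0} \bs{t}^{-\bs{n}}F_pj_*\mc{M}$. If $m \in V^{\bs{\alpha}}j_*\mc{M} \cap F^*_p$, then $\bs{t}^{\bs{n}}m \in F_p j_*\mc{M} \cap V^{\bs{\alpha}+\bs{n}}$ for some $\bs{n}$. Iterating the strictness from \eqref{eq:hodge * formula 1} one coordinate at a time, with each intermediate index having nonnegative $i$-th coordinate, peels off the factors of $\bs{t}^{\bs{n}}$ and gives $m \in F_pV^{\bs{\alpha}}j_*\mc{M}$, which is the required strictness.
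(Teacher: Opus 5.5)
Your proposal is correct, and outside the case $\alpha_i=0$ it is the paper's proof. The shared steps are the reduction to $V^\bullet_{D_i}$ via Proposition \ref{prop:V-filtration for sub divisor}, the case $\alpha_i>0$ from Proposition \ref{prop:mhm strict specialisation}, and the coordinate-by-coordinate peeling of $\bs{t}^{\bs{n}}$ for \eqref{eq:hodge * formula 2}.

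For $\alpha_i=0$ the paper does not factor $j$. It observes that $t_i\colon\gr^0_{V_{D_i}}j_*\mc{M}\to\gr^1_{V_{D_i}}j_*\mc{M}$ is a $\ms{D}$-module isomorphism (since $j_*\mc{M}=(j_*\mc{M})(*D_i)$). This map underlies a morphism of mixed Hodge modules by Proposition \ref{prop:mhm strict specialisation}, so it is a filtered isomorphism by Proposition \ref{prop:hodge forgetful functor}, and the five lemma finishes the argument.

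Your route via $j_*=j_{i*}\circ j'_*$ and \eqref{eqn: filtration on *extension} also works, but the phrase ``follows from $V^0=t^{-1}V^1$'' skips the real input, since that identity says nothing about $F$. Write $F^*_p=(F_pj_*\mc{M})(*D_i)$. Through the term $F_0\ms{D}_X\otimes F^*_pV^0$ of the tensor product filtration, \eqref{eqn: filtration on *extension} gives $F^*_p\cap V^0_{D_i}\subset F_pj_*\mc{M}$. Combined with the trivial inclusion $F_pj_*\mc{M}\subset F^*_p$, this yields $F_pV^\gamma_{D_i}j_*\mc{M}=F^*_p\cap V^\gamma_{D_i}$ for $\gamma=0,1$. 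Since $F^*_p$ is an $\mc{O}_X(*D_i)$-module and $t_iV^0_{D_i}=V^1_{D_i}$, $t_i$ is then directly a bijection $F_pV^0_{D_i}\to F_pV^1_{D_i}$. So on your route you need neither the $\gr$ diagram nor the appeal to the $\alpha>0$ case; you should state this step explicitly.

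As a trade-off, the paper's argument uses only the abstract Hodge structure on $\gr_V$ and needs no factorisation of $j$. Yours relies on the explicit extension formula, but gives the sharper identity $F_pV^0_{D_i}j_*\mc{M}=F^*_p\cap V^0_{D_i}$, which is the one-component case of \eqref{eq:hodge * formula 2}.
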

\begin{proof}
Since $j_*\mc{M} = (j_*\mc{M})(*D)$, the map $t_i$ above is clearly injective. It therefore suffices to show that
\[ F_pV^{\bs{\alpha}}\mc{M} = t_i^{-1} (F_pV^{\bs{\alpha} + \bs{1}_i}\mc{M}).\]
Since $F_pV^{\bs{\alpha}}j_*\mc{M} = F_pj_*\mc{M} \cap V^{\bs{\alpha}}j_*\mc{M}$ by definition and since $V^{\bs{\alpha} + \bs{1}_i}j_*\mc{M} = t_iV^{\bs{\alpha}}j_*\mc{M}$ by Propositions \ref{prop:weak specialisation} and \ref{prop:* criterion}, this is equivalent to showing that $m \in F_pj_*\mc{M}$ as long as $m \in V^{\bs{\alpha}}j_*\mc{M}$ and $t_i m \in F_p j_*\mc{M}$. But by Proposition \ref{prop:V-filtration for sub divisor}, $V^{\bs{\alpha}}j_*\mc{M} \subset V_{D_i}^{\alpha_i}j_*\mc{M}$, so this follows from the analogous claim for $V_{D_i}^{\alpha_i}j_*\mc{M}$. For $\alpha_i > 0$ this is immediate from Proposition \ref{prop:mhm strict specialisation}. 
 For $\alpha_i = 0$, we have a diagram
\[
\begin{tikzcd}
0 \ar[r] & F_p V^{>0}_{D_i}j_*\mc{M} \ar[r] \ar[d, "t_i", "\sim"' {anchor=south, rotate=90}] & F_p V^0_{D_i}j_*\mc{M} \ar[r] \ar[d, "t_i"] & F_p \gr_{V_{D_i}}j_*\mc{M} \ar[r] \ar[d, "t_i"] & 0 \\
0 \ar[r] & F_p V^{>1}_{D_i}j_*\mc{M} \ar[r] & F_pV^1_{D_i}j_*\mc{M} \ar[r] & F_p\gr_{V_{D_i}}^1 j_*\mc{M} \ar[r] & 0
\end{tikzcd}
\]
with exact rows. Since $j_*\mc{M} = (j_*\mc{M})(*D_i)$, $t_i \colon \gr_{V_{D_i}}^{0}j_*\mc{M} \to \gr_{V_{D_i}}^{1}j_*\mc{M}$ is an isomorphism at the level of $\ms{D}_{D_i}$-modules, it is also an isomorphism of mixed Hodge modules. So the vertical arrow on the right is an isomorphism and hence the middle arrow is an isomorphism as well. This proves the desired strictness of \eqref{eq:hodge * formula 1}.

To deduce the strictness of \eqref{eq:hodge * formula 2}, note that $m \in V^{\bs{\alpha}}j_*\mc{M}$ lies in $F_p^*\mc{M}(*D)$ if and only if $\bs{t}^{\bs{n}} m$ lies in $F_pj_*\mc{M}$ for some $\bs{n} \in \mb{Z}_{\geq 0}^r$, which implies $m \in F_p V^{\bs{\alpha}}j_*\mc{M}$ by repeated application of \eqref{eq:hodge * formula 1}.
\end{proof}

\begin{prop} \label{prop:hodge !* formula}
For $\mc{M} \in \mhm(U)_X$, the canonical map from left to right defines an isomorphism
\[  (\ms{D}_X, F_\bullet) \overset{\mrm{L}}\otimes_{(V^{\bs{0}}\ms{D}_X, F_\bullet)} (V^{\bs{0}}\mc{M}(*D), F_\bullet^*) = (\ms{D}_X, F_\bullet) \overset{\mrm{L}}\otimes_{(V^{\bs{0}}\ms{D}_X, F_\bullet)} (V^{\bs{0}}j_*\mc{M}, F_\bullet) \cong (j_*\mc{M}, F_\bullet). \]
in the filtered derived category $\mrm{D}^b_{\mrm{coh}}(\ms{D}_X, F_\bullet)$.
\end{prop}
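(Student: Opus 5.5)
The first equality in the statement is immediate from Lemma \ref{lem:hodge * formula}. Since $\bs{0}\in\mb{R}^r_{\geq 0}$, the inclusion $(V^{\bs{0}}j_*\mc{M},F_\bullet)\to(\mc{M}(*D),F^*_\bullet)$ is strict. Moreover $V^{\bs{0}}\mc{M}(*D)=V^{\bs{0}}j_*\mc{M}$, because $j_*\mc{M}=\mc{M}(*D)$ as $\ms{D}_X$-modules. So the two filtered $V^{\bs{0}}\ms{D}_X$-modules coincide, and all the content lies in the isomorphism with $(j_*\mc{M},F_\bullet)$.

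I will prove that isomorphism by induction on $r$, peeling off one component at a time. The idea is to reduce to the smooth-divisor formula \eqref{eqn: filtration on *extension}.

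Write $D'=D_1\cup\cdots\cup D_{r-1}$ and $j'\colon X\setminus D'\to X$. Set $\mc{M}'=j'_*\mc{M}|_{X\setminus D'}$, viewed as a mixed Hodge module on $X\setminus D'$ extendable across $D'$, with $j_*\mc{M}=j'_*(\mc{M}')$. The induction hypothesis gives a filtered quasi-isomorphism
\[
(\ms{D}_X,F)\overset{\mrm{L}}\otimes_{(V_{D'}^{\bs 0}\ms{D}_X,F)}(V_{D'}^{\bs 0}j_*\mc{M},F)\cong(j_*\mc{M},F).
\]
Here $V_{D'}$ is the multivariate filtration along $D'$, which by Proposition \ref{prop:V-filtration for sub divisor} is obtained from $V^\bullet$ by taking the union over $\alpha_r$. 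The smooth case handles the last step: for $r=1$ the underived statement is \eqref{eqn: filtration on *extension}, and I still need to verify vanishing of higher filtered Tor for it, as described below.

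The key intermediate claim is a relative version along $D_r$:
\[
(V^{\bs 0}_{D'}\ms{D}_X,F)\overset{\mrm{L}}\otimes_{(V^{\bs 0}\ms{D}_X,F)}(V^{\bs 0}j_*\mc{M},F)\cong(V^{\bs 0}_{D'}j_*\mc{M},F).
\]
I would prove it by running the argument of \eqref{eqn: filtration on *extension} inside $V^{\bs 0}_{D'}\ms{D}_X$, using the Hodge-filtered strictness of $t_r$ from Lemma \ref{lem:hodge * formula}. Concretely, the right module is filtered by $\ms{D}$-stable pieces $V^{(\bs 0,-k)}$ for $k\geq0$, equivalently by powers of $\partial_{t_r}$. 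Each step $(V^{\bs n-\bs1_r}\ms{D}_X/V^{\bs n}\ms{D}_X)\otimes V^{\bs0}$ maps isomorphically, with filtrations shifted by one, onto $V^{\bs n-\bs 1_r}j_*\mc{M}/V^{\bs n}j_*\mc{M}$ via $\partial_{t_r}$. On the $\ms{D}$-module level this is Proposition \ref{prop:strong specialisation} and Proposition \ref{prop:* formula}. The Hodge-filtered version is supplied by the strict surjectivity of $\partial_t$ on $\gr_V$ in Definition \ref{defn: strict A-specializability}\eqref{itm: dt strict}, applied to $V_{D_r}$ and then intersected as in Proposition \ref{prop:V-filtration for sub divisor}. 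Combining the two base changes by associativity of filtered derived tensor products gives the proposition.

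For the vanishing of higher Tor I would pass to Rees modules. The proof of Proposition \ref{prop:* formula} shows that the only potential Tor comes from $R_F(V^{\bs 0}\ms{D}_X/t_iV^{\bs 0}\ms{D}_X)$. Its Tor against $R_FV^{\bs 0}j_*\mc{M}$ vanishes exactly when $t_i$ acts injectively and strictly on the Rees module, which is the strict injectivity \eqref{eq:hodge * formula 1} at $\bs\alpha=\bs0$. Then the filtration by $\bs n$ and dévissage, as in \eqref{eq:* formula 3} and \eqref{eq:* formula 4}, carry over verbatim.

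I expect the main obstacle to be the Hodge-filtered strictness of $\partial_{t_r}$ on the multivariate graded pieces $V^{\bs n}/V^{\bs n+\bs 1_r}$ with $n_r\leq0$. These are intersections of single-variable data, and strictness is not obviously preserved under intersection. My first attempt would be to localise at maximal ideals of $\mb{C}[\bs s]$ via Lemma \ref{lem:localised chambers}, reduce to a single $V_{D_r}$-jump, and there invoke Proposition \ref{prop:mhm strict specialisation} for the mixed Hodge module $\gr_{V_{D_r}}$. The difficulty is that localisation does not obviously commute with $F$, and I am unsure this step goes through.
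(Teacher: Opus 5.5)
\textbf{Verdict: there is a genuine gap, at the step you yourself flagged.}

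\textbf{What is correct.} Your reduction of the first equality to Lemma~\ref{lem:hodge * formula} is fine. Your Tor analysis is exactly the paper's first step: it uses the strict injectivity of $t_i$ on $(V^{\bs 0}j_*\mc M,F_\bullet)$ against the strict resolutions of $V^{\bs n}\ms D_X/V^{\bs n+\bs 1_i}\ms D_X$. Your overall architecture also works: induction along $D'$ combined with a relative base change along $D_r$, using associativity of filtered derived tensor products.

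\textbf{Where it fails.} The gap is your key intermediate claim
$(V^{\bs 0}_{D'}\ms D_X,F)\overset{\mrm{L}}\otimes_{(V^{\bs 0}\ms D_X,F)}(V^{\bs 0}j_*\mc M,F)\cong(V^{\bs 0}_{D'}j_*\mc M,F)$.
Your d\'evissage in $n_r$ needs the following for $n_r\le 0$: the filtration induced from $F_\bullet j_*\mc M$ on $V^{(\bs 0,n_r-1)}j_*\mc M$ must be generated by $\partial_{t_r}F_{\bullet-1}V^{(\bs 0,n_r)}j_*\mc M$ and $F_\bullet V^{(\bs 0,n_r)}j_*\mc M$. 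That is a Hodge-filtered version of Proposition~\ref{prop:strong specialisation}, and nothing available supplies it.
\begin{itemize}
\item Definition~\ref{defn: strict A-specializability}\eqref{itm: dt strict} only concerns $\gr_{V_{D_r}}$ of the single-variable filtration.
\item The pieces $V^{(\bs 0,n_r)}j_*\mc M$ are in general strictly smaller than $V^{\bs 0}_{D'}\cap V^{n_r}_{D_r}$ (cf.\ Example~\ref{example: diagonal embedding}). Proposition~\ref{prop:V-filtration for sub divisor} expresses the single-variable filtrations as unions of multivariate pieces, not the other way round.
\item Localising at maximal ideals of $\mb C[\bs s]$ does not make sense for $F_p$, since $s_iF_p\subset F_{p+1}$, so $F_p$ is not a $\mb C[\bs s]$-module.
\item On the $\ms D$-module level, Proposition~\ref{prop:strong specialisation} was itself deduced from Proposition~\ref{prop:* formula}. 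Its filtered analogue is therefore essentially as strong as the statement you are trying to prove.
\end{itemize}

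\textbf{The missing idea.} Do not try to know $F_\bullet$ on the multivariate pieces in advance. Your Tor computation already shows that $(V^{\bs n}\ms D_X,F)\overset{\mrm L}\otimes(V^{\bs 0}j_*\mc M,F)$ is strict in degree $0$, and that the transition maps as $\bs n$ decreases are strict injections. Taking the colimit and using Proposition~\ref{prop:* formula} then gives some good filtration $F'_\bullet$ on $j_*\mc M$ with $(V^{\bs n}\ms D_X,F)\overset{\mrm L}\otimes(V^{\bs 0}j_*\mc M,F)=(V^{\bs n}j_*\mc M,F')$ for all $\bs n$.

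It remains only to prove $F'=F$, by induction on $r$:
\begin{enumerate}
\item By induction, $F$ and $F'$ agree off $D_r$. Hence locally $t_r^kF_\bullet\subset F'_\bullet$ and $t_r^kF'_\bullet\subset F_\bullet$ for some $k$.
\item Both filtrations make $t_r\colon V^n_{D_r}j_*\mc M\to V^{n+1}_{D_r}j_*\mc M$ a filtered isomorphism for $n\ge 0$. For $F'$ this follows from the tensor formula and Proposition~\ref{prop:V-filtration for sub divisor}; for $F$ it follows from strict specialisability and the argument of Lemma~\ref{lem:hodge * formula}.
\item Steps 1 and 2 together force $F_\bullet V^0_{D_r}j_*\mc M=F'_\bullet V^0_{D_r}j_*\mc M$.
\item Both filtrations are recovered from $V^0_{D_r}$ by $(\ms D_X,F)\otimes_{(V^0_{D_r}\ms D_X,F)}(-)$. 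For $F'$ this comes from the tensor formula; for $F$ it is \eqref{eqn: filtration on *extension}.
\end{enumerate}

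So the paper handles the smooth component $D_r$ with the classical formula and uses the induction hypothesis only away from $D_r$. The filtered multivariate specialisation you were stuck on then comes out as a consequence rather than being needed as an input.
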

\begin{proof}
We argue in a similar way to Proposition \ref{prop:* formula}, but with Hodge filtrations in place. For all $\bs{n}\in \Z^r$ and each $i$, we first note that the inclusion $V^{\bs{n} + \bs{1}_i}\ms{D}_X \to V^{\bs{n}}\ms{D}_X$ is strict with respect to $F_\bullet$, and that we have a strict exact sequence of filtered right $V^{\bs{0}}\ms{D}_X$-modules, induced by \eqref{eqn: Vn-1i/Vn},
\[ 0 \to (V^{\bs{0}}\ms{D}_X, F_{\bullet - k}) \xrightarrow{t_i} (V^{\bs{0}}\ms{D}_X, F_{\bullet - k}) \to \left(\frac{V^{\bs{n}}\ms{D}_X}{V^{\bs{n} + \bs{1}_i}\ms{D}_X}, F_\bullet\right)  \to 0, \]
where the second map is the operator $\prod_{j = 1}^r P_j$ for
\[ P_j = \begin{cases} t_j^{n_j}, & \text{if $n_j \geq 0$}, \\ \partial_{t_j}^{-n_j}, & \text{if $n_j < 0$}\end{cases} \]
and $k = \sum_{j = 1}^r \deg P_j= \sum_{j = 1}^r \max\{0, -n_j\}$. Since the map
\[ t_i \colon (V^{\bs{0}}j_*\mc{M}, F_\bullet) \to (V^{\bs{0}}j_*\mc{M}, F_\bullet) \]
is strict injective by Lemma \ref{lem:hodge * formula}, we conclude that
\[ \left(\frac{V^{\bs{n}}\ms{D}_X}{V^{\bs{n} + \bs{1}_i}\ms{D}_X}, F_\bullet\right) \overset{\mrm{L}}\otimes_{(V^{\bs{0}}\ms{D}_X, F_\bullet)} (V^{\bs{0}}j_*\mc{M}, F_\bullet) = \mrm{Cone}((V^{\bs{0}}j_*\mc{M}, F_{\bullet - k}) \xrightarrow{t_i} (V^{\bs{0}}j_*\mc{M}, F_{\bullet - k})) \]
is a strict complex concentrated in degree $0$. We also know that, for $\bs{n} \in \mb{Z}_{\geq 0}^r$, the map
\[ \bs{t}^{\bs{n}}\colon (V^{\bs{0}}\ms{D}_X, F_\bullet) \overset{\sim}\to (V^{\bs{n}}\ms{D}_X, F_\bullet) \]
given by multiplication by $\bs{t^n}$ on the left is an isomorphism of right $V^{\bs{0}}\ms{D}_X$-modules. Combining these statements, we deduce that for all $\bs{n}$, 
\[ (V^{\bs{n}}\ms{D}_X, F_\bullet) \overset{\mrm{L}}\otimes_{(V^{\bs{0}}\ms{D}_X, F_\bullet)} (V^{\bs{0}}j_*\mc{M}, F_\bullet) \]
is a strict complex in degree $0$, and that the map
\[ (V^{\bs{n}}\ms{D}_X, F_\bullet) \overset{\mrm{L}}\otimes_{(V^{\bs{0}}\ms{D}_X, F_\bullet)} (V^{\bs{0}}j_*\mc{M}, F_\bullet) \to (V^{\bs{n} - \bs{1}_i}\ms{D}_X, F_\bullet) \overset{\mrm{L}}\otimes_{(V^{\bs{0}}\ms{D}_X, F_\bullet)} (V^{\bs{0}}j_*\mc{M}, F_\bullet) \]
is a strict injection. Taking the colimit over $\bs{n}\in \Z^r$ and applying Proposition \ref{prop:* formula}, 
we get a filtration $F_\bullet'j_*\mc{M}$ such that
\begin{equation} \label{eq:hodge !* formula 1}
 (V^{\bs{n}}\ms{D}_X, F_\bullet) \overset{\mrm{L}}\otimes_{(V^{\bs{0}}\ms{D}_X, F_\bullet)} (V^{\bs{0}}j_*\mc{M}, F_\bullet) = (V^{\bs{n}}j_*\mc{M}, F_\bullet').
\end{equation}

It remains to show that $F_\bullet'j_*\mc{M}$ coincides with the Hodge filtration $F_\bullet j_*\mc{M}$. We prove this by induction on the number $r$ of components of $D$. This is vacuous if $r = 0$ (i.e.\ if $D = \emptyset$). If $r > 0$, we may suppose by induction that the two filtrations agree on the complement of, say, $D_r$. This implies that $t_r^k F_\bullet \subset F_\bullet'$ and $t_r^k F_\bullet' \subset F_\bullet$ for some $k > 0$. Now, from \eqref{eq:hodge !* formula 1} 
it follows by Proposition \ref{prop:V-filtration for sub divisor} that
\[ t_r \colon (V_{D_r}^nj_*\mc{M}, F_\bullet') \to (V_{D_r}^{n + 1}j_*\mc{M}, F_\bullet') \]
is an isomorphism for $n \geq 0$, as this morphism is the colimit of the filtered isomorphisms
\[t_r \colon (V^{\bs{n}',n}\ms{D}_X, F_\bullet) \overset{\mrm{L}}\otimes_{(V^{\bs{0}}\ms{D}_X, F_\bullet)} (V^{\bs{0}}j_*\mc{M}, F_\bullet) \to (V^{\bs{n}',n+1}\ms{D}_X, F_\bullet) \overset{\mrm{L}}\otimes_{(V^{\bs{0}}\ms{D}_X, F_\bullet)} (V^{\bs{0}}j_*\mc{M}, F_\bullet) \]
over $\bs{n}' \in \mb{Z}^{r - 1}$. Since the same is true for $F_\bullet$ by strict specialisability along $D_r$ (see Definition \ref{defn: strict A-specializability}\eqref{itm: t strict}), we claim that this implies
 \begin{equation}\label{eqn: F and F' agree on V0}F_\bullet V_{D_r}^{0}j_*\mc{M} = F'_\bullet V_{D_r}^0j_*\mc{M}. \end{equation}
Indeed, we have 
\[ F_{\bullet}V_{D_r}^{k}j_*\mc{M}=t^k_{r}F_{\bullet}V_{D_r}^{0}j_*\mc{M} \subset F'_{\bullet}V_{D_r}^{k}j_*\mc{M}\]
The same argument gives the reverse inclusion, so in fact
\[ F_\bullet V_{D_r}^{k}j_*\mc{M} = F'_\bullet V_{D_r}^kj_*\mc{M}.\]
This implies \eqref{eqn: F and F' agree on V0} by applying $t_r^{-k}$ on both sides.
Now, \eqref{eq:hodge !* formula 1} and Proposition \ref{prop:V-filtration for sub divisor} also imply that
\[ (j_*\mc{M}, F_\bullet') = (\ms{D}_X, F_\bullet) \otimes_{(V^0_{D_r}\ms{D}_X, F_\bullet)} (V_{D_r}^0j_*\mc{M}, F_\bullet'). \]
Since the same is again true for $F_\bullet$ by \eqref{eqn: filtration on *extension}, we deduce that $F_\bullet j_*\cM = F_\bullet'j_*\cM$ as claimed.
\end{proof}

\begin{remark}\label{remark: !-extension multivariate}
In fact, by a duality argument \`a la Theorem \ref{thm:duality}, Proposition \ref{prop:* formula} also implies the dual formula
\[ \ms{D}_X \overset{\mrm{L}}\otimes_{V^{\bs{0}}\ms{D}_X} V^{\epsilon \bs{1}}\mc{M}(*D) = j_!\mc{M}.\]
So a similar argument to Proposition \ref{prop:hodge !* formula} implies the formula
\[ (\ms{D}_X, F_\bullet) \overset{\mrm{L}}\otimes_{(V^{\bs{0}}\ms{D}_X, F_\bullet)} (V^{\epsilon\bs{1}}\mc{M}(*D), F^*_\bullet) \cong (j_!\mc{M}, F_\bullet) \]
for the Hodge filtration on $j_!\mc{M}$. We will not need this fact, however, so we leave the details to the interested reader.
\end{remark}

\subsection{From the Hodge filtration to the $V$-filtration}\label{subsec:magic formula}

In this subsection, we prove a multivariable version of the main result of \cite{DY25}, which expresses the multivariate $V$-filtration on a mixed Hodge module to the Hodge filtrations on free-monodromic local systems.
\subsubsection{The single variable case}
We begin by recalling the basic input into the construction. Consider $\mb{C}^\times$ with coordinate $t$ and the flat connections
\[ \mc{E}_n \colonequals \frac{\mc{O}_{\mb{C}^\times}[s]t^s}{(s^n)}\]
for $n \in \mb{Z}_{>0}$; see \S \ref{subsec:malgrange-mellin}. Recall from Example \ref{eg:nilpotent orbit} that each $\mc{E}_n$ carries an admissible variation of $\mb{Q}$-mixed Hodge structure. So by Theorem \ref{thm:vmhs is mhm} we obtain lifts of $\mc{E}_n$ to objects in $\mhm(\mb{C}^\times)_\mb{C}$.
For $\alpha \in \mb{R}$, we can also form the variation of complex mixed Hodge structure
\[ \mc{E}_{n, \alpha} \colonequals  \frac{\mc{O}_{\mb{C}^\times}[s]t^s}{(s + \alpha)^n} = \mc{E}_n \otimes \mc{O}_{\mb{C}^\times}t^{-\alpha}\]
by tensoring with the unitary local system $\mc{O}_{\mb{C}^\times}t^{-\alpha}$ with its Hodge structure given by Proposition \ref{prop:basic local systems}---here we must restrict to $\alpha \in \mb{Q}$ in order to work within $\mhm_{\bar{\mb{Q}}}$. By Lemma \ref{lem: tensor product formula}, $\mc{E}_{n, \alpha}$ is again an object in $\mhm(\mb{C}^\times)_{\mb{C}}$. Chasing through the various constructions, the weight and Hodge filtrations on $\mc{E}_{n, \alpha}$ are given by 
\[ W_{\ell} \mc{E}_{n, \alpha} = \sum_{2k \geq -\ell + 1} \mc{O}_{\mb{C}^\times}s^kt^{s + \alpha}, \quad F_p \mc{E}_{n, \alpha} = \sum_{k \leq p} \mc{O}_{\mb{C}^\times} s^kt^{s + \alpha}.\]
Moreover, since the quotient map $\mc{E}_{n + 1} \to \mc{E}_n$ is a morphism of admissible variations of $\mb{Q}$-mixed Hodge structure, we conclude that
\begin{equation}\label{eqn: En+1toEn} \mc{E}_{n + 1, \alpha} \to \mc{E}_{n, \alpha} \end{equation}
is a morphism in $\mhm(\mb{C}^\times)_{\mb{C}}$ for all $n$. Similarly, since multiplication by $s$ defines a morphism of variations of mixed Hodge structure $s \colon \mc{E}_n(1) \to \mc{E}_{n + 1}$, we also have that
\begin{equation} \label{eqn: EntoEn+1}
 s + \alpha \colon \mc{E}_{n, \alpha}(1) \to \mc{E}_{n + 1, \alpha}
\end{equation}
is a morphism in $\mhm(\mb{C}^\times)_{\mb{C}}$.

\begin{defn}
The \emph{free-monodromic local system on $\mb{C}^\times$ with monodromy $\alpha$} is the formal inverse limit
\[ \mc{O}_{\mb{C}^\times}[[s + \alpha]] t^s := \varprojlim_n \mc{E}_{n, \alpha} \in \operatorname{Pro} \mhm(\mb{C}^\times)_\mb{C}.\]
\end{defn}

Free-monodromic local systems (with and without Hodge structures) appear implicitly in Beilinson's construction of the nearby cycles and maximal extension functors \cite{Beilinson} and are playing an increasingly important role in representation theory (e.g.\ \cite{bezrukavnikov-yun} and \cite{DMB}). While for many of these applications it is important to regard this as an object in the category of pro-objects as above, this is only a matter of conceptual convenience here: for the construction below, it suffices to work with $(\mc{E}_{n, \alpha})_{n \in \mb{Z}_{> 0}}$ as an inverse system.

Now let $X$ be a complex manifold, $f \colon X \to \mb{C}$ a function with zero locus $D = f^{-1}(0)$ and $\mc{M}$ a mixed Hodge module on $U := f^{-1}(\mb{C}^\times)$. Then
\[ \frac{\mc{M}[s]f^s}{(s + \alpha)^n} = \mc{M} \otimes f^*\mc{E}_{n, \alpha} \]
is a well-defined object in $\mhm(U)$. Since $f^*\mc{E}_{n, \alpha}$ is a flat connection, by Lemma \ref{lem: tensor product formula} one has
\[ F_k\frac{\mc{M}[s]f^s}{(s + \alpha)^n}=\sum_{p+q\leq k} \frac{F_p\cM s^q f^s}{(s+\alpha)^n}.\]
Here we have used that the pullback of a connection is always non-characteristic, so the Hodge filtration on $f^*\mc{E}_{n, \alpha}$ is given by the obvious formula by Lemma \ref{lemma: hodge filtration on noncharacteristic pullback}. We form the formal inverse limit
\[ \mc{M}[[s + \alpha]]f^s := \varprojlim_n \frac{\mc{M}[s]f^s}{(s + \alpha)^n} \in \pro \mhm(U).\]
If $\mc{M}$ is extendable from $U$ to $X$, then so is each $\mc{M} \otimes f^*\mc{E}_{n, \alpha}$. So letting $j \colon U \to X$ be the inclusion, we may form the $*$-extension
\[ j_*\mc{M}[[s + \alpha]]f^s := \varprojlim_n j_*\frac{\mc{M}[s]f^s}{(s + \alpha)^n} \in \pro \mhm(X).\]

As a pro-$\ms{D}_X$-module, $j_*\mc{M}[[s + \alpha]]f^s$ is a completion of the non-holonomic $\ms{D}_X$-module $\mc{M}(*D)[s]f^s$, presented as an inverse limit of holonomic modules. 
As has been observed previously \cite{DV23, DMB}, the Hodge filtration provides a canonical ``decompletion'' of this $\ms{D}_X$-module as follows. For fixed $p$, we consider the inverse system
\[ F_p\left(j_*\frac{\mc{M}[s]f^s}{(s + \alpha)^n} \right)\]
and set
\[ F_p j_*\mc{M}[[s + \alpha]]f^s \colonequals \varprojlim_n F_p \left(j_*\frac{\mc{M}[s]f^s}{(s + \alpha)^n} \right).\]
As discussed in the introduction, this defines an interesting $\sD_X[s]$-submodule
\[ \bigcup_p F_pj_*\mc{M}[[s + \alpha]]f^s \subset j_*\mc{M}[[s + \alpha]]f^s.\]

\begin{lemma}\label{lemma: stabilisation of the inverse limit}
For any fixed $p$, the inverse system above stablises, i.e.
\[ F_p j_*\mc{M}[[s + \alpha]]f^s= F_p \left(j_*\frac{\mc{M}[s]f^s}{(s + \alpha)^n} \right),\quad \textrm{for $n\gg 0$}.\]
\end{lemma}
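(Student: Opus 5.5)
The plan is to compute the Hodge filtration on $j_*\mc{N}_n$, where $\mc{N}_n := \mc{M}[s]f^s/(s+\alpha)^n$, using Proposition \ref{prop:hodge extension}. That result applies along a smooth divisor, so we first reduce to one via the graph embedding $\iota$ of $f$; since $\iota_+$ is compatible with Hodge filtrations and with $j_*$, we may assume $f = t$ is a coordinate and $D$ is smooth. Proposition \ref{prop:hodge extension} then gives
\[ F_p j_*\mc{N}_n = \sum_{k \geq 0} F_k\ms{D}_X \cdot F^*_{p-k}V^0\mc{N}_n(*D). \]
So the task reduces to showing that, for fixed $q$, the pieces $F^*_qV^0\mc{N}_n(*D)$ stabilise as $n$ grows, in the sense that the transition maps \eqref{eqn: En+1toEn} induce isomorphisms on them for $n \gg 0$. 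Stabilisation of $F_p$ then follows because only finitely many $q \leq p$ are relevant: by Remark \ref{rmk:hodge lower bound}, $F_q$ vanishes for $q$ below a bound independent of $n$.

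To carry this out, first describe $V^0\mc{N}_n(*D)$ explicitly. Via the Malgrange--Mellin picture, $\mc{N}_n(*D) = \mc{M}(*D)\otimes \mc{O}[s]t^s/(s+\alpha)^n$. Twisting by $t^s$ with $s+\alpha$ nilpotent shifts the $\partial_t t$-eigenvalues by $-\alpha$ and adds a nilpotent part. Hence
\[ V^\beta \mc{N}_n(*D) = \bigoplus_{k<n} V^{\beta+\alpha}\mc{M}(*D)\,(s+\alpha)^k t^s, \]
at least as a filtered $\mc{O}$-module after splitting by powers of $s+\alpha$; the splitting is justified by uniqueness of the $V$-filtration (Theorem \ref{thm:multivariate V uniqueness}, case $r=1$), by checking the eigenvalue condition directly. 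By Lemma \ref{lem: * tensor product formula}, the Hodge filtration is
\[ F^*_q\mc{N}_n(*D) = \sum_{a+k\le q} F^*_a\mc{M}(*D)\,(s+\alpha)^k t^s. \]
Intersecting with $V^0$ and using the compatibility of $F$ with $V$ on $\mc{M}(*D)$ (strictness, Lemma \ref{lem:hodge * formula}), we get
\[ F^*_qV^0\mc{N}_n(*D) = \bigoplus_{0\le k<n} F^*_{q-k}V^{\alpha}\mc{M}(*D)\,(s+\alpha)^kt^s. \]
Since $F^*_{q-k}\mc{M}(*D)$ vanishes on $V^{\alpha}$ once $q-k$ is below the lowest Hodge index (locally on $X$), only $k \le q - p_0$ contribute. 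Thus for $n > q-p_0$ the transition map is an isomorphism. The pieces with $k \ge n$ die in the quotient, but they are zero anyway in this range.

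The main obstacle is the second step: justifying that the $V$-filtration and the Hodge filtration on $\mc{N}_n(*D)$ split compatibly along powers of $s+\alpha$, because the monodromy is non-semisimple and $F$ and $V$ are not a priori adapted to a common basis. I would handle this by checking that the candidate filtration on the right is good, satisfies the eigenvalue condition, and is $t$-stable in positive degrees. The lowest-index bound is then local: $F_{p_0-1}\mc{M}=0$ on a relatively compact open set, and since $(\mc{M}, F)$ is coherent this gives a uniform vanishing in $n$, which is all the argument needs.
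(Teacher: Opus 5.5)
Your argument is correct, but it takes a much longer route than the paper.

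\textbf{The paper's route.} The paper never computes $F_p j_*\mc{N}_n$. It observes that \eqref{eqn: En+1toEn} gives a short exact sequence of mixed Hodge modules $0\to K_n\to j_*\mc{N}_{n+1}\to j_*\mc{N}_n\to 0$. The kernel $K_n$ is the Tate twist by $(n)$ of a fixed object (the $*$-extension of $\mc{M}$ twisted by $f^{-\alpha}$). Hence $F_pK_n$ is $F_{p-n}$ of that fixed object, which vanishes for $n\gg 0$ by Remark \ref{rmk:hodge lower bound}. Morphisms of mixed Hodge modules are strict, so $F_p$ is exact on this sequence, and the transition maps are isomorphisms on $F_p$. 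No reduction to a smooth divisor, no $V$-filtration and no Proposition \ref{prop:hodge extension} are needed.

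\textbf{Your route.} You instead recompute, in the single-variable case, what the paper does later in \eqref{eq:snc magic formula 1}, \eqref{eq:f^s localised hodge} and Lemma \ref{lem:snc magic formula}. This gives you an explicit description of the stable $F_p$. The cost is the graph-embedding reduction, the uniqueness of $V$, and Proposition \ref{prop:hodge extension}. That proposition itself encodes strict specialisability, so your argument is not more elementary than the paper's use of strictness.

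\textbf{Two points to tighten.}

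First, the ``main obstacle'' you flag is not one. Your $V$-formula (which follows from uniqueness, as you say) and your $F^*$-formula (from Lemma \ref{lem: * tensor product formula}) are both visibly direct sums along the $\mc{O}$-linear decomposition of $\mc{N}_n(*D)$ by powers of $s+\alpha$. So their intersection splits termwise, and no strictness input is needed.

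Second, ``only finitely many $q\le p$ are relevant'' does not by itself give that $F_p j_*\mc{N}_{n+1}\to F_p j_*\mc{N}_n$ is an isomorphism. Reading Proposition \ref{prop:hodge extension} as an image filtration only yields surjectivity. You can close this in either of two ways:
\begin{itemize}
\item Read the proposition at the level of Rees modules. Since $R_F\ms{D}_X$ and $R_FV^0\ms{D}_X$ are non-negatively graded, the degree-$p$ part of $R_F\ms{D}_X\otimes_{R_FV^0\ms{D}_X}N$ depends only on $N$ in degrees $\le p$.
\item Get injectivity directly from $F_pj_*\mc{N}_{n+1}\subset F^*_p\mc{N}_{n+1}(*D)$. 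Your formula for $F^*$ shows the latter has no $(s+\alpha)^n$-component once $n>p-p_0$, so it meets the kernel of the transition map trivially.
\end{itemize}
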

\begin{proof}
Note that the map \eqref{eqn: En+1toEn} induces a short exact sequence of mixed Hodge modules
\begin{equation}\label{eqn: ses for stablisation} 0 \to j_*\cM(n) \xrightarrow{s^n} j_*\frac{\mc{M}[s]f^s}{(s + \alpha)^{n+1}} \to j_*\frac{\mc{M}[s]f^s}{(s + \alpha)^n} \to 0.\end{equation}
For $n\gg 0$, one has $F_pj_*\cM(n)=F_{p-n}j_*\cM=0$, because the Hodge filtration on $j_*\cM$ is bounded below (see Remark \ref{rmk:hodge lower bound}). This proves the statement.\end{proof}

The main theorem of \cite{DY25} is\footnote{We only proved the algebraic case, but the proof therein can be adapted to the analytic situation.} 

\begin{thm} \label{thm:univariate magic formula}
In the setting above, we have a natural filtered $\sD_X[s]$-isomorphism
\[ \left(\bigcup_p F_p j_*\mc{M}[[s + \alpha]]f^s, F_\bullet\right) = (V^{\alpha}\iota_+\mc{M}(*D), F_{\bullet + 1})\]
for $\alpha \geq 0$, where $\iota\colon X\to X\times \C$ is the graph embedding of $f$. 
\end{thm}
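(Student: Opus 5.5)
We plan to compare, level by level in $n$, the two sides via the Malgrange–Mellin transform. Since $\mc{M}$ is extendable and $f$ acts invertibly on $\mc{M}(*D)$, Proposition \ref{prop: Malgrange isomorphism} identifies $\iota_+\mc{M}(*D)\cong \mc{M}(*D)[s]f^s$ as $\ms{D}_X[s,t]$-modules. For each $n$ we first build a natural $\ms{D}_X[s]$-linear map
\[ \phi_n \colon V^{\alpha}\iota_+\mc{M}(*D) \to \frac{\mc{M}(*D)[s]f^s}{(s+\alpha)^n} = j_*\frac{\mc{M}[s]f^s}{(s+\alpha)^n}, \]
namely the restriction of the quotient map. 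The equality with $j_*$ holds because $\alpha\ge 0$ makes the local monodromy of $\mc{M}\otimes f^*\mc{E}_{n,\alpha}$ have no eigenvalue forcing a proper sub-extension, so the underlying $\ms{D}$-module of the $*$-extension is the localisation. The maps $\phi_n$ are compatible with \eqref{eqn: En+1toEn}, so they assemble to $\phi\colon V^\alpha\iota_+\mc{M}(*D)\to \mc{M}(*D)[[s+\alpha]]f^s$. Injectivity of $\phi$ follows from Krull's intersection theorem applied to the $\ms{D}_X[s]$-coherent module $V^\alpha$ (Lemma \ref{lem: DXs coherence}): $\bigcap_n (s+\alpha)^n V^\alpha$ vanishes locally after localising at $(s+\alpha)$, and away from $s=-\alpha$ the map into the completion is in any case determined by localisation at $(s+\alpha)$.

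The heart of the argument is the filtered statement: for each $p$ and each $n\gg0$,
\[ \phi_n(F_{p+1}V^\alpha\iota_+\mc{M}(*D)) = F_p\Bigl(j_*\frac{\mc{M}[s]f^s}{(s+\alpha)^n}\Bigr), \]
and that these images lift compatibly. To compute the right-hand side we apply the smooth-divisor formula \eqref{eqn: filtration on *extension} to the graph picture. Concretely, we pass to $X\times\mb{C}$: $j_*(\mc{M}\otimes f^*\mc{E}_{n,\alpha})$ is obtained from $\iota_+$ of it together with the $V$-filtration along $t=0$. The tensor structure given by Lemma \ref{lem: tensor product formula} expresses $F^*$ on the localisation as $\sum F_p\mc{M}\, s^q f^s$. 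Then $V^0$ of $\iota_+(\mc{M}\otimes f^*\mc{E}_{n,\alpha})(*D)$ is identified with $V^{\alpha}\iota_+\mc{M}(*D)/(s+\alpha)^n$. The shift by $\alpha$ comes from the twist by $t^{-\alpha}$, and the nilpotent part $s^k$ does not change real parts of eigenvalues. Strict specialisability (Proposition \ref{prop:mhm strict specialisation}) then gives the Hodge filtration on $V^0$ as the image of $F_{\bullet+1}V^\alpha$. The shift $+1$ is the usual graph-embedding shift.

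With the level-$n$ statement in hand, Lemma \ref{lemma: stabilisation of the inverse limit} shows the inverse limit of the $F_p$'s stabilises, so $F_p$ of the pro-object equals $\phi(F_{p+1}V^\alpha)$. Taking unions over $p$ gives the unfiltered identification, since $F_\bullet V^\alpha$ is exhaustive.

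The main obstacle is the level-$n$ identification of the Hodge filtration. This requires knowing that $F_\bullet$ on $V^\alpha\iota_+\mc{M}(*D)$ surjects strictly onto the quotient by $(s+\alpha)^n$. That is, we need $(s+\alpha)^n$ to be strict on $(V^\alpha,F)$ up to the Tate shift, which we would extract from the exact sequence \eqref{eqn: ses for stablisation} and bi-strictness of morphisms of mixed Hodge modules (Proposition \ref{prop:hodge forgetful functor}), arguing by induction on $n$.
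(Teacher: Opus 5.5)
Your frame (define $\phi_n$, prove $\phi_n(F_{p+1}V^{\alpha}\iota_+\mc{M}(*D))=F_p(j_*\mc{M}_n)$ for $n\gg0$ with $\mc{M}_n=\mc{M}[s]f^s/(s+\alpha)^n$, then stabilise) is a correct reformulation. But that level-$n$ identification is the whole theorem, and the argument you give for it does not work.

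\textbf{The identification of $V^0$.} Your identification of $V^0$ conflates two different variables. Write $\sigma$ for the variable of $\mc{E}_{n,\alpha}$. Then $\iota_+(\mc{M}\otimes f^*\mc{E}_{n,\alpha})(*D)\cong \iota_+\mc{M}(*D)[\sigma]t^{\sigma}/(\sigma+\alpha)^n$, with $-\partial_tt$ acting as $s-\sigma$. Its $V^0$ along $t=0$ is therefore $V^{\alpha}\iota_+\mc{M}(*D)[\sigma]t^{\sigma}/(\sigma+\alpha)^n$: a module on $X\times\mb{C}$ with an extra free variable, not $V^\alpha\iota_+\mc{M}(*D)/(s+\alpha)^n$. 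Applying \eqref{eqn: filtration on *extension} there computes the Hodge filtration of $\iota_+j_*\mc{M}_n$ on $X\times\mb{C}$ as $\sum_iF_i\ms{D}_{X\times\mb{C}}\cdot F^*_{p-i}V^0$. You give no mechanism for descending from this to $F_p(j_*\mc{M}_n)$ on $X$, or for matching it with $\phi_n(F_{p+1}V^\alpha)$.

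The bridge actually needed is different. One reduces to the case where $f=x$ is a coordinate, via an auxiliary graph embedding and Theorem \ref{thm:multivariate V direct image}. There the $V$-filtration of $j_*\mc{M}_n$ on $X$ itself along $x=0$ has $V^0=V^{\alpha}_*\mc{M}(*D)[s]x^s/(s+\alpha)^n$. Proposition \ref{prop:V-filtration on graph} then identifies $\ms{D}_X\otimes_{V^0\ms{D}_X}V^\alpha_*\mc{M}(*D)[s]x^s$ with $V^\alpha_*\iota_+\mc{M}(*D)$. Together with \eqref{eqn: filtration on *extension} and stabilisation in $n$, this identifies the image (Lemma \ref{lem:snc magic formula}).

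\textbf{The Hodge-theoretic step is circular.} Strict specialisability concerns $t$ and $\partial_t$ on graded pieces and says nothing about quotients by $(s+\alpha)^n$. The strictness of $(s+\alpha)^n\colon(V^\alpha,F_{\bullet-n})\to(V^\alpha,F_\bullet)$ that you need is not a formal consequence of it. On any nonzero $\gr_V^\beta$ with $\beta\neq\alpha$, $s+\alpha$ is bijective but cannot map $F_{p-1}$ onto $F_p$ at the lowest nonzero Hodge level, so the property is genuinely global. Extracting it from \eqref{eqn: ses for stablisation} presupposes exactly the filtered identification of $V^\alpha/(s+\alpha)^n$ with $j_*\mc{M}_n$ that you are trying to prove.

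The missing idea is the one that makes the filtration comparison automatic: $f$ acts injectively on each $F_pj_*\mc{M}[[s+\alpha]]f^s$ (\cite[Lemma 3.5]{DV23}, cf.\ Lemma \ref{lem:inverse limit strict inclusion}). Hence $\bigcup_pF_pj_*\mc{M}[[s+\alpha]]f^s$ embeds strictly into its localisation along $D$. Because the localised Hodge filtration on $j_*\mc{M}_n$ is the naive one, that localisation is $(\iota_+\mc{M}(*D),F^*_{\bullet+1})$ by the filtered Malgrange--Mellin transform (Proposition \ref{prop:filtered malgrange}, Lemma \ref{lem:localised magic formula}). The filtration on the left is then the one induced by $F^*$, and only the unfiltered image remains to be identified.

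\textbf{Where $\alpha\ge0$ is used.} It enters through $V^\alpha_*=V^\alpha$ and $F^*\cap V^\alpha=F\cap V^\alpha$ (Corollary \ref{cor:extending V*}, Lemma \ref{lem:hodge * formula}). It does not enter through monodromy: $j_*$ of an extendable module always has the localisation as its underlying $\ms{D}_X$-module, for every $\alpha$. Also, injectivity of $\phi$ needs no Krull argument, since $\bigcap_n(s+\alpha)^n\mc{M}(*D)[s]=0$.
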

In fact, we proved slightly more: the isomorphism holds for any $\alpha\in \R$ \cite[Theorem 1.1]{DY25} on the level of $\sD_X[s]$-modules (without the filtrations).

\subsubsection{The multivariable case}
We generalise Theorem \ref{thm:univariate magic formula} to several functions as follows. Suppose now that $f_1, \ldots, f_r \colon X \to \mb{C}$ are holomorphic functions on a complex manifold $X$, cutting out a divisor $D = \{f_1 \cdots f_r = 0\}$ with complement $U = X \setminus D$ and that $\mc{M} \in \mhm(U)_X$ is a mixed Hodge module extendable to $X$. In this case, for $\bs{\alpha} = (\alpha_1, \ldots, \alpha_r) \in \mb{R}^r$, we consider the free-monodromic local system on the torus $(\mb{C}^\times)^r$ defined by
\[ \mc{O}_{(\mb{C}^\times)^r}[[\bs{s} + \bs{\alpha}]]\bs{t}^{\bs{s}} := \varprojlim_n (\mc{E}_{n, \alpha_1} \boxtimes \cdots \boxtimes \mc{E}_{n, \alpha_r}) \in \pro \mhm((\mb{C}^\times)^r)_{\mb{C}^r}\]
and the associated pro-mixed Hodge module
\[ \mc{M}[[\bs{s} + \bs{\alpha}]]\bs{f^s} := \varprojlim_n \frac{\mc{M}[\bs{s}]\bs{f^s}}{((s_1 + \alpha_1)^n, \ldots, (s_r + \alpha_r)^n)}\in \pro \mhm(U)_X\]
where
\[ \frac{\mc{M}[\bs{s}]\bs{f^s}}{((s_1 + \alpha_1)^n, \ldots, (s_r + \alpha_r)^n)} \colonequals \mc{M} \otimes (f_1, \ldots, f_r)^*(\mc{E}_{n, \alpha_1} \boxtimes \cdots \boxtimes \mc{E}_{n, \alpha_r})\]
as a mixed Hodge module. Here $(f_1,\ldots,f_r)$ denotes the induced map $U\to (\mb{C}^{\times})^r$. As in the single variable case, we can extend to $X$ and use the Hodge filtration to decomplete by setting
\begin{equation}\label{eqn: Fp of multivariate magic formula} F_p j_*\mc{M}[[\bs{s} + \bs{\alpha}]]\bs{f^s} = \varprojlim_n F_p\left(j_*\frac{\mc{M}[\bs{s}]\bs{f^s}}{((s_1 + \alpha_1)^n, \ldots, (s_r + \alpha_r)^n)}\right),\end{equation}
obtaining a filtered $\ms{D}_X[\bs{s}]$-submodule
\[ \bigcup_p F_p j_*\mc{M}[[\bs{s} + \bs{\alpha}]]\bs{f^s} \subset j_*\mc{M}[[\bs{s} + \bs{\alpha}]]\bs{f^s}=\varprojlim_n j_{\ast}\frac{\mc{M}[\bs{s}]\bs{f^s}}{((s_1 + \alpha_1)^n, \ldots, (s_r + \alpha_r)^n)}.\]
\begin{remark}\label{remark: stablization for several variable}
By the same argument as in Lemma \ref{lemma: stabilisation of the inverse limit}, for each $p$, the inverse system \eqref{eqn: Fp of multivariate magic formula} stabilises for large $n$.
\end{remark}

The main theorem of this subsection is:

\begin{thm} \label{thm:multivariate magic formula}
For $\bs{\alpha} \in \mb{R}^r$, we have a canonical isomorphism of filtered $\ms{D}_X[\bs{s}]$-modules
\begin{equation}\label{eqn: * version of multivariate magic}
\left(\bigcup_p F_p j_*\mc{M}[[\bs{s} + \bs{\alpha}]]\bs{f^s}, F_\bullet\right) \cong (V^{\bs{\alpha}}_*\iota_+\mc{M}(*D), F_{\bullet + r}^*),\end{equation}
where $\iota \colon X\to X\times \C^r$ is the graph embedding of $f_1,\ldots,f_r$ and $F^*_{\bullet}V^{\bs{\alpha}}_*\iota_+\mc{M}(*D)=F^*_{\bullet}\iota_{+}\mc{M}(\ast D)\cap V_*^{\bs{\alpha}}\iota_{+}\mc{M}(*D)$. In particular, for each $p$, 
\[ F_{p+r}^*V^{\bs{\alpha}}_*\iota_+\mc{M}(*D)\cong F_p\left(j_*\frac{\mc{M}[\bs{s}]\bs{f^s}}{((s_1 + \alpha_1)^n, \ldots, (s_r + \alpha_r)^n)}\right), \quad \textrm{if $n\gg 0$}.\]
\end{thm}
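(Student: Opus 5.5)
Our plan is to reduce to the case where the $f_i$ are coordinate functions and then use Proposition \ref{prop:hodge !* formula} to compute both sides explicitly.

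\textbf{Reduction.} By Theorem \ref{thm:multivariate V direct image} and the Malgrange-Mellin transform, replacing $X$ by the graph and $\mc{M}$ by its pushforward, we may assume $X = X' \times \mb{C}^r$, $f_i = t_i$, and $D = \{t_1\cdots t_r = 0\}$ has simple normal crossings. We also use Proposition \ref{prop:V-filtration on graph} to transport the statement between $\iota_+\mc{M}$ and $\mc{M}$, with the shift $F_{\bullet + r}$ coming from the order shift of the graph embedding. The Hodge filtration of the finite-level objects is compatible with this reduction because $\mc{E}_{n,\bs{\alpha}} := \boxtimes_i \mc{E}_{n,\alpha_i}$ is pulled back. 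We would check this via Lemma \ref{lemma: hodge filtration on noncharacteristic pullback} and Lemma \ref{lem: * tensor product formula}.

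\textbf{Computing the left side at finite level.} Put $\mc{N}_n = \mc{M}[\bs{s}]\bs{t^s}/((s_i+\alpha_i)^n)$, a mixed Hodge module on $U$. By Proposition \ref{prop:hodge !* formula},
\[
F_\bullet j_*\mc{N}_n = F_\bullet\ms{D}_X\cdot F^*_\bullet V^{\bs{0}}\mc{N}_n(*D).
\]
By Lemma \ref{lem: * tensor product formula}, $F^*_\bullet \mc{N}_n(*D) = \sum_{p+q\le \bullet} F^*_p\mc{M}(*D)\,\bs{s}^{q}$ in the evident sense. The multivariate $V$-filtration of $\mc{N}_n(*D)$ is computed from that of $\mc{M}(*D)$. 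Since $\mc{N}_n$ is an iterated extension of copies of $\mc{M}\bs{t}^{-\bs{\alpha}}$, the eigenvalues of $\bs{s}$ are shifted by $\bs{\alpha}$, and $V^{\bs{0}}_*\mc{N}_n$ can be written as a twist of $V^{\bs{\alpha}}_*\mc{M}$. We make this precise through the bimodule description of Proposition \ref{prop:V-filtration on graph}: the localisation at the maximal ideal $(\bs{s}+\bs{\alpha})$ matches $(V_*^{\bs{\alpha}}\iota_+\mc{M})\otimes \mb{C}[\bs{s}]/((s_i+\alpha_i)^n)$, using Lemma \ref{lem:localised chambers}.

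\textbf{Passing to the limit.} This gives natural filtered maps
\[
V^{\bs{\alpha}}_*\iota_+\mc{M} \to j_*\mc{N}_n
\]
compatible in $n$. On $F_{p+r}$ these are isomorphisms onto $F_p j_*\mc{N}_n$ for $n \gg 0$, by Remark \ref{remark: stablization for several variable} and because $F_{p}V^{\bs{\alpha}}_*$ is $\mc{O}_X$-coherent and hence $(\bs{s}+\bs{\alpha})$-adically separated after finitely many steps. We need the following ingredients for this step:
\begin{itemize}
\item Injectivity, which comes from flatness over $\mb{C}[\bs{s}]$ (Theorem \ref{thm:flatness}) together with Krull intersection on each coherent $F_p$.
\item Surjectivity on $F_p$, which comes from the strict exact sequences analogous to \eqref{eqn: ses for stablisation}, one for each variable, together with the Hodge lower bound in Remark \ref{rmk:hodge lower bound}.
\end{itemize}
Taking unions over $p$ then yields the isomorphism of filtered $\ms{D}_X[\bs{s}]$-modules.

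\textbf{Main obstacle.} We expect the hardest step to be identifying the Hodge filtration on $j_*\mc{N}_n$ with the one induced from $F^*V_*^{\bs{\alpha}}$, that is, proving strictness of the comparison map. The single-variable case (Theorem \ref{thm:univariate magic formula}) handles one direction. In general we would first try induction on $r$, applying the single-variable theorem to $t_r$ with coefficients in the mixed Hodge module $j'_*\mc{N}'_n$ built from the first $r-1$ variables. To align the iterated $V$-filtrations with the multivariate one, we would use Proposition \ref{prop:V-filtration for sub divisor}, uniqueness (Theorem \ref{thm:V* uniqueness}), and strictness from Lemma \ref{lem:hodge * formula}.
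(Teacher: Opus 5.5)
This is essentially the paper's proof, reorganised. The paper likewise reduces to the case $f_i=x_i$ by an extra graph embedding. It computes the finite level using Proposition~\ref{prop:hodge !* formula}, Lemma~\ref{lem: * tensor product formula}, the identification $V^{\bs 0}_*\mc{N}_n=(V^{\bs\alpha}_*\mc{M})[\bs s]\bs{x^s}/((s_i+\alpha_i)^n)$, and Proposition~\ref{prop:V-filtration on graph}. It settles exactly the strictness you single out by applying the single-variable result (\cite[Lemma 3.5]{DV23}) to $f_r$ with coefficients in $j'_*\mc{N}'_n$. This shows that $\bigcup_pF_pj_*\mc{M}[[\bs s+\bs\alpha]]\bs{f^s}$ embeds strictly into its localisation along $D$, whose filtration is $F^*_{\bullet+r}$ by the filtered Malgrange--Mellin transform.

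Two of your justifications in the limit step need correcting.
\begin{itemize}
\item Injectivity on $F^*_{p+r}$ of $V^{\bs\alpha}_*\iota_+\mc{M}\to j_*\mc{N}_n$ for $n\gg0$ does not follow from flatness or Krull intersection, since $F^*_{p+r}$ is not a $\mb{C}[\bs s]$-module. It holds because $F^*_{p+r}$ has $\bs s$-degree bounded in terms of $p$ (the Hodge filtration is bounded below), whereas every nonzero element of the kernel $((s_i+\alpha_i)^n)\cdot\iota_+\mc{M}(*D)$ has $\bs s$-degree at least $n$.
\item Surjectivity onto $F_pj_*\mc{N}_n$ comes from lifting generators in the tensor formula. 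The sequences \eqref{eqn: ses for stablisation} only give stabilisation in $n$.
\end{itemize}
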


\begin{remark}
As stated, Theorem \ref{thm:multivariate magic formula} gives a formula for the $V_*$-filtration and the localised Hodge filtration $F_\bullet^*$. Recall, however, that for $\bs{\alpha} \in \mb{R}_{\geq 0}^r$, we have
\[ (V_*^{\bs{\alpha}}\iota_+\mc{M}(*D), F_\bullet^*) = (V^{\bs{\alpha}}\iota_+j_*\mc{M}, F_\bullet)\]
by Corollary \ref{cor:extending V*} and Lemma \ref{lem:hodge * formula}, so this also gives a formula for the actual Hodge and $V$-filtrations in this regime (if $r=1$, it recovers Theorem \ref{thm:univariate magic formula}). Moreover, by Propositions \ref{prop:weak specialisation} and \ref{prop:multivariate V extension}, the whole multivariate $V$-filtration $V^{\bullet}\iota_{+}\cM(\ast D)$ can be recovered from the free-monodromic local system on the left of \eqref{eqn: * version of multivariate magic} via the formula \eqref{eqn: whole V filtration from Vres}.
\end{remark}

The proof of Theorem \ref{thm:multivariate magic formula} takes up the rest of the subsection. As a first step, let us record the following filtered version of the Malgrange-Mellin transform.

\begin{prop} \label{prop:filtered malgrange}
The Malgrange-Mellin transform $\mc{M}(*D)[\bs{s}]\bs{f^s} \cong \iota_+\mc{M}(*D)$ restricts to an isomorphism
\[ F_p^*\mc{M}(*D)[\bs{s}]\bs{f^s} \cong F_{p + r}^*\iota_+\mc{M}(*D),\]
for all $p$, where
\[ F_p^*\mc{M}(*D)[\bs{s}]\bs{f^s} \colonequals \sum_{j + k_1 + \cdots + k_r \leq p}F_j^*\mc{M}(*D) s_1^{k_1} \cdots s_r^{k_r}\bs{f^s} \subset \mc{M}(*D)[\bs{s}]\bs{f^s}.\]
\end{prop}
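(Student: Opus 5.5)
The plan is to reduce to the localised filtered graph embedding $\iota_+$ of a mixed Hodge module on $U$, and then compare both sides explicitly on $U$ before extending across $D$ using coherence. Note first that $\iota_+\mc{M}(*D)$ is the localisation at $\{t_1\cdots t_r=0\}$ of the extendable mixed Hodge module $\iota_+j_*\mc{M}$. Its localised Hodge filtration $F^*_\bullet$ is therefore obtained as follows: take $\iota_+$ of any extension $\mc{M}'$ of $\mc{M}$ to $X$, with Hodge filtration given by Laumon's formula for the closed immersion $\iota$, and then localise. Concretely,
\[ F_{p+r}\iota_+\mc{M}' = \sum_{j+|\bs{k}|\leq p} F_j\mc{M}'\otimes \bs{\partial_t}^{\bs{k}}\delta_{\bs{t}=\bs{f}}, \]
where the shift by $r$ is the codimension of the graph, matching the convention $F_{p+1}\iota_+\mc{M}=\sum_{r+s\le p}F_r\mc{M}\otimes\partial_t^s$ recalled before Definition~\ref{defn: strict A-specializability}.

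The proof then splits into two inclusions, both checked after localising along $D$. The Malgrange–Mellin isomorphism of Proposition~\ref{prop: Malgrange isomorphism} sends $u\,\bs{s}^{\bs{k}}\bs{f^s}$ to $u\otimes \prod_i(-\partial_{t_i}t_i)^{k_i}\delta$. To compute this, use $t_i\delta = f_i\delta$ and commute $t_i$ past $\partial_{t_j}$. This shows that $u\,\bs{s}^{\bs{k}}\bs{f^s}$ maps to a sum
\[ \sum_{\bs{l}\leq \bs{k}} c_{\bs{l}}\,(\bs{f}^{\bs{l}}u)\otimes \bs{\partial_t}^{\bs{l}}\delta, \]
with the coefficients $c_{\bs{l}}$ constants and $c_{\bs{k}}=\pm1$. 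Since $F_j^*\mc{M}(*D)$ is an $\mc{O}_X(*D)$-module, we have $\bs{f}^{\bs{l}}F_j^*\subset F_j^*$, so this gives $F_p^*\mc{M}(*D)[\bs{s}]\bs{f^s}\subset F^*_{p+r}\iota_+\mc{M}(*D)$.

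For the reverse inclusion, argue by induction on $|\bs{k}|$ using the same triangular formula. Inverting the triangular system, the element $u\otimes\bs{\partial_t}^{\bs{k}}\delta$ equals $\pm\bs{f}^{-\bs{k}}u\,\bs{s}^{\bs{k}}\bs{f^s}$ plus lower-order terms of the form $\bs{f}^{-\bs{k}+\bs{l}}u\,\bs{s}^{\bs{l}}\bs{f^s}$ with $\bs{l}<\bs{k}$ (more precisely, polynomial combinations of these). Each of these terms lies in $F_p^*\mc{M}(*D)[\bs{s}]\bs{f^s}$, again because $f_i$ is invertible on $F^*_j\mc{M}(*D)$.

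The only subtle point is whether the localisation of Laumon's filtration is really $\sum F_j^*\mc{M}(*D)\otimes\bs{\partial_t}^{\bs{k}}\delta$. In other words, we must check that localising at $\{t_1\cdots t_r=0\}$ on $X\times\mb{C}^r$ agrees with localising at $D$ on the $\mc{M}'$ factor. I would handle this by a coherence argument in the spirit of Lemma~\ref{lem: * tensor product formula}. On the graph, $t_i$ acts as $f_i$ modulo the nilpotent operator $t_i-f_i$, so the $\mc{O}$-coherent pieces $F_q\iota_+\mc{M}'$ and $\iota_+$ of $F_q\mc{M}'$ have the same localisations. The result is independent of the chosen extension $\mc{M}'$, as in the definition of $F^*_\bullet$.

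I expect this to be the only step requiring care; the rest is the explicit triangular change of basis between $\bs{s}^{\bs{k}}$ and $\bs{\partial_t}^{\bs{k}}$.
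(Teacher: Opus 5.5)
Your proposal is correct and follows the paper's own argument. The paper likewise deduces the statement from the explicit triangular formulas for the Malgrange--Mellin transform and its inverse. It uses only that each $f_i^{-1}$ preserves $F_j^*\mc{M}(*D)$, and attributes the shift by $r$ to the Hodge filtration on the closed-immersion direct image $\iota_+$. Your additional check that localising $F_{p+r}\iota_+\mc{M}'$ along $\{t_1\cdots t_r=0\}$ gives $\sum_{j+|\bs{k}|\leq p}F_j^*\mc{M}(*D)\otimes\bs{\partial_t}^{\bs{k}}\delta$ is a step the paper leaves implicit, and your argument for it is sound.
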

\begin{proof}
Since each $f_i^{-1}$ preserves $F_p^*\mc{M}(*D)$ by definition, this follows directly from the usual formulas for the Malgrange-Mellin transform and its inverse as in the single variable case \cite[Lemma 2.3]{DY25}. Note that the shift by $r$ comes from the definition of the Hodge filtration on $\iota_+\mc{M}$.
\end{proof}

Using Proposition \ref{prop:filtered malgrange}, we check that Theorem \ref{thm:multivariate magic formula} holds after localisation at $D$. For simplicity of notation, let us write
\[ \mc{M}_n = j_*\frac{\mc{M}[\bs{s}]\bs{f^s}}{((s_1 + \alpha_1)^n, \ldots, (s_r + \alpha_r)^n)} \]
from now on. Since $(f_1, \ldots, f_r)^*(\mc{E}_{n, \alpha_1} \boxtimes \cdots \boxtimes \mc{E}_{n, \alpha_r})$ is a vector bundle on $U = X \setminus D$, the localised Hodge filtration on $\mc{M}_n$ is given by the naive formula
\begin{equation} \label{eq:f^s localised hodge}
F_p^*\mc{M}_n = \sum_{j + k_1 + \cdots + k_r \leq p}F_j^*\mc{M}(*D) s_1^{k_1} \cdots s_r^{k_r}\bs{f^s}.
\end{equation}
by Lemma \ref{lem: * tensor product formula}.
Since the filtration $F_\bullet^*\mc{M}(*D)$ is the localisation of a good filtration $F_\bullet j_*\mc{M}$, it is bounded below. 
So the polynomial order in the $s_i$ of the sections of \eqref{eq:f^s localised hodge} are bounded independent of $n$. So in fact,
\[ F_p^*\mc{M}[[\bs{s} + \bs{\alpha}]]\bs{f^s}(*D) \colonequals (F_pj_*\mc{M}[[\bs{s} + \bs{\alpha}]])(*D) = F_p^*\mc{M}(*D)[\bs{s}]\bs{f^s} \subset \mc{M}[\bs{s}]{\bs{f^s}}(*D).\]
Taking the union over all $p$ and applying Proposition \ref{prop:filtered malgrange}, we conclude:

\begin{lem} \label{lem:localised magic formula}
We have an isomorphism of filtered $\ms{D}_X(*D)[\bs{s}]$-modules
\[ \left(\bigcup_p F_p^*\mc{M}[[\bs{s} + \bs{\alpha}]]\bs{f^s}(*D), F_\bullet^*\right) \cong (\iota_+\mc{M}(*D), F_{\bullet + r}^*).\]
\end{lem}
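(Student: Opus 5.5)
The plan is to compute both sides one Hodge level $p$ at a time, reduce the left-hand side to a single finite stage $\mc{M}_n$ with $n\gg 0$, and then apply the filtered Malgrange--Mellin transform (Proposition \ref{prop:filtered malgrange}).

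First, fix $p$. By Remark \ref{remark: stablization for several variable}, the inverse system $F_p\mc{M}_n$ stabilises, so $F_pj_*\mc{M}[[\bs{s}+\bs{\alpha}]]\bs{f^s} = F_p\mc{M}_n$ for $n \geq n_0(p)$. Localisation along $D$ is exact on $\mc{O}_X$-modules. Hence $(F_pj_*\mc{M}[[\bs{s}+\bs{\alpha}]]\bs{f^s})(*D) = F_p^*\mc{M}_n$ for such $n$, and the transition maps $\mc{M}_{n+1}\to \mc{M}_n$ induce isomorphisms on these localised pieces. By Lemma \ref{lem: * tensor product formula}, applied with the flat connection $(f_1,\ldots,f_r)^*(\mc{E}_{n,\alpha_1}\boxtimes\cdots\boxtimes\mc{E}_{n,\alpha_r})$, this piece is given by \eqref{eq:f^s localised hodge}. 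It is the image in
\[ \mc{M}_n(*D) = \mc{M}(*D)[\bs{s}]\bs{f^s}/((s_i+\alpha_i)^n)_i \]
of the submodule $F_p^*\mc{M}(*D)[\bs{s}]\bs{f^s}$ from Proposition \ref{prop:filtered malgrange}.

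The main point, and the only step needing care, is that this image map is injective for $n$ large. Since $F_\bullet j_*\mc{M}$ is a good filtration, it is bounded below locally, say $F_{p_0-1}^*\mc{M}(*D)=0$. Then every section of $F_p^*\mc{M}(*D)[\bs{s}]\bs{f^s}$ has degree at most $p-p_0$ in each $s_i$. For $n>p-p_0$ the monomials $\prod_i (s_i+\alpha_i)^{k_i}$ with all $k_i<n$ form a free basis of $\mb{C}[\bs{s}]/((s_i+\alpha_i)^n)_i$. Consequently, the composite
\[ \mc{M}(*D)\otimes\mb{C}[\bs{s}]_{\deg\le p-p_0}\to \mc{M}(*D)\otimes\mb{C}[\bs{s}]/((s_i+\alpha_i)^n) \]
is injective. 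Thus $F_p^*\mc{M}_n \cong F_p^*\mc{M}(*D)[\bs{s}]\bs{f^s}$ for $n\gg0$, compatibly with the transition maps. This yields an identification
\[ F_p^*\mc{M}[[\bs{s}+\bs{\alpha}]]\bs{f^s}(*D) = F_p^*\mc{M}(*D)[\bs{s}]\bs{f^s} \subset \mc{M}(*D)[\bs{s}]\bs{f^s}. \]
These identifications are compatible as $p$ increases, because all of them sit inside the same inverse limit $\varprojlim_n \mc{M}_n(*D)$.

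Finally, take the union over $p$. The left side becomes $\bigcup_p F_p^*\mc{M}(*D)[\bs{s}]\bs{f^s} = \mc{M}(*D)[\bs{s}]\bs{f^s}$, which is exhaustive since $F^*_\bullet\mc{M}(*D)$ is. The identification is $\ms{D}_X(*D)[\bs{s}]$-linear, because the actions on each $\mc{M}_n$ are given by the same formulas \eqref{eqn: action on M[s]fs}, reduced modulo $((s_i+\alpha_i)^n)$. Composing with Proposition \ref{prop:filtered malgrange}, which carries $F_p^*\mc{M}(*D)[\bs{s}]\bs{f^s}$ onto $F^*_{p+r}\iota_+\mc{M}(*D)$, gives the claimed filtered isomorphism.
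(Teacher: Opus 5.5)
Your proposal is correct and follows essentially the same route as the paper: the naive formula for $F^*_p\mc{M}_n$ from Lemma \ref{lem: * tensor product formula}, then boundedness below of $F^*_\bullet\mc{M}(*D)$ to bound the $\bs{s}$-degree independently of $n$, and finally the filtered Malgrange--Mellin transform (Proposition \ref{prop:filtered malgrange}). Your injectivity argument via the basis $\prod_i (s_i+\alpha_i)^{k_i}$ of $\mb{C}[\bs{s}]/((s_i+\alpha_i)^n)$ spells out the step that the paper states in a single line.
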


Now, by construction, we have a natural localisation map
\begin{equation} \label{eq:inverse limit inclusion}
\left(\bigcup_p F_pj_*\mc{M}[[\bs{s} + \bs{\alpha}]]\bs{f^s}, F_\bullet\right) \to \left(\bigcup_p F_p^*\mc{M}[[\bs{s} + \bs{\alpha}]]\bs{f^s}(*D), F_\bullet^*\right).
\end{equation}
To prove Theorem \ref{thm:multivariate magic formula}, we will show that \eqref{eq:inverse limit inclusion} is strict injective and that the image coincides with $V^{\bs{\alpha}}_*\iota_+\mc{M}(*D)$ under the isomorphism of Lemma \ref{lem:localised magic formula}.

\begin{lem} \label{lem:inverse limit strict inclusion}
The morphism \eqref{eq:inverse limit inclusion} is strict and injective.
\end{lem}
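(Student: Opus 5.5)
By Remark \ref{remark: stablization for several variable}, for each fixed $p$ both sides of \eqref{eq:inverse limit inclusion} in degree $p$ are computed at a single finite level. Concretely, for $n \gg 0$ (depending on $p$) we have
\[ F_pj_*\mc{M}[[\bs{s} + \bs{\alpha}]]\bs{f^s} = F_p\mc{M}_n \quad \text{and} \quad F_p^*\mc{M}[[\bs{s} + \bs{\alpha}]]\bs{f^s}(*D) = F_p^*\mc{M}_n(*D) = F_p^*\mc{M}_n. \]
The second identification holds because the polynomial degree in the $s_i$ of sections of \eqref{eq:f^s localised hodge} is bounded independently of $n$. The plan is to reduce the lemma to the following finite-level statement: the localisation map
\[ (\mc{M}_n, F_\bullet) = (j_*\mc{M}_n', F_\bullet) \to (\mc{M}_n'(*D), F_\bullet^*) \]
is injective and strict for each $n$, where $\mc{M}_n'$ is the mixed Hodge module on $U$. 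We then pass to the inverse limit in each fixed degree $p$, compatibly with the transition maps.

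Injectivity at finite level is clear, since $j_*\mc{M}_n' = (j_*\mc{M}_n')(*D)$ as $\ms{D}_X$-modules. Strictness reduces to the inclusion $F_p^*\mc{M}_n(*D) \cap j_*\mc{M}_n \subset F_pj_*\mc{M}_n$, since the reverse inclusion is automatic. For this I would use Proposition \ref{prop:hodge !* formula}, applied to $\mc{M}_n' \in \mhm(U)_X$ after reducing locally to an snc setting, together with Lemma \ref{lem:hodge * formula}. If $D$ is simple normal crossings, Lemma \ref{lem:hodge * formula} with $\bs{\alpha} = \bs{0}$ shows that $(V^{\bs{0}}j_*\mc{M}_n, F_\bullet) \to (\mc{M}_n(*D), F^*_\bullet)$ is strict. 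By Proposition \ref{prop:hodge !* formula}, $F_\bullet j_*\mc{M}_n$ is then generated by $F_\bullet V^{\bs{0}}$, and by construction it agrees with $F^*_\bullet$ after localisation.

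For a general divisor I would work instead with the graph embedding $\iota\colon X \to X\times\C^r$, which makes $\{t_1\cdots t_r=0\}$ a simple normal crossings divisor. The key point is that the Hodge filtration on $j_*$ is characterised as the smallest filtration extending the given one on $U$ with the right specialisation properties. Equivalently, $F_pj_*\mc{M}_n = j_*\mc{M}_n \cap F_p^*\mc{M}_n(*D)$: a section of $j_*$ lying in the localised Hodge piece already lies in the Hodge piece. This is the standard fact for $*$-extensions (\cite[Propositions 11.3.3]{MHMproject}); it follows from \eqref{eqn: filtration on *extension} along a smooth divisor, for example $\{t_1\cdots t_r = 0\}$ after passing through the graph of $f_1\cdots f_r$.

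Once finite-level strict injectivity is established, the limit step is formal. For fixed $p$ and $n \gg 0$ the two sides of \eqref{eq:inverse limit inclusion} in degree $p$ are $F_p\mc{M}_n$ and $F_p^*\mc{M}_n$, so the map is injective on each $F_p$. Strictness follows because any section of the union on the left lies in some $F_q$, and its image lying in $F_p^*$ forces it into $F_p$ at a common large level $n$. I expect the main obstacle to be the finite-level identity $F_pj_*\mc{N} = j_*\mc{N}\cap F_p^*\mc{N}(*D)$ for a non-normal-crossings $D$. If the graph-embedding reduction is awkward, I would instead take a log resolution $\pi$ of $D$ and use strictness of $\pi_+$ on Hodge filtrations together with $j_* = \pi_+\tilde{j}_*$, applying Lemma \ref{lem:hodge * formula} upstairs.
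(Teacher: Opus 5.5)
There is a genuine gap: the finite-level statement you reduce to is false. For each $n$ the underlying $\ms{D}_X$-module of $j_*\mc{M}_n'$ \emph{is} $\mc{M}_n'(*D)$; you use this yourself for injectivity. So the map $(j_*\mc{M}_n',F_\bullet)\to(\mc{M}_n'(*D),F^*_\bullet)$ is the identity on underlying modules, and strictness would force $F_\bullet j_*\mc{M}_n'=F^*_\bullet\mc{M}_n'(*D)$. This fails whenever $D$ meets the support, because $F_pj_*\mc{M}_n'$ is $\mc{O}_X$-coherent while $F^*_p$ is a localisation. Already for $X=\mb{C}$, $D=\{0\}$ and $\mc{O}_U$ one has $F_0j_*\mc{O}_U=\mc{O}_{\mb{C}}x^{-1}$, whereas $F_0^*=(\mc{O}_{\mb{C}}x^{-1})(*0)=\mc{O}_{\mb{C}}(*0)$.

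The ``standard fact'' you attribute to \cite{MHMproject} is not what \eqref{eqn: filtration on *extension} says. That formula identifies $F_\bullet$ and $F^*_\bullet$ only on $V^0$; this is the content of Lemma \ref{lem:hodge * formula} for $\bs{\alpha}\geq\bs{0}$. Away from $V^0$, $F_\bullet j_*$ is generated by $F_\bullet\ms{D}_X$ and is much smaller than $F^*_\bullet$. So your ``formal'' limit step, which invokes finite-level strictness at a common large $n$, has nothing to rest on. Your log-resolution fallback aims at the same false identity.

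The lemma is really a statement about the decompletion only. The source $\bigcup_p\varprojlim_n F_pj_*\mc{M}_n$ is a much smaller submodule of $\varprojlim_n j_*\mc{M}_n$; a posteriori it is $V^{\bs{\alpha}}_*\iota_+\mc{M}(*D)$. Since the target filtration is by definition the localisation of the source filtration, strict injectivity is equivalent to each $f_i$ acting injectively on $F_q/F_p$, i.e.\ on $\gr^F$, of this inverse limit. That fails at every finite level, e.g.\ $\gr^F_1\mc{O}_{\mb{C}}(*0)\cong\mc{O}_{\mb{C}}/(x)$. It holds only because, for fixed $p$, the pieces $F_pj_*\mc{M}_m$ stabilise and one can compare levels $m\geq n$.

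The paper gets this by writing $\mc{M}_n=j_{r*}\bigl(\mc{N}[s_r]f_r^{s_r}/(s_r+\alpha_r)^n\bigr)$, where $\mc{N}$ is the analogous $(r-1)$-variable module on $X\setminus D_r$. It then invokes the single-function case \cite[Lemma 3.5]{DV23}, and uses that the surjections between levels $m\geq n$ compose to an isomorphism on $F_p$. Some inverse-limit input of this kind is indispensable; the statement cannot be proved level by level.
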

\begin{proof}
Since the target is the localisation of the source at $f_1 \cdots f_r$, the claim is equivalent to the assertion that
\[ f_i \colon F_pj_*\mc{M}[[\bs{s} + \bs{\alpha}]]\bs{f^s} \to F_p j_*\mc{M}[[\bs{s} + \bs{\alpha}]]\bs{f^s}\]
is injective for all $i$ and all $p$. For the sake of notational simplicity, let us prove this for $i = r$. For fixed $p$, choosing $n$ sufficiently large, we have
\[ F_pj_*\mc{M}[[\bs{s} + \bs{\alpha}]]\bs{f^s} = F_p\mc{M}_n = F_p\left( j_{r*}\frac{\mc{N}[s_r]f_r^{s_r}}{((s_r + \alpha_r)^n)}\right),\]
where
\[ \mc{N} = j'_*\frac{\mc{M}[s_1, \ldots, s_{r - 1}]f_1^{s_1} \cdots f_{r - 1}^{s_{r - 1}}}{((s_1 + \alpha_1)^n, \ldots, (s_{r - 1} + \alpha_{r - 1})^n)}\]
for $j' \colon U \to X \setminus D_r$ and $j_r \colon X \setminus D_r \to X$ the inclusions. Now, by \cite[Lemma 3.5]{DV23}, the present lemma holds for a single function. Applying this to the mixed Hodge module $\mc{N}$ and the function $f_r$, we have that $f_r$ acts injectively on
\[ F_p\left( \frac{\mc{N}[s_r]f_r^{s_r}}{((s_r + \alpha_r)^m)}(*D_r)\right)\]
for $m \gg 0$. But if $m \geq n$ then we have surjections
\[ F_p\mc{M}[[\bs{s} + \bs{\alpha}]] \bs{f^s}(*D) \to F_p\left( \frac{\mc{N}[s_r]f_r^{s_r}}{((s_r + \alpha_r)^m)}(*D_r) \right)\to F_p \left(\frac{
\mc{N}[s_r]f_r^{s_r}}{((s_r + \alpha_r)^n)}(*D_r) \right)\]
such that the composition is an isomorphism. So in fact both arrows are isomorphisms and hence $f_r$ acts injectively on $F_p\mc{M}[[\bs{s} + \bs{\alpha}]]\bs{f^s}(*D)$ as claimed.
\end{proof}

It remains to prove that that the image of \eqref{eq:inverse limit inclusion} coincides with $V^{\bs{\alpha}}_*\iota_+\mc{M}(*D)$. We first treat the case where the divisor $D$ has simple normal crossings and the functions $f_i = x_i$ are coordinates cutting out the irreducible components.

\begin{lem} \label{lem:snc magic formula}
In the setting above, the image of \eqref{eq:inverse limit inclusion} coincides with $V^{\bs{\alpha}}_*\iota_+\mc{M}(*D)$.
\end{lem}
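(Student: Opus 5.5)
The strategy is to reduce, in the snc setting with $f_i = x_i$, to a statement about $\mc{M}(*D)$ itself via Proposition \ref{prop:V-filtration on graph}, and then compute both sides using the Hodge localisation formula Proposition \ref{prop:hodge !* formula}. Concretely, each $\mc{M}_n = j_*\bigl(\mc{M} \otimes \bs{x}^*(\mc{E}_{n,\alpha_1}\boxtimes\cdots\boxtimes\mc{E}_{n,\alpha_r})\bigr)$ is a $*$-extension across the snc divisor $D$. So Proposition \ref{prop:hodge !* formula} gives
\[ (\mc{M}_n, F_\bullet) \cong (\ms{D}_X, F_\bullet)\otimes_{(V^{\bs{0}}\ms{D}_X,F_\bullet)} (V^{\bs{0}}\mc{M}_n(*D), F^*_\bullet). \]
Hence $F_p\mc{M}_n$ is determined by the pair $(V^{\bs{0}}\mc{M}_n(*D), F^*_\bullet)$.

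The first step is to compute $V^{\bs{0}}_*\mc{M}_n(*D)$ in terms of $V_*^\bullet\mc{M}(*D)$. The natural guess is
\[ V^{\bs{0}}_*\mc{M}_n(*D) = \sum_{\bs{k}} V_*^{\bs{\alpha}}\mc{M}(*D)\,(\bs{s}+\bs{\alpha})^{\bs{k}}\bs{x^s} \bmod ((s_i+\alpha_i)^n). \]
The reason is that on $\mc{M}(*D)[\bs{s}]\bs{x^s}/((\bs{s}+\bs{\alpha})^n)$ the operator $\partial_{x_i}x_i$ acts as $\partial_{x_i}x_i + s_i$. Twisting by $\bs{x}^{\bs{s}}$ with $s_i$ nilpotent near $-\alpha_i$ therefore shifts the eigenvalues of $-\partial_{x_i}x_i$ by $\alpha_i$. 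This is checked by verifying that the right-hand side satisfies the defining properties of a $V_*$-filtration, and then applying uniqueness (Theorem \ref{thm:V* uniqueness}). Using Lemma \ref{lem:localised chambers}, the eigenvalue condition is checked after localising at maximal ideals. For the Hodge part, $F^*_p$ on $\mc{M}_n(*D)$ is given by \eqref{eq:f^s localised hodge}. I also need strictness of the inclusion $V^{\bs{0}}_* \subset \mc{M}_n(*D)$ with respect to $F^*$ (Lemma \ref{lem:hodge * formula}). Then I would check that the induced filtration is the naive one, $\sum F^*_j V^{\bs{\alpha}}_*\mc{M}(*D)\cdot(\bs{s}+\bs{\alpha})^{\bs{k}}$ with $j+|\bs{k}|\le p$.

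The second step tensors with $\ms{D}_X$ and passes to the limit. Writing $\ms{D}_X\otimes_{V^{\bs{0}}\ms{D}_X}(-)$ in terms of the basis $\bs{\partial_x}^{\bs{m}}$, and using that $\partial_{x_i}$ acts on $u\,p(\bs{s})\bs{x^s}$ as $(\partial_{x_i}u + s_i x_i^{-1}u)p(\bs{s})\bs{x^s}$, the image of $F_p\mc{M}_n$ inside $\mc{M}_n(*D)$ is identified with the $F$-filtered image of
\[ \ms{D}_X[\bs{s}]\bs{x^s}\otimes_{V^{\bs{0}}\ms{D}_X}V^{\bs{\alpha}}_*\mc{M}(*D) \]
modulo $(\bs{s}+\bs{\alpha})^n$. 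Since the $F_p$ pieces stabilise in $n$ (Remark \ref{remark: stablization for several variable}) and the image is injective (Lemma \ref{lem:inverse limit strict inclusion}), the union over $p$ is exactly $\ms{D}_X[\bs{s}]\bs{x^s}\otimes_{V^{\bs{0}}\ms{D}_X}V^{\bs{\alpha}}_*\mc{M}(*D)$ inside $\iota_+\mc{M}(*D)$. By Proposition \ref{prop:V-filtration on graph} this is $V^{\bs{\alpha}}_*\iota_+\mc{M}(*D)$. It only remains to match the Hodge shift $F_{\bullet+r}$, which comes from Proposition \ref{prop:filtered malgrange}.

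I expect the main obstacle to be the first step. There I must show that the twisted module $V^{\bs{\alpha}}_*\mc{M}(*D)[\bs{s}]$ really is $V^{\bs{0}}_*$ of the nilpotent-twisted module. The delicate points are goodness and the wall condition for a filtration involving polynomials in $s$ truncated at order $n$, and keeping track of the filtered (strict) version. My first attempt would be to filter $\mc{M}_n(*D)$ by the powers $(\bs{s}+\bs{\alpha})^{\bs{k}}$. Each graded piece is a copy of $\mc{M}(*D)\bs{x}^{-\bs{\alpha}}$, whose $V_*$-filtration is $V_*^{\bullet+\bs{\alpha}}\mc{M}(*D)$. I would then combine strictness (Corollary \ref{cor:V strictness}) with the filtered strictness of Hodge modules along these extensions.
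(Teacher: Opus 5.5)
Your proposal is correct and follows the paper's proof essentially step for step. You apply Proposition \ref{prop:hodge !* formula} to $\mc{M}_n$; identify $V^{\bs{0}}_*\mc{M}_n$ with $(V^{\bs{\alpha}}_*\mc{M}(*D))[\bs{s}]\bs{x^s}/((s_i+\alpha_i)^n)$ by checking the axioms of Definition \ref{definition: V*} for the shifted filtration and invoking uniqueness; use stabilisation of $F_p$ in $n$ and take the union over $p$; and conclude with Proposition \ref{prop:V-filtration on graph}. The step you flag as the main obstacle is in fact routine, and the paper calls it immediate from the definitions: the $\mc{O}_X$-splitting by the monomials $(\bs{s}+\bs{\alpha})^{\bs{k}}$ makes goodness, the shifted eigenvalue condition, and the naive form of $F^*_\bullet$ on $V^{\bs{0}}$ all evident, so the d\'evissage you sketch is unnecessary.
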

\begin{proof}
By Proposition \ref{prop:hodge !* formula}, we have a filtered isomorphism 
\begin{equation} \label{eq:snc magic formula 2}
\begin{aligned}
 &\left(\cM_n, F_\bullet\right)= (\ms{D}_X, F_\bullet) \otimes_{(V^{\bs{0}}\ms{D}_X, F_\bullet)}  \left(V^{\bs{0}}\cM_n, F_\bullet^*\right).
 \end{aligned}
\end{equation}
Now, we claim that
\begin{equation} \label{eq:snc magic formula 1}
V^{\bs{0}}_*\cM_n = \frac{(V^{\bs{\alpha}}_*\mc{M}(*D))[\bs{s}]\bs{f^s}}{((s_1 + \alpha_1)^n, \ldots, (s_r + \alpha_r)^n)}.
\end{equation}
Indeed, it is immediate from the definitions that the filtration defined by
\[ U^{\bs{\beta}}\cM_n \colonequals \frac{(V_*^{\bs{\alpha} + \bs{\beta}}\mc{M}(*D))[\bs{s}]\bs{f^s}}{((s_1 + \alpha_1)^n, \ldots, (s_r + \alpha_r)^n)}\]
satisfies the conditions of Definition \ref{definition: V*} and hence is equal to the $V_*$-filtration on $\cM_n$. So \eqref{eq:snc magic formula 1} follows. Substituting into \eqref{eq:snc magic formula 2}, we get
\begin{equation} \label{eq:snc magic formula 3}
\begin{aligned}
 &\left(\cM_n, F_\bullet\right)= (\ms{D}_X, F_\bullet) \otimes_{(V^{\bs{0}}\ms{D}_X, F_\bullet)}  \left(\frac{(V^{\bs{\alpha}}_*\mc{M}(*D))[\bs{s}]\bs{f^s}}{((s_1 + \alpha_1)^n, \ldots, (s_r + \alpha_r)^n)}, F_\bullet^*\right).
 \end{aligned}
\end{equation}
Now, fixing $p$, one can argue as in Lemma \ref{lemma: stabilisation of the inverse limit} to show that, for $n \gg 0$,
\begin{align*}
&F_p\left((\ms{D}_X, F_\bullet) \otimes_{(V^{\bs{0}}\ms{D}_X, F_\bullet)} \left(\frac{(V^{\bs{\alpha}}_*\mc{M}(*D))[\bs{s}]\bs{f^s}}{((s_1 + \alpha_1)^n, \ldots, (s_r + \alpha_r)^n)}, F_\bullet^*\right)\right) \\
& \qquad \qquad \qquad \qquad = F_p\left((\ms{D}_X, F_\bullet) \otimes_{(V^{\bs{0}}\ms{D}_X, F_\bullet)} (V^{\bs{\alpha}}_*\mc{M}(*D))[\bs{s}]\bs{f^s}, F_\bullet^*)\right).
\end{align*}
Taking the union over all $p$, \eqref{eq:snc magic formula 3} therefore yields
\[ \bigcup_p F_p \mc{M}[[\bs{s} + \bs{\alpha}]]\bs{f^s}(*D) = \bigcup_p \varprojlim_{n} F_p\cM_n=\ms{D}_X \otimes_{V^{\bs{0}}\ms{D}_X} (V^{\bs{\alpha}}_*\mc{M}(*D))[\bs{s}]\bs{f^s}.\]
By Proposition \ref{prop:V-filtration on graph}, the inclusion into $\iota_+\mc{M}(*D)$ identifies the right hand side with $V^{\bs{\alpha}}_*\iota_+\mc{M}(*D)$, so we are done.
\end{proof}

\begin{proof}[Proof of Theorem \ref{thm:multivariate magic formula}]
To complete the proof, we just need to show that Lemma \ref{lem:snc magic formula} holds in the general case where $f_1, \ldots, f_r$ are arbitrary holomorphic functions. We reduce this to the simple normal crossings case using an additional graph embedding. Let us write $X' = X \times \mb{C}^r$ and $i \colon X \to X'$ for the graph embedding (we use this notation to distinguish from the graph embedding $\iota \colon X \to X \times \mb{C}^r$ appearing in the Malgrange-Mellin transform). We also write $x_1, \ldots, x_r$ for the coordinate functions on $\mb{C}^r$ (as a factor of $X'$), $D' = \{x_1 \cdots x_r = 0\}$ and $j' \colon X' \setminus D' \to X'$ for the inclusion. Then, as in \S\ref{subsec:direct image}, we have the direct image functors, both denoted $i_+$, from $\ms{D}_X$-modules to $\ms{D}_{X'}$-modules, and from $V^{\bs{0}}\ms{D}_{X \times \mb{C}^r}$-modules to $V^{\bs{0}}\ms{D}_{X' \times \mb{C}^r}$-modules. These two functors are fully faithful and compatible with the Malgrange-Mellin transform after localising at $D$, so it suffices to show that
\begin{equation} \label{eq:multivariate magic formula 1}
i_+\left(\bigcup_p F_pj_*\mc{M}[[\bs{s} + \bs{\alpha}]]\bs{f^s}\right) = i_+(V^{\bs{\alpha}}_*\iota_+\mc{M}(*D))
\end{equation}
as subsheaves of their common localisation at $D$.

To prove \eqref{eq:multivariate magic formula 1}, observe that, for fixed $p$ and $n \gg 0$, the kernel of the quotient map $\bigcup_q F_qj_*\mc{M}[[\bs{s} + \bs{\alpha}]]\bs{f^s} \to \mc{M}_n$ has $F_p \mc{K}_n = 0$. So
\begin{equation} \label{eq:multivariate magic formula 4}
F_pi_+\left(\bigcup_q F_q j_*\mc{M}[[\bs{s} + \bs{\alpha}]]\bs{f^s}\right) = F_pi_+\mc{M}_n.
\end{equation}
Since the functor $i_+$ for filtered $\ms{D}_X$-modules is compatible with the functor $i_* \colon \mhm(X) \to \mhm(X')$, we have
\[ i_+\mc{M}_n = i_*j_*\left(\frac{\mc{M}[\bs{s}]\bs{f^s}}{((s_1 + \alpha_1)^n, \ldots, (s_r + \alpha_r)^n))}\right) = j'_*\left(\frac{(i'_*\mc{M})[\bs{s}]\bs{x^s}}{((s_1 + \alpha_1)^n, \ldots, (s_r + \alpha_r)^n)}\right),\]
as mixed Hodge modules and hence as filtered $\ms{D}_{X'}$-modules, where $i' \colon X \setminus D \to X' \setminus D'$ is the restriction of $i$.  Applying \eqref{eq:multivariate magic formula 4} and the analogous claim for $i'_+\mc{M}$ and taking the union of the filtered pieces, we deduce that
\begin{equation} \label{eq:multivariate magic formula 2}
i_+\left(\bigcup_p F_p j_*\mc{M}[[\bs{s} + \bs{\alpha}]]\bs{f^s}\right) = \bigcup_p F_p j'_*(i'_*\mc{M})[[\bs{s} + \bs{\alpha}]]\bs{x^s}.
\end{equation}
Similarly, by Theorem \ref{thm:multivariate V direct image} applied to the proper morphism $i$, we have
\begin{equation} \label{eq:multivariate magic formula 3}
i_+(V^{\bs{\alpha}}_*\iota_+\mc{M}(*D)) = V^{\bs{\alpha}}_*\iota_+i_+\mc{M}(*D').
\end{equation}
The right hand sides of \eqref{eq:multivariate magic formula 2} and \eqref{eq:multivariate magic formula 3} coincide by Lemma \ref{lem:snc magic formula}, so \eqref{eq:multivariate magic formula 1} holds. 
This completes the proof of the theorem.\end{proof}

\begin{rmk} \label{rmk:! magic formula}
Similarly to Theorem \ref{thm:multivariate magic formula} we also have the dual formula
\begin{equation} \label{eq:! magic formula}
 \left(\bigcup_p F_p j_!\mc{M}[[\bs{s} + \bs{\alpha}]]\bs{f^s}, F_\bullet\right) \cong (V^{\bs{\alpha} + \epsilon\bs{1}}_*\iota_+\mc{M}(*D), F_{\bullet + r}^*),
\end{equation}
where $F_p j_!\mc{M}[[\bs{s} + \bs{\alpha}]]\bs{f^s}$ is defined analogously to the $j_*$ version as in \eqref{eqn: Fp of multivariate magic formula}. Indeed, setting $f = f_1 \cdots f_r$, we have for all $\alpha \in \mb{R}$ and all $n$ a natural isomorphism of pro-mixed Hodge modules
\[ \mc{M}[\bs{s} + \bs{\alpha}, s + \alpha]]\bs{f^s}f^s \cong \mc{M}[[\bs{s} + \bs{\alpha} + \alpha\bs{1}]]\bs{f^s}[[s + \alpha]]\]
sending $s$ to $s$ and $s_i$ to $s_i - s$. Arguing as in the proof of \cite[Theorem 2.11]{DMB}, we deduce that
\[ F_p j_!\mc{M}[[\bs{s} + \bs{\alpha}]]\bs{f^s} = F_p j_*\mc{M}[[\bs{s} + \bs{\alpha} + \epsilon\bs{1}]]\bs{f^s}.\]
So \eqref{eq:! magic formula} follows from Theorem \ref{thm:multivariate magic formula}.
\end{rmk}

\subsection{Properties of the Hodge and $V$-filtrations} \label{subsec:properties of Hodge and V}
In this section, we use Theorem \ref{thm:multivariate magic formula} to deduce Hodge-filtered refinements of several results in the previous sections.

Let $f_1, \ldots, f_r$ be holomorphic functions on a complex manifold $X$ and let $\iota\colon X\to X\times \C^r$ be the associated graph embedding. Let $D=\textrm{div}(f_1\cdots f_r)$ and $U\colonequals X \setminus D$, with open embedding $j\colon U\to X$. We fix a mixed Hodge module $\mc{M}$ on $X$ such that $\mc{M} = \mc{M}[(f_1 \cdots f_r)^{-1}]$ as a $\ms{D}$-module (or, equivalently, $\mc{M} = j_*j^*\mc{M}$).

We begin with a filtered version of Theorem \ref{thm:flatness}.

\begin{thm} \label{thm:hodge flatness}
For any $\bs{\alpha} \in \mb{R}^{r}$, the stalks of $\gr^{F^*}V_*^{\bs{\alpha}}\mc{M}$ and $V_*^{\bs{\alpha}}\mc{M}$ are free over $\C[\bs{s}]$. Moreover, if $\bs{\alpha}\leq \bs{\beta}$ are separated by a single wall $W \subset \mb{R}^r$, then the stalks of $\gr^{F^*}(V_{\ast}^{\bs{\alpha}}\iota_{+}\mc{M}/V_{\ast}^{\bs{\beta}}\iota_{+}\mc{M})$ are free relative to $W$.
\end{thm}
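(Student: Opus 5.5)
The plan is to use Theorem \ref{thm:multivariate magic formula} to write each filtered piece $F^*_{p+r}V^{\bs{\alpha}}_*\iota_+\mc{M}$ as $F_p\mc{M}_n$, where $\mc{M}_n=j_*\mc{M}[\bs{s}]\bs{f^s}/((s_i+\alpha_i)^n)$ and $n\gg0$. We then derive the freeness statements from the strictness of morphisms of mixed Hodge modules (Proposition \ref{prop:hodge forgetful functor}), applied to the multiplication maps by linear forms in the $\bs{s}$.

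\textbf{Key reduction.} Because each $F_p$ is $\mc{O}_X$-coherent and the $s_i$ raise $F$ by one, $\gr^{F^*}V_*^{\bs{\alpha}}$ is a graded module over $\mc{O}_X[\bs{s}]$ with $\deg s_i=1$. Each graded piece is coherent over $\mc{O}_X$ and the grading is bounded below (Remark \ref{rmk:hodge lower bound}). For such graded modules, the stalk is free over $\mb{C}[\bs{s}]$ as soon as the $s_i+\alpha_i$ (equivalently the $s_i$ on $\gr^F$) form a regular sequence in every order. The same holds for flatness over $\mb{C}[\bs{s}]$. Freeness of the stalks of $V_*^{\bs{\alpha}}$ itself then follows by a standard lift of homogeneous bases along the exhaustive, bounded-below filtration $F^*$. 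The work therefore reduces to showing that, for any linear form $\ell(\bs{s})$ (and, inductively, after quotienting by earlier ones), multiplication by $\ell$ on $V^{\bs{\alpha}}_*$ is injective and strict with respect to $F^*_{\bullet}\to F^*_{\bullet+1}$.

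\textbf{Strictness via mixed Hodge modules.} For a linear form $\ell=\sum c_i(s_i+\alpha_i)$ with real coefficients, and more generally after a monomial change of functions, we may first reduce to $\ell=s_1+\alpha_1$. We do this using Proposition \ref{prop:change of functions} and Corollary \ref{cor: change of functions} to change coordinates on the torus so that $\ell$ becomes a coordinate. If the coefficients are rational, we pass to a finite cover of the torus; general $\ell$ follows because regularity of a sequence in graded modules can be tested on a Zariski-dense set of forms. Multiplication by $s_1+\alpha_1$ is then a morphism $\mc{M}_n(1)\to\mc{M}_{n}$ of mixed Hodge modules, by \eqref{eqn: EntoEn+1} tensored with the other factors. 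By strictness, $F_p$ of its image equals the image of $F_{p-1}$. Passing to the limit in $n$ with the stabilisation of Remark \ref{remark: stablization for several variable}, $s_1+\alpha_1$ acts strictly on $\bigcup_pF_p j_*\mc{M}[[\bs{s}+\bs{\alpha}]]\bs{f^s}$. Injectivity comes from Theorem \ref{thm:flatness}. Strict injectivity then gives injectivity of $s_1$ on $\gr^F$. To iterate the regular sequence, the quotient by $s_1+\alpha_1$ must again be a system of the same type. It is $\varprojlim_n$ of the mixed Hodge modules $j_*\mc{M}[s_2,\dots]/(\dots)$, i.e.\ the $r-1$ variable construction applied to $\mc{M}\otimes f_1^{-\alpha_1}$. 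Induction on $r$ then handles the rest.

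\textbf{Wall case and main obstacle.} For $\bs{\alpha}\le\bs{\beta}$ separated by $W=\bs{L}^{-1}(\gamma)$, the inclusion $V_*^{\bs{\beta}}\to V_*^{\bs{\alpha}}$ is realised by morphisms of mixed Hodge modules between the corresponding $\mc{M}_n$, via the construction for $\bs{\beta}$ versus $\bs{\alpha}$. Hence $\gr^F$ of the quotient is the $\gr^F$ of a cokernel in MHM, and the same strictness argument runs for linear forms in a subspace $U$ not containing $\bs{L}(\bs{s})$. These forms again admit MHM realisations by the change-of-functions trick, using Theorem \ref{thm:flatness} for injectivity. The main obstacle I expect is this realisation step: making multiplication by an arbitrary linear form (and the maps between different $\bs{\alpha}$) into honest morphisms of pro-mixed Hodge modules compatible with the $F$-stabilisation. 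The reduction of a general form to a coordinate one via monomial changes and covers is where I would concentrate effort.
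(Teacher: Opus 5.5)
The first assertion is fine as you set it up. Strictness comes from the morphisms $s_i+\alpha_i\colon \mc{M}_n(1)\to\mc{M}_n$, the induced maps on their cokernels, and the stabilisation of Remark \ref{remark: stablization for several variable}. Injectivity comes from Theorem \ref{thm:flatness}. Together these show that the $s_i+\alpha_i$ act strictly injectively on successive quotients, so the $s_i$ form a regular sequence on $\gr^{F^*}$. This is a sequential version of the paper's proof. The paper instead treats $\mb{C}\overset{\mrm{L}}\otimes_{\mb{C}[\bs{s}]}\mc{M}_n$ as a Koszul complex of mixed Hodge modules and combines Lemma \ref{lem:filtered koszul} and Remark \ref{rmk:hodge lower bound} with Lemma \ref{lemma: flatness over polynomial ring}. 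Only the coordinate forms are needed for this part.

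The gap is in the wall case. You claim the inclusion $V_*^{\bs{\beta}}\iota_+\mc{M}\subset V_*^{\bs{\alpha}}\iota_+\mc{M}$ is realised by morphisms of mixed Hodge modules from the truncations at $-\bs{\beta}$ to the truncations at $-\bs{\alpha}$. Such morphisms do not exist in general. These objects are killed by powers of $s_i+\beta_i$, respectively $s_i+\alpha_i$, so any $\mb{C}[\bs{s}]$-compatible map between them is zero when $\bs{\alpha}\neq\bs{\beta}$. For example, take $\mc{M}=\mc{O}_U$, the $f_i$ coordinate functions and $\bs{\beta}-\bs{\alpha}\notin\mb{Z}^r$: the restrictions to $U$ have disjoint monodromy eigenvalues, so even $\mathrm{Hom}(j_*A,j_*B)=\mathrm{Hom}(A,B)=0$. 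So nothing in your plan shows that $F^*$ on $V_*^{\bs{\alpha}}/V_*^{\bs{\beta}}$ is the Hodge filtration of a (complex of) mixed Hodge modules, and that is exactly what your strictness argument needs.

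The missing idea is to keep the completion point fixed and change the extension. Move $\bs{\alpha}$ onto $W$ so that $V_*^{\bs{\beta}}=V_*^{\bs{\alpha}+\epsilon\bs{1}}$. Write $\mc{N}_n$ for the truncation on $U$, so your $\mc{M}_n$ is $j_*\mc{N}_n$. The $!$-variant of Theorem \ref{thm:multivariate magic formula} (Remark \ref{rmk:! magic formula}) realises $(V_*^{\bs{\alpha}+\epsilon\bs{1}}\iota_+\mc{M},F^*_{\bullet+r})$ by $j_!\mc{N}_n$, using the same $\mc{N}_n$. Then, for fixed $p$ and $n\gg0$, $F^*_{p+r}$ of the quotient equals $F_p\,\mrm{Cone}(j_!\mc{N}_n\to j_*\mc{N}_n)$, which is a complex of mixed Hodge modules. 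Strictness plus Theorem \ref{thm:flatness} then gives freeness relative to $W$.

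Separately, the obstacle you single out for arbitrary linear forms is not a real one. $\MHM(X)$ is $\mb{C}$-linear, so $\sum_i c_i(s_i+\alpha_i)\colon\mc{M}_n(1)\to\mc{M}_n$ is a morphism for every $c\in\mb{C}^r$. In the $\bar{\mb{Q}}$-theory one restricts to $U$ defined over $\bar{\mb{Q}}$, as the paper remarks.

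Your proposed workaround would fail in any case, for two reasons:
\begin{itemize}
\item Acting injectively is a Zariski-open condition on the form, so it cannot be inferred from a dense set of forms. On $\mb{C}[s_1,s_2]/(s_1-\sqrt{2}s_2)$ every nonzero rational form acts injectively, yet $s_1-\sqrt{2}s_2$ acts by zero.
\item A form such as $s_1+s_2$ cannot be made a coordinate by an injective substitution $s_i\mapsto\sum_j d_{ij}s'_j$ with all $d_{ij}\geq 0$, which is all Corollary \ref{cor: change of functions} allows. Nonnegativity forces rows $1$ and $2$ of $d$ to be proportional, so $d$ cannot have rank $r$.
\end{itemize}
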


Here we have used the following terminology, by analogy with Definition \ref{defn:relative flatness}.

\begin{defn} \label{defn:relative freeness}
Let $M$ be a $\mb{C}[\bs{s}]$-module and $W = \bs{L}^{-1}(\gamma) \subset \mb{R}^r$. We say that $M$ is \emph{free relative to $W$} if $M$ is free over $\mb{C}[U]$ for any linear subspace $U \subset \mrm{span}\{s_1, \ldots, s_r\}$ not containing $\bs{L}(\bs{s})$.
\end{defn}

The proof of Theorem \ref{thm:hodge flatness} uses the following basic fact about graded modules over polynomial rings.

\begin{lemma}\label{lemma: flatness over polynomial ring}
Let $M$ be a bounded below graded $\C[\bs{s}]$-module. Then the following statements are equivalent.
\begin{enumerate}
\item $M$ is a free $\C[\bs{s}]$-module. 
\item $M$ is a flat $\C[\bs{s}]$-module.
\item $\mrm{Tor}^{\C[\bs{s}]}_i(M,\mb{C})=0$ for $i>0$, where we regard $\mb{C}$ as a $\C[\bs{s}]$-module by letting the $s_i$ act by zero.
\end{enumerate}
\end{lemma}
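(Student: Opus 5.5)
The implications (1)$\Rightarrow$(2)$\Rightarrow$(3) are standard: free modules are flat, and flat modules have vanishing higher $\mrm{Tor}$ against any module, in particular against $\mb{C} = \mb{C}[\bs{s}]/(s_1, \ldots, s_r)$. The content is (3)$\Rightarrow$(1), which is the graded Nakayama lemma argument; I would follow the standard proof for bounded below graded modules over a positively graded polynomial ring, each $s_i$ having degree $1$.

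For (3)$\Rightarrow$(1), let $\fm = (s_1, \ldots, s_r)$ and consider the graded $\mb{C}$-vector space $M/\fm M = M \otimes_{\mb{C}[\bs{s}]} \mb{C}$. I would choose a homogeneous $\mb{C}$-basis $\{\bar e_j\}$ of it, lift it to homogeneous elements $e_j \in M$ of the same degrees, and let $P = \bigoplus_j \mb{C}[\bs{s}] e_j$ (shifted appropriately) be the graded free module with the evident degree-preserving map $\phi \colon P \to M$. Note that $P$ is also bounded below, since the degrees of the $e_j$ are bounded below by the lower bound of $M$.

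First I show that $\phi$ is surjective. Let $C = \coker \phi$. It is a bounded below graded module, and $C/\fm C = 0$ because $P/\fm P \to M/\fm M$ is surjective by construction. If $C \neq 0$, take a nonzero homogeneous element of minimal degree (possible by bounded-belowness). It cannot lie in $\fm C$, since elements of $\fm C$ in a given degree $d$ come from elements of $C$ in degrees $< d$, and there are none. This contradicts $C = \fm C$, so $C = 0$.

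Next I show that $\phi$ is injective. Let $K = \ker \phi$, again graded and bounded below. Tensoring $0 \to K \to P \to M \to 0$ with $\mb{C}$ gives the exact sequence $\mrm{Tor}_1(M, \mb{C}) \to K/\fm K \to P/\fm P \to M/\fm M \to 0$. Here the first term vanishes by (3), and $P/\fm P \to M/\fm M$ is an isomorphism by the choice of basis. Hence $K/\fm K = 0$, and the same minimal-degree argument gives $K = 0$. Therefore $M \cong P$ is free.

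The only delicate point is the graded Nakayama step for modules that need not be finitely generated. There the bounded-below hypothesis replaces finite generation, and one must check that the homogeneous generators used preserve degrees. Only $\mrm{Tor}_1$ is actually needed.
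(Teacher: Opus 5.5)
Your proposal is correct and follows essentially the same route as the paper's proof. Both lift a homogeneous basis of $M\otimes_{\C[\bs{s}]}\C$ to a map from a graded free module. Both then use the $\mathrm{Tor}$ long exact sequence together with the graded Nakayama lemma for bounded below modules to show that the kernel and cokernel of this map vanish.
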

\begin{proof}
It suffices to prove $(3)\Rightarrow (1)$. Choose a homogeneous $\C$-basis of the graded $\C[\bs{s}]$-module $M\otimes_{\C[\bs{s}]}\C$ and lift this along the surjective map $M\twoheadrightarrow M\otimes_{\C[\bs{s}]}\mb{C}$ to a set of homogeneous elements $\{m_{\gamma}\}_{\gamma\in I}$ of $M$. We claim that $\{m_{\gamma}\}_{\gamma\in I}$ are free generators of $M$. Consider the natural morphism of graded $\C[\bs{s}]$-modules $\pi \colon \bigoplus_{\gamma\in I}\C[\bs{s}]\cdot m_{\gamma}\to M$. By our choice, we have $\mrm{im}(\pi)\otimes_{\C[\bs{s}]}\mb{C}=M\otimes_{\C[\bs{s}]}\mb{C}$. Using the Tor exact sequences, the condition (3) implies that
\[ \ker(\pi)\otimes_{\C[\bs{s}]} \mb{C}=0=\mrm{coker}(\pi)\otimes_{\C[\bs{s}]}\mb{C}.\]
Since the grading on $M$ is bounded below, the same holds for $\ker(\pi)$ and $\mrm{coker}(\pi)$. But for any bounded below graded $\mb{C}[\bs{s}]$-module $N$, if $N\otimes_{\C[\bs{s}]}\mb{C}=0$, then $N=0$. So $\ker(\pi)=0=\mrm{coker}(\pi)$ and so $\pi$ is an isomorphism. In particular, $M$ is a free $\C[\bs{s}]$-module, i.e.\ $(1)$ holds.
\end{proof}

We will also make repeated use of the following elementary fact.

\begin{lemma} \label{lem:filtered koszul}
Let $(M, F_\bullet) \to (N, F_\bullet)$ be a morphism of complexes of filtered $\mb{C}[\bs{s}]$-modules. If $F_{p'} M \to F_{p'}N$ is an isomorphism for all $p' \leq p$, then
\[ F_{p'}(\mb{C}\overset{\mrm{L}}\otimes_{\mb{C}[\bs{s}]} M) \to F_{p'}(\mb{C} \overset{\mrm{L}}\otimes_{\mb{C}[\bs{s}]} N) \]
is an isomorphism for all $p' \leq p$.
\end{lemma}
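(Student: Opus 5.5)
The statement is Lemma \ref{lem:filtered koszul}: if $F_{p'}M \to F_{p'}N$ is an isomorphism for all $p' \leq p$, then the same holds after applying $\mb{C}\overset{\mrm{L}}\otimes_{\mb{C}[\bs{s}]}(-)$. The plan is to compute this derived tensor product with the Koszul resolution of $\mb{C}$ over $\mb{C}[\bs{s}]$, filtered so that it is compatible with the filtrations. Concretely, I regard $\mb{C}[\bs{s}]$ with the order filtration in which each $s_i$ has degree $1$; this is the convention under which $F_\bullet$ on $V^{\bs{\alpha}}_*\iota_+\mc{M}$ is a filtered $\mb{C}[\bs{s}]$-module, since $s_i F_p \subset F_{p+1}$.

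The filtered Koszul complex is then
\[ K^{-k} = \textstyle\bigwedge^k(\mb{C}^r) \otimes \mb{C}[\bs{s}] \quad\text{with the filtration shifted by } k, \]
so that the differential $e_i \mapsto s_i$ is filtered. This complex is a strict filtered resolution of $\mb{C}$, where $\mb{C}$ carries the trivial filtration concentrated in degree $0$. Its terms are filtered free, so $K^{-k}$ is a sum of copies of $(\mb{C}[\bs{s}], F_{\bullet-k})$. Hence the filtered derived tensor product is computed by the total complex $K\otimes_{\mb{C}[\bs{s}]} M$, whose $F_{p'}$ piece is
\[ \bigoplus_k \textstyle\bigwedge^k \mb{C}^r \otimes F_{p'-k}M^{\bullet}. \]
The key observation is that this involves only the pieces $F_{q}M$ with $q \leq p'$. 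This is the only real point to check, and it is immediate from the shift going the right way: a free module shifted by $k$, tensored with $M$, yields $F_{p'-k}M \subset F_{p'}M$.

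With that in hand, the morphism $F_{p'}(K\otimes M)\to F_{p'}(K\otimes N)$ is, term by term, a direct sum of the maps $F_{p'-k}M^{j}\to F_{p'-k}N^{j}$. Each of these is an isomorphism for $p'\leq p$ by hypothesis. So the map of complexes is an isomorphism on $F_{p'}$, and in particular a quasi-isomorphism, for all $p'\leq p$.

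If the hypothesis is only meant up to quasi-isomorphism, meaning $F_{p'}M\to F_{p'}N$ is a quasi-isomorphism, the same argument applies. The total complex $F_{p'}(K\otimes M)$ has a finite filtration by Koszul degree $k$. Its graded pieces are finite sums of copies of $F_{p'-k}M$, so a five-lemma or spectral sequence argument on the finite filtration gives the quasi-isomorphism. There is no substantial obstacle; the main care needed is to set up the convention so that $s_i$ raises the filtration index by one, and hence the Koszul shifts only involve lower pieces.
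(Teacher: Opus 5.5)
Your proof is correct: the Koszul complex with its $k$-th term shifted by $k$ is a strict filtered free resolution of $\mb{C}$, since its associated graded is the Koszul resolution over the graded ring $\mb{C}[\bs{s}]$. Hence $F_{p'}$ of the derived tensor product is $\bigoplus_k \bigwedge^k\mb{C}^r \otimes F_{p'-k}M$, which involves only pieces $F_qM$ with $q \leq p'$. The paper states this lemma as an elementary fact without proof, and your filtered Koszul argument is exactly the one its label points to. In the paper's application, $s_i$ acts on $\mb{C}$ by $-\alpha_i$; there one uses the Koszul complex on the $s_i+\alpha_i$, which has the same filtration degrees and the same associated graded, so your argument goes through unchanged.
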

\begin{proof}[Proof of Theorem \ref{thm:hodge flatness}]
Observe that for a bounded below exhaustively filtered $\mb{C}[\bs{s}]$-module $(M, F_\bullet)$ (where $F_\bullet M$ is compatible with the order filtration on $\mb{C}[\bs{s}]$), if $\gr^FM$ is free over $\mb{C}[\bs{s}] = \gr^F \mb{C}[\bs{s}]$ then $M$ is also free over $\mb{C}[\bs{s}]$. Indeed, if we lift free homogeneous generators $m_i \in \gr^F_{p_i} M$ to elements $\tilde{m}_i$ in $F_{p_i}M$, then the map $\bigoplus_i \mb{C}[\bs{s}]\tilde{m}_i \to M$ induces an isomorphism on $\gr^F$ and is hence itself an isomorphism. So to prove the theorem, it is enough to prove the statements for $\gr^{F^*}$. 

We first prove that the stalks of $\gr^{F^*} V^{\bs{\alpha}}_*\iota_+\mc{M}$ are free over $\mb{C}[\bs{s}]$. By Lemma \ref{lemma: flatness over polynomial ring}, it suffices to show that 
\begin{equation}\label{eq:hodge flatness 1}
\mrm{H}^i(\mb{C} \overset{\mrm{L}}\otimes_{\mb{C}[\bs{s}]}\gr^{F^*}V_*^{\bs{\alpha}}\iota_+\mc{M}) = 0 \quad \text{for every $i<0$},\end{equation}
where $s_i$ acts on $\mb{C}$ by zero. We will show this using Theorem \ref{thm:multivariate magic formula}.

Consider the morphism $j^*\mc{M}[\bs{s}]\bs{f^s} \to \mc{M}_n$, where, to simplify the notation, we write
\[ \mc{M}_n\colonequals \frac{j^{\ast}\cM[\bs{s}]\bs{f^s}}{\left((s_1+\alpha_1)^n,\ldots (s_r+\alpha_r)^n\right)}.\]
By Lemma \ref{lem:filtered koszul}, for fixed $p$, we can choose $n$ large enough so that
\[ F_{p'}(\mb{C} \overset{\mrm{L}}\otimes_{\mb{C}[\bs{s}]} j^*\mc{M}[\bs{s}]\bs{f^s}) \to F_{p'}(\mb{C} \overset{\mrm{L}}\otimes_{\mb{C}[\bs{s}]} \mc{M}_n)\]
is an isomorphism for $p' \leq p$. Here in the ungraded setting we let $s_i$ act on $\mb{C}$ by $-\alpha_i$. Since the source has $\mrm{H}^i = 0$ for $i < 0$, we conclude that
\[ \mrm{H}^i(F_{p'}(\mb{C} \overset{\mrm{L}}\otimes_{\mb{C}[\bs{s}]} \mc{M}_n))  = 0 \quad \text{for $i < 0$ and $p' \leq p$}.\]
Observe that $\mb{C}\overset{\mrm{L}}\otimes_{\mb{C}[\bs{s}]} \mc{M}_n$ can be written as the Koszul complex for the morphisms $s_i + \alpha_i \colon \mc{M}_n(1) \to \mc{M}_n$; in particular, it underlies a complex of mixed Hodge modules. Taking the direct image we conclude by Remark \ref{rmk:hodge lower bound} that
\[ \mrm{H}^i(\gr^F_{p'}(\mb{C} \overset{\mrm{L}}\otimes_{\mb{C}[\bs{s}]} j_*\mc{M}_n)) = 0 \quad \text{for $i < 0$ and $p' \leq p$}.\]
Applying Lemma \ref{lem:filtered koszul} again, we conclude that
\begin{equation} \label{eq:hodge flatness 2}
 \mrm{H}^i\left(\gr^F\left(\mb{C} \overset{\mrm{L}}\otimes_{\mb{C}[\bs{s}]} \bigcup_p F_pj_*\mc{M}[[\bs{s} + \bs{\alpha}]]\bs{f^s}\right)\right) = 0 \quad \text{for $i < 0$}.
\end{equation}
But by Theorem \ref{thm:multivariate magic formula}, the left hand side of \eqref{eq:hodge flatness 2} agrees with the left hand side of \eqref{eq:hodge flatness 1}, so this proves \eqref{eq:hodge flatness 1}; here we have used the standard fact that if $(M,F_{\bullet})$ and $(N,F_{\bullet})$ are filtered modules over $(\C[\bs{s}],F_{\bullet})$, then there is an isomorphism
\[ \gr^F\left((M,F_{\bullet})\overset{\mrm{L}}{\otimes}_{(\C[\bs{s}],F_{\bullet})}(N,F_{\bullet})\right)\cong \gr^FM\overset{\mrm{L}}{\otimes}_{\gr^F\C[\bs{s}]}\gr^FN\]
in the derived category of graded $\gr^F\C[\bs{s}]$-modules.

To prove that $\gr^{F^*}(V_*^{\bs{\alpha}}\iota_+\mc{M}/V_*^{\bs{\beta}}\iota_+\mc{M})$ is free relative to the wall, let us assume for simplicity that $\bs{\alpha}$ lies on the separating wall so that $V_*^{\bs{\beta}}\iota_+\mc{M} = V_*^{\bs{\alpha} + \epsilon \bs{1}}\iota_+\mc{M}$. Fix a linear subspace $U \subset \mrm{span}\{s_1, \ldots, s_r\}$. Observing that every element of $U$ defines a morphism of complex mixed Hodge modules $\mc{M}_n(1) \to \mc{M}_n$, we deduce that the derived tensor product
\[ \mb{C} \overset{\mrm{L}}\otimes_{\mb{C}[U]} \mc{M}_n \]
again underlies a complex of mixed Hodge modules. Arguing as above using Theorem \ref{thm:multivariate magic formula} and the $!$-variant of Remark \ref{rmk:! magic formula}, we deduce that, for fixed $p$, there exists $n$ such that
\begin{equation} \label{eq:hodge flatness 3}
F_{p' + r}^*(\mb{C}\overset{\mrm{L}}\otimes_{\mb{C}[U]} V^{\bs{\alpha}}_*\iota_+\mc{M}/V^{\bs{\alpha} + \epsilon \bs{1}}_*\iota_+\mc{M}) = F_{p'}(\mb{C} \overset{\mrm{L}}\otimes_{\mb{C}[U]} \mrm{Cone}(j_!\mc{M}_n \to j_*\mc{M}_n))
\end{equation}
for $p' \leq p$. Since the right hand side of \eqref{eq:hodge flatness 3} is a complex of mixed Hodge modules, it is strict. So the filtered complex $\mb{C} \overset{\mrm{L}}\otimes_{\mb{C}[U]} V^{\bs{\alpha}}\iota_+\mc{M}/V^{\bs{\alpha} + \epsilon \bs{1}}\iota_+\mc{M}$ is strict as well. But the underlying complex has vanishing $\mrm{H}^i$ for $i < 0$ by Theorem \ref{thm:flatness}, so we conclude
\[ \mrm{H}^i(\gr^{F^*}(V_*^{\bs{\alpha}}\iota_+\mc{M}/V_*^{\bs{\beta}}\iota_+\mc{M})) = 0 \quad \text{for $i < 0$}.\]
We deduce the theorem by applying Lemma \ref{lemma: flatness over polynomial ring}.
\end{proof}

\begin{remark}
In the proof of Theorem \ref{thm:hodge flatness}, we used the fact that any element of $\mb{C}\text{-span}\{s_1, \ldots, s_r\}$ defines a morphism of complex mixed Hodge modules $\mc{E}_n(1) \to \mc{E}_n$. While this is true on the nose in the complex category, it is only true for elements of $\bar{\mb{Q}}\text{-span}\{s_1, \ldots, s_r\}$ in $\mhm_{\bar{\mb{Q}}}$. For the above proof to work in Saito's theory, we must therefore restrict the subspaces $U$ in Definition \ref{defn:relative freeness} to those defined over $\bar{\mb{Q}}$.
\end{remark}

The next result is the filtered version of Corollary \ref{cor: change of functions}.

\begin{thm} \label{thm:filtered change of functions}
Suppose that $f_j' = \prod_{i = 1}^r f_i^{d_{ij}}$, $j = 1, \ldots, r'$, for some $d_{ij} \in \mb{Z}_{\geq 0}$ and assume that the zero loci of $f_1 \cdots f_r$ and $f_1' \cdots f_{r'}'$ coincide. Then for any $\bs{\alpha}'\in \R^{r'}$, setting $\bs{\alpha} = (\sum_j d_{1j} \alpha_j', \ldots, \sum_j d_{rj}\alpha_{j}')\in \R^r$, we have
\[\gr^{F^*}V_*^{\bs{\alpha}}\iota_+\mc{M}(-r)\otimes_{\mb{C}[\bs{s}]} \mb{C}[\bs{s}']= \gr^{F^*}V_*^{\bs{\alpha}'}\iota_+'\mc{M}(-r'),\]
where $\iota' \colon X \to X \times \mb{C}^{r'}$ is the graph embedding of the $f_j'$ and the tensor product are taken over $s_i \mapsto \sum_j d_{ij}s_j'$.
\end{thm}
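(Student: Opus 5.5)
We prove the filtered statement at the level of the Hodge-filtered modules and then pass to associated gradeds. The basic input is Theorem \ref{thm:multivariate magic formula}, which identifies $(V_*^{\bs{\alpha}}\iota_+\mc{M}, F^*_{\bullet+r})$ with the decompleted free-monodromic object $\bigcup_p F_p j_*\mc{M}[[\bs{s}+\bs{\alpha}]]\bs{f^s}$. The analogous identification holds for $(V_*^{\bs{\alpha}'}\iota'_+\mc{M}, F^*_{\bullet+r'})$. The Tate twists $(-r)$ and $(-r')$ in the statement are exactly these index shifts. So it suffices to compare the two free-monodromic pro-objects.

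First, on $U$ there is a natural morphism of pro-mixed Hodge modules. Because $\bs{f'^{s'}} = \bs{f}^{\bs{s}}$ under $s_i \mapsto \sum_j d_{ij}s_j'$, the substitution $s_i \mapsto \sum_j d_{ij}s_j'$ sends $(s_i+\alpha_i)$ into the ideal generated by the $(s_j'+\alpha_j')$. Concretely, it comes from pulling back along the monomial map $(\mb{C}^\times)^{r} \to (\mb{C}^\times)^{r'}$ and the map on $\mc{E}$'s. This gives compatible morphisms
\[ \frac{\mc{M}[\bs{s}]\bs{f^s}}{((s_i+\alpha_i)^N)} \otimes_{\mb{C}[\bs{s}]} \mb{C}[\bs{s}'] \to \frac{\mc{M}[\bs{s}']\bs{f'^{s'}}}{((s'_j+\alpha'_j)^n)} \]
for $N \gg n$. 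Since $\mb{C}[\bs{s}']$ is free of finite rank over $\mb{C}[\bs{s}]$ modulo these ideals, the left-hand side is a mixed Hodge module realised as a Koszul-type complex of copies of $\mc{M}_N$. After tensoring, the map becomes an isomorphism of pro-objects. Here the linear forms $\sum_j d_{ij}s_j'$ are defined over $\mb{Q}$, so this stays within $\mhm_{\bar{\mb{Q}}}$.

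Next, we apply $j_*$ and take $F_p$. By Remark \ref{remark: stablization for several variable}, each $F_p$ stabilises for large $n$. By Lemma \ref{lem:filtered koszul}, $F_p$ of the derived tensor product $\mb{C}[\bs{s}'] \overset{\mrm{L}}\otimes_{\mb{C}[\bs{s}]}(-)$ can likewise be computed at a finite stage $n$. At that stage it is a complex of mixed Hodge modules, so it is strict. We then show this complex has no higher cohomology: this follows from Theorem \ref{thm:hodge flatness}, since freeness of $\gr^{F^*}V_*^{\bs{\alpha}}\iota_+\mc{M}$ over $\mb{C}[\bs{s}]$ kills all higher Tors. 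Hence $\gr^F$ commutes with $\otimes_{\mb{C}[\bs{s}]}\mb{C}[\bs{s}']$, giving
\[ \gr^{F^*}V_*^{\bs{\alpha}}\iota_+\mc{M}(-r)\otimes_{\mb{C}[\bs{s}]}\mb{C}[\bs{s}'] \cong \gr^{F}\Bigl(\bigcup_p F_p j_*\mc{M}[[\bs{s}'+\bs{\alpha}']]\bs{f'^{s'}}\Bigr). \]
Applying Theorem \ref{thm:multivariate magic formula} on the $\bs{s}'$ side then identifies the right-hand side with $\gr^{F^*}V_*^{\bs{\alpha}'}\iota'_+\mc{M}(-r')$.

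The main obstacle is the first step: showing that base change along $\mb{C}[\bs{s}]\to\mb{C}[\bs{s}']$ of the inverse system $\mc{M}_N$ is pro-isomorphic to the $\bs{s}'$-system as filtered objects, with the correct Hodge filtrations. The $\mc{D}$-module identification is immediate from \eqref{eq:basic change of functions}. The Hodge filtrations must be checked on $U$ by Lemma \ref{lem: tensor product formula}, where both sides are $\mc{M}\otimes(\text{vector bundle})$ with the naive filtration. They must then be extended by $j_*$, which is exact on mixed Hodge modules. I would also cross-check the result against Corollary \ref{cor: change of functions}, which gives the unfiltered version via Proposition \ref{prop:change of functions}: the filtered map should be injective by that corollary and surjective on $\gr^F$ by the Koszul argument.
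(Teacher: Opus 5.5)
Your route is genuinely different from the paper's, but as written it proves only part of the statement. The step that fails is the one you flag as the main obstacle.

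\textbf{Where it breaks.} You assert that $\mb{C}[\bs{s}']$ is finite over $\mb{C}[\bs{s}]$ modulo $((s_i+\alpha_i)^N)$. From this you conclude that $\mc{M}_N\otimes_{\mb{C}[\bs{s}]}\mb{C}[\bs{s}']$ is a mixed Hodge module, pro-isomorphic to the $\bs{s}'$-system. That only holds when the linear map $s_i\mapsto\sum_j d_{ij}s_j'$ is surjective onto $\mrm{span}\{s_1',\ldots,s_{r'}'\}$, i.e.\ when the matrix $(d_{ij})$ has rank $r'$. The theorem allows arbitrary $d_{ij}$. For example, take $r=1$, $r'=2$, $f_1'=f_2'=f_1$. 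Then $\mb{C}[s_1',s_2']/((s_1'+s_2'+\alpha_1)^N)$ is infinite-dimensional. So $\mc{M}_N\otimes_{\mb{C}[\bs{s}]}\mb{C}[\bs{s}']$ is not holonomic, carries no mixed Hodge module structure, and is not pro-isomorphic to $\{\mc{M}[\bs{s}']\bs{f'^{s'}}/((s_j'+\alpha_j')^n)\}_n$. The pullback along the monomial map gives comparison maps in one direction only. Even in the surjective case, $\mb{C}[\bs{s}']$ is a quotient of $\mb{C}[\bs{s}]$ by $\mb{Q}$-rational linear forms rather than a free module. Your Koszul complex should be taken on those forms.

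\textbf{The surjective case.} When the map is surjective (this includes $r'=1$, the case used later in the paper), your argument does go through and gives a different proof. The ingredients are:
\begin{enumerate}
\item the two pro-systems are cofinal;
\item the comparison maps come from morphisms of admissible variations on the torus;
\item each finite-stage Koszul complex on $j_*\mc{M}_N$ is a complex of mixed Hodge modules, hence strict;
\item Lemma \ref{lem:filtered koszul} and Remark \ref{remark: stablization for several variable} transfer this strictness to $(V_*^{\bs{\alpha}}\iota_+\mc{M},F^*)\otimes^{\mrm{L}}(\mb{C}[\bs{s}'],F)$;
\item the magic formula for the $\bs{s}'$-tuple identifies the result with $F^*_{\bullet+r'}V_*^{\bs{\alpha}'}\iota'_+\mc{M}$.
\end{enumerate}

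\textbf{Repairing the general case.} You would need to factor $\mb{C}[\bs{s}]\to\mb{C}[\bs{s}']$ as a surjection onto the subalgebra generated by the forms $\sum_j d_{ij}s_j'$, followed by a polynomial extension in complementary rational linear forms $\bs{u}$. Since $\bs{f'^{s'}}$ only sees the image of $\bs{s}$, the $\bs{s}'$-system should then be cofinal with the surjective-case system tensored with the constant Hodge structures $\mb{C}[\bs{u}]/(\ldots)^N$. This identification must be checked at the level of variations on the torus.

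\textbf{The paper's route.} The paper avoids this entirely. It takes the unfiltered isomorphism $V_*^{\bs{\alpha}}\iota_+\mc{M}\otimes^{\mrm{L}}_{\mb{C}[\bs{s}]}\mb{C}[\bs{s}']\cong V_*^{\bs{\alpha}'}\iota'_+\mc{M}$ from Corollary \ref{cor: change of functions}, which holds for any $d_{ij}$. The ambient Hodge filtrations match by Proposition \ref{prop:filtered malgrange}. Strictness of the comparison map is then reduced to strict injectivity across single walls. That follows from the relative freeness of $\gr^{F^*}(V_*^{\bs{\alpha}}\iota_+\mc{M}/V_*^{\bs{\beta}}\iota_+\mc{M})$ in Theorem \ref{thm:hodge flatness}, exactly as in Corollary \ref{cor: subtle injectivity}. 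The magic formula enters only through Theorem \ref{thm:hodge flatness}, and no comparison of pro-objects for the two tuples is needed.
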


\begin{proof}
Since $\gr^{F^*}V_*^{\bs{\alpha}}\iota_+\mc{M}$ is flat over $\mb{C}[\bs{s}]$, we may replace the tensor product with the derived tensor product in the theorem. Since
\[ V_*^{\bs{\alpha}'}{\iota'}_+\mc{M} = V^{\bs{\alpha}}_*\iota_+\mc{M}\overset{\mrm{L}}\otimes_{\mb{C}[\bs{s}]}\mb{C}[\bs{s}'] \]
by Corollary \ref{cor: change of functions}, it suffices to show that the morphism
\[ (V_*^{\bs{\alpha}}{\iota}_+\mc{M}, F_{\bullet + r}^*) \overset{\mrm{L}}\otimes_{(\mb{C}[\bs{s}], F_\bullet)} (\mb{C}[\bs{s}'], F_\bullet) \to (\iota'_+\mc{M}, F_{\bullet + r'}^*) \]
is strict. As in the proof of Corollary \ref{cor: change of functions}, this reduces to showing that
\[ (V_*^{\bs{\beta}}{\iota}_+\mc{M}, F_{\bullet + r}^*) \overset{\mrm{L}}\otimes_{(\mb{C}[\bs{s}], F_\bullet)} (\mb{C}[\bs{s}'], F_\bullet) \to (V_*^{\bs{\alpha}}{\iota}_+\mc{M}, F_{\bullet + r}^*) \overset{\mrm{L}}\otimes_{(\mb{C}[\bs{s}], F_\bullet)} (\mb{C}[\bs{s}'], F_\bullet)\]
is strict injective whenever $\bs{\alpha} \leq \bs{\beta}$ are separated by a single wall. Since $\gr$ commutes with derived tensor products, this is equivalent to showing that
\[ \gr^{F^*}V_*^{\bs{\beta}}{\iota}_+\mc{M} \overset{\mrm{L}}\otimes_{\mb{C}[\bs{s}]}\mb{C}[\bs{s}'] \to \gr^{F^*}V_*^{\bs{\alpha}}{\iota}_+\mc{M} \overset{\mrm{L}}\otimes_{\mb{C}[\bs{s}]}\mb{C}[\bs{s}']\]
is injective, which follows from the flatness relative to the wall of $\gr^{F^*}(V_*^{\bs{\alpha}}\iota_+\mc{M}/V_*^{\bs{\beta}}\iota_+\mc{M})$ as in the proof of Corollary \ref{cor: change of functions}.
\end{proof}

Finally, we prove a filtered version of Theorem \ref{thm:multivariate V direct image}, which is the multivariate version of \cite[Proposition 3.3.17]{Saito88} (see also \cite[Proposition 5.1]{DY25}). Let $X,Y$ be complex manifolds and $\pi \colon Y \to X$ a projective morphism. Suppose that $\mc{M}$ is a mixed Hodge module on $Y$. Recall that the direct image of a mixed Hodge module $\mc{M}$ on $Y$ is given at the level of filtered $\ms{D}$-modules by 
\begin{equation} \label{eq:laumon}
(\pi_*\mc{M}, F_\bullet) \colonequals \mathbf{R}\pi_*\left((\ms{D}_{X \leftarrow Y}, F_\bullet) \overset{\mrm{L}}{\otimes}_{(\ms{D}_Y, F_\bullet)} (\mc{M}, F_\bullet)\right),
\end{equation}
where we give the bimodule $\ms{D}_{Y \leftarrow X}$ the filtration by order of differential operator starting in degree $\dim X - \dim Y$. Let $f_1,\ldots,f_r$ be holomorphic functions on $X$ and write $\iota$ for the graph embeddings of both $f_1,\ldots,f_r$ and $f_1\circ \pi,\ldots,f_r\circ\pi$.

In the statement below, we say that an $\mb{R}^r \times \mb{Z}$-filtered complex $(M, F_\bullet V^\bullet M)$ is \emph{bistrict} if each $(V^{\bs{\alpha}}M, F_\bullet)$ is a strict complex with respect to $F_\bullet$ and the natural maps
\[ \mrm{H}^i(V^{\bs{\beta}}M, F_\bullet) \to \mrm{H}^i(V^{\bs{\alpha}}M, F_\bullet)\]
are strict injections. Note that this does not impose any strictness condition on the interaction of the $r$ different $\mb{R}$-indices.

\begin{thm} \label{thm:multistrict pushforward}
Suppose $f_i\circ \pi$ acts on $\cM$ bijectively for each $i$. Then the $\mb{R}^r \times \mb{Z}$-filtered complex
\[ \pi_+(V^{\bullet}_*\iota_{+}\cM,F_{\bullet}^*)\colonequals \mathbf{R}\pi_*\left((\ms{D}_{X \leftarrow Y}, F_\bullet) \overset{\mrm{L}}\otimes_{(\ms{D}_Y, F_\bullet)} (V^\bullet_*\iota_{+}\mc{M}, F_\bullet^*)\right) \]
is bistrict with respect to the filtrations $F_\bullet^*$ and $V^\bullet_*$. In particular,
\[ (V^{\bs{\alpha}}_*\iota_+\mc{H}^i(\pi_*\mc{M}), F_\bullet^*) = \mc{H}^i\pi_{\ast}(V^{\bs{\alpha}}_*\iota_{+}\cM,F_{\bullet}^*), \quad \text{for all $i\in \Z$ and all $\bs{\alpha} \in \mb{R}^r$}.\]
\end{thm}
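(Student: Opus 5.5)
The plan is to reduce everything to the magic formula, Theorem \ref{thm:multivariate magic formula}, together with the strictness of projective direct images of mixed Hodge modules. The underlying $\ms{D}$-module statement, forgetting $F$, is Theorem \ref{thm:multivariate V direct image} applied to $\iota_+\mc{M}$, whose $\mc{H}^i$ carry the multivariate $V$- and $V_*$-filtrations. So the real content is the Hodge-filtered strictness of each $\pi_+(V^{\bs{\alpha}}_*\iota_+\mc{M}, F^*_\bullet)$ and of the transition maps.

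Fix $\bs{\alpha}$ and $p$. Theorem \ref{thm:multivariate magic formula} identifies $F^*_{p+r}V^{\bs{\alpha}}_*\iota_+\mc{M}$ with $F_p\mc{M}_n$ for $n\gg0$, where $\mc{M}_n = j_*\bigl(\mc{M}[\bs{s}]\bs{f^s}/((s_i+\alpha_i)^n)\bigr)$ on $Y$. More precisely, I will show that for $n\gg0$ the natural surjection $V^{\bs{\alpha}}_*\iota_+\mc{M}\to \mc{M}_n$ of filtered $\ms{D}_Y$-modules has a kernel $\mc{K}_n$ with $F_{p'}\mc{K}_n=0$ for all $p'\le p+C$. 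Here $C$ is a constant depending only on $\dim X-\dim Y$ and the amplitude of the Laumon complex. This follows as in Lemma \ref{lemma: stabilisation of the inverse limit}, using that the Hodge filtration on $j_*\mc{M}$ is bounded below (Remark \ref{rmk:hodge lower bound}).

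Since the Laumon complex \eqref{eq:laumon} computes $F_{p'}$ of the direct image only from $F_{\le p'+c}$ of the input, the truncation map then gives an isomorphism on $F_{\le p}$ between $\pi_+(V^{\bs{\alpha}}_*\iota_+\mc{M},F^*_{\bullet+r})$ and $\pi_+(\mc{M}_n,F_\bullet)$. The latter is strict by the direct image theorem for projective morphisms. Moreover $\pi_+\mc{M}_n = j_*\bigl(\mc{H}^i\pi_+\mc{M}[\bs{s}]\bs{f^s}/(\ldots)\bigr)$, because $\pi_*$ commutes with $j_*$, with the projection formula for the pulled-back variation $(f_1,\dots,f_r)^*(\mc{E}_{n,\alpha_1}\boxtimes\cdots)$, and with Tate twists. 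Strictness in degrees $\le p$ for every $p$ gives $F$-strictness of $\pi_+(V^{\bs{\alpha}}_*\iota_+\mc{M})$.

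Applying Theorem \ref{thm:multivariate magic formula} again on $X$ to the mixed Hodge module $\mc{H}^i\pi_*\mc{M}$ identifies $F_p\mc{H}^i$ with $F^*_{p+r}V^{\bs{\alpha}}_*\iota_+\mc{H}^i\pi_*\mc{M}$. This yields the displayed formula at the level of $F_{\le p}$ for each $p$, and hence the displayed formula in general.

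It remains to show that the maps $\mrm{H}^i(V^{\bs{\beta}}_*)\to\mrm{H}^i(V^{\bs{\alpha}}_*)$ are strict injections. Injectivity already holds for the underlying $\ms{D}$-modules by Theorem \ref{thm:multivariate V direct image}. Filtered strictness should follow because both sides are identified on $F_{\le p}$ with $F_p$ of $\mc{H}^i\pi_*$ applied to $\mc{M}_n$ for $\bs{\alpha}$ and for $\bs{\beta}$. The comparison map is induced by a morphism of pro-mixed Hodge modules, or equivalently is the inclusion of $V_*$-pieces inside $\iota_+\mc{H}^i\pi_*\mc{M}$ with the induced Hodge filtration $F^*\cap V_*$, which is strict by definition.

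The main obstacle I expect is the uniform control of $n$: the kernel $\mc{K}_n$ must vanish in Hodge degrees $\le p+C$ for one $n$ working simultaneously before and after pushforward. That requires that the identification of $\pi_+\mc{M}_n$ with the corresponding object built from $\mc{H}^i\pi_*\mc{M}$ respects the inverse systems in $n$. To settle this, I would first try to use the short exact sequences \eqref{eqn: ses for stablisation} and their pushforwards, which are exact triangles of mixed Hodge modules with Tate-twisted, hence shifted, Hodge filtrations.
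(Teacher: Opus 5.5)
You prove $F$-strictness of each $\pi_+(V^{\bs{\alpha}}_*\iota_+\mc{M},F^*_\bullet)$ exactly as the paper does: magic formula, Laumon truncation, then Saito's strictness for $\pi_*j_*\mc{M}_n$. The ``uniform control of $n$'' you worry about is harmless, since for fixed $p$ one just takes $n$ large enough for the finitely many stabilisations involved (Remark \ref{remark: stablization for several variable}).

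You diverge from the paper at the strict injectivity of $\mc{H}^i\pi_+V^{\bs{\beta}}_*\iota_+\mc{M}\to\mc{H}^i\pi_+V^{\bs{\alpha}}_*\iota_+\mc{M}$. Your first justification is false. For $\bs{\alpha}\neq\bs{\beta}$, no morphism of pro-mixed Hodge modules from $j_*\mc{M}[[\bs{s}+\bs{\beta}]]\bs{f^s}$ to $j_*\mc{M}[[\bs{s}+\bs{\alpha}]]\bs{f^s}$ realises $V^{\bs{\beta}}_*\subset V^{\bs{\alpha}}_*$. Indeed, any $\mb{C}[\bs{s}]$-linear map between the finite stages vanishes, because the ideals $((s_i+\alpha_i)^n)_i$ and $((s_i+\beta_i)^n)_i$ are comaximal. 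The paper gets a Hodge-theoretic realisation only after reducing to $\bs{\beta}=\bs{\alpha}+\epsilon\bs{1}$ and using the $!$-version of the magic formula (Remark \ref{rmk:! magic formula}). That identifies the transition map in low Hodge degrees with $\pi_*j_!\mc{M}_n\to\pi_*j_*\mc{M}_n$, for the same $\bs{\alpha}$ and $n$, which is a morphism of mixed Hodge modules and hence strict. The displayed formula is then a corollary.

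Your second justification is a genuinely different and workable route. You first prove the displayed formula for every $\bs{\alpha}$: identify $\mc{H}^i\pi_*\mc{M}_n$ with $(\mc{H}^i\pi_*\mc{M})_n$ via the projection formula and $\pi_*j_*=j_*\pi_*$, then apply the magic formula a second time on $X$. Strict injectivity is then automatic, because both filtrations are $F^*\cap V^{\bullet}_*$ inside the single module $\iota_+\mc{H}^i\pi_*\mc{M}$. Compared with the paper, this avoids the $!$-magic formula (only sketched there) and the reduction to $\bs{\alpha}+\epsilon\bs{1}$. The cost is the projection formula for mixed Hodge modules, compatibly with the inverse systems, plus a naturality check that $\mc{H}^i\pi_+$ of the quotient map on $Y$ is the quotient map $q$ on $X$.

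One step must still be written out. The two filtrations on $V^{\bs{\alpha}}_*\iota_+\mc{H}^i\pi_*\mc{M}$ both have $F_{p'+r}$-pieces mapping isomorphically onto $F_{p'}(\mc{H}^i\pi_*\mc{M})_n$ under $q$, but that alone does not make them equal. Argue as follows:
\begin{enumerate}
\item Take a section $x$ in the $F_{p'+r}$-piece of one filtration.
\item By exhaustiveness, $x$ lies in the $F_{P+r}$-piece of the other filtration for some $P\geq p'$.
\item Choose $n$ for level $P$, so that $q$ is injective on the $F_{P+r}$-pieces of both filtrations.
\item Then $x$ equals the element of the other filtration's $F_{p'+r}$-piece that has the same image under $q$.
\end{enumerate}
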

\begin{proof}
Let us write
\[ \mc{M}_{n, \bs{\alpha}} = \frac{j^{\ast}\cM[\bs{s}]\bs{f^s}}{\left((s_1+\alpha_1)^n,\ldots (s_r+\alpha_r)^n\right)}.\]
By Theorem \ref{thm:multivariate magic formula}, for each $p$ there exists $n$ such that
\[ F_{p' + r} ^*V_*^{\bs{\alpha}}\iota_+\mc{M} = F_{p'}j_*\mc{M}_n \quad \text{for all $p' \leq p$ and all $\bs{\alpha}$}\]
and hence
\begin{equation} \label{eq:multistrict pushforward 1}
F_{p' + r}^*\pi_+V_*^{\bs{\alpha}}\iota_+\mc{M} = F_{p'}\pi_*j_*\mc{M}_n \quad \text{for all $p' \leq p + \dim Y - \dim X$ and all $\bs{\alpha}$}.
\end{equation}
Since the right hand side is the direct image of a mixed Hodge module, it is a strict complex by Saito's direct image theorem. So $\pi_+V_*^{\bs{\alpha}}\iota_+\mc{M}$ is strict as well. Similarly, to prove strict injectivity of $\mc{H}^i\pi_+V^{\bs{\beta}}_*\iota_+\mc{M} \to \mc{H}^i\pi_+V^{\bs{\alpha}}_*\iota_+\mc{M}$ we may assume without loss of generality that $\bs{\beta} = \bs{\alpha} + \epsilon \bs{1}$. Then arguing as above using Remark \ref{rmk:! magic formula}, we have
\begin{equation} \label{eq:multistrict pushforward 2}
 F_{p' + r}^*\pi_+V_*^{\bs{\alpha} + \epsilon \bs{1}}\iota_+\mc{M} = F_{p'}\pi_*j_!\mc{M}_n \quad \text{for all $p' \leq p + \dim Y - \dim X$ and all $\bs{\alpha}$}.
\end{equation}
Since
\[ \mc{H}^i(\pi_*j_!\mc{M}_n) \to \mc{H}^i(\pi_*j_*\mc{M}_n) \]
underlies a morphism of mixed Hodge modules, it is strict with respect to $F_\bullet$. So \eqref{eq:multistrict pushforward 1} and \eqref{eq:multistrict pushforward 2} imply that $\mc{H}^i\pi_+V^{\bs{\beta}}_*\iota_+\mc{M} \to \mc{H}^i\pi_+V^{\bs{\alpha}}_*\iota_+\mc{M}$ is strict with respect to $F_\bullet^*$. Since it is also injective by Theorem \ref{thm:multivariate V direct image}, we are done.
\end{proof}

\subsection{Connection to mixed multiplier ideals and Bernstein-Sato ideals} 

We conclude this section with a brief discussion of the relationship between the multivariate $V$-filtration and more classical invariants.

Let $X$ be a complex manifold, $f_1,\cdots,f_r$ be holomorphic functions on $X$, $D=\textrm{div}(f_1\cdots f_r)$, and $\iota$ be the graph embedding. We first reinterpret the mixed multiplier ideals of $f_i$ in terms of the lowest piece in the Hodge filtration on $V^{\bs{\alpha}}\iota_{+}\cO_X$. This is a multivariate generalisation of a result of Budur-Saito \cite[Theorem 0.1]{BS05}. 
\begin{corollary} \label{cor:log canonical}
 For $\bs{\alpha}\in \R^r_{> 0}$ and $0<\epsilon\ll 1$, one has an equality of ideals 
\[ \mc{J}\left(X,\sum_{i = 1}^r(\alpha_i - \epsilon) \mrm{div}(f_i)\right)= F_r V^{\bs{\alpha}}\iota_{+}\cO_X, \quad h\mapsto h\bs{f}^{\bs{s}}.\]
Here we regard $F_rV^{\bs{\alpha}}\iota_+\mc{O}_X$ as an ideal via the inclusion $F_rV^{\bs{\alpha}}\iota_+\mc{O}_X \subset F_r\iota_+\mc{O}_X = \mc{O}_X$. In particular, $\bs{f}^{\bs{s}} \in V^{\bs{\alpha}}\iota_{+}\cO_X$ if and only if the pair $(X, \sum_{i = 1}^r\alpha_i \mrm{div}(f_i))$ is log canonical. 
\end{corollary}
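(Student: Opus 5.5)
Apply Theorem~\ref{thm:multivariate magic formula} with $\mc{M}=\mc{O}_X(*D)$ and read off the lowest Hodge piece. By Corollary~\ref{cor:localising V}, $V^{\bs{\alpha}}\iota_+\mc{O}_X=V^{\bs{\alpha}}_*\iota_+\mc{O}_X(*D)$ for $\bs{\alpha}\in\mb{R}^r_{>0}$. By Lemma~\ref{lem:hodge * formula}, $F_\bullet V^{\bs{\alpha}}\iota_+\mc{O}_X(*D)$ equals $F^*_\bullet$ intersected with $V^{\bs{\alpha}}$. One checks that this agrees with $F_\bullet\iota_+\mc{O}_X\cap V^{\bs{\alpha}}$ for $\bs{\alpha}>0$, using strictness of $\iota_+\mc{O}_X\to\iota_+\mc{O}_X(*D)$ and the fact that the two modules agree on $V^{>0}$.

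The magic formula identifies $F_r V^{\bs{\alpha}}$ with $F_0 j_*\bigl(\mc{O}_U[\bs{s}]\bs{f^s}/((s_i+\alpha_i)^n)\bigr)$ for $n\gg0$. By Proposition~\ref{prop:filtered malgrange}, $F_r^*\iota_+\mc{O}_X(*D)=F_0^*\mc{O}_X(*D)\,\bs{f^s}=\mc{O}_X(*D)\bs{f^s}$, and $F_r\iota_+\mc{O}_X=\mc{O}_X\bs{f^s}$. So $F_rV^{\bs{\alpha}}\iota_+\mc{O}_X$ is an ideal $I\subset\mc{O}_X$ via $h\mapsto h\bs{f^s}$.

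Next compute $F_0$ of the $*$-extension. Filter the rank-$n^r$ variation by the $s$-adic filtration. Its graded pieces are copies of the rank-one unitary system $\mc{O}_U\bs{f}^{-\bs{\alpha}}$, Tate twisted so that only the untwisted top piece can contribute to $F_0$, as in Lemma~\ref{lemma: stabilisation of the inverse limit}. Hence the inverse system stabilises, and
\[F_0 j_*\mc{M}_n=\text{image of }F_0 j_*(\mc{O}_U\bs{f}^{-\bs{\alpha}})\text{ under the natural lift}.\]
This is automatic for the generator $h\bs{f^s}$: since $F_0$ of the quotient $(\bs{s}+\bs{\alpha})$-part vanishes, $F_0$ of the finite-level module maps isomorphically onto $F_0 j_*(\mc{O}_U\bs{f}^{-\bs{\alpha}})$.

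The lowest Hodge piece of $j_*$ of a rank-one unitary local system $\mc{O}_U\prod f_i^{-\alpha_i}$ is a standard computation. Take a log resolution $\mu\colon\tilde X\to X$ of $D$. On $\tilde X$ the lowest piece of the $*$-extension is given by Deligne's canonical extension with residues in $[0,1)$. Pushing forward via Saito's direct image theorem (strictness, with $F_0$ of $\mc{H}^0\mu_*$ computed by $\mu_*(\omega_{\tilde X/X}\otimes F_0)$) gives
\[F_0 j_*(\mc{O}_U\bs{f}^{-\bs{\alpha}})=\mu_*\mc{O}_{\tilde X}\bigl(K_{\tilde X/X}-\lfloor\textstyle\sum_i(\alpha_i-\epsilon)\mu^*\mrm{div} f_i\rfloor\bigr)\cdot\bs{f}^{-\bs{\alpha}}.\]
The shift by $\epsilon$ arises because $*$-extension takes residues with real part in $(-1,0]$ after twisting. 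This is precisely the mixed multiplier ideal times $\bs{f}^{-\bs{\alpha}}$, as in Budur--Saito. Matching $h\bs{f^s}\mapsto h\bs{f}^{-\bs{\alpha}}$ gives the equality.

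The main obstacle is this last identification, namely carefully matching the convention ($\epsilon$, strict versus non-strict inequality) between the residue normalisation of $j_*$ and the floor in the multiplier ideal. I would first check it in the SNC case $f_i=x_i$ via Proposition~\ref{prop:normal crossings V-filtration} and formula~\eqref{eq:normal crossings 2}. There $F_0$ of the $*$-extension is $\prod x_i^{\lceil\alpha_i-\epsilon\rceil-1}\cdot$, which equals $\mc{J}(\sum(\alpha_i-\epsilon)D_i)$. I would then reduce the general case by Theorem~\ref{thm:multistrict pushforward} applied to $\mu$, together with the change-of-functions Theorem~\ref{thm:filtered change of functions} for monomial $f_i\circ\mu$. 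Using $\mc{O}_X=\mc{H}^0\mu_*\mc{O}_{\tilde X}$ as a summand, the pushforward of the lowest piece yields $\mu_*(\omega_{\tilde X/X}\otimes\cdot)$.

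The log canonical statement follows last: $\bs{f^s}\in V^{\bs{\alpha}}$ if and only if $1\in F_rV^{\bs{\alpha}}$, if and only if $\mc{J}(\sum(\alpha_i-\epsilon)\mrm{div} f_i)=\mc{O}_X$ for all small $\epsilon$, which is log canonicity of $(X,\sum\alpha_i\mrm{div} f_i)$.
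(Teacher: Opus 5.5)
Your proposal is correct, but it follows a different route from the paper.

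The paper's route is as follows.
\begin{itemize}
\item It first reduces to $\bs{\alpha}\in\mb{Q}^r_{>0}$: the $V$-filtration is defined over $\mb{Q}$ by Corollary \ref{cor: MHM fully A specializable}, and both sides are semicontinuous in $\bs{\alpha}$.
\item It writes $N\bs{\alpha}=(d_1,\ldots,d_r)\in\mb{Z}^r$ and applies the filtered change of functions, Theorem \ref{thm:filtered change of functions}, along $s_i\mapsto d_is$. This passes to the single function $g=\prod_i f_i^{d_i}$ at index $1/N$.
\item At the lowest Hodge level this gives $F_rV^{\bs{\alpha}}\iota_+\cO_X\cong F_1V^{1/N}\iota_{g,+}\cO_X$, with $h\bs{f^s}\mapsto hg^s$.
\item The single-variable theorem of Budur--Saito \cite{BS05}, together with $\mc{J}(\sum_i(\alpha_i-\epsilon)\mrm{div} f_i)=\mc{J}((1/N-\epsilon)\mrm{div} g)$, finishes the proof.
\end{itemize}

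You instead apply Theorem \ref{thm:multivariate magic formula} directly and discard the $(\bs{s}+\bs{\alpha})$-part at $F_0$ by the Tate-twist argument. You then compute $F_0 j_*(\cO_U\bs{f}^{-\bs{\alpha}})$ on a log resolution, using Saito's formula for the lowest Hodge piece of a projective direct image. In effect you rerun the Budur--Saito argument in several variables. Your route is more self-contained, since it bypasses the flatness and duality results on which Theorem \ref{thm:filtered change of functions} rests. The paper's route takes a few lines given that machinery and uses Budur--Saito as a black box.

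Two small points.
\begin{itemize}
\item For irrational $\bs{\alpha}$ you are relying on complex-MHM desiderata: the magic formula and Proposition \ref{prop:basic local systems} with real exponents. To stay within Saito's theory, add the paper's reduction to $\bs{\alpha}\in\mb{Q}^r$.
\item Your description of the residues is off. The lowest piece of $\tilde{j}_*$ of the rank-one system is the Deligne lattice with residues in $[0,1)$ shifted by $-1$ in each residue, i.e.\ residues in $[-1,0)$ (cf.\ \eqref{eq:normal crossings 2}). Residues in $(-1,0]$ would produce the exponent $\lfloor\sum_i\alpha_i\,\mrm{ord}_E(f_i)\rfloor$ along each exceptional $E$. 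That gives $\mc{J}(\sum_i\alpha_i\,\mrm{div} f_i)$ without the $\epsilon$, which is the answer for $V^{>\bs{\alpha}}$ rather than $V^{\bs{\alpha}}$.
\end{itemize}
Your displayed formula and your SNC check $\prod_i x_i^{\lceil\alpha_i-\epsilon\rceil-1}$ are nevertheless correct.
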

\begin{proof}
Since  $\cO_X \in \MHM_{\Q}(X)$, the multivariate $V$-filtration $V^{\bullet}\iota_{+}\cO_X$ is defined over $\Q$ by Corollary \ref{cor: MHM fully A specializable}. Thus by the semi-continuity of both sides with respect to $\bs{\alpha}$, we can assume $\bs{\alpha}\in \Q^r_{>0}$. Choose an integer $N\gg0$ such that $(d_1,\ldots,d_r)\colonequals N\bs{\alpha}\in \Z^r$. Applying Theorem \ref{thm:filtered change of functions} to $\cO_X(\ast D)$, $g=f_1^{d_1}\cdots f_r^{d_r}$ and $\alpha'=1/N$, and using Corollary \ref{cor:localising V} and Lemma \ref{lem:hodge * formula}, we obtain
\[ \gr^FV^{\bs{\alpha}}\iota_{+}\cO_X(-r)\otimes_{\C[\bs{s}]}\C[s]=\gr^FV^{1/N}\iota_{g,+}\cO_X(-1),\]
where $\iota_g \colon X\to X\times \C$ is the graph embedding of $g$ and the tensor product is taken over $s_i\mapsto d_is$; note that this is an equality of ideals by construction. Evaluating at the lowest nonzero Hodge level gives
\[ F_rV^{\bs{\alpha}}\iota_{+}\cO_X \cong F_1V^{1/N}\iota_{g,+}\cO_X,\]
where $h\bs{f}^{\bs{s}}$ is mapped to $hg^s$. We finish the proof by using \cite[Theorem 0.1]{BS05}:
\[ \mc{J}\left(X,\sum_{i = 1}^r(\alpha_i - \epsilon) \mrm{div}(f_i)\right)= \cJ\left(X,(1/N-\epsilon)\textrm{div}(g)\right)= F_1V^{1/N}\iota_{g,+}\cO_X, \quad h\mapsto hg^s.\qedhere\]
\end{proof}

We also have the following relation between the multivariate $V$-filtration and the Bernstein-Sato ideal, generalising the case $r = 1$.
Denote by
\[ \textrm{Exp}\colon \C^r \to (\mb{C}^{\times})^r, \quad (\alpha_1,\ldots,\alpha_r)\mapsto (e^{2\pi i\alpha_1},\ldots, e^{2\pi i\alpha_r}).\]
\begin{prop}\label{prop: Bernsteinideal and Vfiltration}
We have
\begin{equation} \label{eq:bernstein vs V 1}
\mathrm{Exp}(\mathrm{Zero}(B_F))=\mathrm{Exp}\overline{\left(\{-\bs{\alpha}\mid \bs{\alpha}\in \R^r_{\geq 0}, \, V^{\bs{\alpha}}\iota_{+}\cO_X(\ast D)\neq V^{\bs{\alpha}+\epsilon\bs{1}}\iota_{+}\cO_X(\ast D)\}\right)},
\end{equation}
where $\overline{(-)}$ denotes the Zariski closure in $\mb{C}^r$.
\end{prop}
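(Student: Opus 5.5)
The plan is to go through the Malgrange–Mellin transform and the known description of $\mathrm{Zero}(B_F)$ in terms of the $\ms{D}_X[\bs{s}]$-module $\ms{D}_X[\bs{s}]\bs{f^s}/\ms{D}_X[\bs{s}]\bs{f^{s+1}}$. By Budur (and the work of Budur–van der Veer–Wu–Zhou on zero loci of Bernstein–Sato ideals), $\mathrm{Exp}(\mathrm{Zero}(B_F))$ equals $\mathrm{Exp}$ of the support of $\mc{N}:=\ms{D}_X[\bs{s}]\bs{f^s}/\ms{D}_X[\bs{s}]\bs{f^{s+1}}$, and this image coincides with the set of points $\mathrm{Exp}(\bs{\gamma})$ for which $\bs{\gamma}$ lies in the support of any relative holonomic $\ms{D}_X[\bs{s}]$-lattice squeezed between integer translates. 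In particular, the $\mathrm{Exp}$-image of the support is insensitive to replacing $\ms{D}_X[\bs{s}]\bs{f^s}$ by another $\ms{D}_X[\bs{s}]$-lattice $\mc{L}\subset\iota_+\cO_X(*D)$ that satisfies $\bs{t}^{N\bs{1}}\mc{L}'\subset\mc{L}\subset\mc{L}'$ for comparable lattices. The reason is that supports of $\mc{L}/\bs{t}^{\bs{n}}\mc{L}$ differ only by integer translates, which $\mathrm{Exp}$ kills. So the first step is to establish this \emph{lattice-independence} statement. I would prove it by filtering $\mc{L}/t_i\mc{L}$ and using $t_i\colon \mc{L}_{\mf{m}}\cong (t_i\mc{L})_{\mf{m}'}$ with $\mf{m}'$ shifted by $\bs{1}_i$.

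Next, take $\mc{L}=V^{\bs{0}}\iota_+\cO_X(*D)=V^{\bs{0}}_*\iota_+\cO_X(*D)$. This is a $\ms{D}_X[\bs{s}]$-lattice by Lemma \ref{lem: DXs coherence}, and it is relative holonomic by Proposition \ref{prop:V is relative holonomic}. Since $\bs{f^s}$ lies in some $V^{-\bs{n}}$ and $V^{\bs{0}}$ is finitely generated over $\ms{D}_X[\bs{s}]$, it is sandwiched between integer translates of $\ms{D}_X[\bs{s}]\bs{f^s}$. Then consider $\mc{L}/t_1\cdots t_r\mc{L}=V^{\bs{0}}/V^{\bs{1}}$. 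This quotient is filtered by the successive wall-crossing quotients $V^{\bs{\alpha}}/V^{\bs{\alpha}+\epsilon\bs{1}}$ with $\bs{\alpha}\in[0,1)^r$ along a generic path. By Definition \ref{defn:multivariate V-filtration}, each such quotient is supported on $\{\bs{L}(\bs{s})=-\gamma_j\}$. More precisely, using Lemma \ref{lem:localised chambers}, its localisation at $\mf{m}=(\bs{s}+\bs{\beta})$ is nonzero only when $\Re\bs{\beta}$ lies on walls through $\bs{\alpha}$ across which the filtration actually jumps.

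The main step, and the expected obstacle, is identifying the support set precisely with the Zariski closure of the negated jumping points $-\bs{\alpha}$. Since $\cO_X(*D)$ underlies a $\mb{Q}$-mixed Hodge module, the filtration is defined over $\mb{Q}$ (Corollary \ref{cor: MHM fully A specializable}), so we may take the $\gamma_j$ to be real. Then the support of $V^{\bs{\alpha}}/V^{\bs{\alpha}+\epsilon\bs{1}}$ is a union of real affine subspaces through $-\bs{\alpha}$, each contained in a jumping wall. The inclusion ``$\subseteq$'' follows, because the real points of such a support are jumping points: localisation at $\mf{m}$ on the wall equals the jump by Lemma \ref{lem:localised chambers}. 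For ``$\supseteq$'', a nonzero jump at $\bs{\alpha}$ gives a nonzero localisation at $\mf{m}_{-\bs{\alpha}}$, which forces $-\bs{\alpha}$ into the support. Flatness relative to the wall (Theorem \ref{thm:flatness}) ensures that the support of each wall-crossing quotient is a union of full hyperplanes, so its real points are Zariski dense in it. This density is what makes the closure on the right-hand side match. Finally, apply $\mathrm{Exp}$ together with the lattice-independence step to conclude the proof of \eqref{eq:bernstein vs V 1}.
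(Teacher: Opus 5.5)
Your proposal follows essentially the same route as the paper. The paper likewise compares the two $\bs{t}$-lattices $\ms{D}_X[\bs{s}]\bs{f^s}$ and $V_*^{\bs{\alpha}}\iota_+\mc{O}_X(*D)$, so that the annihilators of $\mc{L}/\bs{t}\mc{L}$ agree up to products of integer translates, which $\mathrm{Exp}$ does not see; it then identifies the zero locus of the annihilator of $V_*^{\bs{\alpha}}\iota_+\mc{O}_X(*D)/\bs{t}V_*^{\bs{\alpha}}\iota_+\mc{O}_X(*D)$ with the negated jumping walls, using only $\mb{Q}$-definedness and the relative flatness of Theorem~\ref{thm:flatness}, a step you spell out in more detail.

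Two points need care there, and the paper's one-line treatment glosses both.

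First, the jumping set in $\R^r_{\geq 0}$ is stable under $\Z^r_{\geq 0}$. Read literally, a single jump therefore already makes the Zariski closure all of $\C^r$. The closure must instead be taken over jumping points in a fundamental domain such as $[0,1)^r$, which is exactly what $V^{\bs{0}}/V^{\bs{1}}$ sees.

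Second, flatness is needed for your ``$\supseteq$'' step as well, not only for density of real points. It is what forces each nonzero single-wall quotient to be supported on the whole complexified wall. Only then is a jump at $\bs{\alpha}$ visible after localising at $(\bs{s}+\bs{\alpha})$ itself.
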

\begin{proof}
By Corollary \ref{cor:extending V*}, we can replace the right hand side of \eqref{eq:bernstein vs V 1} with $\mrm{Exp}$ of the Zariski closure of the jumping values of $V_*$. Since $V_*^\bullet \mc{O}_X(*D)$ is defined over $\mb{Q}$, it follows from the definition and Theorem \ref{thm:flatness} that this Zariski closure is the zero locus of the annihilator in $\mb{C}[\bs{s}]$ of
\begin{equation} \label{eq:bernstein vs V 3}
 V^{\bs{\alpha}}_*\iota_+\mc{O}_X(*D)/V^{\bs{\alpha} + \bs{1}}_*\iota_+\mc{O}_X(*D) = V^{\bs{\alpha}}_*\iota_+\mc{O}_X(*D)/\bs{t}V^{\bs{\alpha}}_*\iota_+\mc{O}_X(*D),
\end{equation}
where $\bs{t} = t_1 \cdots t_r$. Now, observe that
\[ V^{\bs{\alpha}}_*\iota_+\mc{O}_X(*D) \quad \text{and} \quad \ms{D}_X[\bs{s}]\bs{f^s} \subset \iota_+\mc{O}_X(*D) \]
are $\ms{D}_X[\bs{s}]$-coherent subsheaves that become equal after inverting $\bs{t}$. So \eqref{eq:bernstein vs V 3} is an extension of finitely many sheaves of the form
\begin{equation} \label{eq:bernstein vs V 2}
 \frac{V^{\bs{\alpha}}_*\iota_+\mc{O}_X(*D) \cap \bs{t}^{\bs{n}}\ms{D}_X[\bs{s}]\bs{f^s}}{V^{\bs{\alpha}}_*\iota_+\mc{O}_X(*D) \cap \bs{t}^{\bs{n} + \bs{1}}\ms{D}_X[\bs{s}]\bs{f^s} + \bs{t}V^{\bs{\alpha}}_*\iota_+\mc{O}_X(*D) \cap \bs{t^n}\ms{D}_X[\bs{s}]\bs{f^s}}.
 \end{equation}
Since
\[ B_F = \mrm{Ann}_{\mb{C}[\bs{s}]} \ms{D}_X[\bs{s}]\bs{f^s}/\bs{t}\ms{D}_X[\bs{s}]\bs{f^s}\]
by definition, the terms \eqref{eq:bernstein vs V 2} are all annihilated by some integer translate of $B_F$. So \eqref{eq:bernstein vs V 3} is annihilated by a product of such ideals, proving the inclusion $\supset$ in \eqref{eq:bernstein vs V 1}. Interchanging the roles of $V_*^{\bs{\alpha}}\iota_+\mc{O}_X(*D)$ and $\ms{D}_X[\bs{s}]\bs{f^s}$ gives the reverse inclusion, so we are done.
\end{proof}

\section{The Strong Monodromy Conjecture for hyperplane arrangements}\label{sec: strong monodromy conjecture}

In this section, as a demonstration of the general theory developed in \S\S\ref{sec: multivariate Vfiltration}--\ref{sec: Hodge and multivariate V}, we prove the Strong Monodromy Conjecture for hyperplane arrangements and its multivariable version due to Budur.

\subsection{Multivariate $V$-filtration of hyperplane arrangements}\label{sec: multivariate V hyp arr}

In this subsection, we establish a structural lemma about the multivariate $V$-filtration of the graph embedding of hyperplane arrangements that we will use in the proof

Assume that $f=\prod_{i=1}^r f_i^{d_i}\in \C[x_1,\ldots,x_n]$ defines a hyperplane arrangement in  $X=\C^n$, where $f_i$ are linear and $d_i\in \Z_{>0}$. Set $D\colonequals f^{-1}(0)=\sum_{i=1}^r d_iD_i$, where $D_i=\mathrm{div}(f_i)$ are the irreducible components of $D$.

We recall some terminologies and basic facts following \cite[\S 2]{BMT}. A hyperplane arrangement  is \emph{central} if each $f_i$ is homogeneous. A linear subspace $W\subset \C^n$ is called an \emph{edge}, if it is an intersection of some hyperplanes $D_i$. A central hyperplane arrangement is \emph{essential} if $\{0\}$ is an edge, and is \emph{indecomposable} if there is no linear change of coordinates on $\C^n$ such that $f$ can be written as the product of two non-constant polynomials in disjoint sets of variables. An edge $W$ is \emph{dense} if the induced hyperplane arrangement  $\textrm{im}\{\cup_{D_i\supseteq W} D_i \to \C^n/W\}$ is essential and indecomposable. We assume throughout this section that
\[ \textrm{$D$ is central, essential, and indecomposable}.\]
Under this assumption, let $\iota \colon X\to X\times \C^r$ be the graph embedding of $f_1,\ldots,f_r$, and let $\mc{M} = \iota_{+} \mc{O}_{X}$ be the direct image $\sD$-module. Consider the multivariate $V$-filtration $V^{\bullet}\cM$, with respect to the coordinates on $\C^r$. Via the $\sD_X[\bs{s}]$-module isomorphism
\[ \iota_{+}\cO_X(*D)\cong \cO_X(*D)[\bs{s}]\bs{f^s}\]
from \eqref{eqn: Malgrange isomorphism}, we will identify $\cM$ as a sub-$\sD_X[\bs{s}]$-module of $\cO_X(*D)[\bs{s}]\bs{f^s}$. For $\bs{\alpha} \in \mb{R}^r$, since $V^{\bs{\alpha}}\mc{M}$ is a $\C[\bs{s}]$-module, we can define an ideal $I^{\bs{\alpha}} \subset \mb{C}[\bs{s}]$ by
\begin{equation}\label{eqn: definition of Ialpha} \mb{C}[\bs{s}]\bs{f^s} \cap V^{\bs{\alpha}}\mc{M} = I^{\bs{\alpha}}\bs{f^s}.\end{equation}
In other words, $I^{\bs{\alpha}}=\{p(\bs{s})\in \C[\bs{s}] \mid p(\bs{s)}\bs{f^s}\in V^{\bs{\alpha}}\cM\}$. The main result of this section is:
\begin{lemma} \label{lem: n/d reduction}
If $\bs{\alpha}\in \R^r_{>0}$ satisfies $\sum_{i=1}^r \alpha_i =n$, then for any $0<\epsilon \ll 1$, the image of the natural map
\begin{equation}\label{eqn: map in n/d reduction}
I^{\bs{\alpha}} \to \frac{V^{\bs{\alpha}}\mc{M}}{V^{\bs{\alpha} + \epsilon\bs{1}} \mc{M}} \otimes_{\mb{C}[\bs{s}]} \mb{C}[s],\quad p(\bs{s}) \mapsto [p(\bs{s})\bs{f^s}]\otimes 1,
\end{equation}
is nonzero, where the tensor product is taken via $s_i \mapsto \alpha_i s$.
\end{lemma}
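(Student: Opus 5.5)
The plan is to detect the jump of $V^\bullet\mc{M}$ across the wall $\{\sum_i\alpha_i=n\}$ on the exceptional divisor over the origin, and then use flatness relative to that wall to survive the specialisation $s_i\mapsto\alpha_i s$.

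\textbf{Geometric setup.} Let $\pi\colon\tilde X\to X$ be the canonical log resolution obtained by iteratively blowing up the dense edges, the first blow-up being at the origin. Let $E_0$ be the exceptional divisor over $\{0\}$. Since the $f_i$ are linear, $\mrm{ord}_{E_0}f_i=1$ for all $i$ and the log discrepancy of $E_0$ is $n$. By Example \ref{ex: set of walls via log resolution} and Corollary \ref{cor:walls via resolution}, the hyperplane $W=\{\sum_i\alpha_i=n\}$ is then among the candidate walls, with $\bs{L}=\bs{1}$. The point $\bs{\alpha}$ lies on $W$, and $\bs{\alpha}+\epsilon\bs{1}$ lies just across it. Moreover $\bs{\alpha}$ is generic enough that I expect $W$ to be the only candidate wall separating $\bs{\alpha}$ from $\bs{\alpha}+\epsilon\bs{1}$ near the generic point of $E_0$.

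\textbf{Step 1: produce a section on $\tilde X$.} Work on $\tilde X$ near a general point of $E_0$ with local coordinate $y$ cutting out $E_0$, and with $\tilde\iota$ the graph embedding of $f_1\circ\pi,\ldots,f_r\circ\pi$. There $\pi^*f_i=y\cdot u_i$ with $u_i$ a unit off the strict transforms, and $\pi^*\bs{f^s}=y^{\sum s_i}\prod u_i^{s_i}$. By Proposition \ref{prop:normal crossings V-filtration}, Proposition \ref{prop:change of functions} and the explicit formula \eqref{eq:normal crossings 2}, the class of $y^{n-1}\bs{f^s}$ (with the Jacobian $y^{n-1}$ of $\pi$ taken into account) lies in $V^{\bs{\alpha}}$ but not in $V^{\bs{\alpha}+\epsilon\bs{1}}$, because along $E_0$ the relevant $V$-index is $\sum_i\alpha_i=n$. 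Its image in $\gr$ across $W$ is a nonzero section of the lowest Hodge piece. Concretely, this is the statement that $E_0$ computes a log canonical place of $(X,\sum_i\alpha_iD_i)$ at $0$ in the sense of Corollary \ref{cor:log canonical}, since its discrepancy term $n-1-\sum_i\alpha_i=-1$ is exactly critical.

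\textbf{Step 2: push forward and find the element of $I^{\bs{\alpha}}$.} Push this section forward using Theorem \ref{thm:multivariate V direct image}, together with its Hodge refinement Theorem \ref{thm:multistrict pushforward}, to get a nonzero class in $V^{\bs{\alpha}}\mc{M}/V^{\bs{\alpha}+\epsilon\bs{1}}\mc{M}$. The pushed-forward section should be of the form $p(\bs{s})\bs{f^s}$. I would obtain $p$ by applying $\ms{D}_X[\bs{s}]$-operators and using the Euler field $\sum_j x_j\partial_{x_j}$, which acts on $\bs{f^s}$ by $\sum_i s_i$. Homogeneity then forces the degree-zero part of the class to be a polynomial multiple of $\bs{f^s}$, so $p\in I^{\bs{\alpha}}$ with $[p\bs{f^s}]\neq0$.

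\textbf{Step 3: specialise.} Theorem \ref{thm:flatness} shows that the quotient $Q=V^{\bs{\alpha}}\mc{M}/V^{\bs{\alpha}+\epsilon\bs{1}}\mc{M}$ is flat over $\mb{C}[U]$ for every $U$ not containing $\sum_i s_i$. Theorem \ref{thm:hodge flatness} shows $\gr^FQ$ is free relative to $W$. The kernel $U$ of $s_i\mapsto\alpha_is$ does not contain $\sum_i s_i$, because $\sum_i\alpha_i=n\neq0$. So $Q\otimes_{\mb{C}[\bs{s}]}\mb{C}[s]=Q\otimes_{\mb{C}[U]}\mb{C}$ is computed on a free module. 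The class $[p\bs{f^s}]$ lies in the lowest Hodge piece and can be taken as part of a homogeneous basis, as in Lemma \ref{lemma: flatness over polynomial ring}. Hence it is not in $U\cdot Q$, so its image in \eqref{eqn: map in n/d reduction} is nonzero.

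\textbf{Main obstacle.} I expect Step 2 to be the hardest: guaranteeing that the nonzero jump seen on $E_0$ is realised by an element of the form $p(\bs{s})\bs{f^s}$, and not merely by some element of $V^{\bs{\alpha}}\mc{M}$. I would attack it with the Euler-weight grading combined with Corollary \ref{cor:log canonical} in the lowest Hodge degree. If that fails, I would localise $Q$ at the maximal ideal of $\mb{C}[\bs{s}]$ at $-\bs{\alpha}$ and use Lemma \ref{lem:multivariate V localisation} with $\mc{F}=\mc{O}_X\bs{f^s}$.
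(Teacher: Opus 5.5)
Your proposal has a genuine gap, and it is not confined to Step~2.

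\textbf{Steps~1--2.} These rest on the jump seen near a general point of $E_0$ being realised downstairs by $\bs{f^s}$ itself; you even call $E_0$ a log canonical place of $(X,\sum_i\alpha_iD_i)$. By Corollary~\ref{cor:log canonical}, $\bs{f^s}\in V^{\bs{\alpha}}\mc{M}$ if and only if $(X,\sum_i\alpha_iD_i)$ is log canonical. On the hyperplane $\sum_i\alpha_i=n$ this typically fails. For $f=xy^{10}(x-y)$, the vector actually needed in Theorem~\ref{thm: n/d text} is $\bs{\alpha}=\frac{n}{d}(d_1,d_2,d_3)=(\frac{1}{6},\frac{5}{3},\frac{1}{6})$, and it has $\alpha_2>1$.

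Nor is $\bs{\alpha}$ generic: the lemma is for every such $\bs{\alpha}$, which may lie on many walls $H_{W,k}$ coming from other dense edges. Moreover, $V^{\bullet}\mc{M}$ near $0$ is a direct image over the whole fibre $\pi^{-1}(0)$, where those divisors meet $E_0$. A section defined only near a general point of $E_0$ therefore cannot be pushed forward at all, and no Euler-field homogeneity argument manufactures the required $p(\bs{s})$.

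\textbf{Step~3.} This step only uses the lowest Hodge piece. A class in $F_rQ$ is of the form $[h\bs{f^s}]$ with $h\in\mc{O}_X$. For nonconstant $p$, the element $p\bs{f^s}$ lies in $F_{r+\deg p}$, and freeness of $\gr^FQ$ does not stop it from lying in $U\cdot Q$.

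In fact no argument confined to $F_rQ$ can work. For $\bs{\alpha}=\frac{n}{d}(d_1,\ldots,d_r)$, a surviving class $[h\bs{f^s}]$ would, via the identification in the proof of Theorem~\ref{thm: n/d text} and Budur--Saito, give $hf^s\in F_1V^{n/d}\setminus F_1V^{>n/d}$. That would make $n/d$ a jumping number of the multiplier ideals of $f$, which need not hold (see the discussion following Theorem~\ref{thm: n/d intro}).

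\textbf{What survives.} Your argument does cover the log canonical case. There Corollary~\ref{cor:log canonical} gives $\bs{f^s}\in F_rV^{\bs{\alpha}}\mc{M}\setminus V^{\bs{\alpha}+\epsilon\bs{1}}\mc{M}$ directly, with no resolution needed. Your Step~3 then becomes essentially Lemma~\ref{lemma: beta variant}, after extending the single-wall Theorems~\ref{thm:flatness} and~\ref{thm:hodge flatness} by an iterated-extension argument.

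\textbf{The missing idea.} You need to deform along the hyperplane $\{\sum_i\gamma_i=n\}$ instead of working at $\bs{\alpha}$.
\begin{enumerate}
\item Choose $\bs{\beta}$ adapted to $\{0\}$ (Definition~\ref{dfn: good}). It exists by Lemma~\ref{lemma: existence of good vector}, which is where indecomposability enters, via Lemma~\ref{lemma: basis JW}.
\item For $\bs{\beta}$ your Steps~1 and~3 do go through, for the \emph{same} specialisation $s_i\mapsto\alpha_is$; this is Lemma~\ref{lemma: beta variant}.
\item Localise at $\mf{m}=(s_1+\alpha_1,\ldots,s_r+\alpha_r)$. Let $\sigma_1,\sigma_2$ be the chambers of $\mc{W}$ with $\bs{\alpha}$ in their closures, lying inside the $\mc{W}_{\mf{m}}$-chambers of $\bs{\beta}$ and $\bs{\beta}+\epsilon\bs{1}$. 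By Lemma~\ref{lem:localised chambers}, $(V^{\bs{\beta}}\mc{M})_{\mf{m}}=(V^{\sigma_1}\mc{M})_{\mf{m}}$ and $(V^{\bs{\beta}+\epsilon\bs{1}}\mc{M})_{\mf{m}}=(V^{\sigma_2}\mc{M})_{\mf{m}}$. Moreover $V^{\bs{\alpha}+\epsilon\bs{1}}\mc{M}\subset V^{\sigma_2}\mc{M}\subset V^{\sigma_1}\mc{M}\subset V^{\bs{\alpha}}\mc{M}$.
\item Corollary~\ref{cor: subtle injectivity} applies because $\bs{L}(\bs{\alpha})>0$ for every wall. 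It makes $V^{\sigma_1}\mc{M}/V^{\sigma_2}\mc{M}\otimes_{\mb{C}[\bs{s}]}\mb{C}[s]\to V^{\bs{\alpha}}\mc{M}/V^{\sigma_2}\mc{M}\otimes_{\mb{C}[\bs{s}]}\mb{C}[s]$ injective. Hence nonvanishing at $\sigma_1$ gives nonvanishing of \eqref{eqn: map in n/d reduction}.
\item Since $n+\sum_is_i$ is nilpotent on $V^{\bs{\beta}}\mc{M}/V^{\bs{\beta}+\epsilon\bs{1}}\mc{M}$, localising at $(s+1)$ after specialisation loses nothing.
\end{enumerate}
The polynomial $p$ then comes for free: take any element of $I^{\sigma_1}\setminus\mf{m}$, which exists because $(I^{\sigma_1})_{\mf{m}}=(I^{\bs{\beta}})_{\mf{m}}=\mb{C}[\bs{s}]_{\mf{m}}$.
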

\begin{remark}\label{remark: localized version of n/d reduction}
Note that if $\mf{m}=(s_1+\alpha_1,\ldots,s_r+\alpha_r)$, then the target of \eqref{eqn: map in n/d reduction} is annihilated by some power of $\mf{m}$: this follows immediately from the definition of $V^{\bs{\alpha}}\mc{M}$ and the fact that the multivariate $V$-filtration on $\mc{M}$ is defined over $\mb{Q}$. In particular, \eqref{eqn: map in n/d reduction} is also nonzero after localising at $\mf{m}$. We will use this fact below in \S \ref{subsec:multi SMC}.
\end{remark}

The rest of this subsection is devoted to the proof of Lemma \ref{lem: n/d reduction}.

The first key point is to control $V^{\bs{\bullet}}\cM$ via an explicit log resolution of $(X,D)$. Denote by $\cG_{\mrm{dense}}$ the set of dense edges of $D$. Let $\pi \colon \tilde{X} \to X$ be the iterated blow-up along the proper transform of the union of edges in $\cG_{\mrm{dense}}$ of dimension $d$ as $d$ ranges from $0$ to $n-1$. Note that for a fixed dimension, the blow up of the union is the same as the blow up the proper transform of each dense edge of the same dimension successively. Since $D$ is central, it is known that (see e.g. \cite[\S 2]{BMT})  $\pi$ is a log resolution of $(X,D)$, and
\begin{equation}\label{eqn: resolution data for hyperplane arrangement} K_{\tilde{X}/X}=\sum_{W\in \cG_{\mrm{dense}}} (\textrm{codim}_{X}W-1) E_W, \quad \pi^{\ast}D_i=\sum_{W\in S} a_{iW} E_W,\end{equation}
where $E_W$ denotes the exceptional divisor over $W$, $a_{iW}=1$ if $W\subset D_i$ and is $0$ otherwise. In particular,
\[ \pi^{\ast}\left(\sum_{i=1}^r \alpha_iD_i\right)=\sum_{W\in \cG_{\mrm{dense}}}\left(\sum_{D_i\supseteq W} \alpha_i\right)E_W\]

We will prove Lemma \ref{lem: n/d reduction} by a wall-crossing argument, i.e.\ we will fix the tensor product $s_i\mapsto \alpha_i s$ but vary $\bs{\alpha}$ in other parts of \eqref{eqn: map in n/d reduction}, along the hyperplane $\{\bs{\gamma}\in \R^r\mid \sum_i\gamma_i=n\}$. We first prove the deformed statement for certain very special points where it becomes easy and deduce Lemma \ref{lem: n/d reduction} using general properties of $V^\bullet\mc{M}$.

The following language will be convenient.
\begin{definition}\label{dfn: good}
The \emph{log canonical polytope} is the set
\[ \left\{\bs{\beta} \in \mb{R}^r_{>0} \mid \sum_{D_i \supseteq W}\beta_i \leq \codim_X W \text{ for all dense edges $W$}\right\}.\]
We say that a vector $\bs{\beta}\in \Q^r_{>0}$ in the log canonical polytope is \emph{adapted to the edge $\{0\}$} if
\[ \sum_{i = 1}^r \beta_i = n \quad \text{and} \quad \sum_{D_i \supseteq W} \beta_i \not\in\mb{Z}\]
for any dense edge $W \neq \{0\}$.
\end{definition}
Note that, by \eqref{eqn: resolution data for hyperplane arrangement}, the log canonical polytope is precisely the set of vectors $\bs{\beta}$ such that the pair $(X, \sum_i \beta_i D_i)$ is log canonical. 

In the next two lemmas, we establish the existence of a vector adapted to $\{0\}$ (see also \cite[Lemmas 4.4, 4.5 and 4.6]{BSZ25}).

\begin{lemma}\label{lemma: basis JW}
    For any edge $W\neq \{0\}$, one has $\dim \mathrm{span}_{\C}\{ f_i \mid f_i|_W\neq 0\}>\dim W$.  
\end{lemma}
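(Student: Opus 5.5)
We argue by contradiction, using only that $D$ is central, essential and indecomposable. Let $W \neq \{0\}$ be an edge, set $k = \dim W \geq 1$, and let $J_W = \{i \mid f_i|_W \neq 0\}$ index the hyperplanes not containing $W$. Suppose that $\dim \mathrm{span}_{\C}\{f_i \mid i \in J_W\} \leq k$. We will produce a decomposition of $f$ into factors in disjoint sets of variables, contradicting indecomposability.

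First we set up the two groups of linear forms. Put $A = \mathrm{span}\{f_i \mid i \notin J_W\}$; this is exactly the space of linear forms vanishing on $W$, because $W$ is an edge, i.e.\ the intersection of the hyperplanes containing it. Hence $\dim A = n - k$. Put $B = \mathrm{span}\{f_i \mid i \in J_W\}$. Essentiality says that all the $f_i$ together span $(\C^n)^*$, so $A + B = (\C^n)^*$, which forces $\dim B \geq k$. Combined with our assumption, we get $\dim B = k$ and $A \cap B = 0$, so $(\C^n)^* = A \oplus B$.

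Next we choose coordinates adapted to this splitting. Take a basis $y_1, \ldots, y_{n-k}$ of $A$ and a basis $z_1, \ldots, z_k$ of $B$; together these form a linear coordinate system on $\C^n$. Each $f_i$ with $i \notin J_W$ is a polynomial in the $y$'s alone, and each $f_i$ with $i \in J_W$ is a polynomial in the $z$'s alone. Therefore
\[ f = \Big(\prod_{i \notin J_W} f_i^{d_i}\Big)\cdot\Big(\prod_{i \in J_W} f_i^{d_i}\Big) \]
is a product of a polynomial in the $y$'s and a polynomial in the $z$'s, which are disjoint sets of variables.

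It remains to see that both factors are non-constant. The first is non-constant because $W \neq \C^n$: since $W$ is a proper edge, at least one hyperplane contains it. The second is non-constant because $k \geq 1$ and $B$ is spanned by the $f_i$ with $i \in J_W$, so $J_W \neq \emptyset$. This contradicts indecomposability, so $\dim B > \dim W$.

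The only delicate step is the identification $\dim A = n - k$, and it rests precisely on $W$ being an edge rather than an arbitrary subspace.
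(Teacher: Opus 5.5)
Your proof is correct and follows essentially the same argument as the paper: split $(\C^n)^*$ into the span of the $f_i$ vanishing on $W$ (which has dimension $\codim_X W$ because $W$ is an edge) and the span of the remaining $f_i$. Essentiality makes these sum to $(\C^n)^*$, and a trivial intersection would split $f$ into non-constant factors in disjoint variables, contradicting indecomposability. You also state explicitly that the first factor is non-constant because $W \neq \C^n$ (i.e.\ the edge is proper), a point the paper leaves implicit.
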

\begin{proof}
Consider the following vector spaces in $(\C^n)^{\ast}$: \begin{align*}
V_1=\textrm{span}_{\C}\{ f_i \mid f_i|_W=0\}, \quad V_2=\textrm{span}_{\C}\{ f_i \mid f_i|_W\neq 0\}.\end{align*}
Since $D$ is essential, $V_1+V_2 = \textrm{span}_{\C}\{f_i \mid 1\leq i\leq r\}=(\C^n)^{\ast}$. Hence
\[ \dim V_1 + \dim V_2=\dim (V_1+V_2) +\dim V_1\cap V_2=n+\dim V_1\cap V_2.\]
We claim that $V_1\cap V_2\neq \{0\}$. Otherwise, $V_1\oplus V_2 =(\C^n)^{\ast}$. Since $W \neq \{0\}$ we have $V_2\neq \{0\}$, so after a linear change of coordinates, we can write $f$ as the product of two linear functions with separate variables. This contradicts the indecomposability of $D$, so $\dim V_2 >n-\dim V_1 = \dim W$ as claimed.
\end{proof}

\begin{lemma}\label{lemma: existence of good vector}
There exists a vector $\bs{\beta}$ adapted to $\{0\}$.
\end{lemma}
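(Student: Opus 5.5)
Take the barycentre $\bs{\beta}_0 = \frac{n}{r}\bs{1}$ as a first candidate and perturb it to a generic rational point $\bs{\beta}$ near it inside the hyperplane $\{\sum_i \beta_i = n\}$. We need to check three things: that $\bs{\beta}_0$ lies in the log canonical polytope strictly on the non-$\{0\}$ constraints, that $\sum_{D_i\supseteq W}\beta_i\notin\Z$ can be arranged after perturbation, and that positivity is preserved.

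\textbf{Strictness at the barycentre.} For a dense edge $W\neq\{0\}$, set $m_W=\#\{i \mid D_i\supseteq W\}$. The constraint at $W$ reads $\frac{n}{r}m_W\le \codim W$, i.e.\ $m_W/\codim W\le r/n$. Here $r-m_W=\#\{i\mid f_i|_W\neq0\}$.

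\begin{itemize}
\item By Lemma \ref{lemma: basis JW}, $r-m_W\ge \dim V_2>\dim W=n-\codim W$.
\item The vector space $V_1$ spanned by the $f_i$ vanishing on $W$ has dimension $\codim W$, so $m_W\ge\codim W$.
\end{itemize}

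So we want $m_W\,n<r\codim W$, which does not follow immediately from these bounds. A cleaner route is an averaging argument over the defining constraint: note $\bs{\beta}_0$ makes the $\{0\}$ constraint an equality ($\sum\beta_i=n$). If the barycentre fails at some $W$, we instead choose weights adapted to $W$.

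\textbf{Construction via the lemma.} Alternative plan, which I will commit to: choose $\bs{\beta}$ with $\beta_i=c$ for $D_i\supseteq W_{\max}$ style constraints seems messy. Better is a general convexity argument. The polytope $P=\{\bs{\beta}\in\R^r_{\ge0}\mid \sum_{D_i\supseteq W}\beta_i\le\codim W\ \forall W\}$ is known (matroid base polytope type) to meet the face $\sum\beta_i=n$, and the claim is that this face has relative interior where all other dense-edge inequalities are strict.

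The strictness comes from Lemma \ref{lemma: basis JW}. If some $W\neq\{0\}$ constraint were tight on the entire face $F=P\cap\{\sum\beta_i=n\}$, then the complementary sum $\sum_{f_i|_W\ne0}\beta_i=n-\codim W=\dim W$ on all of $F$. Restricting the arrangement to directions transverse to $W$ then gives a decomposition, contradicting $\dim V_2>\dim W$ (indecomposability).

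Concretely, I would show $F$ has nonempty relative interior point $\bs{\beta}^\circ$ with all coordinates positive and all $W\ne\{0\}$ inequalities strict. For each tight $W$, exhibit a point of $F$ strict at $W$ by moving weight from $\{i: W\subset D_i\}$ to indices with $f_i|_W\ne0$; there are more than $\dim W$ of these, spanning enough. Averaging these points gives $\bs{\beta}^\circ$.

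\textbf{Perturbation.} The relative interior of $F$ is an open subset of the hyperplane $\{\sum\beta_i=n\}$ (of dimension $r-1\ge1$). Each condition $\sum_{D_i\supseteq W}\beta_i\in\Z$ cuts out a hyperplane in it. It is a proper hyperplane because the functional $\sum_{D_i\supseteq W}\beta_i$ is not proportional to $\sum_i\beta_i$, since $m_W<r$ for $W\ne\{0\}$. Near $\bs{\beta}^\circ$ only finitely many of these hyperplanes matter, so we can choose a rational $\bs{\beta}$ avoiding them.

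\textbf{Main obstacle.} The main obstacle is producing the strict interior point. That is, one must show that no non-zero dense edge inequality is forced to equality on $F$, and this is where indecomposability via Lemma \ref{lemma: basis JW} must be used carefully.
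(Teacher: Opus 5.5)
Your overall framework is sound, and the final perturbation step is correct. Suppose you have a point $\bs{\beta}^\circ$ with $\sum_i\beta^\circ_i=n$, all $\beta^\circ_i>0$, and $\sum_{D_i\supseteq W}\beta^\circ_i<\codim W$ for every dense edge $W\neq\{0\}$. These finitely many strict inequalities persist on a neighbourhood of $\bs{\beta}^\circ$ in the hyperplane $\{\sum_i\beta_i=n\}$. There, each locus $\{\sum_{D_i\supseteq W}\beta_i\in\Z\}$ is a locally finite union of proper rational hyperplanes, so a rational point avoiding all of them exists. You were also right to discard the barycentre: for $xy(x-y)(x+y)(x+2y)\,z\,(x+y+z)$ on $\C^3$, the $z$-axis is a dense edge of codimension $2$ contained in $5$ of the $7$ hyperplanes, and $5\cdot\frac{3}{7}>2$.

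The gap is the existence of $\bs{\beta}^\circ$ itself. You flag it as the main obstacle but do not supply it, and it is essentially the whole content of the lemma. ``Moving weight'' from $\{i \mid D_i\supseteq W\}$ to indices with $f_i|_W\neq 0$ is not a construction. Shifting mass from $i$ to $j$ increases $\sum_{D_k\supseteq W'}\beta_k$ for every dense edge $W'$ with $D_j\supseteq W'$ but $D_i\not\supseteq W'$. Such a constraint may already be tight, so the new point can leave the polytope. You also never exhibit a single point of $F$: the appeal to a ``known'' matroid-polytope fact is doing the work. Finally, the claim that tightness of some $W$ on all of $F$ forces a decomposition is asserted rather than proved.

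The missing idea is the one the paper uses: indicator vectors of bases. Let $J\subset\{1,\ldots,r\}$ be such that $\{f_j\}_{j\in J}$ is a basis of $(\C^n)^*$, and let $\bs{\beta}^J$ be the indicator vector of $J$. For any edge $W$, the forms $f_j$ with $j\in J$ and $f_j|_W=0$ are linearly independent in $\mathrm{span}\{f_i \mid f_i|_W=0\}$, which has dimension $\codim W$. So $\sum_{D_i\supseteq W}\beta^J_i\le\codim W$, with value $n$ at $W=\{0\}$, i.e.\ $\bs{\beta}^J\in F$.

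Now let $J_W$ extend a basis of $V_2=\mathrm{span}\{f_i \mid f_i|_W\neq 0\}$. Lemma \ref{lemma: basis JW} gives $\sum_{D_i\supseteq W}\beta^{J_W}_i\le n-\dim V_2<\codim W$; this is the honest form of your contradiction argument. Average the $\bs{\beta}^{J_W}$ over the finitely many dense $W\neq\{0\}$, together with, for each $i$, some basis containing $i$ (one exists since $f_i\neq 0$ and the $f_k$ span). This gives your $\bs{\beta}^\circ$, and your perturbation then finishes the proof.

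The paper bypasses $\bs{\beta}^\circ$. It works directly with $\sum_J\lambda_J\bs{\beta}^J$ over all bases, with $\lambda_J>0$ and $\sum_J\lambda_J=1$, where positivity and all the inequalities are automatic. It then compares $J_W$ with a basis $J'_W$ extending a basis of $\mathrm{span}\{f_i \mid f_i|_W=0\}$, which has value exactly $\codim W$. This shows each functional $(\lambda_J)\mapsto\sum_{D_i\supseteq W}\sum_J\lambda_J\beta^J_i$ is non-constant, hence open, on the simplex. Non-integrality can then be imposed directly at a rational choice of $(\lambda_J)$.
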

\begin{proof}
Let us say a subset $J\subset \{1,\ldots,r\}$ is a \emph{basis} if $\{f_j \mid j\in J\}$ forms a basis of $(\C^n)^{\ast}$. For any basis $J$, we define a vector $\bs{\beta}^{J}$ by setting $\bs{\beta}^{J}_i$ to be $1$ if $i\in J$ and is $0$ otherwise. Clearly, the vector $\bs{\beta} = \bs{\beta}^J$ satisfies
\begin{equation}\label{eqn: weaker version of good vector} \sum_{D_i\supseteq W} \beta_i= \begin{cases}n, \quad &\textrm{if $W=\{0\}$},\\
\leq \textrm{codim}_{X}W, \quad &\textrm{if $W\neq \{0\}$}.\end{cases}\end{equation}
Let $(\lambda_J)\subset \Q_{>0}$ be a tuple of positive rational numbers, indexed by all bases $J$, such that $\sum_J \lambda_J=1$. Clearly, the vector $\sum_J \lambda_J \bs{\beta}^{J}$ still satisfies \eqref{eqn: weaker version of good vector}. The plan is to make a suitable choice of $(\lambda_J)$ such that the vector $\sum_J \lambda_J \bs{\beta}^{J}$ satisfies the desired property. To see that this is possible, fix an edge $W \neq \{0\}$ and consider the map
\begin{equation} \label{eq:good vector 1}
 \left\{ (\lambda_J \in \mb{R}_{>0})\, \left| \, \sum_J \lambda_J = 1\right.\right\} \to \mb{R}_{>0}; \quad (\lambda_J) \mapsto \sum_{D_i \supset W} \sum_J \lambda_J\beta^J_i.
\end{equation}
Since $W \neq \{0\}$, if $J_W$ is a basis extending a basis of $\textrm{span}_{\C}\{f_i \mid f_i|_W\neq 0\}$ then by Lemma \ref{lemma: basis JW}, we must have
\[ \sum_{D_i\supseteq W} \beta^{J_W}_i \leq n-\dim \textrm{span}_{\C}\{f_i \mid f_i|_W\neq 0\}<\codim_{X}W.\]
Conversely, if $J_W'$ is a basis extending a basis of $\mrm{span}_{\C}\{f_i \mid f_i|_W = 0\}$ then
\[ \sum_{D_i\supseteq W} \beta^{J_W'}_i  = \codim_{X}W.\]
So, varying $\lambda_{J_W}$ and $\lambda_{J_W'}$ while keeping their sum equal, we conclude that the map \eqref{eq:good vector 1} is open. So the set of rational points in the domain whose image is not an integer is dense. Taking a point $(\lambda_J)$ in the intersection over all $W$ of these sets, we conclude that $\bs{\beta} = \sum_J \lambda_J \bs{\beta}^J$ is a vector adapted to $\{0\}$.
\end{proof}

Now we establish the deformed version of Lemma \ref{lem: n/d reduction} for a vector $\bs{\beta}$ adapted to $\{0\}$.
\begin{lemma}\label{lemma: beta variant}
Suppose $\bs{\beta}$ adapted to $\{0\}$ and $\bs{\alpha}\in \R^r_{>0}$. Then $I^{\bs{\beta}}=\C[\bs{s}]$ and the image of the natural map 
\begin{equation}\label{eqn: map associated to beta}I^{\bs{\beta}}\to \frac{V^{\bs{\beta}}\cM}{V^{\bs{\beta}+\epsilon\bs{1}}\cM}\otimes_{\C[\bs{s}]}\C[s],\end{equation}
is nonzero, where the tensor product is taken over $s_i\mapsto \alpha_is$.
\end{lemma}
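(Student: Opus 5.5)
The plan is to work with the lowest piece of the Hodge filtration, where both sides of \eqref{eqn: map associated to beta} become computable through multiplier ideals. We then use the Hodge-theoretic flatness (Theorem \ref{thm:hodge flatness}) to carry nonvanishing through the base change $s_i\mapsto \alpha_i s$.

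\textbf{Step 1: $I^{\bs{\beta}}=\C[\bs{s}]$.} Since $\bs{\beta}$ lies in the log canonical polytope, the pair $(X,\sum_i\beta_iD_i)$ is log canonical by \eqref{eqn: resolution data for hyperplane arrangement}. Corollary \ref{cor:log canonical} then gives $\bs{f^s}\in V^{\bs{\beta}}\mc{M}$. Since $V^{\bs{\beta}}\mc{M}$ is a $\C[\bs{s}]$-module, $I^{\bs{\beta}}=\C[\bs{s}]$. In fact $\bs{f^s}\in F_rV^{\bs{\beta}}\mc{M}$, because $F_r\iota_+\mc{O}_X=\mc{O}_X\bs{f^s}$.

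\textbf{Step 2: the class of $\bs{f^s}$ is nonzero in $\gr^F_r(V^{\bs{\beta}}\mc{M}/V^{\bs{\beta}+\epsilon\bs{1}}\mc{M})$.} By Corollary \ref{cor:log canonical}, $F_rV^{\bs{\beta}+\epsilon\bs{1}}\mc{M}=\mc{J}(X,\sum_i\beta_iD_i)\bs{f^s}$. We compute this ideal on the resolution $\pi$.
\begin{itemize}
\item For $E_{\{0\}}$, the coefficient of $K_{\tilde X/X}-\lfloor\pi^*\sum\beta_iD_i\rfloor$ is $(n-1)-n=-1$. Hence the multiplier ideal is contained in the maximal ideal of the origin.
\item For every other dense edge $W$, including the hyperplanes themselves, the number $\sum_{D_i\supseteq W}\beta_i$ is a non-integer that is at most $\codim W$. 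So its floor is at most $\codim W-1$, and these divisors impose nothing further.
\end{itemize}
Thus $1\notin\mc{J}$ near $0$. Since $F_{r-1}=0$, the piece $\gr^F_r$ of the quotient is $(\mc{O}_X/\mc{J})\bs{f^s}$, which contains the nonzero class of $\bs{f^s}$.

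\textbf{Step 3: passing to $\otimes_{\C[\bs{s}]}\C[s]$.} Because $\bs{\beta}$ is adapted to $\{0\}$, Example \ref{ex: set of walls via log resolution} shows that the only wall through $\bs{\beta}$ is $\{\sum_i\gamma_i=n\}$. Hence $\bs{\beta}$ and $\bs{\beta}+\epsilon\bs{1}$ are separated by the single wall with $\bs{L}=\bs{1}$. The kernel $U$ of $s_i\mapsto\alpha_is$ does not contain $\bs{L}(\bs{s})=\sum s_i$, since $\sum\alpha_i>0$. By Theorem \ref{thm:hodge flatness} (together with Corollary \ref{cor:extending V*} and Lemma \ref{lem:hodge * formula} to pass from $V_*$ to $V$ in the positive orthant), $\gr^F$ of the quotient is free over $\C[U]$. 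Arguing exactly as in the proof of Theorem \ref{thm:filtered change of functions}, the filtered derived tensor product with $(\C[s],F_\bullet)$ is then strict, so $\gr^F$ commutes with $\otimes_{\C[\bs{s}]}\C[s]$.

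In the lowest degree $r$, no contributions come from lower degrees or from multiplication by elements of $U$, which have degree $1$. Therefore the degree-$r$ part of $\gr^F(\text{quotient}\otimes\C[s])$ equals $\gr^F_r(\text{quotient})\ne0$. The class of $\bs{f^s}\otimes1$ maps to the nonzero element found in Step 2, so the image of \eqref{eqn: map associated to beta} is nonzero.

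\textbf{Main obstacle.} The delicate point is Step 3: justifying strictness of the filtered base change and checking that the lowest graded piece is unaffected. I would handle it by reducing, via relative freeness, to a free $\C[U]$-module where $\otimes\,\C[U]/U$ is visibly exact on each graded piece. Step 2 also needs care to ensure that the non-integrality conditions in Definition \ref{dfn: good} cover all divisors of $\pi$, including the strict transforms of the $D_i$.
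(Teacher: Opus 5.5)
Your proof is correct and follows the paper's argument. Both work in the lowest Hodge piece $F_r$, where the Koszul complex identifies $F_r$ of the quotient with $F_r$ of its derived base change, and both get strictness of that base change from the relative freeness of $\gr^F(V^{\bs\beta}\cM/V^{\bs\beta+\epsilon\bs1}\cM)$ across the single wall $\{\sum_i\gamma_i=n\}$. The differences are cosmetic: you invoke Theorem~\ref{thm:hodge flatness} directly rather than twisting by $\bs{f}^{-\bs\beta}$ and quoting Theorem~\ref{thm:filtered change of functions}; and you spell out, via the multiplier ideal, the fact that $\bs{f^s}\notin V^{\bs\beta+\epsilon\bs1}\cM$, which the paper leaves implicit in Corollary~\ref{cor:log canonical} (only the coefficient $-1$ along $E_{\{0\}}$ is needed for this).
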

\begin{proof}
Since $\bs{\beta}$ is in the log canonical polytope by assumption, the pair $(X, \sum_i \beta_i D_i)$ is log canonical. So $\bs{f^s} \in V^{\bs{\beta}}\mc{M}$ by Corollary \ref{cor:log canonical} and hence $I^{\bs{\beta}} = \mb{C}[\bs{s}]$. It is therefore enough to show that the image $\bs{f^s}$ of $1 \in I^{\bs{\beta}}$ is non-zero in
\[ \frac{V^{\bs{\beta}}\cM}{V^{\bs{\beta}+\epsilon\bs{1}}\cM}\otimes_{\C[\bs{s}]}\C[s].\]
Now, by Lemma \ref{lem:hodge * formula}, we have $\bs{f^{s}}\in F_r^*V^{\bs{\beta}}\cM = F_rV^{\bs{\beta}}\mc{M}$. Since, $F_r$ is the lowest non-zero piece of the Hodge filtration, by the Koszul complex, the natural map
\begin{equation} \label{eq:beta variant 1}
F_r\left(\frac{V^{\bs{\beta}}\cM}{V^{\bs{\beta}+\epsilon\bs{1}}\cM}\right)\to F_r\left(\frac{V^{\bs{\beta}}\cM}{V^{\bs{\beta}+\epsilon\bs{1}}\cM}\overset{\mrm{L}}\otimes_{\C[\bs{s}]}\C[s]\right)
\end{equation}
is an isomorphism. So it is enough to show that the target of \eqref{eq:beta variant 1} is a strict complex or, equivalently, that its associated graded has cohomology in degree $0$ only. Note that we cannot apply Theorem \ref{thm:filtered change of functions} directly to conclude this as $\bs{\beta}$ is not necessarily a multiple of $\bs{\alpha}$. Instead, observe that
\begin{equation} \label{eq:beta variant 2}
\left(\frac{V^{\bs{\beta}}\mc{M}}{V^{\bs{\beta} + \epsilon \bs{1}}\mc{M}}, F_\bullet\right) \cong \left(\frac{V^{\bs{0}}_*\iota_+\mc{O}_X(*D)\bs{f}^{-\bs{\beta}}}{V_*^{\epsilon \bs{1}}\iota_+\mc{O}_X(*D)\bs{f^{-\bs{\beta}}}}, F_\bullet^*\right)
\end{equation}
as filtered $\ms{D}_X[\bs{s}]$-modules, up to translating the action of $\mb{C}[\bs{s}]$ by $\bs{\beta}$. Indeed, \eqref{eq:beta variant 2} follows without the filtrations from Corollary \ref{cor:localising V} and the definition of $V_*$, while the agreement of the filtrations follows from Lemma \ref{lem:hodge * formula} again. Since translations in $\bs{s}$ act trivially on $\gr^F\mb{C}[\bs{s}]$, we conclude by applying Theorem \ref{thm:filtered change of functions} to the right hand side of \eqref{eq:beta variant 2}.
\end{proof}

We now proceed to the proof of Lemma \ref{lem: n/d reduction}.

\begin{proof}[Proof of Lemma \ref{lem: n/d reduction}]

Choose a vector $\bs{\beta}\in \Q^r_{>0}$ adapted to $\{0\}$, which exists by Lemma \ref{lemma: existence of good vector}. The plan is to relate the map \eqref{eqn: map in n/d reduction} to the map \eqref{eqn: map associated to beta} in Lemma \ref{lemma: beta variant}, after localising along the maximal ideal $\mf{m} =(s_1+\alpha_1,\ldots,s_r+\alpha_r)\subset \mb{C}[\bs{s}]$.

Applying Example \ref{ex: set of walls via log resolution} to the log resolution $\pi$, we know that the set
\begin{equation}\label{eqn: set of walls for n/d conjecture} \mc{W}= \left\{ H_{W, k} \,\left|\,  H_{W, k} \colonequals\left\{\bs{\gamma} \in \mb{R}^r \, \left|\,\sum_{i \in I_W} \gamma_i = k\right.\right\}, \, W \in \cG_{\mrm{dense}},k \in \mb{Z}\right.\right\},\end{equation}
forms a set of walls for the filtration $\{V_{\ast}^{\bs{\alpha}}\mc{M}\}_{\bs{\alpha}\in \R^r}$. So it forms a set of walls for the filtration $\{V^{\bs{\alpha}}\mc{M}\}_{\bs{\alpha}\in \R^r_{>0}}$. By assumption, both vectors $\bs{\alpha},\bs{\beta}$ lie on the wall $H_{\{0\},n}$. Consider the finite set of walls
\[ \mc{W}_{\mf{m}}=\{ H\in \mc{W} \mid \bs{\alpha}\in H\}.\]
Let $\sigma_{1, \mf{m}}$ and $\sigma_{2, \mf{m}}$ be the chambers of $\mc{W}_{\mf{m}}$ containing $\bs{\beta}$ and $\bs{\beta} + \epsilon \bs{1}$ respectively, and let $\sigma_1 \subset \sigma_{1, \mf{m}}$ and $\sigma_2 \subset \sigma_{2, \mf{m}}$ be the unique chambers of $\mc{W}$ such that $\bs{\alpha}$ lies in the closures of $\sigma_1,\sigma_2$, respectively; see Figure \ref{fig:n/d walls} for an illustration. Under this setup, we claim that
\[ V^{\bs{\alpha} + \epsilon \bs{1}}\mc{M} \subset V^{\sigma_2}\mc{M}\subset V^{\sigma_1}\mc{M}\subset V^{\bs{\alpha}}\cM.\]
The first and third inclusions follow from the fact that $\bs{\alpha}\in \overline{\sigma}_2$ and $\bs{\alpha}\in \overline{\sigma}_1$; see Remark \ref{remark: containment of closure}(1). For the second inclusion, observe that the set of walls of $\mc{W}$ separating $\sigma_1$ and $\sigma_2$ is precisely the set of walls of $\mc{W}_\mf{m}$ containing $\bs{\beta}$. Since $\bs{\beta} + \epsilon \bs{1}$ and hence $\sigma_2$ lies above any such wall, the second inclusion follows from Remark \ref{remark: containment of closure}(2). 
Furthermore, Lemma \ref{lem:localised chambers} implies that \begin{equation}\label{eq:n/d localised V}
(V^{\bs{\beta}}\cM)_{\mf{m}}=(V^{\sigma_1}\cM)_{\mf{m}},\quad (V^{\bs{\beta}+\epsilon \bs{1}}\cM)_{\mf{m}}=(V^{\sigma_2}\cM)_{\mf{m}}.
\end{equation}

\begin{figure} 
\begin{tikzpicture}[scale=2]

  \fill[yellow!30] (0,0) -- (1, -0.5) -- (1.5, -1.5) -- cycle;
  \fill[blue!30] (0,0) -- (0.75, 0) -- (1, -0.5) -- cycle;
  \draw[red, thick] (-2, 1) -- (2, -1);         
  \draw[thick] (-1.5, 1.5) -- (1.5, -1.5);      
  \draw[thick] (-2, 0) -- (2, 0);          

  \draw[dashed] (0, 1.5) -- (1.5,-1.5);         

  \filldraw (1.4, -0.7) circle (0.5pt) node[below] {$\bs{\beta}$};
  \filldraw (1.5, -0.6) circle (0.5pt) node[above right] {$\bs{\beta}+\epsilon\bs{1}$};
  
  \filldraw (0,0) circle (0.5pt) node[below left] {$\bs{\alpha}$};

  \node at (0.5, -0.1) {$\textcolor{blue}{\sigma_{2}}$};
  \node at (2, -0.75) {$\sigma_{2,\mf{m}}$};
  \node at (2, -1.3) {$\sigma_{1,\mf{m}}$};
  \node at (-2, 0.8) {$\textcolor{red}{H_{\{0\},n}}$};
    \node at (1, -0.8) {$\textcolor{blue}{\sigma_{1}}$};
\end{tikzpicture}

\caption{The chambers $\sigma_i$ and $\sigma_{i, \mf{m}}$. In the picture, $\mc{W}_{\mf{m}}=\{\textrm{solid lines}\}$, $\mc{W}=\mc{W}_{\mf{m}}\cup \{\textrm{dashed lines}\}$, $\sigma_1$ is the yellow chamber and $\sigma_2$ is the purple chamber.}
\label{fig:n/d walls}
\end{figure}

As in \eqref{eqn: definition of Ialpha}, we define an ideal $I^{\sigma_1}\subset \mb{C}[\bs{s}]$ by $\mb{C}[\bs{s}]\bs{f^s}\cap V^{\sigma_1}\cM=I^{\sigma_1}\bs{f^s}$. Then $I^{\sigma_1} \subset I^{\bs{\alpha}}$ and we have a natural map
\begin{equation} \label{eq:n/d conjecture 1}
I^{\sigma_1} \to \frac{V^{\sigma_1}\mc{M}}{V^{\sigma_2}\mc{M}} \otimes_{\mb{C}[\bs{s}]} \mb{C}[s],
\end{equation}
 which fits into the following commutative diagram
\[\begin{tikzcd}
    I^{\sigma_1}\arrow[d,hook] \arrow[r,"\eqref{eq:n/d conjecture 1}"] &\frac{V^{\sigma_1}\mc{M}}{V^{\sigma_2}\mc{M}} \otimes_{\mb{C}[\bs{s}]} \mb{C}[s]\arrow[dr,"\eta"]\\
    I^{\bs{\alpha}} \arrow[r,"\eqref{eqn: map in n/d reduction}"] & \frac{V^{\bs{\alpha}
    }\mc{M}}{V^{\bs{\alpha}+\epsilon\bs{1}
    }\mc{M}} \otimes_{\mb{C}[\bs{s}]} \mb{C}[s] \arrow[r,"\delta",->>] & \frac{V^{\bs{\alpha}
    }\mc{M}}{V^{\sigma_2}\mc{M}} \otimes_{\mb{C}[\bs{s}]} \mb{C}[s]
\end{tikzcd}
\]
Since $\alpha_i>0$ for all $i$, for any separating wall $\bs{L}^{-1}(\gamma)$ of $\bs{\alpha}$ and $\sigma_1$ we must have $\sum_i L_i\alpha_i\neq 0$, so by Corollary \ref{cor: subtle injectivity} we have an injection
\[ V^{\sigma_1}\mc{M}\otimes_{\mb{C}[\bs{s}]}\mb{C}[s] \to  V^{\bs{\alpha}}\mc{M}\otimes_{\mb{C}[\bs{s}]}\mb{C}[s].\]
It follows that the map $\eta$ is injective. Therefore, to prove that the map \eqref{eqn: map in n/d reduction} is nonzero, it suffices to prove the same holds for the map \eqref{eq:n/d conjecture 1}.

To this end, we will prove that the localisation of \eqref{eq:n/d conjecture 1} at $\fm$ is nonzero, i.e.\ that
\[ (I^{\sigma_1})_{\mf{m}} \to \left(\frac{V^{\sigma_1}\mc{M}}{V^{\sigma_2}\mc{M}} \otimes_{\mb{C}[\bs{s}]} \mb{C}[s]\right)_{\mf{m}}\]
is nonzero. By \eqref{eq:n/d localised V}, the map above can be identified with the natural map
\begin{equation}\label{eqn: localised Ibeta to Vbeta}
(I^{\bs{\beta}})_{\mf{m}} \to \left(\frac{V^{\bs{\beta}}\mc{M}}{V^{\bs{\beta}+\epsilon\bs{1}}\mc{M}} \otimes_{\mb{C}[\bs{s}]} \mb{C}[s]\right)_{\mf{m}}.\end{equation}
Now note that since $\bs{\beta}$ is adapted to $\{0\}$, by Definition \ref{dfn: good} we have that $\bs{\beta}$ and $\bs{\beta}+\epsilon\bs{1}$ are separated by the unique wall $H_{\{0\},n}=\{\sum_{i=1}^r \gamma_i=n\}$.  Since $V^{\bullet}\cM$ is defined over $\mathbb{Q}$ by Corollary \ref{cor: MHM fully A specializable}, the operator $n+\sum_{i=1}^r s_i$ therefore acts nilpotently on $V^{\bs{\beta}}\cM/V^{\bs{\beta}+\epsilon\bs{1}}\cM$. Because $\sum_{i=1}^r \alpha_i=n$,  under the tensor product $s_i\mapsto \alpha_i s$, the operator $s+1=\frac{1}{n}\left(n+\sum_{i=1}^r(\alpha_is)\right)$ acts nilpotently on $(V^{\bs{\beta}}\cM/V^{\bs{\beta}+\epsilon\bs{1}}\cM)\otimes_{\C[\bs{s}]}\C[s]$ and the ideal $\mf{m}$ becomes $(s+1)$. Therefore
\[ \left(\frac{V^{\bs{\beta}}\mc{M}}{V^{\bs{\beta}+\epsilon\bs{1}}\mc{M}} \otimes_{\mb{C}[\bs{s}]} \mb{C}[s]\right)_{\mf{m}}=\left(\frac{V^{\bs{\beta}}\cM}{V^{\bs{\beta}+\epsilon\bs{1}}\cM}\otimes_{\C[\bs{s}]}\C[s]\right)_{(s+1)}=\frac{V^{\bs{\beta}}\cM}{V^{\bs{\beta}+\epsilon\bs{1}}\cM}\otimes_{\C[\bs{s}]}\C[s].\]
But by Lemma \ref{lemma: beta variant}, the image of the map
\[ I^{\bs{\beta}} \to \frac{V^{\bs{\beta}}\mc{M}}{V^{\bs{\beta} + \epsilon \bs{1}}\mc{M}} \otimes_{\mb{C}[\bs{s}]} \mb{C}[s]\]
is nonzero, and hence the same is true for \eqref{eqn: localised Ibeta to Vbeta}. We conclude that the map \eqref{eq:n/d conjecture 1} is also nonzero, which finishes the proof of Lemma \ref{lem: n/d reduction}.
\end{proof}

\subsection{The single-variable case}\label{sec: single SMC}
In this section, we give the proof of the Strong Monodromy Conjecture for hyperplane arrangements.

We first recall the definition of the topological zeta function. Assume $f\in \C[x_1,\ldots,x_n]$, and set $D=\textrm{div}(f)$. Let $\pi \colon \tilde{\C}^n\to \C^n$ be a log resolution of $(\C^n,D)$ and $I$ be the set of irreducible components of $\pi^{-1}D$, i.e. the set of exceptional divisors of $\pi$. For any $J\subset I$, denote by $E_J^{\circ}=\cap_{j\in J}E_j-\cup_{i\in I\setminus J}E_i$. The (global) \emph{topological zeta function} of $f$ is defined by
\begin{equation}\label{eqn: global top zeta} Z^{\textrm{global}}_{f,\mathrm{top}}(s)\colonequals\sum_{J\subset I} \chi\left(E_J^{\circ}\right)\cdot \prod_{E\in J} \frac{1}{\textrm{ord}_E(D)\cdot s+\textrm{ord}_E(K_{\tilde{\C}^n/\C^n})+1},\end{equation}
where $\chi(-)$ denotes the Euler characteristic and $\textrm{ord}_E(-)$ is the vanishing order along $E$. Assume $f(0)=0$, we also have the \emph{local topological zeta function} of $f$ at $0$:
\begin{equation}\label{eqn: top zeta} Z_{f,\mathrm{top}}(s)\colonequals\sum_{J\subset I} \chi\left(E_J^{\circ}\cap \pi^{-1}(0)\right)\cdot \prod_{E\in J} \frac{1}{\textrm{ord}_E(D)\cdot s+\textrm{ord}_E(K_{\tilde{\C}^n/\C^n})+1}.\end{equation}

If $f$ defines a hyperplane arrangement, we can always make a linear change of variables so that $f(0)=0$. In this case, we have a local version of Theorem \ref{thm:intro SMC}.
\begin{thm}\label{thm: SMC main text} If $f\in \C[x_1,\ldots,x_n]$ defines a hyperplane arrangement and $f(0)=0$, then any pole $s_0$ of $Z_{f,\mathrm{top}}(s)$ is a root of $b_f(s)$.
\end{thm}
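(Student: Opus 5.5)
We follow the reduction of Budur--Musta\c{t}\u{a}--Teitler and feed in Lemma \ref{lem: n/d reduction}. Write $f=\prod_{i=1}^r f_i^{d_i}$ with distinct linear $f_i$, and use the canonical log resolution $\pi$ obtained by blowing up the dense edges, with the data \eqref{eqn: resolution data for hyperplane arrangement}. The topological zeta function does not depend on the resolution, so computing with $\pi$ shows that every pole of $Z_{f,\mathrm{top}}(s)$ is a candidate pole of the form
\[ s_W = -\frac{\codim_X W}{\sum_{D_i\supseteq W} d_i}, \]
where $W$ is a dense edge containing $0$; the components $E_W$ with $W\not\ni 0$ contribute nothing at $0$. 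For such a $W$, the arrangement $\{D_i\supseteq W\}$, viewed in $\C^n/W$, is central, essential and indecomposable. Locally near a generic point of $W$ close to $0$, $f$ is a unit times the pullback of this arrangement $f_W$ under a smooth projection. Roots of $b$-functions are local and are inherited from generic points, by the local-to-global property $b_f=\mathrm{lcm}_x b_{f,x}$ together with invariance under products with smooth factors. We are therefore reduced to the following weighted $n/d$ statement: if $f=\prod f_i^{d_i}$ is central, essential and indecomposable in $\C^n$ and $d=\sum d_i$, then $b_f(-n/d)=0$.

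To prove the weighted statement, set $\bs{\alpha}=(n d_1/d,\ldots,n d_r/d)\in\R^r_{>0}$, so that $\sum\alpha_i=n$. Lemma \ref{lem: n/d reduction} then gives some $p(\bs{s})\in I^{\bs{\alpha}}$ whose class in $(V^{\bs{\alpha}}\cM/V^{\bs{\alpha}+\epsilon\bs 1}\cM)\otimes_{\C[\bs s]}\C[s]$ is nonzero, where the tensor product is taken along $s_i\mapsto\alpha_i s$. Next we apply Corollary \ref{cor: change of functions} with $r'=1$ and $d_{i1}=d_i$, which concerns the graph embedding $\iota_f$ of $f$. Since $\alpha_i=d_i\cdot(n/d)$, this identifies $V^{\bs{\alpha}}\cM\otimes\C[s]$ with $V^{n/d}\iota_{f,+}\cO_X(*D)$. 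Corollary \ref{cor: subtle injectivity} identifies the quotient by $V^{\bs{\alpha}+\epsilon\bs1}$ with the quotient by $V^{n/d+\epsilon'}$, possibly after a rescaling of variables; here $\sum_i L_i d_i>0$ holds for every wall. The image of $p(\bs s)\bs{f^s}$ is therefore $p(d_1 s,\ldots,d_r s)f^s$, a nonzero element of $\gr_V^{n/d}\iota_{f,+}\cO_X(*D)$ lying in $\C[s]f^s$. Note that $n/d\in(0,1]$, since $d\ge n$ because $D$ is essential.

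The final step, which we expect to be the main obstacle, is the ELSV-type argument showing that a jump of $V^\bullet$ witnessed on $\C[s]f^s$ at a level $\alpha\in(0,1]$ gives a root $-\alpha$ of $b_f$. The standard argument runs as follows. Let $\mc{N}=\sD_X[s]f^s$. Then $b_f(s)$ annihilates $\mc{N}/t\mc{N}$, and the $V$-filtration restricted to $\mc{N}$ has jumps only at roots of $-b_f$ shifted by integers. We would use Corollary \ref{cor: subtle injectivity} and the explicit element $q(s)f^s$ to find $q(s)f^s\notin V^{>n/d}$. Then $(s+n/d)^N q(s)f^s$ lies in $V^{>n/d}$, while $f^{s}\in V^{\alpha'}$ for all $\alpha'\le \mathrm{lct}$. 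We expect to carry this out by comparing $V^\bullet\cap\mc{N}$ with the filtration $t^k\mc{N}$ and using $b_f(s)\mc{N}\subset t\mc{N}$, following \cite{ELSV}. The delicate point is to exclude the possibility that the jump at $n/d$ is accounted for by a root $-n/d-k$ with $k\geq1$. This is where the condition $p\in I^{\bs\alpha}$, namely that the polynomial multiple of $f^s$ already lies in $V^{\bs\alpha}$, is essential, and we would argue that $\C[s]f^s\cap V^{>n/d}$ forces the factor $(s+n/d)$ to divide $b_f$.
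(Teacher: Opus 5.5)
Up to the construction of $q(s)f^s=p(d_1s,\ldots,d_rs)f^s\in V^{n/d}\mc{N}\setminus V^{>n/d}\mc{N}$, with $\mc{N}=\iota_{f,+}\mc{O}_X$, your argument is the paper's. It is the Budur--Musta\c{t}\u{a}--Teitler reduction to the weighted $n/d$ statement (which the paper simply cites), followed by Lemma \ref{lem: n/d reduction}, Corollary \ref{cor: change of functions} and Corollary \ref{cor: subtle injectivity}. Your rescaling is harmless: the specialisations $s_i\mapsto\alpha_is$ and $s_i\mapsto d_is$ differ by the automorphism $s\mapsto (n/d)s$ of $\C[s]$.

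The gap is the last step. You leave it as an unexecuted plan and call it the main obstacle, but it takes a few lines, and the worry about roots $-n/d-k$ never arises. The observation you need is
\[ f^{s+1}=t\cdot f^s\in t\,V^{>0}\mc{N}=V^{>1}\mc{N}\subseteq V^{>n/d}\mc{N}, \]
which uses $f^s\in V^{>0}\mc{N}$ (Corollary \ref{cor:log canonical}, since $\mrm{lct}>0$) and $n/d\le 1$.

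Now choose $P\in\sD_X[s]$ with $b_f(s)f^s=Pf^{s+1}$. Since $V^{>n/d}\mc{N}$ is a $\sD_X[s]$-module and $b_f(s)$ is central, $b_f(s)q(s)f^s=q(s)Pf^{s+1}\in V^{>n/d}\mc{N}$. The $V$-filtration is defined over $\Q$, so $s+n/d$ is nilpotent on $\gr_V^{n/d}\mc{N}$. If $b_f(-n/d)\neq 0$, then $b_f(s)$ acts invertibly on $\gr_V^{n/d}\mc{N}$. The class of $q(s)f^s$ there would then vanish, contradicting $q(s)f^s\notin V^{>n/d}\mc{N}$.

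Other roots of $b_f$, including $-n/d-k$, play no role, because invertibility of $b_f(s)$ on $\gr_V^{n/d}\mc{N}$ only requires $b_f(-n/d)\neq 0$. Nor do you need to compare $V^\bullet\cap\sD_X[s]f^s$ with $t^k\sD_X[s]f^s$: $t\,\sD_X[s]f^s=\sD_X[s]\,tf^s\subseteq V^{>1}\mc{N}$ already lies in $V^{>n/d}\mc{N}$. This is exactly how the paper concludes.
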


As explained in \cite[Theorem 1.3 and \S 2.6]{BMT}, Theorem \ref{thm: SMC main text} and Theorem \ref{thm:intro SMC}\footnote{Furthermore, by \cite[Theorem 2.8]{BMT}, the assertion for the $p$-adic zeta function holds for \emph{every} prime.} follow from the $n/d$ conjecture \cite[Conjecture 1.2]{BMT}, which we prove below.\begin{thm}\label{thm: n/d text}
Let $f\in \mb{C}[x_1, \ldots, x_n]$ of degree $d$ such that $f^{-1}(0)_{\mathrm{red}}$ defines an essential indecomposable central hyperplane arrangement in $\mb{C}^n$. Then $b_f(-n/d) = 0$.
\end{thm}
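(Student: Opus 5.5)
Write $f=\prod_{i=1}^r f_i^{d_i}$ with $f_i$ distinct linear forms, so that $d=\sum_i d_i$. Set $\alpha_i = n d_i/d$. Then $\bs{\alpha}\in\mb{Q}^r_{>0}$ and $\sum_i\alpha_i = n$, so Lemma \ref{lem: n/d reduction} applies to this $\bs{\alpha}$. The plan is to transport the nonvanishing in Lemma \ref{lem: n/d reduction} to the single-variable $V$-filtration of the graph embedding $\iota_f$ of $f$. The proof runs in three steps: single-variable specialisation, an Ein--Lazarsfeld--Smith--Varolin-type argument to land on a root of $b_f$, and a check of the exponent $-n/d$.

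\textbf{Step 1 (specialisation).} Specialise along $s_i\mapsto d_i s$, i.e.\ take $r'=1$ and $f'_1=f$ in Corollary \ref{cor: change of functions}. With $\alpha' = n/d$, the induced vector is $(d_i\alpha')_i=\bs{\alpha}$. Corollary \ref{cor: change of functions}, applied to $\mc{O}_X(*D)$ with Corollary \ref{cor:localising V} (legitimate since $\bs{\alpha}>0$), gives
\[V^{\bs{\alpha}}\iota_+\mc{O}_X\otimes_{\mb{C}[\bs{s}]}\mb{C}[s]\cong V^{n/d}\iota_{f,+}\mc{O}_X.\]
Applying the same to $\bs{\alpha}+\epsilon\bs{1}$, which lies in the same chamber as $(n/d+\epsilon')\bs{d}$ on the relevant side of each wall, yields
\[\frac{V^{\bs{\alpha}}}{V^{\bs{\alpha}+\epsilon\bs{1}}}\otimes\mb{C}[s]\cong\gr_V^{n/d}\iota_{f,+}\mc{O}_X.\]
Corollary \ref{cor: subtle injectivity} should make the quotient compatible with the tensor product. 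Lemma \ref{lem: n/d reduction} is stated with the specialisation $s_i\mapsto\alpha_i s$; since $\alpha_i=(n/d)d_i$, this differs from $s_i\mapsto d_i s$ only by the rescaling $s\mapsto (n/d)s$, which is harmless. Hence there exists $p(\bs{s})\in I^{\bs{\alpha}}$ such that $p(d_1s,\dots,d_rs)f^s$ lies in $V^{n/d}\iota_{f,+}\mc{O}_X$ and has nonzero class in $\gr_V^{n/d}$.

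\textbf{Step 2 (landing on a root of $b_f$).} Put $q(s)=p(d_1s,\dots,d_rs)\in\mb{C}[s]$, so that $q(s)f^s\in V^{n/d}$ with nonzero class in $\gr^{n/d}_V$. By Remark \ref{remark: localized version of n/d reduction}, this class survives localisation at $s=-n/d$. The class is the image of $q(s)f^s\in\mb{C}[s]f^s$, so it lies in the image of $\mb{C}[s]f^s\cap V^{n/d}$ in $\gr_V^{n/d}$. I would now run the standard argument of \cite{ELSV}, as in the theory relating $b_f$ to the $V$-filtration. The module $\ms{D}_X[s]f^s/\ms{D}_X[s]f^{s+1}$ is annihilated by $b_f(s)$. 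Suppose $b_f(-n/d)\neq0$. Then $f^s$ lies in $\ms{D}_X[s]_{\mf{m}}f^{s+1}$, where $\mf{m}=(s+n/d)$ and we localise in $s$. Since $f^{s+1}=tf^s\in V^{1}\ms{D}\cdot f^s$, iterating should push $f^s$, and hence $q(s)f^s$, into $V^{>n/d}$ after localisation at $\mf{m}$: one uses that the $V$-filtration jumps near $f^s$ are controlled by the localised module (Lemma \ref{lem:localised chambers} with $r=1$). This contradicts the nonvanishing of the localised class. The precise mechanism I would use is the one in \cite{ELSV}: $\mb{C}[s]f^s$ together with $V^{\bullet}$ is compared to the $\mb{C}[s]$-module $\ms{D}[s]f^s/\ms{D}[s]tf^s$.

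\textbf{Step 3 (main obstacle).} The hard part is Step 2: converting a nonzero class of $\mb{C}[s]f^s$ in $\gr_V^{n/d}$, rather than of $f^s$ itself, into the vanishing $b_f(-n/d)=0$. The difficulty is that $q$ might vanish at $-n/d$, and $n/d$ need not lie in $(0,1]$. I would handle this by working with $\ms{D}_X[s]f^s\cap V^{n/d}$ and showing that the generalised $-n/d$ eigenspace of $s$ on $\ms{D}[s]f^s/\ms{D}[s]f^{s+1}$ is nonzero, as follows. Choose $m\ge0$ with $n/d-m\in(0,1]$ and compare $V^{n/d}$ with $t^m V^{n/d-m}$ via Proposition \ref{prop:weak specialisation}. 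Then use that a jump of $V$ containing an element of $\ms{D}[s]f^s$ forces a nonzero eigenspace of $\ms{D}[s]f^s/t\ms{D}[s]f^s$, which is the same argument as in the proof of Proposition \ref{prop: Bernsteinideal and Vfiltration}, now localised at the single point $-n/d$ instead of up to integer shifts. The integer ambiguity there is the crux. I would remove it using the containments $t^k\ms{D}[s]f^s\subset V^{k+\mathrm{lct}}$, together with the fact from Lemma \ref{lem: n/d reduction} that the relevant element lies in $\mb{C}[s]f^s$ itself, with no $t$-shift.
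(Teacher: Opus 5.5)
Your Step 1 is exactly the paper's specialisation, and the $b_f$-argument you start in Step 2 is the paper's argument. However, you stop short and misdiagnose the difficulty, so as written the proof is not finished. The missing input is elementary: under the hypotheses, $n/d<1$. Essentialness means the $f_i$ span $(\mb{C}^n)^*$, so $r\ge n$. If $r=n$, the $f_i$ are coordinates and $f=\prod x_i^{d_i}$ would be decomposable (for $n\geq 2$), so $d\ge r>n$. For $n=1$ one still has $n/d\le 1$, which suffices.

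Your Step 3 claim that $n/d$ need not lie in $(0,1]$ is therefore false. The speculative Step 3 (eigenspaces of $\ms{D}[s]f^s/t\ms{D}[s]f^s$, removing an integer ambiguity) is both unnecessary and not actually carried out. No iteration is needed in Step 2 either. Since $f^s\in V^{>0}\iota_{f,+}\mc{O}_X$ (e.g.\ by Corollary \ref{cor:log canonical}), one step gives $f^{s+1}=tf^s\in V^{>1}\subset V^{>n/d}$. You even write down the needed containment $t\ms{D}[s]f^s\subset V^{1+\mathrm{lct}}$ in Step 3. By contrast, iterating via $b_f$ as you suggest would require $b_f(-n/d+j)\neq 0$ for $j\geq 1$, which you have not justified.

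Your second worry, that $q$ might vanish at $-n/d$, is also irrelevant. Choose $P\in\ms{D}_X[s]$ with $Pf^{s+1}=b_f(s)f^s$. Since $\mb{C}[s]$ is central in $\ms{D}_X[s]$ and $V^{>n/d}$ is a $\ms{D}_X[s]$-submodule containing $f^{s+1}$, we get $b_f(s)q(s)f^s=q(s)Pf^{s+1}\in V^{>n/d}$. So $b_f(s)$ kills the class of $q(s)f^s$ in $\gr_V^{n/d}$. The operator $s+n/d$ acts nilpotently on $\gr_V^{n/d}$ (the $V$-filtration is defined over $\mb{Q}$). Hence if $b_f(-n/d)\neq 0$, then $b_f(s)$ acts invertibly there, and the class of $q(s)f^s$ would vanish, contradicting Lemma \ref{lem: n/d reduction}.

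With these two observations your outline becomes the paper's proof. Neither localisation at $\mf{m}$ (already automatic, since $\gr_V^{n/d}$ is $\mf{m}$-torsion) nor any eigenspace analysis of $\ms{D}[s]f^s/t\ms{D}[s]f^s$ is required.
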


\begin{proof}
Write $f=\prod_{i=1}^r f_i^{d_i}$, where $f_i$ are linear and homogeneous. The essential assumption implies $r\geq n$. If $r = n$, then $D=\textrm{div}(f)$ has normal crossings, contradicting the indecomposability of $D$. Hence $n/d\in (0,1)$.

Let $\iota_{f}\colon X \to X\times \C, \iota\colon X\to X\times \C^r$ be the graph embedding of $f$ and $f_1,\ldots,f_r$, respectively. Consider the $V$-filtration on $\mc{N} \colonequals \iota_{f,+}\mc{O}_{X}$ along the coordinate on $\C$ and the multivariate $V$-filtration on $\cM\colonequals \iota_{+}\mc{O}_X$ along the coordinates on $\C^r$. Using Corollary \ref{cor:localising V} and applying Corollary \ref{cor: change of functions} to $\{f_1,\ldots,f_r\}, f'=f = \prod_i f_i^{d_i}, \alpha'=n/d$ and $\bs{\alpha}\colonequals n/d\cdot (d_1,\ldots,d_r)\in \R^r$, we obtain
\[ V^{\bs{\alpha}}\mc{M} \otimes_{\mb{C}[\bs{s}]} \mb{C}[s]\xrightarrow{\sim} V^{n/d}\cN, \quad V^{\bs{\alpha} + \epsilon\bs{1}} \mc{M} \otimes_{\mb{C}[\bs{s}]} \mb{C}[s] \xrightarrow{\sim} V^{>n/d}\cN,\]
where the tensor product is taken via $s_i\mapsto d_is$. So by Corollary \ref{cor: subtle injectivity} we have an isomorphism
\[ \frac{V^{\bs{\alpha}}\mc{M}}{V^{\bs{\alpha}+\epsilon\bs{1}}\mc{M}}\otimes_{\mb{C}[\bs{s}]} \mb{C}[s]\cong \frac{V^{n/d}\cN}{V^{>n/d}\cN}.\]

Clearly the vector $\bs{\alpha}\in \R^r_{>0}$ satisfies the assumption of Lemma \ref{lem: n/d reduction}. So there exists $p(\bs{s}) \in \mb{C}[\bs{s}]$ such that 
\begin{equation}\label{eqn: qsfs not in Vn/d} p(d_1s,\ldots,d_rs)\prod_{i=1}^rf_i^{d_is}=q(s) f^s \in V^{n/d}\mc{N}\setminus V^{>n/d}\cN,\end{equation}
where $q(s)\colonequals p(d_1s, \ldots, d_r s)$. Note that $f^s\in V^{>0}\cN$ (by e.g.\ Corollary \ref{cor:log canonical}) and $n/d<1$, so $f^{s + 1} \in t\cdot V^{>0}\mc{N}\subset V^{>n/d}\mc{N}$. Choose a differential operator $P\in \sD_X[s]$ such that $P\cdot f^{s+1}= b_f(s)\cdot f^s$. Since $V^{>n/d}\cN$ is a $\sD_X[s]$-module, we have
\[ b_f(s)q(s) f^s = q(s)P f^{s + 1} \in V^{>n/d}\mc{N},\]
i.e.\ the class of $b_f(s)q(s) f^s$ in $\gr_V^{n/d}\mc{N}$ is zero. Suppose $b_f(-n/d)\neq 0$. Since $s+\beta$ acts invertibly on $\gr_V^{n/d}\mc{N}$ if $\beta\neq n/d$, we deduce from \eqref{eqn: qsfs not in Vn/d} that the class of $q(s)f^s$ in $\gr_V^{n/d}\mc{N}$ is zero. But this contradicts with $q(s) f^s\not\in V^{>n/d}\cN$. Hence $b_f(-n/d)=0$, as claimed. 
\end{proof}

\begin{remark}\label{remark: archimedean zeta function does not work}

A different strategy for proving Theorem \ref{thm: n/d text}, which has been proposed elsewhere (cf.\ \cite[Question 1.12]{Shi-Zuo24}) would be to show that $-n/d$ is a pole of the \emph{Archimedean} zeta function $Z_f$ (see, e.g.\ \cite{Shi-Zuo24} or \cite{DLY} for the definition); since $n/d < 1$, this would imply $b_f(-n/d)=0$ by Bernstein's functional equation for $Z_f$. This approach is used, for example, in  \cite{Shi-Zuo24} to prove a generic version of the $n/d$-conjecture. We show below, however, that $-n/d$ need not always be a pole of $Z_f$.

Consider $f=xy(x-y)z^2(x-z)^4$ from \cite[Example A.1]{BSY11}, found by Veys, where the authors proved that $-n/d=-1/3$ is not a pole of the local topological zeta function of $f$. We claim that $-\alpha \colonequals -1/3$ is also not a pole of the Archimedean zeta function $Z_f$. By \cite[Theorem 1.2]{DLY}, it is enough to show that $-\alpha$ is a simple root of $b_f(s)$ and that $f^{-\alpha} \in  \ms{D}_X \cdot f^{-\alpha + 1}$, where $X=\C^3$. For the first condition, computing $b_f(s)$ using Macaulay2 gives
\begin{equation}\label{eqn: bf of the counterexample}
b_f(s)=(s+\frac{1}{3})(s+\frac{2}{3})(s+\frac{4}{3})\prod_{i=1}^4(s+\frac{i}{4})\prod_{i=2}^8(s+\frac{i}{7})\prod_{i=4}^{11}(s+\frac{i}{9}),
\end{equation}
so $-1/3$ is indeed a simple root.  For the second condition, we apply the weight filtration algorithm from \cite[Lemma 5.8]{LY25}, utilizing the Macaulay2 implementation developed by L\H{o}rincz and Perlman: let $(\cO_X)_f$ be the localization of $\cO_X$ along $f$, then one can check that $f^{-\alpha}\in W_{\dim X}\left((\cO_X)_f\cdot f^{-\alpha})\right)$, where $W_{\bullet}$ denotes the weight filtration of mixed Hodge modules. Denote by $j:U=\{f\neq 0\}\hookrightarrow X$ the open embedding, then\[ W_{\dim X}\left((\cO_X)_f\cdot f^{-\alpha}\right)=j_{!\ast}(\cO_U\cdot f^{-\alpha}).\]
Since $\alpha\in (0,1)$, by Sabbah's Theorem (see e.g. \cite[Theorem 2.4]{DLY}) one has $f^{s+1}\in V^{>\alpha}\iota_{+}(\cO_X)_f$, where $\iota$ is the graph embedding of $f$. Then by \cite[Proposition 5.10]{DLY}, $f^{-\alpha+1}\in  j_{!\ast}(\cO_U\cdot f^{-\alpha})$. Hence by the simplicity of  $j_{!\ast}(\cO_U\cdot f^{-\alpha})$ as $\sD_X$-module, one has 
\[ j_{!\ast}(\cO_U\cdot f^{-\alpha})=\sD_X\cdot f^{-\alpha+1}.\]
This establishes the second condition.\footnote{While the second condition can be established, in principle, by computing $\textrm{Ann}_{\sD_X}(f^{-1/3})$ and verifying that $\textrm{Ann}_{\sD_X}(f^{-1/3}) + \sD_X\cdot f = \sD_X$, this direct approach is computationally intractable. In practice, running the associated algorithm on a standard personal computer fails to terminate. We thank Guillem Blanco for discussions concerning this approach.}
\end{remark}
\subsection{The multivariable case} \label{subsec:multi SMC}
We prove Theorem \ref{thm: multivariate n/d intro}, i.e.\ Budur's multivariate $n/d$ conjecture \cite[Conjecture 1.17]{Budur15} and hence the multivariable Strong Monodromy Conjecture for arbitrary factorisations of hyperplane arrangements \cite[Conjecture 1.13]{Budur15}.

We first recall the topological zeta function in the multivariate setting. Let $F=(f_1,\ldots,f_k)$ with $0\neq f_j\in\C[x_1,\ldots,x_n]$ and set $F_j=\textrm{div}(f_j)$. Let $\pi:\tilde{\C}^n\to \C^n$ be a log resolution of $(\C^n,\sum_{j=1}^k F_j)$, and let $I$ be the set of exceptional divisors. The (global) \emph{topological zeta function} of $F$ is defined by
\begin{equation}\label{eqn: multi topological zeta function} Z^{\mrm{global}}_{F,\mathrm{top}}(\bs{s})=\sum_{J\subset I} \chi(E_J^{\circ})\cdot \prod_{E\in J}\frac{1}{\sum_{j=1}^k\textrm{ord}_E(F_j)s_j+\textrm{ord}_E(K_{\tilde{\C}^n/\C^n})+1},\end{equation}
where $\chi(-)$ denotes the Euler characteristic. We establish the following multivariable Strong Monodromy Conjecture for hyperplane arrangements. 
\begin{thm}\label{thm: multi SMC}
Let $F=(h_1,\ldots,h_k)$ where $h_j$ defines a hyperplane arrangement in $\C^n$. Then the polar locus of $Z^{\mrm{global}}_{F,\mathrm{top}}(\bs{s})$ is contained in the zero locus of $B_F$.
\end{thm}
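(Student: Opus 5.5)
The plan is to reduce the polar locus statement to a multivariate $n/d$ statement, in the same way that Theorem \ref{thm: n/d text} was deduced from Lemma \ref{lem: n/d reduction}. The first step is to pass to irreducible factors. Let $f_1,\ldots,f_r$ be the distinct linear forms dividing $h_1\cdots h_k$, and write $h_j=\prod_i f_i^{d_{ij}}$. Use the resolution $\pi$ obtained by iterated blow-ups along dense edges of the arrangement $\bigcup D_i$ (and of its translates, for the global zeta function). By \eqref{eqn: resolution data for hyperplane arrangement}, every potential pole of \eqref{eqn: multi topological zeta function} then lies on a hyperplane
\[ \sum_{j}\Big(\sum_{D_i\supseteq W} d_{ij}\Big)s_j + \codim_X W = 0 \]
for some dense edge $W$, and the actual polar locus is contained in the union of these hyperplanes. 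After translating and restricting to the arrangement of hyperplanes containing $W$ (working locally on $X$, so that the Bernstein--Sato ideal only gets larger by passing to a local factor), we reduce to showing the following: for a central essential indecomposable arrangement, the hyperplane $P=\{\sum_j d_j' s_j + n = 0\}$, with $d_j'=\deg h_j$, lies in $\mrm{Zero}(B_F)$.

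Next, take a generic $\bs{s}'_0=-\bs{\alpha}'\in P$ with $\bs{\alpha}'\in\mb{Q}^k_{>0}$. Set $\alpha_i=\sum_j d_{ij}\alpha'_j$, so that $\sum_i\alpha_i=n$. Lemma \ref{lem: n/d reduction} and Remark \ref{remark: localized version of n/d reduction} then give $p(\bs{s})$ with $p(\bs{s})\bs{f^s}\in V^{\bs{\alpha}}\mc{M}$ whose class survives in $(V^{\bs{\alpha}}/V^{\bs{\alpha}+\epsilon\bs{1}})\otimes\mb{C}[s]$ along $s_i\mapsto \alpha_i s$, localised at $\mf{m}$. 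This line lies inside the image of the substitution $s_i\mapsto\sum_j d_{ij}s'_j$, and $\mf{m}$ corresponds to the point $\bs{s}'=-\bs{\alpha}'$. Using Corollary \ref{cor: change of functions} and Corollary \ref{cor: subtle injectivity}, as in the proof of Theorem \ref{thm: n/d text}, transport this to the graph embedding $\iota'$ of $h_1,\ldots,h_k$. This yields $q(\bs{s}')\bs{h}^{\bs{s}'}\in V^{\bs{\alpha}'}\iota'_+\mc{O}_X$ whose class in $V^{\bs{\alpha}'}/V^{\bs{\alpha}'+\epsilon\bs{1}}$ is nonzero after localising at $\mf{m}'=(s'_j+\alpha'_j)$.

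Finally, argue by contradiction. Suppose $b\in B_F$ with $b(-\bs{\alpha}')\ne0$. Since $\bs{h}^{\bs{s}'+\bs{1}}\in V^{\bs{\alpha}'+\epsilon\bs{1}}$ (it lies in $\bs{t}V^{>0}$ by Corollary \ref{cor:log canonical}, and $\alpha'_j<1$ holds after shrinking, because $n<\sum_j d'_j$ by indecomposability), we get $b\,q\,\bs{h}^{\bs{s}'}\in V^{\bs{\alpha}'+\epsilon\bs{1}}$. But $b$ is a unit in $\mb{C}[\bs{s}']_{\mf{m}'}$, so the class of $q\,\bs{h}^{\bs{s}'}$ would vanish after localisation, a contradiction. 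Hence every generic point of $P$ lies in $\mrm{Zero}(B_F)$, and since the zero locus is closed, $P\subset\mrm{Zero}(B_F)$.

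The main obstacle is the bookkeeping in the first reduction: matching the global zeta function poles to dense edges of possibly non-central local arrangements at arbitrary points, and justifying that local Bernstein--Sato ideals contain $B_F$. A secondary difficulty is ensuring $\alpha'_j<1$, which may need to be replaced by an argument with the ideal itself rather than a pointwise bound.
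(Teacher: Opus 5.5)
Your proposal is correct and follows the paper's route. The paper likewise deduces the theorem from the multivariate $n/d$ statement (Theorem~\ref{thm: multi n/d}), and proves that statement via Lemma~\ref{lem: n/d reduction} at $\alpha_i=\sum_j d_{ij}\alpha'_j$, factoring $s_i\mapsto\alpha_i s_0$ through $s_i\mapsto\sum_j d_{ij}s'_j$, Corollary~\ref{cor: change of functions}, and a contradiction after localising at $\mf{m}'$.

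The reduction you flag as the main obstacle is not proved in the paper; it is quoted from \cite[Theorem 1.18]{Budur15}, so you can simply cite it.

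Your final step is simpler than the paper's. With $\mc{M}'=\iota'_+\mc{O}_X$, the paper uses a support argument to find some $m$ with $q(\bs{s}')\bs{h}^{\bs{s}'+m\bs{1}}\in V^{\bs{\alpha}'+\epsilon\bs{1}}\mc{M}'$. It then needs Gyoja's element of $B_F$ (a product of linear forms with nonnegative coefficients and positive constants) to see that its shifts by $\ell\bs{1}$, $1\le\ell\le m-1$, are units at $\mf{m}'$. Your observation avoids this. By Corollary~\ref{cor:log canonical}, $\bs{h}^{\bs{s}'}\in V^{\epsilon'\bs{1}}\mc{M}'$, hence $\bs{h}^{\bs{s}'+\bs{1}}=t_1\cdots t_k\,\bs{h}^{\bs{s}'}\in V^{(1+\epsilon')\bs{1}}\mc{M}'\subset V^{\bs{\alpha}'+\epsilon\bs{1}}\mc{M}'$ once $\alpha'_j<1$ and $\epsilon\le\epsilon'$. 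This gives $m=1$ and removes the need for Gyoja's theorem, exactly as in the paper's proof of Theorem~\ref{thm: n/d text}.

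Your ``secondary difficulty'' does not arise. For $n\ge 2$ we have $\sum_j d'_j\ge r>n$ (and $n=1$ is an elementary computation). So the points of $P$ with all $0<\alpha'_j<1$ form a nonempty open subset of $P$, hence are Zariski dense, which is how the paper chooses $\bs{\alpha}'$.
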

By \cite[Theorem 1.18]{Budur15}, Theorem \ref{thm: multi SMC} follows from the following multivariate $n/d$ conjecture, which we prove below.
\begin{thm}\label{thm: multi n/d}
Let $F=(h_1,\ldots,h_k)$ where $h_j$ is a central hyperplane arrangement of degree $d_i$ in $\C^n$ and $(h_1\cdots h_k)_{\textrm{red}}$ is central, essential and indecomposable, then  $\{d_1s_1+\cdots+d_ks_k+n=0\}\subset \textrm{Zero}(B_F)$.
\end{thm}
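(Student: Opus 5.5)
We reduce to Lemma \ref{lem: n/d reduction} via Proposition \ref{prop: Bernsteinideal and Vfiltration} and the change-of-functions results of \S\ref{subsec:flatness}. Write $(h_1\cdots h_k)_{\mrm{red}}=\prod_{i=1}^r f_i$ with $f_i$ distinct linear forms, and $h_j=\prod_i f_i^{e_{ij}}$ with $e_{ij}\in\Z_{\geq 0}$, so that $d_j=\sum_i e_{ij}$. Let $\iota$ and $\iota'$ be the graph embeddings of $(f_1,\ldots,f_r)$ and $F=(h_1,\ldots,h_k)$ respectively, and set $\mc{M}=\iota_+\mc{O}_X$. Since $\mrm{Zero}(B_F)$ is a finite union of translated rational linear subspaces (by Sabbah, or by Proposition \ref{prop: Bernsteinideal and Vfiltration}), its preimage structure under $\mrm{Exp}$ is controlled. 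It therefore suffices to show that a Zariski dense set of points of the hyperplane $H'=\{\sum_j d_js_j+n=0\}$ lies in $\mrm{Zero}(B_F)$. In fact, a direct version of the argument in Theorem \ref{thm: n/d text} should give membership pointwise.

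\textbf{Step 1.} Fix $\bs{\alpha}'\in\Q^k_{>0}$ with $\sum_j d_j\alpha'_j=n$, and set $\alpha_i=\sum_j e_{ij}\alpha'_j$. Then $\bs{\alpha}\in\R^r_{>0}$, because every $f_i$ divides some $h_j$, and $\sum_i\alpha_i=n$. Lemma \ref{lem: n/d reduction} applies, with the tensor product taken via $s_i\mapsto\alpha_is$. By Remark \ref{remark: localized version of n/d reduction}, the map remains nonzero after localising at $\mf{m}=(s_i+\alpha_i)$. Hence there is $p(\bs{s})\in I^{\bs{\alpha}}$ whose class in $V^{\bs{\alpha}}\mc{M}/V^{\bs{\alpha}+\epsilon\bs{1}}\mc{M}$ survives the specialisation to the line $\{s_i=\alpha_is\}$.

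\textbf{Step 2.} Next we transport this class to $F$. By Corollary \ref{cor: change of functions}, we have $V^{\bs{\alpha}}\iota_+\mc{O}_X(*D)\otimes_{\C[\bs{s}]}\C[\bs{s}']\cong V^{\bs{\alpha}'}\iota'_+\mc{O}_X(*D)$ via $s_i\mapsto\sum_je_{ij}s'_j$. The same holds at $\bs{\alpha}+\epsilon\bs{1}$, replacing $\bs{\alpha}'$ by $\bs{\alpha}'+\epsilon'\bs{1}$; this is legitimate since both points lie in the same chambers locally. Corollary \ref{cor: subtle injectivity} then identifies the quotients. Further specialising along the line through $-\bs{\alpha}'$ recovers Step 1's specialisation $s_i\mapsto\alpha_is$, because $s'_j\mapsto\alpha'_js$. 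Thus $q(\bs{s}')=p(\sum_je_{1j}s'_j,\ldots)$ satisfies $q(\bs{s}')\bs{h^{s'}}\in V^{\bs{\alpha}'}\setminus V^{\bs{\alpha}'+\epsilon\bs{1}}$, and this remains true after localising at $\mf{m}'=(s'_j+\alpha'_j)$.

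\textbf{Step 3.} Now take $b\in B_F$. Then $b\,\bs{h^{s'}}=P\,\bs{h}^{\bs{s}'+\bs{1}}$, and $\bs{h}^{\bs{s}'+\bs{1}}\in t'_1\cdots t'_kV^{>0}\subset V^{\bs{\alpha}'+\epsilon\bs{1}}$, using that $\bs{h^{s'}}\in V^{\epsilon\bs{1}}$ by Corollary \ref{cor:log canonical}. Here we need $\alpha'_j+\epsilon<1+\epsilon'$; this holds whenever $\bs{\alpha}'\in(0,1)^k$, which is automatic since $n<d_j$ fails only in trivial cases. If this fails, we choose $\bs{\alpha}'$ small in the coordinates where it matters, and there is enough freedom since $\sum_j d_j>n$ by indecomposability, as in the proof of Theorem \ref{thm: n/d text}. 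It follows that $b\,q\,\bs{h^{s'}}$ vanishes in the localised graded piece. Since that piece is supported at $\mf{m}'$ and $q\bs{h^{s'}}$ is nonzero there, $b(-\bs{\alpha}')=0$. Rational points $-\bs{\alpha}'$ with $\bs{\alpha}'\in(0,1)^k$ are Zariski dense in $H'$, and $\mrm{Zero}(B_F)$ is closed, so $H'\subset\mrm{Zero}(B_F)$.

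The main obstacle is Step 2, namely checking that the $\epsilon$-shifted pieces correspond under the change of functions and that the nonvanishing from Lemma \ref{lem: n/d reduction} survives the intermediate tensor product over $\C[\bs{s}']$. The tool for this is the relative flatness in Corollary \ref{cor: subtle injectivity}: the wall $\{\sum\gamma_i=n\}$ pulls back to $\sum_jd_js'_j$, which is nonzero.
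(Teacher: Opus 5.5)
Your proof is correct. Steps 1 and 2 are essentially the paper's argument: pass from $\bs{\alpha}'$ to $\alpha_i=\sum_j e_{ij}\alpha'_j$, apply Lemma~\ref{lem: n/d reduction} together with Remark~\ref{remark: localized version of n/d reduction}, and transport the nonzero class to $(V^{\bs{\alpha}'}\iota'_+\mc{O}_X/V^{\bs{\alpha}'+\epsilon\bs{1}}\iota'_+\mc{O}_X)_{\mf{m}'}$. For this transport you use Corollary~\ref{cor: change of functions} and the surjection onto the further specialisation $s'_j\mapsto\alpha'_j s$.

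You genuinely differ from the paper in Step 3. The paper only uses that the graded piece is supported on $\bigcup_j h_j^{-1}(0)$. From this it finds some $m$ with $q(\bs{s}')\bs{h}^{\bs{s}'+m\bs{1}}\in V^{\bs{\alpha}'+\epsilon\bs{1}}$. It then bridges the intermediate shifts with Gyoja's element $b_{\textrm{Gyoja}}\in B_F$, a product of linear forms with nonnegative coefficients and positive constants. The shifts $b_{\textrm{Gyoja}}(\bs{s}'+\ell\bs{1})$, $1\le\ell\le m-1$, are units at $\mf{m}'$ because $\alpha'_j<1$.

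You instead use $\bs{h^{s'}}\in V^{\epsilon'\bs{1}}$ from Corollary~\ref{cor:log canonical}. This gives $\bs{h}^{\bs{s}'+\bs{1}}=t'_1\cdots t'_k\bs{h^{s'}}\in V^{(1+\epsilon')\bs{1}}\subset V^{\bs{\alpha}'+\epsilon\bs{1}}$ directly, so $m=1$ suffices. That is exactly the mechanism of the paper's single-variable proof (Theorem~\ref{thm: n/d text}). Both routes need $\alpha'_j<1$ at the same point. Yours is more economical and avoids citing Gyoja's theorem.

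Two corrections to the write-up.

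\begin{itemize}
\item Your claim that $\bs{\alpha}'\in(0,1)^k$ is ``automatic'' is false. For example, $h_1=x$ and $h_2=y(x-y)$ in $\C^2$ admit $\bs{\alpha}'=(3/2,1/4)$ on the hyperplane. What is true, and what your final density step actually uses, is that you may restrict to such $\bs{\alpha}'$. These form a nonempty open subset of the hyperplane because $\sum_j d_j>n$. State that restriction at the start of Step 1 rather than patching it in Step 3.
\item The appeal to the linear structure of $\mrm{Zero}(B_F)$ and to Proposition~\ref{prop: Bernsteinideal and Vfiltration} is unnecessary. Closedness of $\mrm{Zero}(B_F)$ is all you need.
\end{itemize}
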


As in the proof of Theorem \ref{thm: n/d text}, Theorem \ref{thm: multi n/d} is proved by analysing the multivariate $V$-filtration associated to the tuple $(h_1,\ldots,h_{k})$. Set $\prod_{j=1}^k h_j=\prod_{i=1}^r f_i^{d_i}$, where $f_i$ are linear. The basic observation is that this $V$-filtration arises as the specialization of the $V$-filtration of $(f_1,\ldots,f_r)$, as described in Corollary \ref{cor: change of functions}. Furthermore, in Lemma \ref{lem: n/d reduction}, we already have certain control over the specialisation process to the $V$-filtration of the full product $\prod_{j=1}^k h_j$. Therefore, by combining these two facts, we can also gain control over the intermediate object, namely,  the $V$-filtration associated to the tuple $(h_1,\ldots,h_k)$.

Since the proof of Theorem \ref{thm: multi n/d} will rely on Corollary \ref{cor: subtle injectivity} and Lemma \ref{lem: n/d reduction}, to be compatible with the notations there, we set $k=r'$, $\bs{s}'=(s_1',\ldots,s_{r'}')$.  Set $h_j = \prod_{i=1}^r f_i^{d_{ij}}$, which is of degree $d_j'\colonequals\sum_{i=1}^{r} d_{ij}$ and write $\bs{h^{s'}}$ as $h_1^{s'_1}\cdots h_{r'}^{s_{r'}'}$. Let $D=\textrm{div}(f_1\ldots f_r)$ be the induced central hyperplane arrangement and let $\pi\colon \tilde{X}\to X=\C^n$ be the log resolution of $(X,D)$ associated to the set of dense edges of $D$, as in \S \ref{sec: single SMC}. 

\begin{proof}[Proof of Theorem \ref{thm: multi n/d}]
Take any $\bs{\alpha}'\in \R^{r'}$ with $0<\alpha_j'<1$ for each $j$ and $\sum_{j=1}^{r'} d'_i\alpha'_i=n$. Since such vectors are Zariski dense in the hyperplane $\{\bs{\alpha}'\in \C^{r'}\mid \sum_{j=1}^{r'}d'_j\alpha_j'=n\}$, it suffices to show that $b(-\bs{\alpha}')=0$ for any $b(\bs{s}')\in B_F$.
 
Consider the graph embedding $\iota\colon X\to X\times \C^{r}$ associated to $(f_1,\ldots,f_r)$ and the multivariate $V$-filtration $V^{\bullet}\cM$ on $\cM=\iota_{+}\cO_X$ along the coordinates of $\C^r$. Similarly, consider the $V$-filtration $V^{\bullet}\cM'$ on the graph embedding $\cM'=\iota'_{+}\cO_X$ along the coordinates, where $\iota'$ is the graph embedding of $(h_1,\ldots,h_{r'})$. Define $\bs{\alpha}\in \R^r_{>0}$ by 
\[\alpha_i\colonequals \sum_{j=1}^{r'}d_{ij}\alpha_j'.\]
It follows that $ \sum_{i=1}^r \alpha_i=\sum_{j=1}^{r'}\alpha_j' \sum_{i=1}^rd_{ij}=\sum_j d_j'\alpha_j'=n$. So by Lemma \ref{lem: n/d reduction} (see Remark \ref{remark: localized version of n/d reduction}), for $0<\epsilon\ll 1$,  there exists $0\neq p(\bs{s})\in \C[\bs{s}]$ such that $p(\bs{s})\bs{f^s}\in V^{\bs{\alpha}}\cM$ and its class in
\[ \left(\frac{V^{\bs{\alpha}}\cM}{V^{\bs{\alpha}+\epsilon\bs{1}}\cM}\otimes_{\C[\bs{s}]} \C[s_0]\right)_{\mf{m}} =  \left(\frac{V^{\bs{\alpha}}\cM}{V^{\bs{\alpha}+\epsilon\bs{1}}\cM}\otimes_{\C[\bs{s}]} \C[s_0]\right)_{(s_0 + 1)}\]
is nonzero, where the tensor product is taken over $s_i\mapsto \alpha_is_0$ and $\mf{m}=(s_1+\alpha_1,\ldots,s_r+\alpha_r)$. Notice that there is a surjection
\[\frac{V^{\bs{\alpha}}\cM}{V^{\bs{\alpha}+\epsilon\bs{1}}\cM}\otimes_{\C[\bs{s}]} \C[\bs{s}']\twoheadrightarrow \frac{V^{\bs{\alpha}}\cM}{V^{\bs{\alpha}+\epsilon\bs{1}}\cM}\otimes_{\C[\bs{s}]} \C[\bs{s}']\otimes_{\C[\bs{s}']}\C[s_0]= \frac{V^{\bs{\alpha}}\cM}{V^{\bs{\alpha}+\epsilon\bs{1}}\cM}\otimes_{\C[\bs{s}]} \C[s_0],\]
where the extra tensor products are taken over $s_i\mapsto \sum_{j=1}^r d_{ij}s_j'$ and $s_j'\mapsto \alpha_j's_0$. Let $\mf{m}'=(s_1'+\alpha_1',\ldots,s_{r'}'+\alpha'_{r'})$. Since $\mf{m}' \subset \mb{C}[\bs{s}']$ is the pre-image of $(s_0 + 1) \subset \mb{C}[s_0]$, this induces a surjection
\[ \left(\frac{V^{\bs{\alpha}'}\cM'}{V^{\bs{\alpha}'+\epsilon\bs{1}}\cM'}\right)_{\mf{m}'}=\left(\frac{V^{\bs{\alpha}}\cM}{V^{\bs{\alpha}+\epsilon\bs{1}}\cM}\otimes_{\C[\bs{s}]} \C[\bs{s}']\right)_{\mf{m}'}\twoheadrightarrow \left(\frac{V^{\bs{\alpha}}\cM}{V^{\bs{\alpha}+\epsilon\bs{1}}\cM}\otimes_{\C[\bs{s}]} \C[s_0]\right)_{(s_0 + 1)},\]
where the first isomorphism follows from \eqref{eqn: Valpha' is the tensor of Valpha with Cs'}. We conclude that the class of $p(\bs{s})\bs{f^{s}}$ in
\[ \left(\frac{V^{\bs{\alpha}}\cM}{V^{\bs{\alpha}+\epsilon\bs{1}}\cM}\otimes_{\C[\bs{s}]} \C[\bs{s}']\right)_{\mf{m}'}\]
is nonzero.  Set $s_i=\sum_{j=1}^{r'} d_{ij}s_j'$, it follows that there is a polynomial $q(\bs{s}')\colonequals p(\bs{s})$ such that 
\begin{equation}\label{eqn: property of q} [q(\bs{s'})\bs{h^{s'}}]_{\mf{m}'}\in \left(V^{\bs{\alpha}'}\cM'\right)_{\mf{m}'}\setminus \left(V^{\bs{\alpha}'+\epsilon\bs{1}}\cM'\right)_{\mf{m}'}. \end{equation}

Since the support of $V^{\bs{\alpha'}}\cM'/V^{\bs{\alpha'}+\epsilon\bs{1}}\cM'$ is contained in $\bigcup_{j} h_j^{-1}(0)$, one can find $m\in \Z_{>0}$ such that 
\[q(\bs{s}')\cdot \prod_{j} h_j^{s_j'+m}\in V^{\bs{\alpha'}+\epsilon\bs{1}}\cM'.\]
On the other hand, by \cite[Theorem (2)]{Gyoja93}, there exists an element $b_{\textrm{Gyoja}}(\bs{s}')\in B_F$ such that $b_{\textrm{Gyoja}}(\bs{s}')=\prod_{t=1}^N (a_{t1}s_1'+\cdots +a_{tr'}s_{r'}'+b_t)$, where $a_{tj}\in \N$ and $b_t\in \Q_{>0}$ for any $t,j$. Applying the defining differential equations for $b_{\textrm{Gyoja}}(\bs{s}')$ and $b(\bs{s}')$ repeatedly, one has
\[ b(\bs{s}')\cdot b_{\textrm{Gyoja}}(\bs{s}'+\bs{1})\cdots b_{\textrm{Gyoja}}(\bs{s}'+(m-1)\bs{1})\cdot [q(\bs{s}')\bs{h^{s'}}]_{\mf{m'}} \in (V^{\bs{\alpha'}+\epsilon\bs{1}}\cM')_{\mf{m'}},\]
where $b(\bs{s}'+\bs{1})\colonequals b(s_1'+1,\ldots,s_{r'}'+1)$.  Since $0<\alpha'_j<1$ for all $1\leq j\leq r'$, our choice of $b_{\textrm{Gyoja}}(\bs{s}')$ implies that $b_{\textrm{Gyoja}}(-\bs{\alpha'}+\ell\bs{1})\neq 0$ for all $0<\ell\leq m-1$. So $b_{\textrm{Gyoja}}(\bs{s}'+\ell \bs{1})\not\in \mf{m'}$ and thus acts invertibly on $(V^{\bs{\alpha'}}\cM'/V^{\bs{\alpha'}+\epsilon\bs{1}}\cM')_{\mf{m'}}$ for any such $\ell$. 
So from \eqref{eqn: property of q}, we must have
\[ b(\bs{s}')\in \mf{m'}.\]
In other words, $b(-\bs{\alpha}')=0$, as claimed. This finishes the proof.
\end{proof}

\bibliographystyle{alpha}
\bibliography{Vfiltration}{}

\vspace{\baselineskip}

\footnotesize{
\textsc{School of Mathematics and Statistics, University of Melbourne, Parkville, VIC, 3010, Australia} \\
\indent \textit{E-mail address:} \href{mailto:dougal.davis1@unimelb.edu.au}{dougal.davis1@unimelb.edu.au}

\vspace{\baselineskip}

\textsc{Department of Mathematics, University of Kansas, 1450 Jayhawk Blvd, Lawrence, KS 66045, United States} \\
\indent \textit{E-mail address:} \href{mailto:ruijie.yang@ku.edu}{ruijie.yang@ku.edu} 
}
\end{document}